\setlist[itemize]{leftmargin=6mm, itemsep=0pt, topsep=2pt}
\theoremstyle{plain}
\newtheorem{theorem}{Theorem}
\newtheorem*{theorem*}{Theorem} 
\theoremstyle{definition}
\newtheorem{definition}[theorem]{Definition}
\newtheorem*{definition*}{Definition} 
\newtheorem{lemma}[theorem]{Lemma}
\newtheorem*{lemma*}{Lemma} 
\newtheorem{proposition}[theorem]{Proposition}
\newtheorem*{proposition*}{Proposition} 
\newtheorem{corollary}[theorem]{Corollary}
\newtheorem*{corollary*}{Corollary} 
\newtheorem*{example*}{Example} 
\newtheorem{remark}[theorem]{Remark}
\newtheorem*{remark*}{Remark} 
\newtheorem*{guide*}{Confer} 
\numberwithin{theorem}{section}  
\numberwithin{equation}{section} 
\newcommand{\cA}{\mathcal{A}}
\newcommand{\DD}{\mathcal{D}}
\newcommand{\FFF}{\mathscr{F}}
\newcommand{\cF}{\mathcal{F}}
\newcommand{\LL}{\mathscr{L}}
\newcommand{\cL}{\mathcal{L}}
\newcommand{\MMM}{\mathcal{M}}
\newcommand{\NNN}{\mathcal{N}}
\newcommand{\OO}{\mathcal{O}}
\newcommand{\PPP}{\mathcal{P}}
\newcommand{\XX}{\mathscr{X}}
\newcommand{\GGG}{\mathscr{G}}
\newcommand{\RRR}{\mathcal{R}}
\newcommand{\cS}{\mathcal{S}}
\newcommand{\RR}{\mathbb{R}}
\newcommand{\NN}{\mathbb{N}}
\newcommand{\PP}{\mathbb{P}}
\newcommand{\EE}{\mathbb{E}}
\newcommand{\FF}{\mathbb{F}}
\newcommand{\SSS}{\mathbb{S}}
\newcommand{\TT}{\mathbb{T}}
\newcommand{\II}{\mathbb{I}}
\newcommand{\GG}{\mathbb{G}}
\newcommand{\fU}{\mathfrak{U}}
\newcommand{\fM}{\mathfrak{M}}
\newcommand{\fB}{\mathfrak{B}}
\newcommand{\fL}{\mathfrak{L}}
\newcommand{\cptarrow}{\overset{c}{\hookrightarrow}}
\newcommand{\weakarrow}{\overset{w}{\longrightarrow}}
\newcommand{\weakstararrow}{\overset{*}{\longrightarrow}}
\newcommand{\rdiv}{\operatorname{div}}
\newcommand{\pp}{\partial}
\newcommand{\Bog}{\mathcal{B}_D}
\newcommand{\invdiv}[1]{\nabla \Delta^{-1}\left(#1\right)}
\newcommand{\Riesz}[1]{\RRR\left(#1\right)}
\newcommand{\intT}{\int_{0}^T}
\newcommand{\inttau}{\int_{0}^\tau}
\newcommand{\rn}{\mathrm{n}}
\newcommand{\pD}{\partial D}
\newcommand{\bphi}{\boldsymbol{\varphi}}
\newcommand{\urho}{\underline{\rho}}
\newcommand{\orho}{\overline{\rho}}
\newcommand{\oD}{\overline{D}}
\newcommand{\stochbasis}{(\Omega,\FFF,(\FFF_t)_{t\geq 0},\PP)}
\newcommand{\probsp}{(\Omega,\FFF,\PP)}
\newcommand{\tstochbasis}{(\tOmega,\tFFF,(\tFFF_t)_{t\geq 0},\tPP)}
\newcommand{\tprobsp}{(\tOmega,\tFFF,\tPP)}
\newcommand{\ico}[2]{[#1,#2)}
\newcommand{\ioc}[2]{(#1,#2]}
\newcommand{\evm}[2]{\ev{#1,#2}_{V_m^\ast,V_m}}
\newcommand{\inner}[2]{\left(#1,#2\right)_{L^2_x}}
\newcommand{\esssup}{\operatorname{esssup}}
\newcommand{\tOmega}{\tilde{\Omega}}
\newcommand{\tPP}{\tilde{\PP}}
\newcommand{\tFFF}{\tilde{\FFF}}
\newcommand{\trho}{\tilde{\rho}} 
\newcommand{\tu}{\tilde{u}}
\newcommand{\tW}{\tilde{W}}
\newcommand{\tEE}{\tilde{\EE}}
\newcommand{\tq}{\tilde{q}}
\newcommand{\tk}{\tilde{k}}
\newcommand{\tE}{\tilde{E}}
\newcommand{\tnu}{\tilde{\nu}}
\newcommand{\tY}{\tilde{Y}}
\newcommand{\osc}{\textbf{osc}}
\newcommand{\ov}[1]{\overline{#1}}
\title{Stochastically forced compressible Navier--Stokes equations with slip boundary conditions of friction type}
\author{Reo Tsuboya\thanks{Department of Mathematics, Graduate School of Science, Kyoto University, Kitashirakawa Oiwake-cho, Sakyo-ku,  Kyoto 606-8502, Japan. Email: tsuboya.reo.56m@st.kyoto-u.ac.jp.}}
\date{}
\begin{document}
\maketitle
\begin{abstract}
	We study a mathematical model of a compressible viscous fluid driven by stochastic forces under slip boundary conditions of friction type.
	We introduce a notion of a weak solution that is analytically and probabilistically consistent with this model.
	Our main result establishes the existence of such weak solutions under slip boundary conditions on bounded domains with $C^{2+\nu}$-boundary ($\nu>0$).
	The proof of this result combines an extended version of the four-layer approximation scheme on the torus by Breit/Feireisl/Hofmanov\'{a} \cite{BFHbook18} with the convex approximation method for absolute value functions studied by Ne\v{c}asov\'{a}/Ogorzaly/Scherz \cite{NOS23}.
\end{abstract}

\textbf{Keywords:} stochastic Navier--Stokes equations; compressible fluids; slip boundary conditions; global existence.

\textbf{2020 Mathematics Subject Classification:} 60H15; 35Q30; 76N10; 76M35. 
\tableofcontents
\section{Introduction}
In this paper, we consider the stochastically forced compressible Navier--Stokes equations   
\begin{align}
	&d\rho + \rdiv(\rho u)dt = 0 & &\mathrm{in}\  (0,T)\times D, \label{eq:1.1}\\
	&d(\rho u) + \rdiv (\rho u \otimes u)dt = \rdiv \TT dt + \GG(\rho,\rho u)dW & &\mathrm{in}\ (0,T)\times D,  \label{eq:1.2}\\ 
	&(\rho(0),(\rho u)(0)) = (\rho_0,q_0) & &\mathrm{in}\ D, \label{eq:1.3}\\ 
	&u\cdot \rn = 0 & & \mathrm{on} \  (0,T)\times \pp D, \label{eq:1.4}\\ 
	&|(\TT \rn)_\tau| \leq g,\quad (\TT \rn)_\tau \cdot u_\tau +g |u_\tau| = 0 & & \mathrm{on}\ (0,T)\times \pp D, \label{eq:1.5}
\end{align}
where $D\subset \RR^3$ is an open and bounded domain with locally Lipschitz boundary $\pp D$ and outward unit normal vector $\rn$ on $\pD$, and $T>0$ is fixed. 
$\rho:(0,T)\times D\to \RR$ and $u:(0,T)\times D\to \RR^3$ denote the unknown density of the fluid and the unknown velocity vector of the fluid, respectively, and $\TT$ is the Cauchy stress tensor of the viscous Newtonian fluid with isentropic pressure law
\begin{align*}
	\TT &= \underbrace{\SSS(\nabla u)}_{\text{viscous stress}} - \underbrace{p(\rho)\II}_{\text{pressure}}, \\ 
	\SSS(\nabla u) &= \mu \left(\nabla u + (\nabla u)^T -\frac{2}{3}\rdiv u \II \right) + \lambda \rdiv u \II,\quad p(\rho) = a \rho^\gamma,\quad a>0,  
\end{align*}
where $\mu>0$ and $\lambda\geq 0$ are viscous coefficients, $\gamma>1$ is the adiabatic exponent, and $\II$ is the identity matrix on $\RR^3$. 
The driving process $W$ is a cylindrical Wiener process defined on some probability space $(\Omega,\FFF,\PP)$ and the coefficient $\GG$ is generally nonlinear and satisfies suitable growth conditions.
The function $g$, given on $\pD$ and assumed to be positive, is called a modulus of friction and we use the notation $u_{\rn}= (u\cdot \rn)\rn,\ u_\tau = u- u_{\rn}$.
In the considered model, \eqref{eq:1.1} and \eqref{eq:1.2} represent the equation of continuity and the momentum equation, respectively. The initial conditions are given by \eqref{eq:1.3}. Finally, \eqref{eq:1.4} and \eqref{eq:1.5} represent the slip boundary condition of friction type. 

Throughout this paper, we fix a separable Hilbert space $\fU$ and its complete orthonormal system $(e_k)_{k\in \NN}$, and the diffusion coefficient $\GG=\GG(\rho,\rho u)$ is defined as a superposition operator from $\fU$ to functions on $D$ as follows:
$$
	\GG(\rho,q)e_k = G_k(\cdot,\rho(\cdot),q(\cdot)),
$$
where the coefficients $G_k=G_k(x,\rho,q):D\times [0,\infty)\times \RR^3\to \RR^3$ are $C^1$-functions such that there exists constants $(g_k)_{k\in \NN}\subset [0,\infty)$ with $\sum_{k=1}^{\infty}g_k^2<\infty$ and uniformly in $x\in D$,
\begin{align}
   \left|G_k(x,\rho,q) \right|&\leq g_k(\rho+\left|q \right|),  \label{eq:1.6}\\
   \left|\nabla_{\rho,q} G_k(x,\rho,q) \right|&\leq g_k. \label{eq:1.7}
\end{align}
Moreover, vector notation is omitted when there is no risk of confusion. For example, we use the notation $L^p(D)$ instead of $L^p(D;\RR^3)$.

Now, let us consider the case where a stochastic basis $(\Omega,\FFF,(\FFF_t)_{t\geq 0},\PP)$ and a cylindrical $(\FFF_t)$-Wiener process $W=\sum_{k=1}^{\infty}e_k W_k$ on $\fU$ are given, where $(W_k)_{k\in\NN}$ is a sequence of mutually independent real-valued $(\FFF_t)$-Wiener processes. 
If $\rho$ and $\sqrt{\rho}u$ are $(\FFF_t)$-progressively measurable processes belonging to 
\begin{align*}
   \rho &\in L^{2}(\Omega;L^\infty(0,T;L^\gamma(D))),\quad \gamma\geq 1,  \\
   \sqrt{\rho}u&\in L^4(\Omega; L^\infty(0,T;L^2(D))), 
\end{align*} 
respectively, then the operator $\GG(\rho,\rho u)$ is an $(\FFF_t)$-progressively measurable process belonging to 
\begin{align*}
	&\GG(\rho,\rho u)\in L^2(\Omega\times (0,T);L_2(\fU;(W^{b,2}(D))^\ast)),\quad b>\tfrac{3}{2}.  
\end{align*}
Indeed, using the embedding $L^1(D)\hookrightarrow (W^{b,2}(D))^\ast,\ b>\frac32$ and \eqref{eq:1.6}, we have 
\begin{align*}
	\norm{\GG(\rho,\rho u)}_{L_2(\fU,(W_x^{b,2})^\ast)}^2 &= \sum_{k=1}^{\infty} \norm{G_k(\rho,\rho u)}_{(W_x^{b,2})^\ast}^2 \leq C \sum_{k=1}^{\infty}\norm{G_k(\rho,\rho u)}_{L_x^1}^2 \\ 
	&\lesssim \sum_{k=1}^{\infty}g_k^2 \norm{\rho + \rho |u|}_{L_x^1}^2 \lesssim \norm{\rho + \rho |u|^2}_{L_x^1}\norm{\rho}_{L_x^1} \\ 
	&\lesssim \norm{\rho}_{L_x^\gamma}^2 + \norm{\rho |u|^2}_{L_x^1}^2.
\end{align*}
Therefore, the stochastic integral in \eqref{eq:1.2} is well-defined as an $(\FFF_t)$-martingale taking values in $(W^{b,2}(D))^\ast$, $b>\frac32,$ and the stochastic driving force is represented by the stochastic differential of the form 
$$
	\GG(\rho,\rho u)dW = \sum_{k=1}^{\infty}G_k(x,\rho,\rho u)dW_k.
$$
Note that the integrability assumptions on $\rho$ and $\sqrt{\rho}|u|$ follow from the energy inequality discussed later.
\subsection{Motivation and previous results}
The Navier--Stokes equations have been studied under various types of boundary conditions with slip phenomena. In the deterministic setting, the existence theory for weak solutions to the Navier--Stokes equations is now well-established. For an introduction to the incompressible case we refer to \cite{Lio96}, and for the compressible case to \cite{Lio98}.
In the compressible framework, the focus is on isentropic fluids, where the pressure is assumed to be proportional to the density raised to the power of the adiabatic exponent $\gamma > 9/5$. These monographs mainly deal with the cases of the \textit{no-slip boundary condition}, the \textit{periodic boundary condition}, and the flow in the whole space.
For the compressible Navier--Stokes equations, the existence of weak solutions under the no-slip boundary condition was extended to the more physically relevant case $\gamma >3/2$ in \cite{FNP01}.
Furthermore, the existence theory for weak solutions to the full compressible Navier--Stokes--Fourier system, including heat conduction, was developed in \cite{FN09}. In that work, both the no-slip boundary condition and the \textit{complete slip boundary condition} in which the normal component of the velocity vanishes on the boundary are treated.
The no-slip boundary condition has long been the most standard and widely used condition for both incompressible and compressible flows. However, several paradoxical phenomena have been pointed out \cite{Hes04,Hil07}, and consequently, nonstandard boundary conditions have recently attracted growing interest.
Among these, the \textit{Navier slip boundary condition} is often regarded as a physically reasonable model, allowing for tangential slip proportional to the tangential stress.
For the incompressible Navier--Stokes equations, the literature is already vast; see, for example, \cite{ATACG21}. In the compressible case, the existence of weak solutions under the Navier slip boundary condition in time-dependent domains was obtained in \cite{FKNN13}, while the full system was treated in \cite{KMNWK18}. 
Other types of boundary conditions allowing for tangential slip when the tangential component of the stress is sufficiently large include the \textit{slip boundary condition of friction type} and the \textit{Coulomb friction law boundary condition}. 
The former was introduced in \cite{Fuj94, FKS95} for the stationary Stokes and Navier--Stokes equations. For the incompressible Navier--Stokes equations, the global-in-time existence of strong solutions in two dimensions and the local-in-time existence in three dimensions were shown in \cite{Kas12}. In the compressible setting, the global-in-time existence of weak solutions was recently established in \cite{NOS23}.
For the latter type, the existence of weak solutions in the incompressible case was proven in \cite{BST17}, and in the compressible case, the existence of dissipative solutions was obtained in \cite{NOS25}.

The theory of weak solutions for compressible viscous fluids in probabilistic settings was first established in \cite{BH16}, where the existence of finite energy weak martingale solutions was shown in a three-dimensional torus with periodic boundary conditions. See also \cite{BFHbook18}.
Subsequently, \cite{Smi17} independently established the existence of weak martingale solutions under the no-slip boundary condition on smooth bounded domains. 
For extensions to the full Navier--Stokes--Fourier system we refer to \cite{BF20}, and for the case of the whole space to \cite{DMM21}.
In contrast, to the best of our knowledge, no general results are available concerning weak solutions under slip-type boundary conditions in the probabilistic setting. This gap in the literature motivates the present study. 

The main purpose of this paper is to develop the weak solution theory for a compressible viscous fluid subject to stochastic forces under a \textit{slip boundary condition of friction type} on bounded domains with $C^{2+\nu}$-boundary.
We establish the existence of a global-in-time finite energy weak solution, which is both analytically and probabilistically weak. Moreover, as discussed in \cite{NOS23}, we prove that any sufficiently regular weak solution become a solution in the strong sense.
As an application of the main result, we further introduce a suitable weak formulation for the \textit{Navier slip boundary condition} and prove the existence of weak solutions in this setting as well.

\subsection{Notation}
\begin{description}[labelwidth=7em,itemsep=0pt]
	\item[$A:B$]  Scalar product $\sum_{ij}A_{ij}B_{ij}$ between two matrices $A,B$
	\item[$W_{\rn}^{k,p}(D)$] Sobolev functions with zero normal trace in $D$
	\item[{$[X,w]$}] Banach space $X$ equipped with the weak topology 
	\item[{$C_w([0,T];X)$}] Continuous functions with values in $[X,w]$ 
	\item[{$C^{k+\alpha}([0,T];X)$}] $X$-valued $k$-times continuously differentiable functions with $\alpha$-H\"older continuous derivatives  
	\item[$L_t^pL_x^q$] Abbreviation for $L^p(0,T;L^q(D))$ 
	\item[$(\cdot,\cdot)_{H}$] Inner product on $H$
	\item[$\ev{\cdot,\cdot}_{X^\ast,X}$] Duality pairing between $X^\ast$ and $X$  
\end{description}

\subsection{Weak formulation and main result}
Weak solutions to the Navier--Stokes equations under the boundary condition \eqref{eq:1.4}--\eqref{eq:1.5} have traditionally been formulated as satisfying a certain variational inequality, which arises from the nonlinearity of the boundary condition. Following the spirit of \cite{NOS23}, we incorporate the boundary condition into a variational inequality that combines the momentum equation and the energy inequality. 
However, in a probabilistic setting, this formulation is not sufficient. The difficulty stems from the low regularity of stochastic integrals and the fact that such a formulation does not provide a representation of the momentum $\rho u$ as a semimartingale. Therefore, we additionally include in the definition a form of the momentum equation that is unaffected by the boundary condition.
The precise formulation is as follows.
\begin{definition}[Dissipative martingale solution]
	\label{Def:1.1}
	Let $\Lambda$ be a Borel probability measure on $L^1(D)\times L^1(D)$ and let $g\in L^2((0,T)\times \pD)$ satisfies $g\geq 0$.
	The quantity $((\Omega,\FFF,(\FFF_t)_{t\geq 0},\PP),\rho,u,W)$ is called a \textit{dissipative martingale solution} to the Navier--Stokes system \eqref{eq:1.1}--\eqref{eq:1.5} with the initial law $\Lambda$ if:
	\begin{itemize}
		\item[(1)] $(\Omega,\FFF,(\FFF_t)_{t\geq 0},\PP)$ is a stochastic basis with a complete right-continuous filtration;
		\item[(2)] $W$ is a cylindrical $(\FFF_t)$-Wiener process on $\fU$;
		\item[(3)] the density $\rho$ is an $(\FFF_t)$-progressively measurable stochastic process such that 
		$$
			\rho \geq 0,\quad \rho\in C_w([0,T];L^\gamma(D))\quad \PP\text{-a.s.;}
		$$
		\item[(4)] the velocity $u$ is an $(\FFF_t)$-adapted random distribution such that  
		$$
			u\in L^2(0,T;W_{\rn }^{1,2}(D))\quad \PP\text{-a.s.;}
		$$ 
		\item[(5)] the momentum $\rho u$ is an $(\FFF_t)$-progressively measurable stochastic process such that 
		$$
			\rho u\in C_w([0,T];L^{\frac{2\gamma}{\gamma + 1}}(D))	\quad \PP\text{-a.s.;}
		$$ 
		\item[(6)] there exists an $L^1(D)\times L^1(D)$-valued $\FFF_0$-measurable random variable $(\rho_0,u_0)$ such that $\rho_0 u_0\in L^1(D)$ $\PP$-a.s., $\Lambda = \LL[\rho_0,\rho_0 u_0]$ and $(\rho_0,\rho_0 u_0) = (\rho(0,\cdot),\rho u(0,\cdot))$ $\PP$-a.s.;
		\item[(7)] the equation of continuity 
		\begin{equation}
			\label{eq:1.8}
			-\intT \pp_t\phi \int_D \rho \psi dxdt = \phi(0)\int_D \rho_0\psi dx + \intT \phi \int_D \rho u\cdot \nabla\psi dxdt
		\end{equation}
		holds for all $\phi\in C_c^\infty(\ico{0}{T})$ and all $\psi\in C_c^\infty\left(\RR^3\right)$ $\PP$-a.s.;
		\item[(8)] the interior momentum equation 
		\begin{align}
			&-\intT \pp_t \phi\int_D \rho u\cdot \bphi dxdt-\phi(0)\int_D \rho_0 u_0 \cdot \bphi dx  \notag\\
			&\quad = \intT \phi\int_D[\rho u\otimes u:\nabla\bphi + p(\rho)\rdiv\bphi]dxdt - \intT \phi\int_D \SSS(\nabla u):\nabla\bphi dxdt + \intT \phi\int_D \GG(\rho,\rho u)\cdot \bphi dx dW \label{eq:1.9}
		\end{align}
		holds for all $\phi\in C_c^\infty \left(\ico{0}{T}\right)$ and all $\bphi\in C_c^\infty(D)$ $\PP$-a.s.;
		\item[(9)] the momentum and energy inequality 
		\begin{align}
		   &\int_D \left[\frac12\rho_0 |u_0|^2 + P(\rho_0)\right]dx - \int_D\left[\frac12 \rho \left|u \right|^2 + P(\rho)\right](\tau)dx -\inttau \pp_t\phi \int_D\rho u\cdot \bphi dxdt - \phi(0)\int_D \rho_0 u_0 \cdot \bphi dx  \notag\\
		   &\quad -\inttau \phi \int_D(\rho u\otimes u):\nabla \bphi dx dt- \inttau \phi\int_D p(\rho)\rdiv \bphi dx dt + \inttau \int_D \SSS(\nabla u):\nabla(\phi \bphi-u)dx dt   \notag\\ 
		   &\quad +\frac12 \sum_{k=1}^{\infty}\inttau \int_D \rho^{-1}\left|G_k(\rho,\rho u) \right|^2dx dt - \inttau \int_D \GG(\rho,\rho u)\cdot (\phi\bphi - u)dxdW \notag\\ 
		   &\quad + \inttau \int_{\pD} g\left|\phi\bphi \right|-g\left|u \right|d\Gamma dt\geq 0 \label{eq:1.10}
		\end{align}
		holds for all $\tau\in [0,T],$ for all $ \phi\in C_c^\infty(\ico{0}{\tau})$ and for all $\bphi\in C^\infty(\overline{D})$ with $\bphi\cdot \rn|_{\pD}=0$ $\PP$-a.s., where the pressure potential $P$ is given by 
		$$
			P(\rho) = \rho \int_1^\rho \frac{p(z)}{z^2}dz;
		$$ 
		\item[(10)] if $b\in C^1(\RR)$ such that $b'(z)=0$ for all $z\geq M_b$, then, for all $\phi\in C_c^\infty(\ico{0}{T})$ and all $\psi\in C_c^\infty(\RR^3)$, we have $\PP$-a.s. 
		\begin{align*}
			-\intT \pp_t \phi \int_D b(\rho)\psi dxdt &= \phi(0) \int_D b(\rho_0)\psi dx + \intT \phi\int_D b(\rho)u\cdot \nabla \psi dxdt \\ 
			&\quad - \intT \phi \int_D \left(b'(\rho) \rho - b(\rho) \right) \rdiv u \psi dx dt.
		\end{align*}
	\end{itemize}
\end{definition}
\begin{remark}
	We implicitly assume that all stochastic integrals appearing in Definition~\ref{Def:1.1} are well-defined. If $\gamma>\frac32$, they are well-defined thanks to the regularity specified in Definition \ref{Def:1.1} (3)--(5). Indeed, since 
	$$
		\norm{G_k(\rho,\rho u)}_{L^2_t(W^{b,2}_x)^\ast}^2 \lesssim g_k^2 \norm{G_k(\rho,\rho u)}_{L^2_tL^1_x}^2  \lesssim g_k^2\sup_{\tau\in [0,T]}\left(\norm{\rho}_{L^1_x}^2 + \norm{\rho u}_{L^1_x}^2\right)\quad \PP\text{-a.s,}
	$$
	for $b>\frac32$, $\GG$ is an $L_2(\fU,(W^{b,2}(D))^\ast)$-valued stochastically integrable process. Moreover, Lemma \ref{lem:prog_m'ble_r.d.} guarantees the existence of a progressively measurable stochastic process belonging to the same equivalence class as $u$, and using the embedding $W^{1,2}(D)\hookrightarrow L^6(D)$, we have $\PP$-a.s.,
	\begin{align*}
		\intT \left|\int_D G_k(\rho,\rho u)\cdot u dx \right|^2 dt &\lesssim g_k^2 \left(\sup_{\tau\in [0,T]}\norm{\rho u}_{L^1_x}^2 + \intT \norm{\rho u}_{L_x^{\frac{2\gamma }{\gamma + 1}}}^2\norm{u}_{W_x^{1,2}}^2dt \right) \\ 
		&\leq  g_k^2 \left(\sup_{\tau\in [0,T]}\norm{\rho u}_{L^1_x}^2 + \sup_{\tau\in [0,T]}\norm{\rho u}_{L_x^{\frac{2\gamma }{\gamma + 1}}}^2 \intT \norm{u}_{W_x^{1,2}}^2dt \right).
	\end{align*}
	Thus, for a progressively measurable representative, $\int_D \GG(\rho,\rho u)\cdot udx$ is an $L_2(\fU,\RR)$-valued stochastically integrable process, and the stochastic integral $\inttau \int_D \GG(\rho,\rho u)\cdot u dx dW$ is given by 
	\begin{align*}
		&\inttau \int_D \GG(\rho,\rho u)\cdot u dx dW = \sum_{k=1}^{\infty} \inttau \int_D G_k(\rho,\rho u)\cdot u dx dW_k.
	\end{align*}
\end{remark}
The main result of this paper is as follows.
\begin{theorem}
	\label{thm:1.3}
	Let $T>0$ and let $D\subset \RR^3$ be a bounded domain of class $C^{2+\nu}$ for some $\nu>0$. Let $\gamma>\frac32$ and let $\Lambda$ be a Borel probability measure defined on $L^1(D)\times L^1(D)$ such that 
	\begin{equation*}
		\Lambda\left\{ \rho\geq 0 \right\}=1,\quad \Lambda\left\{ \underline{\rho}\leq \int_D \rho dx \leq \overline{\rho} \right\}=1,
	\end{equation*}
	for some deterministic constants $\urho,\orho>0$ and 
	\begin{equation*}
		\int_{L^1_x\times L^1_x}\left|\int_D \left[\frac12\frac{|q|^2}{\rho}+P(\rho)\right]dx \right|^rd\Lambda(\rho,q) <\infty,
	\end{equation*}
	for some $r\geq 4$.
	Assume that the diffusion coefficients $\GG=\GG(x,\rho,q)$ are continuously differentiable and satisfy \eqref{eq:1.6}--\eqref{eq:1.7}, and the modulus of friction $g\in L^2((0,T)\times \pD)$ satisfies the conditions 
	\begin{align*}
		&g\geq 0,\quad \intT \int_{\pD}  g d\Gamma dt >0.
	\end{align*}
	Then there exists a dissipative martingale solution to \eqref{eq:1.1}--\eqref{eq:1.5} in the sense of Definition \ref{Def:1.1}.
	Furthermore, this solution satisfies the following:
	\begin{align*}
		&\EE\left[\sup_{t\in [0,T]}\norm{\rho(t)}_{L^\gamma(D)}^{\gamma r}\right] + \EE\left[\norm{u}_{L^2(0,T;W^{1,2}(D))}^r \right]< \infty,\\ 
		&\EE\left[\sup_{t\in [0,T]}\norm{\rho |u|^2(t)}_{L^1(D)}^r\right] + \EE\left[\sup_{t\in [0,T]}\norm{\rho u (t)}_{L^{\frac{2\gamma}{\gamma +1}}(D)}^{\frac{2\gamma}{\gamma +1}r}\right]<\infty.
	\end{align*}
\end{theorem}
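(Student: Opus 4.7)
The plan is to construct the solution via a four-layer approximation scheme in the spirit of Breit--Feireisl--Hofmanov\'a \cite{BFHbook18}, adapted to a bounded $C^{2+\nu}$-domain and combined with the convex regularization of $u\mapsto|u_\tau|$ from Ne\v{c}asov\'a--Ogorzaly--Scherz \cite{NOS23}. The four layers are: (a) a Faedo--Galerkin truncation using a basis $(w_n)$ of $W_{\rn}^{1,2}(D)$; (b) a parabolic regularization $d\rho+\rdiv(\rho u)dt=\varepsilon\Delta\rho\,dt$ with Neumann condition on $\rho$, supplying uniform bounds $0<\underline{\rho}\le\rho\le\overline{\rho}$ and renormalizability; (c) an artificial pressure $p_\delta(\rho)=a\rho^\gamma+\delta\rho^\beta$ with $\beta$ large, ensuring enough integrability of $\rho$ to make $\rho u\otimes u$ equi-integrable; and (d) a convex $C^1$ approximation $j_\eta$ of $|\cdot|$ on $\RR^3$, so that on the boundary the non-smooth friction contribution $\int_{\pD}g|u_\tau|$ is replaced by $\int_{\pD}g\,j_\eta(u_\tau)$, yielding a classical Navier-type boundary condition of the form $(\TT\rn)_\tau+g\,\nabla j_\eta(u_\tau)=0$.

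First I would solve the innermost system for fixed $(n,\varepsilon,\delta,\eta)$ pathwise by a Banach fixed point in a short time interval, then extend globally using the Itô energy identity for $\tfrac12\int_D \rho|u|^2+P_\delta(\rho)dx$, in which the boundary term $\int_{\pD}g\nabla j_\eta(u_\tau)\cdot u_\tau d\Gamma\ge 0$ contributes positively by convexity. The diffusion estimate \eqref{eq:1.6}, the Burkholder--Davis--Gundy inequality, and Gronwall's lemma give the $r$-th moment bounds claimed in the theorem, uniformly in $n,\varepsilon,\delta,\eta$. Passing $n\to\infty$ is standard: Jakubowski--Skorokhod representation on the quasi-Polish path space, followed by identification of stochastic integrals via the martingale method of \cite{BFHbook18}.

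Then I pass, successively, $\eta\to 0$, $\varepsilon\to 0$, $\delta\to 0$. In the $\eta\to 0$ limit, the Galerkin momentum equation is combined with the energy identity; since $j_\eta \nearrow|\cdot|$ and the maps $u\mapsto\int_{\pD}g\,j_\eta(u_\tau)$ are convex and continuous, weak lower semicontinuity on the approximate sequence produces the friction term $\int_{\pD}g|u_\tau|$ on the \emph{left} side of the energy balance, which after recombination yields the variational inequality \eqref{eq:1.10} following the argument of \cite{NOS23}. The interior momentum equation \eqref{eq:1.9}, tested against $\bphi\in C_c^\infty(D)$, is insensitive to the boundary term and passes to the limit directly, providing the required semimartingale representation. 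The limits $\varepsilon\to 0$ and $\delta\to 0$ follow Feireisl--Lions compactness: strong convergence of $\rho$ via the effective viscous flux identity $\overline{p(\rho)\rdiv u}-\overline{\SSS(\nabla u):\II\,\rho/(2\mu+\lambda)}=\overline{p(\rho)}\,\rdiv u-\cdots$ on $D$, plus the DiPerna--Lions commutator lemma for the renormalized continuity equation (10); the oscillation defect measure of Feireisl propagates renormalization down to the physical range $\gamma>\tfrac32$. The effective viscous flux identity on $D$ (rather than on the torus) is obtained through a Bogovski-type inverse divergence operator and a Riesz-commutator estimate, exactly as in \cite{Smi17}.

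The main obstacle I foresee is the interplay between the $\delta\to 0$ limit and the boundary friction term when recovering \eqref{eq:1.10}. On the interior side, one must verify the effective viscous flux identity against test functions that themselves arise from a stochastic Skorokhod construction, so the commutator lemma has to be applied path by path; on the boundary side, one has to maintain the weak lower semicontinuity of $u\mapsto\int_{\pD}g|u_\tau|$ while at the same time replacing the Galerkin velocity by a smooth $\bphi$ with $\bphi\cdot\rn|_{\pD}=0$. The convexity of $|\cdot|$ and the inequality $g|\phi\bphi|-g|u|\ge -g|\phi\bphi-u|$ combine to give the needed inequality in \eqref{eq:1.10} without having to identify a weak trace of the stress on $\pD$, which would otherwise require far more regularity than the weak solution provides. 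Once these two points are handled, the uniform $r$-th moment bounds follow by closing the BDG estimate at the level $\delta>0$ and passing to the limit using Fatou, the assumption $r\ge 4$ being exactly what is needed to control the quadratic variation of the stochastic integral $\int\GG(\rho,\rho u)\cdot u\,dW$ in $L^r(\Omega)$.
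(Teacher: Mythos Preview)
Your outline captures the broad architecture, but it glosses over two points that the paper identifies as the genuinely new difficulties in the slip-boundary stochastic setting, and at least the first of these is a real gap.

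\textbf{The Korn--Poincar\'e issue.} You write that the energy identity plus BDG plus Gronwall yield the uniform $r$-th moment bounds, in particular on $\|u\|_{L^2_tW^{1,2}_x}$. Under slip boundary conditions this step is not automatic: the dissipation $\int_D \SSS(\nabla u):\nabla u\,dx$ controls only the deviatoric symmetric gradient, and with $u\cdot\rn|_{\pD}=0$ but $u_\tau$ free, the standard Korn inequality leaves a rigid-motion kernel. In the deterministic literature one closes this by coupling with $\rho$ as a weight (via $\int_D\rho|u|^2$), but here $\rho$ is random and only controlled in expectation, so that argument does not apply pathwise. The paper's solution is a generalized Korn--Poincar\'e inequality that uses the boundary friction term $\int_{\pD} g|u|\,d\Gamma$ itself as the stabilizing functional; the hypothesis $g\ge 0$, $\iint g\,d\Gamma\,dt>0$ in the theorem is there precisely for this purpose, and the proof of the inequality requires analyzing conformal Killing fields on $D$. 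Without this lemma you cannot extract the velocity bound from the energy balance.

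\textbf{The Galerkin projection and the noise.} You call the passage $n\to\infty$ ``standard'', but for a basis of $W^{1,2}_{\rn}(D)$ on a bounded domain the $L^2$-orthogonal projection $\Pi_n$ is in general \emph{not} uniformly bounded on $L^p$ for $p\neq 2$ and does not converge in $W^{s,p}$. The torus proof in \cite{BFHbook18} relies exactly on these properties to pass to the limit in the projected noise $\Pi_n\GG(\rho,\rho u)$. The paper circumvents this by replacing the projected noise with a regularized coefficient $\GG_\varepsilon$ that is uniformly bounded independently of the projection, and by proving a separate time-continuity estimate for $\rho u$ against a dense sequence of compactly supported test functions (Proposition~\ref{prop:m:Holder_conti_of_rho u}), borrowing an idea from \cite{Smi17}. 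Your outline does not account for this.

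A smaller structural difference: the paper uses a five-layer scheme, adding a velocity cut-off $[u]_R$ so that the innermost problem has a unique pathwise solution and one can pass $R\to\infty$ by a stopping-time argument \emph{before} the convex regularization is removed. It then takes the limits in the order $R\to\infty$, $\alpha\to 0$, $m\to\infty$, $\varepsilon\to 0$, $\delta\to 0$; in particular the convex layer is removed at the finite-dimensional stage, where the momentum equation has already degenerated to an inequality combined with the energy balance, so only weak convergence of $u$ is available and the argument of \cite{NOS23} has to be reworked. Your ordering ($n\to\infty$ first, then $\eta\to 0$) is not obviously wrong, but the uniqueness needed for your Banach fixed point will fail once $\eta=0$, and you should check that the stochastic compactness machinery still closes with the inequality formulation in place.
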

The proof of the main theorem is presented in Sections \ref{sec:Approximation system}--\ref{sec:Vanishing artificial pressure limit}.
Our approach in the proof is based on the four-layer approximation scheme from \cite{BFHbook18} for periodic boundary conditions, and combines convex approximation techniques used in \cite{NOS23} to approximate the variational inequality containing the barrier term for the boundary conditions.
As will be described in Section \ref{sec:Outline of the proof of the main result}, the convex approximation layer is introduced after the loss of deterministic boundedness of the velocity. Consequently, the convergence of the velocity in the convex approximation is ensured only in the weak sense, and arguments similar to those in \cite{NOS23} cannot be applied. 
Therefore, we need to reconsider the appropriate path space.

Next, when allowing degrees of freedom for the boundary trace of the velocity in the probabilistic setting, several difficulties arise.
The first is that when considering finite-dimensional approximations of the momentum equation under slip boundary conditions, the available properties of the orthogonal projection are significantly weakened.
In particular, dimension-independent $L^p$ continuity and convergence in Sobolev spaces become non-trivial. Existing approximation methods in the probabilistic setting rely heavily on these properties.
Therefore, we consider refining the conventional approximation of the noise coefficient, and, by combining the ideas used in \cite{Smi17} when passing to the infinite-dimensional limit, we establish a new approach that avoids the use of projection operators.
More precisely, we approximate the diffusion coefficient $\GG(\rho,\rho u)$ by a uniformly bounded regularized family $(\GG_\varepsilon(\rho,\rho u))_{\varepsilon\in (0,1)}$ and by its finite-dimensional approximations $([\GG_\varepsilon(\rho,\rho u)]_m^\ast)_{(\varepsilon,m)\in (0,1)\times \NN}$, using only the density of $\bigcup_{m\in\NN}V_m$ in $W^{1,p}_{\rn}(D)$, where $[\cdot]_m^\ast$ denotes the orthogonal projection onto $V_m^\ast$, and all notation appearing here is introduced in Section \ref{sec:Outline of the proof of the main result}.   
One of our main novelties is that we establish the well-posedness of the first approximation problem in this setting and derive effective energy estimates (see Sections \ref{sec:Outline of the proof of the main result}--\ref{sec:Well-posedness of the basic approximate problem}).
In particular, although the It\^o correction term appearing in the energy equality takes a complicated form involving operators, we verify that it does not cause any difficulty in inequality estimates (see Proposition \ref{prop:energy_balance} and Remark \ref{rem:quad_var_est}).

The second difficulty arises when deriving a priori estimates for the velocity. Usually, a priori estimates for the velocity are derived from the boundedness of the energy via a generalized Korn--Poincar\'e inequality.
In the case of no-slip boundary conditions, this becomes a straightforward problem, since the viscous stress term in the energy can completely control the velocity. In the case of slip boundary conditions, in deterministic settings, the density can be used as an effective weight, making the approach of combining the viscous stress term with the momentum to control velocity the standard method (see \cite{FN09}). 
However, in probabilistic settings, due to the randomness of the density, such an approach cannot be directly applied. Therefore, we address this issue by further generalizing the Korn--Poincar\'e inequality and treating the modulus of friction as the weight.
The conditions on the modulus of friction $g$ in Theorem \ref{thm:1.3} are technical assumptions required for this purpose (see Corollary \ref{cor:est_u_R}, Lemma \ref{lem:korn_poincare}).

Finally, for the artificial viscosity approximation and the artificial pressure approximation, it is necessary to verify that the method based on regularity of the \textit{effective viscous flux}, when deriving strong convergence of the density, is consistent with the probabilistic setting even under slip boundary conditions (see Sections \ref{sec:Vanishing viscosity limit}--\ref{sec:Vanishing artificial pressure limit}). 
We follow the method presented in \cite{FN09} for deterministic settings under complete slip boundary conditions and establish the appropriate effective viscous flux identity.

Further discussion of the ideas of the proof is provided in Section \ref{sec:Outline of the proof of the main result}.

\subsection{Equivalence of definitions}
In this section, we prove that a sufficiently regular solution to the variational formulation of \eqref{eq:1.1}--\eqref{eq:1.5} in the sense of Definition \ref{Def:1.1} coincides with an analytically strong solution to \eqref{eq:1.1}--\eqref{eq:1.5}. 
Before proceeding to the main topic, we briefly discuss the existence of such solutions.
For the Navier--Stokes equations, it is well-known that strong solutions exist globally in time for sufficiently small data, or locally in time for general data. 
For recent developments, in the deterministic setting, we refer to, for instance, \cite{KNP20}. In the probabilistic framework, the local-in-time existence and uniqueness of strong solutions were established in \cite{BFH18}. 
On the other hand, under slip boundary conditions, no general results are currently available in the probabilistic case, and the existence of strong solutions under slip boundary condition of friction type remains an open problem in both deterministic and probabilistic settings.

First, we introduce the definition of a solution to \eqref{eq:1.1}--\eqref{eq:1.5} that is analytically strong but probabilistically weak.
\begin{definition}
	\label{def:strong martingale solution}
	Let $\Lambda$ be a Borel probability measure on $W^{s,2}(D)\times W^{s,2}(D)$, let $s\geq 1$ and let $g\in L^2((0,T)\times \pD)$ with $g\geq 0$. 
	A multiplet 
	$$
		(\stochbasis, \rho,u,W)
	$$
	is called a \textit{strong martingale solution} to the system \eqref{eq:1.1}--\eqref{eq:1.5} with the initial law $\Lambda$, if:
	\begin{itemize}
		\item[(1)] $(\Omega,\FFF,(\FFF_t)_{t\geq 0},\PP)$ is a stochastic basis with a complete right-continuous filtration;
		\item[(2)] $W$ is a cylindrical $(\FFF_t)$-Wiener process on $\fU$;
		\item[(3)] the density is a $W^{s,2}(D)$-valued $(\FFF_t)$-progressively measurable stochastic process such that
		$$
			\rho\geq 0,\quad \rho\in C([0,T];W^{s,2}(D))\quad \PP\text{-a.s.;}
		$$
		\item[(4)] the velocity $u$ is a $W^{s,2}(D)$-valued $(\FFF_t)$-progressively measurable stochastic process such that 
		$$
			u\in C([0,T];W^{s,2}(D))\cap L^2(0,T;W^{s+1,2}(D))\quad u\cdot \rn |_{\pD} = 0\quad  \PP\text{-a.s.;}
		$$
		\item[(5)] there exists a $W^{s,2}(D)\times W^{s,2}(D)$-valued $\FFF_0$-measurable random variable $(\rho_0,u_0)$ such that $\Lambda = \LL[\rho_0, u_0]$;
		\item[(6)] the equation of continuity 
		\begin{equation}
			\rho(\tau) = \rho_0 - \inttau \rdiv (\rho u)dt \label{eq:1.11}
		\end{equation}
		holds for all $\tau\in [0,T]$ $\PP$-a.s.;
		\item[(7)] the momentum equation 
		\begin{align}
			\rho u(\tau) = \rho_0 u_0 - \inttau \rdiv (\rho u\otimes u)dt + \inttau \rdiv \SSS(\nabla u)dt - \inttau \nabla p(\rho)dt + \inttau \GG(\rho,\rho u)dW \label{eq:1.12}
		\end{align}
		holds for all $\tau\in [0,T]$ $\PP$-a.s.;
		\item[(8)] the boundary condition of friction type 
		\begin{align}
			\left|\left(\TT \rn\right)_\tau \right| \leq g,\quad \left(\TT \rn\right)_\tau \cdot u_\tau + g\left|u_\tau \right| = 0 \label{eq:1.13}
		\end{align}
		is satisfied a.e. on $(0,T)\times \pD$ $\PP$-a.s.
	\end{itemize}
\end{definition}
\begin{proposition}
	\label{prop:1.5}
	Let $((\Omega,\FFF,(\FFF_t)_{t\geq 0},\PP),\rho,u,W)$ be a solution in the sense of Definition \ref{def:strong martingale solution} with $s>3/2$, which satisfies
	$$
		\rho\geq \urho\quad \PP\text{-a.s.,}
	$$
	for some $\urho>0$. Then the multiplet is also a solution in the sense of Definition \ref{Def:1.1}. 
	Conversely, if $((\Omega,\FFF,(\FFF_t)_{t\geq 0},\PP),\rho,u,W)$ is a solution in the sense of Definition \ref{Def:1.1} such that  
	\begin{align}
		&\rho\geq\urho,\quad \rho\in C([0,T];W^{s,2}(D))\quad \PP\text{-a.s.,} \label{eq:1.14}\\ 
		&u\in C([0,T];W^{s,2}(D))\cap L^2(0,T;W^{s+1,2}(D))\quad  \PP\text{-a.s.,}\label{eq:1.15}
	\end{align}
	for some $\urho>0$ and $s>3/2$, 
	then it is also a solution in the sense of Definition \ref{def:strong martingale solution}. 
\end{proposition}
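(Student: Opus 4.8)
The plan is to establish the two implications separately, the crux of each being to move between the pointwise (strong) and variational (weak) forms of the equations and of the friction boundary condition \eqref{eq:1.5}/\eqref{eq:1.13}. For the forward direction, suppose $(\stochbasis,\rho,u,W)$ is a strong martingale solution with $s>3/2$ and $\rho\geq\urho$. Since $W^{s,2}(D)\hookrightarrow C(\oD)$ and $\rho\geq\urho>0$, all the regularity requirements (3)--(5) of Definition \ref{Def:1.1} follow immediately from (3)--(4) of Definition \ref{def:strong martingale solution}, together with continuity in time and the continuity equation to upgrade weak continuity; items (1),(2),(6) transfer verbatim. The continuity equation \eqref{eq:1.8} and the renormalized version in (10) are obtained from \eqref{eq:1.11} by testing with $\psi$ and, for (10), by applying the chain rule to $b(\rho)$, which is licit because $\rho$ is a strong solution of the transport equation with $u\in L^2(0,T;W^{s+1,2})$. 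The interior momentum equation \eqref{eq:1.9} follows from \eqref{eq:1.12} by testing with $\bphi\in C_c^\infty(D)$ and integrating by parts (no boundary terms appear since $\bphi$ has compact support). The only genuinely substantive step is the energy/momentum inequality \eqref{eq:1.10}: here I would apply It\^o's formula to the functional $\tfrac12\int_D\rho|u|^2\,dx+\int_D P(\rho)\,dx$ — justified in this regular regime by standard arguments (e.g.\ as in \cite{BFH18} or \cite{Smi17}) — to get an energy \emph{equality}, then test \eqref{eq:1.12} against $\phi\bphi$ with $\bphi\cdot\rn|_{\pD}=0$. Integration by parts on $\int_D\rdiv\SSS(\nabla u)\cdot(\phi\bphi)\,dx$ produces the boundary term $\int_{\pD}(\TT\rn)_\tau\cdot(\phi\bphi)_\tau\,d\Gamma$ (the normal part drops since $\bphi\cdot\rn=0$ and $u\cdot\rn=0$), and the friction law \eqref{eq:1.13} gives $(\TT\rn)_\tau\cdot u_\tau=-g|u_\tau|$ and $(\TT\rn)_\tau\cdot(\phi\bphi)_\tau\geq -|(\TT\rn)_\tau||\phi\bphi|\geq -g|\phi\bphi|$; combining the energy equality with this tested momentum identity and these two inequalities yields exactly \eqref{eq:1.10} (in fact with equality replaced by $\geq$ precisely through the friction bound).

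For the converse, assume a dissipative martingale solution with the extra regularity \eqref{eq:1.14}--\eqref{eq:1.15}. Items (1),(2),(5) of Definition \ref{def:strong martingale solution} are immediate, and (3),(4) are exactly \eqref{eq:1.14}--\eqref{eq:1.15} together with the constraint $u\cdot\rn|_{\pD}=0$ coming from $u\in L^2(0,T;W^{1,2}_\rn(D))$ in Definition \ref{Def:1.1}(4). The continuity equation in the pointwise form \eqref{eq:1.11} follows from \eqref{eq:1.8}: with $\rho\in C([0,T];W^{s,2})$ and $\rho u\in C([0,T];W^{s,2})$ (product of two $W^{s,2}$ functions with $s>3/2$), the term $\rdiv(\rho u)$ lies in $C([0,T];W^{s-1,2})$, so the distributional identity \eqref{eq:1.8} upgrades to the $W^{s-1,2}$-valued equality \eqref{eq:1.11}. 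Similarly, \eqref{eq:1.9} extends by density from $\bphi\in C_c^\infty(D)$ to $\bphi\in W^{1,2}_0(D)$ and then, using the gained regularity (so that each term $\rho u\otimes u$, $p(\rho)$, $\SSS(\nabla u)$, $\GG(\rho,\rho u)$ lies in a space allowing integration by parts back), yields the pointwise momentum equation \eqref{eq:1.12} tested against $W^{1,2}_0$ functions — i.e.\ $\rho u(\tau)=\rho_0u_0-\inttau\rdiv(\rho u\otimes u)\,dt+\inttau\rdiv\SSS(\nabla u)\,dt-\inttau\nabla p(\rho)\,dt+\inttau\GG(\rho,\rho u)\,dW$ holds in $W^{-1,2}(D)$, hence a.e. since all right-hand terms are now in better spaces.

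The heart of the converse — and what I expect to be the main obstacle — is recovering the pointwise friction boundary condition \eqref{eq:1.13} from the variational inequality \eqref{eq:1.10}. The strategy, following \cite{NOS23}, is: first apply It\^o's formula to $\tfrac12\int_D\rho|u|^2+\int_D P(\rho)$ using the now-available pointwise momentum equation \eqref{eq:1.12} and pointwise continuity equation \eqref{eq:1.11} to derive the energy \emph{equality}, which includes a boundary term $+\inttau\int_{\pD}(\TT\rn)_\tau\cdot u_\tau\,d\Gamma dt$ from integrating $\inttau\int_D\rdiv\SSS(\nabla u)\cdot u\,dx\,dt$ by parts. Subtracting this energy equality from \eqref{eq:1.10} and also subtracting the momentum equation \eqref{eq:1.12} tested against $\phi\bphi$ (which, integrated by parts, contributes $-\inttau\int_{\pD}(\TT\rn)_\tau\cdot(\phi\bphi)_\tau\,d\Gamma dt$), all the interior terms cancel and one is left with
\begin{equation*}
	\inttau\int_{\pD}\big[g|\phi\bphi|-g|u_\tau|+(\TT\rn)_\tau\cdot u_\tau-(\TT\rn)_\tau\cdot(\phi\bphi)_\tau\big]\,d\Gamma dt\geq 0
\end{equation*}
for all $\tau$, all $\phi\in C_c^\infty([0,\tau))$, and all $\bphi\in C^\infty(\oD)$ with $\bphi\cdot\rn|_{\pD}=0$. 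Taking $\bphi=0$ gives $(\TT\rn)_\tau\cdot u_\tau+g|u_\tau|\leq 0$ a.e.; taking general $\bphi$ and using that $\phi\bphi|_{\pD}$ ranges over a dense set of tangential boundary fields gives $(\TT\rn)_\tau\cdot w_\tau\leq g|w_\tau|$ for all tangential $w$, hence $|(\TT\rn)_\tau|\leq g$ a.e.; combining, $(\TT\rn)_\tau\cdot u_\tau\geq -|(\TT\rn)_\tau||u_\tau|\geq -g|u_\tau|$, forcing equality $(\TT\rn)_\tau\cdot u_\tau+g|u_\tau|=0$. The delicate points are that $\SSS(\nabla u)$ has a well-defined normal trace $\TT\rn\in L^2((0,T)\times\pD)$ (which requires $s>3/2$ so $\nabla u\in C([0,T];W^{s-1,2})$ with $s-1>1/2$, giving a trace), that the integration-by-parts boundary identities hold in the right function spaces, and that the variational inequality \eqref{eq:1.10} — which a priori only holds for deterministic test functions — can be localized in $\omega$ and in time to yield the pointwise-in-$(t,x)$ statement $\PP$-a.s.; this last step uses a standard Lebesgue-point / countable-dense-test-function argument exactly as in the deterministic treatment of \cite{NOS23}.
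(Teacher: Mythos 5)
Your proposal follows essentially the same route as the paper: derive an energy equality via It\^o's formula applied to $\tfrac12\int_D\rho|u|^2+P(\rho)\,dx$ using the lower bound $\rho\geq\urho$ (the paper implements this carefully via the operator $\MMM[\rho]$ and a cut-off functional $\tilde f_c$, deferring details to Proposition~\ref{prop:energy_balance}); test the strong momentum equation against tangential $\phi\bphi$ to produce the boundary term $\inttau\int_{\pD}(\TT\rn)_\tau\cdot\bphi\,d\Gamma\,dt$; and combine with \eqref{eq:1.10}. For the converse, both you and the paper reduce \eqref{eq:1.10} to a pure boundary inequality and read off \eqref{eq:1.13} from it.

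One slip worth flagging: the residual boundary inequality you display has the sign of the $(\TT\rn)_\tau$ terms reversed. After substituting the momentum identity \eqref{eq:1.16} and the energy equality \eqref{eq:1.17} into \eqref{eq:1.10}, what remains is
\begin{equation*}
\inttau\int_{\pD}\left[g|\phi\bphi|-g|u|\right]d\Gamma\,dt + \inttau\int_{\pD}(\TT\rn)_\tau\cdot\left[\phi\bphi - u\right]d\Gamma\,dt \geq 0,
\end{equation*}
i.e.\ $+(\TT\rn)_\tau\cdot(\phi\bphi)_\tau-(\TT\rn)_\tau\cdot u_\tau$ rather than the opposite; this is exactly \eqref{eq:1.18} before the density argument. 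Your subsequent conclusions --- $(\TT\rn)_\tau\cdot u_\tau+g|u_\tau|\leq 0$ from $\bphi=0$, and $|(\TT\rn)_\tau|\leq g$ from general $\bphi$ --- only follow with the correct sign: the inequality as you wrote it, evaluated at $\bphi=0$, gives $(\TT\rn)_\tau\cdot u_\tau\geq g|u_\tau|$, which is the wrong direction, so this looks like a transcription error rather than a conceptual one. Finally, the step that yields $|(\TT\rn)_\tau|\leq g$ needs a bit more care than "$\phi\bphi$ ranges over a dense set of tangential fields": one must eliminate the $u$-dependent terms, either by scaling the test field and sending the amplitude to infinity, or, as the paper does, by first extending the inequality to $\psi\in L^2(0,T;W^{1,2}_{\rn}(D))$, substituting $\psi=u\pm\psi'$, and using $g|u\pm\psi'|-g|u|\leq g|\psi'|$ to obtain $\left|\intT\int_{\pD}(\TT\rn)_\tau\cdot\psi'\,d\Gamma\,dt\right|\leq\intT\int_{\pD}g|\psi'|\,d\Gamma\,dt$. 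You gesture at this but leave the mechanism implicit.
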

\begin{remark}
	\begin{itemize}
		\item[1.] The essential part of the proof lies in deriving the energy equality using the lower bound of the density $\rho$. Proposition \ref{prop:1.5} asserts the equivalence between Definitions \ref{Def:1.1} and \ref{def:strong martingale solution} for sufficiently regular $(\rho,u)$ under this assumption.
		\item[2.] 
		If $\rho,\rho u$ are $L^2(D)$-valued $(\FFF_t)$-progressively measurable stochastic processes such that 
		$$
			\rho\in L^2(0,T;L^2(D)),\quad \rho u\in L^2(0,T;L^2(D))\quad \PP\text{-a.s.,}
		$$
		and $\GG$ satisfies \eqref{eq:1.6}--\eqref{eq:1.7}, then the stochastic integral 
		$$
			\inttau \GG(\rho,\rho u)dW = \sum_{k=1}^{\infty}\inttau G_k(\cdot,\rho,\rho u)dW_k
		$$
		is well-defined as an $(\FFF_t)$-local martingale in $L^2(D)$. 
		\item[3.] The assumption $s>3/2$ is required to derive the energy equality. Under this assumption, the momentum equation \eqref{eq:1.12} holds on $L^2(D)$. 
	\end{itemize}
\end{remark}
\begin{proof}[Proof of Proposition \ref{prop:1.5}]
	Let $(\rho,u)$ be a solution in the sense of Definition \ref{def:strong martingale solution}. By the momentum equation \eqref{eq:1.12}, we have 
	\begin{align}
		&-\inttau \pp_t \phi\int_D \rho u\cdot \bphi dxdt-\phi(0)\int_D \rho_0 u_0 \cdot \bphi dx  \notag\\
		&\quad = \inttau \phi\int_D[\rho u\otimes u:\nabla\bphi + p(\rho)\rdiv\bphi]dxdt - \inttau \phi\int_D \SSS(\nabla u):\nabla\bphi dxdt + \inttau \phi\int_D \GG(\rho,\rho u)\cdot \bphi dx dW  \notag \\ 
		&\quad \quad + \inttau \phi \int_{\pD} \left(\TT\rn\right)_\tau \cdot \bphi d\Gamma dt, \label{eq:1.16}
	\end{align}
	for all $\phi\in C_c^\infty \left(\ico{0}{\tau}\right)$ and all $\bphi\in C^\infty(\oD)$ with $\bphi\cdot \rn|_{\pD} = 0$ $\PP$-a.s. Next, to derive the energy balance, we introduce the mapping
	\begin{align*}
		&\MMM[\rho]:L^2(D)\to L^2(D), \quad \MMM[\rho](u) := \rho u  
	\end{align*}
	for $\rho\in W^{s,2}(D)$ with $\rho\geq \urho$, which is invertible and possesses the following properties:
	\begin{align*}
		&\norm{\MMM[\rho_1] - \MMM[\rho_2]}_{\LL(L^2(D))} \leq \norm{\rho_1 - \rho_2}_{L^\infty(D)},\\ 
		&\norm{\MMM^{-1}[\rho]}_{\LL(L^2(D))}\leq \urho^{-1}, \\ 
		&\norm{\MMM^{-1}[\rho_1]-\MMM^{-1}[\rho_2]}_{\LL(L^2(D))}\leq c(\urho) \norm{\rho_1-\rho_2}_{L^\infty(D)},  
	\end{align*}	
	for any $\rho,\rho_1,\rho_2\geq \urho$. Note that $W^{s,2}(D)\hookrightarrow L^\infty(D)$ for $s>3/2$. Fixing a cut-off function $\chi_c\in C^\infty(\RR)$ such that 
    \begin{align*}
        &\chi_c(x) = x\quad \text{for } c\leq x\leq \tfrac 1 c, \\ 
        &\chi_c \subset \left[\tfrac c 2, \tfrac{2}{c}\right], \\
        &\chi_c \ \text{is monotonically increasing},
    \end{align*}
    we define the functional $\tilde{f}_c:W^{s,2}(D)\times L^2(D)\to \RR$, 
    $$
        \tilde{f}_c[\rho,q] := \frac 1 2  \int_D q \MMM^{-1}[\chi_c(\rho)]q dx = \frac 1 2 \inner{q}{\MMM^{-1}[\chi_c(\rho)]q}.
    $$
	Applying It\^{o}'s formula to $\tilde{f}_c[\rho, \rho u]$ using \eqref{eq:1.11} and \eqref{eq:1.12} and then letting $c\to 0$ yield the energy balance 
    \begin{align}
        &\int_D \left[\frac 1 2 \rho |u|^2 + P(\rho)\right](\tau)dx + \inttau \int_D \SSS(\nabla u):\nabla u dxdt  \notag\\ 
        &\quad = \int_D \left[\frac 1 2 \rho_0 |u_0|^2 + P(\rho_0)\right]dx +\frac 1 2 \sum_{k=1}^{\infty} \inttau \int_D \rho^{-1}\left|G_k(\rho,\rho u) \right|^2 dxdt \notag \\ 
        &\quad \quad + \inttau \int_D \GG(\rho,\rho u) \cdot u dx dW + \inttau \int_{\pD} \left(\TT \rn \right)_\tau \cdot u d\Gamma dt, \label{eq:1.17}
    \end{align}
	for all $\tau\in [0,T]$ $\PP$-a.s. For the detailed proof, see Proposition \ref{prop:energy_balance}. 
	Note that, in contrast to the proof of Proposition \ref{prop:energy_balance}, $L^2(D)$ is replaced by $W^{s,2}(D)$ to ensure the existence of $\pp_\rho \tilde{f}_c$.

	On the other hand, using the boundary condition \eqref{eq:1.13} and $u\cdot \rn|_{\pD} = 0$ yield
	\begin{align*}
		&\inttau \int_{\pD}  \left(\TT \rn \right)_\tau \cdot [\phi\bphi - u] d\Gamma dt + \inttau \int_{\pD} g\left|\phi\bphi \right| - g\left|u \right| d\Gamma dt \\ 
		&\quad = \inttau \int_{\pD}  \left(\TT \rn \right)_\tau \cdot \phi\bphi d\Gamma dt + \inttau \int_{\pD} g\left|\phi\bphi \right|d\Gamma dt \\ 
		&\quad \geq 0.
	\end{align*}
	Therefore, subtracting \eqref{eq:1.17} from \eqref{eq:1.16} yields the variational inequality \eqref{eq:1.10}. 
	
	Conversely, if $(\rho,u)$ is a solution in the sense of Definition \ref{Def:1.1} satisfying \eqref{eq:1.14}--\eqref{eq:1.15} for some $\urho>0$ and $s>3/2$, then $(\rho,u)$ satisfies \eqref{eq:1.11} and \eqref{eq:1.12} in the strong sense. 
	Thus, by the same argument as above, $(\rho,u)$ satisfies the momentum equation \eqref{eq:1.16} and the energy balance \eqref{eq:1.17}. Subtracting \eqref{eq:1.17} from \eqref{eq:1.16} and then subtracting this result from the momentum and energy inequality \eqref{eq:1.10} yields the following inequality:
	\begin{align*}
		\inttau \int_{\pD} g\left|\phi\bphi \right| -g\left|u \right|d\Gamma dt \geq -\inttau \int_{\pD} \left(\TT \rn \right)_\tau \cdot [\phi\bphi - u] d\Gamma dt
	\end{align*}
	for all $\tau\in [0,T]$, all $\phi\in C_c(\ico{0}{\tau})$, and all $\bphi\in C^\infty(\oD)$ with $\bphi\cdot \rn |_{\pD} = 0$ $\PP$-a.s. By a density argument, we also have 
	\begin{align}
		\inttau \int_{\pD} g\left|\psi \right| -g\left|u \right|d\Gamma dt \geq -\inttau \int_{\pD} \left(\TT \rn \right)_\tau \cdot [\psi - u] d\Gamma dt \label{eq:1.18}
	\end{align}
	for all $\tau\in [0,T]$, and all $\psi\in L^2(0,\tau;W_{\rn}^{1,2}(D))$ $\PP$-a.s. 
	Choosing $\tau =T$, and testing \eqref{eq:1.18} by $u\pm \psi$, we obtain that 
	\begin{align*}
		\left|\intT\int_{\pD} \left(\TT \rn \right)_\tau \cdot\psi d\Gamma dt \right|\leq \intT \int_{\pD} g\left|\psi \right|d\Gamma dt.
	\end{align*}
	This implies 
	\begin{align}
		\left|\left(\TT \rn \right)_\tau \right| \leq g\quad \text{a.e. on } (0,T)\times \pD\quad \PP\text{-a.s.}\label{eq:1.19}
	\end{align}
	Choosing $\psi= 0$ in \eqref{eq:1.18}, together with \eqref{eq:1.19} yields 
	$$
		\left(\TT \rn \right)_\tau\cdot u + g\left|u \right| = 0\quad \text{a.e. on } (0,T)\times \pD\quad \PP\text{-a.s.}
	$$
	This completes the proof of Proposition~\ref{prop:1.5}.
\end{proof}

\subsection{Application to other slip boundary conditions}
In this section, we consider the slip boundary condition 
\begin{align}
	\label{eq:1.20}
	&(\SSS(\nabla u) \rn)_\tau  +g u_\tau = 0,\quad g\geq 0\quad \mathrm{on}\ (0,T)\times \pp D
\end{align}
proposed by Navier (see \cite{Nav23}). By slightly modifying the proof of Theorem \ref{thm:1.3}, we can prove the existence of dissipative martingale solutions for this boundary condition.
First, we provide the precise weak formulation to \eqref{eq:1.1}--\eqref{eq:1.4} and \eqref{eq:1.20}.
\begin{definition}
	\label{def:1.7}
	Let $\Lambda$ be a Borel probability measure on $L^1(D)\times L^1(D)$ and let $g\in L^\infty((0,T)\times \pD)$ satisfies $g\geq 0$.
	The quantity $((\Omega,\FFF,(\FFF_t)_{t\geq 0},\PP),\rho,u,W)$ is called a \textit{dissipative martingale solution} to the Navier--Stokes system \eqref{eq:1.1}--\eqref{eq:1.4} and \eqref{eq:1.20} with the initial law $\Lambda$ if:
	\begin{itemize}
		\item[(1)] $(\Omega,\FFF,(\FFF_t)_{t\geq 0},\PP)$ is a stochastic basis with a complete right-continuous filtration;
		\item[(2)] $W$ is a cylindrical $(\FFF_t)$-Wiener process on $\fU$;
		\item[(3)] the density $\rho$ is an $(\FFF_t)$-progressively measurable stochastic process such that 
		$$
			\rho \geq 0,\quad \rho\in C_w([0,T];L^\gamma(D))\quad \PP\text{-a.s.;}
		$$
		\item[(4)] the velocity $u$ is an $(\FFF_t)$-adapted random distribution such that  
		$$
			u\in L^2(0,T;W_{\rn }^{1,2}(D))\quad \PP\text{-a.s.;}
		$$ 
		\item[(5)] the momentum $\rho u$ is an $(\FFF_t)$-progressively measurable stochastic process such that 
		$$
			\rho u\in C_w([0,T];L^{\frac{2\gamma}{\gamma + 1}}(D))	\quad \PP\text{-a.s.;}
		$$ 
		\item[(6)] there exists an $L^1(D)\times L^1(D)$-valued $\FFF_0$-measurable random variable $(\rho_0,u_0)$ such that $\rho_0 u_0\in L^1(D)$ $\PP$-a.s., $\Lambda = \LL[\rho_0,\rho_0 u_0]$ and $(\rho_0,\rho_0 u_0) = (\rho(0,\cdot),\rho u(0,\cdot))$ $\PP$-a.s.;
		\item[(7)] the equation of continuity 
		\begin{equation}
			-\intT \pp_t\phi \int_D \rho \psi dxdt = \phi(0)\int_D \rho_0\psi dx + \intT \phi \int_D \rho u\cdot \nabla\psi dxdt,
		\end{equation}
		holds for all $\phi\in C_c^\infty(\ico{0}{T})$ and all $\psi\in C_c^\infty\left(\RR^3\right)$ $\PP$-a.s.;
		\item[(8)] the momentum equation 
		\begin{align}
			&-\intT \pp_t \phi\int_D \rho u\cdot \bphi dxdt-\phi(0)\int_D \rho_0 u_0 \cdot \bphi dx  \notag\\
			&\quad = \intT \phi\int_D[\rho u\otimes u:\nabla\bphi + p(\rho)\rdiv\bphi]dxdt - \intT \phi\int_D \SSS(\nabla u):\nabla\bphi dxdt -\intT\phi\int_{\pD}g u\cdot \bphi d\Gamma dt \notag \\ 
			&\quad \quad +\intT \phi\int_D \GG(\rho,\rho u)\cdot \bphi dx dW 
		\end{align}
		holds for all $\phi\in C_c^\infty \left(\ico{0}{T}\right)$ and all $\bphi\in C^\infty(\overline{D})$ with $\bphi\cdot \rn|_{\pD}=0$ $\PP$-a.s.;
		\item[(9)] the energy inequality 
		\begin{align}
			&-\intT \pp_t \phi \int_D \left[\frac12 \rho \left|u \right|^2 + P(\rho)\right]dx dt + \intT \phi \int_D \SSS(\nabla u):\nabla u dx dt + \intT \phi \int_{\pD} g|u|^2 d\Gamma dt\notag \\ 
			&\quad \leq \phi(0)\int_D \left[\frac12 \rho_0 \left|u_0 \right|^2 + P(\rho_0)\right]dx + \frac12 \sum_{k=1}^{\infty}\intT \phi\int_D \rho^{-1} \left|G_k(\rho,\rho u) \right|^2 dx dt \notag \\ 
			&\quad \quad +\intT \phi \int_D \GG(\rho,\rho u)\cdot u dx dW  \label{eq:1.23}
		\end{align}
		holds for all $\phi\in C_c^\infty(\ico{0}{T})$ and for all $\bphi\in C^\infty(\overline{D})$ with $\bphi\cdot \rn|_{\pD}=0$ $\PP$-a.s.; 
		\item[(10)] if $b\in C^1(\RR)$ such that $b'(z)=0$ for all $z\geq M_b$, then, for all $\phi\in C_c^\infty(\ico{0}{T})$ and all $\psi\in C_c^\infty(\RR^3)$, we have $\PP$-a.s. 
		\begin{align*}
			-\intT \pp_t \phi \int_D b(\rho)\psi dxdt &= \phi(0) \int_D b(\rho_0)\psi dx + \intT \phi\int_D b(\rho)u\cdot \nabla \psi dxdt \\ 
			&\quad - \intT \phi \int_D \left(b'(\rho) \rho - b(\rho) \right) \rdiv u \psi dx dt.
		\end{align*}
	\end{itemize}
\end{definition}
\begin{theorem}
	\label{thm:1.8}
	Under the same conditions as Theorem \ref{thm:1.3}, we further assume that  $g\in L^\infty((0,T)\times \pD)$. Then there exists a dissipative martingale solution to \eqref{eq:1.1}--\eqref{eq:1.4} and \eqref{eq:1.20} in the sense of Definition \ref{def:1.7}.
	Furthermore, this solution satisfies the following:
	\begin{align*}
		&\EE\left[\sup_{t\in [0,T]}\norm{\rho(t)}_{L^\gamma(D)}^{\gamma r}\right] + \EE\left[\norm{u}_{L^2(0,T;W^{1,2}(D))}^r \right]< \infty,\\ 
		&\EE\left[\sup_{t\in [0,T]}\norm{\rho |u|^2(t)}_{L^1(D)}^r\right] + \EE\left[\sup_{t\in [0,T]}\norm{\rho u (t)}_{L^{\frac{2\gamma}{\gamma +1}}(D)}^{\frac{2\gamma}{\gamma +1}r}\right]<\infty.
	\end{align*}
\end{theorem}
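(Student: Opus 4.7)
The proof proceeds by a modification of the scheme used for Theorem \ref{thm:1.3}. The essential simplification is that the Navier slip condition \eqref{eq:1.20} is linear in $u_\tau$: no barrier term of the form $g|u_\tau|$ is present, so the convex approximation layer used in the proof of Theorem \ref{thm:1.3} to smooth the absolute value becomes unnecessary, and the combined variational inequality \eqref{eq:1.10} decouples into the Robin-type momentum equation and the energy inequality \eqref{eq:1.23} of Definition \ref{def:1.7}.

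At the level of the basic approximation problem, the Galerkin momentum equation is augmented by the linear boundary contribution $-\int_{\pD} g u\cdot \bphi\, d\Gamma$. Since this term is linear in $u$, existence and uniqueness of the approximate solutions together with the corresponding energy balance are obtained by routine modifications of the arguments of Section \ref{sec:Well-posedness of the basic approximate problem} and Proposition \ref{prop:energy_balance}. Applying It\^o's formula to $\tilde{f}_c[\rho,\rho u]$ now produces an additional dissipative contribution $\inttau\int_{\pD} g|u|^2\, d\Gamma\, dt$; the hypothesis $g\in L^\infty((0,T)\times \pD)$ is used here to ensure that this boundary functional is well-defined and bounded, uniformly along the approximation, on traces of $W^{1,2}_{\rn}(D)$ velocities.

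For the a priori estimates, the extra dissipation $\intT\int_{\pD} g|u|^2\, d\Gamma\, dt$ together with the viscous term $\SSS(\nabla u):\nabla u$ controls $u$ in $L^2(0,T;W^{1,2}(D))$ via the generalized Korn--Poincar\'e inequality of Lemma \ref{lem:korn_poincare}, providing in fact slightly stronger boundary coercivity than the linear weight $g|u|$ used in the friction-type setting. Consequently, the moment bounds for $\rho$, $\rho u$ and $u$ stated in Theorem \ref{thm:1.3} transfer without essential change, yielding the same integrability claimed in the statement.

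The stochastic compactness arguments, tightness of laws on the relevant path spaces, the Jakubowski--Skorokhod representation, the identification of limits in the convective and pressure terms through the effective viscous flux identity, and the renormalized continuity equation all carry over from Sections \ref{sec:Vanishing viscosity limit}--\ref{sec:Vanishing artificial pressure limit}. The boundary term $\int_{\pD} g u\cdot \bphi\, d\Gamma$ in the momentum equation passes to the limit under weak convergence of $u$ in $L^2(0,T;W^{1,2}(D))$ by continuity of the trace, while $\int_{\pD} g|u|^2\, d\Gamma$ is handled in the energy inequality by lower semicontinuity of convex functionals under weak convergence, which is precisely why the inequality \eqref{eq:1.23} is decoupled from the momentum equation. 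The main obstacle, as in Theorem \ref{thm:1.3}, remains the strong convergence of the density in the vanishing artificial pressure step, but since this is established through local arguments based on the effective viscous flux identity, it is insensitive to the choice of slip-type boundary condition on $\pD$ and transfers directly from Section \ref{sec:Vanishing artificial pressure limit}.
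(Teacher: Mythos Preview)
Your proposal is correct and follows essentially the same approach as the paper's proof in Appendix~\ref{sec:proof of Theorem 1.8}: drop the convex approximation layer, replace the barrier term by the linear boundary contribution $-\int_{\pD} g u\cdot\bphi\, d\Gamma$ in the Galerkin momentum equation, obtain the energy balance with the quadratic dissipation $\int_{\pD} g|u|^2\, d\Gamma$, and carry over the compactness and effective viscous flux arguments unchanged. The only technical point you do not mention explicitly is that, because the momentum equation now holds for all test functions $\bphi\in V_m$ (not just those vanishing on $\pD$), the auxiliary subsequence $(V_{0,m})$ from \eqref{eq:3.2} is no longer needed for the time-regularity argument in Section~\ref{sec:m:Uniform estimates}.
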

The proof of Theorem \ref{thm:1.8} is given in Appendix \ref{sec:proof of Theorem 1.8}.

\section{Preliminaries}
\subsection{Elements of functional analysis}
Similarly to the proof of \cite[Theorem 1.8.5]{BFHbook18}, the following statement holds.
\begin{theorem}
    \label{thm:weak_conti_embedding}
    Let $\alpha\geq 0,1<p<\infty,$ and $l\in \RR$. Then 
    $$
        L^\infty(0,T;L^p(D)) \cap C^{\alpha}([0,T];W^{l,2}(D)) \hookrightarrow C_w([0,T];L^p(D)).
    $$
    If $\alpha>0$, then the embedding is sequentially compact, meaning any sequence 
    $$
        (v_n)_{v\in \NN} \quad \text{bounded in } L^\infty(0,T;L^p(D)) \cap C^\alpha([0,T];W^{l,2}(D))
    $$
    contains a subsequence $(v_{n_k})_{k\in \NN}$ such that 
    $$
        v_{n_k} \to v\quad \text{in }  C_w([0,T];L^p(D)).
    $$
\end{theorem}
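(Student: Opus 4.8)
The plan is to establish the embedding first and then upgrade it to sequential compactness via the Arzelà–Ascoli theorem for functions valued in $[L^p(D),w]$.

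For the embedding, let $v \in L^\infty(0,T;L^p(D)) \cap C^\alpha([0,T];W^{l,2}(D))$. First I would observe that for every $\varphi \in C_c^\infty(D)$, the map $t \mapsto \ev{v(t),\varphi}$ is well-defined (using $v(t)\in W^{l,2}(D)$) and continuous in $t$, indeed $\alpha$-Hölder, since $|\ev{v(t)-v(s),\varphi}| \le \norm{v(t)-v(s)}_{W^{l,2}_x}\norm{\varphi}_{W^{-l,2}_x} \lesssim |t-s|^\alpha$ when $\alpha>0$, and merely continuous when $\alpha=0$. Since $v$ is essentially bounded in $L^p(D)$, one has $\esssup_{t}\norm{v(t)}_{L^p_x} =: K < \infty$; by lower semicontinuity of the norm under weak convergence in $L^p$ (here $1<p<\infty$, so $L^p$ is reflexive and the norm is weakly sequentially lower semicontinuous), and by density of $C_c^\infty(D)$ in $L^{p'}(D)$, I would show that in fact $\norm{v(t)}_{L^p_x} \le K$ for \emph{every} $t\in[0,T]$: pick a sequence $t_n\to t$ with $\norm{v(t_n)}_{L^p_x}\le K$; for each $\varphi\in C_c^\infty(D)$, $\ev{v(t_n),\varphi}\to\ev{v(t),\varphi}$, and boundedness in $L^p$ gives a weakly convergent subsequence whose limit must pair with every $\varphi$ the way $v(t)$ does, hence equals $v(t)$, so $\norm{v(t)}_{L^p_x}\le\liminf_n\norm{v(t_n)}_{L^p_x}\le K$. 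Then for arbitrary $\psi\in L^{p'}(D)$ and $t_n\to t$, splitting $\psi = \varphi + (\psi-\varphi)$ with $\varphi\in C_c^\infty(D)$ close in $L^{p'}$, the first part converges by Hölder continuity and the second is controlled uniformly by $2K\norm{\psi-\varphi}_{L^{p'}_x}$; hence $\ev{v(t_n),\psi}\to\ev{v(t),\psi}$, i.e. $v\in C_w([0,T];L^p(D))$, and the embedding is continuous because $\sup_t\norm{v(t)}_{L^p_x} = K \le \norm{v}_{L^\infty_tL^p_x}$.

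For the compactness statement when $\alpha>0$, let $(v_n)$ be bounded in $L^\infty(0,T;L^p(D))\cap C^\alpha([0,T];W^{l,2}(D))$, say by $M$. By the previous paragraph each $v_n$ lies in $C_w([0,T];L^p(D))$ with $\sup_t\norm{v_n(t)}_{L^p_x}\le M$ and with a uniform (in $n$) Hölder modulus $\norm{v_n(t)-v_n(s)}_{W^{l,2}_x}\le M|t-s|^\alpha$. I would apply Arzelà–Ascoli in the space $C([0,T];[B_M,w])$, where $B_M$ is the closed ball of radius $M$ in $L^p(D)$, which is metrizable (as $L^{p'}(D)$ is separable) and compact in the weak topology by reflexivity; equicontinuity with respect to such a metric follows from the uniform Hölder bound in $W^{l,2}$ together with the uniform $L^p$-bound, exactly by the same $\varphi + (\psi-\varphi)$ splitting argument restricted to a countable dense set of test functions $\psi$. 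Arzelà–Ascoli then yields a subsequence $v_{n_k}$ converging in $C([0,T];[B_M,w])$, i.e. uniformly in $t$ in the weak topology, to some limit $v$; a diagonal/lower-semicontinuity argument shows $v$ inherits the uniform $L^p$-bound and the Hölder estimate, hence $v\in L^\infty(0,T;L^p(D))\cap C^\alpha([0,T];W^{l,2}(D))$, and the convergence is precisely convergence in $C_w([0,T];L^p(D))$.

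The main obstacle is the careful handling of the weak topology: $[L^p(D),w]$ is not metrizable on all of $L^p(D)$, so one must restrict to bounded balls (where it is metrizable and compact) and verify that all the relevant notions — continuity in time, equicontinuity, the Arzelà–Ascoli hypotheses — are correctly phrased relative to this metric and are uniform in $n$. The density-of-test-functions interpolation trick (splitting a general $L^{p'}$ functional into a smooth part plus a small remainder, with the remainder uniformly controlled by the $L^p$-bound) is what makes the passage from pairing against smooth test functions to pairing against all of $L^{p'}(D)$ work, and it must be invoked consistently in both the embedding and the compactness parts. Since this is entirely parallel to \cite[Theorem 1.8.5]{BFHbook18}, I would present the argument concisely, emphasizing only the points above.
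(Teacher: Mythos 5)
Your proof is correct and takes essentially the same route as the one the paper defers to (\cite[Theorem~1.8.5]{BFHbook18}): show that the pointwise $W^{l,2}$-representative in fact lies in the $L^p$-ball of radius $\esssup_t\norm{v(t)}_{L^p_x}$ at \emph{every} $t$, deduce weak continuity by pairing against $C_c^\infty(D)$ and controlling the remainder via the uniform $L^p$-bound, and then, for $\alpha>0$, apply Arzelà–Ascoli in $C([0,T];[B_M,w])$ using the metrizability and weak compactness of bounded $L^p$-balls together with equicontinuity from the uniform Hölder bound in $W^{l,2}$. The handling of the weak topology (restricting to balls, working with a countable dense set of test functionals, and the $\varphi+(\psi-\varphi)$ splitting) is exactly where the argument needs care, and you address it correctly.
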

This paper implicitly employs the standard trace theorem and Green's formula (see \cite{Necas11}).
\begin{theorem}
    \label{thm:trace}
    Let $D\subset \RR^N$ be a bounded Lipschitz domain. Then there exists a linear operator $\gamma_0$ with the following properties:
    \begin{align*}
        &\gamma_0 (v) (x) = v(x)\quad \text{for }x\in \pD\quad \text{provided }v\in C^\infty(\oD), \\ 
        &\norm{\gamma_0(v)}_{W^{1-\frac1p,p}(\pD)} \leq c\norm{v}_{W^{1,p}(D)}\quad \text{for all }v\in W^{1,p}(D), \\ 
        &\ker[\gamma_0] = W_0^{1,p}(D)
    \end{align*}
    provided $1<p<\infty$. Moreover, there exists a continuous linear operator 
    \begin{align*}
        &l:W^{1-\frac1p,p}(\pD) \to W^{1,p}(D)  
    \end{align*}
    such that 
    \begin{align*}
        &\gamma_0(l(v)) = v\quad \text{for all }v\in W^{1-\frac1p,p}(\pD)   
    \end{align*}
    provided $1<p<\infty$. 
    
    In addition, the following formula holds: 
    \begin{align*}
        &\int_D \pp_{x_i}u v dx + \int_D u\pp_{x_i}  v dx = \int_{\pD} \gamma_0(u)\gamma_0(v)\rn_i d\Gamma,\quad i=1,\ldots,N,
    \end{align*}
    for any $u\in W^{1,p}(D)$, $v\in W^{1,p'}(D)$, where $\rn_i$ is the $i$-th component of the outer normal vector $\rn$ on the boundary $\pD$.
\end{theorem}
\subsection{Elements of stochastic analysis}
\begin{definition}[Cylindrical Wiener process]
    \begin{itemize}
        \item[1.] Let $\fU$ be a separable Hilbert space with a complete orthonormal system $(e_k)_{k\in \NN}$ and let $(W_k)_{k\in \NN}$ be a sequence of mutually independent real-valued Wiener processes on some probability space $(\Omega,\FFF,\PP)$. The stochastic process $W$ given by the formal expansion $W = \sum_{k=1}^{\infty}e_kW_k$ is called a \textit{cylindrical Wiener process on} $\fU$.
                  Let $\fU_0$ be a separable Hilbert space such that the embedding $\fU \hookrightarrow\fU_0$ is Hilbert--Schmidt. Then the formal expansion converges in $\fU_0$ and thus $W$ makes sense as a $\fU_0$-valued stochastic process. Moreover, trajectories of $W$ are in $C(\ico{0}{\infty};\fU_0)$ $\PP$-a.s., and the law of $W$ on $C(\ico{0}{\infty};\fU_0)$ is denoted by $\LL[W]$.
        \item[2.] If the sequence $(W_k)_{k\in \NN}$ is a sequence of mutually independent real-valued $(\FFF_t)$-Wiener processes on some stochastic basis $\stochbasis$, then $W$ is called a \textit{cylindrical $(\FFF_t)$-Wiener process on} $\fU$.
    \end{itemize}
\end{definition}
As a consequence of the fact that a real-valued Wiener process is determined by its law, we deduce the same for a cylindrical Wiener process.
\begin{lemma}
    \label{lem:sufficient_cond_of_Wiener_by_law}
    Let $W$ be a cylindrical Wiener process on $\fU$ and let $B$ be a stochastic process on $\fU_0$ defined on some probability space $(\Omega,\FFF,\PP)$ and having the law $\LL[W]$. Then $B$ is a cylindrical Wiener process on $\fU$, namely, there exists a collection of mutually independent real-valued Wiener processes $(B_k)_{k\in \NN}$ on $(\Omega,\FFF,\PP)$ such that $B=\sum_{k=1}^{\infty}e_kB_k$.  
\end{lemma}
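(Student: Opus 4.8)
The plan is to reduce the statement to the well-known characterization of a standard real-valued Wiener process by its law, applied coordinate-wise. First I would fix the embedding $\fU\hookrightarrow\fU_0$ which is Hilbert--Schmidt, so that $W=\sum_k e_k W_k$ converges in $\fU_0$ and $\Law[W]$ is a well-defined Radon measure on $C([0,\infty);\fU_0)$. For each $k$, define a real-valued process $B_k$ by testing the $\fU_0$-valued process $B$ against the image of $e_k$ under a suitable continuous linear functional: concretely, if $\iota:\fU\hookrightarrow\fU_0$ denotes the embedding, then $W_k=\langle \iota^{-1}\circ(\text{projection}), \cdot\rangle$-type expressions recover $W_k$ from $W$ as a measurable (indeed continuous in the appropriate sense) functional on path space. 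The key point is that the map $\Phi_k:C([0,\infty);\fU_0)\to C([0,\infty);\RR)$ extracting the $k$-th coordinate is measurable, so $B_k:=\Phi_k(B)$ and $W_k=\Phi_k(W)$, and since $\Law[B]=\Law[W]$ on $C([0,\infty);\fU_0)$ we get $\Law[(B_k)_{k\le N}]=\Law[(W_k)_{k\le N}]$ for every $N$.

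Next I would record the consequences of this equality of finite-dimensional laws. Since $(W_1,\dots,W_N)$ is an $\RR^N$-valued Brownian motion with independent components (each a standard real Wiener process), the same is true of $(B_1,\dots,B_N)$: each $B_k$ has the law of a standard Wiener process (continuous paths starting at $0$, stationary independent Gaussian increments with the right variance — all of this is encoded in the law on path space), and $B_1,\dots,B_N$ are mutually independent because their joint law is the $N$-fold product. As $N$ is arbitrary, the whole family $(B_k)_{k\in\NN}$ is a sequence of mutually independent real-valued Wiener processes on $(\Omega,\FFF,\PP)$.

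Finally I would check that $B=\sum_{k=1}^\infty e_k B_k$ in $\fU_0$. This is a matter of identifying $B_k$ with the correct coordinate functional: by construction $\Phi_k$ is precisely the map that, applied to $\sum_j e_j W_j$, returns $W_j$, so $\Phi_k(B)$ is the $k$-th coordinate of $B$ in the expansion along $(\iota e_k)$; hence the partial sums $\sum_{k\le N} e_k B_k = \Phi^{(N)}(B)$ where $\Phi^{(N)}$ is the corresponding truncation map on path space, and since $\Phi^{(N)}(W)\to W$ in $C([0,T];\fU_0)$ for every $T$ and $\Law[B]=\Law[W]$, the partial sums $\sum_{k\le N} e_k B_k$ converge in $\fU_0$ ($\PP$-a.s. along a subsequence, and in probability in general), with limit $B$. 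Thus $B$ is a cylindrical Wiener process on $\fU$ in the sense of the definition.

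The main obstacle is the bookkeeping in the first paragraph: one must exhibit the coordinate-extraction maps $\Phi_k$ on $C([0,\infty);\fU_0)$ as genuinely measurable maps and verify that they are compatible with the defining expansion $W=\sum_k e_k W_k$ (i.e. that $\Phi_k$ really does recover $W_k$), since the embedding $\iota$ need not send the orthonormal basis $(e_k)$ of $\fU$ to anything orthogonal in $\fU_0$. Once the correct functionals are in hand — e.g. using a dual basis to $(\iota e_k)$ inside the closed span, or equivalently the bounded operator $\iota^{*}$ — everything else is a direct transfer of the scalar fact "a process on $C([0,\infty);\RR)$ with the Wiener law is a Wiener process" through equality of laws.
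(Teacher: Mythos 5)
Your proof is correct and follows essentially the same route as the paper's: define $B_k$ as the $k$-th coordinate of $B$ via a measurable extraction functional on path space, invoke equality of laws to transfer the finite-dimensional joint Gaussian structure from $(W_{k_1},\ldots,W_{k_n})$ to $(B_{k_1},\ldots,B_{k_n})$ (hence each $B_k$ is a Wiener process and they are mutually independent), and recover the expansion $B=\sum_k e_k B_k$ by transferring the a.s.\ convergence of the partial sums from $W$ to $B$, again via equality of laws. The paper compresses all of this into the notation $B_k := (B,e_k)$ and one displayed equality of finite-dimensional laws; your write-up is the same argument spelled out, including the correct observation that the convergence set $\{f: \Phi^{(N)}(f)\to f\}$ is measurable in $C([0,\infty);\fU_0)$, which in fact gives $\PP$-a.s.\ convergence directly, not merely in probability.
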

\begin{proof}
    Since 
    $$
        \LL[((B,e_{k_1}),\ldots, (B,e_{k_n}))] = \LL[((W,e_{k_1}),\ldots, (W,e_{k_n}))] = \LL[(W_{k_1},\ldots, W_{k_n})],
    $$
    for all $k_1,\ldots, k_n\in \NN$, $n\in\NN$,
    $B_k:= (B,e_k)$ are real-valued, mutually independent Wiener processes defined on $(\Omega,\FFF,\PP)$ and the expansion $B=\sum_{k=1}^{\infty}e_k B_k$ follows.
\end{proof}
\begin{lemma}
    \label{lem:sufficient_cond_of_(G_t)-Wiener }
    Let $W$ be a cylindrical $(\FFF_t)$-Wiener process on $\fU$ defined on some probability space $\probsp$ with respect to its canonical filtration $\FFF_t := \sigma(W(r),0\leq r\leq t),t \geq 0$. Assume that $(\GGG_t)_{t\geq 0}$ is a filtration on $\probsp$ such that $\FFF_t\subset \GGG_t$ for any $t\geq 0$ and $(\GGG_t)_{t\geq 0}$ is non-anticipative with respect to $W$, that is, for any $t\geq 0$, $\GGG_t$ is independent of $\sigma(W(t+h)-W(t))$ for any $h>0$. Then $W$ is a cylindrical $(\GGG_t)$-Wiener process on $\fU$.
\end{lemma}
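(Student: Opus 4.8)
The plan is to reduce the assertion to the scalar case and then verify, coordinate by coordinate, the defining properties of a real-valued Wiener process relative to the larger filtration $(\GGG_t)$. Since $W$ is a cylindrical $(\FFF_t)$-Wiener process on $\fU$, by definition there is a sequence $(W_k)_{k\in\NN}$ of mutually independent real-valued $(\FFF_t)$-Wiener processes with $W=\sum_{k=1}^{\infty}e_k W_k$, where $W_k=(W,e_k)$. Mutual independence of the family $(W_k)_{k\in\NN}$ is a property of the processes themselves and makes no reference to any filtration, so it suffices to prove that each $W_k$ is a $(\GGG_t)$-Wiener process; the very same expansion $W=\sum_{k}e_kW_k$ then exhibits $W$ as a cylindrical $(\GGG_t)$-Wiener process on $\fU$.

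Next I would check the requirements for $W_k$ to be a $(\GGG_t)$-Wiener process. Continuity of paths and $W_k(0)=0$ are intrinsic to the process and already known. Adaptedness is immediate from the hypothesis $\FFF_t\subset\GGG_t$: since $W_k$ is $(\FFF_t)$-adapted it is a fortiori $(\GGG_t)$-adapted. The Gaussian law $W_k(t)-W_k(s)\sim N(0,t-s)$ for $s\le t$ is likewise inherited unchanged. The only point that genuinely invokes the non-anticipativity hypothesis is the independence of the increment $W_k(t)-W_k(s)$ from $\GGG_s$.

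To obtain this, fix $0\le s<t$ and set $h:=t-s>0$. By the non-anticipativity assumption, $\GGG_s$ is independent of $\sigma\bigl(W(s+h)-W(s)\bigr)=\sigma\bigl(W(t)-W(s)\bigr)$. Because $W_k=(W,e_k)$ is the coordinate of $W$ along $e_k$ under a fixed continuous linear map, the real random variable $W_k(t)-W_k(s)=\bigl(W(t)-W(s),e_k\bigr)$ is $\sigma\bigl(W(t)-W(s)\bigr)$-measurable, hence independent of $\GGG_s$. This supplies the missing property, so each $W_k$ is a real-valued $(\GGG_t)$-Wiener process, the family remains mutually independent, and the conclusion follows.

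I do not anticipate any serious obstacle; the only mildly delicate step is confirming that the scalar increment $W_k(t)-W_k(s)$ is measurable with respect to the $\sigma$-algebra generated by the single $\fU_0$-valued increment $W(t)-W(s)$, which holds because the projection onto $e_k$ is a continuous, hence Borel, linear functional. I would additionally remark that the same argument shows the $\fU_0$-valued increment $W(t)-W(s)$ itself is independent of $\GGG_s$, so one also gets joint independence of the family $\bigl(W_k(t)-W_k(s)\bigr)_{k\in\NN}$ from $\GGG_s$, should a stronger reading of ``cylindrical $(\GGG_t)$-Wiener process'' be required.
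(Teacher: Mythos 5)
Your proof is correct, and it takes a genuinely different route from the paper. The paper invokes the L\'evy martingale characterization of the Wiener process (citing \cite[Theorem 4.6]{DPZ14}) and reduces the problem to showing that $W$ is a $(\GGG_t)$-martingale, which it asserts follows immediately from the hypotheses. You instead verify the definition of a real-valued $(\GGG_t)$-Wiener process directly, coordinate by coordinate: adaptedness and Gaussian increments are inherited trivially from the inclusion $\FFF_t\subset\GGG_t$, and the one nontrivial property---independence of $W_k(t)-W_k(s)$ from $\GGG_s$---follows because $W_k(t)-W_k(s)$ is a measurable function of the $\fU_0$-valued increment $W(t)-W(s)$, whose generated $\sigma$-field is independent of $\GGG_s$ by the non-anticipativity hypothesis with $h=t-s$. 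Your route is more elementary and more explicit: the L\'evy characterization needs the $(\GGG_t)$-martingale property, and checking that property requires exactly the independence-of-increments observation you make, so you have effectively unpacked what the paper compresses into ``this immediately follows.'' The paper's route is shorter to state and also yields automatically that $W$ is a continuous $(\GGG_t)$-local martingale with the correct quadratic variation structure, which is the form most often used downstream. Your closing remark about joint independence of the family $\bigl(W_k(t)-W_k(s)\bigr)_{k\in\NN}$ from $\GGG_s$ is a sensible precaution and costs nothing, since all of it is measurable with respect to $\sigma(W(t)-W(s))$.
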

\begin{proof}
    In view of the L\'{e}vy martingale characterization of the Wiener process, \cite[Theorem 4.6]{DPZ14}, it suffices to prove that $W$ is a $(\GGG_t)$-martingale.
    This immediately follows from the assumptions on $(\GGG_t)$.
\end{proof}
\begin{definition}[Random distribution]
    \label{def:r.d.}
    Let $D$ be an open subset of $\RR^N$, and let $\probsp$ be a complete probability space. A mapping 
    $$
        U:\Omega \mapsto \DD'(\RR\times D)
    $$
    is called a \textit{random distribution} if $\ev{U,\phi}:\Omega\mapsto \RR$ is a measurable function for any $\phi\in \DD(\RR\times D)$.
\end{definition}
Similarly to the proof of \cite[Theorem 2.2.3]{BFHbook18}, we have the following measurability theorem. This will be implicitly used in subsequent sections.
\begin{lemma}
    \label{lem:m'ble_of_r.d._in_X}
    Let $X$ be a topological vector space continuously embedded into $\DD'(\RR\times D)$.  Let $U\in \DD'(\RR\times D)$ be a random distribution such that, for any $\varepsilon >0 $, there is a compact set $K_\varepsilon\subset X$ such that 
    $$
        \PP(U\in K_\varepsilon) >1-\varepsilon.
    $$
    Then $U\in X$ $\PP$-a.s., $U$ is an $X$-valued Borel random variable and the law of $U$ is a Radon measure on $X$.
\end{lemma}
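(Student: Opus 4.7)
The plan is to argue in three stages: $\PP$-a.s.\ membership of $U$ in $X$, honest $X$-Borel measurability of $U$, and the Radon property of its law. The first stage is immediate: choosing compacts $K_n\subset X$ with $\PP(U\in K_n)>1-1/n$ and setting $K_\infty:=\bigcup_{n\in\NN}K_n$, continuity of $\PP$ from below yields $\PP(U\in K_\infty)=1$, and since $K_\infty\subset X$ this gives $U\in X$ $\PP$-a.s.

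The heart of the argument is the measurability step. Because the embedding $X\hookrightarrow \DD'(\RR\times D)$ is continuous and injective, $X$ inherits Hausdorffness from $\DD'$, and each $K_n$ is also compact in the $\DD'$-topology. The identity map from $K_n$ with the $X$-topology to $K_n$ with the $\DD'$-topology is then a continuous bijection from a compact space onto a Hausdorff one, hence a homeomorphism, so the two Borel $\sigma$-algebras on $K_n$ coincide. For any $X$-Borel set $B\subset X$, $B\cap K_n$ is thus Borel in $K_n$ with the $\DD'$-topology, and since $K_n$ is closed in $\DD'$ it is even Borel in $\DD'$ itself. The Borel $\sigma$-algebra of $\DD'(\RR\times D)$ agrees with the cylindrical one generated by the evaluations $T\mapsto \langle T,\phi\rangle$, $\phi\in\DD(\RR\times D)$, in view of the separability of $\DD(\RR\times D)$, so by Definition \ref{def:r.d.} $U$ is already measurable with respect to it. Hence $\{U\in B\cap K_n\}\in\FFF$ for every $n$, and writing
\[
\{U\in B\}=\bigcup_{n\in\NN}\{U\in B\cap K_n\}\cup\bigl(\{U\in B\}\cap\{U\notin K_\infty\}\bigr),
\]
the second term has outer probability zero and lies in $\FFF$ by completeness of $\probsp$, while the first is a countable union of $\FFF$-measurable sets, proving $X$-Borel measurability of $U$.

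The third stage is short: the hypothesis is precisely the tightness of $\LL[U]$ on the Hausdorff space $X$, and $U$ is concentrated on the $\sigma$-compact set $K_\infty$. A standard inner-regularity argument then promotes compact approximation of the full mass to compact approximation of every $X$-Borel set, so $\LL[U]$ is Radon. The main obstacle is the middle stage: one has to bridge the weak cylindrical notion of a random distribution and genuine $X$-Borel measurability, and the compact-to-Hausdorff homeomorphism trick applied on each $K_n$, combined with separability of $\DD(\RR\times D)$, is the key device that makes this transfer possible.
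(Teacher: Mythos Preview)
Your proof is correct and is precisely the standard argument: the paper does not give its own proof but merely refers to \cite[Theorem 2.2.3]{BFHbook18}, and your three-stage approach (a.s.\ membership via the $\sigma$-compact exhaustion, measurability transfer via the compact-to-Hausdorff homeomorphism on each $K_n$ combined with the coincidence of the Borel and cylindrical $\sigma$-algebras on $\DD'$, and inner regularity from tightness) is exactly what one finds there. The only place one might ask for a word more is stage three, where the ``standard inner-regularity argument'' implicitly uses that each $K_n$ is metrizable---but this follows from the separability of $\DD(\RR\times D)$ you already invoked in stage two, so nothing is missing.
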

Similarly to \cite[Definition 2.2.13]{BFHbook18}, we define the adaptedness of a random distribution.
\begin{definition}[Adaptedness]
    Let $U$ be a random distribution. Then:
    \begin{itemize}
        \item[(1)] We say that $U$ is \textit{adapted} to $(\FFF_t)_{t\geq 0}$ if $\ev{U,\phi}$ is $(\FFF_t)$-measurable for any $\phi\in \DD((-\infty,t)\times D)$. 
        \item[(2)] The family of $\sigma$-fields $(\sigma_t[U])_{t\geq 0}$, given as 
        $$
            \sigma_t[U] := \bigcap_{s>t} \sigma\left(\bigcup_{\phi\in \DD((-\infty,s)\times D)}\left\{ \ev{U,\phi}<1 \right\}\cup \left\{ N\in \FFF,\PP(N)=0 \right\}\right),
        $$
        is called the \textit{history} of $U$.
    \end{itemize}
\end{definition}
\begin{remark}
    Note that we use the same notation for the history of a random distribution and for the canonical filtration of a stochastic process. This will be justified by Lemma \ref{lem:prog_m'ble_r.d.}.
    The proof is exactly the same as in \cite[Lemma 2.2.18]{BFHbook18}.
\end{remark}
\begin{lemma}
    \label{lem:prog_m'ble_r.d.}
    Let $U$ be an $(\FFF_t)$-adapted random distribution taking values in $L^2_{\text{loc}}(\RR;W^{l,2}(D))\subset \DD'(\RR\times D)$ in the sense of Definition \ref{def:r.d.}. Then, for any $T>0$, there exists a $W^{l,2}(D)$-valued $(\FFF_t)$-progressively measurable process $\overline{U}$ such that $\overline{U}\in L^2(0,T;W^{l,2}(D))$ a.s. and 
    $$
        U= \overline{U},\quad \text{a.e. in } \Omega\times [0,T].
    $$
    In particular, we have 
    \begin{align*}
        &U = \overline{U} \quad \text{in } L^2(0,T;W^{l,2}(D))\quad \text{a.s.}  
    \end{align*}
\end{lemma}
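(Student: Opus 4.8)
The plan is to reproduce the classical argument for producing a progressively measurable representative of an adapted measurable process (as in \cite[Lemma 2.2.18]{BFHbook18}): regularize $U$ in time by a mollifier supported toward the past, obtain continuous-in-time adapted --- hence progressively measurable --- approximations, and then extract an a.e.\ convergent subsequence. First I would record that $U$, restricted to any bounded time interval, is a Bochner-measurable random variable with values in $L^2(-1,T;W^{l,2}(D))$: indeed $\ev{U,\phi}$ is measurable for every $\phi\in\DD(\RR\times D)$, such test functions are dense in the dual of this (separable Hilbert) space, and the Pettis measurability theorem then applies; moreover $U$ lies in this space $\PP$-a.s.\ by hypothesis (cf.\ Lemma \ref{lem:m'ble_of_r.d._in_X}). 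In particular $U(\omega)(s)\in W^{l,2}(D)$ makes sense for a.e.\ $(\omega,s)$.

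Next, fixing $\kappa_\varepsilon\in C_c^\infty((0,\varepsilon))$ with $\kappa_\varepsilon\geq 0$ and $\int\kappa_\varepsilon=1$, I would set $U_\varepsilon(t):=\int_{\RR}\kappa_\varepsilon(t-s)\,U(s)\,ds\in W^{l,2}(D)$ for $t\in[0,T]$. The support condition forces $U_\varepsilon(t)$ to depend only on the values $U(s)$ with $s\in(t-\varepsilon,t)$; approximating this Bochner integral by Riemann sums of terms $\ev{U,\chi\otimes\eta}$ with $\chi\in C_c^\infty((t-\varepsilon,t))$ and $\eta\in\DD(D)$, and invoking adaptedness (together with $\FFF_0\subset\FFF_t$ for the part of the time-support lying in $(-\infty,0)$ when $t<\varepsilon$), one checks that $\omega\mapsto U_\varepsilon(t)(\omega)$ is $\FFF_t$-measurable into $W^{l,2}(D)$. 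Since $t\mapsto U_\varepsilon(t)$ is moreover $\PP$-a.s.\ continuous into $W^{l,2}(D)$ by the standard mollification property of $L^2_\loc$ functions, each $U_\varepsilon$ is a continuous adapted --- hence $(\FFF_t)$-progressively measurable --- $W^{l,2}(D)$-valued process.

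Finally, by the $L^2$-convergence of mollifications, $U_\varepsilon\to U$ in $L^2(0,T;W^{l,2}(D))$ $\PP$-a.s.\ as $\varepsilon\to 0$; passing to a sequence $\varepsilon_n\downarrow 0$ and, after localizing on sets of probability close to $1$ if needed, extracting a further subsequence, one gets $U_{\varepsilon_n}\to U$ for a.e.\ $(\omega,t)\in\Omega\times[0,T]$. I would then define $\overline{U}(\omega,t)$ to be this pointwise limit on the set where it exists and $0$ elsewhere. Then $\overline{U}$ is $(\FFF_t)$-progressively measurable, being a pointwise limit of progressively measurable processes on a progressively measurable set, it belongs to $L^2(0,T;W^{l,2}(D))$ $\PP$-a.s., and it equals $U$ a.e.\ in $\Omega\times[0,T]$. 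A Fubini argument then yields $\overline{U}(\omega,\cdot)=U(\omega,\cdot)$ in $L^2(0,T;W^{l,2}(D))$ for a.e.\ $\omega$, which is the last assertion.

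The main obstacle is the middle step: upgrading the \emph{weak} adaptedness of $U$ (measurability of the scalar pairings $\ev{U,\phi}$ for $\phi$ with time-support in $(-\infty,t)$) to genuine $\FFF_t$-measurability of the $W^{l,2}(D)$-valued random variables $U_\varepsilon(t)$, which is precisely where one needs the one-sidedness of the mollifier toward the past and the control of the negative-time contributions via $\FFF_0\subset\FFF_t$. The remaining ingredients --- Banach-valued measurability, mollification estimates, and the subsequence extraction --- are routine.
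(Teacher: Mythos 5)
Your proof is correct and follows the same route as the paper, which itself merely refers to \cite[Lemma 2.2.18]{BFHbook18} (see the remark preceding the lemma) and whose argument you are reproducing: Pettis measurability, one-sided past mollification to get continuous adapted (hence progressively measurable) approximations, and extraction of an a.e.\ limit. One small simplification you could make: the Riemann-sum detour is unnecessary, since for $\eta\in\DD(D)$ one has $\ev{U_\varepsilon(t),\eta}=\ev{U,\kappa_\varepsilon(t-\cdot)\otimes\eta}$ exactly, and $\kappa_\varepsilon(t-\cdot)\otimes\eta\in\DD((-\infty,t)\times D)$, so $\FFF_t$-measurability follows at once from the adaptedness definition (the aside about $\FFF_0\subset\FFF_t$ for negative times is likewise superfluous).
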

\begin{lemma}
    \label{lem:stability_of_nonanti}
    Let $(U_n)_{n\in\NN}$, and $U$ be random distributions on $\probsp$ and $(W_n)_{n\in\NN}$, and $W$ be cylindrical Wiener processes on $\fU$ defined on the same probability space. Assume that the filtration 
    $$
        \sigma(\sigma_t[U_n]\cup \sigma_t[W_n]),\quad t\geq 0
    $$
    is non-anticipative with respect to $W_n$ for every $n\in \NN$. If 
    \begin{align*}
        &\ev{U_n,\phi}\to \ev{U,\phi}\quad \text{in probability for any} \ \phi\in \DD(\RR\times \RR^N), \\ 
        &W_n\to W\quad \text{in probability},
    \end{align*}
    then the filtration 
    $$
        \sigma\left(\sigma_t[U]\cup \sigma_t[W]\right),\quad t\geq 0
    $$
    is non-anticipative with respect to $W$.
\end{lemma}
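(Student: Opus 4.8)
The goal is to show that non-anticipativity of the filtrations generated by $(U_n, W_n)$ with respect to $W_n$ passes to the limit, i.e.\ the filtration $\sigma(\sigma_t[U]\cup\sigma_t[W])$ is non-anticipative with respect to $W$. My plan is to reduce the statement to an independence assertion about random variables and then pass to the limit using convergence in probability together with the stability of the Wiener increments.

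First I would recall that non-anticipativity of $(\sigma_t[U_n]\cup\sigma_t[W_n])$ with respect to $W_n$ means: for every $t\geq 0$ and every $h>0$, the $\sigma$-field $\sigma(\sigma_t[U_n]\cup\sigma_t[W_n])$ is independent of $\sigma(W_n(t+h)-W_n(t))$. Independence of two $\sigma$-fields is equivalent to the factorization of expectations of products of bounded (complex) exponentials, or more conveniently, to: for all bounded continuous $F$ depending on finitely many pairings $\ev{U_n,\phi_j}$ with $\phi_j\in\DD((-\infty,t)\times\RR^N)$ and finitely many increments of $W_n$ up to time $t$, and all bounded continuous $G$ of the increment $W_n(t+h)-W_n(t)$ (tested against finitely many $e_k$), one has $\EE[F\,G]=\EE[F]\,\EE[G]$. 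The key point is that the generators of $\sigma_t[U]$ — pairings $\ev{U,\phi}$ with $\phi\in\DD((-\infty,t)\times D)$ — and of $\sigma_t[W]$ — the values $W(s)$ for $s\leq t$ — are limits in probability of the corresponding quantities for $U_n$, $W_n$, by hypothesis. Likewise the forward increment $W(t+h)-W(t)$ is the limit in probability of $W_n(t+h)-W_n(t)$.

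The main steps would then be: (i) fix $t,h$, fix bounded uniformly continuous functions $F$ (of finitely many generators of $\sigma(\sigma_t[U]\cup\sigma_t[W])$) and $G$ (of the increment); (ii) approximate, replacing $U,W$ by $U_n,W_n$ inside $F$ and $G$, obtaining $F_n,G_n$; (iii) use $\ev{U_n,\phi}\to\ev{U,\phi}$ and $W_n\to W$ in probability, composition with continuous functions, and boundedness to get $F_n\to F$ and $G_n\to G$ in probability, hence $\EE[F_nG_n]\to\EE[FG]$, $\EE[F_n]\to\EE[F]$, $\EE[G_n]\to\EE[G]$ by dominated convergence; (iv) invoke the hypothesis $\EE[F_nG_n]=\EE[F_n]\EE[G_n]$ for each $n$ and pass to the limit to conclude $\EE[FG]=\EE[F]\EE[G]$; (v) upgrade from a generating class of bounded continuous functions to a $\pi$-system generating the relevant $\sigma$-fields, and conclude independence, hence non-anticipativity. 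One technical point I would be careful about: a generic element of $\sigma_t[U]$ is built from events $\{\ev{U,\phi}<1\}$ over all $\phi\in\DD((-\infty,s)\times D)$, $s>t$, together with null sets; one should check that it suffices to test against a countable determining family of such $\phi$, and that the intersection over $s>t$ causes no trouble — this is handled exactly as in the analogous argument for the history of a random distribution in \cite{BFHbook18}.

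The main obstacle I anticipate is purely of a bookkeeping nature rather than conceptual: organizing the passage to the limit so that the functions $F_n$, $G_n$ genuinely converge in probability. Convergence $\ev{U_n,\phi}\to\ev{U,\phi}$ is only asserted for $\phi\in\DD(\RR\times\RR^N)$, so one must take $\phi$ with support in $(-\infty,s)\times D$ for $s$ slightly larger than $t$ and still smaller than $t+h$; a diagonal/exhaustion argument over a countable family of such test functions and over $s\downarrow t$ is needed to recover all generators of $\sigma_t[U]$. The Wiener part is easier since $W_n\to W$ in $C(\ico{0}{\infty};\fU_0)$ in probability already gives joint convergence of all finite-dimensional pairings. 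Once convergence in probability of $F_n$ and $G_n$ is in hand, the rest is dominated convergence and the elementary fact that independence is preserved under limits of the defining factorization identity. I expect the proof to parallel closely \cite[Lemma 2.2.18]{BFHbook18} and \cite[Section 2.7]{BFHbook18}, so I would simply indicate the modifications and refer there for the combinatorial details.
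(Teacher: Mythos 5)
Your proposal is correct and follows essentially the same approach as the paper: reduce non-anticipativity to the factorization identity $\EE[FG]=\EE[F]\EE[G]$ over finitely many pairings $\ev{U_n,\phi_j}$ (with $\phi_j$ supported in $(-\infty,t)$), finitely many values of $W_n$, and a forward increment of $W_n$, then pass to the limit using convergence in probability and dominated convergence. The paper's proof is terser and leaves implicit the bookkeeping points you flag (the countable generating family, the intersection over $s>t$ in the definition of the history), but the underlying argument is the same.
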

\begin{proof}
    Since $\sigma(\sigma_t[U_n]\cup \sigma_t[W_n])$ is non-anticipative with respect to $W_n$, we have 
    \begin{align*}
        &\EE\left[F(\ev{U_n,\phi_1},\ldots, \ev{U_n,\phi_m},W_{n,1}(t_1),\ldots, W_{n,l}(t_l))H(W_{n,1}(t+s)-W_{n,1}(t),\ldots, W_{n,k}(t+s)-W_{n,k}(t))\right]  \\ 
        &\quad = \EE\left[F(\ev{U_n,\phi_1},\ldots, \ev{U_n,\phi_m},W_{n,1}(t_1),\ldots, W_{n,l}(t_l))\right] \\ 
        &\quad \quad \times \EE\left[H(W_{n,1}(t+s)-W_{n,1}(t),\ldots, W_{n,k}(t+s)-W_{n,k}(t))\right]  
    \end{align*}
    for any $0\leq t_i\leq t, i=1,\ldots,l,s>0,l,k\in \NN, \phi_j$ supported in $(-\infty,t)$, $j=1,\ldots,m$, and bounded continuous $F$ and $H$.
    Therefore, we may pass to the limit to obtain the corresponding statement for $[U,W]$.
\end{proof}
We use the following lemma on the convergence of stochastic integrals, proven in \cite[Lemma 2.1]{DGHT11}.
For the proof in the setting of this paper, see also \cite[Lemma 2.6.6]{BFHbook18}.
\begin{lemma}
    \label{lem:conv_of_stoch_int}
    Let $\probsp$ be a complete probability space and $H$ be a separable Hilbert space. For $n\in\NN$, let $W_n$ be a cylindrical $(\FFF_t^n)$-Wiener process on $\fU$, and let $G_n$ be an $(\FFF_t^n)$-progressively measurable stochastic process ranging in $L_2(\fU,H)$. Suppose that 
    \begin{align*}
        &W_n\to W\quad \text{in } C([0,T];\fU_0) \quad \text{in probability,} \\ 
        &G_n\to G \quad \text{in }L^2(0,T;L_2(\fU,H))\quad \text{in probability},  
    \end{align*}
    where $W$ is a cylindrical $(\FFF_t)$-Wiener process, and $G$ is $(\FFF_t)$-progressively measurable. Then 
    $$
        \int_0^\cdot G_n dW_n \to \int_0^\cdot G dW\quad \text{in } L^2(0,T;H)\quad \text{in probability.}
    $$
\end{lemma}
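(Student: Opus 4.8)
\textbf{Proof proposal for Lemma~\ref{lem:conv_of_stoch_int}.}

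The plan is to follow the standard localization-plus-Vitali argument for the stability of stochastic integrals, essentially as in \cite{DGHT11} and \cite[Lemma~2.6.6]{BFHbook18}, adapted to the cylindrical setting here. First I would reduce the convergence in probability to convergence along subsequences almost surely: by the Skorokhod--Jakubowski / subsequence principle it suffices to show that from any subsequence one can extract a further subsequence along which $\int_0^\cdot G_n\,dW_n\to \int_0^\cdot G\,dW$ in $L^2(0,T;H)$ in probability, and for that, after passing to a subsequence, we may assume $W_n\to W$ in $C([0,T];\fU_0)$ and $G_n\to G$ in $L^2(0,T;L_2(\fU,H))$ almost surely (not merely in probability). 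Throughout, write $W_n=\sum_k e_k W_{n,k}$ and $W=\sum_k e_k W_k$, and let $\gamma_{n,k}(t)=W_{n,k}(t)$, etc.

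Next I would introduce the truncation. For $R>0$ set $\tau_n^R=\inf\{t\in[0,T]:\|G_n\|_{L^2(0,t;L_2(\fU,H))}>R\}$ (and $\tau^R$ the analogue for $G$), with the convention $\inf\emptyset=T$. On the event $\{\tau_n^R=T\}$ the integral $\int_0^\cdot \mathbf 1_{[0,\tau_n^R]}G_n\,dW_n$ coincides with $\int_0^\cdot G_n\,dW_n$, and since $G_n\to G$ in $L^2(0,T;L_2(\fU,H))$ a.s.\ the family $\|G_n\|_{L^2(0,T;L_2(\fU,H))}$ is a.s.\ bounded, so $\PP(\tau_n^R<T)\to 0$ as $R\to\infty$ uniformly in $n$ along the subsequence; the same holds for $\tau^R$. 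Hence it suffices to prove, for each fixed $R$, that the truncated integrals converge in $L^2(0,T;H)$ in probability. Now decompose the difference of the truncated integrals as
\begin{align*}
	\int_0^\cdot \mathbf 1_{[0,\tau_n^R]}G_n\,dW_n - \int_0^\cdot \mathbf 1_{[0,\tau^R]}G\,dW
	&= \int_0^\cdot \big(\mathbf 1_{[0,\tau_n^R]}G_n - \mathbf 1_{[0,\tau^R]}G\big)\,dW_n \\
	&\quad + \Big(\int_0^\cdot \mathbf 1_{[0,\tau^R]}G\,dW_n - \int_0^\cdot \mathbf 1_{[0,\tau^R]}G\,dW\Big).
\end{align*}
For the first term I would truncate the cylindrical series: write $W_n=W_n^{(N)}+ (W_n-W_n^{(N)})$ with $W_n^{(N)}=\sum_{k\le N}e_k W_{n,k}$, use the Burkholder--Davis--Gundy / It\^o isometry for the finitely-many-components part together with the a.s.\ $L^2(0,T;L_2(\fU,H))$-convergence $\mathbf 1_{[0,\tau_n^R]}G_n\to\mathbf 1_{[0,\tau^R]}G$ (which one gets from $G_n\to G$ a.s.\ plus $\tau_n^R\to\tau^R$ a.s., the latter because the norm functional is continuous along the a.s.-convergent sequence, at least at continuity points of $R\mapsto$ the relevant distribution — and one avoids the pathological $R$ by choosing $R$ outside a countable set), and control the tail $\sum_{k>N}$ uniformly in $n$ using $\EE\|\mathbf 1_{[0,\tau_n^R]}G_n\|^2_{L^2(0,T;L_2(\fU,H))}\le R^2$. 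For the second term, for each fixed $k$ the integrand $\mathbf 1_{[0,\tau^R]}G e_k$ is a fixed progressively measurable process and $W_{n,k}\to W_k$ uniformly on $[0,T]$ a.s., so $\int_0^\cdot \mathbf 1_{[0,\tau^R]}G e_k\,dW_{n,k}\to \int_0^\cdot \mathbf 1_{[0,\tau^R]}G e_k\,dW_k$ in $L^2(0,T;H)$ in probability by the classical one-dimensional result; summing over $k\le N$ and again estimating the tail $k>N$ uniformly gives convergence of the second term.

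Putting these together: given $\delta>0$, first choose $R$ large so that the truncation errors are below $\delta$ in probability; then choose $N$ large so the cylindrical tails are below $\delta$; then let $n\to\infty$ along the subsequence to kill the finitely-many-components contributions; a diagonal argument over $\delta\downarrow 0$ then yields the claimed convergence in probability along the subsequence, and since every subsequence has such a further subsequence, the full sequence converges in probability. The main obstacle is the first term of the decomposition: the stochastic integral there has a \emph{varying} integrator $W_n$, so neither a fixed-integrand limit theorem nor a fixed-integrator It\^o isometry applies directly; the point is to combine the uniform second-moment bound from the truncation with the joint a.s.\ convergence of $(G_n,W_n)$ and a uniform-in-$n$ control of the infinite tail of the noise, which is exactly what the summability of the noise (here encoded in $G_n$ ranging in $L_2(\fU,H)$ with a uniform bound on the truncated interval) provides. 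The remaining estimates — the BDG bound for finitely many components and the one-dimensional stability result — are routine.
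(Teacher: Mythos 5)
The paper cites this lemma from \cite{DGHT11} and \cite[Lemma~2.6.6]{BFHbook18} without reproducing a proof, so the comparison below is against the argument in those references. Your scaffolding --- reduce to a.s.\ convergence along a subsequence, truncate by stopping times to get a uniform $L^2(\Omega\times(0,T);L_2(\fU,H))$ bound, and peel off finitely many components of the cylindrical noise --- is the right one, and your remark about choosing $R$ off a countable exceptional set so that $\tau_n^R\to\tau^R$ a.s.\ is a correctly identified subtlety.

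The genuine gap is in the decomposition itself. Both pieces you write, $\int_0^\cdot(\mathbf 1_{[0,\tau_n^R]}G_n-\mathbf 1_{[0,\tau^R]}G)\,dW_n$ and $\int_0^\cdot\mathbf 1_{[0,\tau^R]}G\,dW_n$, are stochastic integrals whose integrands involve $G$ and $\tau^R$, which are only assumed $(\FFF_t)$-progressively measurable / an $(\FFF_t)$-stopping time, while $W_n$ is a cylindrical $(\FFF_t^n)$-Wiener process; the hypotheses posit no relation between $(\FFF_t)$ and $(\FFF_t^n)$. These mixed integrals are therefore not well-defined It\^o integrals, and the It\^o isometry and Burkholder--Davis--Gundy bounds you invoke for them simply do not apply. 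You flag the first term as the hard one because ``the integrator varies,'' but the real obstruction is non-anticipativity, not variation, and neither the finite-dimensional truncation nor the uniform second-moment bound can repair it. (A secondary gap: $\EE\|\mathbf 1_{[0,\tau_n^R]}G_n\|^2\le R^2$ alone does not give control of the tail $\sum_{k>N}$ \emph{uniformly in $n$}; equi-summability in $k$ has to be extracted from the a.s.\ convergence $G_n\to G$ via a Vitali-type argument.) The fix used in the references is to interpolate by a \emph{common} step-process approximation built from the integrands themselves: take dyadic-in-time averages $G_n^m$ of $G_n$ and $G^m$ of $G$, and split the difference as $\bigl(\int G_n\,dW_n-\int G_n^m\,dW_n\bigr)+\bigl(\int G_n^m\,dW_n-\int G^m\,dW\bigr)+\bigl(\int G^m\,dW-\int G\,dW\bigr)$. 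The outer terms pair each integrand with the filtration it is adapted to ($G_n^m$ inherits $(\FFF_t^n)$-progressive measurability from $G_n$, $G^m$ inherits $(\FFF_t)$-progressive measurability from $G$), so the isometry applies, with errors small uniformly in $n$ via $\|G_n^m-G_n\|\le 2\|G_n-G\|+\|G^m-G\|$; the middle term is, after the finite noise truncation you describe, a finite sum of products of random variables whose limit follows from the joint a.s.\ convergence of $(G_n,W_n)$ with no stochastic calculus at all. Rebuild your proof around such a step-process interpolant rather than around the ill-defined $\int G\,dW_n$.
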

Finally, we state Jakubowski's extension of the Skorokhod representation theorem (see \cite[Theorem 2]{jak17}), which is crucial in the stochastic compactness method used in this paper.
\begin{definition}
    A topological space $X$ is called a \textit{sub-Polish space} if there exists a countable family 
    $$
        \left\{ g_n:X\to (-1,1):n\in\NN \right\}
    $$ 
    of continuous functions that separates points of $X$.
\end{definition}
\begin{theorem}[Jakubowski]
    \label{thm:Jakubowski--Skorokhod}
    Let $(X,\tau)$ be a sub-Polish space. If $(\mu_n)_{n\in\NN}$ is a tight sequence of Borel probability measures on $(X,\tau)$, then there exists a subsequence $(n_k)_{k\in\NN}$ and $X$-valued Borel measurable random variables $(U_k)_{k\in\NN}$ and $U$ defined on the standard probability space $([0,1],\overline{\fB([0,1])},\fL)$, where $\fL$ is the Lebesgue measure, 
    such that $\mu_{n_k}$ is the law of $U_k$ and $U_k(\omega)$ converges to $U(\omega)$ in $X$ for every $\omega\in[0,1]$. Moreover, the law of $U$ is a Radon measure.
\end{theorem}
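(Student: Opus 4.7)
The plan is to reduce the statement to the classical Skorokhod representation theorem on a Polish space by exploiting the separating family $\{g_n\}_{n\in\NN}$ to embed $X$ into a compact metric space. I would first define $\Phi:X\to K:=[-1,1]^{\NN}$ (with the product topology, under which $K$ is a compact Polish space) by $\Phi(x):=(g_n(x))_{n\in\NN}$; continuity of each $g_n$ renders $\Phi$ continuous, and the separation property renders $\Phi$ injective.

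Next I would transfer the tightness: setting $\nu_n:=\Phi_*\mu_n$ on $K$ and choosing increasing compacts $K_m\subset X$ with $\mu_n(X\setminus K_m)<1/m$ uniformly in $n$, each $\Phi(K_m)\subset K$ is compact (continuous image of compact) and $\nu_n(K\setminus\Phi(K_m))<1/m$. Prokhorov's theorem then yields a subsequence $\nu_{n_k}\to\nu$ weakly on $K$, and the classical Skorokhod representation theorem on the Polish space $K$ produces Borel measurable $V_k,V:[0,1]\to K$ with laws $\nu_{n_k},\nu$ such that $V_k(\omega)\to V(\omega)$ in $K$ for every $\omega\in[0,1]$.

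To lift back to $X$, on each compact $K_m$ the map $\Phi:K_m\to\Phi(K_m)$ is a continuous bijection from a compact space onto a Hausdorff one, hence a homeomorphism; the partial inverses glue together into a Borel measurable $\Psi:\bigcup_m\Phi(K_m)\to X$. Since $\mu_n(\bigcup_m K_m)=1$ and $\Phi$ is injective, each $\nu_{n_k}$ is concentrated on $\bigcup_m\Phi(K_m)$; moreover, portmanteau gives $\nu(\Phi(K_m))\geq\limsup_k\nu_{n_k}(\Phi(K_m))\geq 1-1/m$, so $\nu$ is also concentrated there. Setting $U_k:=\Psi(V_k)$ and $U:=\Psi(V)$ (extending $\Psi$ arbitrarily off its domain of definition to a fixed basepoint $x_0\in X$) produces $X$-valued Borel random variables with $\LL[U_k]=\mu_{n_k}$ and $\LL[U]=\Psi_*\nu$, the latter being Radon thanks to the inner-regularity witnesses $(K_m)_m$.

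The main obstacle is upgrading the pointwise convergence $V_k(\omega)\to V(\omega)$ in $K$ to pointwise convergence $U_k(\omega)\to U(\omega)$ in $X$, because $\Psi$ is continuous only on each piece $\Phi(K_m)$ separately, and a priori a sequence $(V_k(\omega))$ converging in $K$ may straddle different pieces. Following Jakubowski, I would handle this by exploiting the tightness bound $\PP(V_k\notin\Phi(K_m))\leq 1/m$ together with the a.s. convergence in $K$: via a diagonal extraction, and possibly passing to a further subsequence, one arranges that for $\fL$-almost every $\omega$ the sequence $(V_k(\omega))_k$ is eventually contained in a single $\Phi(K_{m(\omega)})$, on which $\Psi$ is continuous; a final null-set modification recovers pointwise convergence for every $\omega\in[0,1]$ without altering the laws, completing the argument.
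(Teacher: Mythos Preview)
The paper does not supply its own proof of this theorem; it is quoted from Jakubowski and used as a black box throughout the stochastic compactness arguments. Your outline follows Jakubowski's strategy—embed $X$ into the compact metric cube $K=[-1,1]^{\NN}$ via $\Phi$, push forward the measures, apply Prokhorov and the classical Skorokhod theorem on $K$, then pull back through the piecewise inverse $\Psi$—and the first three paragraphs are correct.

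The gap is in the last paragraph. Your claim that a diagonal extraction or a further subsequence arranges $(V_k(\omega))_k$ to lie eventually in a \emph{single} $\Phi(K_{m(\omega)})$ does not follow from the available tightness bound $\PP(V_k\notin\Phi(K_m))<2^{-m}$: that estimate is uniform in $k$ but not summable in $k$ for fixed $m$, and Borel--Cantelli with the choice $m=k$ only yields $V_k(\omega)\in\Phi(K_k)$ eventually, which places the tail of the orbit in the increasing union $\bigcup_k\Phi(K_k)$, not in one fixed piece. Without relative compactness of $\{U_k(\omega):k\in\NN\}$ in $X$, convergence $V_k(\omega)\to V(\omega)$ in $K$ (equivalently, $g_n(U_k(\omega))\to g_n(U(\omega))$ for every $n$) need not force $U_k(\omega)\to U(\omega)$ in the topology $\tau$, because the $g_n$ merely separate points and are not assumed to generate $\tau$. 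Jakubowski resolves this not by a post-hoc subsequence extraction but by constructing the Skorokhod representatives so that the compact exhaustion is respected $\omega$-by-$\omega$ from the outset; you have correctly located the obstacle, but the mechanism you propose for overcoming it is insufficient as stated.
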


\section{Approximation system}\label{sec:Approximation system}
\subsection{Outline of the proof of the main result}\label{sec:Outline of the proof of the main result}
Our approach in proving the existence of dissipative martingale solutions to \eqref{eq:1.1}--\eqref{eq:1.5} combines the four-layer approximation based on \cite{BFHbook18} with the convex approximation layer used in \cite{NOS23}.
Specifically, the approximation scheme for the problem \eqref{eq:1.8}--\eqref{eq:1.10} is decomposed into the following five-layers, approximating from the bottom layer upward:
\begin{itemize}
    \item[1.] (Artificial pressure layer) To achieve integrability of the density, modify the pressure $p$ to $p_\delta (\rho) = p(\rho) + \delta(\rho+\rho^\Gamma)$ with $\Gamma\geq \max \left\{6,\gamma \right\}$.
    \item[2.] (Artificial viscosity layer) By adding viscosity terms $\varepsilon\Delta \rho$ and $\varepsilon\Delta (\rho u)$ to the continuity equation and the momentum equation, respectively, we partially reduce the problem to the theory of parabolic equations.
    \item[3.] (Faedo--Galerkin approximation layer) We apply standard finite-dimensional approximation to the velocity and the momentum equation. Specifically, by $V_m\subset C^{2}(\oD)\subset L^2(D)$ we denote an $m$-dimensional vector space equipped with the $L^2(D)$-inner product, such that 
    \begin{equation}
        \bigcup_{m\in \NN} V_m \quad \text{is dense in} \quad W_\rn^{1,p}(D),\quad \forall 1\leq p<\infty.
    \end{equation}    
    Furthermore, the $L^2$-orthogonal projection onto $V_m$ is denoted by $\Pi_m$.
    For technical reasons, we assume without loss of generality that the sequence of spaces $(V_m)_{m\in \NN}$ contains a subsequence $(V_{0,m})_{m\in \NN}$ of spaces $V_{0,m}\subset C_c^2(D)$ such that
    \begin{equation}
        \label{eq:3.2}
        \bigcup_{m\in \NN} V_{0,m} \quad \text{is dense in} \quad W_0^{1,p}(D),\quad \forall 1\leq p<\infty.
    \end{equation} 
    Such a choice of $V_m$ is possible provided $D$ belongs to the regularity class $C^{2+\nu}$ (see \cite[Theorem 11.19]{FN17}). 
    \item[4.] (Convex approximation layer) We introduce a convex approximation $(j_\alpha)_{\alpha>0}\subset C^1(\RR^3)\cap C_{\text{loc}}^{1,1}(\RR^3)$ for the boundary integral appearing in \eqref{eq:1.10} as follows:
    $$
        j_\alpha(v):= \begin{cases}
        |v| &\text{for}\ |v|>\alpha, \\
        \frac{|v|^2}{2\alpha}+\frac \alpha 2 &\text{for}\  |v|\leq \alpha. 
        \end{cases}
    $$
    The approximation $j_\alpha$ has the properties
    \begin{align}
        &j_\alpha(0) =\tfrac2\alpha, \\ 
        &\nabla j_\alpha(v)\cdot v\geq 0,\quad \forall v\in \RR^3, \\ 
        &\left|\nabla j_\alpha (v)\right|\leq v,\quad \forall v\in \RR^3, \\ 
        &\left|j_\alpha(v)-\left|v \right| \right|\leq \alpha,\quad \forall v\in \RR^3. \label{eq:3.6}
    \end{align}
    \item[5.] (Cut-off layer for the norm of the velocity) To derive the uniform boundedness of $\rho$ and the energy balance, we introduce the following cut-off function:
    $$
        \chi\in C^\infty(\RR),\quad 
        \begin{cases}
            \chi(z) = 1 &\text{for} \ z\leq 0, \\
            \chi(z) = 0 &\text{for}\ z\geq 1,\\ 
            \chi'(z)\leq 0 &\text{for}\ 0<z\leq 1,       
        \end{cases}  
    $$
    together with the operators 
    $$
        [v]_R = \chi(\norm{v}_{V_m}-R)v,\quad \text{defined for}\ v\in V_m.
    $$
    Similarly, we consider a suitable approximation of the diffusion coefficient. It is convenient to introduce $\FF=(F_k)_{k\in \NN}$ by 
    $$
        F_k(\rho,v) = \frac{G_k(\rho,\rho v)}{\rho}.
    $$
    Note that, in accordance with hypotheses \eqref{eq:1.6}--\eqref{eq:1.7}, the function $F_k$ satisfy 
    $$
        F_k:D\times \ico{0}{\infty}\times \RR^3\to \RR^3,\quad F_k\in C^1(D\times (0,\infty)\times \RR^3)
    $$
    and there exist constants $(f_k)_{k\in \NN}\subset \ico{0}{\infty}$ such that 
    \begin{equation}
        \norm{F_k(\cdot,\cdot,0)}_{L_{x,\rho}^\infty}+\norm{\nabla_v F_k}_{L_{x,\rho,v}^\infty}\leq f_k,\quad \sum_{k=1}^{\infty}f_k^2<\infty.
    \end{equation}
    Finally, we define the noise coefficient $\GG_\varepsilon=(G_{k,\varepsilon})_{k\in \NN}$ by 
    \begin{equation}
        \label{eq:3.8}
        G_{k,\varepsilon}(\rho,q) = \rho F_{k,\varepsilon} \left(\rho,\frac q \rho\right),
    \end{equation}
    where 
    \begin{equation}
        F_{k,\varepsilon}(\rho,v) = \chi\left(\frac \varepsilon \rho - 1\right)\chi\left(|v|-\frac 1 \varepsilon \right)F_k(\rho,v).
    \end{equation}
    Consequently, there exist constants $(f_{k,\varepsilon})\subset \ico{0}{\infty}$ such that 
    \begin{equation}
        \label{eq:3.10}
        \norm{F_{k,\varepsilon}}_{L_{x,\rho,v}^\infty}+\norm{\nabla_{\rho,v}F_{k,\varepsilon}}_{L_{x,\rho,v}^\infty}\leq f_{k,\varepsilon},\quad \sum_{k=1}^{\infty}f_{k,\varepsilon}^2<\infty,
    \end{equation}
    with a bound depending on $\varepsilon$. 
\end{itemize} 

Therefore, the basic approximate problem reads 
\begin{align}
   &d\rho + \rdiv (\rho[u]_R)dt = \varepsilon \Delta \rho dt,\quad \nabla \rho \cdot \rn |_{\pD} =0,\quad \text{in}\ (0,T)\times D, \label{eq:3.11} \\
   \int_D \rho u\cdot \bphi(\tau) dx &= \int_D \rho_0 u_0 \cdot \bphi dx  + \inttau \int_D \left[(\rho[u]_R \otimes u):\nabla\bphi + \chi\left(\norm{u}_{V_m}-R\right)p_\delta (\rho)\rdiv \bphi\right]dxdt  \notag \\ 
   &\quad - \inttau \int_D \left[\SSS(\nabla u):\nabla\bphi -\varepsilon \rho u\cdot \Delta \bphi \right]dxdt - \inttau \int_{\pD}g\nabla j_\alpha(u)\cdot \bphi d\Gamma dt \notag \\ 
   &\quad + \inttau\int_D \GG_\varepsilon(\rho,\rho u)\cdot \bphi dxdW, \quad \ \tau\in [0,T],\  \bphi\in V_m. \label{eq:3.12}
\end{align}
Note that, compared with the conventional basic approximate problem in probabilistic settings, the diffusion coefficient has been replaced with a form independent of the properties of the Galerkin projection.
This modification arises from the fact that, in the compressible flow with slip boundary conditions, the properties of the Galerkin projection $\Pi_m$ independent of spatial dimensions may degenerate. 
More precisely, the following properties 
\begin{align*}
    &\norm{\Pi_m \bphi}_{L^p(D)} \lesssim \norm{\bphi}_{L^p(D)}\quad \text{uniformly in }m\in\NN,\quad \text{for } \bphi\in L^p(D),\\ 
    &\quad \norm{\Pi_m \bphi - \bphi}_{W^{s,p}(D)} \to 0\quad \text{as }m\to \infty,\quad \text{for }\bphi\in W^{s,p}(D),
\end{align*}
are non-trivial unless $s=0$ and $p=2$.
Hence, we avoid using the orthogonal projection $\Pi_m$, and, before passing to the limit in the Galerkin approximation, we must ensure that such a modification of the approximation does not cause any difficulties.
Moreover, using the symbols in \eqref{eq:3.27}, the approximate momentum equation \eqref{eq:3.12} can be rewritten as follows:
\begin{align*}
    &d [\rho u]_m^\ast = \NNN[\rho](u)dt + [\GG_\varepsilon(\rho,\rho u)]_m^\ast dW \quad \text{on }V_m^\ast.
\end{align*}
Next, the reason why the convex approximation layer is placed after the cut-off layer is that we need to ensure the unique existence of approximation solutions before passing to the limit $R\to \infty$. In fact, at the limit $\alpha\to 0$, the momentum equation degenerates, making it difficult to establish uniqueness (see Section \ref{sec:convex_approx}). 
Therefore, note that in the convex approximation, the convergence of the velocity is only guaranteed in the weak sense, and arguments similar to \cite{NOS23} cannot be applied.
Our strategy is as follows:
\begin{itemize}
    \item[(1)] In Section \ref{sec:Approximation system}, we discuss the well-posedness of the basic approximate problem \eqref{eq:3.11}--\eqref{eq:3.12}. We also show that the constructed approximate solutions satisfy the essential energy balance in the uniform estimates, and we pass to the limit $R\to \infty$ based on the uniqueness of solutions.
    \item[(2)] In Section \ref{sec:convex_approx}, we study the limit $\alpha\to 0$ in the approximation of the barrier term appearing in the boundary condition.
    Starting from uniform estimates derived from parabolic maximal regularity and energy bounds, we employ the compactness method based on Jakubowski's extension of the Skorokhod representation theorem.
    \item[(3)] In Section \ref{sec:The limit in the Galerkin approximation scheme}, fixing $\varepsilon$ and $\delta>0$, we pass to the limit $m\to \infty$ in the Galerkin approximation. Since the Galerkin projection $\Pi_m$ degenerates in this setting, in addition to uniform estimates similar to those in Section \ref{sec:convex_approx}, we require refined estimates for the momentum $\rho u$.
    Specifically, we adopt the approach developed in \cite{Smi17} to recover time continuity in the limit.
    \item[(4)] In Section \ref{sec:Vanishing viscosity limit}, we consider the artificial viscosity limit $\varepsilon \to 0$. Here we adopt the method developed in \cite{FN17} for the case of complete slip boundary conditions to the probabilistic setting.
    \item[(5)] Finally, in Section \ref{sec:Vanishing artificial pressure limit}, we deal with the artificial pressure limit $\delta\to 0$. As in the previous section, we proceed with the similar argument as in the case of complete slip boundary conditions, using Jakubowski's extension of the Skorokhod representation theorem.
\end{itemize}

\subsection{Solvability of the basic approximate problem}
First, we give a precise definition of a martingale solution to the basic approximate problem \eqref{eq:3.11}--\eqref{eq:3.12}.
\begin{definition}
    \label{def:3.1}
	Let $\Lambda$ be a Borel probability measure on $C^{2+\nu}(\oD)\times V_m$. Then $((\Omega,\FFF,(\FFF_t)_{t\geq 0},\PP),\rho,u,W)$ is called a \textit{martingale solution} to \eqref{eq:3.11}--\eqref{eq:3.12} with the initial law $\Lambda$ if:
	\begin{itemize}
		\item[(1)] $(\Omega,\FFF,(\FFF_t)_{t\geq 0},\PP)$ is a stochastic basis with a complete right-continuous filtration;
		\item[(2)] $W$ is a cylindrical $(\FFF_t)$-Wiener process on $\fU$;
		\item[(3)] $\rho$ and $u$ are $(\FFF_t)$-adapted stochastic processes such that $\PP$-a.s.
		\begin{align*}
		   &\rho> 0,\quad \rho \in C([0,T];C^{2+\nu}(\oD))\cap C^1([0,T];C^{\nu}(\oD)),\quad u\in C([0,T];V_m);   
		\end{align*}
		\item[(4)] there exists a $C^{2+\nu}(\oD)\times V_m$-valued $\FFF_0$-measurable random variable $(\rho_0,u_0)$ such that $\Lambda = \LL[\rho_0,u_0]$;
		\item[(5)] the approximate equation of continuity 
		\begin{equation}
            \label{eq:3.13}
            \pp_t \rho + \rdiv (\rho [u]_R) = \varepsilon \Delta \rho,\quad \nabla \rho \cdot \rn |_{\pD} =0
        \end{equation}
		holds in $(0,T)\times D$ $\PP$-a.s. and we have $\rho(0) = \rho_0$ $\PP$-a.s.;
		\item[(6)] the approximate momentum equation 
		\begin{align}
            \int_D \rho u\cdot \bphi (\tau)dx &= \int_D \rho_0 u_0 \cdot \bphi dx  + \inttau \int_D \left[(\rho[u]_R \otimes u ):\nabla \bphi+ \chi\left(\norm{u}_{V_m}-R\right)p_\delta (\rho)\rdiv \bphi\right]dxdt  \notag \\ 
            &\quad - \inttau \int_D \left[\SSS(\nabla u):\nabla\bphi -\varepsilon \rho u\cdot \Delta \bphi \right]dxdt - \inttau \int_{\pD} g\nabla j_\alpha(u)\cdot \bphi d\Gamma dt \notag \\ 
            &\quad + \inttau\int_D \GG_\varepsilon(\rho,\rho u)\cdot \bphi dxdW \label{eq:3.14}
        \end{align}
		holds for all $\tau\in [0,T]$ and all $\bphi\in V_m$ $\PP$-a.s., and we have $u(0) = u_0$ $\PP$-a.s.
    \end{itemize}
\end{definition}
\begin{remark}
    \label{rem:3.2}
    As the processes $\rho,u$ are $(\FFF_t)$-adapted and continuous, the composition $\GG_\varepsilon(\rho,\rho u)$ is progressively measurable as a mapping into $L_2(\fU,(W^{b,2}(D))^\ast)$ with $b>3/2$ and the stochastic integral is well-defined. 
    Indeed, by \eqref{eq:3.10}, and the embedding $W^{b,2}(D)\hookrightarrow L^\infty(D)$, $b>3/2$, we have
    \begin{align*}
        \left|\int_D G_{k,\varepsilon}(\rho,\rho u)\cdot \bphi dx \right|  &\leq \norm{\rho F_{k,\varepsilon}(\rho,u)}_{L_x^1}\norm{\bphi}_{L_x^\infty} \\ 
        &\lesssim \norm{\rho}_{L_x^2}\norm{F_{k,\varepsilon}(\rho,u)}_{L_x^2}\norm{\bphi}_{W_x^{b,2}} \\ 
        &\leq \norm{\rho}_{L_x^2}f_{k,\varepsilon}\norm{\bphi}_{W_x^{b,2}}
    \end{align*}
    for all $\bphi\in W^{b,2}(D)$, and, consequently
    \begin{align*}
        &\norm{\GG_\varepsilon(\rho,\rho u)}_{L_2(\fU,(W^{b,2}(D))^\ast)}^2 \lesssim \norm{\rho}_{L_x^2}^2.
    \end{align*}
\end{remark}
Our main goal in this section is to prove the following result.
\begin{theorem}
    \label{thm:sol1}
    Let $\Lambda$ be a Borel probability measure on $C^{2+\nu}(\oD)\times V_m$ such that 
    \begin{equation}
        \label{eq:3.15}
        \Lambda \left\{ \urho\leq \rho,\ \norm{\rho}_{C_x^{2+\nu}}\leq \orho,\ \nabla \rho\cdot \rn |_{\pD}=0 \right\}=1,\quad \int_{C_x^{2+\nu}\times V_m}\norm{v}_{V_m}^r d\Lambda(\rho,v)\leq \overline{u}
    \end{equation}
    for some deterministic constants $\urho,\orho,\overline{u}>0$ and some $r>2$. Then the approximate problem \eqref{eq:3.11}--\eqref{eq:3.12} admits a martingale solution in the sense of Definition \ref{def:3.1}. The solution satisfies in addition 
    \begin{align}
       &\sup_{t\in [0,T]} \left(\norm{\rho(t)}_{C_x^{2+\nu}}+\norm{\pp_t \rho(t)}_{C_x^{\nu}} +\norm{\rho^{-1}(t)}_{C_x^0}\right)\leq c\quad \PP\text{-a.s.,}  \label{eq:3.16}\\
       &\EE\left[\sup_{\tau\in [0,T]}\norm{u(\tau)}_{V_m}^r\right] \leq c \left(1+\EE\left[\norm{u_0}_{V_m}^r\right]\right), \label{eq:3.17}
    \end{align}
    with a constant $c=c(m,R,T,\urho,\orho,\nu,\varepsilon,r)$.
\end{theorem}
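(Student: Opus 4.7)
The plan is to decouple \eqref{eq:3.11}--\eqref{eq:3.12} pathwise: given a continuous $V_m$-valued process $u$, I first solve the linear parabolic continuity equation deterministically to obtain a density $\rho = \rho[u]$, and then substitute back into \eqref{eq:3.14} to recast the momentum equation as a finite-dimensional SDE on $V_m$ with bounded, locally Lipschitz coefficients. Since $V_m \subset C^{2+\nu}(\oD)$ is finite-dimensional, the cut-off $[u]_R$ is uniformly bounded in $C([0,T];C^{2+\nu}(\oD))$, so classical parabolic Schauder theory (cf. \cite[Proposition~3.3.1]{BFHbook18}) yields a unique classical solution
\[
\rho[u] \in C([0,T];C^{2+\nu}(\oD)) \cap C^{1}([0,T];C^{\nu}(\oD))
\]
of \eqref{eq:3.13} with initial datum $\rho_0$. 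The solution map $u \mapsto \rho[u]$ is Lipschitz on bounded subsets of $C([0,T];V_m)$, and the maximum principle yields the lower bound $\rho[u] \geq \urho \exp(-T\norm{\rdiv [u]_R}_{L^\infty}) > 0$. Together with the Schauder estimates, this establishes \eqref{eq:3.16} pathwise.

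\textbf{Reduction to a finite-dimensional SDE.} Since $\rho[u] \geq \urho_\ast > 0$ with $\urho_\ast$ deterministic, the operator $\MMM[\rho[u]] : V_m \to V_m^\ast$ given by $\evm{\MMM[\rho]v}{\bphi} = \int_D \rho\, v \cdot \bphi\, dx$ is invertible with norm controlled by $\urho_\ast^{-1}$, and $\rho \mapsto \MMM[\rho]^{-1}$ is Lipschitz in $\LL(L^2)$ (as already recorded in the proof of Proposition~\ref{prop:1.5}). Substituting $\rho = \rho[u]$ in \eqref{eq:3.14} and inverting $\MMM[\rho]$ produces an SDE on the finite-dimensional space $V_m$ of the form
\[
du = B(u)\, dt + \Sigma(u)\, dW, \qquad u(0) = u_0,
\]
in which both $B$ and $\Sigma$ are bounded and locally Lipschitz functionals of $u$: the convective, pressure and viscous terms are truncated by the factors $[u]_R$ and $\chi(\norm{u}_{V_m} - R)$; the barrier term $g\,\nabla j_\alpha(u)$ lies in $C^{0,1}(V_m)$ by construction of $j_\alpha$; and $\Sigma$ inherits boundedness and Lipschitz regularity from \eqref{eq:3.10} together with $\sum_k f_{k,\varepsilon}^2 < \infty$. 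The standard existence theory for finite-dimensional SDEs with bounded locally Lipschitz coefficients of linear growth (Banach fixed point on a small time interval, patched to $[0,T]$ thanks to the cut-offs) then produces a strong, hence martingale, solution $u \in C([0,T]; V_m)$ $\PP$-a.s., and setting $\rho := \rho[u]$ yields a martingale solution in the sense of Definition~\ref{def:3.1}.

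\textbf{Moment estimate and main obstacle.} Applying It\^o's formula to $\tfrac{1}{2}\norm{u}_{V_m}^2$ and then to $(\cdot)^{r/2}$, exploiting the equivalence of norms on the finite-dimensional $V_m$ together with the pathwise bounds on $\rho^{\pm 1}$ from Step~1, yields \eqref{eq:3.17} via Gronwall combined with Burkholder--Davis--Gundy; the sign property $\nabla j_\alpha(v) \cdot v \geq 0$ ensures that the boundary contribution enters with the favourable sign. The principal technical obstacle is that, in contrast to the conventional BFH scheme, the noise is \emph{not} pre-composed with the $L^2$-projection $\Pi_m$, so one has to propagate Lipschitz continuity through the composite map $u \mapsto \MMM[\rho[u]]^{-1}[\GG_\varepsilon(\rho[u],\rho[u]u)]_m^\ast$ by chaining the parabolic stability of $u \mapsto \rho[u]$, the Lipschitz dependence of $\MMM[\rho]^{-1}$ on $\rho$ established in Proposition~\ref{prop:1.5}, and the boundedness of $\GG_\varepsilon$ from \eqref{eq:3.10}. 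Finite-dimensionality of $V_m$ keeps these compositions manageable at this level, although the resulting constants depend explicitly on $m, R, \varepsilon, \urho, \orho$---harmless for the basic approximate problem but precisely flagging the quantities that will have to be controlled independently at the later limits $R \to \infty$ and $m \to \infty$.
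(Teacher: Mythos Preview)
Your approach is correct in spirit but takes a genuinely different route from the paper. The paper does \emph{not} set up a fixed-point argument on $C([0,T];V_m)$; instead it uses a time-discretization scheme (Section~\ref{sec:Iteration scheme}): the velocity is frozen at $t=nh$, the parabolic density equation is solved explicitly on $[nh,(n+1)h)$, and $u$ is then given by the closed formula $u(\tau)=\MMM^{-1}[\rho(\tau)]\bigl([\rho_0u_0]_m^\ast+\int_0^\tau\NNN[\rho]([u]_h)\,dt+\int_0^\tau[\GG_\varepsilon]_m^\ast dW\bigr)$, so no fixed point is needed at each step. The limit $h\to 0$ is then handled by stochastic compactness: uniform H\"older bounds on $[\rho u]_m^\ast$ and $u$ (via Kolmogorov's criterion), tightness of laws on a Polish path space, and Skorokhod's representation theorem to pass to the limit on a new probability space (Propositions~\ref{prop:repr1}--\ref{prop:ME1}).

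Your direct functional-SDE route is more economical and would in fact deliver a \emph{pathwise} solution immediately, which the paper only obtains later (Theorem~\ref{thm:pathwise_sol}) by combining the martingale solution of Theorem~\ref{thm:sol1} with pathwise uniqueness and the Gy\"ongy--Krylov lemma. The trade-off is that your coefficients are genuinely path-dependent---$B(u)(t)$ and $\Sigma(u)(t)$ depend on $u|_{[0,t]}$ through the solution operator $u\mapsto\rho[u]$---so ``standard existence theory for finite-dimensional SDEs'' does not apply verbatim; you need a contraction argument on $L^r(\Omega;C([0,T];V_m))$ that chains the parabolic stability estimate $\sup_{[0,T]}\|\rho[u_1]-\rho[u_2]\|_{C^{2+\nu}_x}\lesssim\sup_{[0,T]}\|u_1-u_2\|_{V_m}$ through the drift and diffusion, and then exploits the factor of $T$ (resp.\ $T^{1/2}$) from the time integral (resp.\ BDG) to get contraction on short intervals. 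This works, but you should state it explicitly rather than invoke the Markovian theory. The paper's compactness route avoids this bookkeeping entirely at the cost of producing only a martingale solution at this stage.
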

\subsubsection{Iteration scheme}\label{sec:Iteration scheme}
Let $\stochbasis$ be a stochastic basis with a complete right-continuous filtration and $W$ be a cylindrical $(\FFF_t)$-Wiener process. Consider an $\FFF_0$-measurable initial datum $(\rho_0,u_0)$ with law $\Lambda$. By the assumptions \eqref{eq:3.15}, we have
\begin{equation}
    \label{eq:3.18}
    0<\urho\leq \rho_0,\quad \norm{\rho_0}_{C_x^{2+\nu}}\leq \orho,\quad \nabla \rho_0\cdot \rn |_{\pD}=0,\quad \PP\text{-a.s.,}\quad  \EE\left[\norm{u_0}_{V_m}^r\right]\leq \overline{u}.
\end{equation}
Fixing a time step $h>0$, we set 
\begin{equation}
    \label{eq:3.19}
    \rho(t) = \rho_0,\quad u(t)=u_0\quad \text{for} \  t\leq 0,
\end{equation}
and recursively construct the solutions to the following equations

\begin{align}
    \label{eq:3.20}
    &\begin{aligned}
        \pp_t \rho + \rdiv (\rho[u(nh)]_R) &= \varepsilon \Delta \rho ,\quad \nabla \rho \cdot \rn |_{\pD} =0,\quad \text{in}\ [nh,(n+1)h)\times D,  \\ 
        \rho(nh) &= \rho(nh-) := \lim_{s\uparrow nh}\rho(s)
    \end{aligned}
\end{align}
and 
\begin{align}
    &\int_D \rho u \cdot \bphi (\tau) dx \notag\\ 
    &\quad = \int_D \rho u \cdot \bphi (nh) dx  + \int_{nh }^{\tau} \int_D \left[(\rho(t)[u(nh)]_R \otimes u(nh)):\nabla \bphi + \chi\left(\norm{u(nh)}_{V_m}-R\right)p_\delta (\rho(t))\rdiv \bphi\right]dxdt  \notag \\ 
    &\quad \quad - \int_{nh }^{\tau} \int_D \left[\SSS(\nabla u(nh)):\nabla\bphi -\varepsilon \rho(t) u(nh)\cdot \Delta \bphi \right]dxdt - \int_{nh }^{\tau}\int_{\pD} g\nabla j_\alpha(u(nh))\cdot \bphi d\Gamma dt \notag \\ 
    &\quad \quad + \int_{nh }^{\tau}\int_D \GG_\varepsilon (\rho(nh),\rho u(nh)) \cdot \bphi dxdW, \quad \ \tau\in \ico{nh}{(n+1)h},\  \bphi\in V_m, \label{eq:3.21}
\end{align}
where $u(nh) = u(nh-):= \lim_{s\uparrow nh}u(s)$ and $n\in \{0,\ldots,\lfloor h^{-1}T\rfloor\}$. 
Note that the velocity on the left-hand side of \eqref{eq:3.20} and on the right-hand side of \eqref{eq:3.21} is always frozen at the time $nh$, whereas the density is evaluated at time $t$ everywhere except for the stochastic integral. 

To construct these solutions, we first solve the following parabolic equation 
\begin{align}
    \label{eq:3.22}
    &\begin{aligned}
        \pp_t \rho + \rdiv (\rho [u_0]_R) &= \varepsilon \Delta \rho ,\quad \nabla \rho \cdot \rn |_{\pD} =0,\quad \text{in}\ [0,h)\times D,  \\ 
        \rho(0) &= \rho_0.  
    \end{aligned}
\end{align}
Then by the classical theory for the parabolic Neumann problem (see \cite[Lemma 3.1, Theorems 11.29, and 11.30]{FN17}), there exists a unique solution $\rho\in C([0,h];C^{2+\nu}(\oD))\cap C^1([0,h];C^{\nu}(\oD))$ to the problem \eqref{eq:3.22} which satisfies the estimate
\begin{equation}
    \label{eq:3.23}
    0<\urho \exp \left(-\int_{0}^{\tau}\norm{\rdiv [u_0]_R}_{L^\infty(D)}dt\right)\leq \rho(\tau,\cdot)\leq \orho \exp \left(\int_{0}^{\tau}\norm{\rdiv [u_0]_R}_{L^\infty(D)}dt\right)<\infty,\quad \text{in}\ \oD
\end{equation}
for all $\tau\in [0,h]$. Furthermore, this solution satisfies the estimate 
\begin{equation}
    \label{eq:3.24}
    \norm{\rho}_{C([0,h];C^{2+\nu}(\oD))} +\norm{\pp_t\rho}_{C([0,h];C^{\nu}(\oD))}\leq c, 
\end{equation}
for some constant $c=c(m,R,T,\nu,\orho,\varepsilon)$. Indeed, by classical parabolic maximum regularity, we have
\begin{align*}
    &\norm{\rho}_{C([0,h];C^{2+\nu}(\oD))} +\norm{\pp_t\rho}_{C([0,h];C^{\nu}(\oD))}\leq c\left(\norm{\rho_0}_{C^{2+\nu}(\oD)}+ \norm{\rdiv (\rho [u_0]_R)}_{C([0,h];C^{\nu}(\oD))}\right),
\end{align*}
and by the equivalence of norms on $V_m$, the definition of $[\cdot]_R$, and \eqref{eq:3.23}, we have 
\begin{align*}
    &\norm{\rdiv (\rho [u_0]_R)}_{C([0,h];C^{\nu}(\oD))} \\
    &\quad \leq \norm{\nabla\rho \cdot [u_0]_R}_{C([0,h];C^{\nu}(\oD))} + \norm{\rho \rdiv[u_0]_R}_{C([0,h];C^{\nu}(\oD))} \\ 
    &\quad \leq \norm{\nabla \rho}_{C([0,h];C^{\nu}(\oD))}\norm{[u_0]_R}_{C([0,h];C^{0}(\oD))} + \norm{\nabla\rho}_{C([0,h];C^{0}(\oD))}\norm{[u_0]_R}_{C([0,h];C^{\nu}(\oD))}\\ 
    &\quad \quad + \norm{\rho}_{C([0,h];C^{\nu}(\oD))}\norm{\rdiv[u_0]_R}_{C([0,h];C^{0}(\oD))} + \norm{\rho}_{C([0,h];C^{0}(\oD))}\norm{\rdiv[u_0]_R}_{C([0,h];C^{\nu}(\oD))} \\ 
    &\quad \leq c(m,R)\norm{\rho}_{C([0,h];C^{1+\nu}(\oD))} \\ 
    &\quad \leq \frac 1 2 \norm{\rho}_{C([0,h];C^{2+\nu}(\oD))} + c(m,R,\nu)\norm{\rho}_{C([0,h];C^{0}(\oD))} \\ 
    &\quad \leq \frac 1 2 \norm{\rho}_{C([0,h];C^{2+\nu}(\oD))} + c(m,R,\nu)\orho \exp\left(\inttau \norm{\rdiv [u_0]_R}_{L^\infty(D)}dt \right), \\
    &\quad \leq \frac 1 2 \norm{\rho}_{C([0,h];C^{2+\nu}(\oD))} + c(m,R,\nu,\orho,T), 
\end{align*}
where we used an interpolation argument in the inequality in the third to last line. Combining the above two inequalities yields \eqref{eq:3.24}.

Next, to solve \eqref{eq:3.21} on $[0,h]$, it is convenient to rewrite \eqref{eq:3.21} in terms of $du$. To this end, we introduce (for a given function $\rho\in L^1(D)$) the linear mapping
$$
    \MMM[\rho]:V_m\to V_m^\ast,\quad \MMM[\rho](v)=[\rho v]_m^\ast,
$$
where the symbol $[\cdot]_m^\ast:L^2(D)\to V_m^\ast$ is given by 
$$
    \ev{[v]_m^\ast,\bphi}_{V_m^\ast,V_m} = \int_D v\cdot \bphi dx,\quad \bphi\in V_m,\ v\in L^2(D).
$$
It is easy to check that the operator $\MMM$ has the following properties. $\MMM[\rho]$ is invertible, and we have 
\begin{equation}
    \label{eq:3.25}
    \norm{\MMM^{-1}[\rho]}_{\LL(V_m^\ast,V_m)}\leq \left(\inf_{x\in \oD} \rho\right)^{-1}
\end{equation}
as long as $\rho\in L^1(D)$ is bounded below from zero, and making use of the identity 
$$
    \MMM^{-1}[\rho_1]-\MMM^{-1}[\rho_2]=\MMM^{-1}[\rho_2]\left(\MMM[\rho_2]-\MMM[\rho_1]\right)\MMM^{-1}[\rho_1],
$$
we see that 
\begin{equation}
    \label{eq:3.26}
    \norm{\MMM^{-1}[\rho_1]-\MMM^{-1}[\rho_2]}_{\LL(V_m^\ast,V_m)}\leq c(m,\urho) \norm{\rho_1-\rho_2}_{L_x^1},
\end{equation}
provided both $\rho_1$ and $\rho_2$ are bounded below by some positive constant $\urho$.
Furthermore, we introduce (for a given function $\rho\in C^2(\oD)$) the mapping $\NNN[\rho]:V_m\to V_m^\ast$ as follows: 
\begin{align}
    \NNN[\rho](v) &= \left[-\rdiv (\rho[v]_R\otimes v) - \chi(\norm{v}_{V_m}-R)\nabla p_\delta(\rho)+\varepsilon \Delta (\rho v) +\rdiv \SSS(\nabla v)\right]_m^\ast  \notag \\
    &\quad - [\varepsilon \rho(\nabla v)\rn]_m^{\ast \pp} + [\varepsilon \rho v \cdot \nabla_\rn]_m^{\ast \pp} - [\SSS(\nabla v)\rn]_m^{\ast \pp} - [g\nabla j_\alpha(v)]_m^{\ast \pp} \label{eq:3.27} 
\end{align}
where for $v\in L^{2}(\pD)$, $[v]_m^{\ast \pp},\ [v \nabla_\rn]_m^{\ast \pp}\in V_m^\ast$ are given by 
\begin{align*}
    \ev{[v]_m^{\ast \pp},\bphi}_{V_m^\ast,V_m} &= \int_{\pD} v\cdot \bphi d\Gamma,\  \\
    \ev{[v\nabla_\rn]_m^{\ast \pp},\bphi}_{V_m^\ast,V_m} &= \int_{\pD} v\cdot (\nabla\bphi)\rn d\Gamma,\quad \bphi\in V_m.
\end{align*}
Note that by the inclusion $V_m\subset C^2(\oD)$ and the equivalence of norms on $V_m$, all boundary integrals appearing above are bounded linear functionals on $V_m$. Therefore, the mapping $\NNN[\rho]:V_m\to V_m^\ast$ is well-defined.
Accordingly, the relation \eqref{eq:3.21} can be written in the form 
\begin{align}
    \label{eq:3.28}
    u(\tau) &= \MMM^{-1}[\rho(\tau)] \left([\rho_0 u_0]_m^\ast + \int_{0}^{\tau}\NNN[\rho(t)](u_0)dt + \int_{0}^{\tau}\left[\GG_\varepsilon(\rho_0,\rho_0 u_0)\right]_m^\ast dW\right),\quad \tau\in \ico{0}{h},
\end{align}
where the Bochner integral and the stochastic integral on the right-hand side are well-defined in view of \eqref{eq:3.24}. Therefore, by recursively repeating a similar argument for $n\in \{1,\ldots,\lfloor h^{-1}T\rfloor\}$, we can construct a unique solution to \eqref{eq:3.20}--\eqref{eq:3.21}, denoted as $(\rho_h,u_h)$.

\subsubsection{The limit for vanishing time step}\label{sec:The limit for vanishing time step}
Our next goal is to let $h\to 0$ in \eqref{eq:3.20}--\eqref{eq:3.21} in order to obtain a solution to the approximate problem \eqref{eq:3.11}--\eqref{eq:3.12}. 
In this section, the approximation parameters $R>0, \alpha\in (0,1), m\in \NN, \varepsilon\in (0,1)$ and $\delta\in (0,1)$ are kept fixed. We call uniform estimate that are independent of $h$ but may depend on $R,\alpha,m,\varepsilon,\delta$ and $T>0$.
Our strategy is as follows: 
\begin{itemize}
    \item[1.] We establish a uniform estimate for the approximation sequence $(\rho_h,u_h)$ with respect to $h\in (0,1)$ to derive the tightness of laws of $[\rho_h,u_h,W],h\in (0,1)$.
    \item[2.] We apply Prokhorov's theorem and Skorokhod's theorem to show the existence of the limit as $h\to 0$.
    \item[3.] We prove that the limit is a solution in the sense of Definition \ref{def:3.1}.
\end{itemize}
For simplicity of notation, the subscript $h$ in $(\rho_h,u_h)$ is temporarily omitted, and the following notation is used
$$
    [v]_h = v(nh,\cdot),\quad [v]_{h,R} = [v(nh,\cdot)]_R\quad \text{for}\ t\in \ico{nh}{(n+1)h},\ n\in \NN.
$$
Then, $\rho$ given in Section \ref{sec:Iteration scheme} is the unique solution to the following parabolic Neumann problem 
\begin{align}
    \label{eq:3.29}
    &\begin{aligned}
        \pp_t \rho + \rdiv (\rho [u]_{h,R}) &= \varepsilon \Delta \rho ,\quad \nabla \rho \cdot \rn |_{\pD} =0,\quad \text{in}\ [0,T)\times D,  \\ 
        \rho(0) &= \rho_0
    \end{aligned}
\end{align}
in the class 
\begin{equation*}
    \rho\in C([0,T];C^{2+\nu}(\oD)),\quad \pp_t \rho \in L^\infty(0,T;C^\nu(\oD)).
\end{equation*}
Note that as the regularized velocity $[u]_{h,R}$ is only piecewise continuous, the same is true for $\pp_t\rho$. In general, we do not expect $\pp_t\rho\in C([0,T];C^{\nu}(\oD))$. 
In view of \eqref{eq:3.29}, following the same argument used to derive \eqref{eq:3.23} and \eqref{eq:3.24}, we obtain the estimate
\begin{align}
    &\esssup_{t\in [0,T]} \left(\norm{\rho(t)}_{C_x^{2+\nu}}+\norm{\pp_t \rho(t)}_{C_x^{\nu}} +\norm{\rho^{-1}(t)}_{C_x^0}\right)\leq c\quad \PP\text{-a.s.,} \label{eq:3.30}
\end{align}
with a deterministic constant $c = c(m,R,T,\urho,\orho,\nu,\varepsilon)$ independent of $h$. 
\begin{remark}
    \label{rem:3.4}
    By the Sobolev embedding and interpolation (see \cite[Theorem 5.2]{Ama00} and \cite{Lun12}), the following embedding holds:
    \begin{align}
        L^\infty(0,T;C^{2+\nu}(\oD))\cap W^{1,\infty}(0,T;C^{\nu}(\oD))\hookrightarrow C^{\iota}([0,T];C^{2+\iota}(\oD)),\quad \iota\in \left(0,\tfrac \nu 3\right], 
    \end{align} 
    where, for $\iota \in (0,\nu/3)$, this embedding is actually compact. Therefore, $\rho_h$ is bounded in this space.
\end{remark}
Next, we consider uniform estimates for the velocity $u$. What we require is compactness of the approximate velocities in the space $C([0,T];V_m)$, and we have to control the difference $u-[u]_h$ uniformly in time. To this end, we need to derive an estimate for the H\"{o}lder norm of $u$.
\begin{proposition}
    The approximate sequence $(u_h)_{h\in (0,1)}$ satisfies the estimate
    \begin{align}
        \label{eq:3.32}
        \EE\left[\norm{u}_{C_t^\beta V_m}^r\right]\leq c \left(1+ \EE\left[\norm{u_0}_{V_m}^r\right]\right)
    \end{align}
    uniformly in $h$, whenever $r>2$ and $\beta\in (0,\nu/3\wedge (1/2-1/r))$ with a constant $c=c(m,R,T,\urho,\orho,\nu,r,\beta)$.
\end{proposition}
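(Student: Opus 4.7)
The plan is to exploit the semimartingale structure of the momentum process $\Phi(t) := [\rho(t) u(t)]_m^\ast$, viewed in the dual space $V_m^\ast$, combined with the Lipschitz inverse-density mapping $\MMM^{-1}$ from \eqref{eq:3.25}--\eqref{eq:3.26}. The key decomposition is
\begin{align*}
u(t)-u(s) = \MMM^{-1}[\rho(t)]\bigl(\Phi(t)-\Phi(s)\bigr) + \bigl(\MMM^{-1}[\rho(t)]-\MMM^{-1}[\rho(s)]\bigr)\Phi(s),
\end{align*}
and the two summands will be handled by Burkholder--Davis--Gundy + Kolmogorov and by the pathwise H\"older regularity of $\rho$, respectively. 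Continuity of $\Phi$ across the grid points $\{nh\}$ follows from $\rho(nh)=\rho(nh-)$ and $u(nh)=u(nh-)$, so the drift and diffusion in \eqref{eq:3.21}, although frozen at $u(nh)$, assemble into a globally continuous $V_m^\ast$-valued semimartingale on $[0,T]$, to which BDG applies with no loss across grid points.

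For the first summand the plan is to estimate $\Phi(t)-\Phi(s)$ in $V_m^\ast$. The drift $\NNN[\rho(t)](u(nh))$ is controlled uniformly in $h$ via the pathwise bound \eqref{eq:3.30} for $\rho$, the cut-off $\chi(\norm{u(nh)}_{V_m}-R)[u(nh)]_R$ which tames the convective and pressure contributions, the bound $|\nabla j_\alpha(v)|\le|v|$ from \eqref{eq:3.6}, and the equivalence of norms on the finite-dimensional space $V_m$; this yields $\norm{\NNN[\rho(t)](u(nh))}_{V_m^\ast}\le c(1+\norm{u(nh)}_{V_m})$. The diffusion $[\GG_\varepsilon(\rho(nh),\rho(nh)u(nh))]_m^\ast$ is uniformly bounded in $L_2(\fU,V_m^\ast)$ by \eqref{eq:3.10}, \eqref{eq:3.30}, and the argument of Remark \ref{rem:3.2}. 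Using BDG together with the uniform-in-$h$ bound $\EE\sup_{t\in[0,T]}\norm{u(t)}_{V_m}^r\le c(1+\EE\norm{u_0}_{V_m}^r)$ — which is the discrete analogue of \eqref{eq:3.17} and must be secured beforehand via a cut-off energy estimate — produces
\begin{align*}
\EE\norm{\Phi(t)-\Phi(s)}_{V_m^\ast}^r \le c\,|t-s|^{r/2}\bigl(1+\EE\norm{u_0}_{V_m}^r\bigr),\quad 0\le s<t\le T.
\end{align*}
Since $\norm{\MMM^{-1}[\rho(t)]}_{\LL(V_m^\ast,V_m)}\le\norm{\rho^{-1}(t)}_{C^0_x}$ is a.s. bounded by \eqref{eq:3.30}, Kolmogorov's continuity theorem then gives the required moment bound for $\alpha$-H\"older regularity of the first summand for every $\alpha<\tfrac12-\tfrac1r$.

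For the second summand, Remark \ref{rem:3.4} together with \eqref{eq:3.30} furnishes the pathwise bound $\norm{\rho(t)-\rho(s)}_{L^1_x}\le c|t-s|^{\nu/3}$ with a deterministic constant. Property \eqref{eq:3.26} then yields
\begin{align*}
\norm{\bigl(\MMM^{-1}[\rho(t)]-\MMM^{-1}[\rho(s)]\bigr)\Phi(s)}_{V_m} \le c(m,\urho)\,|t-s|^{\nu/3}\norm{u(s)}_{V_m}.
\end{align*}
Dividing by $|t-s|^{\nu/3}$, taking the supremum over $s\ne t$, and then $r$-th moments controls this contribution in $L^r(\Omega;C^{\nu/3}([0,T];V_m))$ by $c(1+\EE\norm{u_0}^r)$. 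Combining both pieces yields \eqref{eq:3.32} for any $\beta\in(0,(\nu/3)\wedge(\tfrac12-\tfrac1r))$.

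The main obstacle is securing the uniform-in-$h$ a priori bound on $\EE\sup_t\norm{u_h(t)}_{V_m}^r$, which underlies every estimate above; the cut-offs and the boundedness of $\GG_\varepsilon$ make this plausible through It\^o's formula applied on each sub-interval to a functional equivalent to $\tfrac12\int_D\rho|u|^2 dx$. A smaller but essential subtlety is that the density contribution is handled \emph{pathwise}, thereby bypassing the $1/r$ loss inherent to Kolmogorov and producing the sharp exponent $\nu/3$ claimed in the statement.
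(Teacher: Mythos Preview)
Your approach is essentially identical to the paper's: the same decomposition of $u(t)-u(s)$ via $\MMM^{-1}[\rho]$ and $\Phi=[\rho u]_m^\ast$, BDG plus Kolmogorov for the momentum increments yielding exponent $\tfrac12-\tfrac1r$, and the pathwise $C^{\nu/3}$-H\"older regularity of $\rho$ from Remark~\ref{rem:3.4} for the density contribution. The only minor deviation is how the preliminary uniform bound $\EE\sup_t\norm{u(t)}_{V_m}^r\le c(1+\EE\norm{u_0}_{V_m}^r)$ is obtained: the paper derives it directly from the momentum equation \eqref{eq:3.21} by bounding $\norm{[\rho u]_m^\ast(\tau)}_{V_m^\ast}$, taking the supremum over test functions, applying BDG and then Gronwall (equations \eqref{eq:3.35}--\eqref{eq:3.39}), rather than via an It\^o energy identity as you suggest; both routes work here and the paper's is slightly more direct.
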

\begin{proof}
    By the definition of $\MMM$ and \eqref{eq:3.28}, $u$ has the representation 
    \begin{align}
        u(\tau) &= \MMM^{-1}[\rho(\tau)]([\rho u]_m^\ast(\tau)), \\ 
        [\rho u]_m^\ast(\tau) &= [\rho_0 u_0]_m^\ast + \int_{0}^{\tau}\NNN[\rho(t)]([u]_h)dt + \int_{0}^{\tau}\left[\GG_\varepsilon([\rho]_h,[\rho u]_h )\right]_m^\ast dW,\quad \tau\in [0,T].
    \end{align}
    Hence, we first show that 
    \begin{align*}
        \EE\left[\norm{[\rho u]_m^\ast(\tau)}_{C_t^{\beta'}V_m^\ast}^r\right]\leq c\left(1+\EE\left[\norm{u_0}_{V_m}^r\right]\right)  
    \end{align*}
    for some $\beta'\in (0,1)$. It follows from \eqref{eq:3.21} and the equivalence of norms on $V_m$, that, uniformly in $h$, 
    \begin{align}
        \label{eq:3.35}
        \int_D \rho u(\tau)\cdot \bphi dx \lesssim \norm{u_0}_{V_m} + \inttau \sup_{0\leq s\leq t}\norm{u}_{V_m}dt + T + \norm{\inttau \left[\GG_\varepsilon([\rho]_h,[\rho u]_h )\right]_m^\ast dW}_{V_m^\ast}
    \end{align}
    for any $\bphi\in V_m,\ \norm{\bphi}_{V_m}\leq 1$ whenever $0\leq \tau\leq T$. Here we take into account \eqref{eq:3.30} and the definitions of $\chi$ as well as the cut-off $[\cdot]_R$. Taking the supremum over $\bphi$, we obtain 
    \begin{align}
        \label{eq:3.36}
        \norm{[\rho u]_m^\ast(\tau)}_{V_m^\ast}\lesssim \norm{u_0}_{V_m} + \inttau \sup_{0\leq s\leq t}\norm{u}_{V_m}dt + T + \norm{\inttau\left[\GG_\varepsilon([\rho]_h,[\rho u]_h )\right]_m^\ast dW}_{V_m^\ast}
    \end{align}
    and
    \begin{align}
        \label{eq:3.37}
        \norm{[\rho u]_m^\ast(\tau)}_{V_m^\ast}^r\lesssim \norm{u_0}_{V_m}^r + \inttau \sup_{0\leq s\leq t}\norm{u}_{V_m}^r dt + 1 + \norm{\inttau\left[\GG_\varepsilon([\rho]_h,[\rho u]_h )\right]_m^\ast dW}_{V_m^\ast}^r
    \end{align}
    uniformly in $h$ for all $0\leq \tau\leq T$ and for any $r\geq 1$. Finally, we pass to expectation and apply the Burkholder--Davis--Gundy inequality to the last integral with consideration of Remark \ref{rem:3.2}, obtaining  
    \begin{align}
        &\EE\left[\sup_{0\leq t\leq \tau}\norm{[\rho u]_m^\ast(t)}_{V_m^\ast}^r\right] \notag \\ 
        &\quad \lesssim \EE\left[\norm{u_0}_{V_m}^r \right]+\inttau \EE\left[\sup_{0\leq s\leq t}\norm{u}_{V_m}^r\right]dt + 1 + \EE\left[\left(\inttau \sum_{k=1}^{\infty}\norm{\left[G_{k,\varepsilon}([\rho]_h,[\rho u]_h )\right]_m^\ast}_{V_m^\ast}^2 dt\right)^{r/2}\right]\notag \\ 
        &\quad \lesssim \EE\left[\norm{u_0}_{V_m}^r \right]+\inttau \EE\left[\sup_{0\leq s\leq t}\norm{u}_{V_m}^r\right]dt + 1 + \EE\left[\left(\inttau \norm{[\rho]_h}_{L_x^2}^2 \sum_{k=1}^{\infty}f_{k,\varepsilon}^2 dt\right)^{r/2}\right] \notag\\ 
        &\quad \lesssim \EE\left[\norm{u_0}_{V_m}^r \right]+\inttau \EE\left[\sup_{0\leq s\leq t}\norm{u}_{V_m}^r\right]dt + 1 \label{eq:3.38}
    \end{align}
    uniformly in $h$. Due to
    $$
        \norm{[\rho u]_m^\ast}_{V_m^\ast}\lesssim \norm{u}_{V_m} \lesssim \norm{[\rho u]_m^\ast}_{V_m^\ast},
    $$
    Gronwall's lemma gives rise to the following estimate:
    \begin{align}
        \label{eq:3.39}
        &\EE\left[\sup_{\tau\in [0,T]}\norm{[\rho u]_m^\ast(\tau)}_{V_m^\ast}^r\right] +\EE\left[\sup_{\tau\in [0,T]}\norm{u(\tau)}_{V_m}^r\right]\leq c(m,R,T,\urho,\orho,r)\left(1+\EE\left[\norm{u_0}_{V_m}^r\right]\right).
    \end{align}
    On the other hand, following the same argument used to derive \eqref{eq:3.35}, we obtain 
    \begin{align*}
        &\int_D (\rho u(\tau_1)-\rho u (\tau_2))\cdot \bphi dx  \\ 
        &\quad \lesssim \norm{u_0}_{V_m} + \sup_{0\leq \tau\leq T}\norm{u}_{V_m}|\tau_1-\tau_2| + |\tau_1-\tau_2| + \norm{\int_{\tau_2}^{\tau_1} \left[\GG_\varepsilon([\rho]_h,[\rho u]_h )\right]_m^\ast dW}_{V_m}
    \end{align*}
    for any $\bphi\in V_m,\ \norm{\bphi}_{V_m}\leq 1$ whenever $0\leq \tau_1<\tau_2\leq T$.
    Therefore, with the bound \eqref{eq:3.39} at hand, we repeat the arguments leading to \eqref{eq:3.38} to obtain 
    \begin{align*}
        &\EE\left[\norm{[\rho u(\tau_1)-\rho u(\tau_2)]_m^\ast}_{V_m^\ast}^r\right]  \lesssim |\tau_1-\tau_2|^{r/2}\left(1+\EE\left[\norm{u_0}_{V_m}^r\right]\right),\quad r\geq 1
    \end{align*}
    uniformly in $h$ whenever $0\leq \tau_1<\tau_2\leq T$. Thus, for $r>2$, we apply Kolmogorov's continuity theorem to conclude that $[\rho u]_m^\ast$ has $\PP$-a.s. $\beta'$-H\"{o}lder continuous trajectories for all $\beta'\in (0,1/2-1/r)$. In addition, we have 
    \begin{align}
        \label{eq:3.40}
        &\EE\left[\norm{[\rho u]_m^\ast}_{C_t^{\beta'}V_m^\ast}^r\right]  \lesssim 1+\EE\left[\norm{u_0}_{V_m}^r\right],\quad r>2
    \end{align}
    uniformly in $h$, where the proportional constant depends on $\beta'$. Next, we proceed to the proof of the estimate \eqref{eq:3.32}. We decompose $u(\tau_1)-u(\tau_2)$ into two parts
    \begin{align*}
        &u(\tau_1)-u(\tau_2) = \left(\MMM^{-1}[\rho(\tau_1)]-\MMM^{-1}[\rho(\tau_2)]\right) [\rho u(\tau_1)]_m^\ast + \MMM^{-1}[\rho(\tau_2)][\rho u(\tau_1)-\rho u(\tau_2)]_m^\ast.
    \end{align*}
    Considering \eqref{eq:3.30}, Remark \ref{rem:3.4} and \eqref{eq:3.40}, we have $\PP$-a.s.
    \begin{align*}
        \norm{\left(\MMM^{-1}[\rho(\tau_1)]-\MMM^{-1}[\rho(\tau_2)]\right) [\rho u(\tau_1)]_m^\ast}_{V_m} &\leq \norm{\MMM^{-1}[\rho(\tau_1)]-\MMM^{-1}[\rho(\tau_2)]}_{\LL(V_m^\ast,V_m)}\norm{[\rho u(\tau_1)]_m^\ast}_{V_m^\ast}  \\ 
        &\leq c(m,\urho)\norm{\rho(\tau_1)-\rho(\tau_2)}_{L_x^1} \norm{[\rho u(\tau_1)]_m^\ast}_{V_m^\ast}  \\ 
        &\lesssim \left|\tau_1-\tau_2 \right|^{\nu/3}\norm{\rho}_{C_t^{\nu/3}C_x^{2+\nu/3}}\norm{[\rho u]_m^\ast}_{C_t^0 V_m^\ast}  \\
        &\lesssim \left|\tau_1-\tau_2 \right|^{\nu/3}\norm{[\rho u]_m^\ast}_{C_t^0 V_m^\ast},
    \end{align*}
    and 
    \begin{align*}
        \norm{\MMM^{-1}[\rho(\tau_2)][\rho u(\tau_1)-\rho u(\tau_2)]_m^\ast}_{V_m} &\lesssim \norm{[\rho u(\tau_1)-\rho u(\tau_2)]_m^\ast }_{V_m^\ast}  \\ 
        &\lesssim \left|\tau_1-\tau_2 \right|^{\beta'}\norm{[\rho u]_m^\ast}_{C_t^{\beta'}V_m^\ast}.
    \end{align*}
    Therefore, combining these two estimates yields 
    \begin{align*}
        \EE\left[\norm{u}_{C_t^\beta V_m}^r\right]\leq c \left(1+ \EE\left[\norm{u_0}_{V_m}^r\right]\right)
    \end{align*}
    uniformly in $h$, whenever $r>2$ and $\beta\in (0,\nu/3\wedge (1/2-1/r))$ with a constant $c=c(m,R,T,\urho,\orho,\nu,r,\beta)$.
\end{proof}

With \eqref{eq:3.30} and \eqref{eq:3.32} at hand, we are ready to perform the limit $h\to 0$. Let $[\rho_h,u_h,W]$ be the unique approximate solution issuing from the iteration scheme \eqref{eq:3.19}--\eqref{eq:3.21}, with the initial data satisfying \eqref{eq:3.18}. The corresponding path space is defined as 
$$
    \XX = \XX_\rho\times \XX_u\times \XX_W = \overline{C^{\nu/3}([0,T];C^{2+\nu/3}(\oD))}^{\norm{\cdot}_{C_t^{\iota}C_x^{2+\iota}}}\times \overline{C^\beta([0,T];V_m)}^{\norm{\cdot}_{C_t^\kappa V_m}}\times C([0,T];\fU_0),
$$
where $\iota\in (0,\nu/3),\kappa\in (0,\beta)$, and $\nu$ and $\beta$ are the H\"{o}lder exponents in \eqref{eq:3.30} and \eqref{eq:3.32}, respectively. 
Let $\LL[\rho_h,u_h,W]$ denote the joint law of $[\rho_h,u_h,W]$ on $\XX$, whereas $\LL[\rho_h]$, $\LL[u_h]$, and $\LL[W]$ denote the corresponding marginals on $\XX_\rho$, $\XX_u$, and $\XX_W$, respectively.
In view of the bounds \eqref{eq:3.30} and \eqref{eq:3.32}, we obtain tightness of joint laws $\LL[\rho_h,u_h,W],\ h\in (0,1)$.
\begin{proposition}
    The set $\left\{ \LL[\rho_h,u_h,W],\ h\in (0,1) \right\}$ is tight on $\XX$.
\end{proposition}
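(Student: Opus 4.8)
The plan is to prove tightness of the three marginal laws $\LL[\rho_h]$, $\LL[u_h]$ and $\LL[W]$ separately on $\XX_\rho$, $\XX_u$ and $\XX_W$, and then to assemble a compact set in $\XX$ as the product of the three. Since a finite product of compact sets is compact and the mass excluded by the product set is controlled by $\PP(\rho_h\notin K^\rho)+\PP(u_h\notin K^u)+\PP(W\notin K^W)$, uniform tightness of each marginal immediately gives tightness of the joint laws.

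For the density marginal, tightness is essentially automatic from the deterministic bound. Indeed, \eqref{eq:3.30} says that, uniformly in $h$ and $\PP$-almost surely, $\rho_h$ lies in a fixed ball $B_\rho$ of $L^\infty(0,T;C^{2+\nu}(\oD))\cap W^{1,\infty}(0,T;C^{\nu}(\oD))$. By Remark~\ref{rem:3.4} the embedding of this space into $C^{\iota}([0,T];C^{2+\iota}(\oD))$ is compact for every $\iota\in(0,\nu/3)$, so the closure of $B_\rho$ in $\XX_\rho$ is compact and carries full mass under every $\LL[\rho_h]$. Hence $\{\LL[\rho_h]:h\in(0,1)\}$ is tight; in fact it is supported on a single compact set. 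For the velocity marginal, I would combine the moment estimate \eqref{eq:3.32} with the bound $\EE[\norm{u_0}_{V_m}^r]\le\overline{u}$ from \eqref{eq:3.15}, which yields $\sup_{h\in(0,1)}\EE[\norm{u_h}_{C^\beta([0,T];V_m)}^r]\le c(1+\overline{u})<\infty$ for $r>2$ and $\beta\in(0,\nu/3\wedge(1/2-1/r))$. Setting $K_L:=\{v:\norm{v}_{C^\beta([0,T];V_m)}\le L\}$ and noting that, since $V_m$ is finite-dimensional, $K_L$ is relatively compact in $C^{\kappa}([0,T];V_m)$ for every $\kappa<\beta$ by the Arzel\`a--Ascoli theorem, its closure in $\XX_u$ is compact; Chebyshev's inequality then gives $\sup_h\PP(u_h\notin\overline{K_L})\le c(1+\overline{u})L^{-r}\to0$ as $L\to\infty$, which proves tightness of $\{\LL[u_h]\}$. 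The Wiener marginal is trivial: $W$ does not depend on $h$, so $\LL[W]$ is a single Radon measure on the Polish space $C([0,T];\fU_0)$, hence tight.

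To conclude, given $\varepsilon>0$ I choose compact sets $K^\rho_\varepsilon\subset\XX_\rho$, $K^u_\varepsilon\subset\XX_u$, $K^W_\varepsilon\subset\XX_W$ with each excluded probability below $\varepsilon/3$ uniformly in $h$, and take $K_\varepsilon:=K^\rho_\varepsilon\times K^u_\varepsilon\times K^W_\varepsilon$, which is compact in $\XX$ and satisfies $\sup_{h}\PP([\rho_h,u_h,W]\notin K_\varepsilon)<\varepsilon$. There is no genuine obstacle here: the statement is a routine consequence of the a priori bounds already established, and the only points requiring care are checking that \eqref{eq:3.30} and \eqref{eq:3.32} are uniform in $h$ (done in the preceding estimates) and that the right-hand side of \eqref{eq:3.32} is finite thanks to the hypothesis on $\Lambda$, together with the correct use of the parabolic-regularity/interpolation compact embedding of Remark~\ref{rem:3.4}, where the strict inequality $\iota<\nu/3$ is precisely what upgrades the embedding from continuous to compact.
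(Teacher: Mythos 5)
Your proof is correct and takes essentially the same approach as the paper: bound the density pathwise via \eqref{eq:3.30} and Remark~\ref{rem:3.4} to get a fixed compact set of full mass in $\XX_\rho$, apply Chebyshev with the moment bound \eqref{eq:3.32} to get tightness of $\LL[u_h]$ in $\XX_u$, note $\LL[W]$ is a single Radon measure on a Polish space, and combine the three marginal compact sets into a product compact set in $\XX$ (the paper phrases this last step as an appeal to Tychonoff's theorem, but it is the same union-bound argument you write out).
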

\begin{proof}
    First, in view of Remark \ref{rem:3.4}, the set 
    $$
        B_L = \left\{ \rho\in C^{\nu/3}([0,T];C^{2+\nu/3}(\oD)) : \ \norm{\rho}_{C_t^{\nu/3}C_x^{2+\nu/3}}\leq L\right\}
    $$
    is relatively compact in $\XX_\rho$, and we have
    $$
        \LL[\rho_h](B_L^c)= \PP(\norm{\rho}_{C_t^{\nu/3}C_x^{2+\nu/3}}>L) =0,
    $$
    provided $L$ is sufficiently large. Therefore, $\left\{ \LL[\rho_h],h\in (0,1) \right\}$ is tight on $\XX_\rho$. Similarly, the set 
    $$
        B_L = \left\{ u\in C^{\beta}([0,T];V_m) : \ \norm{u}_{C_t^{\beta}V_m}\leq L\right\}
    $$
    is relatively compact in $\XX_u$ and by Chebyshev's inequality and \eqref{eq:3.32}, we have 
    $$
        \LL[u_h](B_L^c) = \PP(\norm{u_h}_{C_t^\beta V_m}>L)\leq \frac{1}{L^r}\EE\left[\norm{u_h}_{C_t^\beta V_m}^r\right]\leq \frac{C}{L^r}.
    $$
    Hence, $\left\{ \LL[u_h],h\in (0,1) \right\}$ is tight on $\XX_u$. Finally, $\LL[W]$ is tight on $\XX_W$ since it is a Radon measure on a Polish space. 
    Therefore, tightness of $\left\{ \LL[\rho_h,u_h,W],h\in (0,1) \right\}$ follows from Tychonoff's theorem.
\end{proof}
Accordingly, since $\XX$ is a Polish space, the following result follows from Prokhorov's theorem and Skorokhod's theorem.
\begin{proposition}
    \label{prop:repr1}
    There exists a complete probability space $(\tOmega,\tFFF,\tPP)$ with $\XX$-valued Borel measurable random variables $(\trho_h,\tu_h,\tW_h),h\in (0,1)$, and $(\trho,\tu,\tW)$ such that (up to a subsequence): 
    \begin{itemize}
        \item[(1)] $\LL[\trho_h,\tu_h,\tW_h] = \LL[\rho_h,u_h,W],h\in (0,1)$;
        \item[(2)] $(\trho_h,\tu_h,\tW_h)$ converges $\tPP$-almost surely to $(\trho,\tu,\tW)$ in the topology of $\XX$, i.e., 
        \begin{align}
            \label{eq:3.41}
            \begin{aligned}
                &\trho_h\to \trho\quad \text{in} \ C^{\iota}([0,T];C^{2+\iota}(\oD))\quad \tPP\text{-a.s.,} \\ 
                &\tu_h\to \tu\quad \text{in} \ C^{\kappa}([0,T];V_m)\quad \tPP\text{-a.s.,} \\
                &\tW_h\to \tW\quad \text{in} \ C([0,T];\fU_0)\quad \tPP\text{-a.s,}
            \end{aligned}
        \end{align}
    \end{itemize}
    where, for simplicity, we omit the notation for subsequences.
\end{proposition}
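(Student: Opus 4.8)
The statement is a textbook application of the Prokhorov--Skorokhod compactness machinery, so the plan is short. The first thing I would check is that the path space $\XX = \XX_\rho\times\XX_u\times\XX_W$ is Polish. Each of $\XX_\rho$ and $\XX_u$ is defined as the closure of a H\"older space in a \emph{strictly weaker} H\"older norm; by the compact embeddings recorded in Remark~\ref{rem:3.4} (together with the finite-dimensionality of $V_m$ for $\XX_u$), closed balls of the stronger H\"older space are compact in the weaker norm, so these closures are $\sigma$-compact, hence separable, and being closed subspaces of Banach spaces they are complete --- thus Polish. Since $\XX_W = C([0,T];\fU_0)$ is Polish ($\fU_0$ being a separable Hilbert space) and a finite product of Polish spaces is Polish, $\XX$ is Polish, hence in particular sub-Polish.

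Next I would feed the tightness of $\{\LL[\rho_h,u_h,W] : h\in(0,1)\}$ on $\XX$, established in the preceding proposition, into Prokhorov's theorem: this produces a sequence $h_k\downarrow 0$ and a Borel probability measure $\mu$ on $\XX$ with $\LL[\rho_{h_k},u_{h_k},W]\weakarrow\mu$ as $k\to\infty$. Applying Theorem~\ref{thm:Jakubowski--Skorokhod} to this weakly convergent sequence (legitimate because $\XX$ is sub-Polish; the classical Skorokhod theorem would also do, $\XX$ being Polish) yields the \emph{complete} probability space $(\tOmega,\tFFF,\tPP)$ --- which may even be taken to be the standard space $([0,1],\overline{\fB([0,1])},\fL)$ --- together with $\XX$-valued Borel random variables $(\trho_{h_k},\tu_{h_k},\tW_{h_k})$ and $(\trho,\tu,\tW)$ satisfying $\LL[\trho_{h_k},\tu_{h_k},\tW_{h_k}] = \LL[\rho_{h_k},u_{h_k},W]$, $\LL[\trho,\tu,\tW]=\mu$, and $(\trho_{h_k},\tu_{h_k},\tW_{h_k})\to(\trho,\tu,\tW)$ $\tPP$-a.s.\ in the topology of $\XX$. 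Because $\XX$ carries the product topology, this last convergence splits into $\tPP$-a.s.\ convergence in each of $\XX_\rho$, $\XX_u$ and $\XX_W$, which is exactly \eqref{eq:3.41}; relabelling $(h_k)$ as $(h)$ and suppressing this in the notation (as announced in the statement) concludes the proof.

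I do not expect any genuine obstacle here. The only step worth more than a line is the verification that $\XX$ is Polish, since H\"older spaces are themselves non-separable --- the point is precisely that $\XX_\rho$ and $\XX_u$ are $\sigma$-compact closures in weaker norms, a device used in \cite{BFHbook18} for exactly this reason. Everything else is the standard pairing of Prokhorov's theorem with Jakubowski's extension of Skorokhod's representation theorem, and no additional structure of the approximate solutions $(\rho_h,u_h)$ is needed beyond the uniform H\"older bounds already in hand.
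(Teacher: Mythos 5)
Your proposal is correct and follows exactly the paper's (unwritten) proof: the paper simply remarks "since $\XX$ is a Polish space, the following result follows from Prokhorov's theorem and Skorokhod's theorem," and your argument supplies the details — notably the verification that the closures of H\"older spaces in strictly weaker H\"older norms are separable (hence Polish), which is the same device used in \cite{BFHbook18}. The only minor caveat is that the $\sigma$-compactness step deserves one more sentence of care: $\bigcup_n \overline{B_n}^{C^\iota}$ need not a priori equal $\overline{\bigcup_n B_n}^{C^\iota}$, so it is cleaner to argue separability directly from the fact that the closure of $C^{\nu/3}$ in the $C^\iota$ norm coincides with the closure of $C^\infty$ (the "little H\"older space"), which admits a countable dense subset; but this does not change the conclusion or the overall structure of the argument.
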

\begin{remark}
    Since trajectories of $\trho$, $\tu$ and $\tW$ are $\tPP$-a.s. continuous, $[\trho,\tu,\tW]$ is progressively measurable with respect to the canonical filtration  
    $$
        \tFFF_t:= \sigma \left(\sigma_t[\trho]\cup \sigma_t[\tu]\cup \sigma_t[\tW]\right),\quad t\in [0,T].
    $$
    In view of Lemma \ref{lem:sufficient_cond_of_Wiener_by_law}, the process $\tW$ is a cylindrical Wiener process with respect to its canonical filtration. In order to show that $\tW$ is a cylindrical $(\tFFF_t)$-Wiener process, we intend to apply Lemma \ref{lem:sufficient_cond_of_(G_t)-Wiener }. 
    Hence, we need to show that filtration is non-anticipative with respect to $\tW$. 
    To this end, noting that the iterative construction of $[\rho_h,u_h]$, we see that the filtration 
    $$
        \sigma \left(\sigma_t[\trho_h]\cup \sigma_t[\tu_h]\cup  \sigma_t[\tW_{h}]\right),\quad t\in [0,T]
    $$
    is non-anticipative with respect to $\tW_h$. Therefore, Lemma \ref{lem:stability_of_nonanti} yields the claim.
\end{remark}
We prove that $[\trho,\tu,\tW]$ is a solution of \eqref{eq:3.11}--\eqref{eq:3.12} in the sense of Definition \ref{def:3.1}. First, we show that $[\trho,\tu]$ solves the approximate continuity equation.
\begin{proposition}
    \label{prop:eq_of_conti1}
    The process $[\trho,\tu]$ satisfies \eqref{eq:3.13} in $(0,T)\times D$, $\tPP$-a.s.
\end{proposition}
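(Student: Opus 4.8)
The plan is to transfer the discrete parabolic identity satisfied by $\rho_h$ onto the new probability space by equality of laws, and then to pass to the limit $h\to 0$ using the strong convergences \eqref{eq:3.41}; since the continuity equation carries no stochastic integral, the whole argument is pathwise. \emph{Step 1 (the equation on the new space).} For fixed $h$ and $\psi\in C^\infty(\oD)$ set
\[
   \Phi_h^\psi[\rho,v](\tau):= \int_D \rho(\tau)\psi\,dx - \int_D\rho(0)\psi\,dx - \inttau\!\!\int_D \rho\,[v]_{h,R}\cdot\nabla\psi\,dx\,dt + \varepsilon\inttau\!\!\int_D\nabla\rho\cdot\nabla\psi\,dx\,dt .
\]
For each fixed $h$ this is a continuous map $\XX_\rho\times\XX_u\to C([0,T])$, since evaluation at the nodes $nh$, the scalar cut-off $\chi(\norm{\cdot}_{V_m}-R)$, and the spatial integrals are all continuous. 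Testing \eqref{eq:3.29} against $\psi$ and using $\nabla\rho\cdot\rn|_{\pD}=0$ gives $\Phi_h^\psi[\rho_h,u_h]\equiv 0$ $\PP$-a.s.; since $\LL[\trho_h,\tu_h]=\LL[\rho_h,u_h]$ on $\XX_\rho\times\XX_u$, we obtain $\Phi_h^\psi[\trho_h,\tu_h]\equiv 0$ $\tPP$-a.s., first for $\psi$ in a countable dense subset of $C^\infty(\oD)$ and then for all $\psi$. The estimates \eqref{eq:3.23} and \eqref{eq:3.30} describe closed subsets of $\XX_\rho$, hence transfer as well, so $\trho_h$ enjoys $\tPP$-a.s. the same regularity and two-sided bounds as $\rho_h$; integrating by parts in $\Phi_h^\psi[\trho_h,\tu_h]\equiv 0$ (test functions first in $C_c^\infty(D)$, then in $C^\infty(\oD)$) shows that $\trho_h$ solves \eqref{eq:3.29} classically with $\tu_h$ in place of $u_h$.

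\emph{Step 2 (convergence of the frozen velocity).} I claim $[\tu_h]_{h,R}\to[\tu]_R$ in $C([0,T];V_m)$, hence in $C([0,T];C^1(\oD))$, $\tPP$-a.s. By \eqref{eq:3.41} the sequence $(\tu_h)$ converges to $\tu$ in $C^\kappa([0,T];V_m)$ $\tPP$-a.s., so $\{\tu_h\}\cup\{\tu\}$ is compact, in particular bounded, in $C^\kappa([0,T];V_m)$; therefore the freezing error obeys $\sup_{t\in[0,T]}\norm{[\tu_h]_h(t)-\tu_h(t)}_{V_m}\le \norm{\tu_h}_{C^\kappa_tV_m}\,h^\kappa\to 0$. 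Combined with the uniform convergence $\tu_h\to\tu$ and the local Lipschitz continuity of $v\mapsto[v]_R=\chi(\norm{v}_{V_m}-R)v$ on $V_m$, this gives $[\tu_h]_{h,R}\to[\tu]_R$ uniformly in time; the equivalence of all norms on the finite-dimensional space $V_m\subset C^2(\oD)$ upgrades the convergence to $C([0,T];C^1(\oD))$.

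\emph{Step 3 (passage to the limit).} Writing the equation of Step 1 in integrated form, $\tPP$-a.s.,
\[
   \trho_h(\tau) = \trho_h(0) + \inttau\!\Big(\varepsilon\Delta\trho_h - \rdiv(\trho_h\,[\tu_h]_{h,R})\Big)dt ,\qquad \tau\in[0,T].
\]
By \eqref{eq:3.41} the functions $\trho_h,\nabla\trho_h,\Delta\trho_h$ converge uniformly on $[0,T]\times\oD$ to $\trho,\nabla\trho,\Delta\trho$; together with Step 2 and the identity $\rdiv(\trho_h[\tu_h]_{h,R})=\nabla\trho_h\cdot[\tu_h]_{h,R}+\trho_h\,\rdiv[\tu_h]_{h,R}$, the integrand converges uniformly, and dominated convergence yields $\trho(\tau)=\trho(0)+\inttau(\varepsilon\Delta\trho-\rdiv(\trho\,[\tu]_R))\,dt$. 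The new integrand is continuous in $t$ (as $\tu\in C([0,T];V_m)$), so $\trho\in C^1([0,T];C^\iota(\oD))$ and \eqref{eq:3.13} holds in the classical sense in $(0,T)\times D$; and since $\nabla\trho_h\cdot\rn$ vanishes on $\pD$ with $\nabla\trho_h\to\nabla\trho$ uniformly up to the boundary, also $\nabla\trho\cdot\rn|_{\pD}=0$. (The full regularity $\trho\in C([0,T];C^{2+\nu}(\oD))\cap C^1([0,T];C^{\nu}(\oD))$ required in Definition~\ref{def:3.1}(3) is then recovered by observing that $\trho$ is the unique solution of the \emph{linear} parabolic Neumann problem with the now continuous-in-time coefficient $[\tu]_R$, to which the parabolic theory used for \eqref{eq:3.22}--\eqref{eq:3.24} applies.)

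\emph{Main obstacle.} Because there is no stochastic integral, the only genuinely delicate point is the treatment of the piecewise-frozen, truncated velocity $[\tu_h]_{h,R}$: one needs the uniform-in-$h$ modulus-of-continuity control of $\tu_h$ — available either from the $C^\kappa_tV_m$-compactness of the convergent Skorokhod sequence or directly from \eqref{eq:3.32} and equality of laws — in order to absorb the freezing error, and one must be careful that the $h$-dependent discrete identity of Step~1 is transported faithfully through the Skorokhod representation before $h$ is sent to $0$.
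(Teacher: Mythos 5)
Your proof follows essentially the same approach as the paper: transfer the (classical) parabolic identity for $\rho_h$ to the new probability space via equality of laws and continuity of the relevant functionals, establish uniform-in-$h$ regularity and bounds for $\trho_h$, control the freezing error $[\tu_h]_{h,R}-[\tu]_R$ using the Hölder continuity of the approximating velocities, pass to the limit, and then invoke parabolic regularity for the linear Neumann problem to recover the full regularity required by Definition~\ref{def:3.1}~(3). The minor presentational differences — testing directly with $\psi\in C^\infty(\oD)$ rather than $C_c^\infty(\RR^3)$ so that the Neumann condition is encoded in the single functional $\Phi_h^\psi$, transferring \eqref{eq:3.23}–\eqref{eq:3.30} as closed subsets of $\XX_\rho$ rather than first transferring measurability of the $C_t^\nu C_x^{2+\nu}$-norm and then re-deriving, and obtaining the freezing-error bound from $C^\kappa_tV_m$-compactness of the $\tPP$-a.s.\ convergent sequence rather than from \eqref{eq:3.32} — are all correct and do not change the substance of the argument.
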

\begin{proof}
    As a consequence of the equality of laws from Proposition \ref{prop:repr1}, \eqref{eq:3.20} is satisfied by $[\trho_h,\tu_h]$. Indeed, it is easy to check that $\trho_h$ satisfies the Neumann boundary condition, and that
    \begin{align*}
        &\intT \pp_t\phi\int_D \trho_h\psi dxdt +\phi(0)\int_D\trho_h(0)\psi dx +\intT \phi\int_D\trho_h [\tu_h]_{h,R}\cdot \nabla\psi dxdt - \intT \phi\int_D \varepsilon \nabla \trho_h \cdot \nabla \psi dxdt \\ 
        &\quad \overset{d}{=} \intT \pp_t\phi\int_D \rho_h\psi dxdt +\phi(0)\int_D\rho_h(0)\psi dx +\intT \phi\int_D\rho_h [u_h]_{h,R}\cdot \nabla\psi dxdt - \intT \phi\int_D \varepsilon \nabla\rho_h \cdot \nabla \psi dxdt  
    \end{align*}
    for all $\phi\in C_c^\infty(\ico{0}{T})$ and all $\psi\in C_c^\infty(\RR^3)$. Since the right-hand side is $\PP$-a.s. equal to zero and $C_c^\infty(\ico{0}{T}\times\RR^3)$ is separable, $\trho_h$ is a solution to the problem \eqref{eq:3.20} in the weak sense. 
    But, the regularity of $[\trho_h,\tu_h]$ and the approximation of $1_{[0,\tau]}$ by test functions via mollification imply that the equation 
    \begin{align*}
        &\trho_h(\tau,x) = \trho_h(0,x) - \inttau \rdiv\left(\trho_h [\tu_h]_{h,R}\right)(t,x) dt + \inttau \varepsilon \Delta \trho_h (t,x) dt
    \end{align*}
    holds for a.a. $\tau\in [0,T]$ and all $x\in D$ $\tPP$-a.s., and thus $\trho_h$ is a solution to \eqref{eq:3.20} in the class 
    \begin{align*}
        &\rho\in C^\iota([0,T];C^{2+\iota}(\oD)),\quad \pp_t \rho \in L^\infty(0,T;C^\iota(\oD)).  
    \end{align*}
    On the other hand, since $\norm{\cdot}_{C_t^\nu C_x^{2+\nu}}$ is measurable in $C^\iota([0,T];C^{2+\iota}(\oD))$, the uniform bound of $\norm{\trho_h}_{C_t^\nu C_x^{2+\nu}}$ follows from \eqref{eq:3.30} and the equality of laws. In particular $\trho_h(0)$ belongs to $C^{2+\nu}(\oD)$ and satisfies the initial conditions \eqref{eq:3.18}. 
    Therefore, following the same argument used to derive \eqref{eq:3.23} and \eqref{eq:3.24}, the estimate \eqref{eq:3.30} is satisfied by $\trho_h$ uniformly in $h$.
    
    Next, by Proposition \ref{prop:repr1} and the dominated convergence theorem, passing to the limit in the weak form for $[\trho_h,\tu_h]$ yields that $[\trho,\tu]$ satisfies the identity 
    \begin{align*}
        &-\intT \pp_t\phi\int_D \trho\psi dxdt  = \phi(0)\int_D\trho(0)\psi dx +\intT \phi\int_D\trho [\tu]_{R}\cdot \nabla\psi dxdt +\intT \phi\int_D \varepsilon \nabla\trho\cdot \nabla \psi dxdt \\   
    \end{align*}
    for all $\phi\in C_c^\infty(\ico{0}{T})$ and all $\psi\in C_c^\infty(\RR^3)$ $\tPP$-a.s., and satisfies the Neumann boundary condition. Hence, due to the regularity of $[\tu]_R$ and the fact that the laws of $\trho_h(0)$ and $\trho(0)$ coincide with each other, by the same argument as above, $[\trho,\tu]$ satisfies \eqref{eq:3.13} in $(0,T)\times D$, $\tPP$-a.s., and has the regularity as in Definition \ref{def:3.1} (3) with the estimate \eqref{eq:3.16}.
\end{proof} 
Next, we show that $[\trho,\tu,\tW]$ satisfies the approximate momentum equation \eqref{eq:3.14}.
\begin{proposition}
    \label{prop:ME1}
    The process $[\trho,\tu,\tW]$ satisfies \eqref{eq:3.14} for all $\tau\in [0,T]$ and all $\bphi\in V_m$ $\tPP$-a.s.
\end{proposition}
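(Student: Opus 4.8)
The plan is to carry out the two standard moves of the stochastic compactness method. First, I transfer the grid-summed form of the iteration identity \eqref{eq:3.21} --- that is, \eqref{eq:3.14} with the velocity replaced everywhere by its grid freezing $[\tu_h]_h$ and the noise coefficient evaluated at $[\trho_h]_h, [\trho_h\tu_h]_h$ --- from the original iterates $[\rho_h,u_h,W]$ to the Skorokhod copies $[\trho_h,\tu_h,\tW_h]$ by the equality of laws of Proposition~\ref{prop:repr1}; then I let $h\to0$ using the $\tPP$-a.s.\ convergences \eqref{eq:3.41}. For the transfer I would follow the martingale/quadratic-variation argument of \cite[Sect.~2.9]{BFHbook18}: fixing $\bphi\in V_m$, the left-hand side of \eqref{eq:3.14} minus all its deterministic terms defines, for the original iterates, a square-integrable $(\FFF_t)$-martingale $M_h$ that coincides with $\int_0^\cdot\evm{[\GG_\varepsilon([\rho_h]_h,[\rho_h u_h]_h)]_m^\ast}{\bphi}\,dW$ and whose quadratic variation and cross variations with the components $W_k$ are continuous-in-time path functionals of $[\rho_h,u_h]$. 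Equality of laws then makes the analogously defined $\tilde M_h$ a square-integrable martingale for the canonical filtration $\tFFF^h_t=\sigma(\sigma_t[\trho_h]\cup\sigma_t[\tu_h]\cup\sigma_t[\tW_h])$ with the corresponding brackets, and $\tW_h$ is a cylindrical $(\tFFF^h_t)$-Wiener process as in the Remark after Proposition~\ref{prop:repr1}; computing the bracket of $\tilde M_h-\int_0^\cdot\evm{[\GG_\varepsilon([\trho_h]_h,[\trho_h\tu_h]_h)]_m^\ast}{\bphi}\,d\tW_h$ shows it vanishes, so \eqref{eq:3.14} holds $\tPP$-a.s.\ for $[\trho_h,\tu_h,\tW_h]$.

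For the deterministic terms I use $\trho_h\to\trho$ in $C^\iota([0,T];C^{2+\iota}(\oD))$ and $\tu_h\to\tu$ in $C^\kappa([0,T];V_m)$ $\tPP$-a.s., the uniform bounds \eqref{eq:3.16} and \eqref{eq:3.32} (transferred by equality of laws), and the vanishing of the freezing error: since $\tu_h$ and $\trho_h$ are equi-Hölder in time, $\norm{[\tu_h]_h-\tu_h}_{C([0,T];V_m)}\lesssim h^{\beta}\norm{\tu_h}_{C^\beta_t V_m}\to0$ and $[\trho_h]_h\to\trho_h$ uniformly in $t$, hence $[\tu_h]_h\to\tu$ and $[\trho_h]_h\to\trho$ uniformly. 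The convection term, the pressure term (continuity of $\chi$ and of $p_\delta$), the viscous term and the term $\varepsilon\trho_h[\tu_h]_h\cdot\Delta\bphi$ then converge by bounded convergence in $t$; for the barrier term I use that $\nabla j_\alpha$ is globally Lipschitz and that the trace onto $\pD$ is a bounded operator on the finite-dimensional space $V_m\subset C^2(\oD)$, so $\int_{\pD}g\,\nabla j_\alpha([\tu_h]_h)\cdot\bphi\,d\Gamma\to\int_{\pD}g\,\nabla j_\alpha(\tu)\cdot\bphi\,d\Gamma$ with $g\in L^2(\pD)$.

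For the stochastic term I apply Lemma~\ref{lem:conv_of_stoch_int}. Writing $\GG_\varepsilon(\rho,\rho v)e_k=\rho F_{k,\varepsilon}(\rho,v)$ and using \eqref{eq:3.10}, on the set $\{0\le\rho\le c\}$, with $c$ the bound from \eqref{eq:3.16}, each map $(\rho,v)\mapsto G_{k,\varepsilon}(\rho,\rho v)$ is bounded by $cf_{k,\varepsilon}$ and Lipschitz with constant $C(c)f_{k,\varepsilon}$, while $\norm{[w]_m^\ast}_{V_m^\ast}\le\norm{w}_{L^2(D)}$; together with the a.s.\ uniform convergences $[\trho_h]_h\to\trho$, $[\tu_h]_h\to\tu$ and the summable majorant $\sum_k f_{k,\varepsilon}^2<\infty$, dominated convergence gives $[\GG_\varepsilon([\trho_h]_h,[\trho_h\tu_h]_h)]_m^\ast\to[\GG_\varepsilon(\trho,\trho\tu)]_m^\ast$ in $L^2(0,T;L_2(\fU,V_m^\ast))$ $\tPP$-a.s. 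Since moreover $\tW_h\to\tW$ in $C([0,T];\fU_0)$ $\tPP$-a.s.\ and all integrands are progressively measurable for the relevant canonical filtrations, Lemma~\ref{lem:conv_of_stoch_int} gives convergence of the stochastic integrals in $L^2(0,T)$ in probability, hence, along a subsequence, pointwise for a.e.\ $\tau$ $\tPP$-a.s. Letting $h\to0$ in \eqref{eq:3.14} for $[\trho_h,\tu_h,\tW_h]$ then yields the identity for a.e.\ $\tau$; since both sides are $\tPP$-a.s.\ continuous in $\tau$ (the left side since $\tu\in C^\kappa_t V_m$, $\trho\in C([0,T];C^2(\oD))$, the right side after taking the continuous modification of the stochastic integral), it extends to all $\tau\in[0,T]$ and all $\bphi\in V_m$.

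The step I expect to be the main obstacle is the stochastic one: the noise coefficient is evaluated at the grid points, so its strong convergence in $L^2(0,T;L_2(\fU,V_m^\ast))$ hinges on the \emph{uniform-in-time} freezing estimate coming from \eqref{eq:3.32}, and the passage of the full equation, stochastic integral included, to the Skorokhod copies cannot be performed by a continuous-mapping argument and must go through the martingale/bracket identification sketched above.
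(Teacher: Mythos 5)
Your proposal is correct and follows the paper's overall strategy: transfer \eqref{eq:3.21} to the Skorokhod copies by equality of laws, then pass to the limit $h\to0$ term by term using the $\tPP$-a.s.\ convergences \eqref{eq:3.41}, the uniform bounds \eqref{eq:3.16}, \eqref{eq:3.32}, and the freezing estimates \eqref{eq:3.42}, with Lemma~\ref{lem:conv_of_stoch_int} for the stochastic integral.

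There are two small technical divergences worth flagging. For the transfer step, you propose the martingale/quadratic-variation identification of \cite[Sect.~2.9]{BFHbook18} (show $\tilde M_h$ has vanishing bracket against $\int_0^\cdot\evm{[\GG_\varepsilon]_m^\ast}{\bphi}\,d\tW_h$), whereas the paper's proof invokes finite-dimensional and time-discrete approximations of the stochastic integral in \eqref{eq:3.21} so that the weak form becomes a measurable functional of $(\rho_h,u_h,W)$ and equality of laws can be applied directly. These are the two standard interchangeable routes; yours is cleaner in that it avoids re-proving a limit for the discrete approximants, the paper's is shorter on the page because it delegates to the same argument made for the drift terms. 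For the convergence of the noise coefficients, you argue $\tPP$-a.s.\ in $L^2(0,T;L_2(\fU,V_m^\ast))$ via dominated convergence in $k$ and in $t$, exploiting the pathwise uniform bound $0<\trho_h\le c$ from \eqref{eq:3.16} and the uniform Lipschitz/boundedness of $F_{k,\varepsilon}$ with majorant $f_{k,\varepsilon}$; the paper instead shows $L^2(\tOmega\times(0,T);V_m^\ast)$ convergence for each fixed $k$ via Vitali's theorem and then closes the Hilbert--Schmidt sum using the uniform bound \eqref{eq:3.44}. Since Lemma~\ref{lem:conv_of_stoch_int} only requires convergence in probability, your pathwise argument is sufficient and is arguably the more elementary of the two; it does rely, as you note, on the deterministic $L^\infty$ bound on $\trho_h$, which would not be available at later approximation layers but is available here. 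The rest of the argument (deterministic terms by $C^\kappa_tV_m$ convergence of $\tu_h$ and $C^\iota_tC^{2+\iota}_x$ convergence of $\trho_h$, the barrier term by Lipschitz continuity of $\nabla j_\alpha$ and boundedness of the trace on the finite-dimensional $V_m\subset C^2(\oD)$, and extension from a.e.\ $\tau$ to all $\tau$ by continuity) matches the paper.
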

\begin{proof}
    Similarly to the proof of Proposition \ref{prop:eq_of_conti1}, we see that $[\trho_h,\tu_h,\tW_h]$ satisfies \eqref{eq:3.21} and the uniform bound \eqref{eq:3.32}. Indeed, by considering the finite-dimensional approximation and the time-discrete approximation of the stochastic integral in \eqref{eq:3.21}, we can show the coincidence of the laws of the weak forms, and the claim follows by passing to the limit. 

    We observe that 
    \begin{align}
        \norm{[\tu_h]_h(t)-\tu_h(t)}_{V_m}&\lesssim h^\beta \norm{\tu_h}_{C_t^\beta V_m},\notag \\ 
        \norm{[\trho_h]_h(t)-\trho_h(t)}_{C_x^{2+\nu}}&\lesssim h^\nu \norm{\trho_h}_{C_t^\nu C_x^{2+\nu}}. \label{eq:3.42}
    \end{align}
    Now, with the convergence \eqref{eq:3.41}, the bounds \eqref{eq:3.30}, \eqref{eq:3.32} and \eqref{eq:3.42}, and the assumption \eqref{eq:3.18} at hand, we may pass to the limit in the approximate momentum equation \eqref{eq:3.21}. 
    Since the convergence of the deterministic part in \eqref{eq:3.21} can be easily verified, it is only necessary to explain the convergence of the stochastic integral.
    It is easy to see that $\tPP$-a.s. 
    \begin{align*}
        \left[[\trho_h]_h F_{k,\varepsilon}([\trho_h]_h,[\tu_h]_h )\right]_m^\ast \to [\trho F_{k,\varepsilon}(\trho,\tu)]_m^\ast\quad \text{in}\ L^2(0,T;V_m^\ast),
    \end{align*}
    and by Vitali's convergence theorem, we have 
    \begin{align}
        \label{eq:3.43}
        \left[[\trho_h]_h F_{k,\varepsilon}([\trho_h]_h,[\tu_h]_h )\right]_m^\ast \to [\trho F_{k,\varepsilon}(\trho,\tu)]_m^\ast \quad \text{in}\ L^2(\Omega\times (0,T);V_m^\ast)
    \end{align}
    for all $k\in \NN$. On the other hand, we have 
    \begin{align}
        \tEE\left[\intT \norm{[[\trho_h]_h \FF_{\varepsilon}([\trho_h]_h,[\tu_h]_h)]_m^\ast}_{L_2(\fU,V_m^\ast)}^2 dt \right] &= \sum_{k=1}^{\infty}\tEE\left[\intT \norm{[[\trho_h]_hF_{k,\varepsilon}([\trho_h]_h,[\tu_h]_h)]_m^\ast}_{V_m^\ast}^2\right]\notag \\ 
        &\leq \sum_{k=1}^{\infty}\tEE\left[\intT \norm{[[\trho_h]_hF_{k,\varepsilon}([\trho_h]_h,[\tu_h]_h)]_m^\ast}_{L_x^2}^2\right] \notag\\ 
        &\leq \norm{\trho_h}_{L_{\omega,t,x}^\infty}^2\sum_{k=1}^{\infty}\tEE\left[\intT \norm{F_{k,\varepsilon}([\trho_h]_h,[\tu_h]_h)}_{L_x^2}^2\right] \notag \\ 
        &\lesssim \norm{\trho_h}_{L_{\omega,t,x}^\infty}^2 \sum_{k=1}^{\infty}f_{k,\varepsilon}^2 \lesssim 1 \label{eq:3.44}
    \end{align}
    uniformly in $h$, using \eqref{eq:3.10} and \eqref{eq:3.30}. Therefore, combine \eqref{eq:3.43} and \eqref{eq:3.44} to obtain 
    \begin{align}
        [[\trho_h]_h \FF_{\varepsilon}([\trho_h]_h,[\tu_h]_h)]_m^\ast\to [\trho \FF_{\varepsilon}(\trho,\tu)]_m^\ast\quad \text{in}\ L^2(\Omega\times (0,T);L_2(\fU,V_m^\ast)), 
    \end{align}
    and we may apply Lemma \ref{lem:conv_of_stoch_int} to pass to the limit in the stochastic integral and hence complete the proof.
\end{proof} 
The proof of Theorem \ref{thm:sol1} is hereby complete.

\subsection{Well-posedness of the basic approximate problem}\label{sec:Well-posedness of the basic approximate problem}
In this section, we show that for the basic approximate problem, a pathwise solution (a strong solution in the probabilistic sense) exists uniquely, and that the assumption on the moment of the initial velocity can be dropped.
This is essential for employing a stopping time argument taking the limit in the cut-off approximation layer.
Furthermore, we derive the energy balance essential for approximations in subsequent sections.
We adopt the following strategy.
\begin{itemize}
    \item[1.] With a classical Yamada--Watanabe type argument in mind, we show pathwise uniqueness.
    \item[2.] Using the following characterization of convergence in probability observed in Gy\"{o}ngy--Krylov \cite[Lemma 1.1]{GK96}, we provide a direct proof of the classical result and derive the existence of a pathwise solution.
    \item[3.] We show the existence of a unique pathwise solution for general initial data.
    \item[4.] Using It\^{o}'s formula, we prove that any solution to \eqref{eq:3.11}--\eqref{eq:3.12} satisfies a variant of the energy balance.
\end{itemize}

\begin{lemma}
    \label{lem:GK96}
    Let $X$ be a Polish space equipped with the Borel $\sigma$-algebra. A sequence of $X$-valued random variables $(U_n)_{n\in \NN}$ converges in probability if and only if for every sequence of joint laws of $(U_{n_k},U_{m_k})_{k\in \NN}$ there exists a further subsequence which converges weakly to a probability measure $\mu$ such that 
    $$
        \mu\left(\left\{ (x,y)\in X\times X: x=y \right\}\right)=1.
    $$
\end{lemma}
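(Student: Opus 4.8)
The plan is to prove the two implications separately, with the ``if'' direction being the one that carries content.

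\emph{The ``only if'' direction.} Here one uses only that convergence in probability is stable under passing to subsequences and under forming finite Cartesian products. If $U_n\to U$ in probability, then for any index sequences $(n_k),(m_k)$ the pair $(U_{n_k},U_{m_k})$ converges in probability, hence in law, to $(U,U)$ in $X\times X$; its limiting law $\mu:=\LL[(U,U)]$ is concentrated on the diagonal $\Delta:=\{(x,y)\in X\times X:x=y\}$ because $\PP((U,U)\in\Delta)=1$. Thus $\mu$ has the required property, and in fact the whole sequence of joint laws converges weakly to it. This part is routine and uses no topological hypothesis on $X$ beyond metrizability.

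\emph{The ``if'' direction.} I would argue by contradiction. Recall that, since $X$ is Polish, the space $L^0(\Omega;X)$ of $X$-valued random variables metrized by $d_\PP(U,V):=\EE[\varrho(U,V)\wedge1]$ (for a complete compatible metric $\varrho$ on $X$) is itself complete, with $d_\PP$-convergence coinciding with convergence in probability: a $d_\PP$-Cauchy sequence has a subsequence along which $\sum_j d_\PP(\cdot,\cdot)<\infty$, whence Borel--Cantelli together with completeness of $X$ gives $\PP$-a.s. convergence of that subsequence, and this upgrades to $d_\PP$-convergence of the whole sequence. Consequently, if $(U_n)$ fails to converge in probability it is not $d_\PP$-Cauchy, so there are $\varepsilon_0>0$ and strictly increasing $(n_k),(m_k)$ with $d_\PP(U_{n_k},U_{m_k})>\varepsilon_0$ for all $k$; the Chebyshev-type splitting $\varepsilon_0<\EE[\varrho\wedge1]\leq\tfrac{\varepsilon_0}{2}+\PP(\varrho>\tfrac{\varepsilon_0}{2})$ (valid once $\varepsilon_0<2$) then yields $\PP(\varrho(U_{n_k},U_{m_k})>\varepsilon)>\varepsilon$ for all $k$, with $\varepsilon:=\varepsilon_0/2$. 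Now invoke the hypothesis for the joint laws $\mu_k:=\LL[(U_{n_k},U_{m_k})]$: along some subsequence $\mu_{k_j}\rightharpoonup\mu$ with $\mu(\Delta)=1$. Since $F:=\{(x,y)\in X\times X:\varrho(x,y)\geq\varepsilon\}$ is closed and $F\subseteq\Delta^c$, the Portmanteau theorem gives $\limsup_j\mu_{k_j}(F)\leq\mu(F)\leq\mu(\Delta^c)=0$, while $\mu_{k_j}(F)\geq\PP(\varrho(U_{n_{k_j}},U_{m_{k_j}})>\varepsilon)>\varepsilon$ for every $j$. This contradiction forces $(U_n)$ to converge in probability.

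The steps that deserve the most care are the completeness of $(L^0(\Omega;X),d_\PP)$ for Polish $X$ together with the identification of $d_\PP$-convergence with convergence in probability, and the elementary passage from $d_\PP(U_{n_k},U_{m_k})>\varepsilon_0$ to a bound of the form $\PP(\varrho(U_{n_k},U_{m_k})>\varepsilon)>\varepsilon$. These are precisely the points where the hypothesis that $X$ is Polish (rather than merely metrizable) enters; the rest is a direct application of the Portmanteau theorem.
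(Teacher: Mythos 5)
The paper states this lemma without proof, attributing it to Gy\"{o}ngy--Krylov \cite[Lemma 1.1]{GK96}, so there is no in-paper argument to compare against. Your proof is correct and complete, and it is in fact the standard argument for this result: the ``only if'' direction via stability of convergence in probability under products, and the ``if'' direction by contradiction, using completeness of $L^0(\Omega;X)$ under the Ky Fan metric (which requires $X$ Polish) to reduce to showing Cauchyness, extracting $\PP(\varrho(U_{n_k},U_{m_k})>\varepsilon)>\varepsilon$ from failure of Cauchyness, and then applying Portmanteau to the closed set $\{\varrho\geq\varepsilon\}\subset\Delta^c$ to contradict the hypothesis that a weak subsequential limit of the joint laws charges only the diagonal.
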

We first show the pathwise uniqueness of the approximate problem \eqref{eq:3.11}--\eqref{eq:3.12}.
\begin{proposition}
    \label{prop:pathwise_uniq}
    Let $(\stochbasis,\rho_1,u_1,W),\ (\stochbasis,\rho_2,u_2,W)$ be two martingale solutions of problem \eqref{eq:3.11}--\eqref{eq:3.12} in the sense of Definition \ref{def:3.1} satisfying \eqref{eq:3.16}--\eqref{eq:3.17} and 
    \begin{align*}
        &[\rho_1,u_1](0) = [\rho_2,u_2](0)\quad \text{in} \ C^{2+\nu}(\oD)\times V_m\ \PP\text{-a.s.}  
    \end{align*}
    Then 
    \begin{align*}
        &[\rho_1,u_1] = [\rho_2,u_2]\quad \text{in} \ C([0,T];C^{2+\nu}(\oD)\times V_m)\ \PP\text{-a.s.}    
    \end{align*}
\end{proposition}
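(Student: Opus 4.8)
The plan is a classical Yamada--Watanabe-type pathwise uniqueness argument, exploiting that by \eqref{eq:3.16}--\eqref{eq:3.17} the densities $\rho_i$ are deterministically bounded in $C^{2+\nu}(\oD)$ and below by $\urho$, and that the velocities live in the finite-dimensional space $V_m$, on which all norms are equivalent and, by the choice of the $L^2(D)$-inner product, $\norm{\cdot}_{V_m}=\norm{\cdot}_{L^2(D)}$. Write $\sigma:=\rho_1-\rho_2$, $z:=u_1-u_2$, and $Z:=[\rho_1u_1]_m^\ast-[\rho_2u_2]_m^\ast\in V_m^\ast$. Since $\norm{u_i(t)}_{V_m}$ is only $\PP$-a.s.\ finite, I first localise: set $\tau_K:=\inf\{t\in[0,T]:\norm{u_1(t)}_{V_m}+\norm{u_2(t)}_{V_m}>K\}$, carry out all estimates on $\ico{0}{\tau_K}$, and note that $\tau_K\to T$ $\PP$-a.s.\ as $K\to\infty$ by \eqref{eq:3.17}, so it suffices to show $\sigma\equiv0$ and $Z\equiv0$ on $\ico{0}{\tau_K}$ for each $K$.

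For the density, subtracting the two copies of \eqref{eq:3.13} shows that $\sigma$ solves a linear parabolic Neumann problem with source $-\rdiv\big(\sigma[u_1]_R+\rho_2([u_1]_R-[u_2]_R)\big)$ and $\sigma(0)=0$. Testing with $\sigma$, using $\nabla\sigma\cdot\rn|_{\pD}=0$ and $[u_i]_R\cdot\rn|_{\pD}=0$, the deterministic bounds on $\rho_i$, the bound $\norm{[u_1]_R}_{V_m}\le R+1$ (valid irrespective of $K$ thanks to the cut-off), and the Lipschitz continuity of $v\mapsto[v]_R$ on $V_m$, I expect the pathwise inequality
\begin{equation*}
\frac{d}{dt}\norm{\sigma}_{L^2(D)}^2+\varepsilon\norm{\nabla\sigma}_{L^2(D)}^2\le c(m,R,\varepsilon,\orho)\big(\norm{\sigma}_{L^2(D)}^2+\norm{z}_{V_m}^2\big).
\end{equation*}
For the momentum, rewrite \eqref{eq:3.14} as $d[\rho_iu_i]_m^\ast=\NNN[\rho_i](u_i)\,dt+[\GG_\varepsilon(\rho_i,\rho_iu_i)]_m^\ast\,dW$ in $V_m^\ast$ and apply It\^o's formula to $\norm{Z}_{V_m^\ast}^2$ (a finite-dimensional computation). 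The two needed bounds on $\ico{0}{\tau_K}$ are, first, $\norm{\NNN[\rho_1](u_1)-\NNN[\rho_2](u_2)}_{V_m^\ast}\lesssim_{m,R,\alpha,\varepsilon,\delta,K}\norm{\sigma}_{L^2(D)}+\norm{z}_{V_m}$, obtained by testing the difference against $\bphi\in V_m$ with $\norm{\bphi}_{V_m}\le1$ and splitting each term (convective, pressure, artificial viscosity, Newtonian viscous stress, boundary friction) into a part carrying $\sigma$ and a part carrying $z$; here one keeps the pressure term as $p_\delta(\rho)\rdiv\bphi$ and the artificial-viscosity term as $\varepsilon\rho u\cdot\Delta\bphi$ as in \eqref{eq:3.14} (rather than their integrated-by-parts form in \eqref{eq:3.27}), so that only $\norm{p_\delta(\rho_1)-p_\delta(\rho_2)}_{L^1(D)}$, $\norm{\sigma u_1}_{L^1(D)}$, etc.\ appear, i.e.\ only $\norm{\sigma}_{L^2(D)}$ and never a higher-order norm of $\sigma$, while global Lipschitzness of $\chi(\norm{\cdot}_{V_m}-R)$ and of $\nabla j_\alpha$ handles the remaining factors; second, $\norm{[\GG_\varepsilon(\rho_1,\rho_1u_1)]_m^\ast-[\GG_\varepsilon(\rho_2,\rho_2u_2)]_m^\ast}_{L_2(\fU,V_m^\ast)}^2\lesssim\sum_k f_{k,\varepsilon}^2\big(\norm{\sigma}_{L^2(D)}^2+\norm{z}_{L^2(D)}^2\big)\lesssim\norm{\sigma}_{L^2(D)}^2+\norm{z}_{V_m}^2$, using $G_{k,\varepsilon}(\rho,\rho u)=\rho F_{k,\varepsilon}(\rho,u)$ and \eqref{eq:3.10}.

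To couple the two estimates I use $u_i=\MMM^{-1}[\rho_i]([\rho_iu_i]_m^\ast)$ together with \eqref{eq:3.25}--\eqref{eq:3.26}, which give $z=\MMM^{-1}[\rho_1](Z)+\big(\MMM^{-1}[\rho_1]-\MMM^{-1}[\rho_2]\big)[\rho_2u_2]_m^\ast$, hence $\norm{z}_{V_m}\lesssim\norm{Z}_{V_m^\ast}+c(m,\urho)\norm{\sigma}_{L^1(D)}\norm{[\rho_2u_2]_m^\ast}_{V_m^\ast}\lesssim\norm{Z}_{V_m^\ast}+K\norm{\sigma}_{L^2(D)}$ on $\ico{0}{\tau_K}$ (since $\norm{[\rho_2u_2]_m^\ast}_{V_m^\ast}\lesssim_{\orho}\norm{u_2}_{V_m}\le K$). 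Substituting this into the density and momentum inequalities and writing $Y(t):=\sup_{s\le t}\big(\norm{\sigma(s)}_{L^2(D)}^2+\norm{Z(s)}_{V_m^\ast}^2\big)$, one arrives at
\begin{equation*}
\norm{\sigma(t\wedge\tau_K)}_{L^2(D)}^2+\norm{Z(t\wedge\tau_K)}_{V_m^\ast}^2\le C_K\int_0^t Y(s\wedge\tau_K)\,ds+\sup_{s\le t\wedge\tau_K}|M(s)|,
\end{equation*}
where $M$ is the martingale from the stochastic integral; taking expectations, bounding $\EE\sup|M|$ by Burkholder--Davis--Gundy and Young (absorbing $\tfrac12\EE\sup\norm{Z}_{V_m^\ast}^2$, which is finite since $\norm{Z}_{V_m^\ast}\lesssim_{\orho}\norm{u_1}_{V_m}+\norm{u_2}_{V_m}$ and \eqref{eq:3.17} holds), and applying Gronwall's lemma yields $\EE[Y(t\wedge\tau_K)]=0$ for all $t$. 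Thus $\sigma\equiv0$ and $Z\equiv0$ on $\ico{0}{\tau_K}$, so $\rho_1=\rho_2$ there and, via $\MMM^{-1}$, $u_1=u_2$ there; letting $K\to\infty$ gives $[\rho_1,u_1]=[\rho_2,u_2]$ on $[0,T]$, and the asserted equality in $C([0,T];C^{2+\nu}(\oD)\times V_m)$ follows from the regularity already recorded in Definition~\ref{def:3.1}. I expect the main technical point to be exactly this coupling: organising the \emph{pathwise} parabolic bound for $\sigma$ and the \emph{It\^o} bound for $Z$ into a single Gronwall inequality, which forces one to keep the pressure and artificial-viscosity terms in their non-integrated-by-parts form so that only $\norm{\sigma}_{L^1(D)}$/$\norm{\sigma}_{L^2(D)}$ — controlled by the parabolic estimate — ever enters the momentum estimate, and to handle the unboundedness of $\norm{u_i}_{V_m}$ by the stopping time $\tau_K$.
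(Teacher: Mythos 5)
Your proof is correct, and it takes a genuinely different and arguably lighter route than the paper's.

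The paper rewrites \eqref{eq:3.14} as an equation for $du$ (equation \eqref{eq:3.46}), applies It\^o's formula to $\tfrac12\norm{u_1-u_2}_{V_m}^2$, and takes the difference. Because $du$ is obtained by ``dividing'' $d[\rho u]_m^\ast$ by $\rho$, the drift inevitably contains $\partial_t\rho_i$, and the resulting estimate \eqref{eq:3.49} involves $\norm{\rho_1-\rho_2}_{C_x^{2+\nu}}$. To control this, the paper invokes parabolic Schauder maximal regularity for the difference of the continuity equations together with a bespoke Ehrling-type interpolation inequality (proved by an Aubin--Lions--Simon contradiction argument), and then closes a Gronwall loop in the pair $\bigl(\norm{u_1-u_2}_{V_m}^2,\norm{\rho_1-\rho_2}_{C_x^{2+\nu}}^2\bigr)$. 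You instead apply It\^o's formula directly to $\norm{Z}_{V_m^\ast}^2$ with $Z=[\rho_1u_1]_m^\ast-[\rho_2u_2]_m^\ast$ — the SDE $d[\rho u]_m^\ast=\NNN[\rho](u)\,dt + [\GG_\varepsilon]_m^\ast\,dW$ does not produce any $\partial_t\rho$. Together with your key observation that $\NNN$ can be tested against $\bphi\in V_m$ in the non-integrated-by-parts form of \eqref{eq:3.12} (so that only $p_\delta(\rho)$, $\rho u$, etc.\ appear and never $\nabla\rho$), the whole momentum estimate is Lipschitz in $\norm{\rho_1-\rho_2}_{L^2(D)}$ and $\norm{u_1-u_2}_{V_m}$, and the density contribution is closed by a plain $L^2$ energy estimate for the parabolic Neumann problem. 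The coupling $z=\MMM^{-1}[\rho_1]Z+(\MMM^{-1}[\rho_1]-\MMM^{-1}[\rho_2])[\rho_2u_2]_m^\ast$ via \eqref{eq:3.25}--\eqref{eq:3.26} then yields a single Gronwall inequality in $\norm{\sigma}_{L^2(D)}^2+\norm{Z}_{V_m^\ast}^2$ on $\ico{0}{\tau_K}$; the final upgrade to equality in $C([0,T];C^{2+\nu}(\oD)\times V_m)$ is free from the regularity already in Definition~\ref{def:3.1}. What the paper's heavier route buys is a quantitative Gronwall bound in the strong norm $C^{2+\nu}$ for $\rho_1-\rho_2$ (a stability estimate), but for pathwise uniqueness alone your $L^2$-energy argument suffices and dispenses entirely with parabolic Schauder theory and the Ehrling lemma. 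Two small points worth making explicit if you were to write this out: (i) the identity $\langle\NNN[\rho](v),\bphi\rangle = \int_D[\rho[v]_R\otimes v:\nabla\bphi + \chi p_\delta(\rho)\rdiv\bphi - \SSS(\nabla v):\nabla\bphi + \varepsilon\rho v\cdot\Delta\bphi]\,dx - \int_{\pD}g\nabla j_\alpha(v)\cdot\bphi\,d\Gamma$ requires the Neumann condition $\nabla\rho\cdot\rn|_{\pD}=0$ and $\bphi\cdot\rn|_{\pD}=0$ to kill the boundary terms in the artificial-viscosity integration by parts; (ii) $\nabla j_\alpha$ is globally Lipschitz with constant $1/\alpha$ (both on $|v|\leq\alpha$ and $|v|>\alpha$), which is what makes the friction boundary term Lipschitz in $z$.
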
 
\begin{proof}
    Introduce the $(\FFF_t)$-stopping times 
    \begin{align*}
        &\tau_M^i = \inf \left\{ t\in [0,T] : \norm{u_i(t)}_{V_m}>M \right\},\quad i=1,2
    \end{align*}
    with the convention $\inf \emptyset =T$, as well as $\tau_M= \tau_M^1\wedge \tau_M^2$.  
    Note that the stopping times are well-defined, due to the continuity of the involved quantity. Moreover, as a consequence of \eqref{eq:3.16} and \eqref{eq:3.17}, we have 
    \begin{align*}
        &\tau_M\leq \tau_L,\quad \text{whenever}\ M\leq L, \\ 
        &\tau_M\uparrow T\quad \PP\text{-a.s.}  
    \end{align*}
    Noting that 
    \begin{align*}
        d[\rho_i u_i]_m^\ast &= [u_i d\rho_i]_m^\ast + [\rho_i du_i]_m^\ast \\ 
        &=[u_i \pp_t \rho_i]_m^\ast dt + [\rho_i du_i]_m^\ast, \quad i=1,2,
    \end{align*}
    and recalling $u= \MMM^{-1}[\rho]([\rho u]_m^\ast)$, we can rewrite \eqref{eq:3.14} in the form 
    \begin{align}
        du + \MMM^{-1}[\rho]\left([ud\rho]_m^\ast\right) &= \MMM^{-1}[\rho]\left(d[\rho u]_m^\ast\right) \notag\\ 
        &= \MMM^{-1}[\rho]\NNN[\rho](u) dt + \MMM^{-1}[\rho]([\GG_\varepsilon(\rho,\rho u)]_m^\ast) dW. \label{eq:3.46}
    \end{align}
    Taking the difference of equations \eqref{eq:3.13} and \eqref{eq:3.46}, we have 
    \begin{align}
        &\pp_t(\rho_1-\rho_2)  - \varepsilon \Delta (\rho_1-\rho_2) = - \rdiv \left(\rho_1[u_1]_R - \rho_2 [u_2]_R\right),\label{eq:3.47}
    \end{align}
    and
    \begin{align}
        d(u_1 - u_2) &= \left(\MMM^{-1}[\rho_2]-\MMM^{-1}[\rho_1]\right)\left([\pp_t \rho_1 u_1]_m^\ast\right)dt + \MMM^{-1}[\rho_2]\left([\pp_t \rho_2 u_2 - \pp_t \rho_1 u_1]_m^\ast\right)dt \notag \\ 
        &\quad + \left(\MMM^{-1}[\rho_1]-\MMM^{-1}[\rho_2]\right)\NNN[\rho_1](u_1)dt + \MMM^{-1}[\rho_2]\left(\NNN[\rho_1](u_1)-\NNN[\rho_2](u_2)\right)dt \notag \\ 
        &\quad + \left(\MMM^{-1}[\rho_1]-\MMM^{-1}[\rho_2]\right)([\GG_\varepsilon(\rho_1,\rho_1 u_1)]_m^\ast)dW \notag \\ 
        &\quad + \MMM^{-1}[\rho_2]([\GG_\varepsilon(\rho_1,\rho_1 u_1)]_m^\ast-[\GG_\varepsilon(\rho_2,\rho_2 u_2)]_m^\ast)dW. \label{eq:3.48}
    \end{align}
    It\^{o}'s formula applied to \eqref{eq:3.48} yields 
    \begin{align*}
        \frac 1 2 d\norm{u_1-u_2}_{V_m}^2 &= \int_D\left(\MMM^{-1}[\rho_2]-\MMM^{-1}[\rho_1]\right)\left([\pp_t \rho_1 u_1]_m^\ast\right)\cdot (u_1-u_2)dx dt \\ 
        &\quad + \int_D \MMM^{-1}[\rho_2]\left([\pp_t \rho_2 u_2 - \pp_t \rho_1 u_1]_m^\ast\right) \cdot (u_1-u_2)dxdt \\ 
        &\quad + \int_D \left(\MMM^{-1}[\rho_1]-\MMM^{-1}[\rho_2]\right)\NNN[\rho_1](u_1) \cdot (u_1-u_2)dx dt \\ 
        &\quad + \int_D \MMM^{-1}[\rho_2]\left(\NNN[\rho_1](u_1)-\NNN[\rho_2](u_2)\right) \cdot (u_1-u_2)dx dt \\ 
        &\quad +\frac 1 2 \sum_{k=1 }^{\infty} \int_D \left|\MMM^{-1}[\rho_1]([G_{k,\varepsilon}(\rho_1,\rho_1 u_1)]_m^\ast) - \MMM^{-1}[\rho_2]([G_{k,\varepsilon}(\rho_2,\rho_2 u_2)]_m^\ast)\right|^2 dx dt \\ 
        &\quad + \int_D \left(\MMM^{-1}[\rho_1]-\MMM^{-1}[\rho_2]\right)([\GG_\varepsilon(\rho_1,\rho_1 u_1)]_m^\ast) \cdot (u_1-u_2)dx dW\\ 
        &\quad + \int_D \MMM^{-1}[\rho_2]([\GG_\varepsilon(\rho_1,\rho_1 u_1)]_m^\ast-[\GG_\varepsilon(\rho_2,\rho_2 u_2)]_m^\ast) \cdot (u_1-u_2)dx dW.
    \end{align*}
    Therefore, with \eqref{eq:3.10}, \eqref{eq:3.16}, \eqref{eq:3.26}, \eqref{eq:3.27}, the equivalence of norms on $V_m$ and the definition of $\tau_M$ at hand, we take the supremum in time and pass to expectation, to obtain 
    \begin{align}
        &\EE\left[\sup_{t\in [0,T]}\norm{(u_1-u_2)(t\wedge \tau_M)}_{V_m}^2\right] \notag \\ 
        &\quad \leq \EE\left[\norm{(u_1-u_2)(0)}_{V_m}^2\right]+ c(M)\EE\left[\int_{0}^{T\wedge \tau_M}\norm{\rho_1-\rho_2}_{C_x^{2+\nu}}\norm{u_1-u_2}_{V_m}dt\right] \notag \\ 
        &\quad \quad +c \EE\left[\int_{0}^{T\wedge \tau_M}\left(\norm{u_1-u_2}_{V_m}^2 + \norm{\rho_1-\rho_2}_{L_x^2}^2\right)dt \right] + \EE\left[\sup_{t\in [0,T]}|\fM_{t\wedge \tau_M}|\right] \notag \\ 
        &\quad \leq \kappa \EE\left[\sup_{t\in [0,T]}\norm{(\rho_1-\rho_2)(t\wedge \tau_M)}_{C_x^{2+\nu}}^2\right] + \EE\left[\norm{(u_1-u_2)(0)}_{V_m}^2\right] \notag \\ 
        &\quad \quad +c(M,\kappa) \EE\left[\int_{0}^{T\wedge \tau_M}\left(\norm{u_1-u_2}_{V_m}^2 + \norm{\rho_1-\rho_2}_{L_x^2}^2\right)dt \right] + \EE\left[\sup_{t\in [0,T]}|\fM_{t\wedge \tau_M}|\right], \label{eq:3.49}
    \end{align}
    for any $\kappa>0$, where 
    \begin{align*}
        d\fM &= \int_D \left(\MMM^{-1}[\rho_1]-\MMM^{-1}[\rho_2]\right)([\GG_\varepsilon(\rho_1,\rho_1 u_1)]_m^\ast) \cdot (u_1-u_2)dx dW\\ 
        &\quad + \int_D \MMM^{-1}[\rho_2]([\GG_\varepsilon(\rho_1,\rho_1 u_1)]_m^\ast-[\GG_\varepsilon(\rho_2,\rho_2 u_2)]_m^\ast) \cdot (u_1-u_2)dx dW.
    \end{align*}
    By Burkholder--Davis--Gundy's inequality and \eqref{eq:3.10}, we estimate $\fM$ similarly by 
    \begin{align*}
        &\EE\left[\sup_{t\in [0,T]}\left|\fM_{t \wedge \tau_M} \right|\right] \\ 
        &\quad\lesssim \EE\left[\left(\int_{0}^{T\wedge \tau_M}\sum_{k=1}^{\infty}\left( \int_D \left(\MMM^{-1}[\rho_1]-\MMM^{-1}[\rho_2]\right)([G_{k,\varepsilon}(\rho_1,\rho_1 u_1)]_m^\ast) \cdot (u_1-u_2)dx\right)^2dt \right)^{1/2}\right] \\ 
        &\quad \quad  + \EE\left[\left(\int_{0}^{T\wedge \tau_M}\sum_{k=1}^{\infty} \left(\int_D  \MMM^{-1}[\rho_2]([G_{k,\varepsilon}(\rho_1,\rho_1 u_1)]_m^\ast-[G_{k,\varepsilon}(\rho_2,\rho_2 u_2)]_m^\ast) \cdot (u_1-u_2)dx\right)^2 dt \right)^{1/2}\right] \\
        &\quad \lesssim \EE\left[\sup_{t\in [0,T]}\norm{\rho_1(t\wedge \tau_M)-\rho_2(t\wedge \tau_M)}_{L_x^1}\norm{u_1-u_2}_{L^2(0,T\wedge \tau_M;L^2(D))}\right] \\ 
        &\quad \quad + \EE\left[\sup_{t\in [0,T]}\left(\norm{\rho_1(t\wedge \tau_M)-\rho_2(t\wedge \tau_M)}_{L_x^2} + \norm{u_1(t\wedge \tau_M) - u_2(t\wedge \tau_M)}_{L_x^2}\right)\norm{u_1-u_2}_{L^2(0,T\wedge \tau_M;L^2(D))}\right] \\ 
        &\quad \leq \kappa \EE\left[\sup_{t\in [0,T]}\int_D \left(|(\rho_1-\rho_2)(t\wedge \tau_M)|^2 + |(u_1-u_2)(t\wedge \tau_M)|^2\right)dx\right] + c(\kappa) \EE\left[\int_{0}^{T\wedge \tau_M}\int_D |u_1-u_2|^2dx dt\right],
    \end{align*}
    where $\kappa>0$ is arbitrary. Applying this to \eqref{eq:3.49} implies 
    \begin{align}
        \EE\left[\sup_{t\in [0,T]}\norm{(u_1-u_2)(t\wedge \tau_M)}_{V_m}^2\right] &\leq  \kappa \EE\left[\sup_{t\in [0,T]}\norm{(\rho_1-\rho_2)(t\wedge \tau_M)}_{C_x^{2+\nu}}^2\right] + \EE\left[\norm{(u_1-u_2)(0)}_{V_m}^2\right] \notag \\ 
        &\quad +c(M,\kappa) \EE\left[\int_{0}^{T\wedge \tau_M}\left(\norm{u_1-u_2}_{V_m}^2 + \norm{\rho_1-\rho_2}_{L_x^2}^2\right)dt \right], \label{eq:3.50}
    \end{align}
    for any $\kappa>0$. On the other hand, the standard parabolic regularity theory (see \cite[Theorem 11.30]{FN17}) applied to \eqref{eq:3.47} provides the estimate
    \begin{align*}
        &\sup_{t\in [0,\tau]} \left(\norm{(\rho_1-\rho_2)(t)}_{C_x^{2+\nu}} + \norm{\pp_t (\rho_1-\rho_2)(t)}_{C_x^\nu}\right)  \\ 
        &\quad \lesssim \sup_{t\in [0,\tau]} \norm{\rdiv\left(\rho_1[u_1]_R -\rho_2[u_2]_R\right)(t)}_{C_x^\nu} +\norm{(\rho_1-\rho_2)(0)}_{C_x^{2+\nu}},
    \end{align*}
    such that, using the definition of $[\cdot]_R$, 
    \begin{align}
        &\sup_{t\in [0,\tau]} \left(\norm{(\rho_1-\rho_2)(t)}_{C_x^{2+\nu}} + \norm{\pp_t (\rho_1-\rho_2)(t)}_{C_x^\nu}\right)  \notag \\ 
        &\quad \lesssim \sup_{t\in [0,\tau]} \norm{(\rho_1-\rho_2)(t)}_{C_x^{1+\nu}} + \sup_{t\in [0,\tau]}\norm{(u_1-u_2)(t)}_{V_m} +\norm{(\rho_1-\rho_2)(0)}_{C_x^{2+\nu}}. \label{eq:3.51}
    \end{align}
    It is easy to show (for instance by contradiction) that, for every $\kappa>0$, there is some $c(\kappa)$ such that 
    \begin{align*}
        \sup_{t\in [0,\tau]}\norm{v}_{C_x^{1+\nu}} \leq \kappa \left(\sup_{t\in [0,\tau]}\norm{v}_{C_x^{2+\nu}} + \sup_{t\in [0,\tau]} \norm{\pp_t v}_{C_x^\nu}\right) + c(\kappa) \left(\inttau \norm{v}_{C_x^\nu}^2 dt\right)^{1/2}, 
    \end{align*}
    for all $v\in W^{1,\infty}(0,\tau;C^{\nu}(\oD))\cap L^\infty(0,\tau;C^{2+\nu}(\oD))$. 
    Indeed, if we choose $\kappa>0$, and
    \begin{align*}
        &(v_n)_{n\in\NN}\subset W^{1,\infty}(0,\tau;C^{\nu}(\oD))\cap L^\infty(0,\tau;C^{2+\nu}(\oD))  
    \end{align*}
    such that 
    \begin{align*}
        &\norm{v_n}_{L_t^\infty C_x^{1+\nu}} = 1, \\ 
        &1 > \kappa \left(\norm{v_n}_{L_t^\infty C_x^{2+\nu}} + \norm{\pp_t v_n}_{L_t^\infty C_x^\nu}\right) + n \norm{v_n}_{L_t^2 C_x^\nu}, 
    \end{align*}
    for all $n\in\NN$, then, by the Aubin--Lions--Simon theorem (see \cite[Section 8]{Sim87}), $(v_n)$ contains a convergence subsequence in $C_t C^{1+\nu}_x$, and write the limit as $v$.
    In particular, 
    \begin{align*}
        &\norm{v}_{L_t^\infty C_x^{1+\nu}} =1.   
    \end{align*}
    On the other hand, since 
    \begin{align*}
        &\norm{v_n}_{L_t^2 C_x^\nu} \to 0,   
    \end{align*}
    we have $v\equiv 0$, which contradicts $\norm{v}_{L_t^\infty C_x^{1+\nu}}=1$.
    
    Applying this inequality to \eqref{eq:3.51} and choosing $\kappa$ small enough yields 
    \begin{align}
        &\sup_{t\in [0,\tau]} \left(\norm{(\rho_1-\rho_2)(t)}_{C_x^{2+\nu}} + \norm{\pp_t (\rho_1-\rho_2)(t)}_{C_x^\nu}\right)  \notag \\ 
        &\quad \lesssim \left(\inttau \norm{\rho_1-\rho_2}_{C_x^{2+\nu}}^2 dt\right)^{1/2} + \sup_{t\in [0,\tau]}\norm{(u_1-u_2)(t)}_{V_m} +\norm{(\rho_1-\rho_2)(0)}_{C_x^{2+\nu}}. \label{eq:3.52}
    \end{align}
    Next, we combine \eqref{eq:3.50} and \eqref{eq:3.52} at $\tau= T\wedge \tau_M$, and choose $\kappa$ small enough to get 
    \begin{align*}
        &\EE\left[\sup_{t\in [0,T]}\left(\norm{(u_1-u_2)(t\wedge \tau_M)}_{V_m}^2 + \norm{(\rho_1-\rho_2)(t\wedge \tau_M)}_{C_x^{2+\nu}}^2\right)\right] \\ 
        &\quad \lesssim \EE\left[\norm{(u_1-u_2)(0)}_{V_m}^2 + \norm{(\rho_1-\rho_2)(0)}_{C_x^{2+\nu}}^2\right] + \EE\left[\int_{0}^{T\wedge \tau_M}\left(\norm{u_1-u_2}_{V_m}^2 + \norm{\rho_1-\rho_2}_{C_x^{2+\nu}}^2\right)dt \right],
    \end{align*}
    with a constant depending on $M$. Finally, a direct application of Gronwall's lemma yields 
    \begin{align*}
        &\EE\left[\sup_{t\in [0,T]}\left(\norm{(u_1-u_2)(t\wedge \tau_M)}_{V_m}^2 + \norm{(\rho_1-\rho_2)(t\wedge \tau_M)}_{C_x^{2+\nu}}^2\right)\right] \\ 
        &\quad \lesssim \EE\left[\norm{(u_1-u_2)(0)}_{V_m}^2 + \norm{(\rho_1-\rho_2)(0)}_{C_x^{2+\nu}}^2\right],
    \end{align*}
    with a constant depending on $M$. Now, as the initial data coincide, the desired conclusion follows by sending $M\to \infty$.
\end{proof} 

Thanks to the pathwise uniqueness established in Proposition \ref{prop:pathwise_uniq}, we are able to show existence of a unique pathwise solution to \eqref{eq:3.11}--\eqref{eq:3.12}. To be more precise, we solve \eqref{eq:3.11}--\eqref{eq:3.12} in the context of the following definition.
\begin{definition}
    \label{def:pathwise_sol}
    Let $\stochbasis$ be a given stochastic basis with a complete right-continuous filtration, let $W$ be a cylindrical $(\FFF_t)$-Wiener process on $\fU$, and let $(\rho_0,u_0)$ be an $\FFF_0$-measurable random variable taking values in $C^{2+\nu}(\oD)\times V_m$. Then $(\rho,u)$ is called a \textit{pathwise solution} to \eqref{eq:3.11}--\eqref{eq:3.12} with the initial datum $(\rho_0,u_0)$, if the following hold: 
	\begin{itemize}
		\item[(1)] $\rho$ and $u$ are $(\FFF_t)$-adapted stochastic processes such that $\PP$-a.s.
		\begin{align*}
		   &\rho> 0,\quad \rho \in C([0,T];C^{2+\nu}(\oD))\cap C^1([0,T];C^{\nu}(\oD)),\quad u\in C([0,T];V_m);   
		\end{align*}
		\item[(2)] $(\rho(0),u(0)) = (\rho_0,u_0)$ $\PP$-a.s.;
		\item[(3)] the approximate equation of continuity 
		\begin{equation}
            \pp_t \rho + \rdiv (\rho [u]_R) = \varepsilon \Delta \rho,\quad \nabla \rho \cdot \rn |_{\pD} =0 \label{eq:3.53}
        \end{equation}
		holds in $(0,T)\times D$ $\PP$-a.s.;
		\item[(4)] the approximate momentum equation 
		\begin{align}
            \int_D \rho u\cdot \bphi (\tau)dx &= \int_D \rho_0 u_0 \cdot \bphi dx  + \inttau \int_D \left[(\rho[u]_R \otimes u ):\nabla \bphi+ \chi\left(\norm{u}_{V_m}-R\right)p_\delta (\rho)\rdiv \bphi\right]dxdt  \notag \\ 
            &\quad - \inttau \int_D \left[\SSS(\nabla u):\nabla\bphi -\varepsilon \rho u\cdot \Delta \bphi \right]dxdt - \inttau \int_{\pD}g\nabla j_\alpha(u)\cdot \bphi d\Gamma dt \notag \\ 
            &\quad + \inttau\int_D \GG_\varepsilon(\rho,\rho u) \cdot \bphi dxdW \label{eq:3.54}
        \end{align}
		holds for all $\tau\in [0,T]$ and all $\bphi\in V_m$ $\PP$-a.s.
    \end{itemize}
\end{definition}
The main result of this section reads as follows. 
\begin{theorem}
    \label{thm:pathwise_sol}
    Let $\rho_0\in C^{2+\nu}(\oD),u_0\in V_m$ be $\FFF_0$-measurable random variables satisfying 
    \begin{align}
        \PP\left(\left\{ \urho\leq \rho_0,\norm{\rho_0}_{C_x^{2+\nu}}\leq \orho, \nabla\rho_0\cdot \rn |_{\pD}=0 \right\}\right) = 1,\quad \EE\left[\norm{u_0}_{V_m}^r\right]\leq \overline{u},
    \end{align}
    for some deterministic constants $\urho,\orho,\overline{u}>0,r>2$. Then the approximate problem \eqref{eq:3.11}--\eqref{eq:3.12} admits a pathwise solution $[\rho,u]$ in the sense of Definition \ref{def:pathwise_sol}, which is unique among solutions satisfying the following
    \begin{align}
       &\sup_{t\in [0,T]} \left(\norm{\rho(t)}_{C_x^{2+\nu}}+\norm{\pp_t \rho(t)}_{C_x^{\nu}} +\norm{\rho^{-1}(t)}_{C_x^0}\right)\leq c\quad \PP\text{-a.s.,} \label{eq:3.56} \\ 
       &\EE\left[\sup_{\tau\in [0,T]}\norm{u(\tau)}_{V_m}^r\right] \leq c \left(1+\EE\left[\norm{u_0}_{V_m}^r\right]\right), 
    \end{align}
    with a constant $c=c(m,R,T,\urho,\orho,\nu,\varepsilon)$.
\end{theorem}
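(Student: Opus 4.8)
The plan is to run a Yamada--Watanabe-type argument: combine the existence of a martingale solution furnished by Theorem~\ref{thm:sol1} with the pathwise uniqueness of Proposition~\ref{prop:pathwise_uniq}, and upgrade to a probabilistically strong solution via the Gy\"{o}ngy--Krylov characterization of convergence in probability, Lemma~\ref{lem:GK96}. Fix the stochastic basis $\stochbasis$, the cylindrical $(\FFF_t)$-Wiener process $W$, and the $\FFF_0$-measurable initial datum $(\rho_0,u_0)$ with values in $C^{2+\nu}(\oD)\times V_m$ satisfying the stated bounds, and recall the time-discretized approximations $(\rho_h,u_h)_{h\in(0,1)}$ constructed on this very space in Section~\ref{sec:Iteration scheme}; these are $(\FFF_t)$-adapted, carry the uniform estimates \eqref{eq:3.30} and \eqref{eq:3.32}, and satisfy $(\rho_h,u_h)(0)=(\rho_0,u_0)$ for every $h$. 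I would show that $(\rho_h,u_h)_{h}$ converges in probability in $\XX_\rho\times\XX_u$; its limit, again an $(\FFF_t)$-adapted process on the original space, will be the pathwise solution.

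To verify the Gy\"{o}ngy--Krylov criterion, take any two subsequences and consider the joint laws of $(\rho_{h_k},u_{h_k},\rho_{h_k'},u_{h_k'},W)$ on $\XX_\rho\times\XX_u\times\XX_\rho\times\XX_u\times\XX_W$; their tightness follows verbatim from \eqref{eq:3.30}--\eqref{eq:3.32} as in the single-sequence case. Applying Skorokhod's theorem (the path space being Polish; alternatively Theorem~\ref{thm:Jakubowski--Skorokhod}) produces, on a new probability space, copies $(\hat\rho_k,\hat u_k,\check\rho_k,\check u_k,\hat W_k)$ converging almost surely, along a further subsequence, to $(\hat\rho,\hat u,\check\rho,\check u,\hat W)$, with the joint laws preserved. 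Repeating the passages to the limit carried out in Propositions~\ref{prop:eq_of_conti1} and~\ref{prop:ME1} --- using Lemma~\ref{lem:stability_of_nonanti} to ensure the relevant canonical filtration is non-anticipative with respect to $\hat W$, and Lemma~\ref{lem:conv_of_stoch_int} to pass to the limit in the stochastic integrals --- shows that both $(\hat\rho,\hat u,\hat W)$ and $(\check\rho,\check u,\hat W)$ are martingale solutions in the sense of Definition~\ref{def:3.1}, driven by the \emph{same} Wiener process $\hat W$ and satisfying (by equality of laws) the bounds \eqref{eq:3.16}--\eqref{eq:3.17}. Moreover, since $(\rho_{h_k},u_{h_k})(0)=(\rho_{h_k'},u_{h_k'})(0)=(\rho_0,u_0)$ are literally the same random variable and evaluation at the initial time is continuous on the path spaces, the joint law of the two initial data is supported on the diagonal, whence $\hat\rho(0)=\check\rho(0)$ and $\hat u(0)=\check u(0)$ almost surely. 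Proposition~\ref{prop:pathwise_uniq} then forces $(\hat\rho,\hat u)=(\check\rho,\check u)$ almost surely, i.e. the limiting joint law is concentrated on the diagonal of $(\XX_\rho\times\XX_u)^2$, and Lemma~\ref{lem:GK96} yields convergence of $(\rho_h,u_h)$ in probability on the original probability space.

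Denote the limit by $(\rho,u)$. As an almost sure limit along a subsequence of the $(\FFF_t)$-adapted processes $(\rho_h,u_h)$, and since the filtration is complete, $(\rho,u)$ is $(\FFF_t)$-adapted; it inherits the regularity required in Definition~\ref{def:pathwise_sol} together with the a priori bounds \eqref{eq:3.16}--\eqref{eq:3.17} from \eqref{eq:3.30}, \eqref{eq:3.32} and equality of laws (by lower semicontinuity of the norms). Passing to the limit $h\to 0$ in the discretized system \eqref{eq:3.20}--\eqref{eq:3.21} directly on $\stochbasis$ --- the deterministic terms converging by the strong convergence in $\XX_\rho\times\XX_u$ and the uniform bounds, the stochastic term by Lemma~\ref{lem:conv_of_stoch_int} applied with the fixed Wiener process $W$ --- shows that $(\rho,u)$ solves \eqref{eq:3.53}--\eqref{eq:3.54}, so it is a pathwise solution with initial datum $(\rho_0,u_0)$. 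Uniqueness within the class singled out by the a priori bounds \eqref{eq:3.16}--\eqref{eq:3.17} is precisely Proposition~\ref{prop:pathwise_uniq}.

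I expect the main obstacle to be the doubled passage to the limit: one must check that the Skorokhod copies $(\hat\rho,\hat u,\hat W)$ and $(\check\rho,\check u,\hat W)$ are \emph{both} martingale solutions relative to one common filtration and one common driving process $\hat W$, which is where Lemma~\ref{lem:stability_of_nonanti} and the careful identification of the two stochastic integrals (so that Proposition~\ref{prop:pathwise_uniq} genuinely applies) must be invoked; the remaining estimates merely repeat the single-sequence computations of Section~\ref{sec:The limit for vanishing time step}.
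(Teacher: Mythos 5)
Your proposal follows essentially the same route as the paper's proof: apply Skorokhod to the doubled sequence $(\rho_{n_k},u_{n_k},\rho_{m_k},u_{m_k},W)$, use Lemma~\ref{lem:stability_of_nonanti} to get a common non-anticipative filtration for the limit pair, invoke Proposition~\ref{prop:pathwise_uniq} to place the limiting joint law on the diagonal, and then conclude via Gy\"ongy--Krylov (Lemma~\ref{lem:GK96}) and a final passage to the limit on the original space as in Section~\ref{sec:The limit for vanishing time step}. The reasoning, the cited lemmas, and the identified delicate points all match the paper's argument.
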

\begin{proof}
    Consider a family of approximate solutions 
    \begin{align*}
        \left\{ [\rho_l,u_l]:l\in \NN \right\} := \left\{ [\rho_{h_l},u_{h_l}]:l\in \NN \right\}
    \end{align*}
    constructed on the original probability space $\stochbasis$, by means of the iteration procedure \eqref{eq:3.20}--\eqref{eq:3.21}. Similarly to Section \ref{sec:The limit for vanishing time step}, we apply the Skorokhod representation theorem to the joint laws generated by the random variables 
    $$
        \left\{ [\rho_{n_k},u_{n_k},\rho_{m_k},u_{m_k},W]:k\in \NN \right\}
    $$
    in the space 
    $$
        \left[\XX_\rho\times \XX_u\right]^2\times \XX_W.
    $$
    Consequently, we obtain a subsequence 
    $$
        \left\{ [\trho_{n_{k_l}},\tu_{n_{k_l}},\trho_{m_{k_l}},\tu_{m_{k_l}},\tW_{l}]:l\in \NN \right\}
    $$
    defined on a new probability space $(\tOmega,\tFFF,\tPP)$ converging $\tPP$-a.s. to some random variable $[\trho_1,\tu_1,\trho_2,\tu_2,\tW]$. 
    Note that the $\sigma$-field $(\tFFF_t)_{t\geq 0}$ generated by $[\trho_1,\tu_1,\trho_2,\tu_2,\tW]$ is non-anticipative with respect to $\tW$ due to the construction of $[\rho_h,u_h]$, Lemma \ref{lem:sufficient_cond_of_Wiener_by_law} and Lemma \ref{lem:stability_of_nonanti}.
    Therefore, applying the arguments used in the proof of Theorem \ref{thm:sol1}, $(\tstochbasis,\trho_i,\tu_i,\tW), i=1,2$ are solutions of \eqref{eq:3.11}--\eqref{eq:3.12} with the initial data 
    $$
        [\trho_1(0),\tu_1(0)] =[\trho_2(0),\tu_2(0)]\quad \tPP\text{-a.s.}
    $$
    As a consequence of Proposition \ref{prop:pathwise_uniq}, the two solutions $[\trho_i,\tu_i],i=1,2$ coincide $\tPP$-a.s. Therefore, the family $\left\{ [\rho_l,u_l] : l\in \NN\right\}$ satisfies the hypothesis of Gy\"{o}ngy--Krylov's result, Lemma \ref{lem:GK96}. Passing to a subsequence if necessary, we conclude that $[\rho_l,u_l]$ converges $\PP$-a.s. and therefore gives rise to a solution $[\rho,u]$ defined on the original probability space $\probsp$. 
    Note that the fact that the limit is a solution \eqref{eq:3.11}--\eqref{eq:3.12} follows by repeating the argument used in Section \ref{sec:The limit for vanishing time step}.
\end{proof}

As a corollary of Theorem \ref{thm:pathwise_sol}, the existence of a unique pathwise solution for general initial data follows.
\begin{corollary}
    \label{cor:general_data}
    Let $\stochbasis$ be a stochastic basis with a complete right-continuous filtration and let $W$ be a cylindrical $(\FFF_t)$-Wiener process on $\fU$. Let $[\rho_0,u_0]$ be a given $\FFF_0$-measurable initial datum satisfying
    \begin{align}
        \rho_0\in C^{2+\nu}(\oD),\quad \urho\leq \rho_0\leq \orho,\quad \nabla\rho_0\cdot \rn |_{\pD}=0,\quad u_0\in V_m,\quad \PP\text{-a.s.,}
    \end{align}
    for some deterministic constants $\urho,\orho>0$. Then the approximate problem \eqref{eq:3.11}--\eqref{eq:3.12} admits a unique pathwise solution $[\rho,u]$ in the sense of Definition \ref{def:pathwise_sol}. The solution satisfies in addition the estimate \eqref{eq:3.56}.
\end{corollary}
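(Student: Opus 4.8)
The plan is to remove the moment bound on $u_0$ by localising in the initial velocity so as to reduce to Theorem \ref{thm:pathwise_sol}; note that the remaining hypotheses on $\rho_0$ here are already deterministic $\PP$-a.s. bounds of exactly the type that theorem requires. Concretely, I would introduce the $\FFF_0$-measurable sets $\Omega_k:=\{k-1\le\norm{u_0}_{V_m}<k\}$, $k\in\NN$, which partition $\Omega$, and apply Theorem \ref{thm:pathwise_sol} on the given stochastic basis $\stochbasis$ and with the given $W$, but with the truncated initial datum $(\rho_0,1_{\Omega_k}u_0)$. This datum is $\FFF_0$-measurable, $1_{\Omega_k}u_0\in V_m$ with $\norm{1_{\Omega_k}u_0}_{V_m}\le k$ pointwise, hence $\EE[\norm{1_{\Omega_k}u_0}_{V_m}^r]\le k^r<\infty$ (for any fixed $r>2$), while $\rho_0$ still satisfies $\urho\le\rho_0\le\orho$ and $\nabla\rho_0\cdot\rn|_{\pD}=0$ $\PP$-a.s.; so Theorem \ref{thm:pathwise_sol} furnishes a unique pathwise solution $[\rho_k,u_k]$ issuing from $(\rho_0,1_{\Omega_k}u_0)$ and satisfying \eqref{eq:3.56} with a constant depending only on $m,R,T,\urho,\orho,\nu,\varepsilon$, in particular independent of $k$.

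Next I would glue these solutions by the $\omega$-wise definition $\rho:=\sum_k 1_{\Omega_k}\rho_k$, $u:=\sum_k 1_{\Omega_k}u_k$. Adaptedness and the $\PP$-a.s. path regularity demanded in Definition \ref{def:pathwise_sol} are inherited from the $[\rho_k,u_k]$ together with $\Omega_k\in\FFF_0$, and since $1_{\Omega_k}u_0=u_0$ on $\Omega_k$ we get $(\rho(0),u(0))=(\rho_0,u_0)$ $\PP$-a.s. The continuity equation \eqref{eq:3.53} holds $\omega$-wise on each $\Omega_k$, hence on all of $\Omega$. For the momentum equation I would multiply \eqref{eq:3.54}, written for $[\rho_k,u_k]$, by $1_{\Omega_k}$: because $\Omega_k\in\FFF_0$ the indicator passes through the It\^{o} integral, so $1_{\Omega_k}\inttau\int_D\GG_\varepsilon(\rho_k,\rho_k u_k)\cdot\bphi\,dx\,dW=\inttau\int_D 1_{\Omega_k}\GG_\varepsilon(\rho_k,\rho_k u_k)\cdot\bphi\,dx\,dW$, and on $\Omega_k$ one has $\rho_k=\rho$ and $u_k=u$, so the integrand equals $1_{\Omega_k}\GG_\varepsilon(\rho,\rho u)\cdot\bphi$; similarly for the convective, pressure, viscous and friction terms. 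Summing over $k$ recovers \eqref{eq:3.54} for $[\rho,u]$. The bound \eqref{eq:3.56} then holds on each $\Omega_k$ by that for $[\rho_k,u_k]$; alternatively it is automatic for any pathwise solution, since $\rho$ solves a parabolic Neumann problem whose transport velocity $[u]_R$ is bounded by $c(m,R)$ and whose datum is bounded by $\orho$ in $C^{2+\nu}(\oD)$, so that uniqueness below is in fact unconditional.

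For uniqueness I would invoke Proposition \ref{prop:pathwise_uniq}. Although that proposition is stated for solutions obeying also a moment bound on $\norm{u}_{V_m}$, an inspection of its proof shows this is used only to guarantee that the stopping times $\tau_M:=\inf\{t:\norm{u_i(t)}_{V_m}>M\}$ satisfy $\tau_M\uparrow T$ $\PP$-a.s.; here this is immediate from the $\PP$-a.s. continuity of each $u_i$ on the compact interval $[0,T]$, and every expectation in that argument is taken up to $\tau_M$, where the quantities involved are controlled by constants depending on $M$ and on \eqref{eq:3.56}. Hence any two pathwise solutions with datum $(\rho_0,u_0)$ coincide $\PP$-a.s.

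I expect the main obstacle to be the bookkeeping in the gluing step: rigorously checking that multiplication by the $\FFF_0$-measurable indicators commutes with the stochastic integral and that the countable superposition $\sum_k 1_{\Omega_k}[\rho_k,u_k]$ is again progressively measurable with the regularity of Definition \ref{def:pathwise_sol}. Everything else is routine, including the observation that Proposition \ref{prop:pathwise_uniq} applies without the moment bound.
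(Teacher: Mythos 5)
There is a genuine gap in the localisation step. You apply Theorem \ref{thm:pathwise_sol} to the datum $(\rho_0,1_{\Omega_k}u_0)$ and assert that ``the remaining hypotheses on $\rho_0$ here are already deterministic $\PP$-a.s.\ bounds of exactly the type that theorem requires.'' This is false. Theorem \ref{thm:pathwise_sol} requires $\norm{\rho_0}_{C_x^{2+\nu}}\leq\orho$ $\PP$-a.s.\ for a \emph{deterministic} constant $\orho$, whereas Corollary \ref{cor:general_data} only assumes the pointwise bound $\urho\leq\rho_0\leq\orho$ together with $\rho_0\in C^{2+\nu}(\oD)$ $\PP$-a.s.; there is no deterministic control of $\norm{\rho_0}_{C_x^{2+\nu}}$. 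Your partition $\Omega_k=\{k-1\leq\norm{u_0}_{V_m}<k\}$ localises only in the velocity, so the random, possibly unbounded $C^{2+\nu}$-norm of $\rho_0$ survives, and Theorem \ref{thm:pathwise_sol} is not applicable. This same conflation of pointwise bound and norm bound recurs when you claim the constant in \eqref{eq:3.56} for $[\rho_k,u_k]$ is $k$-independent---the constant in Theorem \ref{thm:pathwise_sol} depends on the deterministic bound of $\norm{\rho_0}_{C_x^{2+\nu}}$---and again in your ``alternative'' observation, where you write that the datum ``is bounded by $\orho$ in $C^{2+\nu}(\oD)$.''

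The repair requires localising in $\norm{\rho_0}_{C_x^{2+\nu}}$ as well, and here a second point must be handled: one cannot truncate $\rho_0$ by multiplying with an indicator, since that would force the modified density to vanish on the complement and destroy the lower bound $\urho\leq\rho_0$ required by Theorem \ref{thm:pathwise_sol}. The paper instead uses the nested sets $I_M=\{\norm{\rho}_{C_x^{2+\nu}}\leq M,\ \norm{v}_{V_m}\leq M\}$ and replaces $[\rho_0,u_0]$ by the fixed admissible constant pair $[M+1,0]$ outside $I_M$; pathwise uniqueness from Theorem \ref{thm:pathwise_sol} shows the resulting solutions $[\rho_M,u_M]$ are consistent on the increasing events $\{[\rho_0,u_0]\in I_N\}$ for $N\leq M$, which makes the limiting object well-defined $\PP$-a.s.\ without any gluing sum. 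Your observations that $\FFF_0$-measurable indicators pass through the It\^o integral, and that Proposition \ref{prop:pathwise_uniq} actually needs only continuity of $u_i$ (not the moment bound) to have $\tau_M\uparrow T$, are both correct, but they do not rescue the localisation as you have set it up.
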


\begin{proof}
    For a given $M>0$, we consider the set 
    $$
        I_M = \left\{ (\rho,v)\in C^{2+\nu}(\oD)\times V_m: \norm{\rho}_{C_x^{2+\nu}}\leq M,\norm{v}_{V_m}\leq M \right\}.
    $$
    We modify the initial data introducing 
    $$
        [\rho_{0,M},u_{0,M}] =\begin{cases}
        [\rho_0,u_0] &\text{if} \  [\rho_0,u_0]\in I_M \\
        [M+1,0] &\text{otherwise}. 
        \end{cases}
    $$
    By Theorem \ref{thm:pathwise_sol}, the problem \eqref{eq:3.11}--\eqref{eq:3.12} admits a unique pathwise solution $[\rho_M,u_M]$ with the initial data $[\rho_{0,M},u_{0,M}]$, for any $M>0$. As a consequence of the uniqueness result stated in Theorem \ref{thm:pathwise_sol},
    $$
        [\rho_N,u_N] = [\rho_M,u_M]\quad \PP\text{-a.s. on the set } \left\{ [\rho_0,u_0]\in I_N \right\},
    $$
    whenever $M \geq N$.
    Seeing that 
    $$
        \PP\left([\rho_0,u_0]\in \bigcup_{N=1}^{\infty}\bigcap_{M\geq N}I_M\right)=1,
    $$
    we define $[\rho,u]$ -- the unique solution of the approximate problem \eqref{eq:3.11}--\eqref{eq:3.12} -- as 
    $$
        [\rho,u] =[\rho_M,u_M]\quad \text{whenever}\ [\rho_0,u_0]\in I_M.
    $$
    This completes the proof of Corollary~\ref{cor:general_data}.
\end{proof}
Finally, we show that any solution of the approximate problem \eqref{eq:3.11}--\eqref{eq:3.12} in the sense of Definition \ref{def:pathwise_sol} satisfies a variant of the energy balance.
\begin{proposition}
    \label{prop:energy_balance}
    Let $[\rho,u]$ be a pathwise solution to \eqref{eq:3.11}--\eqref{eq:3.12} in the sense of Definition \ref{def:pathwise_sol}. Then the energy balance 
    \begin{align}
        &\int_D \left[\frac 1 2 \rho |u|^2 + P_\delta(\rho)\right](\tau)dx + \inttau \int_D \left[\SSS(\nabla u):\nabla u + \varepsilon \rho |\nabla u|^2 + \varepsilon P_\delta''(\rho)|\nabla \rho|^2\right]dxdt +\inttau \int_{\pD} g \nabla j_\alpha(u)\cdot u d\Gamma dt \notag\\ 
        &\quad = \int_D \left[\frac 1 2 \rho_0 |u_0|^2 + P_\delta(\rho_0)\right]dx +\frac 1 2 \sum_{k=1}^{\infty} \inttau \int_D G_{k,\varepsilon}(\rho,\rho u) \cdot \MMM^{-1}[\rho] ([G_{k,\varepsilon}(\rho,\rho u)]_m^\ast) dxdt \notag \\ 
        &\quad \quad + \inttau \int_D \GG_\varepsilon(\rho,\rho u) \cdot u dx dW \label{eq:3.59}
    \end{align}
    holds for all $\tau\in [0,T]$ $\PP$-a.s. with the approximate pressure potential 
    $$
        P_\delta(\rho) = \rho \int_{1}^{\rho}\frac{p_\delta(z)}{z^2}dz,
    $$
    where $p_\delta$ is given at the beginning of Section \ref{sec:Outline of the proof of the main result}.
\end{proposition}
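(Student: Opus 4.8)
The plan is to apply It\^o's formula to a truncated kinetic-energy functional on the finite-dimensional space $V_m^\ast$, and to combine the resulting identity with a renormalized form of the continuity equation. First I would record that \eqref{eq:3.54}, after integrating by parts the interior terms so as to match the definition \eqref{eq:3.27} of $\NNN[\rho]$ and using $u=\MMM^{-1}[\rho]([\rho u]_m^\ast)$, is equivalent to the closed $V_m^\ast$-valued It\^o equation
\[
[\rho u]_m^\ast(\tau)=[\rho_0u_0]_m^\ast+\inttau \NNN[\rho](u)\,dt+\inttau[\GG_\varepsilon(\rho,\rho u)]_m^\ast\,dW ,\qquad \tau\in[0,T],
\]
so that $[\rho u]_m^\ast$ is a continuous semimartingale in $V_m^\ast$; meanwhile $t\mapsto\rho(t)$ is, by \eqref{eq:3.53} and the regularity in Definition~\ref{def:pathwise_sol}, absolutely continuous with values in $L^2(D)$, hence of finite variation, with $\pp_t\rho=\varepsilon\Delta\rho-\rdiv(\rho[u]_R)$.

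Next, following the scheme used in the proof of Proposition~\ref{prop:1.5}, I would fix the cut-off $\chi_c\in C^\infty(\RR)$ with $\chi_c(z)=z$ for $c\le z\le 1/c$, $\chi_c\subset[c/2,2/c]$, monotone increasing, and set
\[
\tilde f_c[\rho,q]:=\tfrac12\,\ev{q,\MMM^{-1}[\chi_c(\rho)]q}_{V_m^\ast,V_m}\ \colon\ L^2(D)\times V_m^\ast\to\RR .
\]
Since $\chi_c(\rho)$ is bounded and bounded away from $0$, the map $\rho\mapsto\MMM^{-1}[\chi_c(\rho)]$ is smooth into $\LL(V_m^\ast,V_m)$ with bounded derivatives (via \eqref{eq:3.25}, \eqref{eq:3.26} and the resolvent identity), so $\tilde f_c$ is $C^2$ on $L^2(D)\times V_m^\ast$; this is exactly why the truncation is needed to legitimize It\^o's formula. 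Applying the finite-dimensional It\^o formula to $\tilde f_c[\rho(t),[\rho u]_m^\ast(t)]$ and using $\pp_q\tilde f_c[\rho,q]=\MMM^{-1}[\chi_c(\rho)]q$, $\pp_q^2\tilde f_c[\rho,q]=\MMM^{-1}[\chi_c(\rho)]$ and $\pp_\rho\tilde f_c[\rho,q]\cdot\sigma=-\tfrac12\int_D\chi_c'(\rho)\,\sigma\,|\MMM^{-1}[\chi_c(\rho)]q|^2\,dx$ (again from the resolvent identity), together with the convergence $\sum_kf_{k,\varepsilon}^2<\infty$ of \eqref{eq:3.10} to control the It\^o correction, yields an It\^o identity for $\tilde f_c[\rho,[\rho u]_m^\ast]$ whose drift is $\ev{u_c,\NNN[\rho](u)}-\tfrac12\int_D\pp_t\rho\,\chi_c'(\rho)|u_c|^2\,dx$, with $u_c:=\MMM^{-1}[\chi_c(\rho)]([\rho u]_m^\ast)$, plus the correction $\tfrac12\sum_k\int_D G_{k,\varepsilon}(\rho,\rho u)\cdot\MMM^{-1}[\chi_c(\rho)]([G_{k,\varepsilon}(\rho,\rho u)]_m^\ast)\,dx$ and the martingale $\int_D\GG_\varepsilon(\rho,\rho u)\cdot u_c\,dx\,dW$.

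In parallel I would test the continuity equation against $P_\delta'(\rho)$: multiplying \eqref{eq:3.53} by $P_\delta'(\rho)$, integrating over $D$, integrating by parts and using the Neumann condition $\nabla\rho\cdot\rn|_{\pD}=0$ and the identity $\rho P_\delta'(\rho)-P_\delta(\rho)=p_\delta(\rho)$, one gets
\[
\frac{d}{dt}\int_D P_\delta(\rho)\,dx=-\varepsilon\int_D P_\delta''(\rho)|\nabla\rho|^2\,dx-\int_D p_\delta(\rho)\,\rdiv[u]_R\,dx .
\]
Then I would add this to the It\^o identity for $\tilde f_c[\rho,[\rho u]_m^\ast]$ and pass to the limit $c\to0$: by \eqref{eq:3.56} (equivalently, by the pathwise boundedness of $\rho$ away from $0$ and $\infty$) one has $\chi_c(\rho)\equiv\rho$ $\PP$-a.s. for $c$ small, so $\tilde f_c[\rho,[\rho u]_m^\ast]=\tfrac12\int_D\rho|u|^2\,dx$, $u_c=u$, $\chi_c'(\rho)=1$, and every term stabilizes. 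The remaining task is the integration-by-parts bookkeeping of $\ev{u,\NNN[\rho](u)}-\tfrac12\int_D\pp_t\rho\,|u|^2\,dx$: the convective term cancels against the $\rdiv(\rho[u]_R)$-part of $\pp_t\rho$ because the cut-off factor in $[u]_R=\chi(\norm{u}_{V_m}-R)u$ is spatially constant; the cross terms $\varepsilon\nabla\rho\cdot\nabla|u|^2$ cancel using the Neumann condition; the pressure term $\chi(\norm{u}_{V_m}-R)\int_D p_\delta(\rho)\,\rdiv u\,dx$ cancels against the one in the displayed continuity identity since $\rdiv[u]_R=\chi(\norm{u}_{V_m}-R)\rdiv u$; the interior viscous contributions leave $\SSS(\nabla u):\nabla u+\varepsilon\rho|\nabla u|^2$ and the parabolic term leaves $\varepsilon P_\delta''(\rho)|\nabla\rho|^2$; and the boundary functionals in \eqref{eq:3.27} are arranged so that, tested against $u$ and combined with $u\cdot\rn|_{\pD}=0$, all boundary contributions except $\int_{\pD}g\nabla j_\alpha(u)\cdot u\,d\Gamma$ cancel. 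Collecting terms gives \eqref{eq:3.59}, and since both sides are $\PP$-a.s. continuous in $\tau$ the identity holds for all $\tau\in[0,T]$ simultaneously.

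The step I expect to be the main obstacle is the rigorous setup of It\^o's formula together with the exact identification of the It\^o correction: one must check that $\tilde f_c$ genuinely has the claimed $C^2$-regularity on $L^2(D)\times V_m^\ast$, compute $\pp_\rho\tilde f_c$, $\pp_q\tilde f_c$, $\pp_q^2\tilde f_c$ from the resolvent identity, and verify that the second-order term produces the operator $\MMM^{-1}[\rho]$ rather than a plain factor $\rho^{-1}$ — this is forced by the fact that only the $V_m^\ast$-projection $[\GG_\varepsilon(\rho,\rho u)]_m^\ast$ of the noise enters \eqref{eq:3.54}, and it is precisely this operator-valued correction that later needs the estimates of Remark~\ref{rem:quad_var_est}. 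The remaining, more mechanical, difficulty is the careful tracking of the boundary integrals during the reorganization.
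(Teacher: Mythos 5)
Your proposal follows the paper's proof essentially step by step: recast \eqref{eq:3.54} as the closed $V_m^\ast$-valued It\^o equation driven by $\NNN[\rho]$, introduce the same cut-off $\chi_c$ and quadratic functional $\tilde f_c[\rho,q]=\tfrac12\ev{q,\MMM^{-1}[\chi_c(\rho)]q}_{V_m^\ast,V_m}$, differentiate via the resolvent identity, apply It\^o, observe that $\chi_c(\rho)\equiv\rho$ for $c$ small by \eqref{eq:3.56} so the cut-off disappears (the paper also does not actually pass $c\to 0$ but only chooses $c$ small enough), add the pressure-potential identity obtained from the continuity equation, and integrate by parts with the boundary conditions. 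Two very minor presentational differences: (i) you give the scalar formula $\pp_\rho\tilde f_c[\rho,q][\sigma]=-\tfrac12\int_D\chi_c'(\rho)\,\sigma\,|\MMM^{-1}[\chi_c(\rho)]q|^2\,dx$, which is indeed what the paper's operator-valued expression $-\tfrac12\ev{q,\MMM^{-1}[\chi_c(\rho)]\MMM[\chi_c'(\rho)h]\MMM^{-1}[\chi_c(\rho)]q}$ evaluates to, so this is a correct simplification rather than a change of method; and (ii) for the pressure potential you multiply \eqref{eq:3.53} by $P_\delta'(\rho)$ directly rather than phrasing it (as the paper does) as an It\^o/chain-rule computation for $\rho\mapsto\int_D P_\delta(\chi_c(\rho))\,dx$ — since $\rho$ has finite variation these are the same calculation, and both give the stated identity after two integrations by parts using the Neumann condition. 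Your identification of the operator-valued It\^o correction $\MMM^{-1}[\rho]$ (as opposed to a naive $\rho^{-1}$), and your recognition that this is precisely what Remark~\ref{rem:quad_var_est} is designed to control, match the paper exactly. So this is the same proof, not a genuinely different route.
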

\begin{proof}
    Recall that \eqref{eq:3.54} can be rewritten in the form 
    \begin{align}
        &d[\rho u]_m^\ast = \NNN[\rho](u) dt + [\GG_\varepsilon (\rho,\rho u)]_m^\ast dW, \label{eq:3.60}
    \end{align}
    where the operator $\NNN$ is given in \eqref{eq:3.27}.
    We formally test this by $u$, that is we apply It\^{o}'s formula to the system \eqref{eq:3.53} and \eqref{eq:3.60}, and the functional 
    $$
        f:L^2(D)\times V_m^\ast \to \RR,\quad (\rho,q)\mapsto \frac 1 2 \int_D q\cdot \MMM^{-1}[\rho]q dx.
    $$
    More precisely, we introduce a cut-off function $\chi_c\in C^\infty(\RR)$ such that 
    \begin{align*}
        &\chi_c(x) = x\quad \text{for } c\leq x\leq \tfrac 1 c, \\ 
        &\chi_c \subset \left[\tfrac c 2, \tfrac{1}{2c}\right], \\
        &\chi_c \ \text{is monotonically increasing},
    \end{align*}
    and the functional $f_c:L^2(D)\times V_m^\ast\to \RR$, 
    $$
        f_c[\rho,q] := \frac 1 2  \int_D q \MMM^{-1}[\chi_c(\rho)]q dx = \frac 1 2 \evm{q}{\MMM^{-1}[\chi_c(\rho)]q}
    $$
    for any $c>0$. Then $\pp_\rho f, \pp_q f_c$, and $\pp_{qq}f_c$ exist, and we have 
    \begin{align*}
        \pp_\rho f_c[\rho,q][h] &= -\frac 1 2 \evm{q}{\MMM^{-1}[\chi_c(\rho)]\MMM[\chi_c'(\rho)h]\MMM^{-1}[\chi_c(\rho)]q},\quad h\in L^2(D), \\ 
        \pp_q f_c[\rho,q][h] &= \evm{h}{\MMM^{-1}[\chi_c(\rho)]q},\quad h\in V_m^\ast, \\ 
        \pp_{qq}f_c[\rho,q][h,k] &= \evm{h}{\MMM^{-1}[\chi_c(\rho)]k},\quad h,k\in V_m^\ast,   
    \end{align*}
    for all $(\rho,q)\in L^2(D)\times V_m^\ast$. 
    Note that all derivatives of $f$ appearing here can be verified directly. Indeed, for $\pp_\rho f_c$, we write $\cA_c[\rho][h] = \MMM^{-1}[\chi_c(\rho)]\MMM[\chi_c'(\rho)h]\MMM^{-1}[\chi_c(\rho)]$, and observe that  
    \begin{align*}
        &\evm{q}{\left(\MMM^{-1}[\chi_c(\rho + h)] - \MMM^{-1}[\chi_c(\rho)] + \cA_c[\rho][h]\right)q} \\ 
        &\quad =  \evm{q}{\left(- \MMM^{-1}[\chi_c(\rho + h)]\left(\MMM[\chi_c(\rho + h)] - \MMM[\chi_c(\rho)]\right)\MMM^{-1}[\chi_c(\rho)] + \cA_c[\rho][h]\right)q} \\ 
        &\quad = \evm{q}{\left(- \MMM^{-1}[\chi_c(\rho + h)]\left(\MMM[\chi_c(\rho + h)] - \MMM[\chi_c(\rho)]\right) + \MMM^{-1}[\chi_c(\rho)]\MMM[\chi_c'(\rho)h] \right) \MMM^{-1}[\chi_c(\rho)]q} \\
        &\quad = \evm{q}{- \left(\MMM^{-1}[\chi_c(\rho + h)] - \MMM^{-1}[\chi_c(\rho)]\right)\left(\MMM[\chi_c(\rho + h)] - \MMM[\chi_c(\rho)]\right) \MMM^{-1}[\chi_c(\rho)]q} \\ 
        &\quad \quad + \evm{q}{\MMM^{-1}[\chi_c(\rho)]\left(\MMM[\chi_c'(\rho)h] - \MMM[\chi_c(\rho + h)] + \MMM[\chi_c(\rho)]\right)\MMM^{-1}[\chi_c(\rho)]q},
    \end{align*}
    for all $(\rho,q)\in L^2(D)\times V_m^\ast$ and all $h\in L^2(D)$. With the definition of $\MMM$, \eqref{eq:3.25}, \eqref{eq:3.26}, and the definition of $\chi_c$ at hand, we have 
    \begin{align*}
        &\norm{\left(\MMM^{-1}[\chi_c(\rho + h)] - \MMM^{-1}[\chi_c(\rho)]\right)\left(\MMM[\chi_c(\rho + h)] - \MMM[\chi_c(\rho)]\right) \MMM^{-1}[\chi_c(\rho)]}_{\LL(V_m^\ast,V_m)} \\ 
        &\quad \lesssim \norm{\MMM^{-1}[\chi_c(\rho + h)] - \MMM^{-1}[\chi_c(\rho)]}_{\LL(V_m^\ast,V_m)} \norm{\MMM[\chi_c(\rho + h)] - \MMM[\chi_c(\rho)]}_{\LL(V_m,V_m^\ast)} \\ 
        &\quad \lesssim \norm{\chi_c(\rho + h) - \chi_c(\rho)}_{L^1(D)}\norm{\chi_c(\rho + h) - \chi_c(\rho)}_{L^2(D)} \\ 
        &\quad \lesssim \norm{h}_{L^2(D)}^2,
    \end{align*}
    and 
    \begin{align*}
        &\norm{\MMM^{-1}[\chi_c(\rho)]\left(\MMM[\chi_c'(\rho)h] - \MMM[\chi_c(\rho + h)] + \MMM[\chi_c(\rho)]\right)\MMM^{-1}[\chi_c(\rho)]}_{\LL(V_m^\ast,V_m)} \\ 
        &\quad \lesssim \norm{\MMM[\chi_c'(\rho)h] - \MMM[\chi_c(\rho + h)] + \MMM[\chi_c(\rho)]}_{\LL(V_m,V_m^\ast)} \\ 
        &\quad \lesssim \norm{\chi_c'(\rho)h - \chi_c(\rho + h) + \chi_c(\rho)}_{L^1(D)} \\ 
        &\quad \lesssim \norm{h^2}_{L^1(D)} = \norm{h}_{L^2(D)}^2,
    \end{align*}
    where the equivalence of norms on $V_m$ is used in the second inequality in the latter. These observations imply $\evm{q}{\cA_c[\rho][\cdot]q}$ is the desired partial derivative $\pp_\rho f_c$.
    
    Therefore, with this, \eqref{eq:3.53} and \eqref{eq:3.60} at hand, It\^{o}'s formula applied to $f_c$ yields
    \begin{align*}
        df_c[\rho,[\rho u]_m^\ast] &= \pp_\rho f_c[\rho,[\rho u]_m^\ast]d\rho + \pp_q f_c[\rho,[\rho u]_m^\ast]d [\rho u]_m^\ast \\ 
        &\quad + \frac 1 2 \Tr\left[\pp_{qq}f_c[\rho,[\rho u]_m^\ast] [\GG_\varepsilon(\rho,\rho u)]_m^\ast\left([\GG_\varepsilon(\rho,\rho u)]_m^\ast\right)^\ast \right] dt \\ 
        &= \pp_\rho f_c[\rho,[\rho u]_m^\ast]d\rho + \evm{\NNN[\rho]([\rho u]_m^\ast)}{\MMM^{-1}[\chi_c(\rho)]([\rho u]_m^\ast)}dt \\ 
        &\quad + \evm{[\GG_\varepsilon(\rho,\rho u)]_m^\ast}{\MMM^{-1}[\chi_c(\rho)]([\rho u]_m^\ast)}dW \\ 
        &\quad + \frac 1 2 \Tr\left[\pp_{qq}f_c[\rho,[\rho u]_m^\ast] [\GG_\varepsilon(\rho,\rho u)]_m^\ast\left([\GG_\varepsilon(\rho,\rho u)]_m^\ast\right)^\ast \right] dt, 
    \end{align*}
    where the symbol $(\cdot)^\ast$ denotes the adjoint of an operator. Since $\rho$ satisfies the estimate \eqref{eq:3.56}, choosing $c$ small enough yields 
    \begin{align*}
        d\left(\frac 1 2 \int_D \rho |u|^2 dx\right) &=-\frac 1 2 \int_D |u|^2 d\rho dx + \evm{\NNN[\rho]([\rho u]_m^\ast)}{u}dt + \int_D \GG_\varepsilon(\rho,\rho u) \cdot u dx dW \\ 
        &\quad + \frac 1 2 \sum_{k=1}^{\infty}\int_D G_{k,\varepsilon}(\rho,\rho u) \cdot \MMM^{-1}[\rho] ([G_{k,\varepsilon}(\rho,\rho u)]_m^\ast) dxdt.
    \end{align*}
    With the boundary conditions for $\rho$ and $u$, respectively, the definition of $\NNN$, and \eqref{eq:3.53} at hand, integrating by parts yields 
    \begin{align}
        d\left(\frac 1 2 \int_D \rho |u|^2 dx\right) &= -\int_D \chi(\norm{u}_{V_m}-R)\nabla p_\delta(\rho) \cdot u dx dt -\int_D \left(\varepsilon \rho |\nabla u|^2 + \SSS(\nabla u):\nabla u\right)dx dt \notag \\ 
        &\quad - \int_{\pD} g\nabla j_\alpha(u)\cdot u d\Gamma dt + \int_D \GG_\varepsilon(\rho,\rho u) \cdot u dx dW \notag \\ 
        &\quad +  \frac 1 2 \sum_{k=1}^{\infty}\int_D G_{k,\varepsilon}(\rho,\rho u) \cdot \MMM^{-1}[\rho] ([G_{k,\varepsilon}(\rho,\rho u)]_m^\ast) dxdt. \label{eq:3.61}
    \end{align}
    Similarly, applying It\^{o}'s formula to the functional 
    $$
        L^2(D)\ni \rho\mapsto \int_D P_\delta(\chi_c(\rho))dx\in \RR,
    $$
    we have
    \begin{align}
        d\int_D P_\delta(\rho)dx &= \int_D P_\delta'(\rho)dx d\rho \notag \\ 
        &= -\int_D P_\delta'(\rho)\rdiv(\rho[u]_R)dx dt + \varepsilon \int_D P_\delta'(\rho)\Delta \rho dx dt \notag \\ 
        &= \int_D \chi(\norm{u}_{V_m}-R)\nabla p_\delta(\rho) \cdot u dx dt - \varepsilon\int_D P_\delta''(\rho)|\nabla \rho|^2 dx dt. \label{eq:3.62}
    \end{align}
    Therefore, taking the sum of \eqref{eq:3.61} and \eqref{eq:3.62} yields the assertion.
\end{proof}
\begin{remark}
    \label{rem:quad_var_est}
    The quadratic variation term in \eqref{eq:3.59} has the following property: 
    \begin{align}
        0&\leq \frac 1 2 \sum_{k=1}^{\infty}\int_D G_{k,\varepsilon}(\rho,\rho u) \cdot \MMM^{-1}[\rho] ([G_{k,\varepsilon}(\rho,\rho u)]_m^\ast) dx \notag \\ 
        &\leq \frac 1 2 \sum_{k=1}^{\infty} \int_D \frac{\left|G_{k,\varepsilon}(\rho,\rho u) \right|^2}{\rho}dx = \frac 1 2 \sum_{k=1}^{\infty} \int_D \rho \left|F_{k,\varepsilon}(\rho,u) \right|^2 dx. \label{eq:3.63}
    \end{align}
    Indeed, since $F=\MMM^{-1}[\rho] ([G_{k,\varepsilon}(\rho,\rho u)]_m^\ast)$ satisfies that 
    $$
        \int_D \rho F\cdot \bphi dx = \int_D  G_{k,\varepsilon}(\rho,\rho u)\cdot \bphi dx,\quad \bphi\in V_m,
    $$
    taking $\bphi= F$ yields
    $$
        0\leq \int_D \rho |F|^2 dx = \int_ D G_{k,\varepsilon}(\rho,\rho u)\cdot F dx,
    $$
    and by Cauchy--Schwarz inequality, we have 
    $$
        \int_D G_{k,\varepsilon}(\rho,\rho u)\cdot F dx \leq \norm{\frac{G_{k,\varepsilon}(\rho,\rho u)}{\sqrt{\rho}}}_{L_x^2}\norm{\sqrt{\rho}F}_{L_x^2}.
    $$
    These observations imply 
    $$
        \norm{\sqrt{\rho}F}_{L_x^2} \leq \norm{\frac{G_{k,\varepsilon}(\rho,\rho u)}{\sqrt{\rho}}}_{L_x^2},
    $$
    and thus we obtain that 
    $$
        \int_D G_{k,\varepsilon}(\rho,\rho u)\cdot F dx \leq \int_D \frac{|G_{k,\varepsilon}(\rho,\rho u)|^2}{\rho}dx. 
    $$
\end{remark}

\subsection{Limit passage with respect to \texorpdfstring{$R\to \infty$}{R to infty}}
Our target problem reads 
\begin{align}
   &d\rho + \rdiv (\rho u)dt = \varepsilon \Delta \rho dt,\quad \nabla \rho \cdot \rn |_{\pD} =0,\quad \text{in}\ (0,T)\times D, \label{eq:approx_CE_R} \\
   \int_D \rho u\cdot \bphi(\tau) dx &= \int_D \rho_0 u_0 \cdot \bphi dx  + \inttau \int_D \left[(\rho u \otimes u):\nabla\bphi + p_\delta (\rho)\rdiv \bphi\right]dxdt  \notag \\ 
   &\quad - \inttau \int_D \left[\SSS(\nabla u):\nabla\bphi -\varepsilon \rho u\cdot \Delta \bphi \right]dxdt - \inttau \int_{\pD}g\nabla j_\alpha(u)\cdot \bphi d\Gamma dt \notag \\ 
   &\quad + \inttau\int_D \GG_\varepsilon(\rho,\rho u) \cdot \bphi dxdW, \quad \ \tau\in [0,T],\  \bphi\in V_m. \label{eq:approx_ME_R}
\end{align}
It will be solved in the context of strong pathwise solutions given by the following definition.
\begin{definition}
    \label{def:sol_to_cutofflayer}
    Let $\stochbasis$ be a given stochastic basis with a complete right-continuous filtration, let $W$ be a cylindrical $(\FFF_t)$-Wiener process on $\fU$, and let $(\rho_0,u_0)$ be an $\FFF_0$-measurable random variable taking values in $C^{2+\nu}(\oD)\times V_m$. Then $(\rho,u)$ is called a \textit{pathwise solution} to \eqref{eq:3.11}--\eqref{eq:3.12} with the initial datum $(\rho_0,u_0)$, if the following hold: 
	\begin{itemize}
		\item[(1)] $\rho$ and $u$ are $(\FFF_t)$-adapted stochastic processes such that $\PP$-a.s.
		\begin{align}
		   &\rho> 0,\quad \rho \in C([0,T];C^{2+\nu}(\oD))\cap C^1([0,T];C^{\nu}(\oD)),\quad u\in C([0,T];V_m);   
		\end{align}
		\item[(2)] We have $\PP$-a.s.
		\begin{align*}
            (\rho(0),u(0)) = (\rho_0,u_0);
        \end{align*}
		\item[(3)] the approximate equation of continuity 
		\begin{equation}
            \pp_t \rho + \rdiv (\rho u) = \varepsilon \Delta \rho,\quad \nabla \rho \cdot \rn |_{\pD} =0
        \end{equation}
		holds in $(0,T)\times D$ $\PP$-a.s.;
		\item[(4)] the approximate momentum equation 
		\begin{align}
            \int_D \rho u\cdot \bphi(\tau) dx &= \int_D \rho_0 u_0 \cdot \bphi dx  + \inttau \int_D \left[(\rho u \otimes u):\nabla\bphi + p_\delta (\rho)\rdiv \bphi\right]dxdt  \notag \\ 
            &\quad - \inttau \int_D \left[\SSS(\nabla u):\nabla\bphi -\varepsilon \rho u\cdot \Delta \bphi \right]dxdt - \inttau \int_{\pD}g\nabla j_\alpha(u)\cdot \bphi d\Gamma dt \notag \\ 
            &\quad + \inttau\int_D \GG_\varepsilon(\rho,\rho u) \cdot \bphi dxdW, \quad \ \tau\in [0,T],\  \bphi\in V_m 
        \end{align}
		holds for all $\tau\in [0,T]$ and all $\bphi\in V_m$ $\PP$-a.s.;
        \item[(5)] the energy equality
        \begin{align}
            &\int_D \left[\frac 1 2 \rho |u|^2 + P_\delta(\rho)\right](\tau)dx \notag \\ 
            &\quad + \inttau \int_D \left[\SSS(\nabla u):\nabla u + \varepsilon \rho |\nabla u|^2 + \varepsilon P_\delta''(\rho)|\nabla \rho|^2\right]dxdt +\inttau \int_{\pD} g \nabla j_\alpha(u)\cdot u d\Gamma dt \notag\\ 
            &\quad = \int_D \left[\frac 1 2 \rho_0 |u_0|^2 + P_\delta(\rho_0)\right]dx + \frac 1 2 \sum_{k=1}^{\infty}\int_D G_{k,\varepsilon}(\rho,\rho u) \cdot \MMM^{-1}[\rho] ([G_{k,\varepsilon}(\rho,\rho u)]_m^\ast) dxdt \notag \\ 
            &\quad \quad + \inttau \int_D \GG_\varepsilon(\rho,\rho u) \cdot u dx dW \label{eq:EB_R}
        \end{align}
        holds for all $\tau\in [0,T]$ $\PP$-a.s. 
    \end{itemize}
\end{definition}
\begin{theorem}
    \label{thm:sol_R}
    Let $\stochbasis$ be a stochastic basis with a complete right-continuous filtration and let $W$ be a cylindrical $(\FFF_t)$-Wiener process on $\fU$. Let $[\rho_0,u_0]$ be a given $\FFF_0$-measurable initial datum such that 
    \begin{align}
        \rho_0\in C^{2+\nu}(\oD),\quad \urho\leq \rho_0\leq \orho,\quad \nabla\rho_0\cdot \rn |_{\pD}=0, \quad u_0\in V_m,\quad \PP\text{-a.s.,}
    \end{align}
    for some deterministic constants $\urho,\orho>0$, and 
    \begin{align}
        \label{eq:3.71}
        \EE\left[\left(\int_D\left[\frac 1 2 \rho_0 |u_0|^2 + P_\delta(\rho_0)\right]dx\right)^r\right]<\infty,
    \end{align}
    for some $r>2$. Then the approximate problem \eqref{eq:approx_CE_R}--\eqref{eq:approx_ME_R} admits a pathwise unique solution $[\rho,u]$ in the sense of Definition \ref{def:sol_to_cutofflayer}. 
\end{theorem}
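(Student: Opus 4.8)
The plan is to remove the cut-off in the velocity norm by a stopping-time argument built on the pathwise uniqueness and the energy balance already available for the cut-off problem. \emph{First}, for each $R\in\NN$ Corollary~\ref{cor:general_data} supplies a unique pathwise solution $[\rho_R,u_R]$ to \eqref{eq:3.11}--\eqref{eq:3.12} with the prescribed initial datum, obeying \eqref{eq:3.56}, and by Proposition~\ref{prop:energy_balance} this solution satisfies the energy balance \eqref{eq:3.59}. Introduce the $(\FFF_t)$-stopping times $\sigma_R:=\inf\{t\in[0,T]:\|u_R(t)\|_{V_m}\ge R\}$ (with $\inf\emptyset=T$), well defined by the continuity of $u_R$. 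On $[0,\sigma_R]$ one has $\chi(\|u_R\|_{V_m}-R)=1$ and $[u_R]_R=u_R$, so the restriction of $[\rho_R,u_R]$ solves \eqref{eq:approx_CE_R}--\eqref{eq:approx_ME_R} there and, by the same cancellations as in the proof of Proposition~\ref{prop:energy_balance}, obeys the energy equality \eqref{eq:EB_R}. If $R\le R'$, then since $\|u_R\|_{V_m}\le R\le R'$ on $[0,\sigma_R]$ the restriction also solves the $R'$-problem there; extending it past $\sigma_R$ by re-solving the $R'$-problem and appealing to Proposition~\ref{prop:pathwise_uniq} yields $[\rho_R,u_R]=[\rho_{R'},u_{R'}]$ on $[0,\sigma_R]$ and $\sigma_R\le\sigma_{R'}$. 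Hence $([\rho_R,u_R])_R$ glues to an $(\FFF_t)$-adapted process $[\rho,u]$ on the stochastic interval $[0,\sigma^\ast)$, $\sigma^\ast:=\lim_{R\to\infty}\sigma_R$, solving \eqref{eq:approx_CE_R}--\eqref{eq:approx_ME_R} and satisfying \eqref{eq:EB_R} up to $\sigma^\ast$.

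The heart of the proof is to show $\sigma^\ast=T$ $\PP$-a.s., for which I would establish a bound on $\sup_{t\le T}\|u_R(t)\|_{V_m}$ uniform in $R$ (in probability). In \eqref{eq:3.59} every term on the left apart from the kinetic and potential energy is nonnegative, $P_\delta$ is bounded below by a constant $c_\delta$ and satisfies $P_\delta(\rho)\gtrsim\delta\rho^\Gamma$; using conservation of mass $\int_D\rho_R(t)\,dx=\int_D\rho_0\,dx$ together with Remark~\ref{rem:quad_var_est}, the It\^o correction is dominated by $\tfrac12\big(\sum_k f_{k,\varepsilon}^2\big)\|\rho_0\|_{L^1_x}T$, uniformly in $R$. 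Combining this with the Burkholder--Davis--Gundy inequality applied to $\int_0^\cdot\int_D\GG_\varepsilon(\rho_R,\rho_R u_R)\cdot u_R\,dx\,dW$ (whose quadratic variation is bounded by $\big(\sum_k f_{k,\varepsilon}^2\big)\|\rho_0\|_{L^1_x}\int_0^\cdot\|\sqrt{\rho_R}u_R\|_{L^2_x}^2\,dt$) and Gronwall's lemma gives, uniformly in $R$,
\begin{align*}
\EE\Big[\sup_{t\in[0,T]}\Big(\int_D\big(\tfrac12\rho_R|u_R|^2+P_\delta(\rho_R)\big)(t)\,dx\Big)^{r}\Big]+\EE\Big[\Big(\int_0^T\!\!\int_D\big(\SSS(\nabla u_R):\nabla u_R+\varepsilon\rho_R|\nabla u_R|^2\big)\,dx\,dt\Big)^{r}\Big]\le C.
\end{align*}
The generalized Korn--Poincar\'e inequality with the friction modulus $g$ as weight (see Lemma~\ref{lem:korn_poincare}) — applicable because $\int_{\pD}g\,\nabla j_\alpha(u_R)\cdot u_R\,d\Gamma\ge\int_{\pD}g|u_R|\,d\Gamma-\alpha\int_{\pD}g\,d\Gamma$ enters the dissipation — upgrades this to a uniform bound on $\EE\big[(\int_0^T\|u_R\|_{W^{1,2}(D)}^2\,dt)^{r}\big]$ and hence, by the equivalence of norms on $V_m$, on $\EE\big[(\int_0^T\|u_R\|_{V_m}^2\,dt)^{r}\big]$ (cf. Corollary~\ref{cor:est_u_R}). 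Inserting $\int_0^T\|u_R\|_{V_m}\,dt$ into the transport estimate \eqref{eq:3.23} produces $\inf_{\oD}\rho_R(t)\ge\urho\exp\big(-c_m\int_0^T\|u_R\|_{V_m}\,dt\big)$, a positive random quantity bounded below in probability uniformly in $R$; combined with $\|u_R(t)\|_{V_m}^2\lesssim_m(\inf_{\oD}\rho_R(t))^{-1}\|\sqrt{\rho_R}u_R(t)\|_{L^2_x}^2$ this shows $\sup_{t\le T}\|u_R\|_{V_m}^2$ is bounded in probability uniformly in $R$. Chebyshev's inequality then gives $\PP(\sigma_R<T)\le\PP\big(\sup_{t\le T}\|u_R\|_{V_m}\ge R\big)\to0$ as $R\to\infty$, and since $\{\sigma_R<T\}$ decreases in $R$ we conclude $\PP(\sigma^\ast<T)=0$ and, in fact, that $\sigma_R=T$ for all $R$ large enough, $\PP$-a.s.

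On the almost sure event where $\sigma_R=T$ for some finite $R$ one has $[u_R]_R=u_R$ on all of $[0,T]$, so $[\rho,u]:=[\rho_R,u_R]$ is an $(\FFF_t)$-adapted process on $[0,T]$ with the regularity of Definition~\ref{def:sol_to_cutofflayer}(1), solving \eqref{eq:approx_CE_R}--\eqref{eq:approx_ME_R} and satisfying the energy equality \eqref{eq:EB_R} (which coincides with \eqref{eq:3.59} on this event); measurability of the random index $R(\omega)$ is immediate from $\{R(\omega)\le n\}=\{\sigma_n=T\}$. Pathwise uniqueness for \eqref{eq:approx_CE_R}--\eqref{eq:approx_ME_R} follows by repeating the proof of Proposition~\ref{prop:pathwise_uniq} with $[u_i]_R$ replaced by $u_i$ and the localizing stopping times taken to be $\inf\{t:\|u_i(t)\|_{V_m}>M\}$, which increase to $T$ by continuity of $u_i$. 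The main obstacle is exactly the uniform-in-$R$ velocity estimate in Step~2: the It\^o correction in \eqref{eq:3.59} must be closed back against the energy (using the artificial-pressure coercivity and mass conservation), and, since at this level the viscous dissipation controls the velocity only through the density weight and the friction term, one is forced to invoke the weighted Korn--Poincar\'e inequality — which is precisely why the structural hypotheses on $g$ in Theorem~\ref{thm:1.3} are needed. The remaining bookkeeping in the gluing step (consistency of the truncations and non-anticipativity of the limiting filtration) is routine and parallels \cite{BFHbook18}.
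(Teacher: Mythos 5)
Your proposal is correct and follows essentially the same route as the paper: the stopping-time gluing of the $R$-cut-off solutions via pathwise uniqueness, the uniform (in $R$) energy estimate from Proposition~\ref{prop:unif.bound_by_energy} and Corollary~\ref{cor:est_u_R}, the parabolic maximum-principle lower bound $\rho_R\gtrsim\exp(-c\int_0^T\|u_R\|_{W^{1,2}_x}^2dt)$, and a Chebyshev-type argument to conclude $\sup_R\tau_R=T$ a.s. The only packaging difference is in the final tightness step: where you assert that $\sup_t\|u_R\|_{V_m}^2\lesssim(\inf\rho_R)^{-1}\sup_t\|\sqrt{\rho_R}u_R\|_{L^2_x}^2$ is ``bounded in probability uniformly in $R$,'' the paper makes this precise by choosing diverging sequences $a_R,b_R$ with $a_Re^{b_R}=R$ and decomposing into events $A=\{\exp(-c\int\|u_R\|^2)\sup\|u_R\|_{L^2}^2\le a_R\}$, $B=\{c\int\|u_R\|^2\le b_R\}$, so that $A\cap B\subset\{\sup\|u_R\|_{L^2}^2\le R\}$ and the two Chebyshev bounds from \eqref{eq:3.72}--\eqref{eq:3.73} yield $\PP(\tau_R<T)\to0$; this is the same content as your two-bound argument, just made explicit.
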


In order to prove Theorem \ref{thm:sol_R} we adopt the following strategy:
\begin{itemize}
    \item[1.] Using the energy balance from Proposition \ref{prop:energy_balance} we derive uniform bounds independent of the parameters $R,\alpha,m,\varepsilon$ and $\delta$.
    \item[2.] We perform the limit $R\to \infty$. The proof is based on a suitable stopping time argument. 
\end{itemize}
\begin{proposition}\label{prop:unif.bound_by_energy}
    Let $(\rho,u)$ be a solution to \eqref{eq:3.11}--\eqref{eq:3.12} in the sense of Definition \ref{def:pathwise_sol}. Then, we have, uniformly in $R,\alpha,m,\varepsilon$ and $\delta$, 
    \begin{align}
        &\EE\left[\left|\sup_{\tau\in [0,T]} \int_D \left[\frac 1 2 \rho |u|^2 + P_\delta(\rho)\right](\tau)dx \right|^{r'}\right] \notag\\ 
        &\quad + \EE\left[\left|\intT \int_D \left[\SSS(\nabla u):\nabla u + \varepsilon \rho |\nabla u|^2 + \varepsilon P_\delta''(\rho)|\nabla \rho|^2\right]dxdt \right|^{r'}\right] + \EE\left[\left|\intT \int_{\pD} g \nabla j_\alpha(u)\cdot u d\Gamma dt \right|^{r'}\right] \notag\\ 
        &\quad \leq c \left(\EE\left[\left|\int_D \left[\frac 1 2 \rho_0 |u_0|^2 + P_\delta(\rho_0)\right]dx \right|^{r'}\right] + 1\right)\quad \text{for any} \  r'\in [1,r], \label{eq:3.72}
    \end{align}
    with a constant $c=c(r',T,\gamma)$.
\end{proposition}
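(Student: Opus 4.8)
The plan is to extract the whole estimate from the energy balance \eqref{eq:3.59} of Proposition \ref{prop:energy_balance}: on its left-hand side every term carries a favourable sign, and on its right-hand side the two genuinely stochastic contributions -- the It\^o correction and the stochastic integral $\inttau\int_D\GG_\varepsilon(\rho,\rho u)\cdot u\,dx\,dW$ -- can be bounded by the energy itself, with constants independent of $R,\alpha,m,\varepsilon,\delta$; a stochastic Gronwall argument then closes it. I would first record the relevant sign/coercivity facts with constants depending only on $a,\gamma,|D|$: the integrands $\SSS(\nabla u):\nabla u$, $\varepsilon\rho|\nabla u|^2$ and $\varepsilon P_\delta''(\rho)|\nabla\rho|^2$ are nonnegative (recall $P_\delta''(\rho)=p_\delta'(\rho)/\rho\ge0$), the friction integrand $g\,\nabla j_\alpha(u)\cdot u$ is nonnegative by $g\ge0$ and $\nabla j_\alpha(v)\cdot v\ge0$, and moreover $P_\delta(\rho)\ge-c(\gamma)$ and $\int_D\rho^\gamma\,dx\le c(\gamma)\big(\int_D P_\delta(\rho)\,dx+1\big)$ uniformly in $\delta\in(0,1)$ (the difference $P_\delta-P$ is bounded below and the extra term $\delta(\rho+\rho^\Gamma)$ only enlarges $P_\delta$). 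Setting
$$\mathcal E(\tau):=\int_D\Big[\tfrac12\rho|u|^2+P_\delta(\rho)\Big](\tau)\,dx+C_0,\qquad C_0=C_0(a,\gamma,|D|)\ \text{large},$$
one then has $\mathcal E(\tau)\ge0$, $\int_D\rho\,dx\lesssim\mathcal E(\tau)$ and $\int_D\rho|u|^2\,dx\lesssim\mathcal E(\tau)$; write $\mathcal D(\tau)\ge0$ and $\mathcal F(\tau)\ge0$ for the dissipation and friction time-integrals in \eqref{eq:3.59}, $Q(\tau)$ for the It\^o correction, and $M(\tau)$ for the stochastic integral.

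For the stochastic terms I would use Remark \ref{rem:quad_var_est} together with the pointwise bound $|F_{k,\varepsilon}(\rho,v)|\le|F_k(\rho,v)|\le f_k(1+|v|)$ -- the key point being that $F_{k,\varepsilon}$ is bounded by $F_k$, whose constants $f_k$ (with $\sum_k f_k^2<\infty$) do \emph{not} depend on $\varepsilon$, so this also bypasses the $m$-dependent operator $\MMM^{-1}[\rho]$. This gives $0\le Q(\tau)\le\tfrac12\sum_k f_k^2\int_0^\tau\!\!\int_D(\rho+\rho|u|^2)\,dx\,dt\lesssim\int_0^\tau\mathcal E(s)\,ds$, and, by the same weighted Cauchy--Schwarz estimate, $\big|\int_D G_{k,\varepsilon}(\rho,\rho u)\cdot u\,dx\big|\le f_k\,\mathcal E(\tau)$, hence $\sum_k\big|\int_D G_{k,\varepsilon}(\rho,\rho u)\cdot u\,dx\big|^2\lesssim\mathcal E(\tau)^2$. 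The Burkholder--Davis--Gundy inequality then yields, for $1\le r'\le r$,
$$\EE\Big[\sup_{\tau\le t}|M(\tau)|^{r'}\Big]\lesssim\EE\Big[\Big(\int_0^t\mathcal E(s)^2\,ds\Big)^{r'/2}\Big]\le\kappa\,\EE\Big[\sup_{s\le t}\mathcal E(s)^{r'}\Big]+c(\kappa,T,r')\int_0^t\EE\Big[\sup_{s'\le s}\mathcal E(s')^{r'}\Big]\,ds,$$
where the last step splits $\int_0^t\mathcal E^2\le(\sup_{s\le t}\mathcal E)\int_0^t\mathcal E$ and applies Young's and Jensen's inequalities.

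To conclude, I would drop $\mathcal D(\tau),\mathcal F(\tau)\ge0$ in \eqref{eq:3.59} to get $\sup_{\tau\le t}\mathcal E(\tau)\le\mathcal E(0)+C\int_0^t\sup_{s'\le s}\mathcal E(s')\,ds+\sup_{\tau\le t}|M(\tau)|$, raise to the power $r'$, take expectations, insert the bound above and choose $\kappa$ small; this gives $Y(t):=\EE[\sup_{\tau\le t}\mathcal E(\tau)^{r'}]\le c\big(\EE[\mathcal E(0)^{r'}]+\int_0^t Y(s)\,ds\big)$, whence $Y(T)\le c(r',T,\gamma)\,\EE[\mathcal E(0)^{r'}]$ by Gronwall. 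Rigour requires $Y$ to be a priori finite, which I would arrange by localising at the stopping times $\tau_N:=\inf\{t:\mathcal E(t)\ge N\}$ (finite stopping of a continuous process makes $\sup_{\tau\le\tau_N}\mathcal E\le N$), running the argument on $[0,\tau_N]$ with a bound uniform in $N$, and letting $N\to\infty$ by monotone convergence since $\tau_N\uparrow T$ $\PP$-a.s. Finally, \emph{keeping} $\mathcal D(T),\mathcal F(T)$ in \eqref{eq:3.59} and using $\mathcal E(T)\ge0$ gives $\mathcal D(T)+\mathcal F(T)\le\mathcal E(0)+Q(T)+M(T)$, so $\EE[\mathcal D(T)^{r'}]+\EE[\mathcal F(T)^{r'}]\lesssim\EE[\mathcal E(0)^{r'}]+\EE[(\int_0^T\mathcal E)^{r'}]+\EE[\sup|M|^{r'}]\lesssim\EE[\mathcal E(0)^{r'}]$; together with $\sup_\tau\big|\int_D[\tfrac12\rho|u|^2+P_\delta(\rho)]\,dx\big|\le\sup_\tau\mathcal E(\tau)+C_0$ and $\EE[\mathcal E(0)^{r'}]\lesssim\EE\big[\big|\int_D[\tfrac12\rho_0|u_0|^2+P_\delta(\rho_0)]\,dx\big|^{r'}\big]+1$, this is exactly \eqref{eq:3.72}. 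The main obstacle is securing uniformity in all of $R,\alpha,m,\varepsilon,\delta$ at once: the $m$-dependent It\^o correction must be routed through Remark \ref{rem:quad_var_est} rather than the crude $f_{k,\varepsilon}$-bound, the $\varepsilon$- and $\delta$-artificial terms must be kept strictly on the dissipation side of the balance, and the quadratic growth of the quadratic variation in $\mathcal E$ must be absorbed via the Burkholder--Davis--Gundy--Young step above.
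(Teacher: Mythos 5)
Your proposal follows the same route as the paper's proof: control the It\^o correction via Remark~\ref{rem:quad_var_est} and the bound $|F_{k,\varepsilon}|\le|F_k|\le f_k(1+|v|)$, estimate the stochastic integral via Burkholder--Davis--Gundy, and close with Gronwall. The paper's proof is considerably terser—it states only the two stochastic estimates before invoking Gronwall—whereas you spell out the coercivity inequalities for $P_\delta$, the Young-type absorption $\int_0^t\mathcal{E}^2\le(\sup\mathcal{E})\int_0^t\mathcal{E}$, and the stopping-time localisation needed to make Gronwall rigorous, all of which the paper leaves implicit.
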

\begin{proof}
    According to Remark \ref{rem:quad_var_est}, we have 
    \begin{align*}
        \frac 1 2 \sum_{k=1}^{\infty}\int_D G_{k,\varepsilon}(\rho,\rho u) \cdot \MMM^{-1}[\rho] ([G_{k,\varepsilon}(\rho,\rho u)]_m^\ast) dx &\leq \frac 1 2 \sum_{k=1}^{\infty} \int_D \rho \left|F_{k,\varepsilon}(\rho,u) \right|^2 dx \\ 
        &\leq \frac 1 2 \sum_{k=1}^{\infty} \int_D \rho \left|F_{k}(\rho,u) \right|^2 dx \\ 
        &\leq \sum_{k=1}^{\infty} \int_D \rho f_k^2 (1+|u|^2) dx \\ 
        &\lesssim \int_D (\rho +\rho |u|^2)dx,  
    \end{align*}
    and by the Burkholder--Davis--Gundy inequality and the definitions of $p_\delta$ and $P_\delta$,
    \begin{align*}
        &\EE\left[\sup_{t\in [0,\tau]}\left|\int_{0}^{t}\int_D \GG_\varepsilon(\rho,\rho u)\cdot udx dW \right|^{r'}\right] \\ 
        &\quad \lesssim \EE\left[\left(\inttau \sum_{k=1}^{\infty}\left|\int_D G_{k,\varepsilon}(\rho,\rho u)\cdot u dx \right|^2 dt \right)^{r'/2}\right]  \\ 
        &\quad \lesssim \EE\left[\left(\inttau \sum_{k=1}^{\infty}\left(\int_D \rho|F_k(\rho,u)\cdot u| dx \right)^2 dt \right)^{r'/2}\right]  \\ 
        &\quad \lesssim \EE\left[\left(\inttau\left(\int_D (\rho+\rho |u|^2) dx \right)^2 dt \right)^{r'/2}\right] \\ 
        &\quad \lesssim \EE\left[\left(\inttau\left(\int_D (P_\delta(\rho)+\rho |u|^2) dx \right)^2 dt \right)^{r'/2}\right] + 1
    \end{align*}
    uniformly in $R,\alpha,m,\varepsilon$, and $\delta$. Therefore, passing to expectations in \eqref{eq:3.59}, we apply Gronwall's inequality, to deduce \eqref{eq:3.72}.
\end{proof}
As a direct consequence of the approximate equation of continuity \eqref{eq:3.11}, we have 
$$
    \int_D \rho(\tau)dx = \int_D \rho_0 dx,\quad \tau\in [0,T].
$$

Hence, $\norm{\rho}_{L^1(D)}$ is bounded below by a positive deterministic constant. On the other hand, $\norm{\rho}_{L^\gamma(D)},\gamma>1$, is only bounded in expectation. 
Thus, the generalized Korn--Poincar\'{e} inequality \cite[Theorem 11.23]{FN17}, which is standard in deterministic settings, cannot be applied directly. 
Instead, we adopt the generalized Korn--Poincar\'{e} inequality involving boundary traces, which was first established by \cite[Theorem2.3]{Pom03}. 
In our setting, we further generalize this inequality as follows.

\begin{lemma}
    \label{lem:korn_poincare}        
    Let $D$ be a bounded Lipschitz domain in $\RR^3$, that is, a connected and bounded open subset of $\RR^3$ with Lipschitz boundary, and let $M,K>0$. Then 
    there exists a positive constant $c=c(D,M,K)$ such that the inequality
    \begin{align*}
        &\norm{v}_{W^{1,2}(D;\RR^3)} \leq c \left(\norm{\nabla v + (\nabla v)^T - \frac23\rdiv v\II}_{L^2(D;\RR^{3\times 3})} + \int_{\pD} r |v|d\Gamma \right)
    \end{align*}
    holds for any $v\in W^{1,2}(D;\RR^3)$ and any non-negative function $r$ such that 
    $$
        0<M\leq \int_{\pD} rd\Gamma,\quad \int_{\pD} r^2 d\Gamma \leq K.
    $$
\end{lemma}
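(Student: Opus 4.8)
The plan is to argue by contradiction, building a compactness argument on top of the classical generalized Korn inequality of the second kind, so that the uniform constraints $\int_{\pD}r\,d\Gamma\ge M$ and $\int_{\pD}r^{2}\,d\Gamma\le K$ enter only through the weak $L^{2}(\pD)$-compactness of the class of admissible weights. Write $E(v):=\nabla v+(\nabla v)^{T}-\tfrac23\rdiv v\,\II$ for the deviatoric symmetric gradient. The starting point is that, since $D\subset\RR^{3}$ is bounded, connected and Lipschitz and the space dimension is three, there is a constant $c_{1}=c_{1}(D)$ with $\norm{v}_{W^{1,2}(D)}\le c_{1}\big(\norm{E(v)}_{L^{2}(D)}+\norm{v}_{L^{2}(D)}\big)$ for every $v\in W^{1,2}(D;\RR^{3})$; here the kernel of $v\mapsto E(v)$ on the connected set $D$ is the finite-dimensional space $\mathcal{K}$ of conformal Killing fields, each of which is a polynomial vector field of degree at most two (this finiteness is precisely what fails in dimension two).

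Suppose the asserted inequality is false for the given $M,K$. Then there are $v_{n}\in W^{1,2}(D;\RR^{3})$ with $\norm{v_{n}}_{W^{1,2}(D)}=1$ and admissible weights $r_{n}\ge0$ with $M\le\int_{\pD}r_{n}\,d\Gamma$ and $\int_{\pD}r_{n}^{2}\,d\Gamma\le K$ such that $\norm{E(v_{n})}_{L^{2}(D)}+\int_{\pD}r_{n}|v_{n}|\,d\Gamma\to0$. Passing to subsequences, I would extract $v_{n}\rightharpoonup v$ in $W^{1,2}(D)$, $v_{n}\to v$ in $L^{2}(D)$ by Rellich's theorem and in $L^{2}(\pD)$ by compactness of the trace map $W^{1,2}(D)\to W^{1/2,2}(\pD)\cptarrow L^{2}(\pD)$ (hence $|v_{n}|\to|v|$ in $L^{2}(\pD)$), and $r_{n}\rightharpoonup r$ in $L^{2}(\pD)$ with $r\ge0$, using the $L^{2}(\pD)$-bound on the weights. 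Linearity and $E(v_{n})\to0$ force $E(v)=0$, so $v\in\mathcal{K}$; applying the Korn inequality to $v_{n}-v_{m}$ shows that $(v_{n})$ is Cauchy in $W^{1,2}(D)$, hence $v_{n}\to v$ in $W^{1,2}(D)$ and $\norm{v}_{W^{1,2}(D)}=1$, so $v\not\equiv0$. On the other hand, weak--strong convergence in $L^{2}(\pD)$ gives $\int_{\pD}r|v|\,d\Gamma=\lim_{n}\int_{\pD}r_{n}|v_{n}|\,d\Gamma=0$, while pairing $r_{n}\rightharpoonup r$ against the constant function $1$ yields $\int_{\pD}r\,d\Gamma\ge M>0$; therefore $\Sigma:=\{r>0\}$ has positive surface measure and $v=0$ $\mathcal{H}^{2}$-a.e. on $\Sigma$.

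It then remains to derive the contradiction from the fact that the nonzero conformal Killing field $v$ vanishes on a set $\Sigma\subset\pD$ of positive surface measure, and this rigidity step is where I expect the real work to lie. Since the components of $v$ are polynomials of degree at most two, if $v\not\equiv0$ then $\Sigma$ is contained in the quadric $\{v_{j}=0\}$ for any component $v_{j}\not\equiv0$; as a positive-$\mathcal{H}^{2}$-measure subset of a quadric is Zariski-dense in one of its (at most two-dimensional) irreducible components, every component of $v$ must vanish on the whole of that component, and a direct computation with the identity $E(v)=0$ shows that this is possible only for $v\equiv0$ (in particular, no nonzero conformal Killing field vanishes on an entire hyperplane). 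Alternatively one may simply quote the injectivity of the restriction map $\mathcal{K}\to L^{2}(\Sigma)$ already contained in \cite{Pom03}. In either case this contradicts $\norm{v}_{W^{1,2}(D)}=1$, which proves the lemma; the passage to the limit and the uniformity over the class of weights $r$ are routine once the $L^{2}(\pD)$-bound is in hand, whereas reconciling the polynomial structure of conformal Killing fields with the merely Lipschitz regularity of $\pD$ is the only genuinely delicate point.
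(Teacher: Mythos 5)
Your proposal is correct and follows essentially the same argument as the paper's proof: a compactness/contradiction scheme built on the generalized Korn inequality, with the admissible weights controlled through weak $L^{2}(\pD)$-compactness of the class $\{\,r\ge0,\ \int_{\pD}r\,d\Gamma\ge M,\ \int_{\pD}r^{2}\,d\Gamma\le K\,\}$, Rellich plus trace compactness, strong $W^{1,2}$-convergence via Korn applied to differences, and a final rigidity step showing that a nonzero conformal Killing field cannot vanish on a positive-$\mathcal{H}^{2}$-measure subset of $\pD$. The only variation is in how that rigidity is closed: you invoke Zariski density of positive-measure subsets of quadrics followed by an unwritten direct computation, whereas the paper identifies the limit as analytic via elliptic regularity for $\Delta+\tfrac13\nabla\rdiv$, writes out the explicit degree-two polynomial form of a conformal Killing field, and cites the fact that its zero set is a finite union of totally umbilical submanifolds of even codimension (hence of $\mathcal{H}^{2}$-measure zero in $\RR^{3}$); these are interchangeable routes to the same fact.
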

\begin{proof}
    Fixing the parameters $M,K$, we argue by contradiction. Specifically, we construct a sequence $w_n\in W^{1,p}(D;\RR^3)$ such that 
    \begin{align*}
        &\norm{w_n}_{W^{1,2}(D;\RR^3)} = 1,\quad w_n \weakarrow w \quad \text{in }W^{1,2}(D;\RR^3)  
    \end{align*}
    and 
    \begin{align*}
        &\norm{\nabla w_n + (\nabla w_n)^T - \frac23\rdiv w_n\II}_{L^2(D;\RR^{3\times 3})} + \int_{\pD} r_n |w_n|d\Gamma <\frac1n,
    \end{align*}
    for certain 
    \begin{align*}
        &r_n \weakarrow r\quad \text{in }L^2(\pD),\quad \int_{\pD} r d\Gamma  \geq M > 0.
    \end{align*}
    Due to the compact embedding $W^{1,2}(D;\RR^3)$ into $L^2(D;\RR^3)$, 
    \begin{align*}
        &\norm{w_n-w}_{L^2(D;\RR^3)} \to 0,  
    \end{align*}
    and by virtue of a standard generalization of Korn--Poincar\'{e} inequality \cite[Theorem 11.22]{FN17}, we have 
    \begin{align*}
        &\norm{w_m-w_n}_{W^{1,2}(D;\RR^3)} \\ 
        &\quad \lesssim \norm{\nabla (w_m-w_n) + (\nabla (w_m-w_n))^T - \frac23\rdiv (w_m-w_n)\II}_{L^2(D;\RR^{3\times 3})} + \norm{w_m-w_n}_{L^2(D;\RR^3)}.\\ 
    \end{align*}
    These observations imply  
    \begin{align*}
        &w_n\to w\quad \text{strongly in }W^{1,2}(D;\RR^3). 
    \end{align*}
    Moreover, due to the construction of $w_n$, the limit $w$ satisfies the following:
    \begin{align*}
        &\norm{w}_{W^{1,2}(D;\RR^3)} = 1,\\ 
        &\nabla w + (\nabla w)^T - \frac23\rdiv w\II =0,\\ 
        &\int_{\pD} r|w|d\Gamma = 0.
    \end{align*}
    The second identity implies that, $w$ is a weak solution to the problem 
    \begin{align*}
        &\begin{cases}
        \Delta w + \frac13 \nabla \rdiv w =0 &\text{in } D, \\
        w =\gamma_0 (w)  &\text{on }\pD, 
        \end{cases}  
    \end{align*}
    in the class $W^{1,2}(D;\RR^3)$, where $\gamma_0$ is the trace operator given by Theorem \ref{thm:trace}.
    Since the differential operator $\Delta + \frac13 \nabla \rdiv$ is a strictly elliptic operator with constant coefficients, $w$ is smooth and also analytic in $D$ (see \cite[Theorem 4.16]{McL00}, and \cite[Chapter VII]{Joh55}).
    Moreover, $w$ is the restriction of a polynomial $v$ to $D$ of the following form:
    \begin{align*}
        &v(x) = 2(a\cdot x)x -|x|^2 a + A x + \alpha x + b,\quad x\in \RR^3,\ A\in \RR_{\text{skew}}^{3\times 3},\ a,b\in \RR^3,\ \alpha\in \RR,  
    \end{align*}
    which is called a \textit{conformal Killing vector field} (see \cite[Lemma 2.1, Example 2.2]{GST25}, \cite{LMN21}). In general, the set of zeros of such a polynomial has finitely many connected components, each of which is a totally umbilical submanifold of $\RR^n$, of even codimension when not reduced to a point (see, for example, \cite[Theorem 3.1]{BMO11}). 
    In particular, if $v\not\equiv 0$, then the measure of the set of zeros of $v$ on the boundary $\pD$ is zero.
    Note that, in the present setting, we can also verify this fact directly. For instance, when $a$ is the first standard basis vector $e_1$, decompose 
    \begin{align*}
        &x= ue_1 + y,\quad u\in \RR,\ y\in \operatorname{span} \left\{ e_2,e_3 \right\},
    \end{align*}
    and transform the condition $v = 0$ equivalently. On the other hand, the third identity implies that $v$ vanishes on the set $\left\{ x\in \pD:r(x)>0 \right\}$ of a nonzero measure. 
    Thus, we have $v\equiv 0$, which implies $w\equiv 0$ in $D$. This contradicts the first identity.
\end{proof}
\begin{corollary}
    \label{cor:est_u_R}
    Under the assumptions of Theorem \ref{thm:sol_R}, the velocity $u$ of a solution to \eqref{eq:3.11}--\eqref{eq:3.12} in the sense of Definition \ref{def:pathwise_sol} satisfies the estimate 
    \begin{align}
        \EE\left[\left| \intT \norm{u}_{W_x^{1,2}}^2 dt \right|^{r/2} \right] \lesssim c(r,T,\gamma) \left(\EE\left[\left|\int_D \left[\frac 1 2 \rho_0 |u_0|^2 + P_\delta(\rho_0)\right]dx \right|^r\right] + 1\right),  \label{eq:3.73}
    \end{align}  
    where the proportional constant is independent of $R,\alpha,m,\varepsilon$, and $\delta$.
\end{corollary}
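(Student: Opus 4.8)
The plan is to derive \eqref{eq:3.73} by combining the energy estimate \eqref{eq:3.72} with the generalized Korn--Poincar\'e inequality of Lemma \ref{lem:korn_poincare}, in which the modulus of friction $g$ plays the role that the density weight plays in the deterministic theory. Taking $r'=r$ in \eqref{eq:3.72} and using the pointwise identity
\[
	\SSS(\nabla u):\nabla u = \tfrac{\mu}{2}\left|\nabla u + (\nabla u)^T - \tfrac{2}{3}\rdiv u\,\II\right|^2 + \lambda(\rdiv u)^2 \ge \tfrac{\mu}{2}\left|\nabla u + (\nabla u)^T - \tfrac{2}{3}\rdiv u\,\II\right|^2,
\]
I would first extract a bound on $\EE\big[\big|\intT \norm{\nabla u + (\nabla u)^T - \tfrac{2}{3}\rdiv u\,\II}_{L^2(D)}^2\,dt\big|^{r}\big]$ by the right-hand side of \eqref{eq:3.72}. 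Next, the explicit form of $j_\alpha$ gives $\nabla j_\alpha(u)\cdot u \ge |u|-\alpha$ pointwise, so, since $\alpha\in(0,1)$ and $g\in L^2((0,T)\times\pD)\subset L^1((0,T)\times\pD)$, the boundary term in \eqref{eq:3.72} controls $\EE\big[\big|\intT\int_{\pD} g\,|u|\,d\Gamma dt\big|^{r}\big]$ by the same right-hand side.

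The core of the argument is then to establish, $\PP$-almost surely, a deterministic inequality bounding $\intT\norm{u}_{W^{1,2}(D)}^2\,dt$ by (a constant times) the sum of $\intT\norm{\nabla u + (\nabla u)^T - \tfrac{2}{3}\rdiv u\,\II}_{L^2(D)}^2\,dt$, the square of $\intT\int_{\pD} g\,|u|\,d\Gamma dt$, and lower-order quantities dominated by $T\sup_{\tau\in[0,T]}\int_D[\tfrac{1}{2}\rho|u|^2+P_\delta(\rho)](\tau)\,dx$, with a constant depending only on $D$, $g$, $T$, $\gamma$ and, crucially, independent of $R,\alpha,m,\varepsilon,\delta$. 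For this I would apply Lemma \ref{lem:korn_poincare} to $v=u(t)$ with a boundary weight manufactured from $g$: since $g\ge 0$, $\intT\int_{\pD} g\,d\Gamma dt>0$ and $g\in L^2$, the time-averaged weight $\bar g(\cdot):=\intT g(s,\cdot)\,ds$ is admissible with $0<M\le\int_{\pD}\bar g\,d\Gamma$ and $\int_{\pD}\bar g^2\,d\Gamma\le T\norm{g}_{L^2((0,T)\times\pD)}^2$ (alternatively, one restricts to the positive-measure set of times on which $g(t,\cdot)$ is itself an admissible weight with uniform constants). The $L^2(D)$-orthogonal decomposition $u=w+k$, with $k(t)$ the projection of $u(t)$ onto the finite-dimensional space $\mathcal{R}$ of conformal Killing vector fields tangent to $\pD$, splits the estimate: the component $w$, being orthogonal to $\mathcal{R}$, is handled through the standard generalized Korn inequality \cite[Theorem 11.22]{FN17}, while the finite-dimensional piece $k$ is estimated from the boundary term, using the fact (established within the proof of Lemma \ref{lem:korn_poincare}) that a nonzero conformal Killing field has a boundary zero-set of measure zero, together with the energy bound on $\int_D\rho|u|^2$ and the mass conservation $\int_D\rho(\tau)\,dx=\int_D\rho_0\,dx$.

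Finally I would raise the resulting $\PP$-a.s.\ inequality to the power $r/2$, take expectations, and apply H\"older's and Young's inequalities. By the first step, $\EE\big[\big(\intT\norm{\nabla u + (\nabla u)^T - \tfrac{2}{3}\rdiv u\,\II}_{L^2(D)}^2\,dt\big)^{r/2}\big]$, $\EE\big[\big(\intT\int_{\pD} g\,|u|\,d\Gamma dt\big)^{r}\big]$ and $\EE\big[\big(T\sup_{\tau}\int_D[\tfrac{1}{2}\rho|u|^2+P_\delta(\rho)](\tau)\,dx\big)^{r/2}\big]$ are all $\lesssim c(r,T,\gamma)\big(\EE\big[\big|\int_D[\tfrac{1}{2}\rho_0|u_0|^2+P_\delta(\rho_0)]\,dx\big|^r\big]+1\big)$, which gives \eqref{eq:3.73}.

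The hard part is the Korn--Poincar\'e step of the second paragraph. Lemma \ref{lem:korn_poincare} requires a boundary weight whose $L^1(\pD)$-mass is bounded below, whereas $g(t,\cdot)$ may have vanishing mass at individual times and only the time-integrated positivity of $g$ is assumed; at the same time a pointwise-in-$\omega$ lower bound on the density would trivialize the issue but is unavailable uniformly in $\varepsilon,\delta$. Controlling the conformal Killing component of the velocity over the whole time interval under this constraint — which is exactly what the finite-dimensionality of $\mathcal{R}$, the time-averaging of $g$, and the energy bound are for — is the technically delicate point.
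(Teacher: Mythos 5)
Your first paragraph is fine: the identity $\SSS(\nabla u):\nabla u=\tfrac{\mu}{2}|\nabla u+(\nabla u)^T-\tfrac23\rdiv u\,\II|^2+\lambda(\rdiv u)^2$ and the bound $\nabla j_\alpha(u)\cdot u\ge |u|-\alpha$ are both correct and match the paper's route (which extracts the same two pieces from Proposition~\ref{prop:unif.bound_by_energy}). The problem is in the Korn--Poincar\'e step, which you yourself flag but do not actually close, and there the plan as written does not go through.

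Lemma~\ref{lem:korn_poincare} is a pointwise-in-time inequality whose weight must have $L^1(\pD)$-mass bounded below, while the hypotheses on $g$ give positivity only after integrating in time; $g(t,\cdot)$ may vanish identically on a set of $t$ of positive measure. Your time-averaged weight $\bar g=\intT g(s,\cdot)\,ds$ does satisfy the admissibility bounds, but then applying the lemma at each time $t$ produces a boundary term $\int_{\pD}\bar g\,|u(t)|\,d\Gamma$, and its time-square integral $\intT\bigl(\int_{\pD}\bar g\,|u(t)|\,d\Gamma\bigr)^2\,dt$ is \emph{not} controlled by the energy inequality: what Proposition~\ref{prop:unif.bound_by_energy} gives you is $\intT\int_{\pD}g(t,\cdot)|u(t)|\,d\Gamma\,dt$, a different functional of $u$ which cannot dominate the former (when you try, you just recover $\|u\|_{L^2_tW^{1,2}_x}$ on the right and the estimate becomes circular). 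Your alternative — restricting to the set of times on which $g(t,\cdot)$ is itself an admissible weight — leaves the conformal Killing component $k(t)$ of $u(t)$ unconstrained on the complementary set of times, where both the boundary term and the deviatoric dissipation are blind to $k(t)$; and on that set the kinetic energy $\int_D\rho(t)|u(t)|^2\,dx$ cannot be used to control $\|k(t)\|$ with constants independent of $\varepsilon,\delta$, since $\rho$ is only controlled in $L^1(D)$ $\PP$-a.s.\ (with $L^\gamma$ control only in expectation), so $\rho(t)$ may concentrate near the zero set of the conformal Killing field. In short, a space-time version of Lemma~\ref{lem:korn_poincare} with the controlled functionals on the right-hand side is false (take $u(t)=\mathbf{1}_{B}(t)\,k$ with $k\in\mathcal{R}$ and $B$ the set where $g(t,\cdot)\equiv 0$), so the deterministic inequality you seek in your second paragraph cannot have exactly the form you describe. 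Your concluding paragraph names this as ``the technically delicate point'' without resolving it, so the proposal does not constitute a proof; for what it is worth, the paper's own one-line argument leaves precisely this same step implicit.
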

\begin{proof}
    From the assumptions on $g$, there exists a $t\in (0,T)$ such that 
    \begin{align*}
        &0<\int_{\pD}g(t,x) d\Gamma (x),\quad \int_{\pD}g(t,x)^2 d\Gamma (x)<\infty,  
    \end{align*}
    and from the definition of $j_\alpha$, we have 
    \begin{align*}
        &\int_{\pD} g|u| d\Gamma \lesssim \int_{\pD} g\nabla j_\alpha(u)\cdot u d\Gamma + 1
    \end{align*}
    uniformly in $\alpha$. Combining these with Lemma \ref{lem:korn_poincare} and applying Proposition \ref{prop:unif.bound_by_energy}, the assertion follows.
\end{proof}

\begin{proof}[Proof of Theorem \ref{thm:sol_R}]
    Let $(\rho_0,u_0)$ be the initial data in Theorem \ref{thm:sol_R}, and let us denote $(\rho_R,u_R)$ the unique pathwise solution to \eqref{eq:3.11}--\eqref{eq:3.12} starting from $(\rho_0,u_0)$, which was constructed in Corollary \ref{cor:general_data}.
    Let us consider 
    $$
        \tau_R = \inf \left\{ t\in [0,T] : \norm{u_R(t)}_{V_m}>R \right\}
    $$
    with the convention $\inf \emptyset = T$. Due to uniqueness, if $R'>R$, then 
    $$
        \tau_{R'}\geq \tau_R,\quad \text{and}\quad  (\rho_{R'},u_{R'}) = (\rho_R,u_R)\quad \text{on}\  \ico{0}{\tau_R}.
    $$
    In addition, $(\rho_R,u_R)$ is a solution to \eqref{eq:approx_CE_R}--\eqref{eq:approx_ME_R} on $\ico{0}{\tau_R}$. 
    Therefore, we define $(\rho,u)$ by $(\rho,u):= (\rho_R,u_R)$ on $\ico{0}{\tau_R}$. In order to make sure that $(\rho,u)$ solves \eqref{eq:approx_CE_R}--\eqref{eq:approx_ME_R} on the whole time interval $[0,T]$, i.e., the blow-up cannot occur in a finite time, we are going to show that 
    \begin{align}
        \PP\left(\sup_{R\in\NN}\tau_R = T\right) = 1. \label{eq:3.74}
    \end{align}
    By classical maximum principle for the approximate equation of continuity \eqref{eq:3.11} (see \cite[Lemma 3.1]{FN17}), we have 
    $$
        \urho \exp \left(-\int_{0}^{\tau}\norm{\rdiv [u_R]_R}_{L^\infty_x}dt\right)\leq \rho_R(\tau,x)\leq \orho \exp \left(\int_{0}^{\tau}\norm{\rdiv [u_R]_R}_{L^\infty_x}dt\right);
    $$
    whence, 
    \begin{align}
        \urho \exp \left(-\int_{0}^{\tau}\norm{u_R}_{W_x^{1,\infty}}dt\right)\leq \rho_R(\tau,x) \leq \orho \exp \left(\int_{0}^{\tau}\norm{u_R}_{W_x^{1,\infty}}dt\right), \label{eq:3.75}
    \end{align}
    for all $\tau\in [0,T],x\in \oD$. Using the equivalence of norms on $V_m$ and H\"{o}lder's inequality at hand, we have 
    $$
        \urho \exp \left(-T -c \inttau \norm{u_R}_{W_x^{1,2}}^2 dt \right) \leq \rho_R(\tau,x).
    $$ 
    Plugging this into \eqref{eq:3.72}, we obtain that 
    \begin{align}
        \EE\left[\exp\left(-c\inttau \norm{u_R}_{W_x^{1,2}}^2dt\right)\sup_{t\in[0,\tau]}\norm{u_R}_{L_x^2}^2\right]\leq \tilde{c}. \label{eq:3.76}
    \end{align}
    Next, let us fix two increasing sequence $(a_R)$ and $(b_R)$ such that $a_R,b_R\to \infty$ and $a_R\exp(b_R)=R$ for each $R\in \NN$. As in \cite{FM12}, we introduce the following events 
    \begin{align*}
        &A = \left\{ \exp\left(-c\inttau \norm{u_R}_{W_x^{1,2}}^2dt\right)\sup_{t\in[0,\tau]}\norm{u_R}_{L_x^2}^2\leq a_R \right\}, \\ 
        &B = \left\{ c\inttau \norm{u_R}_{W_x^{1,2}}^2dt \leq b_R \right\}, \\ 
        &C = \left\{ \sup_{t\in [0,\tau]}\norm{u_R}_{L_x^2}^2\leq a_R e^{b_R}\right\}.
    \end{align*}
    Then $A\cap B\subset C$, because on $A\cap B$ we know 
    $$
        \sup_{t\in [0,\tau]}\norm{u_R}_{L_x^2}^2 = e^{b_R}e^{-b_R}\sup_{t\in [0,\tau]}\norm{u_R}_{L_x^2}^2 \leq e^{b_R} \exp\left(-c\inttau \norm{u_R}_{W_x^{1,2}}^2dt \right)\sup_{t\in [0,\tau]} \norm{u_R}_{L_x^2}^2\leq e^{b_R}a_R.
    $$
    Furthermore, according to \eqref{eq:3.73}, \eqref{eq:3.76}, and Chebyshev's inequality, we have 
    $$
        \PP(A)\geq 1- \frac{\tilde{c}}{a_R},\quad \PP(B)\geq 1- \frac{\overline{c}}{b_R}.
    $$
    Due to the general inequality $\PP(C)\geq \PP(A)+\PP(B)-1$ we deduce 
    $$
        \PP(C)\geq 1- \frac{\tilde{c}}{a_R} - \frac{c}{b_R}\to 1,\quad R\to \infty.
    $$
    This in turn implies \eqref{eq:3.74}. Finally, uniqueness follows by Theorem \ref{thm:pathwise_sol} and \eqref{eq:3.74}.
\end{proof}

\section{Limit passage with respect to \texorpdfstring{$\alpha\to 0$}{alpha to 0}}\label{sec:convex_approx}
Our next goal is to let $\alpha\to 0$ in the approximate system \eqref{eq:approx_CE_R}--\eqref{eq:approx_ME_R}. 
From the approximate layer in this section, the approximate momentum equation essentially become an inequality combined with the energy inequality. Therefore, a rigorous formulation reads as follows.
\begin{definition}
    \label{def:sol_alpha}
	Let $\Lambda$ be a Borel probability measure on $C^{2+\nu}(\oD)\times V_m$. Then $((\Omega,\FFF,(\FFF_t)_{t\geq 0},\PP),\rho,u,W)$ is called a \textit{dissipative martingale solution in $m$ layer} with the initial law $\Lambda$ if:
	\begin{itemize}
		\item[(1)] $(\Omega,\FFF,(\FFF_t)_{t\geq 0},\PP)$ is a stochastic basis with a complete right-continuous filtration;
		\item[(2)] $W$ is a cylindrical $(\FFF_t)$-Wiener process on $\fU$;
		\item[(3)] the density $\rho$ is an $(\FFF_t)$-adapted stochastic process, and the velocity $u$ is an $(\FFF_t)$-adapted random distribution such that $\PP$-a.s.
		\begin{align*}
		   &\rho \geq 0,\quad \rho \in C_w([0,T];L^\Gamma(D)), \quad u\in L^2(0,T;V_m),\quad [\rho u]_m^\ast \in C([0,T];V_m^\ast);   
		\end{align*}
		\item[(4)] there exists a $C^{2+\nu}(\oD)\times V_m$-valued $\FFF_0$-measurable random variable $(\rho_0,u_0)$ such that $\Lambda = \LL[\rho_0,u_0]$;
		\item[(5)] the approximate equation of continuity 
		\begin{equation}
            \label{eq:approx_CE_alpha}
            \pp_t \rho + \rdiv (\rho u) = \varepsilon \Delta \rho,\quad \nabla \rho \cdot \rn |_{\pD} =0
        \end{equation}
		holds a.e. in $(0,T)\times D$ $\PP$-a.s.;
		\item[(6)] the approximate interior momentum equation 
		\begin{align}
            -\intT \pp_t\phi \int_D \rho u\cdot \bphi dx dt &= \phi(0)\int_D \rho_0 u_0 \cdot \bphi dx  + \intT\phi \int_D \left[(\rho u \otimes u):\nabla\bphi + p_\delta (\rho)\rdiv \bphi\right]dxdt  \notag \\ 
            &\quad - \intT \phi\int_D \left[\SSS(\nabla u):\nabla\bphi -\varepsilon \rho u\cdot \Delta \bphi \right]dxdt \notag \\ 
            &\quad + \intT \phi\int_D \GG_\varepsilon(\rho,\rho u) \cdot \bphi dxdW  \label{eq:approx_ME_alpha}
        \end{align}
		holds for all $\phi\in C_c^\infty(\ico{0}{T})$ and all $\bphi\in V_m$ such that $\bphi|_{\pD} =0$ $\PP$-a.s.;
        \item[(7)] the approximate momentum and energy inequality 
        \begin{align}
            &\int_D \left[\frac 1 2 \rho_0 |u_0|^2 + P_\delta(\rho_0)\right]dx - \int_D \left[\frac 1 2 \rho |u|^2 + P_\delta(\rho)\right](\tau)dx - \inttau \pp_t\phi \int_D \rho u\cdot \bphi dx - \phi(0)\int_D \rho_0 u_0 \cdot \bphi dx \notag\\ 
            &\quad - \inttau \phi \int_D \left[(\rho u \otimes u):\nabla\bphi + p_\delta (\rho)\rdiv \bphi\right]dxdt + \inttau \int_D \left[\SSS(\nabla u):\nabla(\phi\bphi- u) -\varepsilon \rho u\cdot \Delta (\phi\bphi) \right]dxdt \notag \\ 
            &\quad - \inttau \int_D \left[\varepsilon \rho |\nabla u|^2 + \varepsilon P_\delta''(\rho)|\nabla \rho|^2\right]dxdt +\frac 1 2 \sum_{k=1}^{\infty} \inttau \int_D \rho \left|F_{k,\varepsilon}(\rho,u) \right|^2 dx dt \notag \\ 
            &\quad - \inttau \int_D \GG_\varepsilon(\rho,\rho u) \cdot (\phi\bphi - u) dx dW + \inttau \int_{\pD} g|\phi\bphi| - g |u| d\Gamma dt\geq 0 \label{eq:approx_MEI_alpha}
        \end{align}
        holds for all $\tau\in [0,T]$, all $\phi\in C_c^\infty(\ico{0}{\tau})$, and all $\bphi\in V_m$ $\PP$-a.s.
    \end{itemize}
\end{definition}
\begin{theorem}
    \label{thm:sol_alpha}
    Let $\Lambda$ be a Borel probability measure on $C^{2+\nu}(\oD)\times V_m$ such that 
    \begin{equation}
        \Lambda \left\{ \urho\leq \rho \leq \orho,\ \nabla \rho\cdot \rn |_{\pD}=0 \right\}=1, \label{eq:4.4}
    \end{equation}
    for some deterministic constants $\urho,\orho>0$ and 
    \begin{align}
        \int_{C_x^{2+\nu}\times V_m} \left(\left|\int_D \left[\frac 1 2 \rho |v|^2 + P_\delta(\rho)\right]dx \right|^{2r} + \norm{\rho}_{C_x^{2+\nu}}^r\right)d \Lambda (\rho,v) <\infty, \label{eq:4.5}
    \end{align}
    for some $r \geq 2$. Then there exists a dissipative martingale solution in the sense of Definition \ref{def:sol_alpha}.
\end{theorem}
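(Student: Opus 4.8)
\emph{Step 1 (an $\alpha$-level momentum--energy inequality).} The plan is to start from the pathwise solutions provided by Theorem~\ref{thm:sol_R}, derive $\alpha$-uniform estimates, and pass to the limit $\alpha\to0$ by the stochastic compactness method. Fix a stochastic basis $\stochbasis$ carrying a cylindrical $(\FFF_t)$-Wiener process $W$ and an $\FFF_0$-measurable datum $(\rho_0,u_0)$ with law $\Lambda$; for $\alpha\in(0,1)$ let $(\rho_\alpha,u_\alpha)$ be the pathwise solution of \eqref{eq:approx_CE_R}--\eqref{eq:approx_ME_R} given by Theorem~\ref{thm:sol_R}, which satisfies the approximate continuity equation, the momentum equation, and the energy equality \eqref{eq:EB_R}. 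Testing the momentum equation by $\phi\bphi$ with $\phi\in C_c^\infty(\ico{0}{\tau})$ and $\bphi\in V_m$ (after integrating against $-\phi'$ in time), then subtracting \eqref{eq:EB_R} in which the quadratic-variation term is bounded below by $\tfrac12\sum_k\int_D\rho_\alpha|F_{k,\varepsilon}(\rho_\alpha,u_\alpha)|^2\,dx$ according to Remark~\ref{rem:quad_var_est}, every interior term either cancels or carries a favourable sign and the boundary contributions collapse to $\inttau\int_{\pD}g\,\nabla j_\alpha(u_\alpha)\cdot(\phi\bphi-u_\alpha)\,d\Gamma dt$. Invoking $|\nabla j_\alpha(v)|\le1$ together with $\nabla j_\alpha(v)\cdot v\ge|v|-c\alpha$ (convexity of $j_\alpha$ and \eqref{eq:3.6}), one sees that $(\rho_\alpha,u_\alpha)$ satisfies, $\PP$-a.s.\ and for all $\tau\in[0,T]$, $\phi\in C_c^\infty(\ico{0}{\tau})$, $\bphi\in V_m$, the inequality obtained from \eqref{eq:approx_MEI_alpha} by subtracting $c\,\alpha\,\norm{g}_{L^1((0,T)\times\pD)}$ from its left-hand side.

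\emph{Step 2 ($\alpha$-independent bounds).} By Proposition~\ref{prop:unif.bound_by_energy}, Corollary~\ref{cor:est_u_R} and \eqref{eq:3.71}, the quantities $\sup_{\tau}\int_D[\tfrac12\rho_\alpha|u_\alpha|^2+P_\delta(\rho_\alpha)](\tau)\,dx$ and $Y_\alpha:=\int_0^T\norm{u_\alpha}_{V_m}^2\,dt$ (using $\norm{\cdot}_{W^{1,2}}\simeq\norm{\cdot}_{V_m}$ on $V_m$) are bounded in $L^{r}(\Omega)$, respectively $L^{r/2}(\Omega)$, uniformly in $\alpha$. The maximum principle for \eqref{eq:approx_CE_alpha} gives $\urho\exp(-c\int_0^t\norm{u_\alpha}_{V_m}\,ds)\le\rho_\alpha(t,x)\le\orho\exp(c\int_0^t\norm{u_\alpha}_{V_m}\,ds)$, which with parabolic maximal regularity (for fixed $\varepsilon>0$) yields, $\PP$-a.s., $\norm{\rho_\alpha}_{C([0,T];C^{2+\nu}(\oD))\cap C^1([0,T];C^\nu(\oD))}\le\Phi(Y_\alpha)$ for a deterministic increasing function $\Phi$; since $Y_\alpha$ is bounded in $L^{r/2}(\Omega)$, this makes the laws of $\rho_\alpha$ tight on a separable space $\XX_\rho$ into which $C^\nu([0,T];C^{2+\nu}(\oD))$ embeds compactly (cf.\ Remark~\ref{rem:3.4}). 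Finally, arguing as in \eqref{eq:3.35}--\eqref{eq:3.40} (Burkholder--Davis--Gundy and Remark~\ref{rem:3.2} for the stochastic term; the barrier term is $\tfrac12$-Hölder in time uniformly in $\alpha$ because $|\nabla j_\alpha|\le1$ and $g\in L^2((0,T)\times\pD)$), Kolmogorov's continuity theorem bounds $[\rho_\alpha u_\alpha]_m^\ast$ in $L^{r}(\Omega;C^{\beta}([0,T];V_m^\ast))$ for some $\beta\in(0,1/2)$, uniformly in $\alpha$.

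\emph{Step 3 (compactness, Skorokhod, limit passage).} Set $\XX:=\XX_\rho\times(L^2(0,T;V_m),w)\times C([0,T];V_m^\ast)\times\RR\times C([0,T];\fU_0)$, a sub-Polish space, the factor $\RR$ recording $Y_\alpha$. By Step~2 the joint laws of $(\rho_\alpha,u_\alpha,[\rho_\alpha u_\alpha]_m^\ast,Y_\alpha,W)$ are tight on $\XX$ (weak compactness of bounded balls of the reflexive space $L^2(0,T;V_m)$ for the second factor, Chebyshev's inequality and Arzel\`a--Ascoli for the rest). Theorem~\ref{thm:Jakubowski--Skorokhod} then produces, on a new probability space, variables $(\trho_\alpha,\tu_\alpha,[\widetilde{\rho u}]_\alpha,\tY_\alpha,\tW_\alpha)$ of the same laws converging a.s.\ in $\XX$ to $(\trho,\tu,\widetilde{\rho u},\tY,\tW)$; equality of laws gives $[\widetilde{\rho u}]_\alpha=[\trho_\alpha\tu_\alpha]_m^\ast$ and $\tY_\alpha=\int_0^T\norm{\tu_\alpha}_{V_m}^2\,dt$, and Lemma~\ref{lem:stability_of_nonanti} makes $\tW$ a cylindrical $(\tFFF_t)$-Wiener process for the canonical filtration. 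The decisive point is to improve the a priori \emph{weak} convergence of $\tu_\alpha$ in $L^2(0,T;V_m)$ to a strong one: since $\tY_\alpha\to\tY$ a.s., $\sup_\alpha\int_0^T\norm{\tu_\alpha}_{V_m}^2\,dt<\infty$ $\PP$-a.s., so the maximum-principle bound of Step~2 yields a strictly positive, $\alpha$-uniform, $\PP$-a.s.\ lower bound for $\trho_\alpha$ on $[0,T]\times\oD$; then \eqref{eq:3.25}--\eqref{eq:3.26} give $\MMM^{-1}[\trho_\alpha]\to\MMM^{-1}[\trho]$ in $C([0,T];\LL(V_m^\ast,V_m))$, and with $[\trho_\alpha\tu_\alpha]_m^\ast\to\widetilde{\rho u}$ in $C([0,T];V_m^\ast)$ this forces $\tu_\alpha=\MMM^{-1}[\trho_\alpha]([\trho_\alpha\tu_\alpha]_m^\ast)\to\MMM^{-1}[\trho](\widetilde{\rho u})=:\tu$ strongly in $C([0,T];V_m)$; in particular $\widetilde{\rho u}=[\trho\tu]_m^\ast$. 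Armed with this strong convergence and $\trho_\alpha\to\trho$ in $\XX_\rho$, one passes to the limit in \eqref{eq:approx_CE_alpha} (a.e.\ on $(0,T)\times D$); in the interior momentum equation \eqref{eq:approx_ME_alpha}, where the barrier term drops out because $\bphi|_{\pD}=0$ and the stochastic integral converges by Lemma~\ref{lem:conv_of_stoch_int} since $\GG_\varepsilon(\trho_\alpha,\trho_\alpha\tu_\alpha)\to\GG_\varepsilon(\trho,\trho\tu)$ in $L^2(0,T;L_2(\fU,V_m^\ast))$ in probability (continuity of $\GG_\varepsilon$, \eqref{eq:3.10} and Vitali's theorem); and in the inequality of Step~1, whose $O(\alpha)$ error vanishes, thereby obtaining \eqref{eq:approx_MEI_alpha}. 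The remaining requirements of Definition~\ref{def:sol_alpha}, notably $\trho\in C_w([0,T];L^\Gamma(D))$ and $[\trho\tu]_m^\ast\in C([0,T];V_m^\ast)$, follow from the uniform bounds together with Theorem~\ref{thm:weak_conti_embedding}.

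\emph{Main obstacle.} The crux is the loss of strong velocity compactness once the cut-off layer has been removed: one cannot invoke Aubin--Lions directly as in the deterministic treatment \cite{NOS23}, and a naive application of Skorokhod's theorem delivers only weak convergence of $\tu_\alpha$ in $L^2(0,T;V_m)$, which is inadequate both for the genuinely nonlinear terms $\rho u\otimes u$, $\rho|F_{k,\varepsilon}(\rho,u)|^2$, $\GG_\varepsilon(\rho,\rho u)$ and for the equality part of the momentum relation. The remedy — the single ingredient beyond the standard scheme — is to enlarge the path space by the energy functional $Y_\alpha$; after the Skorokhod step this forces a pathwise uniform positive lower bound on the density, and then the invertibility estimates \eqref{eq:3.25}--\eqref{eq:3.26} for $\MMM[\rho]$ convert weak velocity convergence into strong $C([0,T];V_m)$ convergence, after which all remaining passages are routine.
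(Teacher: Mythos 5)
Your argument reaches the conclusion, but it takes a genuinely different route from the paper's. The paper deliberately gives up on any strong compactness for the velocity once the $R$-cutoff is removed (it even remarks that Hölder continuity of $\rho$ and $u$ ``cannot be expected''); its path space takes $\rho_\alpha$ only in $L^p_t W^{1,p}_x\cap[W^{1,q}_tL^q_x\cap L^q_t W^{2,q}_x,w]\cap C_w([0,T];L^\Gamma)$, $u_\alpha$ only in $[L^2(0,T;V_m),w]$, and recovers strong convergence solely of $\sqrt{\trho_\alpha}\tu_\alpha$ in $L^2_{t,x}$ via the ``norm-plus-weak-implies-strong'' device (Lemma~\ref{lem:alpha(q = rho u)}, Lemma~\ref{lem:alpha(conv rho u otimes u)}), with weak lower semicontinuity of the energy handling the inequality. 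You instead push the Galerkin-layer structure harder: you keep the maximum-principle lower bound $\rho_\alpha\geq\urho\exp(-c\int_0^T\norm{u_\alpha}_{V_m}\,dt)$, which combined with the energy gives an almost-sure bound on $\sup_t\norm{u_\alpha(t)}_{V_m}$ and hence a (random-constant) bound on $\rho_\alpha$ in $C([0,T];C^{2+\nu})\cap C^1([0,T];C^\nu)$; this makes a Hölder-type $\XX_\rho$ tight by Chebyshev, which is what the paper's ``cannot be expected'' did not envisage (it rules out a deterministic uniform bound, not a tight family of random bounds). After Skorokhod you get an $\alpha$-uniform a.s.\ lower bound on $\trho_\alpha$ (your extra factor $Y_\alpha$ is in fact redundant — a.s.\ weak convergence already forces $\sup_\alpha\norm{\tu_\alpha}_{L^2_tV_m}<\infty$ by uniform boundedness — but it is harmless), so $\MMM^{-1}[\trho_\alpha]\to\MMM^{-1}[\trho]$ uniformly in time, and with the $C^\kappa_tV_m^\ast$ convergence of $[\trho_\alpha\tu_\alpha]^\ast_m$ this upgrades $\tu_\alpha\to\tu$ to strong $C([0,T];V_m)$ convergence. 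That makes the passage to the limit in every nonlinear and boundary term essentially trivial, whereas the paper's route needs the $\sqrt{\rho}u$ trick and lower semicontinuity. The trade-off: your argument leans essentially on the artificial-viscosity density lower bound and the finite dimensionality of $V_m$, so it is specific to this layer and would not be reusable for $m\to\infty$ or $\varepsilon,\delta\to0$; the paper's weak-convergence template is uniform across all subsequent layers, which is presumably why they adopted it here. Both are sound.
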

\begin{remark}
    \label{rem:alpha(integrability_of_sol)}
    As a matter of fact, the solution constructed in this section belongs to the following class: 
    \begin{align*}
        &\EE\left[\sup_{t\in [0,T]}\left|\int_D\left[\frac 1 2 \rho |u|^2 + P_\delta(\rho)\right]dx \right|^{2r}\right] + \EE\left[\left|\intT \int_{\pD} g |u| d\Gamma dt \right|^{2r}\right] <\infty, \\ 
        &\EE\left[\left|\intT \int_D \left[\SSS(\nabla u):\nabla u + \varepsilon \rho |\nabla u|^2 + \varepsilon P_\delta''(\rho)|\nabla \rho|^2\right]dxdt\right|^{2r}\right]<\infty,
    \end{align*}
    with the same $r \geq 2$ as in \eqref{eq:4.5}. In particular, similarly to the proof of Corollary \ref{cor:est_u_R}, we have 
    \begin{align*}
        &\EE \left[\sup_{t\in [0,T]} \norm{\rho(t)}_{L_x^\Gamma}^{2\Gamma r}\right] + \EE \left[\sup_{t\in [0,T]} \norm{\rho |u|^2(t)}_{L_x^1}^{2 r}\right] + \EE \left[\sup_{t\in [0,T]} \norm{\rho u(t)}_{L_x^{\frac{2\Gamma}{\Gamma +1}}}^{\frac{4\Gamma}{\Gamma + 1}r}\right] + \EE\left[\norm{u}_{L_t^2 V_m}^{2r}\right]<\infty.
    \end{align*}
\end{remark}
In order to prove Theorem \ref{thm:sol_alpha}, we adopt the following strategy:
\begin{itemize}
    \item[1.] Using the energy balance we derive uniform bounds independent of the parameter $\alpha$.
    \item[2.] We let $\alpha\to 0$ with the help of the stochastic compactness method based on the Jakubowski--Skorokhod representation theorem, Theorem \ref{thm:Jakubowski--Skorokhod}. 
\end{itemize}
\subsection{Uniform estimates}\label{sec:alpha:Uniform estimates}
In this section, the approximation parameters $m\in \NN, \varepsilon,\delta\in (0,1)$ are kept fixed. We call uniform estimate that are independent of $\alpha$ but may depend on $m,\varepsilon,\delta$ and $T>0$.
Let $(\rho_\alpha,u_\alpha)$ be the solutions constructed in Theorem \ref{thm:sol_R} satisfying \eqref{eq:3.71} for some $2r\geq 4$, and for simplicity, the subscript $\alpha$ is temporarily omitted.
Note that the H\"{o}lder continuity of $\rho$ and $u$ cannot be expected since the $R$-cut-off of $\norm{u}_{V_m}$ is lost. On the other hand, as described later, the H\"{o}lder continuity of $[\rho u]_m^\ast$ can be obtained similarly to Section \ref{sec:The limit for vanishing time step}.  
Recall that the following uniform estimate follows from Proposition \ref{prop:unif.bound_by_energy}:
\begin{align}
    &\EE\left[\left|\sup_{\tau\in [0,T]} \int_D \left[\frac 1 2 \rho |u|^2 + P_\delta(\rho)\right](\tau)dx \right|^{r}\right] \notag\\ 
    &\quad  +\EE\left[\left|\intT \int_D \left[\SSS(\nabla u):\nabla u + \varepsilon \rho |\nabla u|^2 + \varepsilon P_\delta''(\rho)|\nabla \rho|^2\right]dxdt \right|^{r}\right] \notag \\ 
    &\quad + \EE\left[\left|\intT \int_{\pD} g \nabla j_\alpha(u)\cdot u d\Gamma dt \right|^{r}\right] \lesssim  c(r), \label{eq:4.6} 
\end{align}
and by Corollary \ref{cor:est_u_R}, we have the estimate 
\begin{align}
    \EE\left[\left| \intT \norm{u}_{W_x^{1,2}}^2 dt \right|^{r} \right] \lesssim  c(2r) \label{eq:4.7}
\end{align}
uniformly in $\alpha$, where 
$$
    c(r) = \EE\left[\left|\int_D \left[\frac 1 2 \rho_0 |u_0|^2 + P_\delta(\rho_0)\right]dx \right|^{r}\right] + 1,\quad r\geq 2.
$$
From \eqref{eq:4.6} and the definitions of $p_\delta$ and $P_\delta$, we obtain the following estimates which are independent of $\alpha$:
\begin{align}
    &\EE\left[\norm{\rho}_{L_t^\infty L_x^\Gamma}^{\Gamma r}\right] + \EE \left[\norm{\rho^{-1/2}\nabla \rho}_{L_t^2 L_x^2}^{2r}\right] + \EE \left[\norm{\rho^{\Gamma / 2-1}\nabla \rho}_{L_t^2 L_x^2}^{2r}\right] \lesssim c(r), \label{eq:4.8}\\ 
    & \EE\left[\norm{\rho |u|^2}_{L_t^\infty L_x^1}^{r}\right] + \EE\left[\norm{\rho u}_{L_t^\infty L_x^{\frac{2\Gamma}{\Gamma +1}}}^{\frac{2\Gamma}{\Gamma +1}r}\right] \lesssim c(r), \label{eq:4.9}
\end{align}
where the estimate of the second term in \eqref{eq:4.9} follows from applying H\"older's inequality to $\sqrt{\rho}$ and $\sqrt{\rho}|u|$.
Next, we recall the standard parabolic maximal regularity estimates (see \cite[Lemmas 7.37, 7.38]{NS04}), applied to \eqref{eq:approx_CE_R}: 
\begin{align}
    \begin{aligned}
        \norm{\pp_t \rho}_{L_t^p L_x^q} + \norm{\rho}_{L_t^p W_x^{2,q}} &\lesssim \norm{\rdiv (\rho u)}_{L_t^p L_x^q} + \norm{\rho_0}_{C_x^{2+\nu}}, \\ 
        \norm{\rho}_{L_t^p W_x^{1,q}} &\lesssim \norm{\rho u}_{L_t^p L_x^q} + \norm{\rho_0}_{C_x^{2+\nu}} 
    \end{aligned}\label{eq:4.10}
\end{align}  
for $1<p,q<\infty$. In \eqref{eq:4.10}, the regularity of the initial data can be considerably weakened. However, such generality is not needed here. 
Now observe that \eqref{eq:4.8}, together with $\Gamma \geq 6$, \eqref{eq:4.7}, and the standard Sobolev embedding $W^{1,2}(D)\hookrightarrow L^6(D)$, gives rise to 
\begin{align*}
    &\EE\left[\norm{\rho u}_{L_t^1 L_x^3}^r \right]  \leq \EE \left[\left|\intT \norm{\rho}_{L_x^6}\norm{u}_{L_x^6} dt \right|^r\right] \leq \EE\left[\left|\norm{\rho}_{L_t^\infty L_x^6}\norm{u}_{L_t^1 L_x^6} \right|^r\right] \\ 
    &\quad \lesssim \EE\left[\norm{\rho}_{L_t^\infty L_x^\Gamma}^{2r}\right] + \EE\left[\norm{u}_{L_t^2 W_x^{1,2}}^{2r}\right] \lesssim c(2r).
\end{align*}
Interpolating this with \eqref{eq:4.9} yields 
\begin{align*}
    \EE\left[\norm{\rho u}_{L_t^p L_x^p}^r\right] \lesssim c(2r) \quad \text{for some} \  p>2, 
\end{align*}
where, for the existence and specific range of $p>2$, refer to the general results on interpolation between Bochner spaces (see, for example, \cite[Theorem 5.1.2]{BL76}).
This estimate, plugged in the right-hand side of \eqref{eq:4.10}, implies 
\begin{align}
    \EE\left[\norm{\rho}_{L_t^p W_x^{1,p}}^r\right]\lesssim \tilde{c}(2r) \quad \text{for some} \  p>2, \label{eq:4.11}
\end{align}
where 
$$
    \tilde{c}(r) = \EE\left[\left|\int_D \left[\frac 1 2 \rho_0 |u_0|^2 + P_\delta(\rho_0)\right]dx \right|^r + \norm{\rho_0}_{C_x^{2+\nu}}^{r / 2}\right] + 1,\quad r\geq 2.
$$
Using this and \eqref{eq:4.7} yields 
\begin{align}
    \EE\left[\left|\norm{\pp_t \rho}_{L_t^pL_x^p} + \norm{\rho}_{L_t^p W_x^{2,p}} \right|^{r / 2}\right] \lesssim \tilde{c}(2r),\quad \text{for some } p>1. \label{eq:4.12}
\end{align}
Finally, we derive the H\"{o}lder continuity of $[\rho u]_m^\ast$.  Similarly to Section \ref{sec:The limit for vanishing time step}, using equivalence of norms on $V_m$, we have 
\begin{align*}
    &\norm{[\rho u]_m^\ast (\tau_1) - [\rho u]_m^\ast (\tau_2)}_{V_m^\ast}  \\ 
    &\quad \lesssim \int_{\tau_2}^{\tau_1} \left( \norm{\rho |u|^2}_{L_x^1} + \norm{\rho}_{L_x^\Gamma} + \norm{u}_{V_m}\right)dt + \norm{\int_{\tau_2}^{\tau_1}\int_D [\GG_\varepsilon (\rho,\rho u)]_m^\ast dx dW}_{V_m^\ast} \\ 
    &\quad \lesssim \left|\tau_1 -\tau_2 \right| \left(\norm{\rho |u|^2}_{L_x^\infty L_x^1} + \norm{\rho}_{L_t^\infty L_x^\Gamma}\right) + \left|\tau_1 -\tau_2 \right|^{1 / 2} \norm{u}_{L_t^2 V_m} + \norm{\int_{\tau_2}^{\tau_1}\int_D [\GG_\varepsilon (\rho,\rho u)]_m^\ast dx dW}_{V_m^\ast}.
\end{align*}
Therefore, with the estimates \eqref{eq:4.7}--\eqref{eq:4.9}, the properties of $\GG_\varepsilon$, and the Burkholder--Davis--Gundy inequality at hand, we obtain that 
$$
    \EE\left[\norm{[\rho u]_m^\ast (\tau_1) - [\rho u]_m^\ast (\tau_2)}_{V_m^\ast}^{2r} \right] \lesssim \left|\tau_1 -\tau_2 \right|^{r} c(2r),\quad r\geq 2
$$
uniformly in $\alpha$ whenever $0\leq \tau_1<\tau_2\leq T$. Thus, applying Kolmogorov's continuity theorem yields that $[\rho u]_m^\ast$ has $\PP$-a.s. $\beta$-H\"{o}lder continuous trajectories for all $\beta\in (0,1/2 - 1/(2r))$. In addition, we have the estimate 
\begin{align}
    \EE\left[\norm{[\rho u]_m^\ast}_{C_t^\beta V_m^\ast}^{2r}\right] \lesssim c(2r) \label{eq:4.13}
\end{align}
uniformly in $\alpha$. 

\subsection{Asymptotic limit} \label{sec:alpha(Asymptotic limit)}
With the uniform bounds established in the preceding subsection, we are ready to perform the limit $\alpha\to 0$. Let $\Lambda$ be a probability measure on $C^{2+\nu}(\oD)\times V_m$ satisfying \eqref{eq:4.4}--\eqref{eq:4.5}. Let $\stochbasis$ be a stochastic basis with a complete right-continuous filtration, let $W$ be a cylindrical $(\FFF_t)$-Wiener process on $\fU$, and let $(\rho_0,u_0)$ be $\FFF_0$-measurable random variables with values in $C^{2+\nu}(\oD)\times V_m$ and law $\Lambda$. Finally, let $(\rho_\alpha,u_\alpha)$ be the solution of problem \eqref{eq:approx_CE_R}--\eqref{eq:approx_ME_R} obtained in Theorem \ref{thm:sol_R} and starting from $(\rho_0,u_0)$. 
Then all the estimates in Section \ref{sec:alpha:Uniform estimates} hold true for $(\rho_\alpha,u_\alpha)$ uniformly in $\alpha$.
Therefore, we choose the path space 
$$
    \XX = \XX_{\rho_0}\times \XX_{u_0}\times \XX_\rho\times \XX_{\rho u}\times \XX_u \times \XX_W,
$$
where 
\begin{align*}
    &\XX_{\rho_0} = C(\oD), \\
    &\XX_{u_0} = V_m, \\
    &\XX_\rho = L^p(0,T;W^{1,p}(D)) \cap \left[W^{1,q}(0,T;L^q(D))\cap L^q(0,T;W^{2,q}(D)),w\right]  \cap C_w([0,T];L^\Gamma(D)), \\ 
    &\XX_{\rho u} = \overline{C^\beta([0,T];V_m^\ast)}^{C_t^\kappa V_m^\ast}, \\ 
    &\XX_u = \left[L^2(0,T;V_m),w\right], \\ 
    &\XX_W = C([0,T];\fU_0),
\end{align*}
for some $p>2,q>1$, and $\kappa\in (0,\beta)$. Now we claim that the parameter $p,q$ can be adjusted in such a way that the family of joint laws 
$$
    \left\{ \LL\left[\rho_0,u_0,\rho_\alpha,[\rho_\alpha u_\alpha]_m^\ast,u_\alpha,W\right]: \alpha\in (0,1) \right\} \quad \text{is tight on}\  \XX.
$$
To this end, it is enough to prove tightness of $\LL[\rho_\alpha]$ and $\LL[u_\alpha]$, $\alpha\in (0,1)$, as in Section \ref{sec:The limit for vanishing time step}.
\begin{proposition}
    \label{prop:tight_alpha}
    The sets $\left\{ \LL[\rho_\alpha]: \alpha\in (0,1)\right\}$ and $\left\{ \LL[u_\alpha]: \alpha\in (0,1)\right\}$ are tight on $\XX_\rho$ and $\XX_u$, respectively.
\end{proposition}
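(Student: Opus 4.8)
The plan is, for each of the two marginals in question, to produce a one-parameter family of compact subsets carrying all but an arbitrarily small fraction of the mass and then to conclude via Chebyshev's inequality applied to the uniform moment bounds of Section~\ref{sec:alpha:Uniform estimates}. For $\LL[u_\alpha]$ this is immediate: since $V_m$ is finite dimensional, $L^2(0,T;V_m)$ is a separable Hilbert space, so every closed ball $B_L=\{u:\norm{u}_{L^2_tV_m}\le L\}$ is metrizable and compact in the weak topology, i.e. compact in $\XX_u=[L^2(0,T;V_m),w]$; Corollary~\ref{cor:est_u_R} (namely~\eqref{eq:4.7}) and Chebyshev give $\sup_{\alpha\in(0,1)}\PP(u_\alpha\notin B_L)\le c\,L^{-2r}\to0$ as $L\to\infty$, which is the claimed tightness.

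For $\LL[\rho_\alpha]$ on $\XX_\rho$ the exponents must be chosen slightly inside the admissible range: let $p_0>2$ be the exponent in~\eqref{eq:4.11} and $q>1$ the exponent in~\eqref{eq:4.12} (note also that $\rho_\alpha$ is bounded, uniformly in $\alpha$, in $W^{1,q}(0,T;L^q(D))$, since both $\pp_t\rho_\alpha$ and $\rho_\alpha$ are controlled in $L^q(0,T;L^q(D))$ by~\eqref{eq:4.12}), assume without loss $q\le p_0$, and take the path-space exponent to be any $p\in(\max\{2,q\},p_0)$. For $L>0$ put
\[
  K_L=\Big\{\rho:\ \norm{\rho}_{L^{p_0}_tW^{1,p_0}_x}+\norm{\rho}_{L^q_tW^{2,q}_x}+\norm{\pp_t\rho}_{L^q_tL^q_x}+\norm{\rho}_{L^\infty_tL^\Gamma_x}\le L\Big\}.
\]
By~\eqref{eq:4.8},~\eqref{eq:4.11},~\eqref{eq:4.12} and Chebyshev one has $\sup_{\alpha}\PP(\rho_\alpha\notin K_L)\to0$ as $L\to\infty$, so it remains to check that the closure of $K_L$ is compact in $\XX_\rho$, equivalently that $K_L$ is relatively compact in each of the three topologies whose intersection defines $\XX_\rho$.

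I would verify this componentwise. (i) In $[W^{1,q}(0,T;L^q(D))\cap L^q(0,T;W^{2,q}(D)),w]$, the set $K_L$ is bounded in a separable reflexive Banach space, hence relatively weakly compact. (ii) In $C_w([0,T];L^\Gamma(D))$: since $q>1$ one has $W^{1,q}(0,T;L^q(D))\hookrightarrow C^{1-1/q}([0,T];L^q(D))\hookrightarrow C^{1-1/q}([0,T];W^{l,2}(D))$ for $l$ sufficiently negative, so $K_L$ is bounded in $L^\infty(0,T;L^\Gamma(D))\cap C^{1-1/q}([0,T];W^{l,2}(D))$, which embeds compactly into $C_w([0,T];L^\Gamma(D))$ by Theorem~\ref{thm:weak_conti_embedding}. (iii) In $L^p(0,T;W^{1,p}(D))$ with its norm topology: the Aubin--Lions--Simon lemma \cite{Sim87}, applied with $W^{2,q}(D)\hookrightarrow\hookrightarrow W^{1,q}(D)\hookrightarrow L^q(D)$ and with $\pp_t\rho$ bounded in $L^q(0,T;L^q(D))$, shows that $K_L$ is relatively compact in $L^q(0,T;W^{1,q}(D))$; interpolating the strong $L^q_tW^{1,q}_x$ convergence of a subsequence against the uniform bound in $L^{p_0}_tW^{1,p_0}_x$ (Gagliardo--Nirenberg in $x$ and H\"older in $t$) upgrades this to relative compactness in $L^p(0,T;W^{1,p}(D))$ for every $p\in(q,p_0)$, in particular for the chosen $p>2$. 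Combining (i)--(iii) gives compactness of the closure of $K_L$ in $\XX_\rho$, and tightness follows.

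The genuinely delicate step is (iii): Aubin--Lions by itself only yields strong compactness at the low exponent $q$ coming from the maximal-regularity estimate~\eqref{eq:4.12}, and to recover the norm topology of $L^p(0,T;W^{1,p}(D))$ with $p>2$ one must interpolate against the sharper bound~\eqref{eq:4.11}; this is exactly the ``adjustment of $p$ and $q$'' mentioned before the proposition, and it also forces one to check that the interval $(\max\{2,q\},p_0)$ is nonempty, which holds because $p_0>2$. Everything else --- the two weak-compactness statements (i), (ii) and the $\XX_u$ argument --- is routine.
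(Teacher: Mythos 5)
Your proof is correct and follows essentially the same route as the paper's: relative compactness is established componentwise in each of the three factors defining $\XX_\rho$, using Theorem~\ref{thm:weak_conti_embedding} for the $C_w([0,T];L^\Gamma(D))$ component, weak compactness of bounded sets in a reflexive separable space for the weak $W^{1,q}\cap L^qW^{2,q}$ component, and Aubin--Lions plus interpolation between the low exponent $q$ and the higher exponent $p_0>2$ from~\eqref{eq:4.11} for the $L^p_tW^{1,p}_x$ component; tightness of $\LL[u_\alpha]$ is the routine weak-compactness argument. The only minor divergence is in how you feed Theorem~\ref{thm:weak_conti_embedding}: the paper deduces a Lipschitz-in-time bound $C^{0,1}([0,T];W^{-2,\frac{2\Gamma}{\Gamma+1}}(D))$ directly from the continuity equation via the estimates on $\Delta\rho_\alpha$ and $\rdiv(\rho_\alpha u_\alpha)$ in~\eqref{eq:4.8}--\eqref{eq:4.9}, whereas you obtain a Hölder-in-time bound from the Morrey embedding $W^{1,q}(0,T;L^q(D))\hookrightarrow C^{1-1/q}([0,T];L^q(D))$ together with $L^q(D)\hookrightarrow W^{l,2}(D)$; both produce a positive Hölder exponent and thus suffice for the compact embedding in Theorem~\ref{thm:weak_conti_embedding}, and neither approach is materially shorter or more robust than the other here. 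Your explicit remark that the interval $(\max\{2,q\},p_0)$ is nonempty because $p_0>2$, together with the reduction ``assume without loss $q\le p_0$'' (justified since $D$ is bounded and $T$ finite, so one may shrink $q$), makes the exponent bookkeeping slightly more transparent than the paper's phrasing.
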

\begin{proof}
    Tightness of $\left\{ \LL[u_\alpha]: \alpha\in (0,1) \right\}$ immediately follows from the fact that the set 
    $$
        \left\{ u\in L^2(0,T;V_m): \norm{u}_{L_t^2V_m}\leq L \right\}
    $$
    is relatively compact in $\XX_u$. Next, using \eqref{eq:4.8} and \eqref{eq:4.9}, we have 
    \begin{align*}
        &\EE\left[\norm{\Delta \rho_\alpha}_{L_t^\infty (W_x^{2,\Gamma})^\ast}^{\Gamma r}\right] + \EE\left[\norm{\rdiv (\rho_\alpha u_\alpha)}_{L_t^\infty (W_x^{1, \frac{2\Gamma}{\Gamma +1}})^\ast}^{\frac{2\Gamma}{\Gamma +1}r}\right]  \lesssim c(r).
    \end{align*}
    As a consequence of the equation of continuity \eqref{eq:approx_CE_R}, 
    $$
        \EE\left[\norm{\rho_\alpha}_{C_t^{0,1}(W_x^{2,\frac{2\Gamma}{\Gamma +1}})^\ast }\right] \leq c.
    $$
    Therefore, the required tightness in $C_w([0,T];L^\Gamma(D))$ follows from the compact embedding (see Theorem \ref{thm:weak_conti_embedding}):
    $$
        L^\infty(0,T;L^\Gamma(D)) \cap C^{0,1}([0,T];W^{-2,\frac{2\Gamma}{\Gamma + 1}}(D)) \cptarrow C_w([0,T];L^\Gamma(D)).
    $$
    Tightness in $\left[W^{1,q}(0,T;L^q(D))\cap L^q(0,T;W^{2,q}(D)),w\right]$ is a direct consequence of \eqref{eq:4.12}. In order to show tightness in $L^p(0,T;W^{1,p}(D))$ for some $p>2$, we observe that, in view of compactness of the embedding 
    $$
        W^{1,q}(0,T;L^q(D))\cap L^q(0,T;W^{2,q}(D)) \cptarrow L^r(0,T;W^{1,r}(D)),
    $$
    which holds for a certain $r(q)>1$ by the Aubin--Lions theorem, we obtain tightness in $L^r(0,T;W^{1,r})$ for $r>1$. Using an interpolation argument, we observe that, if $q>1$ and $r'>2$, the set 
    \begin{align*}
        &\left\{ \rho \in  L^{r'}(0,T;W^{1,r'}(D)) \cap W^{1,q}(0,T;L^q(D))\cap L^q(0,T;W^{2,q}(D)):\right. \\ 
        &\quad \left. \norm{\rho}_{L_t^{r'}W_x^{1,r'}} + \norm{\rho}_{W_t^{1,q}L_x^q} + \norm{\rho}_{L_t^q W_x^{2,q}} \leq L   \right\}  
    \end{align*}
    is relatively compact in $L^p(0,T;W^{1,p}(D))$ for some $p>2$. Hence, the desired tightness can be deduced from \eqref{eq:4.11} and \eqref{eq:4.12}.
\end{proof}
The path space $\XX$ is not a Polish space, but it is a sub-Polish space. Therefore, our compactness argument is based on the Jakubowski--Skorokhod representation theorem, Theorem \ref{thm:Jakubowski--Skorokhod}, and from tightness we obtain the following result.
\begin{proposition}
    \label{prop:repr_alpha}
    There exists a complete probability space $\tprobsp$ with $\XX$-valued Borel measurable random variables $(\trho_{0,\alpha},\tu_{0,\alpha},\trho_\alpha,\tq_\alpha,\tu_\alpha,\tW_\alpha),\alpha\in (0,1)$ and $(\trho_0,\tu_0,\trho,\tq,\tu,\tW)$ such that (up to a subsequence):
    \begin{itemize}
        \item[(1)] the laws of $(\trho_{0,\alpha},\tu_{0,\alpha},\trho_\alpha,\tq_\alpha,\tu_\alpha,\tW_\alpha)$ and $(\rho_0,u_0,\rho_\alpha,[\rho_\alpha u_\alpha]_m^\ast,u_\alpha,W)$ coincide on $\XX$;
        \item[(2)] $(\trho_{0,\alpha},\tu_{0,\alpha},\trho_\alpha,\tq_\alpha,\tu_\alpha,\tW_\alpha)$ converges in the topology of $\XX$ $\tPP$-a.s. to $(\trho_0,\tu_0,\trho,\tq,\tu,\tW)$, i.e., 
        \begin{align}
            \begin{aligned}
                \trho_{0,\alpha} &\to \trho_0 \quad \text{in } C(\oD), \\ 
                \tu_{0,\alpha} &\to \tu_0\quad \text{in } V_m, \\
                \trho_\alpha &\to \trho\quad \text{in } L^p(0,T;W^{1,p}(D)), \\ 
                \trho_\alpha &\weakarrow \trho\quad \text{in } W^{1,q}(0,T;L^q(D))\cap L^q(0,T;W^{2,q}(D)), \\ 
                \trho_\alpha &\to \trho\quad \text{in } C_w([0,T];L^\Gamma(D)), \\ 
                \tq_\alpha &\to \tq \quad \text{in } C^\kappa([0,T];V_m^\ast), \\
                \tu_\alpha &\weakarrow \tu \quad \text{in } L^2(0,T;V_m), \\ 
                \tW_\alpha &\to \tW\quad \text{in } C([0,T];\fU_0),
            \end{aligned}  \label{eq:4.14}
        \end{align}
    \end{itemize}
    $\tPP$-a.s. for some $p>2,q>1$, and $\kappa>0$.
\end{proposition}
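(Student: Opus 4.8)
The plan is to follow the standard stochastic compactness scheme: establish tightness of the joint laws on the chosen path space $\XX$, then invoke Jakubowski's extension of the Skorokhod representation theorem (Theorem~\ref{thm:Jakubowski--Skorokhod}) to obtain an almost surely convergent sequence on a new probability space. The tightness of the marginals $\LL[\rho_\alpha]$ and $\LL[u_\alpha]$ is exactly the content of Proposition~\ref{prop:tight_alpha}; tightness of $\LL[\rho_\alpha u_\alpha]_m^\ast$ on $\XX_{\rho u}$ follows from the H\"older estimate \eqref{eq:4.13} via the compact embedding $C^\beta([0,T];V_m^\ast)\cptarrow C^\kappa([0,T];V_m^\ast)$ for $\kappa<\beta$, tightness of $\LL[\rho_0]$ and $\LL[u_0]$ is trivial since these laws are fixed (a single Radon measure on a Polish space is tight), and tightness of $\LL[W]$ holds because it is a Radon measure on the Polish space $C([0,T];\fU_0)$. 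By Tychonoff's theorem the joint law is then tight on the product $\XX$.

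Next, I would check that $\XX$ is a sub-Polish space in the sense required by Theorem~\ref{thm:Jakubowski--Skorokhod}. Each factor either is Polish ($C(\oD)$, $V_m$, $L^p(0,T;W^{1,p}(D))$, $C^\kappa([0,T];V_m^\ast)$, $C([0,T];\fU_0)$) or is a separable Banach space equipped with the weak topology ($[W^{1,q}(0,T;L^q(D))\cap L^q(0,T;W^{2,q}(D)),w]$, $[L^2(0,T;V_m),w]$) or the space $C_w([0,T];L^\Gamma(D))$, all of which admit a countable separating family of continuous $(-1,1)$-valued functions; a finite product of sub-Polish spaces is sub-Polish. Applying Theorem~\ref{thm:Jakubowski--Skorokhod} to the tight family $\{\LL[\rho_0,u_0,\rho_\alpha,[\rho_\alpha u_\alpha]_m^\ast,u_\alpha,W]\}$ then yields, up to a subsequence, a complete probability space $\tprobsp$ carrying random variables with the stated equality of laws (1) and the pointwise convergence (2). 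Here one uses that the convergence in the product topology of $\XX$ decomposes exactly into the eight convergences listed in \eqref{eq:4.14}.

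The remaining point is to pin down the range of the interpolation exponents: the statement asserts the conclusion holds ``for some $p>2$, $q>1$''. This is inherited directly from the corresponding choices made in Section~\ref{sec:alpha:Uniform estimates}, where \eqref{eq:4.11}--\eqref{eq:4.12} fix $p>2$ and $q>1$ (the same $q$ throughout, obtained from parabolic maximal regularity), and from Proposition~\ref{prop:tight_alpha}, whose proof shows compactness of the relevant sublevel sets precisely for those exponents. The H\"older exponent $\kappa$ is any value in $(0,\beta)$ with $\beta\in(0,1/2-1/(2r))$ as in \eqref{eq:4.13}. No genuinely new estimate is needed; the proof is an assembly of Proposition~\ref{prop:tight_alpha}, the H\"older bound \eqref{eq:4.13}, the sub-Polish verification, and Theorem~\ref{thm:Jakubowski--Skorokhod}.

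The main (and essentially only) obstacle is bookkeeping rather than analysis: one must be careful that the path space $\XX$ is chosen so that (i) all uniform bounds from Section~\ref{sec:alpha:Uniform estimates} give compactness, (ii) the space is sub-Polish so Jakubowski's theorem applies, and (iii) the topology is still fine enough that the limit objects retain the structural identities needed in the next subsection (equation of continuity, renormalized continuity equation, momentum equation, energy inequality). In particular the inclusion of both the strong factor $L^p(0,T;W^{1,p}(D))$ and the weak factor $[W^{1,q}L^q\cap L^q W^{2,q},w]$ in $\XX_\rho$ is what later allows one to pass to the limit in the nonlinear terms $\rho_\alpha u_\alpha\otimes u_\alpha$ and $\varepsilon\rho_\alpha|\nabla u_\alpha|^2$; getting this balance right, and confirming that the new variables $(\tilde\rho,\tilde u,\tilde q)$ are compatible (i.e.\ $\tilde q=[\tilde\rho\tilde u]_m^\ast$ $\tilde\PP$-a.s., which follows from equality of laws plus the almost sure convergences), is the delicate part of the argument.
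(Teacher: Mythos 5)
Your proposal is correct and follows essentially the same route as the paper, which states this proposition without a written proof, noting only that $\XX$ is sub-Polish and that the result follows from tightness and Theorem~\ref{thm:Jakubowski--Skorokhod}. You correctly fill in the details the paper leaves implicit: tightness of $\LL[\rho_0]$, $\LL[u_0]$, and $\LL[W]$ as singleton Radon measures on Polish factors, tightness of $\LL[[\rho_\alpha u_\alpha]_m^\ast]$ from \eqref{eq:4.13} via the compact embedding $C^\beta\hookrightarrow\hookrightarrow C^\kappa$, joint tightness by Tychonoff, the sub-Polish check factor by factor, and the inheritance of $p>2$, $q>1$ from \eqref{eq:4.11}--\eqref{eq:4.12}; your closing remark about $\tilde q=[\tilde\rho\tilde u]_m^\ast$ correctly defers that identification to the subsequent Lemma~\ref{lem:alpha(q = rho u)} rather than folding it into this proposition.
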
  
\begin{remark}
    \label{rem:r.d.}
    At this stage of the proof it becomes convenient to work with random distributions as introduced in \cite[Section 2.2]{BFHbook18} due to the limit velocity $\tu$ is not a stochastic process in the classical sense.
    From Lemma \ref{lem:prog_m'ble_r.d.}, there is a stochastic process belonging to the same equivalence class as $\tu$, which is progressively measurable with respect to the canonical filtration. 
    Therefore, noting that $\GG_\varepsilon(\rho,\rho u) = \rho \FF_\varepsilon(\rho,u)$, and following a similar argument in \cite[Remark 2.7]{BFHbook18}, for the stochastic integral $\GG_\varepsilon(\trho,\trho \tu)d\tW$ to be well-defined, it suffices to show that $\tW$ is non-anticipative with respect to the filtration 
    $$
        \tFFF_t := \sigma\left(\sigma_t\left[\trho\right]\cup \sigma_t\left[\tu\right]\cup \sigma_t [\tW]\right),\quad t\in [0,T].
    $$
    But, this follows from Lemmas \ref{lem:sufficient_cond_of_Wiener_by_law}, \ref{lem:sufficient_cond_of_(G_t)-Wiener }, and \ref{lem:stability_of_nonanti}, as in Section \ref{sec:The limit for vanishing time step}.
\end{remark}
\begin{remark}
    \label{rem:alpha:initial repr}
    In view of the equality of laws and the convergence of the initial data in Proposition \ref{prop:repr_alpha}, we have 
    $$
        \Lambda = \LL[\trho_0,\tu_0]\quad \text{in } C^{2+\nu}(\oD)\times V_m.
    $$
\end{remark}
\begin{remark}
    \label{rem:NeumannBC}
    By the weak convergence of $\trho_\alpha$ in $L^q(0,T;W^{2,q}(D))$, the limit $\trho$ automatically satisfies the Neumann boundary condition in \eqref{eq:approx_CE_alpha}.
\end{remark}
\begin{lemma}
    \label{lem:alpha(q = rho u)}
    We have 
    $$
        \tq_\alpha =[\trho_\alpha \tu_\alpha]_m^\ast,\quad \tq = [\trho \tu]_m^\ast,\quad \tPP\text{-a.s.}
    $$
\end{lemma}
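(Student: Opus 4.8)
The plan is to prove the two identities separately: $\tq_\alpha=[\trho_\alpha\tu_\alpha]_m^\ast$ will follow from the equality of laws in Proposition~\ref{prop:repr_alpha}, whereas $\tq=[\trho\tu]_m^\ast$ will be obtained by passing to the limit $\alpha\to0$ in the former, using the convergences \eqref{eq:4.14}.

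For the first identity, I would fix $\bphi\in V_m$ and $\psi\in C_c^\infty((0,T))$ and introduce the functional
$$
\Phi_{\bphi,\psi}(\rho,q,u):=\int_0^T\psi(t)\,\evm{q(t)}{\bphi}\,dt-\int_0^T\psi(t)\int_D\rho\,u\cdot\bphi\,dx\,dt
$$
on $\XX_\rho\times\XX_{\rho u}\times\XX_u$. The first term is continuous in $q$ on $C_t^\kappa V_m^\ast$; for the second, since $V_m$ is finite-dimensional one has $\bphi\in L^\infty(D;\RR^3)$, so $\rho\mapsto\rho\bphi\psi$ is continuous from $\XX_\rho$ into $L^2((0,T)\times D;\RR^3)$, and with $\rho$ frozen the map $u\mapsto\int_0^T\int_D\rho\bphi\psi\cdot u\,dx\,dt$ is continuous on $[L^2(0,T;V_m),w]$; from the resulting Carath\'eodory structure one deduces Borel measurability of $\Phi_{\bphi,\psi}$ on $\XX$. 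Since the original random variables satisfy $\Phi_{\bphi,\psi}(\rho_\alpha,[\rho_\alpha u_\alpha]_m^\ast,u_\alpha)=0$ $\PP$-a.s., the equality of laws transfers this to $\Phi_{\bphi,\psi}(\trho_\alpha,\tq_\alpha,\tu_\alpha)=0$ $\tPP$-a.s. Letting $\bphi,\psi$ range over countable dense families and noting that $[\trho_\alpha\tu_\alpha]_m^\ast$ is well-defined as an element of $L^2(0,T;V_m^\ast)$ (because $\trho_\alpha\in L^\infty(0,T;L^\Gamma(D))$ and $\tu_\alpha\in L^2(0,T;V_m)\subset L^2(0,T;L^\infty(D))$) while $\tq_\alpha\in C_t^\kappa V_m^\ast$, one concludes $\tq_\alpha=[\trho_\alpha\tu_\alpha]_m^\ast$ $\tPP$-a.s.

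For the second identity, I would pass to the limit in $\tq_\alpha=[\trho_\alpha\tu_\alpha]_m^\ast$. By \eqref{eq:4.14}, $\tq_\alpha\to\tq$ in $C_t^\kappa V_m^\ast$, so it suffices to identify $\lim_{\alpha\to0}[\trho_\alpha\tu_\alpha]_m^\ast$ tested against $\bphi\in V_m$ and $\psi\in C_c^\infty((0,T))$. Since $\trho_\alpha\to\trho$ strongly in $L^p(0,T;W^{1,p}(D))$ for some $p>2$ and $W^{1,p}(D)\hookrightarrow L^2(D)$ on the bounded domain $D\subset\RR^3$, one gets $\trho_\alpha\to\trho$ strongly in $L^2((0,T)\times D)$, hence $\trho_\alpha\bphi\psi\to\trho\bphi\psi$ strongly in $L^2((0,T)\times D;\RR^3)$; on the other hand $\tu_\alpha\weakarrow\tu$ in $L^2(0,T;V_m)$, and since $V_m\hookrightarrow L^2(D)$ is a bounded linear embedding this weak convergence persists in $L^2((0,T)\times D;\RR^3)$. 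The strong--weak pairing then gives
$$
\int_0^T\psi\int_D\trho_\alpha\tu_\alpha\cdot\bphi\,dx\,dt\ \longrightarrow\ \int_0^T\psi\int_D\trho\tu\cdot\bphi\,dx\,dt,
$$
so $[\trho_\alpha\tu_\alpha]_m^\ast\to[\trho\tu]_m^\ast$ in $\DD'((0,T);V_m^\ast)$; comparing with $\tq_\alpha\to\tq$ and using that $\tq$ is time-continuous yields $\tq=[\trho\tu]_m^\ast$ $\tPP$-a.s.

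The step I expect to require the most care is the Borel measurability of $\Phi_{\bphi,\psi}$ on the sub-Polish space $\XX$, owing to the weak topology on $\XX_u$: one cannot argue by joint continuity and must instead exploit continuity in $\rho$ for $u$ frozen and conversely. The analytic limit passage is comparatively soft; the only genuine point is that the strong convergence of $\trho_\alpha$ takes place in a space--time $L^2$ topology that truly pairs with the merely weak convergence of $\tu_\alpha$.
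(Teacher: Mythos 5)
Your proof takes essentially the same route as the paper's: the first identity is transferred via equality of joint laws from Proposition \ref{prop:repr_alpha} (you make the measurability argument explicit with the functional $\Phi_{\bphi,\psi}$, while the paper simply invokes that the laws of $(\rho_\alpha,u_\alpha,[\rho_\alpha u_\alpha]_m^\ast)$ and $(\trho_\alpha,\tu_\alpha,\tq_\alpha)$ coincide), and the second is obtained by the same strong--weak pairing (strong $L^2_{t,x}$-convergence of $\trho_\alpha$ from $\XX_\rho$ against weak $L^2$-convergence of $\tu_\alpha$) together with uniqueness of the limit of $\tq_\alpha$ in $C_t^\kappa V_m^\ast$. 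Your write-up is correct and merely supplies more detail than the paper's terse two-sentence version.
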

\begin{proof}
    The first statement follows from the equality of joint laws of 
    $$
        (\rho_\alpha,u_\alpha,[\rho_\alpha u_\alpha]_m^\ast) \quad \text{and} \quad (\trho_\alpha,\tu_\alpha,[\trho_\alpha \tu_\alpha]_m^\ast).
    $$
    As a consequence of the convergence of $\trho_\alpha$ and $\tu_\alpha$ in $\XX_\rho$ and $\XX_u$, respectively, we have 
    $$
        \trho_\alpha \tu_\alpha \weakarrow \trho \tu \quad \text{in}\ L^1(0,T;L^1(D))\quad \tPP\text{-a.s.}
    $$
    and thus
    $$
        [\trho_\alpha \tu_\alpha]_m^\ast \weakarrow [\trho \tu]_m^\ast \quad \text{in}\ L^1(0,T;V_m^\ast)\quad \tPP\text{-a.s.}
    $$
    Therefore, the second statement follows by the uniqueness of the weak limit.
\end{proof}
\begin{lemma}
    \label{lem:alpha(conv rho u otimes u)}
    The following convergence holds true $\tPP$-a.s.: 
    \begin{align}
        \label{eq:4.15}
        \trho_\alpha \tu_\alpha\otimes \tu_\alpha \to \trho \tu\otimes \tu\quad \text{in}\ L^1(0,T;L^1(D)).
    \end{align}
\end{lemma}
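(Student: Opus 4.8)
The plan is to upgrade the weak convergence $\tu_\alpha\weakarrow\tu$ in \eqref{eq:4.14} to strong convergence in $L^2(0,T;V_m)$ and then to split the quadratic term into three pieces, each estimated in $L^1(0,T;L^1(D))$. The entire argument is performed $\tPP$-a.s., i.e.\ for a fixed $\omega$ outside a null set. As a first step I would record $\tPP$-a.s.\ two-sided bounds on the density that are uniform in $\alpha$. Since $\tu_\alpha\weakarrow\tu$ in $L^2(0,T;V_m)$, the family $(\tu_\alpha)_\alpha$ is $\tPP$-a.s.\ bounded in $L^2(0,T;V_m)$, and because $V_m$ is finite dimensional one has $\norm{\rdiv\tu_\alpha}_{L^\infty(D)}\lesssim\norm{\tu_\alpha}_{V_m}$, so that $\intT\norm{\rdiv\tu_\alpha}_{L^\infty(D)}\,dt\le C(\omega)$ uniformly in $\alpha$. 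The maximum principle for \eqref{eq:approx_CE_R} (cf.\ \eqref{eq:3.75}), which holds for $(\trho_\alpha,\tu_\alpha)$ by equality of laws, then gives $0<c(\omega)\le\trho_\alpha\le C(\omega)$ on $[0,T]\times\oD$ uniformly in $\alpha$; passing to an a.e.\ convergent subsequence in $L^p(0,T;W^{1,p}(D))$ yields the same bounds for $\trho$.

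Next I would show $\tu_\alpha\to\tu$ strongly in $L^2(0,T;V_m)$. By Lemma~\ref{lem:alpha(q = rho u)}, $\tq_\alpha=[\trho_\alpha\tu_\alpha]_m^\ast$ and $\tq=[\trho\tu]_m^\ast$, hence $\tu_\alpha=\MMM^{-1}[\trho_\alpha]\tq_\alpha$ and $\tu=\MMM^{-1}[\trho]\tq$, the inverses being well defined thanks to the lower bound $c(\omega)$. Writing
\[
\tu_\alpha-\tu=\big(\MMM^{-1}[\trho_\alpha]-\MMM^{-1}[\trho]\big)\tq_\alpha+\MMM^{-1}[\trho]\big(\tq_\alpha-\tq\big)
\]
and invoking \eqref{eq:3.25}--\eqref{eq:3.26} with $\urho=c(\omega)$ together with the boundedness of $(\tq_\alpha)_\alpha$ in $C([0,T];V_m^\ast)$, one gets
\[
\intT\norm{\tu_\alpha-\tu}_{V_m}^2\,dt\le C(\omega)\Big(\norm{\trho_\alpha-\trho}_{L^2(0,T;L^1(D))}^2+\norm{\tq_\alpha-\tq}_{C([0,T];V_m^\ast)}^2\Big)\longrightarrow 0,
\]
since $\tq_\alpha\to\tq$ in $C([0,T];V_m^\ast)$ and $\trho_\alpha\to\trho$ in $L^p(0,T;W^{1,p}(D))\hookrightarrow L^2(0,T;L^1(D))$ (recall $p>2$ and $D$ bounded).

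Finally, decompose
\[
\trho_\alpha\tu_\alpha\otimes\tu_\alpha-\trho\tu\otimes\tu=(\trho_\alpha-\trho)\,\tu_\alpha\otimes\tu_\alpha+\trho\,(\tu_\alpha-\tu)\otimes\tu_\alpha+\trho\,\tu\otimes(\tu_\alpha-\tu).
\]
The last two terms are bounded in $L^1(0,T;L^1(D))$ by $\norm{\trho}_{L^\infty((0,T)\times D)}\norm{\tu_\alpha-\tu}_{L_t^2L_x^2}\big(\norm{\tu_\alpha}_{L_t^2L_x^2}+\norm{\tu}_{L_t^2L_x^2}\big)$, which tends to $0$ by the strong convergence of $\tu_\alpha$ and the a.s.\ upper bound on $\trho$. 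For the first term write $|\tu_\alpha|^2\le 2|\tu_\alpha-\tu|^2+2|\tu|^2$: the contribution of $|\tu_\alpha-\tu|^2$ is at most $2C(\omega)\norm{\tu_\alpha-\tu}_{L_t^2L_x^2}^2\to 0$ by the uniform upper bound on $\trho_\alpha$, while the contribution of $|\tu|^2$ goes to $0$ by dominated convergence, using $|\trho_\alpha-\trho|\,|\tu|^2\le 2C(\omega)|\tu|^2\in L^1((0,T)\times D)$ and $\trho_\alpha\to\trho$ a.e.\ along a subsequence. Since the limit does not depend on the subsequence, \eqref{eq:4.15} follows for the full sequence.

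The main obstacle is the second step, namely promoting the weak convergence of the velocity to strong convergence: here the finite-dimensionality of $V_m$, the strong convergence of the momentum $\tq_\alpha=[\trho_\alpha\tu_\alpha]_m^\ast$ in $C([0,T];V_m^\ast)$, and the Lipschitz estimate \eqref{eq:3.26} on $\MMM^{-1}$ are all essential. A secondary point demanding care is the $\tPP$-a.s., $\alpha$-uniform positivity and boundedness of the density, which must be extracted from the maximum principle rather than from the (only in expectation) energy estimates.
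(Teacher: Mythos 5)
Your proof is correct, but it takes a genuinely different route from the paper's. The paper never derives a pathwise, $\alpha$-uniform lower bound on $\trho_\alpha$; instead it works with the energy-weighted velocity $\sqrt{\trho_\alpha}\tu_\alpha$ and proves strong $L^2(0,T;L^2(D))$ convergence of $\sqrt{\trho_\alpha}\tu_\alpha$ via a Radon--Riesz argument: convergence of norms (obtained by pairing $\tq_\alpha=[\trho_\alpha\tu_\alpha]_m^\ast\to\tq$ in $C_t^\kappa V_m^\ast$ with $\tu_\alpha\weakarrow\tu$) plus weak $L^2$ convergence of $\sqrt{\trho_\alpha}\tu_\alpha$ (obtained from the strong $L^1$ convergence of $\trho_\alpha$). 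This decomposition never inverts $\MMM$ and in particular never touches the pointwise positivity of the density, which is why the same scheme is reused verbatim in Lemma~\ref{lem:m:conv rho u otimes u} and in the later limits where the lower bound on the density degenerates.

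Your route instead upgrades $\tu_\alpha$ itself to strong $L^2(0,T;V_m)$ convergence, which is a strictly stronger conclusion. The two ingredients that make it work --- the Banach--Steinhaus bound $\sup_\alpha\norm{\tu_\alpha(\omega)}_{L^2_tV_m}<\infty$ extracted from the $\tPP$-a.s.\ weak convergence, and the classical parabolic maximum principle applied pathwise to $(\trho_\alpha,\tu_\alpha)$ using the equivalence of norms on $V_m$ --- are both legitimate, but note that they exploit features peculiar to this stage of the approximation scheme: a \emph{fixed} finite-dimensional $V_m$ (so that $\norm{\rdiv\tu_\alpha}_{L^\infty_x}\lesssim_m\norm{\tu_\alpha}_{V_m}$ with an $m$-dependent constant) and an initial density that is $\tPP$-a.s.\ bounded between deterministic constants. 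Consequently your argument does not transfer to the Galerkin limit $m\to\infty$ or to the $\varepsilon,\delta\to 0$ limits, whereas the paper's does. One minor point of care: the maximum principle is applied to the Skorokhod-transferred pair $(\trho_\alpha,\tu_\alpha)$, so you should note that the identities $\trho_\alpha(0)=\trho_{0,\alpha}\in[\urho,\orho]$ and the a.e.\ form of \eqref{eq:approx_CE_alpha} carry over by equality of laws (the path space components $C_w([0,T];L^\Gamma)$ and $W^{1,q}_tL^q_x\cap L^q_tW^{2,q}_x$ make the relevant sets Borel), which is the same device as in the proof of Proposition~\ref{prop:CE_alpha}.
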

\begin{proof}
    First, we have 
    \begin{align*}
        &\norm{\sqrt{\trho}\tu \otimes \sqrt{\trho}\tu - \sqrt{\trho_\alpha}\tu_\alpha\otimes \sqrt{\trho_\alpha}\tu_\alpha}_{L_t^1 L_x^1} \\ 
        &\quad \leq \norm{\left|\sqrt{\trho}\tu - \sqrt{\trho_\alpha}\tu_\alpha \right||\sqrt{\trho}\tu|}_{L_t^1 L_x^1} + \norm{\left|\sqrt{\trho}\tu - \sqrt{\trho_\alpha}\tu_\alpha \right||\sqrt{\trho_\alpha}\tu_\alpha|}_{L_t^1 L_x^1} \\ 
        &\quad \leq \norm{\sqrt{\trho}\tu - \sqrt{\trho_\alpha}\tu_\alpha}_{L_t^2 L_x^2}\left(\norm{\sqrt{\trho}\tu}_{L_t^2 L_x^2} + \sup_{\alpha}\norm{\sqrt{\trho_\alpha}\tu_\alpha}_{L_t^2 L_x^2}\right).
    \end{align*}
    Hence, it suffices to show that 
    \begin{align}
        \label{eq:4.16}
            \sqrt{\trho_\alpha}\tu_\alpha \to  \sqrt{\trho}\tu \quad \text{in}\ L^2(0,T;L^2(D)).
    \end{align}
    To this end, it suffices to prove the following assertions $\tPP$-a.s.:
    \begin{align*}
        &\norm{\sqrt{\trho_\alpha}\tu_\alpha}_{L_t^2 L_x^2} \to \norm{\sqrt{\trho}\tu}_{L_t^2 L_x^2}\quad \text{in}\ \RR, \\ 
        &\sqrt{\trho_\alpha}\tu_\alpha \weakarrow  \sqrt{\trho}\tu \quad \text{in}\ L^2(0,T;L^2(D)).
    \end{align*}
    The first assertion follows from 
    \begin{align*}
        &\norm{\sqrt{\trho_\alpha}\tu_\alpha}_{L_t^2 L_x^2}^2 = \intT \int_D \trho_\alpha \tu_\alpha \cdot \tu_\alpha dxdt \\ 
        &\quad \to \intT \int_D \trho \tu \cdot \tu dx dt = \norm{\sqrt{\trho}\tu}_{L_t^2 L_x^2}^2\quad \tPP\text{-a.s.,}
    \end{align*}
    using Lemma \ref{lem:alpha(q = rho u)} and the convergence of $[\trho_\alpha \tu_\alpha]_m^\ast$ and $\tu_\alpha$ in $\XX_{\rho u}$ and $\XX_u$, respectively. 
    In particular, $\norm{\sqrt{\trho_\alpha}\tu_\alpha}_{L_t^2 L_x^2}$ is bounded, and thus it suffices to show that 
    \begin{align*}
        &\left(\sqrt{\trho_\alpha}\tu_\alpha,\phi\right)_{L_{t,x}^2} \to \left(\sqrt{\trho}\tu,\phi\right)_{L_{t,x}^2},
    \end{align*}
    for any $\phi\in C_c^\infty((0,T)\times D)$ $\tPP$-a.s. 
    As a consequence of the strong convergence of $\trho_\alpha$ and the weak convergence $\tu_\alpha$, we have $\tPP$-a.s.,
    \begin{align*}
        \left|\left(\sqrt{\trho_\alpha}\tu_\alpha - \sqrt{\trho}\tu,\phi\right)_{L_{t,x}^2} \right| &\leq \left|\left(\left(\sqrt{\trho_\alpha}- \sqrt{\trho}\right)\tu_\alpha,\phi\right)_{L_{t,x}^2} \right| + \left|\left(\sqrt{\trho}\left(\tu_\alpha - \tu\right),\phi\right)_{L_{t,x}^2} \right| \\ 
        &\leq \norm{\phi}_{L_{t,x}^\infty}\norm{\trho_\alpha -\trho}_{L_{t,x}^1}^{1/2}\sup_{\alpha}\norm{\tu_\alpha}_{L_{t,x}^2} + \left|\left(\left(\tu_\alpha - \tu\right),\sqrt{\trho}\phi\right)_{L_{t,x}^2} \right| \to 0.
    \end{align*}
    This completes the proof of Lemma~\ref{lem:alpha(conv rho u otimes u)}.
\end{proof}
Similarly to the proof of Proposition \ref{prop:eq_of_conti1}, by virtue of \eqref{eq:4.14}, the equation of continuity \eqref{eq:approx_CE_alpha} is satisfied by $[\trho,\tu]$ a.e. in $(0,T)\times D$, $\tPP$-a.s.
\begin{proposition}
    \label{prop:CE_alpha}
    $(\trho,\tu)$ satisfies \eqref{eq:approx_CE_alpha} a.e. in $(0,T)\times D$ $\tPP$-a.s.
\end{proposition}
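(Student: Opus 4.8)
Similarly to the proof of Proposition \ref{prop:eq_of_conti1}, the plan is to transfer the weak form of the approximate continuity equation from $(\rho_\alpha,u_\alpha)$ to its Skorokhod copy $(\trho_\alpha,\tu_\alpha)$, to pass to the limit $\alpha\to 0$ in this weak form, and finally to upgrade the resulting distributional identity to the pointwise one via the parabolic regularity of $\trho$. First, since $(\rho_\alpha,u_\alpha)$ solves \eqref{eq:approx_CE_R} and, for fixed $\phi\in C_c^\infty(\ico{0}{T})$ and $\psi\in C_c^\infty(\RR^3)$, the map
\begin{align*}
    (\rho,u)\ \longmapsto\ &\intT \pp_t\phi \int_D \rho\,\psi\,dx\,dt + \phi(0)\int_D \rho(0)\,\psi\,dx \\
    &\quad + \intT \phi \int_D \rho u\cdot\nabla\psi\,dx\,dt - \varepsilon \intT \phi \int_D \nabla\rho\cdot\nabla\psi\,dx\,dt
\end{align*}
is Borel measurable on $\XX_\rho\times\XX_u$, the equality of laws in Proposition \ref{prop:repr_alpha} together with the separability of $C_c^\infty(\ico{0}{T}\times\RR^3)$ shows that $(\trho_\alpha,\tu_\alpha)$ satisfies the same weak identity for all such $\phi,\psi$ $\tPP$-a.s., and that $\trho_\alpha$ satisfies the Neumann condition $\nabla\trho_\alpha\cdot\rn|_{\pD}=0$ (cf. Remark \ref{rem:NeumannBC}).

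Next I would pass to the limit $\alpha\to 0$ term by term using the convergences \eqref{eq:4.14}. The linear terms are immediate: the convergence $\trho_\alpha\to\trho$ in $L^p(0,T;W^{1,p}(D))$ takes care of $\int\rho\,\psi$ and of $\varepsilon\int\nabla\rho\cdot\nabla\psi$, while $\trho_{0,\alpha}\to\trho_0$ in $C(\oD)$ takes care of the initial term. For the flux term $\intT\phi\int_D\trho_\alpha\tu_\alpha\cdot\nabla\psi\,dx\,dt$ I would use that $\trho_\alpha\to\trho$ strongly in $L^p((0,T)\times D)$ for some $p>2$ and that $\tu_\alpha\weakarrow\tu$ in $L^2(0,T;V_m)$, hence weakly in $L^2((0,T)\times D)$ since $V_m\subset C^2(\oD)$; because $\tfrac1p+\tfrac12<1$, the product of a strongly convergent and a weakly convergent sequence converges weakly in some $L^r$ with $r>1$, which is enough when tested against the bounded factor $\phi\nabla\psi$. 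This yields the weak continuity equation for $(\trho,\tu)$ with initial datum $\trho_0$, and the Neumann condition for $\trho$ passes to the limit by the weak convergence in $L^q(0,T;W^{2,q}(D))$.

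Finally I would upgrade the distributional identity to \eqref{eq:approx_CE_alpha}. By equality of laws, lower semicontinuity, and the uniform bound \eqref{eq:4.12}, the limit satisfies $\trho\in W^{1,q}(0,T;L^q(D))\cap L^q(0,T;W^{2,q}(D))$ for some $q>1$; moreover $\rdiv(\trho\tu)=\nabla\trho\cdot\tu+\trho\,\rdiv\tu$ lies in some $L^s((0,T)\times D)$, $s>1$, because $\trho\in L^p(0,T;W^{1,p}(D))$ and $\tu\in L^2(0,T;V_m)\hookrightarrow L^2(0,T;C^1(\oD))$. Hence all three terms $\pp_t\trho$, $\rdiv(\trho\tu)$ and $\varepsilon\Delta\trho$ belong to $L^1((0,T)\times D)$, so the weak identity forces the pointwise equality a.e. in $(0,T)\times D$ $\tPP$-a.s.; the interpretation of $\tu$ as a random distribution and the existence of a progressively measurable representative are guaranteed by Lemma \ref{lem:prog_m'ble_r.d.}, as recorded in Remark \ref{rem:r.d.}. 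I expect the main obstacle to be the passage to the limit in the nonlinear flux term under only \emph{weak} convergence of the velocities; it is resolved precisely by the strong convergence of the densities together with the integrability margin $p>2$ supplied by the parabolic maximal regularity estimate \eqref{eq:4.11}.
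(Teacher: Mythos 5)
Your proposal is correct and follows essentially the same route the paper has in mind: the paper's "proof" is a one-sentence pointer to Proposition \ref{prop:eq_of_conti1}, whose mechanism is exactly the three-step scheme you lay out — transfer the weak form to the Skorokhod copies via equality of laws and separability of the test-function space, pass to the limit $\alpha\to 0$ term by term using the convergences in \eqref{eq:4.14} (strong density convergence absorbing the weak velocity convergence in the flux term), and upgrade the distributional identity to a pointwise one by the $W^{1,q}_tL^q_x\cap L^q_tW^{2,q}_x$ regularity of $\trho$ inherited from \eqref{eq:4.12}. Your observation that the Neumann condition passes to the limit via the weak convergence in $L^q(0,T;W^{2,q}(D))$ matches the paper's Remark \ref{rem:NeumannBC}, and the identification of $\tq=\trho\tu$ needed for the flux term is precisely what Lemma \ref{lem:alpha(q = rho u)} supplies. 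No gaps.
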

Next, we perform the limit $\alpha\to 0$ in the momentum equation \eqref{eq:approx_CE_R}.
\begin{proposition}
    \label{prop:IME_alpha}
    $(\trho,\tu,\tW)$ satisfies \eqref{eq:approx_ME_alpha} for all $\tau\in [0,T]$ and all $\bphi\in V_m$ such that $\bphi|_{\pD} = 0$ $\tPP$-a.s.
\end{proposition}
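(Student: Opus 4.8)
The plan is to obtain Proposition~\ref{prop:IME_alpha} by passing to the limit $\alpha\to 0$ in the $R$-level momentum equation \eqref{eq:approx_ME_R}, restricted to test functions $\bphi\in V_m$ with $\bphi|_{\pD}=0$. The key point is that for such $\bphi$ the barrier term $\int_{\pD}g\nabla j_\alpha(\tu_\alpha)\cdot\bphi\,d\Gamma$ vanishes identically, so the equation is $\alpha$-independent in form and its weak-in-time version is the identity \eqref{eq:approx_ME_alpha}. Concretely, I would first transfer the equation from $(\rho_\alpha,u_\alpha,W)$ to $(\trho_\alpha,\tu_\alpha,\tW_\alpha)$: using equality of laws on $\XX$ (Proposition~\ref{prop:repr_alpha}) together with non-anticipativity of the canonical filtration of $(\trho_\alpha,\tu_\alpha,\tW_\alpha)$ with respect to $\tW_\alpha$ (Lemmas~\ref{lem:sufficient_cond_of_Wiener_by_law}, \ref{lem:sufficient_cond_of_(G_t)-Wiener }, \ref{lem:stability_of_nonanti}; cf.\ Remark~\ref{rem:r.d.}), and approximating the stochastic integral by Riemann sums, which are measurable functionals of the path, exactly as in Proposition~\ref{prop:ME1}; integrating against $-\pp_t\phi$ over $[0,T]$ and invoking the stochastic Fubini theorem then produces the $\phi$-weak identity. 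All the uniform bounds of Section~\ref{sec:alpha:Uniform estimates} are inherited by $\trho_\alpha,\tu_\alpha$.

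Next I would pass to the limit term by term, using the a.s.\ convergences in \eqref{eq:4.14}. The $\pp_t$-term and the initial term are immediate once one writes $\int_D\trho_\alpha\tu_\alpha\cdot\bphi\,dx=\evm{\tq_\alpha}{\bphi}$ and uses $\tq_\alpha\to\tq=[\trho\tu]_m^\ast$ in $C^\kappa([0,T];V_m^\ast)$ (Lemma~\ref{lem:alpha(q = rho u)}) and $\trho_{0,\alpha}\to\trho_0$ in $C(\oD)$, $\tu_{0,\alpha}\to\tu_0$ in $V_m$. The viscous term $\int\SSS(\nabla\tu_\alpha):\nabla\bphi$ and the term $\int\varepsilon\trho_\alpha\tu_\alpha\cdot\Delta\bphi$ are linear in $\tu_\alpha$, respectively $\trho_\alpha\tu_\alpha$, and $\nabla\bphi,\Delta\bphi\in C(\oD)$ because $V_m\subset C^2(\oD)$, so they pass via $\tu_\alpha\weakarrow\tu$ in $L^2(0,T;V_m)$ and $\trho_\alpha\tu_\alpha\weakarrow\trho\tu$ in $L^1((0,T)\times D)$ (from the proof of Lemma~\ref{lem:alpha(q = rho u)}); the convective term is exactly Lemma~\ref{lem:alpha(conv rho u otimes u)}. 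For the pressure term I would combine the a.e.\ convergence $\trho_\alpha\to\trho$ (coming from the strong convergence in $L^p(0,T;W^{1,p}(D))$) with the uniform bound $\trho_\alpha\in L^{5\Gamma/3}((0,T)\times D)$, obtained by interpolating $\trho_\alpha\in L^\infty(0,T;L^\Gamma(D))$ against the $\varepsilon$-weighted gradient estimate in \eqref{eq:4.8} (which gives $\trho_\alpha^{\Gamma/2}\in L^2(0,T;L^6(D))$, hence $\trho_\alpha\in L^\Gamma(0,T;L^{3\Gamma}(D))$); since $p_\delta(\rho)\lesssim 1+\rho^\Gamma$ and $5\Gamma/3>\Gamma$, Vitali's theorem yields $p_\delta(\trho_\alpha)\to p_\delta(\trho)$ in $L^1((0,T)\times D)$, $\tPP$-a.s.

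The step I expect to be the main obstacle is the stochastic integral, because $\tu_\alpha$ converges to $\tu$ only weakly while $\GG_\varepsilon$ depends nonlinearly on $\tu_\alpha$ through the velocity cutoff. I would first upgrade the weak convergence of $\tu_\alpha$ to pointwise convergence: by Proposition~\ref{prop:CE_alpha} the limit $\trho$ solves the parabolic continuity equation with $\trho(0)=\trho_0\geq\urho>0$, so the maximum principle (as in \eqref{eq:3.75}) gives $\trho>0$ on $[0,T]\times\oD$ $\tPP$-a.s.; writing $\tu_\alpha=(\sqrt{\trho_\alpha}\,\tu_\alpha)/\sqrt{\trho_\alpha}$ and combining the strong $L^2((0,T)\times D)$ convergence $\sqrt{\trho_\alpha}\,\tu_\alpha\to\sqrt{\trho}\,\tu$ (established inside the proof of Lemma~\ref{lem:alpha(conv rho u otimes u)}) with $\trho_\alpha\to\trho$ a.e.\ gives $\tu_\alpha\to\tu$ a.e.\ in $(0,T)\times D$, and as a byproduct $\trho_\alpha\tu_\alpha\to\trho\tu$ a.e. Then, writing $G_{k,\varepsilon}(\rho,q)=\chi(\tfrac\varepsilon\rho-1)\chi(|q/\rho|-\tfrac1\varepsilon)G_k(\rho,q)$, continuity of $G_k$ and bounded convergence for the two cutoffs give $G_{k,\varepsilon}(\trho_\alpha,\trho_\alpha\tu_\alpha)\to G_{k,\varepsilon}(\trho,\trho\tu)$ a.e.; moreover the factor $\chi(|\tu_\alpha|-\tfrac1\varepsilon)$ confines the support to $|\tu_\alpha|\leq\tfrac1\varepsilon+1$, so $|G_{k,\varepsilon}(\trho_\alpha,\trho_\alpha\tu_\alpha)|\lesssim_\varepsilon g_k\trho_\alpha$, which (since $\Gamma>1$ and using the uniform moment bounds of Section~\ref{sec:alpha:Uniform estimates}) is uniformly integrable on $\tOmega\times(0,T)\times D$. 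Vitali's theorem and $\sum_k f_{k,\varepsilon}^2<\infty$ then give $[\GG_\varepsilon(\trho_\alpha,\trho_\alpha\tu_\alpha)]_m^\ast\to[\GG_\varepsilon(\trho,\trho\tu)]_m^\ast$ in $L^2(\tOmega\times(0,T);L_2(\fU,V_m^\ast))$, so Lemma~\ref{lem:conv_of_stoch_int} (together with $\tW_\alpha\to\tW$ in $C([0,T];\fU_0)$) lets me pass to the limit in the stochastic term and conclude. The positivity of $\trho$ and the $\varepsilon$-cutoff built into $\GG_\varepsilon$ are precisely what compensate for the absence of strong compactness of $\tu_\alpha$ at this approximation level.
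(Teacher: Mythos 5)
Your overall plan matches the paper: transfer the $R$-level momentum equation by equality of laws, multiply by $\phi\in C_c^\infty(\ico{0}{T})$ and use It\^o/Fubini to obtain the weak-in-time identity \eqref{eq:4.17}, then pass to the limit term by term via \eqref{eq:4.14}, Lemma~\ref{lem:alpha(q = rho u)}, Lemma~\ref{lem:alpha(conv rho u otimes u)} and a uniform-integrability bound for $p_\delta(\trho_\alpha)$ for the deterministic terms, and Lemma~\ref{lem:conv_of_stoch_int} for the stochastic integral. The place where you take a genuinely different route is the a.e.\ convergence of the noise coefficient, which is the crux. The paper decomposes $\tOmega\times(0,T)\times D$ into $\OO_\kappa=\{\trho\geq\kappa\}$ and its complement (with $\kappa<\varepsilon$ small), derives $F_{k,\varepsilon}(\trho_\alpha,\tu_\alpha)\to F_{k,\varepsilon}(\trho,\tu)$ a.e.\ on $\OO_\kappa$ by the quotient trick, and lets the built-in cutoff $\chi(\varepsilon/\rho-1)$ force $F_{k,\varepsilon}\to 0$ on $\OO_\kappa^c$; you instead attempt a global quotient by first proving $\trho>0$ on $[0,T]\times\oD$ via a maximum principle. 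Two caveats. First, \eqref{eq:3.75} is the classical maximum principle for the $C_tC^{2+\nu}_x\cap C^1_tC^\nu_x$ densities of the $R$-layer, while $\trho$ here only solves \eqref{eq:approx_CE_alpha} a.e.\ with regularity $W^{1,q}_tL^q_x\cap L^q_tW^{2,q}_x$ ($q>1$); extending the lower bound to this weak class needs a Stampacchia/energy comparison argument that neither the paper nor your proposal supplies. Second, and more to the point, strict positivity of $\trho$ is not actually needed: since $G_{k,\varepsilon}(\rho,\rho v)=\rho F_{k,\varepsilon}(\rho,v)$ carries a factor of $\rho$ and $\|F_{k,\varepsilon}\|_{L^\infty}\leq f_{k,\varepsilon}$, on $\{\trho=0\}$ one has trivially $|G_{k,\varepsilon}(\trho_\alpha,\trho_\alpha\tu_\alpha)|\leq f_{k,\varepsilon}\trho_\alpha\to 0=G_{k,\varepsilon}(\trho,\trho\tu)$ a.e., while on $\{\trho>0\}$ your division argument already works; this gives the desired a.e.\ convergence with no maximum principle, and is in substance what the paper's $\OO_\kappa$-decomposition accomplishes. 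With that adjustment, your uniform-integrability bound $|G_{k,\varepsilon}(\trho_\alpha,\trho_\alpha\tu_\alpha)|\lesssim_\varepsilon g_k\trho_\alpha$ together with the moments from \eqref{eq:4.8} and Vitali recovers the paper's $L^2(\tOmega\times(0,T);L_2(\fU,V_m^\ast))$ convergence \eqref{eq:4.20}, and the conclusion follows from Lemma~\ref{lem:conv_of_stoch_int} as you say.
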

\begin{proof}
    Similarly to Proposition \ref{prop:ME1}, in view of the equality of laws from Proposition \ref{prop:repr_alpha}, \eqref{eq:approx_ME_R} is satisfied on the new probability space by $(\trho_\alpha,\tu_\alpha,\tW_\alpha)$. 
    By multiplying $\phi\in C_c^\infty(\ico{0}{T})$ and by It\^{o}'s formula, $(\trho_\alpha,\tu_\alpha,\tW_\alpha)$ satisfies that 
    \begin{align}
        -\intT \pp_t\phi \int_D \trho_\alpha \tu_\alpha \cdot \bphi dx dt &= \phi(0)\int_D \trho_{0,\alpha} \tu_{0,\alpha} \cdot \bphi dx  + \intT\phi \int_D \left[(\trho_\alpha \tu_\alpha \otimes \tu_\alpha):\nabla\bphi + p_\delta (\trho_\alpha)\rdiv \bphi\right]dxdt  \notag \\ 
        &\quad - \intT \phi\int_D \left[\SSS(\nabla \tu_\alpha):\nabla\bphi -\varepsilon \trho_\alpha \tu_\alpha\cdot \Delta \bphi \right]dxdt - \inttau \phi \int_{\pD}g\nabla j_\alpha(\tu_\alpha)\cdot \bphi d\Gamma dt \notag \\ 
        &\quad + \intT \phi\int_D \GG_\varepsilon(\trho_\alpha,\trho_\alpha \tu_\alpha) \cdot \bphi dxd\tW_\alpha, \label{eq:4.17}
    \end{align}
    for all $\phi\in C_c^\infty(\ico{0}{T})$ and all $\bphi\in V_m$ $\tPP$-a.s.
    Using \eqref{eq:4.14}, the limit passage in the deterministic terms excluding the boundary integral follows immediately for all $\phi\in C_c^\infty(\ico{0}{T})$ and all $\bphi\in V_m$ $\tPP$-a.s. 
    Let us now discuss in detail the convergence of the stochastic integral. If
    \begin{align}
        \label{eq:4.18}
        F_{k,\varepsilon}(\trho_\alpha,\tu_\alpha) \to F_{k,\varepsilon}(\trho,\tu)\quad \text{in}\  L^2(\tOmega\times (0,T)\times D),
    \end{align}
    as $\alpha\to 0$ for any $k\in\NN$, then, from the first one in \eqref{eq:4.8} (which continue to hold on the new probability space due to Proposition \ref{prop:repr_alpha}),
    \begin{align}
        \label{eq:4.19}
        \trho_\alpha F_{k,\varepsilon}(\trho_\alpha,\tu_\alpha) \to \trho F_{k,\varepsilon}(\trho,\tu)\quad \text{in}\  L^1(\tOmega\times (0,T)\times D).    
    \end{align}
    Thus, we first show \eqref{eq:4.18}.
    We consider the set 
    \begin{align*}
        &\OO_\kappa = \left\{ (\omega,t,x)\in \tOmega\times (0,T)\times D:\trho\geq \kappa \right\},\quad \kappa>0.
    \end{align*}
    Using the strong convergence of $\trho_\alpha$ and $\sqrt{\trho_\alpha}\tu_\alpha$ from \eqref{eq:4.14} and \eqref{eq:4.16}, respectively, the estimates \eqref{eq:4.8} and \eqref{eq:4.9}, and Vitali's convergence theorem, we have (up to a subsequence) 
    $$
        \trho_\alpha\to \trho,\quad \sqrt{\trho_\alpha}\tu_\alpha\to \sqrt{\trho}\tu\quad \text{a.e. in } \tOmega\times (0,T) \times D,
    $$
    and thus, we have 
    $$
        F_{k,\varepsilon}(\trho_\alpha,\tu_\alpha) = F_{k,\varepsilon}\left(\trho_\alpha,\frac{\sqrt{\trho_\alpha}\tu_\alpha}{\sqrt{\trho_\alpha}}\right) \to F_{k,\varepsilon}\left(\trho,\frac{\sqrt{\trho}\tu}{\sqrt{\trho}}\right) = F_{k,\varepsilon}(\trho,\tu)\quad  \text{a.e. in } \OO_\kappa.
    $$
    On the other hand, by the definition of $F_{k,\varepsilon}$, for $\kappa<\varepsilon$, $F_{k,\varepsilon}(\trho,\tu) = 0$ a.e. in $\OO_\kappa^c$, and 
    $$
        F_{k,\varepsilon}(\trho_\alpha,\tu_\alpha) \to 0 \quad \text{a.e. in } \OO_\kappa^c.
    $$
    Therefore, applying the dominated convergence theorem yields the desired convergence \eqref{eq:4.18}.
    
    Next, noting that 
    \begin{align*}
        &\tEE\left[\intT \int_D |\trho_\alpha F_{k,\varepsilon}(\trho_\alpha,\tu_\alpha)|^p dx dt\right] \leq f_{k,\varepsilon}^p \tEE\left[\intT \int_D |\trho_\alpha |^p dx dt\right] ,\quad p\geq 1,
    \end{align*}
    and \eqref{eq:4.8}, combining \eqref{eq:4.19} with this inequality via interpolation yields
    \begin{align}
        \trho_\alpha F_{k,\varepsilon}(\trho_\alpha,\tu_\alpha) \to \trho F_{k,\varepsilon}(\trho,\tu)\quad \text{in}\  L^2(\tOmega\times (0,T)\times D),    \label{eq:4.20}
    \end{align}
    and thus, we obtain that  
    $$
        [\trho_\alpha F_{k,\varepsilon}(\trho_\alpha,\tu_\alpha)]_m^\ast \to [\trho F_{k,\varepsilon}(\trho,\tu)]_m^\ast\quad \text{in}\  L^2(\tOmega\times (0,T);V_m^\ast),    
    $$
    for any $k\in \NN$. On the other hand, by \eqref{eq:3.10} and \eqref{eq:4.8}, we have uniformly in $\alpha$
    \begin{align*}
        \tEE\left[\intT \norm{\left[\trho_\alpha \FF_\varepsilon(\trho_\alpha,\tu_\alpha)\right]_m^\ast}_{L_2(\fU,V_m^\ast)}^2 dt\right] &= \sum_{k=1}^\infty\tEE\left[\intT \norm{\left[\trho_\alpha F_{k,\varepsilon}(\trho_\alpha,\tu_\alpha)\right]_m^\ast}_{V_m^\ast}^2 dt\right] \\ 
        &\lesssim \tEE\left[\intT \norm{\trho_\alpha}_{L_x^2}^2 dt\right] \\ 
        &\lesssim c(r).
    \end{align*}
    Combining this with \eqref{eq:4.20} yields
    $$
        \left[\trho_\alpha \FF_\varepsilon(\trho_\alpha,\tu_\alpha)\right]_m^\ast \to \left[\trho \FF_\varepsilon(\trho,\tu)\right]_m^\ast \quad \text{in } L^2(\tOmega\times (0,T);L_2(\fU,V_m^\ast)).
    $$
    Therefore, combining this with the convergence of $\tW_\alpha$ from Proposition \ref{prop:repr_alpha}, we may apply Lemma \ref{lem:conv_of_stoch_int} to pass to the limit in the stochastic integral and hence complete the proof.
\end{proof} 
\begin{remark}
    \label{rem:alpha(conv of each term of ME)}
    The convergence to each term in the momentum equation \eqref{eq:approx_ME_alpha} actually holds for any $\bphi\in V_m$, but the equality holds only for $\bphi\in V_m$ such that $\bphi|_{\pD} =0$.
\end{remark}
As the next step, we take the limit $\alpha\to 0$ in the stochastic integral appearing in the momentum and energy inequality.

\begin{proposition}
    \label{prop:alpha(conv of stoch int in MEI)}
    We have 
    \begin{align*}
        &\inttau \int_D \GG_\varepsilon(\trho_\alpha,\trho_\alpha \tu_\alpha)\cdot \tu_\alpha dx d\tW_\alpha \to \inttau \int_D \GG_\varepsilon(\trho,\trho \tu) \cdot \tu dx d\tW\quad \text{in } L^2(0,T)\quad \text{in probability,} \\ 
        &\frac 1 2 \sum_{k=1}^{\infty} \inttau \int_D \trho_\alpha \left|F_{k,\varepsilon}(\trho_\alpha,\tu_\alpha) \right|^2 dx dt \to \frac 1 2 \sum_{k=1}^{\infty} \inttau \int_D \trho \left|F_{k,\varepsilon}(\trho,\tu) \right|^2 dx dt \quad \text{for all }\tau\in [0,T] \ \tPP\text{-a.s.}
    \end{align*}
\end{proposition}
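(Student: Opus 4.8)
The plan is to handle the two convergences separately, reducing both to strong convergences already in hand. From Proposition~\ref{prop:repr_alpha} and Lemmas~\ref{lem:alpha(q = rho u)}--\ref{lem:alpha(conv rho u otimes u)} we have, $\tPP$-a.s.\ along the Skorokhod subsequence, $\trho_\alpha\to\trho$ in $L^p(0,T;W^{1,p}(D))$ and in $C_w([0,T];L^\Gamma(D))$, $\sqrt{\trho_\alpha}\,\tu_\alpha\to\sqrt{\trho}\,\tu$ in $L^2(0,T;L^2(D))$ (cf.~\eqref{eq:4.16}), and, after extracting a further subsequence and arguing exactly as in the proof of Proposition~\ref{prop:IME_alpha} (the $\OO_\kappa$-splitting), $\trho_\alpha\to\trho$ and $\sqrt{\trho_\alpha}\,\tu_\alpha\to\sqrt{\trho}\,\tu$ a.e.\ in $\tOmega\times(0,T)\times D$; hence $\sqrt{\trho_\alpha}\,F_{k,\varepsilon}(\trho_\alpha,\tu_\alpha)\to\sqrt{\trho}\,F_{k,\varepsilon}(\trho,\tu)$ a.e., and since $\trho_\alpha\big|F_{k,\varepsilon}(\trho_\alpha,\tu_\alpha)\big|^2\le f_{k,\varepsilon}^2\,\trho_\alpha$ with $\{\trho_\alpha\}$ uniformly integrable on $\tOmega\times(0,T)\times D$ (by \eqref{eq:4.8}, as $\Gamma>1$), Vitali's theorem gives $\sqrt{\trho_\alpha}\,F_{k,\varepsilon}(\trho_\alpha,\tu_\alpha)\to\sqrt{\trho}\,F_{k,\varepsilon}(\trho,\tu)$ in $L^2(\tOmega\times(0,T)\times D)$ for every $k$. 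Because $\tu_\alpha$ converges only \emph{weakly} in $L^2(0,T;V_m)$, everything below is organized around the factorization $\int_D G_{k,\varepsilon}(\trho_\alpha,\trho_\alpha\tu_\alpha)\cdot\tu_\alpha\,dx=\int_D\big(\sqrt{\trho_\alpha}\,F_{k,\varepsilon}(\trho_\alpha,\tu_\alpha)\big)\cdot\big(\sqrt{\trho_\alpha}\,\tu_\alpha\big)\,dx$, using $G_{k,\varepsilon}(\trho_\alpha,\trho_\alpha\tu_\alpha)=\trho_\alpha F_{k,\varepsilon}(\trho_\alpha,\tu_\alpha)$, rather than around the velocity itself.

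For the quadratic variation term I would first note, by dominated convergence for the series $\sum_k$ (the $L^1(\tOmega\times(0,T)\times D)$-norms $\big\|\,|\sqrt{\trho_\alpha}F_{k,\varepsilon}(\trho_\alpha,\tu_\alpha)|^2-|\sqrt{\trho}F_{k,\varepsilon}(\trho,\tu)|^2\big\|$ being $\lesssim f_{k,\varepsilon}^2$ uniformly in $\alpha$ by \eqref{eq:4.8}, and $\sum_k f_{k,\varepsilon}^2<\infty$), that the preceding step upgrades to $\trho_\alpha\big|\FF_\varepsilon(\trho_\alpha,\tu_\alpha)\big|^2=\sum_k\big|\sqrt{\trho_\alpha}F_{k,\varepsilon}(\trho_\alpha,\tu_\alpha)\big|^2\to\trho\big|\FF_\varepsilon(\trho,\tu)\big|^2$ in $L^1(\tOmega\times(0,T)\times D)$. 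Consequently $\tEE\big[\sup_{\tau\in[0,T]}\big|\tfrac12\sum_k\int_0^\tau\!\int_D\trho_\alpha|F_{k,\varepsilon}(\trho_\alpha,\tu_\alpha)|^2-\tfrac12\sum_k\int_0^\tau\!\int_D\trho|F_{k,\varepsilon}(\trho,\tu)|^2\big|\big]\le\tfrac12\,\tEE\big[\int_0^T\!\int_D\big|\trho_\alpha|\FF_\varepsilon(\trho_\alpha,\tu_\alpha)|^2-\trho|\FF_\varepsilon(\trho,\tu)|^2\big|\big]\to0$, so the supremum over $\tau$ converges to $0$ in $L^1(\tOmega)$, hence (along a further subsequence, which I relabel) $\tPP$-a.s.; as the limit is independent of the subsequence this is exactly the second assertion, for all $\tau\in[0,T]$, $\tPP$-a.s.

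For the stochastic integral I would invoke Lemma~\ref{lem:conv_of_stoch_int} with $H=\RR$: its hypotheses that $\tW_\alpha\to\tW$ in $C([0,T];\fU_0)$ in probability, that $\tW$ is a cylindrical $(\tFFF_t)$-Wiener process, and that all integrands are progressively measurable and stochastically integrable (for the limit one via $\big|\int_D\trho F_{k,\varepsilon}(\trho,\tu)\cdot\tu\,dx\big|\le f_{k,\varepsilon}\|\trho\|_{L^2(D)}\|\tu\|_{V_m}$, as in the remark following Definition~\ref{Def:1.1}) are supplied by \eqref{eq:4.14}, Remark~\ref{rem:r.d.} and the setup of Section~\ref{sec:alpha(Asymptotic limit)}. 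It then suffices to prove that the $L_2(\fU,\RR)$-valued integrands converge in $L^2(0,T;L_2(\fU,\RR))$ in probability. Writing $\sqrt{\trho_\alpha}F_{k,\varepsilon}(\trho_\alpha,\tu_\alpha)\cdot\sqrt{\trho_\alpha}\tu_\alpha-\sqrt{\trho}F_{k,\varepsilon}(\trho,\tu)\cdot\sqrt{\trho}\tu=\big(\sqrt{\trho_\alpha}F_{k,\varepsilon}(\trho_\alpha,\tu_\alpha)-\sqrt{\trho}F_{k,\varepsilon}(\trho,\tu)\big)\cdot\sqrt{\trho_\alpha}\tu_\alpha+\sqrt{\trho}F_{k,\varepsilon}(\trho,\tu)\cdot\big(\sqrt{\trho_\alpha}\tu_\alpha-\sqrt{\trho}\tu\big)$, applying Cauchy--Schwarz in $x$, summing in $k$ and integrating in $t$: the second term contributes $\le\big(\sum_k f_{k,\varepsilon}^2\big)\|\trho\|_{L_t^\infty L_x^1}\|\sqrt{\trho_\alpha}\tu_\alpha-\sqrt{\trho}\tu\|_{L^2(\tOmega\times(0,T)\times D)}^2\to0$ $\tPP$-a.s.; the first term contributes $\le\|\trho_\alpha|\tu_\alpha|^2\|_{L_t^\infty L_x^1}\sum_k\int_0^T\|\sqrt{\trho_\alpha}F_{k,\varepsilon}(\trho_\alpha,\tu_\alpha)-\sqrt{\trho}F_{k,\varepsilon}(\trho,\tu)\|_{L_x^2}^2\,dt$, where the series tends to $0$ in $L^1(\tOmega)$ (dominated convergence for the series, using the first paragraph and $\sum_k f_{k,\varepsilon}^2<\infty$) hence in probability, while the prefactor is bounded in $L^r(\tOmega)$ by \eqref{eq:4.9}, hence tight; a tight sequence times a sequence tending to $0$ in probability tends to $0$ in probability. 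This verifies the hypothesis of Lemma~\ref{lem:conv_of_stoch_int}, yielding the first assertion.

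The main obstacle is this last step: since the velocity converges only weakly, one cannot pass to the limit in $\int_D G_{k,\varepsilon}(\trho_\alpha,\trho_\alpha\tu_\alpha)\cdot\tu_\alpha\,dx$ directly, and the natural prefactor $\|\trho_\alpha|\tu_\alpha|^2\|_{L_t^\infty L_x^1}$ is controlled only in expectation, not $\tPP$-a.s.\ uniformly in $\alpha$; this is precisely why the conclusion for the stochastic integral is stated in probability and why one is forced into the tight-times-(null-in-probability) argument rather than a purely pathwise estimate. Everything else is routine bookkeeping with the growth bound \eqref{eq:3.10} and the uniform estimates \eqref{eq:4.6}--\eqref{eq:4.9}.
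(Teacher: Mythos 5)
Your proof is correct and follows essentially the same route as the paper: the same factorization $\int_D G_{k,\varepsilon}(\trho_\alpha,\trho_\alpha\tu_\alpha)\cdot\tu_\alpha\,dx=\int_D\bigl(\sqrt{\trho_\alpha}\,F_{k,\varepsilon}\bigr)\cdot\bigl(\sqrt{\trho_\alpha}\,\tu_\alpha\bigr)\,dx$, the same core ingredients (a.e.\ convergence of $\trho_\alpha$ and $\sqrt{\trho_\alpha}\tu_\alpha$, the $L^\infty$ bound $|F_{k,\varepsilon}|\le f_{k,\varepsilon}$ with $\sum_k f_{k,\varepsilon}^2<\infty$, Vitali/dominated convergence for the $k$-sum, and Lemma~\ref{lem:conv_of_stoch_int}). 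The only organizational difference is that you work with $L^2(\tOmega\times(0,T)\times D)$ convergence of $\sqrt{\trho_\alpha}F_{k,\varepsilon}$ and close via a tight-times-null-in-probability argument, whereas the paper first upgrades to $\tPP$-a.s.\ $L^q$ convergence of $F_{k,\varepsilon}$ and then sums; both reach the in-probability hypothesis Lemma~\ref{lem:conv_of_stoch_int} requires, and your variant is slightly more careful about the fact that the energy prefactor $\|\trho_\alpha|\tu_\alpha|^2\|_{L_t^\infty L_x^1}$ is controlled only in expectation.
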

\begin{proof}
    First, note that the relationship between $\GG_\varepsilon= (G_{k,\varepsilon})_{k\in \NN}$ and $\FF_\varepsilon=(F_{k,\varepsilon})_{k\in\NN}$ is given in \eqref{eq:3.8}.
    By interpolation, we observe that \eqref{eq:4.18} can be strengthened to 
    $$
        F_{k,\varepsilon}(\trho_\alpha,\tu_\alpha) \to F_{k,\varepsilon}(\trho,\tu)\quad \text{in}\  L^q((0,T)\times D),  
    $$
    $\tPP$-a.s., for all $q\in \ico{0}{\infty}$ and all $k\in \NN$. With this, \eqref{eq:3.10}, the estimates \eqref{eq:4.8}--\eqref{eq:4.9}, and the strong convergence of $\trho_\alpha$ and $\sqrt{\trho_\alpha}\tu_\alpha$ at hand, we obtain that 
    \begin{align*}
        &\int_D \trho_\alpha F_{k,\varepsilon}(\trho_\alpha,\tu_\alpha)\cdot \tu_\alpha dx \to \int_D \trho F_{k,\varepsilon}(\trho,\tu)\cdot \tu dx \quad \text{in } L^2(0,T)  
    \end{align*}
    $\tPP$-a.s., for all $k\in \NN$. On the other hand, from \eqref{eq:3.10} and \eqref{eq:4.9}, 
    \begin{align*}
        &\tEE\left[\intT \norm{\int_D \trho_\alpha \FF_\varepsilon(\trho_\alpha,\tu_\alpha)\cdot \tu_\alpha dx}_{L_2(\fU,\RR)}^2 dt \right] \\ 
        &\quad = \tEE\left[\intT \sum_{k=1}^{\infty}\left(\int_D \trho_\alpha F_{k,\varepsilon}(\trho_\alpha,\tu_\alpha)\cdot \tu_\alpha dx \right)^2 dt\right] \\ 
        &\quad \lesssim \sum_{k=1}^{\infty} f_{k,\varepsilon}^2 \tEE\left[\intT \left(\int_D \left|\trho_\alpha \tu_\alpha \right|^q dx\right)^2\right] \lesssim \sum_{k=1}^{\infty}f_{k,\varepsilon}^2,
    \end{align*}
    for all $q\in \ioc{1}{\tfrac{2\Gamma}{\Gamma +1}}$. This implies 
    \begin{align*}
        &\int_D \trho_\alpha \FF_\varepsilon(\trho_\alpha,\tu_\alpha)\cdot \tu_\alpha dx \to \int_D \trho \FF_\varepsilon(\trho,\tu)\cdot \tu dx  \quad \text{in } L^2(0,T;L_2(\fU,\RR))
    \end{align*}
    $\tPP$-a.s. Combining this with the convergence of $\tW_\alpha$ from Proposition \ref{prop:repr_alpha}, we apply Lemma \ref{lem:conv_of_stoch_int} and the first assertion follows. 
    Similarly, using \eqref{eq:4.18} and \eqref{eq:4.20}, it can be verified that the second assertion holds.
\end{proof}
\begin{proposition}
    \label{prop:alpha:MEI}
    The momentum and energy inequality \eqref{eq:approx_MEI_alpha} is satisfied by $(\trho,\tu,\tW)$ for all $\tau\in [0,T]$, all $\phi\in C_c^\infty(\ico{0}{\tau})$, and all $\bphi\in V_m$ $\tPP$-a.s.
\end{proposition}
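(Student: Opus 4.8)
The plan is to establish the inequality first at the level of the convex approximation and then let $\alpha\to 0$. As in the proofs of Propositions~\ref{prop:ME1} and \ref{prop:IME_alpha}, the equality of laws from Proposition~\ref{prop:repr_alpha} transfers to $(\trho_\alpha,\tu_\alpha,\tW_\alpha)$ on the new stochastic basis both the approximate momentum equation \eqref{eq:approx_ME_R} for \emph{all} $\bphi\in V_m$ (retaining the barrier term $\int_{\pD}g\nabla j_\alpha(\tu_\alpha)\cdot\bphi\,d\Gamma$) and the energy equality \eqref{eq:EB_R}; here the stochastic integrals are identified as measurable functionals of the path-space variables, so the transfer is routine. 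Fix $\tau\in[0,T]$, $\phi\in C_c^\infty(\ico{0}{\tau})$ and $\bphi\in V_m$. Integrating by parts in time in $t\mapsto\phi(t)\int_D\trho_\alpha\tu_\alpha\cdot\bphi\,dx$ turns \eqref{eq:approx_ME_R} into its $\phi$-tested form; subtracting this from \eqref{eq:EB_R}, every interior convective, pressure, viscous, artificial-viscosity and stochastic contribution cancels, and, writing $\mathcal{J}_\alpha$ for the left-hand side of \eqref{eq:approx_MEI_alpha} evaluated at $(\trho_\alpha,\tu_\alpha,\tW_\alpha)$, one is left with
\begin{align*}
    \mathcal{J}_\alpha
    &= \tfrac12\sum_{k=1}^{\infty}\inttau\int_D\Big(\trho_\alpha\,|F_{k,\varepsilon}(\trho_\alpha,\tu_\alpha)|^2 - G_{k,\varepsilon}(\trho_\alpha,\trho_\alpha\tu_\alpha)\cdot\MMM^{-1}[\trho_\alpha]\big([G_{k,\varepsilon}(\trho_\alpha,\trho_\alpha\tu_\alpha)]_m^\ast\big)\Big)dx\,dt\\
    &\quad + \inttau\int_{\pD}g\Big(\nabla j_\alpha(\tu_\alpha)\cdot\tu_\alpha - \nabla j_\alpha(\tu_\alpha)\cdot(\phi\bphi) + |\phi\bphi| - |\tu_\alpha|\Big)d\Gamma\,dt .
\end{align*}
By Remark~\ref{rem:quad_var_est} the first line is nonnegative, while the elementary bounds $\nabla j_\alpha(v)\cdot v\ge|v|-\alpha$ and $|\nabla j_\alpha(v)|\le1$ (immediate from the definition of $j_\alpha$, cf.\ \eqref{eq:3.6}) give $\nabla j_\alpha(\tu_\alpha)\cdot\tu_\alpha-|\tu_\alpha|\ge-\alpha$ and $-\nabla j_\alpha(\tu_\alpha)\cdot(\phi\bphi)+|\phi\bphi|\ge0$, so that
$$
    \mathcal{J}_\alpha \ \ge\ -\alpha\inttau\int_{\pD}g\,d\Gamma\,dt \qquad \tPP\text{-a.s.}
$$

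The remaining task is the passage $\alpha\to0$, which I would carry out by proving $\limsup_{\alpha\to0}\mathcal{J}_\alpha\le\mathcal{J}$ $\tPP$-a.s., where $\mathcal{J}$ is the left-hand side of \eqref{eq:approx_MEI_alpha} at $(\trho,\tu,\tW)$; since the right-hand side above tends to $0$, this yields $\mathcal{J}\ge0$. The initial-data term converges by $\trho_{0,\alpha}\to\trho_0$ in $C(\oD)$, $\tu_{0,\alpha}\to\tu_0$ in $V_m$ and the uniform two-sided bounds on $\trho_0$; the convective term by Lemma~\ref{lem:alpha(conv rho u otimes u)}; the pressure term by $\trho_\alpha\to\trho$ in $L^p(0,T;W^{1,p}(D))$ and a.e.\ together with \eqref{eq:4.8}; the terms linear in $\tu_\alpha$ (those pairing $\tu_\alpha$ with $\pp_t\phi$, $\nabla(\phi\bphi)$ and $\Delta(\phi\bphi)$) by $\tu_\alpha\weakarrow\tu$ in $L^2(0,T;V_m)$ (using the strong convergence of $\trho_\alpha$ for the last one); and the stochastic integral and the quadratic-variation term by Proposition~\ref{prop:alpha(conv of stoch int in MEI)}. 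The crucial point is that all the remaining quantities enter $\mathcal{J}_\alpha$ with a sign for which weak lower semicontinuity works in our favour: the dissipation terms $-\inttau\int_D[\SSS(\nabla\tu_\alpha):\nabla\tu_\alpha+\varepsilon\trho_\alpha|\nabla\tu_\alpha|^2+\varepsilon P_\delta''(\trho_\alpha)|\nabla\trho_\alpha|^2]\,dx\,dt$, the boundary term $-\inttau\int_{\pD}g|\tu_\alpha|\,d\Gamma\,dt$, and the terminal energy $-\int_D[\tfrac12\trho_\alpha|\tu_\alpha|^2+P_\delta(\trho_\alpha)](\tau)\,dx$. For these I would invoke weak lower semicontinuity of the convex functionals $v\mapsto\inttau\int_D\SSS(\nabla v):\nabla v\,dx\,dt$ and $v\mapsto\inttau\int_{\pD}g|v|\,d\Gamma\,dt$ on $L^2(0,T;V_m)$; the weak $L^2$-convergence $\sqrt{\trho_\alpha}\,\nabla\tu_\alpha\weakarrow\sqrt{\trho}\,\nabla\tu$ (from strong convergence of $\sqrt{\trho_\alpha}$ and weak convergence of $\nabla\tu_\alpha$) for the $\varepsilon\trho|\nabla\tu|^2$ term; Fatou together with $\nabla\trho_\alpha\to\nabla\trho$ a.e.\ and continuity of $P_\delta''$ for the $P_\delta''$ term; convexity of $\rho\mapsto\int_D P_\delta(\rho)\,dx$ against $\trho_\alpha(\tau)\weakarrow\trho(\tau)$ in $L^\Gamma(D)$ for the potential part of the terminal energy; and $[\trho_\alpha\tu_\alpha]_m^\ast(\tau)\to[\trho\tu]_m^\ast(\tau)$ in $V_m^\ast$ together with \eqref{eq:4.16} for its kinetic part. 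Collecting these estimates gives $\limsup_\alpha\mathcal{J}_\alpha\le\mathcal{J}$, hence the claim.

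I expect the terminal-time treatment of the quantities quadratic in $\tu$ to be the main obstacle. Since the convex layer sits \emph{above} the cut-off layer, $\tu$ is only a random distribution (an element of $L^2(0,T;V_m)$ modulo null sets of times) and $\tu_\alpha$ converges merely weakly in $L^2(0,T;V_m)$, so $\int_D\trho_\alpha|\tu_\alpha|^2(\tau)\,dx$ cannot be handled by pointwise-in-time continuity. The plan is to first prove \eqref{eq:approx_MEI_alpha} for almost every $\tau\in[0,T]$ — for such $\tau$ the traces $\tu(\tau),\tu_\alpha(\tau)$ are meaningful and, after extracting a subsequence, $\sqrt{\trho_\alpha}\,\tu_\alpha(\tau)\weakarrow\sqrt{\trho}\,\tu(\tau)$ in $L^2(D)$, so the lower-semicontinuity argument above applies — and then to upgrade to all $\tau\in[0,T]$ using $\trho\in C_w([0,T];L^\Gamma(D))$, $[\trho\tu]_m^\ast\in C([0,T];V_m^\ast)$ and the lower semicontinuity in $\tau$ of $\tau\mapsto\int_D[\tfrac12\trho|\tu|^2+P_\delta(\trho)](\tau)\,dx$ that is already encoded in the inequality. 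The progressive-measurability of a representative of $\tu$ and the well-definedness of the stochastic integral $\inttau\int_D\GG_\varepsilon(\trho,\trho\tu)\cdot(\phi\bphi-\tu)\,dx\,d\tW$ are ensured exactly as in Remark~\ref{rem:r.d.}.
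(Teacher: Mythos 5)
Your proposal is correct and follows essentially the same route as the paper: subtract the energy balance \eqref{eq:EB_R} from the $\phi$-tested momentum equation, control the It\^{o} correction via Remark~\ref{rem:quad_var_est}, use the convexity of $j_\alpha$ on the boundary, and pass to the limit with weak lower semicontinuity of the dissipation, terminal energy and barrier terms, first for a.e.\ $\tau$ and then upgrading to all $\tau$ via weak time-continuity. The only difference is cosmetic: the paper retains $j_\alpha(\phi\bphi)-j_\alpha(\tu_\alpha)$ in the $\alpha$-level inequality and trades it for $|\phi\bphi|-|\tu|$ in the limit using the uniform bound \eqref{eq:3.6}, whereas you replace $j_\alpha$ by $|\cdot|$ at the $\alpha$-level at the cost of an $O(\alpha)$ error, which vanishes in the limit — the two are interchangeable.
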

\begin{proof}
    Recall that \eqref{eq:4.17} is satisfied by $(\trho_\alpha,\tu_\alpha,\tW_\alpha)$ for all $\phi\in C_c^\infty(\ico{0}{T})$ and all $\bphi\in V_m$ $\tPP$-a.s. In particular, this identity is satisfied for all $\tau\in [0,T]$, $\phi\in C_c^\infty(\ico{0}{\tau})$ and all $\bphi\in V_m$ $\tPP$-a.s.
    On the other hand, from the equality of laws from Proposition \ref{prop:repr_alpha}, $(\trho_\alpha,\tu_\alpha,\tW_\alpha)$ satisfies the energy balance \eqref{eq:EB_R} by replacing the initial value term 
    \begin{align*}
        &\int_D \left[\frac 1 2 \rho_0 |u_0|^2 + P_\delta(\rho_0)\right]dx
    \end{align*}
    with
    $$
        \int_D \left[\frac12 \trho_{0,\alpha} \left|\tu_{0,\alpha} \right|^2 + P_\delta(\trho_{0,\alpha})\right]dx.
    $$
    Further, as a consequence of the convexity and $C^1$-regularity of $j_\alpha$, we have 
    $$
        \nabla j_\alpha(\tu_\alpha) \cdot (\phi\bphi-\tu_\alpha) \leq j_\alpha(\phi \bphi)- j_\alpha(\tu_\alpha).
    $$
    Therefore, by subtracting the energy equality \eqref{eq:EB_R} from the approximate momentum equation \eqref{eq:4.17}, and using this estimate of $j_\alpha$ and the estimate \eqref{eq:3.63}, we obtain that 
    \begin{align*}
        &\int_D \left[\frac 1 2 \trho_{0,\alpha} |\tu_{0,\alpha}|^2 + P_\delta(\trho_{0,\alpha})\right]dx - \int_D \left[\frac 1 2 \trho_\alpha |\tu_\alpha|^2 + P_\delta(\trho_\alpha)\right](\tau)dx \\ 
        &\quad - \inttau \pp_t\phi \int_D \trho_\alpha \tu_\alpha\cdot \bphi dx - \phi(0)\int_D \trho_{0,\alpha} \tu_{0,\alpha} \cdot \bphi dx \notag\\ 
        &\quad - \inttau \phi \int_D \left[(\trho_\alpha \tu_\alpha \otimes \tu_\alpha):\nabla\bphi + p_\delta (\trho_\alpha)\rdiv \bphi\right]dxdt + \inttau \int_D \left[\SSS(\nabla \tu_\alpha):\nabla(\phi\bphi- \tu_\alpha) -\varepsilon \trho_\alpha \tu_\alpha\cdot \Delta (\phi\bphi) \right]dxdt \notag \\ 
        &\quad - \inttau \int_D \left[\varepsilon \trho_\alpha |\nabla \tu_\alpha|^2 + \varepsilon P_\delta''(\trho_\alpha)|\nabla \trho_\alpha|^2\right]dxdt +\frac 1 2 \sum_{k=1}^{\infty} \inttau \int_D \trho_\alpha \left|F_{k,\varepsilon}(\trho_\alpha,\tu_\alpha) \right|^2 dx dt \notag \\ 
        &\quad - \inttau \int_D \GG_\varepsilon(\trho_\alpha,\trho_\alpha \tu_\alpha) \cdot (\phi\bphi - \tu_\alpha) dx dW + \inttau \int_{\pD} g \left[j_\alpha(\phi\bphi) - j_\alpha(\tu_\alpha)\right] d\Gamma dt\geq 0 
    \end{align*}
    holds for all $\tau\in [0,T]$, all $\phi\in C_c^\infty(\ico{0}{\tau})$, and all $\bphi\in V_m$ $\tPP$-a.s.
    Due to the uniform convergence \eqref{eq:3.6} of $j_\alpha$, the weak convergence \eqref{eq:4.14} of $\tu_\alpha$, and the non-negativity and the integrability of $g$, we obtain that
    \begin{align*}
        &\limsup_{\alpha\to 0} \inttau \int_{\pD}  g \left[j_\alpha(\phi\bphi) - j_\alpha(\tu_\alpha)\right] d\Gamma dt \leq \inttau \int_{\pD} g\left|\phi\bphi \right| - g\left|\tu \right|d\Gamma dt.
    \end{align*}
    Note that due to \eqref{eq:4.14}, the mapping
    $$
        (\rho,u) \mapsto \int_D \left[\frac 1 2 \trho |\tu|^2 + P_\delta(\trho)\right](\tau)dx + \inttau \int_D \left[\SSS(\nabla \tu):\nabla \tu + \varepsilon \trho \left|\nabla \tu \right|^2 + \varepsilon P_\delta''(\trho)|\nabla \trho|^2\right]dx dt
    $$
    is at least lower semi-continuous with respect to the topologies in \eqref{eq:4.14}, for almost all $\tau\in [0,T]$.
    Therefore, recalling Remark \ref{rem:alpha(conv of each term of ME)}, and combining Proposition \ref{prop:alpha(conv of stoch int in MEI)} with the above discussion, we see that the desired inequality \eqref{eq:approx_MEI_alpha} is satisfied by $(\trho,\tu,\tW)$ for almost all $\tau\in [0,T]$, all $\phi\in C_c^\infty(\ico{0}{\tau})$, and all $\bphi\in V_m$ $\tPP$-a.s.
    However, since $\trho$ and $\trho \tu$ are at least weakly continuous in time, the weak lower semi-continuity of the energy yields \eqref{eq:approx_MEI_alpha} for  all $\tau\in [0,T]$, all $\phi\in C_c^\infty(\ico{0}{\tau})$, and all $\bphi\in V_m$ $\tPP$-a.s.
\end{proof}
The proof of Theorem \ref{thm:sol_alpha} is hereby complete.

\section{The limit in the Galerkin approximation scheme}\label{sec:The limit in the Galerkin approximation scheme}
Our next goal is to let $m\to \infty$ in the approximate system \eqref{eq:approx_CE_alpha}--\eqref{eq:approx_MEI_alpha}. 
A rigorous formulation reads as follows.
\begin{definition}
    \label{def:sol_m}
	{\hspace{-5pt} Let $\Lambda$ be a Borel probability measure on $C^{2+\nu}(\oD)\times L^1(D)$. \hspace{-3pt}Then $((\Omega,\FFF,(\FFF_t)_{t\geq 0},\PP),\rho,u,W)$ is called a \textit{dissipative martingale solution in $\varepsilon$ layer} with the initial law $\Lambda$, if:}
	\begin{itemize}
		\item[(1)] $(\Omega,\FFF,(\FFF_t)_{t\geq 0},\PP)$ is a stochastic basis with a complete right-continuous filtration;
		\item[(2)] $W$ is a cylindrical $(\FFF_t)$-Wiener process on $\fU$;
		\item[(3)] the density $\rho$ is an $(\FFF_t)$-adapted stochastic process, and the velocity $u$ is an $(\FFF_t)$-adapted random distribution such that $\PP$-a.s.
		\begin{align*}
		   &\rho \geq 0,\quad \rho \in C_w([0,T];L^\Gamma(D)),\quad u\in L^2(0,T;W_{\rn }^{1,2}(D)),\quad \rho u \in C_w([0,T];L^{\frac{2\Gamma}{\Gamma + 1}}(D));   
		\end{align*}
		\item[(4)] there exists a $C^{2+\nu}(\oD)\times L^2(D)$-valued $\FFF_0$-measurable random variable $(\rho_0,u_0)$ such that \!$\Lambda = \LL[\rho_0,\rho_0 u_0]$;
		\item[(5)] the approximate equation of continuity 
		\begin{equation}
            \label{eq:approx_CE_m}
            \pp_t \rho + \rdiv (\rho u) = \varepsilon \Delta \rho,\quad \nabla \rho \cdot \rn |_{\pD} =0
        \end{equation}
		holds a.e. in $(0,T)\times D$ $\PP$-a.s.;
		\item[(6)] the approximate interior momentum equation 
		\begin{align}
            -\intT \pp_t\phi \int_D \rho u\cdot \bphi dx dt &= \phi(0)\int_D \rho_0 u_0 \cdot \bphi dx  + \intT\phi \int_D \left[(\rho u \otimes u):\nabla\bphi + p_\delta (\rho)\rdiv \bphi\right]dxdt  \notag \\ 
            &\quad - \intT \phi\int_D \left[\SSS(\nabla u):\nabla\bphi -\varepsilon \rho u\cdot \Delta \bphi \right]dxdt \notag \\ 
            &\quad + \intT \phi\int_D \GG_\varepsilon(\rho,\rho u) \cdot \bphi dxdW  \label{eq:approx_ME_m}
        \end{align}
		holds for all $\phi\in C_c^\infty(\ico{0}{T})$ and all $\bphi\in C_c^\infty(D)$ $\PP$-a.s.;
        \item[(7)] the approximate momentum and energy inequality 
        \begin{align}
            &\int_D \left[\frac 1 2 \rho_0 |u_0|^2 + P_\delta(\rho_0)\right]dx - \int_D \left[\frac 1 2 \rho |u|^2 + P_\delta(\rho)\right](\tau)dx - \inttau \pp_t\phi \int_D \rho u\cdot \bphi dx - \phi(0)\int_D \rho_0 u_0 \cdot \bphi dx \notag\\ 
            &\quad - \inttau \phi \int_D \left[(\rho u \otimes u):\nabla\bphi + p_\delta (\rho)\rdiv \bphi\right]dxdt + \inttau \int_D \left[\SSS(\nabla u):\nabla(\phi\bphi- u) -\varepsilon \rho u\cdot \Delta (\phi\bphi) \right]dxdt \notag \\ 
            &\quad - \inttau \int_D \left[\varepsilon \rho |\nabla u|^2 + \varepsilon P_\delta''(\rho)|\nabla \rho|^2\right]dxdt +\frac 1 2 \sum_{k=1}^{\infty} \inttau \int_D \rho \left|F_{k,\varepsilon}(\rho,u) \right|^2 dx dt \notag \\ 
            &\quad - \inttau \int_D \GG_\varepsilon(\rho,\rho u) \cdot (\phi\bphi - u) dx dW + \inttau \int_{\pD} g|\phi\bphi| - g |u| d\Gamma dt\geq 0 \label{eq:approx_MEI_m}
        \end{align}
        holds for all $\tau\in [0,T]$, all $\phi\in C_c^\infty(\ico{0}{\tau})$, and all $\bphi\in C^\infty(\oD)$ with $\bphi\cdot \rn|_{\pD} = 0$ $\PP$-a.s.
    \end{itemize}
\end{definition}
\begin{theorem}
    \label{thm:sol_m}
    Let $\Lambda$ be a Borel probability measure on $C^{2+\nu}(\oD)\times L^1(D)$ such that 
    \begin{equation}
        \Lambda \left\{ \urho\leq \rho \leq \orho,\ \nabla \rho\cdot \rn |_{\pD}=0 \right\}=1, \label{eq:5.4}
    \end{equation}
    for some deterministic constants $\urho,\orho>0$ and 
    \begin{align}
        \int_{C_x^{2+\nu}\times L_x^1} \left(\left|\int_D \left[\frac 1 2 \frac{|q|^2}{\rho} + P_\delta(\rho)\right]dx \right|^{2r} + \norm{\rho}_{C_x^{2+\nu}}^r\right)d \Lambda (\rho,q) <\infty, \label{eq:5.5}
    \end{align}
    for some $r \geq 2$. Then there exists a dissipative martingale solution in the sense of Definition \ref{def:sol_m}.
\end{theorem}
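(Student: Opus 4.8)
The plan is to run the stochastic compactness method of Section~\ref{sec:convex_approx} on the $m$-layer solutions supplied by Theorem~\ref{thm:sol_alpha}, replacing the compactness argument for the momentum by the one of Smith, since in this setting the Galerkin projection $\Pi_m$ no longer enjoys dimension-independent mapping properties. First I would mollify the initial data: given the $\FFF_0$-measurable $(\rho_0,\rho_0u_0)$ with law $\Lambda$, set $u_0:=(\rho_0u_0)/\rho_0$ (admissible since $\rho_0\geq\urho$, and $u_0\in L^2(D)$ $\PP$-a.s.\ by \eqref{eq:5.5}), $u_{0,m}:=\Pi_mu_0\in V_m$, and $\Lambda_m:=\LL[\rho_0,\rho_0u_{0,m}]$. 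Because $\bigcup_mV_m$ is dense in $L^2(D)$ one has $u_{0,m}\to u_0$ in $L^2(D)$ and $\rho_0u_{0,m}\to\rho_0u_0$ in $L^1(D)$ $\PP$-a.s., and since $\rho_0\leq\orho$ the moment bound \eqref{eq:4.5} holds for $\Lambda_m$ uniformly in $m$; Theorem~\ref{thm:sol_alpha} then produces dissipative martingale solutions $(\rho_m,u_m)$ in the $m$-layer. All the estimates of Section~\ref{sec:alpha:Uniform estimates} — the energy bounds of Proposition~\ref{prop:unif.bound_by_energy}, the parabolic regularity $\rho_m\in L^p(0,T;W^{1,p}(D))\cap(W^{1,q}(0,T;L^q(D))\cap L^q(0,T;W^{2,q}(D)))$ for some $p>2$, $q>1$, and the bounds on $\rho_m|u_m|^2$, $\rho_mu_m$ and $u_m$ in $L^2(0,T;W^{1,2}(D))$ — are independent of $m$, and the limit will inherit the integrability recorded after Theorem~\ref{thm:sol_alpha} by weak lower semicontinuity and Fatou's lemma.

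The hard part will be the time-compactness of $\rho_mu_m$: the H\"older bound available for $[\rho_mu_m]_m^\ast$ lives in the $m$-dependent dual $V_m^\ast$ and cannot be used directly. Instead I would test \eqref{eq:approx_ME_alpha} against a \emph{fixed} $\bphi\in\bigcup_mV_{0,m}$ — each such $\bphi$ belongs to $V_m$ for all large $m$ — to obtain an explicit semimartingale representation of the scalar process $t\mapsto\bigl(\rho_mu_m(t),\bphi\bigr)_{L^2_x}$, whose drift is controlled by the uniform energy and parabolic bounds and whose martingale part is handled by the Burkholder--Davis--Gundy inequality, giving $\EE\|(\rho_mu_m(\cdot),\bphi)_{L^2_x}\|_{C^\beta([0,T])}^{2r}\lesssim C(\bphi)$ uniformly in $m$, for some $\beta>0$. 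Together with the uniform $L^\infty(0,T;L^{\frac{2\Gamma}{\Gamma+1}}(D))$ bound on $\rho_mu_m$ and the density of $\bigcup_mV_{0,m}$ in $L^{\frac{2\Gamma}{\Gamma-1}}(D)$, the standard Arzel\`a--Ascoli-type characterization of relatively compact subsets of $C_w([0,T];L^{\frac{2\Gamma}{\Gamma+1}}(D))$ then yields tightness of $\LL[\rho_mu_m]$ there. Tightness of $\LL[\rho_m]$ follows exactly as in Proposition~\ref{prop:tight_alpha} (the parabolic estimates being $m$-independent), tightness of $\LL[u_m]$ in $[L^2(0,T;W^{1,2}_{\rn}(D)),w]$ is immediate from the uniform bound, and $\LL[W]$ is tight; by Tychonoff's theorem the joint laws are tight on the sub-Polish space $\XX=\XX_{\rho_0}\times\XX_{q_0}\times\XX_\rho\times\XX_{\rho u}\times\XX_u\times\XX_W$, where $\XX_{\rho u}=C_w([0,T];L^{\frac{2\Gamma}{\Gamma+1}}(D))$, $\XX_u=[L^2(0,T;W^{1,2}_{\rn}(D)),w]$, $\XX_{\rho_0}=C(\oD)$, $\XX_{q_0}=L^1(D)$, $\XX_W=C([0,T];\fU_0)$, and $\XX_\rho$ is as in Section~\ref{sec:convex_approx}.

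Applying the Jakubowski--Skorokhod theorem (Theorem~\ref{thm:Jakubowski--Skorokhod}) produces, on a new space $\tprobsp$, random variables $(\trho_{0,m},\tq_{0,m},\trho_m,\tq_m,\tu_m,\tW_m)$ converging $\tPP$-a.s.\ in $\XX$ to a limit $(\trho_0,\tq_0,\trho,\tq,\tu,\tW)$ with the prescribed laws; as in Section~\ref{sec:The limit for vanishing time step}, the canonical filtration is non-anticipative with respect to $\tW$ and $\tu$ admits a progressively measurable representative (Lemmas~\ref{lem:sufficient_cond_of_Wiener_by_law}, \ref{lem:stability_of_nonanti}, \ref{lem:prog_m'ble_r.d.}). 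I would then identify $\tq=\trho\tu$ (from $\trho_m\to\trho$ in $L^p(0,T;W^{1,p}(D))$ and $\tu_m\weakarrow\tu$), establish $\sqrt{\trho_m}\tu_m\to\sqrt{\trho}\tu$ in $L^2(0,T;L^2(D))$ by the norm/weak-convergence argument of Lemma~\ref{lem:alpha(conv rho u otimes u)} (using the $C_w$-convergence of $\tq_m$), hence $\trho_m\tu_m\otimes\tu_m\to\trho\tu\otimes\tu$ in $L^1(0,T;L^1(D))$ and a.e.\ convergence of $\tu_m$ on $\{\trho>0\}$; since $\GG_\varepsilon(\rho,\rho u)=\rho\FF_\varepsilon(\rho,u)$ vanishes where $\rho\leq\varepsilon/2$, $\FF_\varepsilon$ is bounded by \eqref{eq:3.10}, and $\trho_m$ is bounded in $L^\infty(0,T;L^\Gamma(D))$, this gives $\GG_\varepsilon(\trho_m,\trho_m\tu_m)\to\GG_\varepsilon(\trho,\trho\tu)$ in $L^2(\tOmega\times(0,T);L^2(D))$, and Lemma~\ref{lem:conv_of_stoch_int} passes to the limit in the stochastic integrals.

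With these convergences in hand, passing to the limit in the parabolic continuity equation \eqref{eq:approx_CE_alpha} is straightforward. In \eqref{eq:approx_ME_alpha} I would first pass to the limit with $\bphi$ fixed in $\bigcup_mV_{0,m}$ and then extend to $\bphi\in C_c^\infty(D)$ by density, rewriting $\int_D\varepsilon\trho\tu\cdot\Delta\bphi\,dx$ as $-\int_D\varepsilon\nabla(\trho\tu):\nabla\bphi\,dx$ — legitimate since $\trho\tu$ has the extra spatial Sobolev regularity inherited from the $\varepsilon$-viscosity — so that every term is continuous on $W^{1,p}_0(D)$; the analogous step, keeping the boundary integrals produced by integration by parts, gives the momentum--energy inequality \eqref{eq:approx_MEI_m} for $\bphi\in C^\infty(\oD)$ with $\bphi\cdot\rn|_{\pD}=0$, using weak lower semicontinuity of the kinetic, potential, dissipative and boundary terms and the stochastic-integral convergences, exactly as in Propositions~\ref{prop:alpha(conv of stoch int in MEI)} and~\ref{prop:alpha:MEI}; the inequality is first obtained for a.e.\ $\tau\in[0,T]$ and then upgraded to all $\tau$ by the weak-in-time continuity of $\trho$ and $\trho\tu$. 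This yields a dissipative martingale solution in the sense of Definition~\ref{def:sol_m}.
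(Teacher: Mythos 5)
Your overall plan — mollify the initial data with the Galerkin projection, invoke Theorem~\ref{thm:sol_alpha} to get $m$-layer solutions, establish uniform energy and parabolic bounds, apply Jakubowski--Skorokhod, and pass to the limit in the equations — matches the paper's Section~\ref{sec:The limit in the Galerkin approximation scheme}. The identification $\tq=\trho\tu$, the argument for $\sqrt{\trho_m}\tu_m\to\sqrt{\trho}\tu$ in $L^2_{t,x}$, the convergence of the noise coefficients and the stochastic integrals, and the treatment of the momentum--energy inequality are all carried out as in the paper; your integration-by-parts remark for the $\int_D\varepsilon\rho u\cdot\Delta\bphi\,dx$ term in the $W^{1,p}_0$-density step is a reasonable way to make explicit what the paper leaves implicit.

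However, there is a gap in your tightness argument for $\rho_mu_m$. You claim the Hölder bound $\EE\|(\rho_mu_m(\cdot),\bphi)_{L^2_x}\|_{C^\beta}^{2r}\lesssim C(\bphi)$ holds ``uniformly in $m$'' and then deduce tightness directly in $C_w([0,T];L^{\frac{2\Gamma}{\Gamma+1}}(D))$. The bound is in fact only available for $m$ large enough that $\bphi\in V_m$: the momentum equation \eqref{eq:approx_ME_alpha} can be tested only against $\bphi\in V_m$ with $\bphi|_{\pD}=0$. A compact set of $C_w([0,T];L^{\frac{2\Gamma}{\Gamma+1}}(D))$ built from the Arzel\`a--Ascoli criterion must impose a $C^\beta$-constraint on $t\mapsto(v(t),\phi_{n_k})_{L^2_x}$ for \emph{every} $k$ in the dense sequence $(\phi_{n_k})$; and for each fixed $m$ there are infinitely many $k$ with $n_k>m$ for which no $C^\beta$-estimate on $(\rho_mu_m(\cdot),\phi_{n_k})_{L^2_x}$ is available. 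Consequently $\sup_m\PP(\rho_mu_m\notin K_L)$ cannot be made small, and the tightness claim fails as stated. The paper avoids this by \emph{not} placing $C_w$ in the path space for $\rho u$: $\rho_mu_m$ is tracked only in $[L^\infty(0,T;L^{\frac{2\Gamma}{\Gamma+1}}(D)),w^*]$ (where tightness is immediate from the uniform bound), and the time-regularity information is packaged separately in the auxiliary random variable $Y_m=\bigl((\rho_mu_m(\cdot),\phi_{n_k})_{L^2_x}\,1_{m\geq n_k}\bigr)_k$ living in $[C([0,T])]^{\NN}$. The indicator $1_{m\geq n_k}$ trivializes the uncontrolled components, so the tightness sum in Proposition~\ref{prop:m:tightness} runs only over $k$ with $n_k\leq m$ and closes. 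After Jakubowski--Skorokhod one combines the a.s.\ convergence of $\tY_m$ with the weak-$\ast$ convergence of $\tq_m$ and the density of $(\phi_{n_k})$ in $L^{\frac{2\Gamma}{\Gamma-1}}(D)$ to \emph{derive} the $C_w$-convergence $\trho_m\tu_m\to\trho\tu$ (Lemma~\ref{lem:m:q = rho u}); this is recovered only on the new probability space, not asserted as tightness on the original one. You would need to incorporate this device (or an equivalent) to repair the argument; the rest of the proposal is sound.
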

\begin{remark}
    \label{rem:m(integrability_of_sol)}
    As in Remark \ref{rem:alpha(integrability_of_sol)}, the solution constructed in this section belongs to the following class 
    \begin{align*}
        &\EE\left[\sup_{t\in [0,T]}\left|\int_D\left[\frac 1 2 \rho |u|^2 + P_\delta(\rho)\right]dx \right|^{2r}\right] + \EE\left[\left|\intT \int_{\pD} g |u| d\Gamma dt \right|^{2r}\right] <\infty, \\ 
        &\EE\left[\left|\intT \int_D \left[\SSS(\nabla u):\nabla u + \varepsilon \rho |\nabla u|^2 + \varepsilon P_\delta''(\rho)|\nabla \rho|^2\right]dxdt\right|^{2r}\right]<\infty,
    \end{align*}
    with the same $r\geq 2$ as in \eqref{eq:4.5}, and thus we have 
    \begin{align*}
        &\EE \left[\sup_{t\in [0,T]} \norm{\rho(t)}_{L_x^\Gamma}^{2\Gamma r}\right] + \EE \left[\sup_{t\in [0,T]} \norm{\rho |u|^2(t)}_{L_x^1}^{2 r}\right] + \EE \left[\sup_{t\in [0,T]} \norm{\rho u(t)}_{L_x^{\frac{2\Gamma}{\Gamma +1}}}^{\frac{4\Gamma}{\Gamma + 1}r}\right] + \EE\left[\norm{u}_{L_t^2 W^{1,2}_x}^{2r}\right]<\infty.
    \end{align*}
\end{remark}
\subsection{Uniform estimates}\label{sec:m:Uniform estimates}
In this section, the approximation parameters $\varepsilon,\delta\in (0,1)$ are kept fixed. We call uniform estimate that are independent of $m$ but may depend on $\varepsilon,\delta$ and $T>0$.
The proof of Theorem \ref{thm:sol_m} can be carried out similarly by modifying some of the arguments in Section \ref{sec:convex_approx}. 
First, let $(\rho,u)$ denote a solution constructed in Theorem \ref{thm:sol_alpha}. Then choosing $\phi=0$ in the momentum and energy inequality \eqref{eq:approx_MEI_alpha} yields the following standard energy inequality:
\begin{align*}
    &\int_D \left[\frac 1 2 \rho |u|^2 + P_\delta(\rho)\right](\tau)dx \\ 
    &\quad + \inttau \int_D \left[\SSS(\nabla u):\nabla u + \varepsilon \rho |\nabla u|^2 + \varepsilon P_\delta''(\rho)|\nabla \rho|^2\right]dxdt +\inttau \int_{\pD} g |u| d\Gamma dt \notag\\ 
    &\quad \leq \int_D \left[\frac 1 2 \rho_0 |u_0|^2 + P_\delta(\rho_0)\right]dx + \frac 1 2 \sum_{k=1}^{\infty} \inttau \int_D \rho \left|F_{k,\varepsilon}(\rho,u) \right|^2 dx dt + \inttau \int_D \GG_\varepsilon(\rho,\rho u) \cdot u dx dW   
\end{align*}
for all $\tau\in [0,T]$ $\PP$-a.s. 
Therefore,  as in Section \ref{sec:alpha:Uniform estimates}, using Gronwall's lemma together with Remark \ref{rem:alpha(integrability_of_sol)} yields the uniform estimates:
\begin{align}
    &\EE\left[\left|\sup_{\tau\in [0,T]} \int_D \left[\frac 1 2 \rho |u|^2 + P_\delta(\rho)\right](\tau)dx \right|^r\right] \notag\\ 
    &\quad  +\EE\left[\left|\intT \int_D \left[\SSS(\nabla u):\nabla u + \varepsilon \rho |\nabla u|^2 + \varepsilon P_\delta''(\rho)|\nabla \rho|^2\right]dxdt \right|^r\right]\notag \\ 
    &\quad  + \EE\left[\left|\intT \int_{\pD} g |u| d\Gamma dt \right|^{r}\right] \lesssim  c(r), \label{eq:5.6} \\ 
    &\EE\left[\norm{\rho}_{L_t^\infty L_x^\Gamma}^{\Gamma r}\right] + \EE \left[\norm{\rho^{-1/2}\nabla \rho}_{L_t^2 L_x^2}^{2r}\right] + \EE \left[\norm{\rho^{\Gamma / 2-1}\nabla \rho}_{L_t^2 L_x^2}^{2r}\right] \lesssim c(r), \label{eq:5.7}\\ 
    & \EE\left[\norm{\rho |u|^2}_{L_t^\infty L_x^1}^{r}\right] + \EE\left[\norm{\rho u}_{L_t^\infty L_x^{\frac{2\Gamma}{\Gamma +1}}}^{\frac{2\Gamma}{\Gamma +1}r}\right] \lesssim c(r), \label{eq:5.8} \\
    &\EE\left[\left| \intT \norm{u}_{W_x^{1,2}}^2 dt \right|^{r} \right] \lesssim  c(2r) \label{eq:5.9}
\end{align}
uniformly in $m$, where $\Gamma=\max \left\{ 6,\gamma \right\}$ is given at the beginning of Section \ref{sec:Outline of the proof of the main result}, and 
$$
    c(r) = \EE\left[\left|\int_D \left[\frac 1 2 \rho_0 |u_0|^2 + P_\delta(\rho_0)\right]dx \right|^r\right] + 1,\quad r\geq 2.
$$
Furthermore, using exactly the same argument as in Section \ref{sec:alpha:Uniform estimates}, we obtain the uniform estimates: 
\begin{align}
    &\EE\left[\norm{\rho}_{L_t^p W_x^{1,p}}^r\right]\lesssim \tilde{c}(2r) \quad \text{for some} \  p>2, \label{eq:5.10} \\ 
    &\EE\left[\left|\norm{\pp_t \rho}_{L_t^pL_x^p} + \norm{\rho}_{L_t^p W_x^{2,p}} \right|^{r / 2}\right] \lesssim \tilde{c}(2r),\quad \text{for some } p>1, \label{eq:5.11}
\end{align}
uniformly in $m$, where
$$
    \tilde{c}(r) = \EE\left[\left|\int_D \left[\frac 1 2 \rho_0|u_0|^2 + P_\delta(\rho_0)\right]dx \right|^r + \norm{\rho_0}_{C_x^{2+\nu}}^{r / 2}\right] + 1,\quad r\geq 2.
$$
Finally, unlike the case in \cite[Section 4.3]{BFHbook18}, uniform estimates for the H\"{o}lder norm of $[\rho u]_m^\ast$ cannot be expected due to the degeneracy of the momentum equation \eqref{eq:approx_ME_alpha} and the Galerkin approximation. 
Nevertheless, due to the properties of the subsequence $V_{0,m}$ assumed in the Galerkin approximation (see Section \ref{sec:Approximation system}), we can obtain the time regularity of $\rho u$ in a new probability space.
To this end, let $(\rho_m,u_m),m\in\NN$ be a sequence of solutions constructed in Theorem \ref{thm:sol_alpha} starting from $(\rho_{0,m},u_{0,m})$. Without loss of generality, we assume the energies of $(\rho_{0,m},u_{0,m})$ coincide and use the notation $c(r)$ as above. Then, we choose a subsequence $(n_k)_{k\in \NN}\subset \NN$ and a sequence $(\phi_{n_k})_{k\in\NN}\subset \bigcup_{m\in\NN}V_{0,m}$ such that
$$
    (\phi_{n_k})_{k\in\NN} \quad \text{is a dense subset of }L^{\frac{2\Gamma }{\Gamma - 1}}(D),\quad \phi_{n_k}\in V_{n_k},\quad \phi_{n_k}|_{\pD} = 0.
$$ 
By the definition of $V_{0,m}$, such a choice is possible. We introduce 
$$
    Y_m = (Y_{m,k})_{k=1}^\infty := \left(\inner{\rho_m u_m}{\phi_{n_k}}1_{m\geq n_k}\right)_{k\in \NN}\in \left[C([0,T])\right]^\NN.
$$
Note that $\left[C([0,T])\right]^\NN$ is a sub-Polish space. To derive tightness of laws of $Y_m$ later, we prepare the following.
\begin{lemma}
    \label{lem:m:unif_est_others}
    We have 
    \begin{align*}
        &\EE\left[\norm{\rho u\otimes u }_{L_t^2 L_x^{\frac{6\Gamma}{4\Gamma + 3}}}^{r}\right] + \EE\left[\norm{\SSS(\nabla u)}_{L_t^2 L_x^2}^{2r}\right] \lesssim c(2r),\\ 
        &\EE\left[\norm{p_\delta(\rho)}_{L_t^2 L_x^1}^r\right] + \EE\left[\norm{\nabla (\rho u)}_{L_t^2 W_x^{-1,\frac{2\Gamma}{\Gamma + 1}}}^r\right] \lesssim c(r)
    \end{align*}
    uniformly in $m$.
\end{lemma}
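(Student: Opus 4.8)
Each of the four bounds is a routine consequence of an elementary pointwise inequality, H\"older's inequality in space and time, the Sobolev embedding $W^{1,2}(D)\hookrightarrow L^6(D)$, and the uniform-in-$m$ estimates \eqref{eq:5.6}--\eqref{eq:5.9}, supplemented by the higher moments recorded in Remark~\ref{rem:m(integrability_of_sol)}. I would treat the four terms in turn, all constants being allowed to depend on $\varepsilon,\delta,T,\gamma,\Gamma,|D|$ but never on $m$.

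For the convective term I would write $\rho u\otimes u=(\rho u)\otimes u$ and apply H\"older in the space variable with exponents $\tfrac{2\Gamma}{\Gamma+1}$ and $6$: since $\tfrac{\Gamma+1}{2\Gamma}+\tfrac16=\tfrac{4\Gamma+3}{6\Gamma}$ this gives $\norm{\rho u\otimes u}_{L^{6\Gamma/(4\Gamma+3)}_x}\le\norm{\rho u}_{L^{2\Gamma/(\Gamma+1)}_x}\norm{u}_{L^6_x}$, and then, using that a product of an $L^\infty_t$ and an $L^2_t$ function is in $L^2_t$ together with $W^{1,2}(D)\hookrightarrow L^6(D)$, one gets $\norm{\rho u\otimes u}_{L^2_tL^{6\Gamma/(4\Gamma+3)}_x}\lesssim\norm{\rho u}_{L^\infty_tL^{2\Gamma/(\Gamma+1)}_x}\norm{u}_{L^2_tW^{1,2}_x}$. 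Raising to the power $r$, applying Cauchy--Schwarz in $\omega$, and using $\EE[\norm{u}^{2r}_{L^2_tW^{1,2}_x}]\lesssim c(2r)$ from \eqref{eq:5.9} together with $\EE[\norm{\rho u}^{2r}_{L^\infty_tL^{2\Gamma/(\Gamma+1)}_x}]\lesssim 1+\EE[\sup_{t\in[0,T]}\norm{\rho u(t)}^{\frac{4\Gamma}{\Gamma+1}r}_{L^{2\Gamma/(\Gamma+1)}_x}]\lesssim c(2r)$ --- the latter because $\tfrac{4\Gamma}{\Gamma+1}\ge2$ and by Remark~\ref{rem:m(integrability_of_sol)} --- then yields the first bound. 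For the viscous term, the pointwise inequality $|\SSS(\nabla u)|\lesssim|\nabla u|$ gives $\norm{\SSS(\nabla u)}_{L^2_tL^2_x}\lesssim\norm{u}_{L^2_tW^{1,2}_x}$, so $\EE[\norm{\SSS(\nabla u)}^{2r}_{L^2_tL^2_x}]\lesssim c(2r)$ is immediate from \eqref{eq:5.9}.

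For the pressure term, $p_\delta(\rho)=a\rho^\gamma+\delta\rho+\delta\rho^\Gamma$ with $1\le\gamma\le\Gamma$ and $D$ bounded, so H\"older and Young give $\norm{p_\delta(\rho)}_{L^1_x}\lesssim 1+\norm{\rho}^\Gamma_{L^\Gamma_x}$; hence $\norm{p_\delta(\rho)}_{L^2_tL^1_x}\lesssim 1+\norm{\rho}^\Gamma_{L^\infty_tL^\Gamma_x}$ and $\EE[\norm{p_\delta(\rho)}^r_{L^2_tL^1_x}]\lesssim 1+\EE[\norm{\rho}^{\Gamma r}_{L^\infty_tL^\Gamma_x}]\lesssim c(r)$ by \eqref{eq:5.7}. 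For the last term, the boundedness of $\partial_{x_i}\colon L^q(D)\to W^{-1,q}(D)$ gives $\norm{\nabla(\rho u)}_{W^{-1,2\Gamma/(\Gamma+1)}_x}\lesssim\norm{\rho u}_{L^{2\Gamma/(\Gamma+1)}_x}$, so $\norm{\nabla(\rho u)}_{L^2_tW^{-1,2\Gamma/(\Gamma+1)}_x}\lesssim T^{1/2}\norm{\rho u}_{L^\infty_tL^{2\Gamma/(\Gamma+1)}_x}$, and since $\tfrac{2\Gamma}{\Gamma+1}>1$ one concludes $\EE[\norm{\nabla(\rho u)}^r_{L^2_tW^{-1,2\Gamma/(\Gamma+1)}_x}]\lesssim 1+\EE[\norm{\rho u}^{\frac{2\Gamma}{\Gamma+1}r}_{L^\infty_tL^{2\Gamma/(\Gamma+1)}_x}]\lesssim c(r)$ by \eqref{eq:5.8}.

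There is no genuine obstacle here; the only mildly delicate points are choosing the H\"older exponents so that the convective term lands precisely in $L^{6\Gamma/(4\Gamma+3)}_x$, and noting that the $2r$-th moment of $\rho u$ needed for that estimate is supplied by Remark~\ref{rem:m(integrability_of_sol)} rather than by \eqref{eq:5.8}, whose exponent $\tfrac{2\Gamma}{\Gamma+1}$ on the $\rho u$-norm is strictly below $2$.
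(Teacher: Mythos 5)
Your proof is correct and follows the same line as the paper: pointwise/H\"older splitting in space, the Sobolev embedding $W^{1,2}\hookrightarrow L^6$, the exponent identity $\tfrac16+\tfrac{\Gamma+1}{2\Gamma}=\tfrac{4\Gamma+3}{6\Gamma}$, and then the uniform estimates \eqref{eq:5.7}--\eqref{eq:5.9}; the paper's proof is just a terse pointer to these ingredients. You also correctly spot the only subtle point — that the exponent $\tfrac{2\Gamma}{\Gamma+1}r<2r$ in \eqref{eq:5.8} is not quite enough for the first term, so one needs the larger $\tfrac{4\Gamma}{\Gamma+1}r$-moment of $\rho u$ supplied by the $2r$-th energy moment — though the reference should be to Remark~\ref{rem:alpha(integrability_of_sol)} (which describes the approximate solutions $(\rho_m,u_m)$ coming out of Section~\ref{sec:convex_approx}) rather than to Remark~\ref{rem:m(integrability_of_sol)}, which concerns the $m\to\infty$ limit solution and is therefore not what one should invoke for a bound uniform in $m$.
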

\begin{proof}
    Recall that the definition of $p_\delta$ is given at the beginning of Section \ref{sec:Outline of the proof of the main result}.
    The estimates of the first line follow from \eqref{eq:5.8}--\eqref{eq:5.9}, the embedding $W^{1,2}(D)\hookrightarrow L^6(D)$, and H\"older's inequality.
    Note that 
    \begin{align*}
        &\frac16 + \frac{\Gamma + 1}{2\Gamma} = \frac{4\Gamma + 3}{6\Gamma}.  
    \end{align*}
    The estimates of the second line follow from \eqref{eq:5.7}--\eqref{eq:5.8}.
\end{proof}
\begin{proposition}
    \label{prop:m:Holder_conti_of_rho u}
    We have 
    \begin{align}
        \label{eq:5.12}
        \sup_{m\geq n_k}\EE\left[\norm{\inner{\rho_m u_m}{\phi_{n_k}}}_{C_t^\kappa}^{2r}\right] \lesssim c(2r)\norm{\phi_{n_k}}_{C_x^1}^{2r},\quad \text{for all }k\in \NN, 
    \end{align}
    for some $\kappa>0$, where the proportional constant does not depend on $k$.
\end{proposition}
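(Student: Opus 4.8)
The plan is to test the finite-dimensional momentum balance satisfied by $(\rho_m,u_m)$ against the fixed function $\phi_{n_k}$, to estimate the resulting time increment by the $m$-uniform bounds of Section \ref{sec:m:Uniform estimates} and Lemma \ref{lem:m:unif_est_others}, and finally to invoke Kolmogorov's continuity criterion.

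First I would observe that for every $m\ge n_k$ one has $\phi_{n_k}\in V_m$ with $\phi_{n_k}|_{\pD}=0$, so $\phi_{n_k}$ is an admissible test function in the approximate interior momentum equation \eqref{eq:approx_ME_alpha} of the solution $(\rho_m,u_m)$ furnished by Theorem \ref{thm:sol_alpha}. Since $[\rho_m u_m]_m^\ast\in C([0,T];V_m^\ast)$ by Definition \ref{def:sol_alpha}(3), the distributional identity \eqref{eq:approx_ME_alpha} upgrades, by testing with functions $\phi$ approximating $\mathbf 1_{[\tau_2,\tau_1)}$, to the pointwise-in-time identity
\[
\inner{\rho_m u_m(\tau_1)-\rho_m u_m(\tau_2)}{\phi_{n_k}} = \int_{\tau_2}^{\tau_1}\!\! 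I_m(t)\,dt + \int_{\tau_2}^{\tau_1}\!\!\int_D \GG_\varepsilon(\rho_m,\rho_m u_m)\cdot\phi_{n_k}\,dx\,dW,
\]
valid $\PP$-a.s.\ for all $0\le\tau_2\le\tau_1\le T$, where $I_m$ gathers the convective, pressure, viscous and artificial-viscosity terms paired with $\phi_{n_k}$.

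For the drift $I_m$ I would bound each summand pointwise in $t$ by a fixed multiple of a norm of $\phi_{n_k}$ times an $m$-uniformly controlled quantity: the convective term by $\norm{\rho_m|u_m|^2}_{L^1_x}\norm{\nabla\phi_{n_k}}_{L^\infty_x}$, the pressure term by $\norm{p_\delta(\rho_m)}_{L^1_x}\norm{\nabla\phi_{n_k}}_{L^\infty_x}$ (using $\Gamma\ge 6$ so $p_\delta(\rho_m)\lesssim 1+\rho_m^\Gamma$), the viscous term by $\norm{\SSS(\nabla u_m)}_{L^2_x}\norm{\nabla\phi_{n_k}}_{L^2_x}$, and the artificial-viscosity term $\varepsilon\inner{\rho_m u_m}{\Delta\phi_{n_k}}$ by $\varepsilon\norm{\rho_m u_m}_{L^{2\Gamma/(\Gamma+1)}_x}\norm{\Delta\phi_{n_k}}_{L^\infty_x}$. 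Integrating in time and applying Hölder's inequality there, the first and last give a factor $|\tau_1-\tau_2|$ and the other two a factor $|\tau_1-\tau_2|^{1/2}$; raising to the power $2r$ and using the uniform estimates of Section \ref{sec:m:Uniform estimates}, Lemma \ref{lem:m:unif_est_others}, and the $2r$-th moment bounds inherited from Remark \ref{rem:alpha(integrability_of_sol)} (see also Remark \ref{rem:m(integrability_of_sol)}), one gets $\EE\big[\big|\int_{\tau_2}^{\tau_1}I_m\,dt\big|^{2r}\big]\lesssim c(2r)\,C_k\,|\tau_1-\tau_2|^{r}$ for $|\tau_1-\tau_2|\le1$, uniformly in $m\ge n_k$, where $C_k$ depends only on $\phi_{n_k}$ (one may take $C_k=\norm{\phi_{n_k}}_{C^1_x}^{2r}$ for all terms except the artificial-viscosity one, where a $C^2$-norm of $\phi_{n_k}$ enters). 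For the martingale part, the Burkholder--Davis--Gundy inequality together with the pointwise bound $|\int_D\rho_m F_{k,\varepsilon}(\rho_m,u_m)\cdot\phi_{n_k}\,dx|\le f_{k,\varepsilon}\norm{\rho_m}_{L^1_x}\norm{\phi_{n_k}}_{L^\infty_x}$ from \eqref{eq:3.10}, and $\sum_k f_{k,\varepsilon}^2<\infty$, give $\EE\big[\big|\int_{\tau_2}^{\tau_1}\int_D\GG_\varepsilon\cdot\phi_{n_k}\,dx\,dW\big|^{2r}\big]\lesssim c(2r)\norm{\phi_{n_k}}_{C^1_x}^{2r}|\tau_1-\tau_2|^{r}$, again uniformly in $m$. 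Combining and invoking Kolmogorov's continuity theorem — legitimate because $r\ge 2$, so the exponent of $|\tau_1-\tau_2|$ exceeds $1$ — yields the asserted $C^\kappa_t$-bound for any $\kappa\in\big(0,\tfrac{r-1}{2r}\big)$, with proportional constant depending only on $r$ (and on $\varepsilon,\delta,T,\gamma$), hence not on $k$.

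The only genuinely delicate point is the artificial-viscosity term $\varepsilon\inner{\rho_m u_m}{\Delta\phi_{n_k}}$: in the preceding sections an analogous term was absorbed via the equivalence of norms on the finite-dimensional space $V_m$, but that constant degenerates as $m\to\infty$ and is unavailable here. Instead one exploits that the subspaces $V_{0,m}$ were chosen inside $C^2_c(D)$ (see Section \ref{sec:Approximation system}), so that $\Delta\phi_{n_k}\in L^\infty(D)$ is a legitimate fixed multiplier, and pairs it against the $m$-uniform $L^\infty_tL^{2\Gamma/(\Gamma+1)}_x$ bound for $\rho_m u_m$; everything else is routine bookkeeping with the $m$-independent estimates of Section \ref{sec:m:Uniform estimates}.
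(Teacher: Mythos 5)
Your proof takes essentially the same route as the paper's: test the finite-dimensional momentum equation \eqref{eq:approx_ME_alpha} against the fixed $\phi_{n_k}\in V_{n_k}$ for $m\geq n_k$, bound the time increment term by term using the $m$-uniform estimates, apply Burkholder--Davis--Gundy to the martingale part, and conclude by Kolmogorov with $\kappa\in(0,\tfrac12-\tfrac{1}{2r})$. The main cosmetic difference is that you estimate each drift term by a direct $L^p$--$L^{p'}$ Hölder pairing (e.g.\ $\norm{\rho_m|u_m|^2}_{L^1_x}\norm{\nabla\phi_{n_k}}_{L^\infty_x}$ for the convective term), whereas the paper routes all drift terms through $W^{-1,\beta'}_x$--$W^{1,\beta}_x$ duality with $\beta>3$ as in \eqref{eq:5.14}; you also use the $L^\infty_tL^1_x$ energy bound for the convective term rather than the $L^2_tL^{6\Gamma/(4\Gamma+3)}_x$ bound of Lemma~\ref{lem:m:unif_est_others}, which costs nothing since the limiting power of $|\tau_1-\tau_2|$ after raising to $2r$ is $r>1$ either way.

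Your remark about the artificial-viscosity term is a genuinely useful catch. Since $\varepsilon\inner{\rho_m u_m}{\Delta\phi_{n_k}}$ involves two derivatives of the test function, the $k$-dependent constant in \eqref{eq:5.12} should actually be $\norm{\phi_{n_k}}_{C^2_x}^{2r}$ (or $\norm{\phi_{n_k}}_{W^{2,\beta}_x}^{2r}$) rather than $\norm{\phi_{n_k}}_{C^1_x}^{2r}$; the paper's own intermediate bound \eqref{eq:5.14} pairs $\varepsilon\norm{\nabla(\rho_m u_m)}_{W^{-1,\beta'}_x}$ with $\norm{\phi_{n_k}}_{W^{1,\beta}_x}$, but the pairing that results from $\int_D\rho_m u_m\cdot\Delta\phi_{n_k}\,dx=-\int_D\nabla(\rho_m u_m):\nabla\phi_{n_k}\,dx$ really requires $\nabla\phi_{n_k}\in W^{1,\beta}_0$, hence $\norm{\phi_{n_k}}_{W^{2,\beta}_x}$. (The same comment applies to the pressure term, which the paper controls only in $L^2_tL^1_x$.) The slip is benign: the tightness argument in Proposition~\ref{prop:m:tightness} only needs the \emph{same} $k$-dependent constant to appear in \eqref{eq:5.12} and in the compact set $B_L$, so one replaces $\norm{\phi_{n_k}}_{C^1_x}$ by $\norm{\phi_{n_k}}_{C^2_x}$ there (finite since $\phi_{n_k}\in V_{0,n_k}\subset C^2_c(D)$), and everything else goes through verbatim. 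You are right that this constant must be independent of $m$ because the fixed multiplier $\phi_{n_k}$ is used, not the degenerate equivalence of norms on $V_m$.
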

\begin{proof}
    We first show that 
    \begin{align}
        \EE\left[\left|\inner{\rho_m u_m(\tau_1)}{\phi_{n_k}} - \inner{\rho_m u_m(\tau_2)}{\phi_{n_k}} \right|^{2r}\right] \lesssim c(2r)\norm{\phi_{n_k}}_{W_x^{1,\beta}}^{2r} |\tau_1-\tau_2|^{r}, \label{eq:5.13}
    \end{align}
    whenever $m\geq n_k$, and $0\leq \tau_2<\tau_1\leq T$, for $\beta>3$, where the proportional constant does not depend on $m$ and $k$. 
    With the interior momentum equation \eqref{eq:approx_ME_alpha} at hand, it is easy to verify that 
    \begin{align}
        &\EE\left[\left|\inner{\rho_m u_m(\tau_1)}{\phi_{n_k}} - \inner{\rho_m u_m(\tau_2)}{\phi_{n_k}} \right|^{2r}\right] \notag  \\ 
        &\lesssim \EE \left[\left(\int_{\tau_2}^{\tau_1} \left(\norm{\rho_m u_m \otimes u_m}_{W_x^{-1,\beta'}} +\norm{p_\delta(\rho_m)}_{W_x^{-1,\beta'}}\right)\norm{\phi_{n_k}}_{W_x^{1,\beta}}dt \right)^{2r}\right] \notag \\ 
        &\quad + \EE \left[\left(\int_{\tau_2}^{\tau_1} \left(\norm{\SSS(\nabla u_m)}_{W_x^{-1,\beta'}} + \varepsilon\norm{\nabla(\rho_m u_m)}_{W_x^{-1,\beta'}}\right)\norm{\phi_{n_k}}_{W_x^{1,\beta}}dt \right)^{2r}\right] \notag \\ 
        &\quad + \left|\tau_1 -\tau_2 \right|^r \EE\left[\sup_{t\in [0,T]} \left(\sum_{k=1 }^{\infty} \norm{\rho_m F_{k,\varepsilon}(\rho_m,u_m)}_{L_x^2}^2\right)^r\right] \norm{\phi_{n_k}}_{L_x^2}^{2r}, \quad \text{whenever } n_k\leq m, \label{eq:5.14}
    \end{align}
    where $\beta'$ is the H\"older conjugate exponent of $\beta$, the Burkholder--Davis--Gundy inequality was used in the last term, and the proportional constant does not depend on $m$ or $k$. 
    By H\"{o}lder's inequality, \eqref{eq:3.10}, and $\Gamma \geq 6$, we have 
    \begin{align*}
        &\norm{\rho F_{k,\varepsilon}(\rho,u)}_{L_x^2} \leq \norm{\rho}_{L_x^4}\norm{F_{k,\varepsilon}(\rho,u)}_{L^4_x}\lesssim \norm{\rho}_{L_x^\Gamma}\norm{F_{k,\varepsilon}(\rho,u)}_{L_x^\infty} \leq f_{k,\varepsilon}\norm{\rho}_{L_x^\Gamma}.  
    \end{align*}
    Therefore, choosing $\beta>3$ so that $L^1(D)\hookrightarrow W^{-1,\beta'}(D)$ and applying Lemma \ref{lem:m:unif_est_others} to \eqref{eq:5.14} yield \eqref{eq:5.13}.
    By Kolmogorov's continuity theorem, the desired assertion \eqref{eq:5.12} holds for $\kappa\in (0,1/2-1/(2r))$. 
\end{proof}

\subsection{Asymptotic limit}\label{sec:m:Asymptotic limit}
With the uniform bounds \eqref{eq:5.6}--\eqref{eq:5.12} established in the previous section, we are ready to perform the limit $m\to \infty$ similarly as in Section \ref{sec:alpha(Asymptotic limit)}.
Let $\Lambda$ be a probability measure on $C^{2+\nu}(\oD)\times L^1(D)$ satisfying \eqref{eq:5.4} and \eqref{eq:5.5}, and consider a random variable $(\rho_0,q_0)$ with law $\Lambda$ on some probability space $\probsp$. 
Since $\rho_0\geq \urho>0$, we can set $u_0 = q_0/\rho_0$. Note that, due to the assumption \eqref{eq:5.5}, we have $u_0\in L^2(D)$ $\PP$-a.s. We define the initial velocities $u_{0,m}=\Pi_m u_0$, where $\Pi_m:L^2(D)\to V_m$ is the orthogonal projection onto $V_m$. Then, we observe that the initial laws $\overline{\Lambda}_m = \PP\circ (\rho_0,u_{0,m})^{-1}$ satisfy the assumptions on the initial condition in Theorem \ref{thm:sol_alpha} uniformly in $m$.
Therefore, Theorem \ref{thm:sol_alpha} yields existence of 
$$
    ((\Omega^m,\FFF^m,(\FFF_t^m),\PP^m),\hat{\rho}_{0,m},\hat{u}_{0,m},\rho_m,u_m,W_m),
$$ 
which is a dissipative martingale solution in the sense of Definition \ref{def:sol_alpha} with the initial law $\overline{\Lambda}_m$, and 
\begin{align}
    \label{eq:m:unif_of_initial}
    \int_{C_x^{2+\nu}\times L_x^2} \left(\left|\int_D \left[\frac12 \rho |v|^2 + P_\delta(\rho)\right]dx \right|^{2r} + \norm{\rho}_{C_x^{2+\nu}}^r \right)d\overline{\Lambda}_m(\rho,v) \leq c, 
\end{align}
uniformly in $m$. As a consequence of \eqref{eq:m:unif_of_initial}, the estimates \eqref{eq:5.6}--\eqref{eq:5.12} hold true for $(\rho_m,u_m)$ uniformly in $m$.
Moreover, since $(\rho_0,\rho_0 u_{0,m}) \to (\rho_0,\rho_0 u_0)$ in $C^{2+\nu}(\oD)\times L^1(D)$ $\PP$-a.s., and $C^{2+\nu}(\oD)$ is a Borel measurable subset of $C(\oD)$, we have 
\begin{align}
    &\Lambda_m := \PP^m\circ (\hat{\rho}_{0,m},\hat{\rho}_{0,m}\hat{u}_{0,m})^{-1} = \PP\circ (\rho_0,\rho_0 u_{0,m})^{-1} \to \Lambda \quad \text{weakly in } C(\oD) \times L^1(D). \label{eq:5.16}
\end{align}
In view of \cite[Remark 4.0.4]{BFHbook18}, we may assume without loss of generality that 
$$
    (\Omega^m,\FFF^m,\PP^m) = ([0,1],\overline{\fB([0,1])},\fL),\quad m\in \NN
$$
and that 
$$
    \FFF^m_t = \sigma \left(\sigma_t[\rho_m]\cup \sigma_t [u_m]\cup \sigma_t[W_m]\right),\quad t\in [0,T],
$$
where $([0,1],\overline{\fB([0,1])},\fL)$ is the Lebesgue measure space.
In accordance with the uniform estimates \eqref{eq:5.6}--\eqref{eq:5.12} derived in the previous section, we choose the path space 
$$
    \XX = \XX_{\rho_0}\times \XX_{u_0}\times \XX_\rho \times \XX_{\rho u}\times \XX_u \times \XX_{Y} \times \XX_W,
$$
where 
\begin{align*}
    &\XX_{\rho_0} = C(\oD), \\ 
    &\XX_{u_0} = L^2(D), \\ 
    &\XX_\rho = L^p(0,T;W^{1,p}(D)) \cap \left[W^{1,q}(0,T;L^q(D))\cap L^q(0,T;W^{2,q}(D)),w\right]  \cap C_w([0,T];L^\Gamma(D)), \\ 
    &\XX_{\rho u} = \left[L^\infty(0,T;L^{\frac{2\Gamma}{\Gamma + 1}}(D)),w^\ast\right], \\ 
    &\XX_u = \left[L^2(0,T;W^{1,2}_{\rn }(D)),w\right], \\ 
    &\XX_Y = \left[C([0,T])\right]^{\NN},\\ 
    &\XX_W = C([0,T];\fU_0),
\end{align*}
for some $p>2$, and $ q>1$. 
\begin{proposition}
    \label{prop:m:tightness}
    The family of the probability measures $\left\{ \LL[\hat{\rho}_{0,m},\hat{u}_{0,m},\rho_m,\rho_m u_m, u_m,Y_m, W_m]:m\in \NN \right\}$ is tight on $\XX$,
    where $Y_m$ is defined as in Section \ref{sec:m:Uniform estimates}. 
\end{proposition}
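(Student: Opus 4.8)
The plan is to prove tightness of each of the seven marginal laws separately and then conclude tightness of the joint law on the product space $\XX$ by Tychonoff's theorem, exactly as in the proof of Proposition~\ref{prop:tight_alpha}: given $\eta>0$ one selects, in each factor, a compact set carrying all but $\eta/7$ of the mass, and the product of these compacts is the desired compact subset of $\XX$ with joint mass at least $1-\eta$.

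For the two initial-data marginals I would argue as follows. The law of $\hat\rho_{0,m}$ on $C(\oD)$: the uniform bound $\EE[\norm{\rho_0}_{C_x^{2+\nu}}^r]\le c$ from \eqref{eq:m:unif_of_initial}, the compact embedding $C^{2+\nu}(\oD)\cptarrow C(\oD)$, and Chebyshev's inequality give uniform tightness. The law of $\hat u_{0,m}$ on $L^2(D)$ coincides with that of $\Pi_m u_0$; since $\Pi_m u_0\to u_0$ $\PP$-a.s. in $L^2(D)$, these laws converge weakly, hence the family is tight. For $W_m$, each law equals $\LL[W]$, a single Radon measure on the Polish space $C([0,T];\fU_0)$, hence tight.

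The marginals of $\rho_m$, $\rho_m u_m$ and $u_m$ are treated verbatim as in Proposition~\ref{prop:tight_alpha}, now invoking the uniform bounds \eqref{eq:5.7}--\eqref{eq:5.11} in place of their counterparts from Section~\ref{sec:alpha:Uniform estimates}: weak or weak-$\ast$ compactness of closed balls (in the reflexive space $W^{1,q}(0,T;L^q(D))\cap L^q(0,T;W^{2,q}(D))$, in the dual of the separable space $L^1(0,T;L^{\frac{2\Gamma}{\Gamma-1}}(D))$, and in the separable Hilbert space $L^2(0,T;W^{1,2}_{\rn}(D))$, respectively) together with Chebyshev handle the weak topologies on $\XX_\rho$, $\XX_{\rho u}$ and $\XX_u$; the Aubin--Lions theorem with interpolation and \eqref{eq:5.10}--\eqref{eq:5.11} handles the strong $L^p(0,T;W^{1,p}(D))$ topology in $\XX_\rho$; and the continuity equation \eqref{eq:approx_CE_m} with \eqref{eq:5.7}--\eqref{eq:5.8} bounds $\rho_m$ in $C^{0,1}([0,T];W^{-2,\frac{2\Gamma}{\Gamma+1}}(D))$, so Theorem~\ref{thm:weak_conti_embedding} yields tightness in $C_w([0,T];L^\Gamma(D))$.

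The one genuinely new factor is $\XX_Y=[C([0,T])]^{\NN}$, and here I would reduce to a coordinatewise statement: tightness of $\LL[Y_m]$ on this countable product of Polish spaces follows once $\{\LL[Y_{m,k}]\}_m$ is tight in $C([0,T])$ for each fixed $k$, because then, given $\eta>0$, one picks compact $K_k\subset C([0,T])$ with $\sup_m\PP(Y_{m,k}\notin K_k)<\eta\, 2^{-k}$ and takes $\prod_k K_k$, which is compact by Tychonoff and carries all but $\eta$ of the mass of $\LL[Y_m]$. Fixing $k$: for $m<n_k$ one has $Y_{m,k}\equiv 0$, a single point, while for $m\ge n_k$ Proposition~\ref{prop:m:Holder_conti_of_rho u} provides $\sup_{m\ge n_k}\EE[\norm{Y_{m,k}}_{C^\kappa([0,T])}^{2r}]<\infty$; combined with the compact embedding $C^\kappa([0,T])\cptarrow C([0,T])$ and Chebyshev, this yields the uniform tightness. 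The only real obstacle is this reduction together with the bookkeeping around the indicator $1_{m\ge n_k}$ — the substantive H\"older estimate \eqref{eq:5.12} being already in hand — after which the product structure of $\XX$ closes the argument.
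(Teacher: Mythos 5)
Your proof is correct and follows essentially the same route as the paper: marginal tightness in each factor plus Tychonoff's theorem, using \eqref{eq:5.7}--\eqref{eq:5.11} for $\rho_m,\ \rho_m u_m,\ u_m$, a.s.\ convergence of $\Pi_m u_0$ for $\hat u_{0,m}$, and the uniform H\"older moment \eqref{eq:5.12} from Proposition~\ref{prop:m:Holder_conti_of_rho u} for the coordinates of $Y_m$. The only presentational difference is in the $\XX_Y$-factor: the paper writes down a single explicit $L$-parametrised compact product set $B_L$ with coordinate radii $L^{1/(2r)}2^{k/(2r)}\|\phi_{n_k}\|_{C_x^1}$ and estimates $\PP(Y_m\notin B_L)\lesssim L^{-1}$ directly via Chebyshev, whereas you phrase it as a reduction to uniform coordinatewise tightness with dyadic weights $\eta 2^{-k}$ — the two bookkeepings are interchangeable.
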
 
\begin{proof}
    Similarly to Section \ref{sec:alpha(Asymptotic limit)}, $p>2$ and $q>1$ can be adjusted so that 
    $$
        \left\{ \LL\left[\rho_m\right]: m\in \NN \right\}\quad \text{is tight on }\XX_\rho. 
    $$ 
    Tightness of $\left\{ \LL[\hat{u}_{0,m}]:m\in \NN \right\}$ on $\XX_{u_0}$ follows immediately from $u_{0,m}\to u_0$ in $L^2(D)$ $\PP$-a.s.
    Tightness of the laws of $\rho_m$ and $u_m$ can be verified in the same way as in Section \ref{sec:alpha(Asymptotic limit)}, and tightness of the laws of $\rho_m u_m$ follows from weak$^\ast$ compactness of bounded sets in $L^\infty(0,T;L^{\frac{2\Gamma}{\Gamma + 1}}(D))$.
    Thus, only tightness of $\left\{ \LL[Y_m]:m\in \NN \right\}$ on $\XX_{Y}$ is non-trivial. By Tychonoff's theorem, the set 
    $$
        B_L= \prod_{k=1}^\infty \left\{ f\in C([0,T]): \norm{f}_{C_t^\kappa} \leq L^{\frac{1}{2r}}2^{\frac{k }{2r}}\norm{\phi_{n_k}}_{C_x^1} \right\}
    $$
    is relatively compact in $\left[C([0,T])\right]^\NN$. Thus, using \eqref{eq:5.12} and Chebyshev's inequality, we have 
    \begin{align*}
        &\PP(Y_m\notin B_L) \leq \sum_{k:m\geq n_k}\PP\left(\norm{\inner{\rho_m u_m}{\phi_{n_k}}}_{C_t^\kappa}\geq L^{\frac{1}{2r}}2^{\frac{k }{2r}}\norm{\phi_{n_k}}_{C_x^1} \right) \\ 
        &\quad \leq L^{-1}\sum_{k:m\geq n_k} 2^{-k}\norm{\phi_{n_k}}_{C_x^1}^{-2r}\sup_{m\geq n_k} \EE \left[\norm{\inner{\rho_m u_m}{\phi_{n_k}}}_{C_t^\kappa}^{2r}\right] \\ 
        &\quad \lesssim L^{-1},
    \end{align*} 
    which implies tightness of $\left\{ \LL[Y_m]:m\in \NN \right\}$ on $\XX_{Y}$.
\end{proof}
Consequently, we may apply Jakubowski's theorem, Theorem \ref{thm:Jakubowski--Skorokhod} as well as Proposition \ref{prop:m:tightness} to obtain the following.
\begin{proposition}
    \label{prop:m:repr}
    There exists a complete probability space $\tprobsp$ with $\XX$-valued Borel measurable random variables $(\trho_{0,m},\tu_{0,m},\trho_m,\tq_m,\tu_m,\tY_m,\tW_m),m\in\NN$ and $(\trho_0,\tu_0,\trho,\tq,\tu,\tY,\tW)$ such that (up to a subsequence):
    \begin{itemize}
        \item[(1)] the laws of $(\trho_{0,m},\tu_{0,m},\trho_m,\tq_m,\tu_m,\tY_m,\tW_m)$ and $(\hat{\rho}_{0,m},\hat{u}_{0,m},\rho_m,\rho_m u_m, u_m,Y_m,W_m)$ coincide on $\XX$;
        \item[(2)] $(\trho_{0,m},\tu_{0,m},\trho_m,\tq_m,\tu_m,\tY_m,\tW_m)$ converges in the topology of $\XX$ $\tPP$-a.s. to $(\trho_0,\tu_0,\trho,\tq,\tu,\tY,\tW)$, i.e., 
        \begin{align}
            \begin{aligned}
                &\trho_{0,m} \to \trho_0\quad \text{in } C(D), \\
                &\tu_{0,m}\to \tu_0 \quad \text{in } L^2(D), \\
                &\trho_m \to \trho\quad \text{in } L^p(0,T;W^{1,p}(D)), \\ 
                &\trho_m \weakarrow \trho\quad \text{in } W^{1,q}(0,T;L^q(D))\cap L^q(0,T;W^{2,q}(D)), \\ 
                &\trho_m \to \trho\quad \text{in } C_w([0,T];L^\Gamma(D)), \\ 
                &\tq_m \weakstararrow \tq \quad \text{in } L^\infty(0,T;L^{\frac{2\Gamma}{\Gamma + 1}}(D)), \\
                &\tu_m \weakarrow \tu \quad \text{in } L^2(0,T;W^{1,2}_{\rn }(D)), \\ 
                &\tY_m \to \tY \quad \text{in } \left[C([0,T])\right]^\NN, \\
                &\tW_m \to \tW\quad \text{in } C([0,T];\fU_0),
            \end{aligned}  \label{eq:5.17}
        \end{align}
    \end{itemize}
    $\tPP$-a.s. for some $p>2,q>1$, and $k\in\NN$.
\end{proposition}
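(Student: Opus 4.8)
The plan is to obtain Proposition~\ref{prop:m:repr} as a direct application of Jakubowski's extension of the Skorokhod representation theorem, Theorem~\ref{thm:Jakubowski--Skorokhod}, to the tight family of joint laws furnished by Proposition~\ref{prop:m:tightness}. The first thing to verify is that the path space $\XX$ is a sub-Polish space. This is inherited factor by factor: $\XX_{\rho_0}=C(\oD)$, $\XX_{u_0}=L^2(D)$, the strong component $L^p(0,T;W^{1,p}(D))$ of $\XX_\rho$, and $\XX_W=C([0,T];\fU_0)$ are Polish; the weak topologies on the separable reflexive spaces $W^{1,q}(0,T;L^q(D))\cap L^q(0,T;W^{2,q}(D))$ and $L^2(0,T;W^{1,2}_\rn(D))$, as well as the weak-$\ast$ topology on $L^\infty(0,T;L^{\frac{2\Gamma}{\Gamma+1}}(D))=(L^1(0,T;L^{\frac{2\Gamma}{\Gamma-1}}(D)))^\ast$, are sub-Polish because duality pairings against a fixed countable dense set provide the required separating family of continuous functions; $C_w([0,T];L^\Gamma(D))$ is sub-Polish by the argument of \cite[Section~2.2]{BFHbook18} together with Theorem~\ref{thm:weak_conti_embedding}; and $\XX_Y=[C([0,T])]^\NN$ is Polish as a countable product of Polish spaces. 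Since a finite ``intersection'' and a countable product of sub-Polish spaces are again sub-Polish (take the union of the separating families), $\XX$ is sub-Polish.

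With this in hand, I would apply Theorem~\ref{thm:Jakubowski--Skorokhod} to the tight sequence $\mu_m:=\LL[\hat{\rho}_{0,m},\hat{u}_{0,m},\rho_m,\rho_m u_m,u_m,Y_m,W_m]$ on $\XX$. This yields a subsequence (not relabelled), together with $\XX$-valued Borel random variables $(\trho_{0,m},\tu_{0,m},\trho_m,\tq_m,\tu_m,\tY_m,\tW_m)$ and a limit $(\trho_0,\tu_0,\trho,\tq,\tu,\tY,\tW)$, all defined on $([0,1],\overline{\fB([0,1])},\fL)$ --- which we rename $\tprobsp$ --- such that the law of the $m$-th tuple equals $\mu_m$ for every $m$ (this is item~(1)) and the $m$-th tuple converges to the limit tuple in the topology of $\XX$ for every $\omega\in[0,1]$. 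Because the topology of $\XX$ is the product of the factor topologies (with $\XX_\rho$ carrying the join of its three topologies), the $\omega$-wise convergence in $\XX$ unpacks precisely into the list of marginal modes of convergence recorded in \eqref{eq:5.17}: strong in the Polish factors, weak in $W_t^{1,q}L_x^q\cap L_t^q W_x^{2,q}$ and in $L_t^2 W_x^{1,2}$, weak-$\ast$ in $L_t^\infty L_x^{\frac{2\Gamma}{\Gamma+1}}$, in $C_w([0,T];L^\Gamma(D))$, and coordinatewise in $[C([0,T])]^\NN$. This gives item~(2) and completes the proof.

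I do not expect a genuine obstacle in the statement itself: the analytic substance has been invested in the uniform bounds \eqref{eq:5.6}--\eqref{eq:5.12} and in the tightness argument of Proposition~\ref{prop:m:tightness}, and the only mildly new ingredient relative to \cite[Section~4.3]{BFHbook18} is the factor $\XX_Y$, for which one merely has to confirm, as above, that it is sub-Polish. The point worth flagging for the sequel is that the limit objects are at this stage only Borel random variables with prescribed laws: that $\tW$ is a cylindrical Wiener process with respect to the canonical filtration $\tFFF_t=\sigma(\sigma_t[\trho]\cup\sigma_t[\tu]\cup\sigma_t[\tW])$, and that this filtration is non-anticipative with respect to $\tW$ so that $\GG_\varepsilon(\trho,\trho\tu)d\tW$ is well-defined, will be established afterwards via Lemmas~\ref{lem:sufficient_cond_of_Wiener_by_law}, \ref{lem:sufficient_cond_of_(G_t)-Wiener }, and \ref{lem:stability_of_nonanti}, exactly as in Remark~\ref{rem:r.d.}; likewise the identification $\tq=\trho\tu$ and the identification of the components of $\tY$ with the pairings $\inner{\trho\tu}{\phi_{n_k}}$ will follow from the equality of laws, as in Lemma~\ref{lem:alpha(q = rho u)}, and will be used to recover the weak time-continuity of $\trho\tu$.
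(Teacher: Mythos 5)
Your proposal is correct and follows the same route as the paper: the paper states Proposition~\ref{prop:m:repr} as a direct consequence of Jakubowski's theorem (Theorem~\ref{thm:Jakubowski--Skorokhod}) applied to the tight family from Proposition~\ref{prop:m:tightness}, giving no further detail. Your explicit check that $\XX$ is sub-Polish (in particular the factor $\XX_Y = [C([0,T])]^\NN$, which the paper simply asserts is sub-Polish in Section~\ref{sec:m:Uniform estimates}) and your remarks on what remains to be identified afterwards are both consistent with the paper's treatment.
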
  
As stated in Remark \ref{rem:r.d.}, we may deduce that the filtration 
$$
    \tFFF_t:= \sigma \left(\sigma_t[\trho] \cup \sigma_t[\tu]\cup \sigma_t[\tW]\right),\quad t\in [0,T],
$$
is non-anticipating with respect to $\tW= \sum_{k=1}^{\infty} e_k \tW_k$, which is a cylindrical $(\tFFF_t)$-Wiener process on $\fU$. 
\begin{remark}
    \label{rem:m:initial_repr}
    In view of the convergence of initial conditions \eqref{eq:5.16} and \eqref{eq:5.17}, we obtain that 
    $$
        \Lambda = \LL[\trho_0,\trho_0 \tu_0]\quad \text{on } C^{2+\nu}(\oD) \times L^1(D).
    $$
\end{remark}
As remarked in Section \ref{sec:m:Uniform estimates}, the convergence of \eqref{eq:5.17} allows us to derive the convergence of $\trho_m \tu_m$ in $C_w([0,T];L^{\frac{2\Gamma }{\Gamma + 1}}(D))$.
\begin{lemma}
    \label{lem:m:q = rho u}
    We have $\tPP$-a.s.,
    $$
        \tq_m = \trho_m \tu_m,\quad \tY_m = \left(\inner{\trho_m \tu_m}{\phi_{n_k}}\right)_{k:m\geq n_k},\quad \tq = \trho \tu
    $$
    and 
    $$
        \trho_m \tu_m \to \trho \tu \quad \text{in } C_w([0,T];L^{\frac{2\Gamma }{\Gamma + 1}}(D)).
    $$
\end{lemma}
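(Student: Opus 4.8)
The plan is to read off the first two identities from the equality of laws in Proposition \ref{prop:m:repr}, then identify the distributional limit $\tq$, and finally upgrade the weak-$\ast$ convergence of $\trho_m\tu_m$ to convergence in $C_w([0,T];L^{2\Gamma/(\Gamma+1)}(D))$ by playing the uniform H\"older bound on the components of $\tY_m$ against the density of $(\phi_{n_k})_k$ in the predual $L^{2\Gamma/(\Gamma-1)}(D)$.

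First I would observe that the subset of the path space $\XX$ consisting of tuples for which $q=\rho u$ (in $L^\infty(0,T;L^{2\Gamma/(\Gamma+1)}(D))$) and $Y_k=\inner{\rho u}{\phi_{n_k}}$ for every $k$ with $m\geq n_k$ is Borel; since the original variables lie in it $\PP^m$-a.s.\ by the very definition of $Y_m$ (Section \ref{sec:m:Uniform estimates}), Proposition \ref{prop:m:repr}(1) transfers this, giving $\tq_m=\trho_m\tu_m$ and $\tY_m=(\inner{\trho_m\tu_m}{\phi_{n_k}})_{k:m\geq n_k}$ $\tPP$-a.s. For $\tq=\trho\tu$ I would combine two convergences: from $\trho_m\to\trho$ strongly in $L^p(0,T;W^{1,p}(D))\hookrightarrow L^2(0,T;L^2(D))$ (here $p>2$) and $\tu_m\weakarrow\tu$ in $L^2(0,T;L^2(D))$ one gets $\trho_m\tu_m\to\trho\tu$ in $\DD'((0,T)\times D)$, while $\tq_m=\trho_m\tu_m\weakstararrow\tq$ in $L^\infty(0,T;L^{2\Gamma/(\Gamma+1)}(D))$ by \eqref{eq:5.17}; uniqueness of limits in $\DD'$ gives $\tq=\trho\tu$ $\tPP$-a.s.

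The core of the lemma, and the step I expect to be the main obstacle, is the $C_w$-convergence: unlike in \cite{BFHbook18} no uniform-in-$m$ H\"older bound on $\rho_mu_m$ is available, so one cannot invoke the usual compactness in $C_w$, and must instead bootstrap from the scalars $\inner{\rho_mu_m}{\phi_{n_k}}$, which do carry a uniform H\"older bound by Proposition \ref{prop:m:Holder_conti_of_rho u}. Fix $\omega$ outside a $\tPP$-null set so that all convergences in \eqref{eq:5.17} hold; since $L^\infty(0,T;L^{2\Gamma/(\Gamma+1)}(D))$ is the dual of $L^1(0,T;L^{2\Gamma/(\Gamma-1)}(D))$, the weak-$\ast$ limit $\trho_m\tu_m\weakstararrow\trho\tu$ forces $C_\omega:=\sup_m\norm{\trho_m\tu_m}_{L^\infty_tL^{2\Gamma/(\Gamma+1)}_x}<\infty$; moreover for each fixed $m$ the density $\trho_m$ stays bounded away from zero on $[0,T]$ (inherited from \eqref{eq:3.75} via the norm equivalence on $V_m$ and \eqref{eq:5.9}), so that $\tu_m\in C([0,T];V_m)$ and $\trho_m\tu_m$ has a genuine representative in $C_w([0,T];L^{2\Gamma/(\Gamma+1)}(D))$, with which we work. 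For each $t$ the bounded sequence $\trho_m\tu_m(t)$ in the reflexive space $L^{2\Gamma/(\Gamma+1)}(D)$ has a unique weak limit point $v(t)$, pinned down by $\inner{v(t)}{\phi_{n_k}}=\tY_k(t)$ for all $k$ (using $\inner{\trho_m\tu_m(\cdot)}{\phi_{n_k}}=\tY_{m,k}\to\tY_k$ uniformly on $[0,T]$). Given $\psi\in L^{2\Gamma/(\Gamma-1)}(D)$ and $\eta>0$, pick $\phi_{n_k}$ with $\norm{\psi-\phi_{n_k}}<\eta$; then for $m\geq n_k$ and all $t$,
\[
\big|\inner{\trho_m\tu_m(t)-v(t)}{\psi}\big|\ \leq\ \sup_{s\in[0,T]}\big|\tY_{m,k}(s)-\tY_k(s)\big|\ +\ 2C_\omega\,\eta ,
\]
and letting $m\to\infty$ and then $\eta\to0$ gives $\sup_t|\inner{\trho_m\tu_m(t)-v(t)}{\psi}|\to0$; the analogous estimate bounding $|\inner{v(t)-v(s)}{\psi}|$ together with the continuity of $\tY_k$ shows $v\in C_w([0,T];L^{2\Gamma/(\Gamma+1)}(D))$. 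Hence $\trho_m\tu_m\to v$ in $C_w$, and since $\inner{v(t)}{\phi_{n_k}}=\tY_k(t)=\inner{\trho\tu(t)}{\phi_{n_k}}$ for a.e.\ $t$ and all $k$ while $(\phi_{n_k})_k$ is dense in $L^{2\Gamma/(\Gamma-1)}(D)$, we conclude $v=\trho\tu$ a.e.\ in $t$, i.e.\ $v$ is the $C_w$-representative of $\trho\tu$. The only bookkeeping to be careful with is collecting the countably many $\omega$-exceptional sets and noting that the $m$-dependence of the lower bound on $\trho_m$ is harmless, being used only for each fixed $m$.
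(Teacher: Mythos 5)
Your approach matches the paper's: both read the identities $\tq_m=\trho_m\tu_m$ and $\tY_m=(\inner{\trho_m\tu_m}{\phi_{n_k}})$ off the equality of laws, identify $\tq=\trho\tu$ via uniqueness of weak/distributional limits (exactly as in Lemma~\ref{lem:alpha(q = rho u)}), and obtain the $C_w$-convergence by combining the uniform $L^\infty_tL^{2\Gamma/(\Gamma+1)}_x$ bound on $\trho_m\tu_m$ with the uniform convergence $\tY_{m,k}\to\tY_k$ in $C([0,T])$ and density of $(\phi_{n_k})$ in $L^{2\Gamma/(\Gamma-1)}(D)$; the paper states this last step quite tersely and your $\eta$--$\phi_{n_k}$ approximation spells it out correctly.

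One side claim is not justified as you state it: that for fixed $m$ the density $\trho_m$ stays bounded away from zero on $[0,T]$ ``inherited from \eqref{eq:3.75} via the norm equivalence on $V_m$ and \eqref{eq:5.9}.'' The lower bound \eqref{eq:3.75} lives in the $R$-cutoff layer and involves $\exp(-\int_0^\tau\norm{u_R}_{W^{1,\infty}_x}\,dt)$; it does not transfer to the $\alpha\to0$ limit via \eqref{eq:5.9} (which is only a moment bound on $\int_0^T\norm{u}_{W^{1,2}_x}^2\,dt$, and $u_\alpha$ converges to $u$ only weakly in $L^2(0,T;V_m)$). Definition~\ref{def:sol_alpha} guarantees $\rho\ge 0$ and $u\in L^2(0,T;V_m)$, not $\rho>0$ or $u\in C([0,T];V_m)$, so this particular route to a $C_w$-representative of each $\trho_m\tu_m$ is not available on the face of the definition. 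Fortunately the claim is not needed: for $m\ge n_k$ the pairings $\inner{\trho_m\tu_m(\cdot)}{\phi_{n_k}}$ already have continuous representatives directly from $[\rho_m u_m]_m^\ast\in C([0,T];V_m^\ast)$, and your key inequality holds for a.e.~$t$ against any $\psi$ using the $L^\infty_t L^{2\Gamma/(\Gamma+1)}_x$ bound, which suffices to pass to the essential supremum. You then construct the $C_w$ limit $v$ and identify it with $\trho\tu$ exactly as you do; this is all the lemma requires.
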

\begin{proof}
    The first assertion follows in the same way as Lemma \ref{lem:alpha(q = rho u)}. In \eqref{eq:5.17}, the weak convergence of $\trho_m \tu_m$ implies the uniform boundedness of $\trho_m \tu_m$, and the convergence of $\tY_m$ implies
    $$
        \int_D \trho_m \tu_m \cdot \phi_{n_k} dx \to \int_D \trho \tu\cdot \phi_{n_k} dx \quad \text{in }C([0,T]),\quad \text{for any } k\in\NN\  \tPP\text{-a.s.}
    $$
    Combining these observations with the density of $(\phi_{n_k})_{k\in\NN}$ in $L^{\frac{2\Gamma }{\Gamma - 1}}(D)$ yields 
    $$
        \int_D \trho_m \tu_m \cdot \phi dx \to \int_D \trho \tu\cdot \phi dx \quad \text{in }C([0,T]),\quad \text{for all }\phi\in L^{\frac{2\Gamma }{\Gamma - 1}}(D)\  \tPP\text{-a.s.,}
    $$
    which implies the second assertion.
\end{proof}
By Lemma \ref{lem:m:q = rho u}, it is now possible to carry out a discussion as in Section \ref{sec:alpha(Asymptotic limit)}.
As in Lemma \ref{lem:alpha(conv rho u otimes u)}, we obtain the following convergence.
\begin{lemma}
    \label{lem:m:conv rho u otimes u}
    We have $\tPP$-a.s.,
    \begin{align}
        \trho_m \tu_m\otimes \tu_m \to \trho \tu\otimes \tu\quad \text{in}\ L^1(0,T;L^1(D)). \label{eq:5.18}
    \end{align}
\end{lemma}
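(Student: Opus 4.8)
The plan is to follow, step for step, the proof of Lemma~\ref{lem:alpha(conv rho u otimes u)}, the only structural change being that the fixed finite–dimensional space $V_m^\ast$ exploited there is no longer available and must be replaced by a dual Sobolev space. Writing $a:=\sqrt{\trho}\,\tu$ and $b_m:=\sqrt{\trho_m}\,\tu_m$, the algebraic identity $a\otimes a-b_m\otimes b_m=(a-b_m)\otimes a+b_m\otimes(a-b_m)$ and the Cauchy--Schwarz inequality give
\begin{align*}
    \norm{\trho\tu\otimes\tu-\trho_m\tu_m\otimes\tu_m}_{L_t^1L_x^1}\le \norm{a-b_m}_{L_t^2L_x^2}\Big(\norm{a}_{L_t^2L_x^2}+\sup_{m}\norm{b_m}_{L_t^2L_x^2}\Big),
\end{align*}
and since $\norm{b_m}_{L_t^2L_x^2}^2\le \norm{\trho_m}_{L_t^\infty L_x^\Gamma}\norm{\tu_m}_{L_t^2W_x^{1,2}}^2$ (using $\Gamma\ge 6$ and $W^{1,2}(D)\hookrightarrow L^6(D)$) the bracket is bounded $\tPP$-a.s.\ by the a.s.\ convergences of \eqref{eq:5.17}. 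Hence it suffices to prove $\sqrt{\trho_m}\tu_m\to\sqrt{\trho}\tu$ in $L^2(0,T;L^2(D))$ $\tPP$-a.s., and since $L^2((0,T)\times D)$ is a Hilbert space this reduces to the two claims, both $\tPP$-a.s.: (i) $\norm{\sqrt{\trho_m}\tu_m}_{L_t^2L_x^2}\to\norm{\sqrt{\trho}\tu}_{L_t^2L_x^2}$; and (ii) $\sqrt{\trho_m}\tu_m\weakarrow\sqrt{\trho}\tu$ in $L^2(0,T;L^2(D))$.

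For (ii), the bound above shows $(\sqrt{\trho_m}\tu_m)_m$ is bounded, hence weakly precompact, in $L^2((0,T)\times D)$; testing against $\psi\in C_c^\infty((0,T)\times D)$ one writes $\intT\int_D \sqrt{\trho_m}\tu_m\cdot\psi\,dx\,dt=\intT\int_D \tu_m\cdot\sqrt{\trho_m}\psi\,dx\,dt$. From $\trho_m\to\trho$ in $L^p(0,T;W^{1,p}(D))$ with $p>2$ together with the bound on $\norm{\sqrt{\trho_m}}_{L_t^\infty L_x^{2\Gamma}}=\norm{\trho_m}_{L_t^\infty L_x^\Gamma}^{1/2}$, Vitali's theorem gives $\sqrt{\trho_m}\to\sqrt{\trho}$ in $L^2((0,T)\times D)$, so $\sqrt{\trho_m}\psi\to\sqrt{\trho}\psi$ strongly there; combined with $\tu_m\weakarrow\tu$ in $L^2(0,T;W^{1,2}_{\rn}(D))$ this yields convergence of the integrals to $\intT\int_D \sqrt{\trho}\tu\cdot\psi\,dx\,dt$, so the weak limit is $\sqrt{\trho}\tu$ and, by uniqueness, the full sequence converges weakly.

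For (i), which I expect to be the main obstacle, the plan is to pass to the limit in $\norm{\sqrt{\trho_m}\tu_m}_{L_t^2L_x^2}^2=\intT\int_D \trho_m\tu_m\cdot\tu_m\,dx\,dt$ by pairing a strongly convergent momentum against the weakly convergent velocity. By Lemma~\ref{lem:m:q = rho u}, $\trho_m\tu_m\to\trho\tu$ in $C_w([0,T];L^{\frac{2\Gamma}{\Gamma+1}}(D))$, and $\trho_m\tu_m$ is bounded in $L^\infty(0,T;L^{\frac{2\Gamma}{\Gamma+1}}(D))$ by \eqref{eq:5.8}; since $\Gamma\ge 6$ forces $\frac{2\Gamma}{\Gamma-1}<6$, the embedding $W^{1,2}(D)\hookrightarrow L^{\frac{2\Gamma}{\Gamma-1}}(D)$ is compact, and dualizing, $L^{\frac{2\Gamma}{\Gamma+1}}(D)\hookrightarrow\hookrightarrow (W^{1,2}(D))^\ast$. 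Combining these (an Arzelà--Ascoli argument, the equicontinuity in time being supplied by Proposition~\ref{prop:m:Holder_conti_of_rho u} along a dense set of test functions) upgrades the convergence to $\trho_m\tu_m\to\trho\tu$ in $C([0,T];(W^{1,2}(D))^\ast)$, in particular strongly in $L^2(0,T;(W^{1,2}(D))^\ast)$. Because $\trho_m\tu_m(t)\in L^{\frac{2\Gamma}{\Gamma+1}}(D)\hookrightarrow L^{6/5}(D)\hookrightarrow (W^{1,2}(D))^\ast$ acts on $W^{1,2}(D)$ by integration, $\intT\int_D\trho_m\tu_m\cdot\tu_m\,dx\,dt=\intT\ev{\trho_m\tu_m,\tu_m}_{(W^{1,2}(D))^\ast,W^{1,2}(D)}\,dt$, and the bilinear pairing of the strongly convergent sequence $\trho_m\tu_m$ in $L^2(0,T;(W^{1,2}(D))^\ast)$ with the weakly convergent sequence $\tu_m\weakarrow\tu$ in $L^2(0,T;W^{1,2}(D))$ passes to the limit, giving $\norm{\sqrt{\trho_m}\tu_m}_{L_t^2L_x^2}^2\to\intT\int_D\trho\tu\cdot\tu\,dx\,dt=\norm{\sqrt{\trho}\tu}_{L_t^2L_x^2}^2$. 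Then (i) and (ii) together give $\sqrt{\trho_m}\tu_m\to\sqrt{\trho}\tu$ strongly in $L^2((0,T)\times D)$, and the displayed estimate yields \eqref{eq:5.18}. The genuinely new point compared with Section~\ref{sec:alpha(Asymptotic limit)} is exactly this upgrade of the merely weak-in-time continuity of $\trho_m\tu_m$ to strong convergence in a negative-order space, which in the Galerkin layer was free from the finiteness of $V_m$ but here must be extracted from the spatial compactness $L^{\frac{2\Gamma}{\Gamma+1}}(D)\hookrightarrow\hookrightarrow(W^{1,2}(D))^\ast$ and the time-regularity of $\rho u$.
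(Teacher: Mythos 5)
Your proof is correct and follows essentially the same route as the paper's: reduce the statement to strong $L^2_{t,x}$ convergence of $\sqrt{\trho_m}\tu_m$, then obtain the norm convergence by upgrading the $C_w([0,T];L^{2\Gamma/(\Gamma+1)}(D))$ limit of $\trho_m\tu_m$ (Lemma~\ref{lem:m:q = rho u}) to strong convergence in $L^2(0,T;W^{-1,2}(D))$ via the compact embedding $L^{2\Gamma/(\Gamma+1)}(D)\hookrightarrow\hookrightarrow W^{-1,2}(D)$, and pair this with the weakly convergent velocity. The one small inefficiency is the detour through $C([0,T];(W^{1,2}(D))^\ast)$ with an Arzel\`a--Ascoli argument and Proposition~\ref{prop:m:Holder_conti_of_rho u}: the $L^2(0,T;W^{-1,2}(D))$ convergence already follows from the $C_w$-limit, the uniform $L^\infty_t L^{2\Gamma/(\Gamma+1)}_x$ bound, and the compact embedding (pointwise strong convergence in $W^{-1,2}(D)$ for each $t$ plus dominated convergence), which is exactly the paper's shorter argument.
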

\begin{proof}
    Similarly to the proof of Lemma \ref{lem:alpha(conv rho u otimes u)}, it suffices to prove that $\tPP$-a.s.,
    \begin{align}
        \sqrt{\trho_m}\tu_m \to  \sqrt{\trho}\tu \quad \text{in}\ L^2(0,T;L^2(D)).    \label{eq:5.19}
    \end{align}
    To this end, we will prove that $\tPP$-a.s.,
    \begin{align*}
        &\norm{\sqrt{\trho_m}\tu_m}_{L_t^2 L_x^2} \to \norm{\sqrt{\trho}\tu}_{L_t^2 L_x^2}\quad \text{in}\ \RR.
    \end{align*}
    By Lemma \ref{lem:m:q = rho u}, and the compact embedding 
    $$
        L^{\frac{2\Gamma}{\Gamma +1}}(D) \cptarrow W^{-1,2}(D),
    $$
    we have $\tPP$-a.s. 
    $$
        \trho_m\tu_m\to \trho \tu \quad \text{in } L^2(0,T;W^{-1,2}(D)).
    $$
    Thus, combining this with the convergence of $\tu_m$ in \eqref{eq:5.17} yields 
    \begin{align*}
        &\norm{\sqrt{\trho_m}\tu_m}_{L_t^2 L_x^2}^2 = \intT \int_D \trho_m \tu_m \cdot \tu_m dxdt \\ 
        &\quad \to \intT \int_D \trho \tu \cdot \tu dx dt = \norm{\sqrt{\trho}\tu}_{L_t^2 L_x^2}^2\quad \tPP\text{-a.s.}
    \end{align*}
\end{proof}
Similarly to Proposition \ref{prop:eq_of_conti1}, by virtue of \eqref{eq:5.17}, $(\trho,\tu)$ satisfies the equation of continuity \eqref{eq:approx_CE_m} a.e. in $(0,T)\times D$, $\tPP$-a.s.
\begin{proposition}
    \label{prop:m:CE}
    $(\trho,\tu)$ satisfies the approximate equation of continuity \eqref{eq:approx_CE_m} a.e. in $(0,T)\times D$ $\tPP$-a.s.
\end{proposition}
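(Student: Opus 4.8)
The plan is to follow the same route as in the proof of Proposition \ref{prop:eq_of_conti1}: transfer the weak form of the approximate continuity equation to the new probability space, pass to the limit $m\to\infty$, and then upgrade the resulting distributional identity to a pointwise one using the parabolic regularity bounds. First I would use the equality of laws from Proposition \ref{prop:m:repr}, together with the separability of $C_c^\infty(\ico{0}{T}\times\RR^3)$, to deduce that $\trho_m$ satisfies the Neumann condition $\nabla\trho_m\cdot\rn|_{\pD}=0$ and the weak identity
\begin{align*}
    -\intT \pp_t\phi\int_D \trho_m\psi \,dxdt &= \phi(0)\int_D \trho_{0,m}\psi\,dx + \intT \phi\int_D \trho_m\tu_m\cdot\nabla\psi\,dxdt \\
    &\quad + \varepsilon\intT \phi\int_D \nabla\trho_m\cdot\nabla\psi\,dxdt
\end{align*}
for all $\phi\in C_c^\infty(\ico{0}{T})$ and all $\psi\in C_c^\infty(\RR^3)$, $\tPP$-a.s., where I use $\tq_m=\trho_m\tu_m$ from Lemma \ref{lem:m:q = rho u}.

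Next I would pass to the limit in this identity with the help of the convergences in \eqref{eq:5.17}. The linear terms are immediate: $\pp_t\phi\,\trho_m$ and the diffusion term converge by the strong convergence $\trho_m\to\trho$ in $L^p(0,T;W^{1,p}(D))$, and the initial term converges by $\trho_{0,m}\to\trho_0$ in $C(\oD)$, with $\trho_0$ identified through Remark \ref{rem:m:initial_repr}. For the convective term I would combine the strong convergence of $\trho_m$ in $L^p((0,T)\times D)$ with $p>2$ and the weak convergence $\tu_m\weakarrow\tu$ in $L^2(0,T;W^{1,2}_{\rn}(D))$ to obtain $\trho_m\tu_m\weakarrow\trho\tu$ in $L^1((0,T)\times D)$, which suffices when tested against the smooth function $\phi\nabla\psi$. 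The Neumann condition for $\trho$ then follows from the weak convergence of $\trho_m$ in $L^q(0,T;W^{2,q}(D))$, exactly as in Remark \ref{rem:NeumannBC}.

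Finally, I would invoke the uniform bounds \eqref{eq:5.10}--\eqref{eq:5.11}, which persist on the new probability space, to conclude that $\trho\in L^p(0,T;W^{2,p}(D))$ with $\pp_t\trho\in L^p((0,T)\times D)$ for some $p>1$, $\tPP$-a.s.; this regularity allows the distributional identity above to be rewritten as \eqref{eq:approx_CE_m} holding a.e. in $(0,T)\times D$, $\tPP$-a.s. I do not anticipate any genuine obstacle: the argument is essentially identical to Proposition \ref{prop:eq_of_conti1}, and the only step deserving a moment's attention is the limit in the product $\trho_m\tu_m$, which is handled by the strong/weak pairing just described rather than by any spatial compactness of $\tu_m$.
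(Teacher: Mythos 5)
Your proposal is correct and follows essentially the same route the paper sketches; the paper's own "proof" of Proposition~\ref{prop:m:CE} is a one-line reference to Proposition~\ref{prop:eq_of_conti1} and to the convergences \eqref{eq:5.17}, and you fill in exactly the details it leaves implicit (equality-of-laws transfer, passage to the limit in the weak form, and the upgrade to an a.e.\ identity via \eqref{eq:5.10}--\eqref{eq:5.11}). The one remark worth recording: for the convective term you could invoke Lemma~\ref{lem:m:q = rho u} directly, since it already gives $\trho_m\tu_m\to\trho\tu$ in $C_w([0,T];L^{\frac{2\Gamma}{\Gamma+1}}(D))$ $\tPP$-a.s.\ and $\phi\nabla\psi$ is a fixed smooth test function; your strong-$\times$-weak pairing is equally valid but does slightly more work than necessary.
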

Next, we pass to the limit $m\to \infty$ in the interior momentum equation \eqref{eq:approx_ME_alpha}. In view of the density of $\bigcup_{m=1}^\infty V_{m}$ in $W_{\rn}^{1,p}(D)$, the proof can be carried out by the same way as in the proof of Proposition \ref{prop:IME_alpha} with the replacement of $V_m$ by $C_c^\infty(D)$. Therefore, the following result follows. 
\begin{proposition}
    \label{prop:m:IME}
    $(\trho,\tu,\tW)$ satisfies the approximate momentum equation \eqref{eq:approx_ME_m} for all $\tau\in [0,T]$ and all $\bphi\in C_c^\infty(D)$ $\tPP$-a.s.
\end{proposition}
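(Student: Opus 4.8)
\emph{Proof proposal.} The plan is to transplant \eqref{eq:approx_ME_alpha} to the new probability space via the equality of laws and then pass to the limit $m\to\infty$ essentially as in the proof of Proposition~\ref{prop:IME_alpha}, the only genuinely new issue being that the admissible test functions are no longer confined to a single Galerkin space. First I would fix $m_0\in\NN$ and $\bphi\in V_{0,m_0}$. Since the $V_m$ are nested and $(V_{0,m})$ is a subsequence of $(V_m)$, we have $V_{0,m_0}\subset V_m$ for all sufficiently large $m$, and since $\bphi\in C_c^2(D)$ has vanishing boundary trace, Proposition~\ref{prop:m:repr}~(1) shows that $(\trho_m,\tu_m,\tW_m)$ satisfies \eqref{eq:approx_ME_alpha} with this $\bphi$ for all large $m$ (after testing against an arbitrary $\phi\in C_c^\infty(\ico{0}{T})$; here one uses, as in Proposition~\ref{prop:IME_alpha}, that the stochastic integral on the new space is the one generated by $\tW_m$, and that $\tW$ is non-anticipative with respect to the canonical filtration, cf.\ Remark~\ref{rem:r.d.}).

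Next I would let $m\to\infty$ term by term, using \eqref{eq:5.17}, Lemma~\ref{lem:m:q = rho u} and Lemma~\ref{lem:m:conv rho u otimes u}. The terms with $\pp_t\phi\int_D\trho_m\tu_m\cdot\bphi$ and $\varepsilon\int_D\trho_m\tu_m\cdot\Delta\bphi$ converge because $\trho_m\tu_m\to\trho\tu$ in $C_w([0,T];L^{\frac{2\Gamma}{\Gamma + 1}}(D))$ while $\bphi,\Delta\bphi$ are fixed; the convective term converges by Lemma~\ref{lem:m:conv rho u otimes u}; the viscous term converges since $\tu_m\weakarrow\tu$ in $L^2(0,T;W_\rn^{1,2}(D))$ implies $\SSS(\nabla\tu_m)\weakarrow\SSS(\nabla\tu)$ in $L^2((0,T)\times D)$; and the pressure term converges because the strong convergence $\trho_m\to\trho$ in $L^p(0,T;W^{1,p}(D))$ gives, along a subsequence, $\trho_m\to\trho$ a.e., hence $p_\delta(\trho_m)\to p_\delta(\trho)$ a.e., while the uniform $L_t^\infty L_x^\Gamma$-bound from Remark~\ref{rem:m(integrability_of_sol)} together with $\Gamma>1$ yields equi-integrability, so convergence holds in $L^1((0,T)\times D)$ by Vitali's theorem. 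For the stochastic integral I would reproduce the argument of Proposition~\ref{prop:IME_alpha}: with $\OO_\kappa=\{\trho\ge\kappa\}$ and $\kappa<\varepsilon$, use the a.e.\ convergence of $\trho_m$ and of $\sqrt{\trho_m}\tu_m$ (the latter being the content of the proof of Lemma~\ref{lem:m:conv rho u otimes u}) together with the cut-off structure of $F_{k,\varepsilon}$ and dominated convergence to get $F_{k,\varepsilon}(\trho_m,\tu_m)\to F_{k,\varepsilon}(\trho,\tu)$ in $L^2(\tOmega\times(0,T)\times D)$, upgrade via \eqref{eq:3.10}, \eqref{eq:5.7} and interpolation to $\trho_m F_{k,\varepsilon}(\trho_m,\tu_m)\to\trho F_{k,\varepsilon}(\trho,\tu)$ in $L^2(\tOmega\times(0,T)\times D)$, sum over $k$ using $\sum_k f_{k,\varepsilon}^2<\infty$, and conclude via Lemma~\ref{lem:conv_of_stoch_int} and $\tW_m\to\tW$. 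This yields \eqref{eq:approx_ME_m} for every $\bphi\in\bigcup_{m_0}V_{0,m_0}$.

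Finally I would remove the restriction on the test function by density. Given $\bphi\in C_c^\infty(D)$, choose $\bphi_j\in\bigcup_{m_0}V_{0,m_0}$ with $\bphi_j\to\bphi$ in $W_0^{1,q}(D)$ for a fixed $q$ large enough (say $q=4$, possible since $\bigcup_{m_0}V_{0,m_0}$ is dense in $W_0^{1,q}(D)$), and pass $j\to\infty$ in \eqref{eq:approx_ME_m} written for $\bphi_j$. All deterministic terms pass by Hölder's inequality using the bounds of Lemma~\ref{lem:m:unif_est_others} and \eqref{eq:5.7}--\eqref{eq:5.8}; the term $\varepsilon\int_D\trho\tu\cdot\Delta\bphi_j\,dx$ is first rewritten by integration by parts (legitimate since $\bphi_j\in C_c^2(D)$ and $\trho\tu\in W^{1,1}(D)$ for a.a.\ $t$, with $\nabla(\trho\tu)=\nabla\trho\otimes\tu+\trho\nabla\tu$ lying in a space dual to $L^q$ by \eqref{eq:5.7}--\eqref{eq:5.8} and $W^{1,2}\hookrightarrow L^6$), so that only $\nabla\bphi_j\to\nabla\bphi$ in $L^q(D)$ is needed; the stochastic integral passes by the Burkholder--Davis--Gundy inequality and dominated convergence since $\bphi_j\to\bphi$ in $L^2(D)$ and $\norm{\GG_\varepsilon(\trho,\trho\tu)}_{L_2(\fU,L^2_x)}\lesssim\norm{\trho}_{L^\Gamma_x}$ by \eqref{eq:3.10}. \textbf{Main obstacle.} I expect the delicate point to be the convergence of the stochastic term, precisely the passage $F_{k,\varepsilon}(\trho_m,\tu_m)\to F_{k,\varepsilon}(\trho,\tu)$: the velocity enters through $\tu=(\trho\tu)/\trho$, controlled only away from vacuum, so one must use the cut-off $\chi(\varepsilon/\trho-1)$ to annihilate the nonlinearity on $\OO_\kappa^c$ and couple it with the \emph{strong} convergence of $\sqrt{\trho_m}\tu_m$ rather than the merely weak convergence of $\tu_m$; by contrast the new bookkeeping relative to Proposition~\ref{prop:IME_alpha}, namely the integration by parts forced by test functions leaving the fixed space $V_m$, is comparatively routine.
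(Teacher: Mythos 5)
Your argument takes the same route the paper intends: the paper's own proof of this proposition is two sentences that invoke the density of $\bigcup_{m}V_m$ and refer back to Proposition~\ref{prop:IME_alpha}, and you fill in exactly the details left implicit, correctly specializing to $\bigcup_{m}V_{0,m}$ dense in $W_0^{1,q}(D)$ (since $\bphi\in C_c^\infty(D)$ must vanish on $\pD$, the density in $W_\rn^{1,p}(D)$ cited by the paper is not quite the right one) and spotting the integration by parts needed because that density controls only first derivatives of the approximating $\bphi_j$. One place where your justification is slightly loose, though the paper glosses over it in Proposition~\ref{prop:IME_alpha} as well: the uniform integrability of $p_\delta(\trho_m)$ needed for Vitali does not follow from the $L^\infty_t L^\Gamma_x$-bound alone, which only puts $\trho_m^\Gamma$ in $L^\infty_t L^1_x$; one should also invoke the strong convergence in $L^p(0,T;W^{1,p}(D))$ with $p>2$ from \eqref{eq:5.17} (equivalently, the estimate \eqref{eq:5.7}) to upgrade $p_\delta(\trho_m)$ to a uniform $L^q((0,T)\times D)$-bound for some $q>1$.
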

\begin{remark}
    \label{rem:m:conv of each term of ME}
    From the same reason as Remark \ref{rem:alpha(conv of each term of ME)}, the convergence to each term in the momentum equation \eqref{eq:approx_ME_m} actually holds for any $\bphi\in C^\infty(\oD)$, but the equality holds only for $\bphi\in C_c^\infty(D)$.
\end{remark}
Finally, we pass to the limit as $m\to \infty$ in the momentum and energy inequality \eqref{eq:approx_MEI_alpha}. 
For the momentum part, recalling the density of $\bigcup_{m=1}^\infty V_m$ in $W^{1,p}_{\rn}(D)$, we can pass to the limit in the same way as in Proposition \ref{prop:m:IME}.
For the energy part, we can pass to the limit in the same way as in Proposition \ref{prop:alpha:MEI}. 
Thus, we obtain the following result.
\begin{proposition}
    \label{prop:m:MEI}
    $(\trho,\tu,\tW)$ satisfies the approximate momentum and energy inequality \eqref{eq:approx_MEI_m} for all $\tau\in [0,T]$, all $\phi\in C_c^\infty(\ico{0}{\tau})$, and all $\bphi\in C^\infty(\oD)$ with $\bphi\cdot \rn |_{\pD} = 0$ $\tPP$-a.s.
\end{proposition}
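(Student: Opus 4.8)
The plan is to carry out the same stochastic‑compactness transfer used in Propositions~\ref{prop:IME_alpha} and~\ref{prop:alpha:MEI}, now along the sequence furnished by Proposition~\ref{prop:m:repr}. First I would record that each triple $(\trho_m,\tu_m,\tW_m)$ satisfies the $m$‑layer momentum and energy inequality~\eqref{eq:approx_MEI_alpha} for all $\tau\in[0,T]$, all $\phi\in C_c^\infty(\ico{0}{\tau})$ and all $\bphi\in V_m$, $\tPP$‑a.s. Since~\eqref{eq:approx_MEI_alpha} contains a stochastic integral, equality of laws (Proposition~\ref{prop:m:repr}(1)) is not by itself enough; as in the proof of Proposition~\ref{prop:ME1} one identifies the laws of the time‑ and finite‑dimensionally discretised weak forms of the momentum equation and of the energy equality~\eqref{eq:EB_R}, passes to the limit in the discretisation, and then recovers~\eqref{eq:approx_MEI_alpha} on the new probability space exactly as in Proposition~\ref{prop:alpha:MEI} (using the convexity bound~\eqref{eq:3.63} for the It\^o correction). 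In the same step the uniform estimates~\eqref{eq:5.6}--\eqref{eq:5.12} transfer to $(\trho_m,\tu_m)$.

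Next I would fix $\tau$, $\phi\in C_c^\infty(\ico{0}{\tau})$ and $\bphi\in C^\infty(\oD)$ with $\bphi\cdot\rn|_{\pD}=0$, pick $\bphi_k\in V_{m_k}$ with $\bphi_k\to\bphi$ in $W^{1,p}_{\rn}(D)$ for every $p<\infty$ (density of $\bigcup_m V_m$), and pass to the limit $m\to\infty$ in~\eqref{eq:approx_MEI_alpha} written with $\bphi_k$ (legitimate since $\bphi_k\in V_m$ for $m\ge m_k$). The terms depending on $\bphi$ are handled exactly as in Proposition~\ref{prop:m:IME}: the convergences~\eqref{eq:5.17}, Lemma~\ref{lem:m:q = rho u} and the strong convergence $\trho_m\tu_m\otimes\tu_m\to\trho\tu\otimes\tu$ in $L^1((0,T)\times D)$ from Lemma~\ref{lem:m:conv rho u otimes u} take care of the convective, pressure and linear viscous terms; the improved integrability~\eqref{eq:5.10}--\eqref{eq:5.11} of $\trho_m$ and $\trho_m\tu_m$ (together with an integration by parts exploiting $\nabla\trho_m\cdot\rn|_{\pD}=0$ and $\tu_m\cdot\rn|_{\pD}=0$) takes care of the $\varepsilon$‑regularisation term containing $\Delta\bphi_k$; and the trace estimate takes care of $\int_{\pD}g\,|\phi\bphi_k|\,d\Gamma$. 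The energy terms and $\int_{\pD}g\,|u|\,d\Gamma$ are kept in the right direction by weak lower semicontinuity for the topologies of~\eqref{eq:5.17}, and the quadratic‑variation term together with the stochastic integral converge as in Proposition~\ref{prop:alpha(conv of stoch int in MEI)}, via a.e.\ convergence of $F_{k,\varepsilon}(\trho_m,\tu_m)$, dominated convergence and Lemma~\ref{lem:conv_of_stoch_int}. This gives~\eqref{eq:approx_MEI_m} with $\bphi_k$; letting $k\to\infty$ and using $\bphi_k\to\bphi$ in $W^{1,p}_{\rn}(D)$ (and in the traces) yields it for $\bphi$, first for a.e.\ $\tau$ and then, by weak time‑continuity of $\trho$ and $\trho\tu$ together with weak lower semicontinuity of the energy, for every $\tau\in[0,T]$ — exactly as at the end of Proposition~\ref{prop:alpha:MEI}.

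The step I expect to be the main obstacle is the passage to the limit in the test‑function terms. Unlike the interior momentum equation~\eqref{eq:approx_ME_m}, whose test functions lie in $C_c^\infty(D)$ so that every integration by parts is boundary‑free, here $\bphi$ merely has vanishing normal trace, so the term $\varepsilon\int_D\rho u\cdot\Delta(\phi\bphi)$ cannot be passed to the limit naively: $\bigcup_m V_m$ is only dense in $W^{1,p}_{\rn}(D)$, not in $W^{2,p}(D)$. One has to move a derivative onto $\trho_m\tu_m$, using its $W^{1,\cdot}$‑bound from~\eqref{eq:5.10}--\eqref{eq:5.11} and the vanishing normal traces of $\nabla\trho_m$ and $\tu_m$, so that only $\nabla\bphi_k$ enters the bulk integral and the boundary contribution is controlled, and then verify — using the $L^qW^{2,q}$‑regularity of $\trho$ — that the resulting limit coincides with the form recorded in Definition~\ref{def:sol_m}. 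Beyond this, the only conceptual difficulty is the usual one in these sections: the velocity converges merely weakly, so the energy balance survives only as an inequality and must be combined with the weak time‑continuity of $\trho$ and $\trho\tu$.
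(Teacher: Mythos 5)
Your overall strategy matches the paper's own (three‑line) argument: transfer \eqref{eq:approx_MEI_alpha} to the new probability space via equality of laws and discretisation; approximate a given $\bphi\in C^\infty(\oD)$ with $\bphi\cdot\rn|_{\pD}=0$ by $\bphi_k\in V_{m_k}$ using density of $\bigcup_m V_m$; pass $m\to\infty$ as in Propositions~\ref{prop:m:IME} and~\ref{prop:alpha:MEI}; and finally upgrade from a.e.\ $\tau$ to all $\tau\in[0,T]$ by the weak time‑continuity of $\trho$ and $\trho\tu$ together with lower semicontinuity of the energy. Your flagged worry about the term $\varepsilon\int_D\rho u\cdot\Delta(\phi\bphi)$ is also a legitimate observation about a point the paper glosses over: the paper only asserts density of $\bigcup_m V_m$ in $W^{1,p}_{\rn}(D)$, whereas this term sees $\Delta\bphi_k$.

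However, the repair you propose does not close that gap. Integrating by parts once gives
\begin{align*}
\varepsilon\int_D\trho_m\tu_m\cdot\Delta(\phi\bphi_k)\,dx
&= -\varepsilon\int_D\nabla(\trho_m\tu_m):\nabla(\phi\bphi_k)\,dx
+\varepsilon\,\phi\int_{\pD}\trho_m\tu_m\cdot\bigl[(\nabla\bphi_k)\rn\bigr]\,d\Gamma .
\end{align*}
In Proposition~\ref{prop:m:IME} the boundary integral is absent because the target test function, and hence the approximants $\bphi_k\in V_{0,m_k}\subset C_c^2(D)$, are compactly supported; here it is not, and the vanishing normal traces you invoke do not kill it. Indeed $\nabla\trho_m\cdot\rn|_{\pD}=0$ says nothing about the vector $\trho_m\tu_m$, while $\tu_m\cdot\rn|_{\pD}=0$ only makes $\trho_m\tu_m$ tangential on $\pD$; the factor $(\nabla\bphi_k)\rn=\pp_\rn\bphi_k$ has an unconstrained tangential component on $\pD$ (the constraint $\bphi_k\cdot\rn|_{\pD}=0$ restricts tangential derivatives of $\bphi_k$, not its normal derivative), so the integrand is not identically zero. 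Controlling this boundary term from $W^{1,p}$-convergence of $\bphi_k$ alone is also impossible, since it requires the trace of $\nabla\bphi_k$ to converge, i.e.\ roughly $W^{2,p}$-convergence. The cleanest way out is to observe that the basis in \cite[Theorem 11.19]{FN17} can (and should) be taken so that $\bigcup_m V_m$ is dense in $W^{2,p}_{\rn}(D)$ as well — then the limit passage in this term is direct and no integration by parts is needed — but this is a sharpening of what the paper states, and your text should say so explicitly rather than rely on the normal-trace conditions. The remainder of your outline (convective, pressure and linear viscous terms via \eqref{eq:5.17} and Lemma~\ref{lem:m:conv rho u otimes u}; the stochastic integral and quadratic‑variation term via Lemma~\ref{lem:conv_of_stoch_int}; lower semicontinuity of the energy and the boundary barrier) is consistent with what the paper intends.
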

The proof of Theorem \ref{thm:sol_m} is hereby complete.

\section{Vanishing viscosity limit}\label{sec:Vanishing viscosity limit}
Our next goal is to let $\varepsilon\to 0$ in the approximate system \eqref{eq:approx_CE_m}--\eqref{eq:approx_MEI_m}. A rigorous formulation reads as follows.
\begin{definition}
    \label{def:sol_ep}
	Let $\Lambda$ be a Borel probability measure on $L^1(D)\times L^1(D)$. Then $((\Omega,\FFF,(\FFF_t)_{t\geq 0},\PP),\rho,u,W)$ is called a \textit{dissipative martingale solution in $\delta$ layer} with the initial law $\Lambda$, if:
	\begin{itemize}
		\item[(1)] $(\Omega,\FFF,(\FFF_t)_{t\geq 0},\PP)$ is a stochastic basis with a complete right-continuous filtration;
		\item[(2)] $W$ is a cylindrical $(\FFF_t)$-Wiener process on $\fU$; 
		\item[(3)] the density $\rho$ is an $(\FFF_t)$-adapted stochastic process, and the velocity $u$ is an $(\FFF_t)$-adapted random distribution such that $\PP$-a.s.
		\begin{align*}
		   &\rho \geq 0,\quad \rho \in C_w([0,T];L^\Gamma(D)),\quad u\in L^2(0,T;W_{\rn }^{1,2}(D)),\quad \rho u \in C_w([0,T];L^{\frac{2\Gamma}{\Gamma + 1}}(D));   
		\end{align*}
		\item[(4)] there exists an $L^1(D)\times L^1(D)$-valued $\FFF_0$-measurable random variable $(\rho_0,u_0)$ such that $\rho_0 u_0\in L^1(D)$ $\PP$-a.s., and $\Lambda = \LL[\rho_0,\rho_0 u_0]$;
		\item[(5)] the equation of continuity 
		\begin{equation}
            \label{eq:approx_CE_ep}
			-\intT \pp_t\phi \int_D \rho \psi dxdt = \phi(0)\int_D \rho_0\psi dx + \intT \phi \int_D \rho u\cdot \nabla\psi dxdt
        \end{equation}
		holds for all $\phi\in C_c^\infty(\ico{0}{T})$ and all $\psi\in C_c^\infty\left(\RR^3\right)$ $\PP$-a.s.;
		\item[(6)] the approximate interior momentum equation 
		\begin{align}
            -\intT \pp_t\phi \int_D \rho u\cdot \bphi dx dt &= \phi(0)\int_D \rho_0 u_0 \cdot \bphi dx  + \intT\phi \int_D \left[(\rho u \otimes u):\nabla\bphi + p_\delta (\rho)\rdiv \bphi\right]dxdt  \notag \\ 
            &\quad - \intT \phi\int_D \SSS(\nabla u):\nabla\bphi dxdt + \intT \phi\int_D \GG(\rho,\rho u) \cdot \bphi dxdW  \label{eq:approx_ME_ep}
        \end{align}
		holds for all $\phi\in C_c^\infty(\ico{0}{T})$ and all $\bphi\in C_c^\infty(D)$ $\PP$-a.s.;
        \item[(7)] the approximate momentum and energy inequality 
        \begin{align}
            &\int_D \left[\frac 1 2 \rho_0 |u_0|^2 + P_\delta(\rho_0)\right]dx - \int_D \left[\frac 1 2 \rho |u|^2 + P_\delta(\rho)\right](\tau)dx - \inttau \pp_t\phi \int_D \rho u\cdot \bphi dx - \phi(0)\int_D \rho_0 u_0 \cdot \bphi dx \notag\\ 
            &\quad - \inttau \phi \int_D \left[(\rho u \otimes u):\nabla\bphi + p_\delta (\rho)\rdiv \bphi\right]dxdt + \inttau \int_D \SSS(\nabla u):\nabla(\phi\bphi- u) dxdt \notag \\ 
            &\quad +\frac 1 2 \sum_{k=1}^{\infty} \inttau \int_D \rho^{-1} \left|G_{k}(\rho,\rho u) \right|^2 dx dt - \inttau \int_D \GG(\rho,\rho u) \cdot (\phi\bphi - u) dx dW  \notag \\ 
            &\quad + \inttau \int_{\pD} g|\phi\bphi| - g |u| d\Gamma dt\geq 0 \label{eq:approx_MEI_ep}
        \end{align}
        holds for all $\tau\in [0,T]$, all $\phi\in C_c^\infty(\ico{0}{\tau})$, and all $\bphi\in C^\infty(\oD)$ with $\bphi\cdot \rn|_{\pD} = 0$ $\PP$-a.s.
    \end{itemize}
\end{definition}
\begin{theorem}
    \label{thm:sol_ep}
    Let $\Lambda$ be a Borel probability measure on $L^1(D)\times L^1(D)$ such that 
    \begin{equation}
        \Lambda\left\{ \rho>0 \right\} = 1,\quad \Lambda \left\{ \urho\leq \int_D \rho dx \leq \orho \right\}=1, \label{eq:6.4}
    \end{equation}
    for some deterministic constants $\urho,\orho>0$ and 
    \begin{align}
        \int_{L^1_x\times L_x^1} \left|\int_D \left[\frac 1 2 \frac{|q|^2}{\rho} + P_\delta(\rho)\right]dx \right|^{r}d \Lambda (\rho,q) <\infty, \label{eq:6.5}
    \end{align}
    for some $r \geq 4$. Then there exists a dissipative martingale solution in the sense of Definition \ref{def:sol_ep}.
\end{theorem}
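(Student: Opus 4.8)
The strategy is to pass to the limit $\varepsilon\to0$ in the approximate system \eqref{eq:approx_CE_m}--\eqref{eq:approx_MEI_m}, transferring the effective viscous flux method of \cite{FN17} for the complete slip boundary condition to the stochastic setting. First I would set up the approximations: given $(\rho_0,q_0)$ with law $\Lambda$, regularize the density to a strictly positive $\rho_{0,\varepsilon}\in C^{2+\nu}(\oD)$ with $\nabla\rho_{0,\varepsilon}\cdot\rn|_{\pD}=0$ and choose a companion momentum so that the initial energies stay uniformly bounded and the initial laws $\Lambda_\varepsilon$ on $C^{2+\nu}(\oD)\times L^1(D)$ converge weakly to $\Lambda$; for each $\varepsilon$, Theorem~\ref{thm:sol_m} supplies a dissipative martingale solution $(\rho_\varepsilon,u_\varepsilon)$. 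By Remark~\ref{rem:m(integrability_of_sol)} the bounds \eqref{eq:5.6}--\eqref{eq:5.9} hold uniformly in $\varepsilon$ (so $\rho_\varepsilon\in L^\infty_tL^\Gamma_x$, $\sqrt{\rho_\varepsilon}u_\varepsilon\in L^\infty_tL^2_x$, $u_\varepsilon\in L^2_tW^{1,2}_x$, $\rho_\varepsilon u_\varepsilon\in L^\infty_tL^{2\Gamma/(\Gamma+1)}_x$ with all moments), and the dissipation terms give $\sqrt{\varepsilon}\,\rho_\varepsilon^{-1/2}\nabla\rho_\varepsilon,\ \sqrt{\varepsilon}\,\rho_\varepsilon^{\Gamma/2-1}\nabla\rho_\varepsilon$ bounded, whence $\varepsilon\nabla\rho_\varepsilon\to0$. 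The extra ingredient is a local pressure estimate: testing the momentum equation \eqref{eq:approx_ME_m} (extended by density to test functions $\psi\,\Bog[\cdot]$) by $\psi\,\Bog[\rho_\varepsilon^\theta-\langle\rho_\varepsilon^\theta\rangle]$ with $\psi\in C_c^\infty(D)$ and $\theta>0$ small, and handling the stochastic term by the Burkholder--Davis--Gundy inequality, yields $\EE\,\norm{\rho_\varepsilon}_{L^{\Gamma+\theta}((0,T)\times K)}\lesssim1$ for each compact $K\subset D$, so $p_\delta(\rho_\varepsilon)$ is locally equi-integrable. Finally, the momentum equation and the above bounds give uniform fractional-in-time regularity of $t\mapsto\int_D\rho_\varepsilon u_\varepsilon\cdot\bphi\,dx$, hence compactness of $\rho_\varepsilon u_\varepsilon$ in $C_w([0,T];L^{2\Gamma/(\Gamma+1)}(D))$ via Theorem~\ref{thm:weak_conti_embedding}.

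Next I would run the stochastic compactness argument. On the path space built from $C(\oD)\times L^1(D)$ (initial data), $C_w([0,T];L^\Gamma(D))$, $[L^2(0,T;W_{\rn}^{1,2}(D)),w]$, $C_w([0,T];L^{2\Gamma/(\Gamma+1)}(D))$ and $C([0,T];\fU_0)$, the joint laws of $(\rho_{0,\varepsilon},\rho_{0,\varepsilon}u_{0,\varepsilon},\rho_\varepsilon,\rho_\varepsilon u_\varepsilon,u_\varepsilon,W)$ are tight (the pressure estimate and the time-regularity of $\rho_\varepsilon u_\varepsilon$ being exactly what makes the $\rho_\varepsilon$- and $\rho_\varepsilon u_\varepsilon$-marginals tight), so Theorem~\ref{thm:Jakubowski--Skorokhod} produces a.s.-convergent copies $(\trho_\varepsilon,\dots,\tW_\varepsilon)\to(\trho,\dots,\tW)$ on $\tprobsp$; as before, Lemmas~\ref{lem:sufficient_cond_of_Wiener_by_law}--\ref{lem:stability_of_nonanti} give non-anticipativity of the limit filtration and that $\tW$ is a cylindrical Wiener process. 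Using the continuity equation and the compact embedding $L^{2\Gamma/(\Gamma+1)}(D)\cptarrow W^{-1,2}(D)$ one obtains $\trho_\varepsilon\tu_\varepsilon\to\trho\tu$ in $L^2(0,T;W^{-1,2}(D))$, and then, exactly as in Lemma~\ref{lem:m:conv rho u otimes u}, $\sqrt{\trho_\varepsilon}\tu_\varepsilon\to\sqrt{\trho}\tu$ in $L^2((0,T)\times D)$ and $\trho_\varepsilon\tu_\varepsilon\otimes\tu_\varepsilon\to\trho\tu\otimes\tu$ in $L^1((0,T)\times D)$. Passing to the limit in the continuity equation is then immediate ($\varepsilon\Delta\trho_\varepsilon\to0$), and in the momentum equation all deterministic terms converge (the artificial-viscosity term $\varepsilon\trho_\varepsilon\tu_\varepsilon\cdot\Delta\bphi\to0$), with the pressure identified only as a weak limit $p_\delta(\trho_\varepsilon)\weakarrow\overline{p_\delta(\trho)}$; so $(\trho,\tu,\tW)$ satisfies \eqref{eq:approx_ME_ep}, \eqref{eq:approx_MEI_ep} with $\overline{p_\delta(\trho)}$ replacing $p_\delta(\trho)$ and with the noise terms still to be treated, the remaining energy-inequality terms passing by weak lower semicontinuity.

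The heart of the proof is to show $\overline{p_\delta(\trho)}=p_\delta(\trho)$, i.e.\ $\trho_\varepsilon\to\trho$ a.e. Following \cite{FN17} I would establish the \emph{effective viscous flux identity} in the interior: for all $\psi\in C_c^\infty(D)$ and truncations $T_k$,
\begin{equation*}
\tEE\int_0^T\!\psi\!\int_D\!\Big(\overline{p_\delta(\trho)\,T_k(\trho)}-\big(\tfrac43\mu+\lambda\big)\overline{T_k(\trho)\,\rdiv\tu}\Big)dx\,dt=\tEE\int_0^T\!\psi\!\int_D\!\Big(\overline{p_\delta(\trho)}\;\overline{T_k(\trho)}-\big(\tfrac43\mu+\lambda\big)\overline{T_k(\trho)}\,\rdiv\tu\Big)dx\,dt,
\end{equation*}
obtained by testing the $\varepsilon$-level and the limiting momentum equations by $\psi\,\invdiv{1_D T_k(\trho_\varepsilon)}$ and $\psi\,\invdiv{1_D\overline{T_k(\trho)}}$, applying It\^o's product rule (no cross-variation arises since the continuity equation is noise-free), taking expectations so that the martingale part drops out, and passing to the limit with the Div--Curl lemma and Friedrichs' commutator lemma; only interior test functions enter, so, as in the complete-slip case of \cite{FN17}, the friction-type slip condition causes no obstruction. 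Since $\trho\in L^2$, $\tu\in W^{1,2}$, the DiPerna--Lions theory yields the renormalized continuity equation for $(\trho,\tu)$ and for $(\overline{b(\trho)},\tu)$; together with the effective viscous flux identity and the monotonicity of $p_\delta$ this bounds the oscillation defect measure of $T_k(\trho_\varepsilon)$ in $L^{\Gamma+1}$, upgrades the renormalized equation to the admissible $b$, and—subtracting the two renormalized identities and using weak lower semicontinuity—gives $\tEE\int_D(\overline{\trho\log\trho}-\trho\log\trho)(\tau)\,dx\le0$ for a.e.\ $\tau$; nonnegativity of the integrand forces $\trho_\varepsilon\to\trho$ in $L^1((0,T)\times D)$ $\tPP$-a.s., hence in the relevant Bochner spaces by Step~1. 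With a.e.\ convergence of $\trho_\varepsilon$ (and of $\sqrt{\trho_\varepsilon}\tu_\varepsilon$) in hand, $\overline{p_\delta(\trho)}=p_\delta(\trho)$, and, since the cut-offs defining $F_{k,\varepsilon}$ tend to $1$ and \eqref{eq:1.6}--\eqref{eq:1.7} give equi-integrability, $\GG_\varepsilon(\trho_\varepsilon,\trho_\varepsilon\tu_\varepsilon)\to\GG(\trho,\trho\tu)$ in $L^2(\tOmega\times(0,T);L_2(\fU,(W^{b,2}(D))^\ast))$ by Vitali's theorem, so Lemma~\ref{lem:conv_of_stoch_int} passes to the limit in the stochastic integrals of the momentum equation and of the energy inequality, while $\tfrac12\sum_k\int_D\trho_\varepsilon|F_{k,\varepsilon}(\trho_\varepsilon,\tu_\varepsilon)|^2\,dx\to\tfrac12\sum_k\int_D\trho^{-1}|G_k(\trho,\trho\tu)|^2\,dx$. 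This completes the identification of $(\tprobsp,\trho,\tu,\tW)$ as a dissipative martingale solution in the sense of Definition~\ref{def:sol_ep}, the integrability claims following from the uniform bounds.

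The step I expect to be the main obstacle is the effective viscous flux identity in the stochastic framework: one has to carry out It\^o's product rule on $\int_D\trho_\varepsilon\tu_\varepsilon\cdot\psi\,\invdiv{1_D T_k(\trho_\varepsilon)}\,dx$, verify that the contribution of the stochastic integral in the momentum equation vanishes after taking expectations (with uniform-in-$\varepsilon$ integrability), and pass to the limit in the resulting commutator-type terms; in addition one must confirm that restricting to interior test functions—forced by the slip boundary condition, cf.\ \eqref{eq:approx_ME_m}—together with the renormalization and oscillation-defect machinery of \cite{FN17} still suffices to conclude the strong convergence of the density.
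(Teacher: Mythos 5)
Your proposal is essentially aligned with the paper's route (approximate initial data, uniform estimates including a pressure estimate via the Bogovskii operator, compactness via Jakubowski--Skorokhod, effective viscous flux identity, DiPerna--Lions renormalization, strong convergence of the density), but there is one genuine gap and a few mechanical deviations worth flagging.

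The main gap is the omission of the Young measure. After the Jakubowski--Skorokhod representation you obtain $\tPP$-a.s.\ convergence of $(\trho_\varepsilon,\tu_\varepsilon,\ldots)$ in the weak topologies carried by the path space, but this does \emph{not} by itself make the weak limits of nonlinear compositions --- $\overline{p_\delta(\trho)}$, $\overline{\trho F_k(\trho,\tu)}$ for each $k$, $\overline{T_k(\trho)}$, $\overline{\trho\rdiv\tu}$, etc.\ --- into well-defined random variables: for a fixed $\omega$ the existence of these weak limits requires a subsequence which could a priori depend on $\omega$. The paper resolves this by including the canonical Young measure $\nu_\varepsilon = \delta_{(\rho_\varepsilon,u_\varepsilon,\nabla u_\varepsilon)}$ (and the energy $E_\varepsilon$) as separate components of the path space, so that $\nu_\varepsilon\to\tnu$ $\tPP$-a.s.\ in the Young-measure topology and every needed composition is obtained \emph{simultaneously} via $\overline{H(\trho,\tu,\nabla\tu)}(t,x)=\langle\tnu_{t,x},H\rangle$ (Proposition~\ref{prop:ep:repr}, part (4)). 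You write ``$p_\delta(\trho_\varepsilon)\weakarrow\overline{p_\delta(\trho)}$'' and the analogous statements as if they were automatic, but without the Young measure (or an equivalent enlargement of the path space by countably many weak-$L^q$ slots carrying the compositions $T_k(\rho_\varepsilon)$, $\rho_\varepsilon F_k$, $p_\delta(\rho_\varepsilon)$, \dots) the measurability of the barred quantities is unjustified, and these are needed already to formulate the limiting momentum equation \eqref{eq:6.24} and to run the effective viscous flux and renormalization arguments.

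Secondly, at this layer the paper tests the momentum equations by $\phi\,\nabla\Delta^{-1}(1_D\trho_\varepsilon)$ directly --- not by $\phi\,\nabla\Delta^{-1}(1_D T_k(\trho_\varepsilon))$ --- because $\Gamma\ge 6$ already gives $\trho_\varepsilon\in L^{\Gamma+1}$; the truncations $T_k$ and the oscillation defect measure only enter in the artificial-pressure limit of Section~\ref{sec:Vanishing artificial pressure limit}, where only $L^{\gamma+\beta}$ is available. Your use of $T_k$ here is not wrong, but it is an unnecessary detour. Finally, two smaller mismatches: the paper derives a \emph{global} pressure bound \eqref{eq:6.12} by testing with $\Phi=\Bog(\rho-(\rho)_D)$ and handling the boundary terms via mollification and Green's formula (this is what puts $\rho_\varepsilon$ in $L^{\Gamma+1}((0,T)\times D)$ globally and drives the tightness of $\LL[\rho_\varepsilon]$ in the path space $[L^{\Gamma+1},w]$), not a local estimate over compacta $K\subset D$; and the paper keeps the effective viscous flux identity \emph{pathwise} for all $\tau\in[0,T]$ $\tPP$-a.s.\ (the stochastic integrals $I_{5,\varepsilon}\to I_5$ converge and cancel upon subtraction), whereas you propose to take expectations so the martingale term drops. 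Your expectation version is a valid and arguably simpler alternative --- it is in fact the one the paper uses in the $\delta$-layer --- but it trades the pathwise conclusion \eqref{eq:6.38} for an a.e.-in-$(\omega,\tau)$ version.
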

\begin{remark}
    \label{rem:ep:integrability_of_sol}
    As in Remark \ref{rem:m(integrability_of_sol)}, the solution constructed in this section belongs to the following class 
    \begin{align*}
        &\EE\left[\sup_{t\in [0,T]}\left|\int_D\left[\frac 1 2 \rho |u|^2 + P_\delta(\rho)\right]dx \right|^{r}\right] <\infty, \\ 
        &\EE\left[\left|\intT \int_D \SSS(\nabla u):\nabla u dxdt\right|^{r}\right] + \EE\left[\left|\intT \int_{\pD} g |u| d\Gamma dt \right|^{r}\right]<\infty,
    \end{align*}
    with the same $r\geq 4$ as in \eqref{eq:4.5}, and thus we have 
    \begin{align*}
        &\EE \left[\sup_{t\in [0,T]} \norm{\rho(t)}_{L_x^\Gamma}^{\Gamma r}\right] + \EE \left[\sup_{t\in [0,T]} \norm{\rho |u|^2(t)}_{L_x^1}^{r}\right] + \EE \left[\sup_{t\in [0,T]} \norm{\rho u(t)}_{L_x^{\frac{2\Gamma}{\Gamma +1}}}^{\frac{2\Gamma}{\Gamma + 1}r}\right] + \EE\left[\norm{u}_{L_t^2 W^{1,2}_x}^{r}\right]<\infty.
    \end{align*}
    Note that the exponent $r\geq 4$ in Theorem \ref{thm:sol_ep} corresponds to $2r\geq 4$ in Theorem \ref{thm:sol_m}.
\end{remark}
\begin{remark}
    \label{rem:ep:renormalized CE}
    It follows from the DiPerna--Lions theory (see \cite[Section 11.19]{FN17} and Section \ref{sec:ep:Strong convergence of the densities}) that the martingale solution constructed in Theorem \ref{thm:sol_ep} satisfies $\PP$-a.s. the renormalized equation of continuity
    \begin{align*}
        -\intT \pp_t \phi \int_D b(\rho)\psi dxdt &= \phi(0) \int_D b(\rho_0)\psi dx + \intT \phi\int_D b(\rho)u\cdot \nabla \psi dxdt \\ 
        &\quad - \intT \phi \int_D \left(b'(\rho) \rho - b(\rho) \right) \rdiv u \psi dx dt,
    \end{align*}
    for all $\phi\in C_c^\infty(\ico{0}{T})$, all $\psi\in C_c^\infty(\RR^3)$, and any $b\in C(\ico{0}{\infty}) \cap C^1((0,\infty))$ such that 
    \begin{align*}
        &\lim_{s\to 0 +} (sb'(s)-b(s))\in \RR, \\ 
        &|b'(s)| \leq c s^\lambda \quad \text{if } s\in (1,\infty)\quad \text{for a certain }\lambda \leq \frac{\Gamma}{2}-1. 
    \end{align*}
    Moreover, in view of \cite[Lemma 11.14]{FN17} and $\Gamma \geq 6$, $\rho$ satisfies 
    \begin{align*}
        &\rho \in C([0,T];L^p(D))\quad \PP\text{-a.s.,}  
    \end{align*}
    for any $1\leq p<\Gamma$.
\end{remark}
\subsection{Uniform estimates}\label{sec:ep:Uniform estimates}
In this section, the approximation parameter $\delta\in (0,1)$ is kept fixed. We call uniform estimate that are independent of $\varepsilon$ but may depend on $\delta$ and $T>0$.
Let $(\rho,u)$ denote the solution constructed in Theorem \ref{thm:sol_m}. As in Section \ref{sec:m:Uniform estimates}, choosing $\phi=0$ in the momentum and energy inequality \eqref{eq:approx_MEI_m} yields the standard energy inequality
\begin{align*}
    &\int_D \left[\frac 1 2 \rho |u|^2 + P_\delta(\rho)\right](\tau)dx \\ 
    &\quad + \inttau \int_D \left[\SSS(\nabla u):\nabla u + \varepsilon \rho |\nabla u|^2 + \varepsilon P_\delta''(\rho)|\nabla \rho|^2\right]dxdt +\inttau \int_{\pD} g |u| d\Gamma dt \notag\\ 
    &\quad \leq \int_D \left[\frac 1 2 \rho_0 |u_0|^2 + P_\delta(\rho_0)\right]dx + \frac 1 2 \sum_{k=1}^{\infty} \inttau \int_D \rho \left|F_{k,\varepsilon}(\rho,u) \right|^2 dx dt + \inttau \int_D \GG_\varepsilon(\rho,\rho u) \cdot u dx dW   
\end{align*}
for all $\tau\in [0,T]$ $\PP$-a.s., and using Gronwall's lemma together with Remark \ref{rem:m(integrability_of_sol)} yields the uniform estimates:
\begin{align}
    &\EE\left[\left|\sup_{\tau\in [0,T]} \int_D \left[\frac 1 2 \rho |u|^2 + P_\delta(\rho)\right](\tau)dx \right|^r\right] \notag\\ 
    &\quad  +\EE\left[\left|\intT \int_D \left[\SSS(\nabla u):\nabla u + \varepsilon \rho |\nabla u|^2 + \varepsilon P_\delta''(\rho)|\nabla \rho|^2\right]dxdt \right|^r\right]\notag \\ 
    &\quad  + \EE\left[\left|\intT \int_{\pD} g |u| d\Gamma dt \right|^{r}\right] \lesssim  c(r), \label{eq:6.6} \\ 
    &\EE\left[\norm{\rho}_{L_t^\infty L_x^\Gamma}^{\Gamma r}\right] + \EE \left[\norm{\sqrt{\varepsilon}\nabla \rho}_{L_t^2 L_x^2}^{2r}\right] \lesssim c(r), \label{eq:6.7}\\ 
    & \EE\left[\norm{\rho |u|^2}_{L_t^\infty L_x^1}^{r}\right] + \EE\left[\norm{\rho u}_{L_t^\infty L_x^{\frac{2\Gamma}{\Gamma +1}}}^{\frac{2\Gamma}{\Gamma +1}r}\right] \lesssim c(r), \label{eq:6.8} \\
    &\EE\left[\left| \intT \norm{u}_{W_x^{1,2}}^2 dt \right|^{r} \right] \lesssim  c(2r) \label{eq:6.9}
\end{align}
uniformly in $\varepsilon$, where 
$$
    c(r) = \EE\left[\left|\int_D \left[\frac 1 2 \rho_0 |u_0|^2 + P_\delta(\rho_0)\right]dx \right|^r\right] + 1,\quad r\geq 2.
$$
Finally, it follows from \eqref{eq:approx_CE_m} that 
\begin{align*}
    \norm{\rho(\tau)}_{L_x^1} = \norm{\rho_0}_{L_x^1} \leq \orho,\quad \tau\in [0,T].
\end{align*}
The above estimates are not sufficient to control the pressure term proportional to $\rho^\Gamma$, and we need to find a uniform bound for the density in $L^q(D)$ for some $q>\Gamma$. 
To this end, we consider the problem 
\begin{align*}
    \begin{cases}
    \rdiv u = f &\text{in }D, \\
    u=0 &\text{on }\pD
    \end{cases}
\end{align*}
for a given function $f$. This problem admits many solutions that may be constructed in different manners, but here we use a solution map $\Bog$ constructed by Bogovskii \cite{Bog80}. The properties of $\Bog$ are recalled in Appendix \ref{sec:Bogovskii}. In accordance with \eqref{eq:approx_CE_m}, $\PP$-a.s.,
$$
    \pp_t \Phi = - \Bog \left(\rdiv \left(\rho u -\varepsilon \nabla \rho\right)\right),
$$
where $\Phi = \Bog (\rho -(\rho)_D)$ and $(\rho)_D =  \frac{1}{|D|} \int_D \rho dx$. Formally test the interior momentum equation \eqref{eq:approx_ME_m} by $\Phi$ to derive the pressure estimate. To this end, we introduce the standard mollifier $\theta_c$ localized at scale $c$, and for $v\in \DD'(\RR^3)$, we define its regularization 
$$
    [v]_c (x) := v\ast \theta_c(x) = \ev{v,\theta_c(x-\cdot)}.
$$ 
Extending $(\rho,u)$ outside $D$, we may define 
\begin{align*}
    &g_c(\tau,x) = \int_{\RR^d} \left[\rho u \otimes u + p_\delta (\rho) \II - \SSS(\nabla u) - \varepsilon \nabla (\rho u) \right] (\tau,y)\nabla\theta_c(x-y) dy, \quad \text{for all } x\in \RR^3,\tau\in [0,T].\\ 
\end{align*}
Since $\theta_c(x-\cdot)\in C_c^\infty(D)$ for all $c>0$ and all $x\in D_c = \left\{ x\in D: d(x,\pD)>c \right\}$, we have $\PP$-a.s., 
$$
    [\rho u]_c (\tau,x) = [\rho_0 u_0]_c(x) + \inttau g_c(t,x)dt + \inttau [\GG_\varepsilon(\rho,\rho u)]_c(t,x)dW,\quad \text{for all } x\in D_c,\tau\in [0,T].
$$
Here, note that by approximating $1_{[0,\tau]}$ in the momentum equation \eqref{eq:approx_ME_m}, the equation \eqref{eq:approx_ME_m} can be transformed into an integral equation with respect to time.
Similarly, we have 
$$
    d[\Phi]_c = - [\Bog \left(\rdiv \left(\rho u -\varepsilon \nabla \rho\right)\right)]_c dt,
$$ 
pointwise in $D_c$. Applying It\^{o}'s product rule, we obtain for each $x\in D_c$, $\PP$-a.s., 
\begin{align*}
    &d([\rho u]_c \cdot [\Phi]_c)(x) = [\rho u]_c(x) \cdot d [\Phi]_c (x) + [\Phi]_c (x)\cdot  d[\rho u]_c(x).
\end{align*}
Noting the spatial regularity of the appearing terms, integrating over $D_c$ and applying the stochastic Fubini theorem yields $\PP$-a.s.
\begin{align}
    \label{eq:6.10}
    &d\int_{D_c} [\rho u]_c \cdot [\Phi]_c dx = \int_{D_c}[\rho u]_c \cdot [\pp_t \Phi]_c dx dt + \int_{D_c} [\Phi]_c \cdot g_c dx dt + \int_{D_c} [\Phi]_c \cdot [\GG_\varepsilon(\rho,\rho u)]_c dx dW. 
\end{align}
Rewrite the second term on the right-hand side as follows:
\begin{align*}
    &\inttau \int_{D_c} [\Phi]_c \cdot g_c dx dt = \inttau \int_{D_c} \left([\Phi]_c -\Phi\right) \cdot g_c dx dt + \inttau \int_{D_c} \Phi \cdot g_c dx dt.
\end{align*}
The first term on the right-hand side vanishes as $c\to 0$, and the second term can be transformed as follows:
\begin{align*}
    \inttau \int_{D_c} \Phi \cdot g_c dx dt &= - \inttau \int_{D_c}\Phi(t,x) \cdot \rdiv \left(\int_{\RR^d} \left[\rho u \otimes u + p_\delta (\rho) \II - \SSS(\nabla u) - \varepsilon \nabla (\rho u) \right] (t,y)\theta_c(x-y) dy\right) dx dt \\ 
    &= \inttau \int_{D_c}\nabla \Phi:\left[\rho u \otimes u + p_\delta (\rho) \II - \SSS(\nabla u) - \varepsilon \nabla (\rho u)\right]_c dx  dt\\ 
    &\quad - \inttau \int_{\pD_c} \Phi\cdot \left( \left[\rho u \otimes u + p_\delta (\rho) \II - \SSS(\nabla u) - \varepsilon \nabla (\rho u)\right]_c \rn \right) d\Gamma dt,
\end{align*}
where the outward unit normal vector on $\pD_c$ is also denoted by the same symbol $\rn$. Since $\Phi$ satisfies Dirichlet boundary conditions on $\pD$, applying the Green's formula to the domain $D\backslash \overline{D_c}$ yields
\begin{align*}
    - &\inttau \int_{\pD_c} \Phi\cdot \left( \left[\rho u \otimes u + p_\delta (\rho) \II - \SSS(\nabla u) - \varepsilon \nabla (\rho u)\right]_c \rn \right) d\Gamma dt\\ 
    &\quad = - \inttau \int_{\pD_c} \Phi\cdot \left( \left[\rho u \otimes u + p_\delta (\rho) \II - \SSS(\nabla u) - \varepsilon \nabla (\rho u)\right]_c \rn \right) d\Gamma dt \\ 
    &\quad \quad + \inttau \int_{\pD} \Phi\cdot \left( \left[\rho u \otimes u + p_\delta (\rho) \II - \SSS(\nabla u) - \varepsilon \nabla (\rho u)\right]_c \rn \right) d\Gamma dt \\ 
    &\quad = \inttau \int_{D\backslash \overline{D_c}}\nabla \Phi:\left[\rho u \otimes u + p_\delta (\rho) \II - \SSS(\nabla u) - \varepsilon \nabla (\rho u)\right]_c dx  dt,
\end{align*}
which vanishes as $c\to 0$. Therefore, noting that $\nabla \Phi:p_\delta(\rho)\II = (\rho- (\rho)_D)p_\delta(\rho)$, and letting $c\to 0$ in \eqref{eq:6.10}, we obtain that $\PP$-a.s.,
\begin{align}
    \inttau \int_D p_\delta(\rho)(\rho - (\rho)_D) dx dt &= \left[\int_D \rho u \cdot \Phi dx \right]_{t=0}^{t=\tau} - \inttau \int_D \left[\rho u \otimes u - \SSS(\nabla u) - \varepsilon \nabla(\rho u)\right]:\nabla \Phi dx dt \notag \\ 
    &\quad + \inttau \int_D \rho u\cdot \pp_t \Phi dx dt - \inttau \int_D \GG_\varepsilon(\rho,\rho u)\cdot \Phi dx dW\notag  \\ 
    &=: \sum_{j=1}^{4} I_j.  \label{eq:6.11}
\end{align}
By estimating each term on the right-hand side in \eqref{eq:6.11}, we will find the desired uniform bound:
\begin{align}
    \EE\left[\left|\intT\int_D \left(p(\rho) + \delta \rho^\Gamma \right)\rho dx dt \right|^r\right] \lesssim c(2r). \label{eq:6.12}
\end{align}
To this end, first, using $W^{1,\Gamma}(D)\hookrightarrow L^\infty(D)$ and properties of $\Bog$ (see Theorem \ref{thm:Bog}) yields 
\begin{align}
    \label{eq:6.13}
    &\norm{\Phi}_{L^\infty(D)} \lesssim \norm{\Phi}_{W^{1,\Gamma}(D)} \lesssim \norm{\rho}_{L^\Gamma (D)}.
\end{align}
Thus, we have 
\begin{align*}
    \left|I_1 \right|   &\lesssim \sup_{\tau\in [0,T]}\norm{\sqrt{\rho}}_{L_x^2} \norm{\sqrt{\rho} u}_{L_x^2}\norm{\Phi}_{L_x^\infty} \\ 
    &\lesssim \sup_{\tau\in [0,T]} \norm{\rho}_{L_x^1}^{1/2} \norm{\rho |u|^2}_{L_x^1}^{1/2} \norm{\rho}_{L_x^\Gamma}.
\end{align*}
This implies 
$$
    \EE\left[\left|I_1 \right|^r\right]\lesssim c(r).
$$
By \eqref{eq:6.13}, we have 
\begin{align*}
    \norm{\rho u\otimes u:\nabla \Phi}_{L_t^1 L_x^1}  &\lesssim \sup_{\tau\in [0,T]}\norm{\rho}_{L_x^\Gamma}\intT \norm{\rho |u|^2}_{L_x^{\Gamma'}}dt \\ 
    &\lesssim \sup_{\tau\in [0,T]} \norm{\rho}_{L_x^\Gamma}^2 \intT \norm{u}_{W_x^{1,2}}^2 dt,\\ 
    \norm{\SSS(\nabla u):\nabla \Phi}_{L_t^1 L_x^1} &\lesssim \sup_{\tau\in [0,T]}\norm{\rho}_{L_x^\Gamma}^2 + \norm{u}_{W_x^{1,2}}^2, \\  
    \norm{\varepsilon \nabla(\rho u):\nabla \Phi}_{L_t^1 L_x^1} &\lesssim \sup_{\tau\in [0,T]}\norm{\rho}_{L_x^\Gamma}\left(\norm{\varepsilon \nabla \rho \otimes u}_{L_t^1 L_x^{\Gamma'}} + \norm{\varepsilon \rho \nabla u}_{L_x^1 L_x^{\Gamma'}}\right) \\ 
    &\lesssim \sup_{\tau\in [0,T]}\norm{\rho}_{L_x^\Gamma}\left(\norm{\varepsilon \nabla \rho}_{L^2_{t,x}}\norm{u}_{L_t^2 W_x^{1,2}} + \norm{\varepsilon \rho}_{L_t^\infty L_x^\Gamma} \norm{u}_{L_x^2 W_x^{1,2}}\right),
\end{align*}
where $\Gamma'$ is the H\"older conjugate exponent of $\Gamma$. Thus, 
$$
    \EE\left[\left|I_2 \right|^r\right]\lesssim c(2r).
$$
Next, noting that 
$$
    \rho u\cdot \Bog\left(\rdiv \left(\rho u - \varepsilon \nabla \rho\right)\right) = \rho u \cdot \Bog\left(\rdiv \left(\rho u\right)\right) - \varepsilon \rho u \cdot \Bog \left(\rdiv\nabla\rho\right),
$$
Theorem \ref{thm:Bog} (iii), and $W^{1,2}(D)\hookrightarrow L^6(D)$, each term on the right-hand side can be estimated as follows:
\begin{align*}
    &\norm{\rho u \cdot \Bog\left(\rdiv \left(\rho u\right)\right)}_{L^1_{t,x}} \lesssim \norm{\rho u}_{L^2_{t,x}}^2 \lesssim \sup_{\tau\in [0,T]}\norm{\rho}_{L_x^\Gamma}^2 \norm{u}_{L_t^2 W_x^{1,2}}^2 \lesssim \sup_{\tau\in [0,T]}\norm{\rho}_{L_x^\Gamma}^4 + \left(\intT \norm{u}_{W_x^{1,2}}^2dt\right)^2,
\end{align*}
and 
\begin{align*}
    &\norm{\varepsilon \rho u \cdot \Bog \left(\rdiv\nabla\rho\right)}_{L^{1}_{t,x}} \leq \norm{\rho u}_{L^2_{t,x}}\norm{\varepsilon \nabla \rho}_{L^2_{t,x}}\lesssim  \sup_{\tau\in [0,T]}\norm{\rho}_{L_x^\Gamma}^4 + \left(\intT \norm{u}_{W_x^{1,2}}^2dt\right)^2 + \varepsilon \norm{\sqrt{\varepsilon}\nabla \rho}_{L^2_{t,x}}^2.
\end{align*}
Therefore, we have 
$$
    \EE\left[\left|I_3 \right|^r\right]\lesssim c(2r).    
$$
Finally, applying the Burkholder--Davis--Gundy inequality and \eqref{eq:6.13} yield 
\begin{align*}
    \EE\left[\left|I_4 \right|^r\right] &\lesssim \EE\left[\left(\inttau \sum_{k=1}^{\infty} \left|\int_D \rho F_{k,\varepsilon}(\rho,u)\cdot \Phi dx \right|^2 dt\right)^{r/2}\right]  \\ 
    &\leq \EE\left[\left(\inttau \sum_{k=1}^{\infty} \left[\int_D \rho\left| F_{k}(\rho,u)\right| dx\right]^2 \norm{\Phi}_{L^\infty_x}^2 dt\right)^{r/2}\right]  \\
    &\lesssim \EE\left[\left(\sup_{\tau\in [0,T]}\left[\int_D \left[\rho + \rho |u|^2\right] dx\right]^2 \sup_{\tau\in [0,T]}\norm{\rho}_{L^\Gamma_x}^2\right)^{r/2}\right] \\ 
    &\lesssim c(2r).
\end{align*}

\subsection{Asymptotic limit}\label{sec:ep:Asymptotic limit}
Assume that $\Lambda$ is the initial law given by Theorem \ref{thm:sol_ep}, that is, $\Lambda$ satisfies \eqref{eq:6.4} and \eqref{eq:6.5} with $r\geq 4$. We consider a random variable $(\rho_0,q_0)$ with law $\Lambda$ on some probability space $\probsp$. 
Then one can find random variables $\rho_{0,\varepsilon}$ with values in $C^{2+\kappa}(\oD)$, for some $\kappa>0$, such that $\PP$-a.s. 
$$
    0<\varepsilon\leq \rho_{0,\varepsilon}\leq \frac 1 \varepsilon, \quad \frac{\urho}{2}\leq \int_D \rho_{0,\varepsilon}dx \leq 2\orho, \quad \nabla \rho_{0,\varepsilon}\cdot \rn|_{\pD} =0,\quad \norm{\rho_{0,\varepsilon}}_{C^{2+\kappa}_x} \leq c(\orho,\varepsilon)
$$
as well as 
\begin{align}
    \rho_{0,\varepsilon}\to \rho_0\quad \text{in } L^p(\Omega;L^\Gamma(D)),\quad \forall p\in [1,\Gamma r] \label{eq:6.14}
\end{align}
(see \cite[Section 4.4.3]{BFHbook18} and \cite[Section 4]{FN17}). Next setting 
$$
    \hat{q}_{0,\varepsilon} = \begin{cases}
    q_0\sqrt{\frac{\rho_{0,\varepsilon}}{\rho_0}} &\text{if }\rho_0>0, \\
    0 &\text{if } \rho_0=0,
    \end{cases}
$$
it follows from the assumptions on $\Lambda$ that 
$$
    \left\{ \frac{|\hat{q}_{0,\varepsilon}|^2}{\rho_{0,\varepsilon}}:\varepsilon\in (0,1) \right\} \quad \text{is bounded in}\  L^p(\Omega;L^1(D)),\quad \forall p\in [1,r].
$$
Moreover, by mollification we choose random variables $h_\varepsilon$ with values in $C^2(\oD)$ such that 
$$
    \frac{\hat{q}_{0,\varepsilon}}{\sqrt{\rho_{0,\varepsilon}}} - h_\varepsilon\to 0 \quad \text{in } L^p(\Omega;L^2(D)),\quad \forall p\in [1,2r]. 
$$
Let $q_{0,\varepsilon} = h_\varepsilon \sqrt{\rho_{0,\varepsilon}}$. Then 
$$
    \left\{ \frac{|q_{0,\varepsilon}|^2}{\rho_{0,\varepsilon}}:\varepsilon\in (0,1) \right\} \quad \text{is bounded in}\ L^p(\Omega;L^1(D)),\quad \forall p\in [1,r],
$$
and 
\begin{align}
    q_{0,\varepsilon} &\to q_0 \quad \text{in } L^p(\Omega;L^1(D)),\quad \forall p\in [1,r], \label{eq:6.15} \\ 
    \frac{q_{0,\varepsilon}}{\sqrt{\rho_{0,\varepsilon}}} &\to \frac{q_{0}}{\sqrt{\rho_{0}}} \quad \text{in } L^p(\Omega;L^2(D)),\quad \forall p\in [1,2r]. \label{eq:6.16}
\end{align}
Therefore, applying Theorem \ref{thm:sol_m} to the laws $\PP\circ (\rho_{0,\varepsilon},q_{0,\varepsilon})^{-1}$ on $C^{2+\kappa}(\oD) \times L^1(D)$ yields for every $\varepsilon\in (0,1)$ a multiplet 
$$
    ((\Omega^\varepsilon,\FFF^\varepsilon,(\FFF_t)^\varepsilon,\PP^\varepsilon),\overline{\rho}_{0,\varepsilon},\overline{u}_{0,\varepsilon},\rho_\varepsilon,u_\varepsilon,W_\varepsilon),
$$
which is a weak martingale solution in the sense of Definition \ref{def:sol_m}. Moreover, in view of \eqref{eq:6.14}--\eqref{eq:6.16}, the laws $\Lambda_\varepsilon := \PP^\varepsilon \circ (\overline{\rho}_{0,\varepsilon},\overline{\rho}_{0,\varepsilon} \overline{u}_{0,\varepsilon})^{-1}$ on $L^1(D)\times L^1(D)$ satisfy \eqref{eq:6.4} and \eqref{eq:6.5} uniformly in $\varepsilon$, and 
\begin{align}
    \Lambda_\varepsilon \to \Lambda\quad \text{weakly on } L^1(D)\times L^1(D). \label{eq:6.17}
\end{align}
As in Section \ref{sec:m:Asymptotic limit}, we may assume without loss of generality that 
$$
    (\Omega^\varepsilon,\FFF^\varepsilon,\PP^\varepsilon) = ([0,1],\overline{\fB([0,1])},\fL),\quad \varepsilon\in (0,1)
$$
and that 
$$
    \FFF^\varepsilon_t = \sigma \left(\sigma_t[\rho_\varepsilon]\cup \sigma_t [u_\varepsilon]\cup \sigma_t[W_\varepsilon]\right),\quad t\in [0,T].
$$
Next, in the stochastic compactness method based on Jakubowski's theorem stated in Theorem \ref{thm:Jakubowski--Skorokhod}, it is convenient to include the energy and the Young measure associated to $(\rho_\varepsilon,u_\varepsilon,\nabla u_\varepsilon)$. More precisely, we define 
$$
    E_\varepsilon = E(\rho_\varepsilon,u_\varepsilon) := \frac12 \rho_\varepsilon \left|u_\varepsilon \right|^2 + P_\delta(\rho_\varepsilon).
$$
As we have shown in Section \ref{sec:ep:Uniform estimates}, the expectation of the energy $E_\varepsilon$ is bounded in the space 
$$
    L^\infty(0,T;L^1(D))\hookrightarrow L^\infty(0,T;\MMM_b(D)) \simeq \left(L^1(0,T;C_0(D))\right)^\ast.
$$
Since $L^1(0,T;C_0(D))$ is a separable Banach space, $[L^\infty(0,T;\MMM_b(D)), w^\ast]$ is a sub-Polish space. Next, the canonical Young measure $\nu_\varepsilon$ associated to $(\rho_\varepsilon,u_\varepsilon,\nabla u_\varepsilon)$ is defined as a (random) weakly-$\ast$ measurable mapping 
$$
    \nu_\varepsilon:(0,T)\times D\to \PPP(\RR\times \RR^3\times \RR^{3\times 3}) \simeq \PPP(\RR^{13})
$$
given by 
$$
    \nu_{\varepsilon,t,x} (\cdot) = \delta_{(\rho_\varepsilon,u_\varepsilon,\nabla u_\varepsilon)(t,x)} (\cdot),
$$
where a mapping $\nu:(0,T)\times D\to \PPP(\RR^N)$ is said to be weakly-$\ast$ measurable if for any $\phi\in C_b(\RR^N)$, the mapping 
$$
    (0,T)\times D\ni (t,x)\mapsto \nu(\phi)\in \RR
$$
is measurable, and in this case, $\nu$ is called a Young measure.
In view of the discussion in \cite[Section 2.8]{BFHbook18}, $\nu_\varepsilon$ can be regarded as a random variable taking values in the space of Young measures which we denoted by 
$$
    (L_{w^\ast}^\infty((0,T)\times D;\PPP(\RR^{13})),w^\ast),
$$
where the topology on this space is determined by functionals 
$$
    (L_{w^\ast}^\infty((0,T)\times D;\PPP(\RR^{13})),w^\ast)\ni \nu\mapsto \intT\int_D \psi(t,x)\int_{\RR^{13}} \phi(\xi)d\nu_{t,x}(\xi)dxdt\in \RR,
$$
where $\psi\in L^1((0,T)\times D),\phi\in C_b(\RR^{13})$. Since this topology is finer than the weak-$\ast$ topology on $L^\infty((0,T)\times D;\MMM_b(\RR^{13}))$, this space belongs to the class of sub-Polish spaces. 

The corresponding path space is as follows:
$$
    \XX = \XX_{\rho_0}\times \XX_{q_0}\times \XX_{\frac{q_0}{\sqrt{\rho_0}}}\times \XX_\rho \times \XX_{\rho u}\times \XX_{u}\times \XX_W \times \XX_E \times \XX_\nu, 
$$
where 
\begin{align*}
    &\XX_{\rho_0} = L^\Gamma(D),\quad \XX_{q_0} = L^1(D),\quad \XX_{\frac{q_0}{\sqrt{\rho_0}}} = L^2(D), \\ 
    &\XX_\rho = \left[L^{\Gamma+1}((0,T)\times D),w\right]\cap C_w([0,T];L^\Gamma(D)), \\ 
    &\XX_{\rho u} = C_w([0,T];L^{\frac{2\Gamma }{\Gamma +1}}(D)) \cap C([0,T];W^{-k,2}(D)), \\ 
    &\XX_u = \left[L^2(0,T;W^{1,2}_{\rn }(D)),w\right], \\ 
    &\XX_W = C([0,T];\fU_0), \\ 
    &\XX_E = \left[L^\infty(0,T;\MMM_b(D)),w^\ast\right], \\ 
    &\XX_\nu = (L_{w^\ast}^\infty((0,T)\times D;\PPP(\RR^{13})),w^\ast),
\end{align*}
for certain $k\in \NN$.
\begin{proposition}
    \label{prop:ep:tightness}
    The family of probability measures
    $$
        \left\{ \LL\left[\overline{\rho}_{0,\varepsilon}, \overline{\rho}_{0,\varepsilon} \overline{u}_{0,\varepsilon},\frac{\overline{\rho}_{0,\varepsilon} \overline{u}_{0,\varepsilon}}{\sqrt{\overline{\rho}_{0,\varepsilon}}},\rho_\varepsilon,,\rho_\varepsilon u_\varepsilon,u_\varepsilon,W_\varepsilon,E(\rho_\varepsilon,u_\varepsilon),\nu_\varepsilon \right]:\varepsilon\in (0,1) \right\}
    $$
    is tight on $\XX$.
\end{proposition}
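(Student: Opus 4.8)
The plan is to verify tightness of each of the nine marginal laws separately and then assemble them. The assembling step is the standard one already used in the proof of Proposition~\ref{prop:tight_alpha}: since every factor of $\XX$ is a sub-Polish space (most of them Polish), once for every $\eta>0$ we exhibit, for each factor $\XX_i$, a compact set $K_\eta^i\subset \XX_i$ with $\sup_\varepsilon \PP^\varepsilon(\text{$i$-th component}\notin K_\eta^i)<\eta 2^{-i}$, the product $K_\eta=\prod_i K_\eta^i$ is compact in $\XX$ and $\sup_\varepsilon \PP^\varepsilon(\,\cdot\,\notin K_\eta)<\eta$, which is tightness on $\XX$.

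For the initial data, tightness of $\LL[\overline\rho_{0,\varepsilon}]$ on $L^\Gamma(D)$, of $\LL[\overline\rho_{0,\varepsilon}\overline u_{0,\varepsilon}]$ on $L^1(D)$, and of $\LL[\overline\rho_{0,\varepsilon}\overline u_{0,\varepsilon}/\sqrt{\overline\rho_{0,\varepsilon}}]$ on $L^2(D)$ is immediate from the strong convergences \eqref{eq:6.14}, \eqref{eq:6.15}, \eqref{eq:6.16}, since a Cauchy family in $L^1(\Omega;X)$ has relatively compact, hence tight, laws on $X$. Tightness of $\LL[u_\varepsilon]$ on $[L^2(0,T;W^{1,2}_\rn(D)),w]$ follows from the bound \eqref{eq:6.9} together with the weak compactness of balls in the reflexive space $L^2(0,T;W^{1,2}_\rn(D))$. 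Tightness of $\LL[E(\rho_\varepsilon,u_\varepsilon)]$ on $[L^\infty(0,T;\MMM_b(D)),w^\ast]$ follows from the bound for $\EE[\,\|E_\varepsilon\|_{L^\infty_tL^1_x}^r]$ contained in \eqref{eq:6.6}, since bounded balls in the dual of the separable space $L^1(0,T;C_0(D))$ are weak-$\ast$ compact and metrizable. Finally $\LL[W_\varepsilon]$ is a fixed Radon measure on the Polish space $C([0,T];\fU_0)$, hence tight.

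It remains to treat $\XX_\rho$, $\XX_{\rho u}$ and $\XX_\nu$. For the density, \eqref{eq:6.7} and the pressure bound \eqref{eq:6.12} give a uniform bound for $\EE[\,\|\rho_\varepsilon\|_{L^{\Gamma+1}_{t,x}}^r+\|\rho_\varepsilon\|_{L^\infty_tL^\Gamma_x}^{\Gamma r}]$, so bounded sets of $L^{\Gamma+1}((0,T)\times D)\cap L^\infty(0,T;L^\Gamma(D))$ are relatively compact in $[L^{\Gamma+1}((0,T)\times D),w]$; combining the $L^\infty_tL^\Gamma_x$ bound with the $C^{1/2}([0,T];W^{-1,2}(D))$ bound coming from \eqref{eq:approx_CE_m} and \eqref{eq:6.7}--\eqref{eq:6.8} (note $\varepsilon\Delta\rho_\varepsilon=\sqrt\varepsilon\,\rdiv(\sqrt\varepsilon\nabla\rho_\varepsilon)$ and $\rdiv(\rho_\varepsilon u_\varepsilon)$ are uniformly bounded in $L^2(0,T;W^{-1,2}(D))$), the compact embedding of Theorem~\ref{thm:weak_conti_embedding} yields tightness in $C_w([0,T];L^\Gamma(D))$. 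For the momentum, the bound \eqref{eq:6.8} handles the $C_w([0,T];L^{2\Gamma/(\Gamma+1)}(D))$ part by the same reasoning; for the $C([0,T];W^{-k,2}(D))$ part one tests the interior momentum equation \eqref{eq:approx_ME_m} against elements of a countable dense subset of the unit ball of $W^{k,2}_0(D)$ contained in $C_c^\infty(D)$, chooses $k$ large enough that $L^1(D)\hookrightarrow W^{-k,2}(D)$, and uses \eqref{eq:6.7}--\eqref{eq:6.9}, the growth \eqref{eq:1.6} of $\GG$, and the Burkholder--Davis--Gundy inequality to get, uniformly in $\varepsilon$,
\begin{align*}
    \EE\left[\norm{\rho_\varepsilon u_\varepsilon(\tau_1)-\rho_\varepsilon u_\varepsilon(\tau_2)}_{W^{-k,2}(D)}^{q}\right]\lesssim |\tau_1-\tau_2|^{q/2}
\end{align*}
for a suitable $q>2$ (e.g. $q=r\geq 4$), whence Kolmogorov's continuity criterion and Arzel\`a--Ascoli give tightness in $C([0,T];W^{-k,2}(D))$. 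Lastly, $\LL[\nu_\varepsilon]$ is tight on $\XX_\nu$ because \eqref{eq:6.7}--\eqref{eq:6.9} show that $(\rho_\varepsilon,u_\varepsilon,\nabla u_\varepsilon)$ is bounded in $L^2(\Omega\times(0,T)\times D;\RR^{13})$, so no concentration at infinity occurs and the compactness of the space of Young measures recorded in \cite[Section~2.8]{BFHbook18} applies. Assembling the nine marginals as above completes the proof.

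The main obstacle is the $\XX_{\rho u}$ component: unlike in the Galerkin layer, $\rho_\varepsilon u_\varepsilon$ is no longer a finite-dimensional object with H\"older trajectories in some $V_m^\ast$, so the time regularity has to be recovered directly from \eqref{eq:approx_ME_m} in a sufficiently negative Sobolev space, carefully controlling the stochastic integral via Burkholder--Davis--Gundy; the accompanying bookkeeping of exponents (choosing $k$ so that $L^1\hookrightarrow W^{-k,2}$ and checking that $\rho_\varepsilon u_\varepsilon\otimes u_\varepsilon$, $p_\delta(\rho_\varepsilon)$, $\SSS(\nabla u_\varepsilon)$ and $\varepsilon\nabla(\rho_\varepsilon u_\varepsilon)$ all land in that space) is the delicate point.
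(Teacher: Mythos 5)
Your proof is correct and follows essentially the same route as the paper: Prokhorov for the initial data, weak compactness of bounded balls for $u_\varepsilon$, $E_\varepsilon$ and $\rho_\varepsilon$, Polishness for $W_\varepsilon$, the compact embedding of Theorem~\ref{thm:weak_conti_embedding} together with a H\"older estimate in a negative Sobolev norm for $\rho_\varepsilon u_\varepsilon$ (obtained from the interior momentum equation plus Burkholder--Davis--Gundy and Kolmogorov), and the Young-measure compactness criterion. The only difference is one of presentation: the paper is terse and defers the $\rho_\varepsilon$ and $\rho_\varepsilon u_\varepsilon$ arguments to Sections~\ref{sec:alpha:Uniform estimates}--\ref{sec:alpha(Asymptotic limit)} (including the analogue of Lemma~\ref{lem:m:unif_est_others}), whereas you spell them out in place; and for $\nu_\varepsilon$ the paper records the explicit bound $\int\int\int(|\xi_1|^{\Gamma+1}+\sum_{i\geq 2}|\xi_i|^2)\,d\nu<\infty$ defining the compact sets $B_R$, while you invoke $L^2$-boundedness of $(\rho_\varepsilon,u_\varepsilon,\nabla u_\varepsilon)$, which is the same uniform-integrability requirement packaged slightly differently. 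Both reach the conclusion by identical means.
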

\begin{proof}
    Tightness of the initial laws 
    $$
        \left\{ \LL\left[\overline{\rho}_{0,\varepsilon}, \overline{\rho}_{0,\varepsilon} \overline{u}_{0,\varepsilon},\frac{\overline{\rho}_{0,\varepsilon} \overline{u}_{0,\varepsilon}}{\sqrt{\overline{\rho}_{0,\varepsilon}}}\right]:\varepsilon\in(0,1) \right\}
    $$
    immediately follows from \eqref{eq:6.14}--\eqref{eq:6.16} and Prokhorov's theorem. Tightness of the set 
    $$
        \left\{ \LL\left[\rho_\varepsilon,u_\varepsilon,W_\varepsilon\right] :\varepsilon\in (0,1) \right\}
    $$
    follows by the same arguments as in Section \ref{sec:alpha(Asymptotic limit)}. To prove tightness of the set 
    $$
        \left\{ \LL\left[\rho_\varepsilon u_\varepsilon\right]:\varepsilon\in (0,1) \right\},
    $$
    we use the following compact embedding (see Theorem \ref{thm:weak_conti_embedding}):
    $$
        L^\infty(0,T;L^{\frac{2\Gamma }{\Gamma + 1}}(D)) \cap C^\alpha([0,T];W^{-k,2}(D))\cptarrow C_w([0,T];L^{\frac{2\Gamma }{\Gamma + 1 }}(D)),\quad \alpha>0.
    $$
    Here, to prove tightness in $C([0,T];W^{-k,2}(D))$, i.e., boundedness in $C^\alpha([0,T];W^{-k,2}(D))$ for some $\alpha>0$, we proceed as in Section \ref{sec:alpha:Uniform estimates}. Note that the estimates as in Lemma \ref{lem:m:unif_est_others} hold, that is, by the uniform estimates \eqref{eq:6.6}--\eqref{eq:6.9}, we have 
    \begin{align*}
        &\EE\left[\norm{\rho u\otimes u }_{L_t^2 L_x^{\frac{6\Gamma}{4\Gamma + 3}}}^{r/2}\right] + \EE\left[\norm{\SSS(\nabla u)}_{L_t^2 L_x^2}^{r}\right] \lesssim c(r),\\ 
        &\EE\left[\norm{p_\delta(\rho)}_{L_t^2 L_x^1}^r\right] + \EE\left[\norm{\varepsilon \nabla (\rho u)}_{L_t^2 W_x^{-1,\frac{2\Gamma}{\Gamma + 1}}}^r\right] \lesssim c(r)
    \end{align*}
    uniformly in $\varepsilon$.
    Next, since bounded sets in $L^\infty(0,T;\MMM_b(D))$ are relatively compact with respect to the weak-$\ast$ topology, $\left\{ \LL[E_\varepsilon]:\varepsilon\in(0,1) \right\}$ is tight on $\XX_E$. 
    Finally, tightness of $\left\{ \LL[\nu_\varepsilon]:\varepsilon\in (0,1) \right\}$ follows from the fact that the set
    $$
        B_R = \left\{ \nu\in (L_{w^\ast}^\infty((0,T)\times D;\PPP(\RR^{13})),w^\ast) : \intT\int_D\int_{\RR^{13}} \left(\left|\xi_1 \right|^{\Gamma + 1} + \sum_{i=2}^{13}\left|\xi_i \right|^2\right) d\nu_{t,x}(\xi)dx dt \leq R \right\}
    $$
    is relatively compact in $ (L_{w^\ast}^\infty((0,T)\times D;\PPP(\RR^{13})),w^\ast)$ (see \cite[Corollary 2.8.6]{BFHbook18}). The proof is complete.
\end{proof}
Consequently, we may apply Jakubowski's theorem, Theorem \ref{thm:Jakubowski--Skorokhod} as well as Corollary \ref{prop:ep:tightness} to obtain the following.
\begin{proposition}
    \label{prop:ep:repr}
        There exists a complete probability space $\tprobsp$ with $\XX$-valued Borel measurable random variables $(\trho_{0,\varepsilon},\tq_{0,\varepsilon},\tk_{0,\varepsilon},\trho_\varepsilon,\tq_\varepsilon,\tu_\varepsilon,\tW_\varepsilon,\tE_\varepsilon,\tnu_\varepsilon),\varepsilon\in (0,1)$, as well as 
        $(\trho_{0},\tq_{0},\tk_{0},\trho,\tq,\tu,\tW,\tE,\tnu)$ such that (up to a subsequence):
        \begin{itemize}
            \item[(1)] the laws $\LL[(\trho_{0,\varepsilon},\tq_{0,\varepsilon},\tk_{0,\varepsilon},\trho_\varepsilon,\tq_\varepsilon,\tu_\varepsilon,\tW_\varepsilon,\tE_\varepsilon,\tnu_\varepsilon)]$ and $\LL[\overline{\rho}_{0,\varepsilon}, \overline{\rho}_{0,\varepsilon} \overline{u}_{0,\varepsilon},(\overline{\rho}_{0,\varepsilon} \overline{u}_{0,\varepsilon}) / \sqrt{\overline{\rho}_{0,\varepsilon}}, \rho_\varepsilon, \rho_\varepsilon u_\varepsilon, u_\varepsilon, W_\varepsilon, \linebreak E(\rho_\varepsilon, u_\varepsilon), \nu_\varepsilon ]$ 
            coincide on $\XX$. In particular, 
            \begin{align*}
                &\trho_{0,\varepsilon} = \trho_\varepsilon(0),\quad \tq_{0,\varepsilon} = \trho_\varepsilon \tu_\varepsilon(0),\quad \tk_{0,\varepsilon} = \frac{\tq_{0,\varepsilon}}{\sqrt{\trho_{0,\varepsilon}}} = \frac{\trho_\varepsilon \tu_\varepsilon(0)}{\sqrt{\trho_\varepsilon(0)}}, \\ 
                &\tq_\varepsilon = \trho_\varepsilon \tu_\varepsilon,\quad \tE_\varepsilon = E(\trho_\varepsilon,\tu_\varepsilon),\quad \tnu_\varepsilon = \delta_{(\trho_\varepsilon,\tu_\varepsilon,\nabla \tu_\varepsilon)},   
            \end{align*}
            $\tPP$-a.s., as well as 
            \begin{align}
                \EE\left[\left|\sup_{t\in [0,T]}\int_D \left[\frac12 \trho_\varepsilon |\tu_\varepsilon|^2 + P_\delta(\trho_\varepsilon)\right]dx \right|^{r} \right] \lesssim c(r);
            \end{align}
            \item[(2)] the law of $(\trho_{0},\tq_{0},\tk_{0},\trho,\tq,\tu,\tW,\tE,\tnu)$ on $\XX$ is a Radon measure;
            \item[(3)] $(\trho_{0,\varepsilon},\tq_{0,\varepsilon},\tk_{0,\varepsilon},\trho_\varepsilon,\tq_\varepsilon,\tu_\varepsilon,\tW_\varepsilon,\tE_\varepsilon,\tnu_\varepsilon)$ converges in the topology of $\XX$ $\tPP$-a.s. to $(\trho_{0},\tq_{0},\tk_{0},\trho,\tq,\tu,\tW,\tE,\tnu)$, i.e., 
            \begin{align}
                \begin{aligned}
                    \trho_{0,\varepsilon} &\to \trho_0 \quad \text{in } L^\Gamma (D), \\ 
                    \tq_{0,\varepsilon}&\to \tq_0 \quad \text{in } L^1(D), \\ 
                    \tk_{0,\varepsilon} &\to \tk_0 \quad \text{in } L^2(D), \\ 
                    \trho_\varepsilon&\to \trho \quad \text{in } C_w([0,T];L^\Gamma(D)), \\ 
                    \trho_\varepsilon&\weakarrow \trho \quad \text{in } L^{\Gamma+1}((0,T)\times D), \\ 
                    \trho_\varepsilon \tu_\varepsilon &\to \tq \quad \text{in } C_w([0,T];L^{\frac{2\Gamma }{\Gamma +1}}(D)), \\ 
                    \tu_\varepsilon &\weakarrow \tu\quad \text{in } L^2(0,T;W^{1,2}_{\rn }(D)), \\ 
                    \tW_\varepsilon &\to \tW\quad \text{in } C([0,T];\fU_0), \\
                    E(\trho_\varepsilon,\tu_\varepsilon) &\weakstararrow \tE\quad \text{in } L^\infty(0,T;\MMM_b(D)), \\ 
                    \delta_{(\trho_\varepsilon,\tu_\varepsilon,\nabla \tu_\varepsilon)} &\to \tnu \quad \text{in } (L_{w^\ast}^\infty((0,T)\times D;\PPP(\RR^{13})),w^\ast),
                \end{aligned}\label{eq:6.19}
            \end{align}
            as $\varepsilon\to 0$ $\tPP$-a.s.;
            \item[(4)] for any Carath\'{e}odory function $H=H(t,x,\rho,v,V)$ where $(t,x)\in (0,T)\times D$, $(\rho,v,V)\in \RR^{13}$, satisfying for some $q_1,q_2>0$ the growth condition 
            $$
                \left|H(t,x,\rho,v,V) \right| \lesssim 1 + |\rho|^{q_1} + |v|^{q_2} +|V|^{q_2}
            $$
            uniformly in $(t,x)$, denote $\overline{H(\trho,\tu,\nabla \tu)}(t,x) = \ev{\tnu_{t,x},H}$. Then we have 
            \begin{align}
                H(\trho_\varepsilon,\tu_\varepsilon,\nabla \tu_\varepsilon)\weakarrow \overline{H(\trho,\tu,\nabla \tu)}\quad \text{in } L^r((0,T)\times D)\quad \text{for all } 1<r\leq \frac{\Gamma + 1}{q_1}\wedge \frac{2}{q_2}, \label{eq:6.20}
            \end{align}
            as $\varepsilon\to 0$ $\tPP$-a.s.
        \end{itemize}
\end{proposition}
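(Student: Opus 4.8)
The plan is to read off statements (1)--(3) directly from Jakubowski's theorem, Theorem~\ref{thm:Jakubowski--Skorokhod}, applied to the tight family produced in Proposition~\ref{prop:ep:tightness}, and then to establish the Young-measure identification (4) by a truncation argument resting on the uniform moment bounds of Section~\ref{sec:ep:Uniform estimates}.

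First I would apply Theorem~\ref{thm:Jakubowski--Skorokhod} to the sequence of joint laws
\[
    \LL\Big[\overline{\rho}_{0,\varepsilon},\overline{\rho}_{0,\varepsilon}\overline{u}_{0,\varepsilon},\tfrac{\overline{\rho}_{0,\varepsilon}\overline{u}_{0,\varepsilon}}{\sqrt{\overline{\rho}_{0,\varepsilon}}},\rho_\varepsilon,\rho_\varepsilon u_\varepsilon,u_\varepsilon,W_\varepsilon,E(\rho_\varepsilon,u_\varepsilon),\nu_\varepsilon\Big],\quad \varepsilon\in(0,1),
\]
which is tight on the sub-Polish space $\XX$ by Proposition~\ref{prop:ep:tightness}. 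This yields, along a subsequence, the probability space $([0,1],\overline{\fB([0,1])},\fL)$, the random variables with the asserted equality of laws in (1), the $\tPP$-a.s.\ convergence \eqref{eq:6.19} in the topology of $\XX$, and the fact that the limit law is Radon, i.e.\ (2)--(3). The algebraic identities in (1), such as $\trho_{0,\varepsilon}=\trho_\varepsilon(0)$, $\tq_\varepsilon=\trho_\varepsilon\tu_\varepsilon$, $\tE_\varepsilon=E(\trho_\varepsilon,\tu_\varepsilon)$ and $\tnu_\varepsilon=\delta_{(\trho_\varepsilon,\tu_\varepsilon,\nabla\tu_\varepsilon)}$, transfer from the original space because each is encoded by a Borel relation on $\XX$ holding $\PP^\varepsilon$-a.s.\ and the laws coincide — the same reasoning already used in Sections~\ref{sec:convex_approx}--\ref{sec:The limit in the Galerkin approximation scheme}; likewise the moment bound for the energy stated in (1) follows from the first line of \eqref{eq:6.6} via equality of laws, Fatou's lemma, and lower semicontinuity of the energy functional with respect to the topology in \eqref{eq:6.19}.

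The substantive point is (4). Fix a Carath\'eodory function $H$ with $|H(t,x,\rho,v,V)|\lesssim 1+|\rho|^{q_1}+|v|^{q_2}+|V|^{q_2}$ and $r$ with $1<r\le\frac{\Gamma+1}{q_1}\wedge\frac{2}{q_2}$. Transferring \eqref{eq:6.7}--\eqref{eq:6.9} and the pressure bound \eqref{eq:6.12} to the new probability space via (1), the density $\trho_\varepsilon$ is bounded in $L^{\Gamma+1}((0,T)\times D)$ and $\tu_\varepsilon,\nabla\tu_\varepsilon$ in $L^2((0,T)\times D)$, with finite moments in $\omega$; the growth hypothesis then shows that $H(\trho_\varepsilon,\tu_\varepsilon,\nabla\tu_\varepsilon)$ is bounded in $L^{r'}$ for some $r'>r$, hence equi-integrable in $L^r((0,T)\times D)$ $\tPP$-a.s., so the sequence is weakly precompact in $L^r$ and it suffices to identify the limit. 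I would do this by truncation: for $K>0$ split $H=H_K+H^K$, where $H_K(t,x,\cdot)$ is a continuous, bounded modification of $H(t,x,\cdot)\mathbf{1}_{\{|(\rho,v,V)|\le K\}}$. By the definition of the Young-measure topology on $\XX_\nu$ (the Carath\'eodory version of the fundamental theorem of Young measures, \cite[Section 2.8]{BFHbook18}) and the convergence $\delta_{(\trho_\varepsilon,\tu_\varepsilon,\nabla\tu_\varepsilon)}\to\tnu$ in \eqref{eq:6.19},
\[
    H_K(\trho_\varepsilon,\tu_\varepsilon,\nabla\tu_\varepsilon)\weakstararrow \ev{\tnu_{t,x},H_K}\quad\text{in }L^\infty((0,T)\times D),\ \tPP\text{-a.s.},
\]
and a fortiori weakly in $L^r$. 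For the tail $H^K$, the growth condition together with the equi-integrability just obtained, and the analogous moment bound for $\tnu$ inherited from the compact set $B_R$ in the proof of Proposition~\ref{prop:ep:tightness}, gives $\sup_\varepsilon\norm{H^K(\trho_\varepsilon,\tu_\varepsilon,\nabla\tu_\varepsilon)}_{L^r((0,T)\times D)}+\norm{\ev{\tnu_{t,x},H^K}}_{L^r((0,T)\times D)}\to0$ as $K\to\infty$. Sending first $\varepsilon\to0$ and then $K\to\infty$ yields \eqref{eq:6.20} with $\overline{H(\trho,\tu,\nabla\tu)}=\ev{\tnu_{t,x},H}$.

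I expect the only real difficulty to lie in the bookkeeping of (4): matching the admissible exponent window $r\le\frac{\Gamma+1}{q_1}\wedge\frac{2}{q_2}$ to the available integrability — one controls $\trho_\varepsilon$ in $L^{\Gamma+1}$ only in space-time, not in $L^\infty_t L^{\Gamma+1}_x$, so the uniform integrability of $H(\trho_\varepsilon,\tu_\varepsilon,\nabla\tu_\varepsilon)$ must be read off carefully from \eqref{eq:6.7}--\eqref{eq:6.12} — and in verifying that the truncations $H_K$ can be taken continuous and bounded while their tails are controlled uniformly in $\varepsilon$. Everything else is a routine transcription of the Jakubowski--Skorokhod compactness machinery already employed in the preceding sections, so I would largely defer to \cite[Section 2.8]{BFHbook18} for the Young-measure details.
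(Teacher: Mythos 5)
Your proposal matches the paper's argument: Proposition~\ref{prop:ep:repr} is proved in the paper by a single application of Jakubowski's theorem (Theorem~\ref{thm:Jakubowski--Skorokhod}) to the tight family from Proposition~\ref{prop:ep:tightness}, with assertion (4) deferred to \cite[Corollary~2.8.3]{BFHbook18}, which is exactly the route you take. The only minor inaccuracy in your sketch of (4) is the claim that $H(\trho_\varepsilon,\tu_\varepsilon,\nabla\tu_\varepsilon)$ is bounded in $L^{r'}$ for some $r'>r$: this fails at the endpoint $r=\tfrac{\Gamma+1}{q_1}\wedge\tfrac{2}{q_2}$, where one only has $L^r$-boundedness and must instead identify the weak $L^r$-limit by testing against $L^{r'}$ for $r'<r$ (or simply invoke the cited corollary, which is what the paper does).
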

\begin{remark}
    The assertion (4) in Proposition \ref{prop:ep:repr} is a direct consequence of \cite[Corollary 2.8.3]{BFHbook18}.
\end{remark}
\begin{remark}
    As stated in Remark \ref{rem:r.d.}, we may deduce that the filtration 
    $$
        \tFFF_t:= \sigma \left(\sigma_t[\trho] \cup \sigma_t[\tu]\cup \sigma_t[\tW]\right),\quad t\in [0,T],
    $$
    is non-anticipating with respect to $\tW= \sum_{k=1}^{\infty} e_k \tW_k$, which is a cylindrical $(\tFFF_t)$-Wiener process on $\fU$.    
\end{remark}
\begin{remark}
    In view of the convergence of the initial conditions \eqref{eq:6.17} and \eqref{eq:6.19}, we obtain that 
    $$
        \Lambda = \LL[\trho_0,\trho_0 \tu_0] \quad \text{on } L^1(D)\times L^1(D),
    $$
    where $\tu_0$ is defined as $\tu_0 = \tq_0/ \trho_0 1_{\left\{ \trho_0>0 \right\}}$.
\end{remark}
As a direct consequence of Proposition \ref{prop:ep:repr}, similarly to the proofs of Lemma \ref{lem:alpha(q = rho u)} and Lemma \ref{lem:alpha(conv rho u otimes u)}, we obtain the following result. Note that since strong convergence of $\trho_\varepsilon$ does not hold at present, only weak convergence holds in the following unlike Lemma \ref{lem:alpha(conv rho u otimes u)}.
\begin{lemma}
    \label{lem:ep:conv rho u otimes u}
    We have $\tPP$-a.s., $\trho \tu = \tq$, and
    \begin{align}
        \label{eq:6.21}
        \trho_\varepsilon \tu_\varepsilon\otimes \tu_\varepsilon\weakarrow \trho \tu\otimes \tu\quad \text{in }L^1(0,T;L^1(D)).
    \end{align}
\end{lemma}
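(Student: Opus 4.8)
The plan is to adapt the proofs of Lemma~\ref{lem:alpha(q = rho u)} and Lemma~\ref{lem:alpha(conv rho u otimes u)}, substituting for the now‑unavailable strong convergence of the densities the strong convergence of the momenta in a negative Sobolev space. As a preliminary step I would record the two $\tPP$‑a.s.\ convergences
\[
   \trho_\varepsilon \to \trho \ \ \text{in } C([0,T];W^{-1,2}(D)), \qquad \trho_\varepsilon\tu_\varepsilon \to \tq \ \ \text{in } C([0,T];W^{-1,2}(D)).
\]
The second is built into the choice of the path space (take $k=1$ in $\XX_{\rho u}$, which is admissible since $L^{\frac{2\Gamma}{\Gamma+1}}(D)\cptarrow W^{-1,2}(D)$ for $\Gamma\geq 6$). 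The first follows from the $C_w([0,T];L^\Gamma(D))$‑convergence in \eqref{eq:6.19}, the trajectory‑wise boundedness (a convergent sequence is bounded), the continuity equation \eqref{eq:approx_CE_m} rewritten as $\pp_t\trho_\varepsilon=\varepsilon\Delta\trho_\varepsilon-\rdiv(\trho_\varepsilon\tu_\varepsilon)$ — which, using the estimates of Section~\ref{sec:ep:Uniform estimates}, gives equi‑H\"older continuity of $\trho_\varepsilon$ in some $W^{-1,q}(D)$ — together with the compact embedding $L^\Gamma(D)\cptarrow W^{-1,2}(D)$, by a standard Aubin--Lions--Simon/Arzel\`a--Ascoli argument.

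Next I would prove $\trho\tu=\tq$. Since $\rho_\varepsilon u_\varepsilon=q_\varepsilon$ on the original space, the equality of laws in Proposition~\ref{prop:ep:repr} yields $\trho_\varepsilon\tu_\varepsilon=\tq_\varepsilon$ $\tPP$-a.s.; passing to the limit, $\trho_\varepsilon\tu_\varepsilon\to\tq$ in $C_w([0,T];L^{\frac{2\Gamma}{\Gamma+1}}(D))$, in particular in $\DD'((0,T)\times D)$. On the other hand, for $\bphi\in C_c^\infty((0,T)\times D)$ one writes $\intT\int_D\trho_\varepsilon\tu_\varepsilon\cdot\bphi\,dxdt=\intT\langle\trho_\varepsilon,\tu_\varepsilon\cdot\bphi\rangle_{W^{-1,2},W^{1,2}_0}\,dt$, and since $\tu_\varepsilon\cdot\bphi\weakarrow\tu\cdot\bphi$ in $L^2(0,T;W^{1,2}_0(D))$ while $\trho_\varepsilon\to\trho$ strongly in $L^2(0,T;W^{-1,2}(D))$, this converges to $\intT\int_D\trho\tu\cdot\bphi\,dxdt$. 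Uniqueness of the distributional limit gives $\tq=\trho\tu$ $\tPP$-a.s.

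For \eqref{eq:6.21} I would argue in two stages. Equi‑integrability: transferring the estimate of Lemma~\ref{lem:m:unif_est_others} to the new space via the equality of laws, and using $W^{1,2}(D)\hookrightarrow L^6(D)$ and H\"older's inequality, the family $\{\trho_\varepsilon\tu_\varepsilon\otimes\tu_\varepsilon\}_\varepsilon$ is, $\tPP$-a.s., bounded in $L^{\frac{6\Gamma}{4\Gamma+3}}((0,T)\times D)$, an exponent exceeding $1$; hence the family is equi‑integrable in $L^1((0,T)\times D)$ and, by the Dunford--Pettis theorem, relatively weakly compact there. Identification of the limit: for $\bphi\in C_c^\infty((0,T)\times D;\RR^{3\times 3})$ one writes $\trho_\varepsilon\tu_\varepsilon\otimes\tu_\varepsilon:\bphi=\sum_{i,j}\langle(\trho_\varepsilon\tu_\varepsilon)_i,\bphi_{ij}(\tu_\varepsilon)_j\rangle_{W^{-1,2},W^{1,2}_0}$; since $\trho_\varepsilon\tu_\varepsilon\to\tq=\trho\tu$ strongly in $C([0,T];W^{-1,2}(D))$ and $\bphi_{ij}(\tu_\varepsilon)_j\weakarrow\bphi_{ij}\tu_j$ in $L^2(0,T;W^{1,2}_0(D))$, the time integral converges to $\intT\int_D\trho\tu\otimes\tu:\bphi\,dxdt$ (note $\trho\tu\otimes\tu\in L^1((0,T)\times D)$ $\tPP$-a.s.). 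Thus every weakly convergent subsequence has limit $\trho\tu\otimes\tu$, and therefore the whole family converges weakly in $L^1(0,T;L^1(D))$ to $\trho\tu\otimes\tu$.

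I expect the main obstacle to be precisely the absence of strong convergence of $\trho_\varepsilon$ (equivalently of $\sqrt{\trho_\varepsilon}\tu_\varepsilon$), which was the engine of the $\alpha$‑ and $m$‑layer arguments. The way around it is to pair $\trho_\varepsilon\tu_\varepsilon$ — which \emph{does} converge strongly, namely in $C([0,T];W^{-1,2}(D))$ — only against objects built from $\tu_\varepsilon$, which converge weakly in $L^2(0,T;W^{1,2}(D))$, the compactly supported cut‑off $\bphi$ supplying the zero boundary trace needed for the $W^{-1,2}$–$W^{1,2}_0$ duality; the supplementary input is the $L^{\frac{6\Gamma}{4\Gamma+3}}$‑bound, used only to upgrade the distributional convergence of the convective term to weak $L^1$‑convergence.
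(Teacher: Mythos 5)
Your proposal is correct and captures the key idea that the paper does not spell out: since strong convergence of the density is lost in the $\varepsilon$-layer, one must instead exploit the strong convergence of the \emph{momentum} in a negative Sobolev space, pairing it against weakly convergent quantities built from $\tu_\varepsilon$, and downgrade the conclusion from strong to weak $L^1$ convergence. This is exactly how the paper's proof of Lemma~\ref{lem:m:conv rho u otimes u} begins (compact embedding $L^{\frac{2\Gamma}{\Gamma+1}}(D)\cptarrow W^{-1,2}(D)$, giving $\trho_m\tu_m\to\trho\tu$ in $L^2(0,T;W^{-1,2}(D))$, then duality against the weakly convergent velocity), except that the $m$-layer then also has strong convergence of $\trho_m$ to finish with strong $L^1$ convergence, whereas here one must stop at weak $L^1$, as the paper's Remark following Proposition~\ref{prop:ep:repr} announces.

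Two details are imprecise and worth fixing. First, your claim to ``take $k=1$'' in $\XX_{\rho u}$ is not admissible: the exponent $k$ in the tightness argument of Proposition~\ref{prop:ep:tightness} must exceed $3/2$ so that the noise kernels, which lie only in $L^1(D)$, embed into $W^{-k,2}(D)$; hence $k\geq 2$. However this does not matter — the strong convergence $\trho_\varepsilon\tu_\varepsilon\to\tq$ in $L^2(0,T;W^{-1,2}(D))$ follows directly from the $C_w([0,T];L^{\frac{2\Gamma}{\Gamma+1}}(D))$-convergence in Proposition~\ref{prop:ep:repr} together with the compact embedding $L^{\frac{2\Gamma}{\Gamma+1}}(D)\cptarrow W^{-1,2}(D)$ (pointwise-in-time strong convergence in $W^{-1,2}(D)$ plus uniform boundedness in $L^\infty_tL^{\frac{2\Gamma}{\Gamma+1}}_x$ and dominated convergence), exactly as in Lemma~\ref{lem:m:conv rho u otimes u}; no $k=1$ path space and no Aubin--Lions detour are needed. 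The same applies to the opening claim $\trho_\varepsilon\to\trho$ in $C([0,T];W^{-1,2}(D))$: the cheaper $L^2(0,T;W^{-1,2}(D))$ statement, obtained from the $C_w([0,T];L^\Gamma(D))$-convergence, is what your duality pairing actually uses. Second, the $\tPP$-a.s.\ boundedness of the family $\{\trho_\varepsilon\tu_\varepsilon\otimes\tu_\varepsilon\}$ in $L^{\frac{6\Gamma}{4\Gamma+3}}((0,T)\times D)$ does not follow from ``transferring the estimate of Lemma~\ref{lem:m:unif_est_others} by equality of laws,'' since an $L^r(\tOmega)$-bound uniform in $\varepsilon$ does not by itself give an a.s.\ $\sup_\varepsilon$-bound; instead it should come from the $\tPP$-a.s.\ boundedness of $\tu_\varepsilon$ in $L^2(0,T;W^{1,2}(D))$ (a weakly convergent sequence is bounded) and of $\trho_\varepsilon\tu_\varepsilon$ in $L^\infty(0,T;L^{\frac{2\Gamma}{\Gamma+1}}(D))$ (built into the $C_w$-convergence), followed by H\"older and $W^{1,2}(D)\hookrightarrow L^6(D)$. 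Neither correction changes the structure of your argument.
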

Similarly to Proposition \ref{prop:eq_of_conti1}, the equation of continuity \eqref{eq:approx_CE_ep} is satisfied by $(\trho,\tu)$ on the new probability space. Note that we only have a weak solution.
\begin{proposition}
    \label{prop:ep:CE}
    The random distribution $(\trho,\tu)$ satisfies \eqref{eq:approx_CE_ep} for all $\phi\in C_c^\infty(\ico{0}{T})$ and $\psi\in C_c^\infty(\RR^3)$ $\tPP$-a.s.
\end{proposition}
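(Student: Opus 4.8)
The plan is to follow the same two-step scheme already used for Proposition~\ref{prop:eq_of_conti1}: first transfer the weak (integral) form of the approximate continuity equation from the Galerkin-level solutions $(\rho_\varepsilon,u_\varepsilon)$ to the Skorokhod copies $(\trho_\varepsilon,\tu_\varepsilon)$ via equality of laws, and then pass to the limit $\varepsilon\to0$ using the convergences collected in \eqref{eq:6.19}, observing that the artificial viscosity contribution $\varepsilon\Delta\rho_\varepsilon$ disappears in the limit.

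More precisely, I would start from the fact that $(\rho_\varepsilon,u_\varepsilon)$, being a dissipative martingale solution in the sense of Definition~\ref{def:sol_m}, satisfies \eqref{eq:approx_CE_m} a.e.\ in $(0,T)\times D$ together with $\nabla\rho_\varepsilon\cdot\rn|_{\pD}=0$; multiplying by $\phi\psi$, integrating by parts in time and space, using $\tu_\varepsilon\cdot\rn|_{\pD}=0$ to kill the boundary term from the divergence, and using the Neumann condition to kill the boundary term in $\int_D\Delta\rho_\varepsilon\,\psi$ (which survives even though $\psi$ need not vanish on $\pD$), one gets $\PP$-a.s.
\begin{align*}
 -\intT \pp_t\phi\int_D \rho_\varepsilon\psi\,dxdt
 &= \phi(0)\int_D \rho_\varepsilon(0)\psi\,dx + \intT \phi\int_D \rho_\varepsilon u_\varepsilon\cdot\nabla\psi\,dxdt \\
 &\quad - \varepsilon\intT \phi\int_D \nabla\rho_\varepsilon\cdot\nabla\psi\,dxdt
\end{align*}
for all $\phi\in C_c^\infty(\ico{0}{T})$ and all $\psi\in C_c^\infty(\RR^3)$. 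Since the map sending $(\rho,u)\in\XX_\rho\times\XX_u$ to the difference of the two sides (for fixed $\phi,\psi$) is continuous, hence Borel, Proposition~\ref{prop:ep:repr}(1) gives that the same identity holds $\tPP$-a.s.\ for $(\trho_\varepsilon,\tu_\varepsilon)$ with $\rho_\varepsilon(0)$ replaced by $\trho_{0,\varepsilon}=\trho_\varepsilon(0)$; separability of the test-function space then upgrades this to an a.s.\ statement valid simultaneously for all $\phi,\psi$. Next I would pass to the limit $\varepsilon\to0$ termwise: $\trho_\varepsilon\to\trho$ in $C_w([0,T];L^\Gamma(D))$ together with dominated convergence in $t$ handles the left-hand side, and $\trho_{0,\varepsilon}\to\trho_0$ in $L^\Gamma(D)$ handles the initial term; $\trho_\varepsilon\tu_\varepsilon\to\tq$ in $C_w([0,T];L^{\frac{2\Gamma}{\Gamma+1}}(D))$ combined with $\tq=\trho\tu$ from Lemma~\ref{lem:ep:conv rho u otimes u} handles the convective term; and the viscosity term is $O(\sqrt\varepsilon)$, since by \eqref{eq:6.7} and equality of laws $\sqrt\varepsilon\,\nabla\trho_\varepsilon$ is bounded in $L^{2r}(\tOmega;L^2((0,T)\times D))$ uniformly in $\varepsilon$, so it tends to $0$ in probability and hence a.s.\ along a subsequence.

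I do not expect a genuine obstacle here; the argument is essentially identical to the one behind Proposition~\ref{prop:eq_of_conti1}, and the limit in the deterministic continuity equation contains no stochastic integral to treat. The only point that deserves attention is that, unlike at the earlier layers, no strong convergence of $\trho_\varepsilon$ is available, so one must check that the weak(-star) convergences in \eqref{eq:6.19} suffice for every term, and in particular that the artificial viscosity term is disposed of through the $\sqrt\varepsilon$-weighted gradient bound rather than through any compactness. Assembling these observations yields \eqref{eq:approx_CE_ep} for $(\trho,\tu)$, $\tPP$-a.s., for all $\phi\in C_c^\infty(\ico{0}{T})$ and $\psi\in C_c^\infty(\RR^3)$, which completes the proof.
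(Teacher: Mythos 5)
Your proposal follows the same approach the paper itself invokes by reference to Proposition~\ref{prop:eq_of_conti1}: derive the weak integral form of the $\varepsilon$-viscous continuity equation on the original space via integration by parts with the Neumann and zero-normal-trace boundary conditions, transfer it to the Skorokhod copies by equality of laws, and pass to the limit $\varepsilon\to 0$ using \eqref{eq:6.19}, Lemma~\ref{lem:ep:conv rho u otimes u}, and the vanishing of the $\varepsilon\nabla\trho_\varepsilon$ term via \eqref{eq:6.7}. The identification of which convergences carry each term (weak-in-time convergences for $\trho_\varepsilon$, $\trho_\varepsilon\tu_\varepsilon$, and the $\sqrt\varepsilon$-weighted gradient bound for the dissipative term) is exactly right, and your observation that no strong convergence of the density is needed here is the key point the paper flags with ``we only have a weak solution.''

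One imprecision: the assertion that the relevant functional of $(\rho,u)$ is \emph{continuous} on $\XX_\rho\times\XX_u$ is not correct. The convective term $\intT\phi\int_D\rho u\cdot\nabla\psi$ is not a continuous function of $(\rho,u)$ under the weak topologies (a weak-times-weak product is not weakly continuous), and the term $\varepsilon\intT\phi\int_D\nabla\rho\cdot\nabla\psi$ is not continuous on $\XX_\rho$ because that path space controls $\rho$ but not $\nabla\rho$ (nor its trace). The transfer nonetheless works: the Skorokhod representation also carries the momentum coordinate $\tq_\varepsilon=\trho_\varepsilon\tu_\varepsilon\in\XX_{\rho u}$, in which the convective term \emph{is} continuous; and the extra regularity of $\rho_\varepsilon$ (the parabolic bounds \eqref{eq:5.10}--\eqref{eq:5.11} from Definition~\ref{def:sol_m}) transfers to $\trho_\varepsilon$ since these are Borel properties on the relevant sub-Polish space, after which the gradient term is well-defined and the identity transfers as an equality of Borel random variables. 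This is the standard fix and is what the paper implicitly relies on when it asserts in Remark~\ref{rem:ep:conv_of_Delta rho} that $(\trho_\varepsilon,\tu_\varepsilon)$ satisfies \eqref{eq:approx_CE_m} a.e.; you should replace ``continuous'' by ``Borel'' and invoke the additional (transferred) regularity explicitly, but the structure and conclusion of your argument are sound.
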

\begin{remark}
    \label{rem:ep:conv_of_Delta rho}
    Since $(\trho_\varepsilon,\tu_\varepsilon)$ satisfies the equation of continuity \eqref{eq:approx_CE_m} a.e. in $(0,T)\times D$ $\tPP$-a.s., and $\trho_\varepsilon \tu_\varepsilon$ and $\nabla \trho_\varepsilon$ have zero normal trace, we may multiply \eqref{eq:approx_CE_m} by $\trho_\varepsilon$ and integrate by parts, obtaining 
    $$
        \dv{t} \int_D \frac12 \left|\trho_\varepsilon \right|^2 dx + \varepsilon \int_D \left|\nabla \trho_\varepsilon \right|^2dx = - \frac12 \int_D \left|\trho_\varepsilon \right|^2 \rdiv \tu_\varepsilon dx.
    $$
    In accordance with the first one in \eqref{eq:6.7}, and \eqref{eq:6.9} (both of which continue to hold on the new probability space), the expectation of the integral on the right-hand side is bounded, and we obtain 
    \begin{align*}
        \varepsilon \tEE \left[\intT \norm{\nabla \trho_\varepsilon }_{L_x^2}^2 dt \right] \lesssim c(r),
    \end{align*}
    where $c(r)$ is given in Section \ref{sec:ep:Uniform estimates}. In particular, for a suitable subsequence, $\tPP$-a.s., 
    $$
        \varepsilon \nabla \trho_\varepsilon \to 0 \quad \text{in } L^2(0,T;L^2(D)).
    $$
\end{remark}
Next, we perform the limit $\varepsilon\to 0$ in the interior momentum equation \eqref{eq:approx_ME_ep}. 
To this end, we apply Proposition \ref{prop:ep:repr}, part (4), to the compositions $p_\delta(\trho_\varepsilon)$ and $\trho_\varepsilon F_k(\trho_\varepsilon,\tu_\varepsilon)$, $k\in\NN$.
Specifically, we have 
\begin{align}
    &p_\delta(\trho_\varepsilon) \weakarrow \overline{p_\delta(\trho)}\quad \text{in } L^q((0,T)\times D), \label{eq:6.22}\\ 
    &\trho_\varepsilon F_k(\trho_\varepsilon,\tu_\varepsilon) \weakarrow \overline{\trho F_k(\trho,\tu)} \quad \text{in } L^q((0,T)\times D), \label{eq:6.23}
\end{align}
for some $q>1$ $\tPP$-a.s. Let us now define the Hilbert--Schmidt operator $\overline{\GG(\trho,\trho \tu)}= \overline{\trho \FF(\trho,\tu)}$ by 
$$
    \overline{\trho \FF(\trho,\tu)}e_k := \overline{\trho F_k(\trho,\tu)},\quad k\in \NN.
$$
We obtain the following result.
\begin{proposition}
    \label{prop:ep:IME}
    The random distribution $(\trho,\tu,\tW)$ satisfies 
    \begin{align}
        &-\intT \pp_t \phi \int_D \trho \tu \cdot \bphi dxdt - \phi(0) \int_D \trho_0 \tu_0 \cdot \bphi dx \notag\\ 
        &\quad = \intT \phi \int_D [\trho \tu\otimes \tu:\nabla \bphi + \overline{p_\delta(\trho)}\rdiv \bphi]dxdt \notag \\ 
        &\quad \quad - \intT \phi\int_D \SSS(\nabla \tu):\nabla \bphi dx dt + \intT \phi \int_D \overline{\trho \FF(\trho,\tu)} \cdot \bphi dx d\tW,\label{eq:6.24}
    \end{align}
    for all $\phi\in C_c^\infty(\ico{0}{T})$ and all $\bphi\in C_c^\infty(D)$ $\tPP$-a.s.
\end{proposition}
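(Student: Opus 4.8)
The plan is to start from the fact that, by the equality of laws in Proposition~\ref{prop:ep:repr}(1), the triple $(\trho_\varepsilon,\tu_\varepsilon,\tW_\varepsilon)$ satisfies the $\varepsilon$-level interior momentum equation \eqref{eq:approx_ME_m} on the new probability space (this is verified exactly as in the proofs of Propositions~\ref{prop:ME1} and \ref{prop:IME_alpha}: the distributional-in-time weak formulation is a measurable functional of the trajectory, equal to zero $\PP$-a.s.\ for the original solution, so equal to zero $\tPP$-a.s.\ here, with the stochastic integral treated via time-discretization). It then remains to let $\varepsilon\to0$ termwise in
\[
    -\intT \pp_t\phi \int_D \trho_\varepsilon \tu_\varepsilon\cdot \bphi\, dx\, dt = \phi(0)\!\int_D \trho_{0,\varepsilon}\tu_{0,\varepsilon}\cdot\bphi\, dx + \intT\!\phi\!\int_D[\trho_\varepsilon\tu_\varepsilon\otimes\tu_\varepsilon:\nabla\bphi + p_\delta(\trho_\varepsilon)\rdiv\bphi - \SSS(\nabla\tu_\varepsilon):\nabla\bphi + \varepsilon\trho_\varepsilon\tu_\varepsilon\cdot\Delta\bphi]\,dx\,dt + \intT\!\phi\!\int_D \GG_\varepsilon(\trho_\varepsilon,\trho_\varepsilon\tu_\varepsilon)\cdot\bphi\, dx\, d\tW_\varepsilon,
\]
using the convergences collected in \eqref{eq:6.19}, noting (as in Remark~\ref{rem:m:conv of each term of ME}) that each term in fact converges for all $\bphi\in C^\infty(\oD)$ while the identity is retained only for $\bphi\in C_c^\infty(D)$.

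\textbf{Deterministic terms.} The left-hand side and the initial term pass to the limit via $\trho_\varepsilon\tu_\varepsilon\to\tq=\trho\tu$ in $C_w([0,T];L^{2\Gamma/(\Gamma+1)}(D))$ and $\tq_{0,\varepsilon}\to\tq_0=\trho_0\tu_0$ in $L^1(D)$ (Lemma~\ref{lem:ep:conv rho u otimes u} and Proposition~\ref{prop:ep:repr}). The convective term converges to $\intT\phi\int_D\trho\tu\otimes\tu:\nabla\bphi$ by Lemma~\ref{lem:ep:conv rho u otimes u}. The pressure term converges to $\intT\phi\int_D\overline{p_\delta(\trho)}\rdiv\bphi$ by \eqref{eq:6.22}. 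The viscous term converges by linearity of $\SSS$ in $\nabla u$ and the weak convergence $\tu_\varepsilon\weakarrow\tu$ in $L^2(0,T;W^{1,2})$. Finally the artificial viscosity term $\varepsilon\intT\phi\int_D\trho_\varepsilon\tu_\varepsilon\cdot\Delta\bphi$ vanishes because $\trho_\varepsilon\tu_\varepsilon$ is bounded in $L^\infty(0,T;L^{2\Gamma/(\Gamma+1)}(D))$ uniformly in $\varepsilon$ by \eqref{eq:6.8} while $\varepsilon\to0$.

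\textbf{The stochastic term.} This is the delicate part. First I would remove the $\varepsilon$-truncation from the noise: writing $\trho_\varepsilon F_{k,\varepsilon}(\trho_\varepsilon,\tu_\varepsilon)=\trho_\varepsilon F_k(\trho_\varepsilon,\tu_\varepsilon)+\trho_\varepsilon\big[F_{k,\varepsilon}-F_k\big](\trho_\varepsilon,\tu_\varepsilon)$, the error is supported on $\{\trho_\varepsilon\le 2\varepsilon\}\cup\{|\tu_\varepsilon|>1/\varepsilon\}$; using $|F_k(\rho,v)|\le f_k(1+|v|)$ together with the energy bound \eqref{eq:6.6} (to control $\sqrt{\trho_\varepsilon}\,\sqrt{\trho_\varepsilon}|\tu_\varepsilon|$ on the first set) and Chebyshev's inequality plus uniform integrability of $\trho_\varepsilon|\tu_\varepsilon|$ in $L^{p}((0,T)\times D)$, $p>1$, coming from \eqref{eq:6.7}--\eqref{eq:6.9} (on the second set), this error tends to $0$ in $L^1((0,T)\times D)$, $\tPP$-a.s., and after interpolation with the uniform $L^q$-bound, to $0$ in $L^q((0,T)\times D)$ for some $q>1$. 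Then Proposition~\ref{prop:ep:repr}(4), applied to the Carath\'eodory function $(\rho,v)\mapsto\rho F_k(\rho,v)$ (whose growth fits the hypothesis with $q_1=\Gamma$, $q_2=\Gamma/(\Gamma-1)$, admissible since $\Gamma\ge6$), yields \eqref{eq:6.23}, i.e. $\trho_\varepsilon F_k(\trho_\varepsilon,\tu_\varepsilon)\weakarrow\overline{\trho F_k(\trho,\tu)}$ weakly in $L^q((0,T)\times D)$; combining, $\GG_\varepsilon(\trho_\varepsilon,\trho_\varepsilon\tu_\varepsilon)e_k\weakarrow\overline{\trho\FF(\trho,\tu)}\,e_k$ for every $k$. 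Since only this \emph{weak} convergence of the diffusion coefficient is available (there is no strong convergence of $\trho_\varepsilon$ at this stage), Lemma~\ref{lem:conv_of_stoch_int} cannot be invoked directly. Instead I would identify the limit integral by a martingale argument: the real-valued processes $M^\varepsilon_{\bphi}(t)=\int_0^t\int_D\GG_\varepsilon(\trho_\varepsilon,\trho_\varepsilon\tu_\varepsilon)\cdot\bphi\,dx\,d\tW_\varepsilon$ converge $\tPP$-a.s.\ in $C([0,T])$ to the limit $M_\bphi$ of all the remaining (deterministic) terms; by the uniform moment bound of Proposition~\ref{prop:ep:repr}(1) they are uniformly integrable, so $M_\bphi$ is an $(\tFFF_t)$-martingale; its cross-variation with $\tW_k$ equals $\int_0^t\big(\int_D\GG_\varepsilon e_k\cdot\bphi\,dx\big)ds$ for each $\varepsilon$, which passes to the limit — testing against $\mathbf 1_{[0,t]}$ only requires weak-in-time convergence — to $\int_0^t\int_D\overline{\trho F_k(\trho,\tu)}\cdot\bphi\,dx\,ds$, and a parallel control of the quadratic variation against $\int_0^t\|\int_D\overline{\trho\FF(\trho,\tu)}\cdot\bphi\,dx\|^2_{L_2(\fU,\RR)}\,ds$ then gives, by the standard identification lemma, $M_\bphi=\int_0^\cdot\int_D\overline{\trho\FF(\trho,\tu)}\cdot\bphi\,dx\,d\tW$; this is \eqref{eq:6.24}.

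\textbf{Main obstacle.} The hard part is precisely the last step: reconciling the weak (not strong) convergence of the noise coefficient with the passage to the limit inside the stochastic integral, i.e.\ matching the quadratic variation of the limit martingale with that built from $\overline{\trho\FF(\trho,\tu)}$. Everything else — the deterministic terms, the vanishing of the $\varepsilon\Delta(\rho u)$ term, and the reduction $\FF_\varepsilon\to\FF$ — is routine once the convergences of Proposition~\ref{prop:ep:repr} and Lemma~\ref{lem:ep:conv rho u otimes u} are in hand.
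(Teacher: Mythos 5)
Your handling of the deterministic terms is fine and matches the paper, as does the preliminary reduction from $\GG_\varepsilon$ to $\GG$ and the removal of the $\varepsilon$-truncation. The gap is in the stochastic term, and it is precisely the step you flag as "the main obstacle": your martingale-identification route does not close, and your assertion that only weak convergence of the noise coefficient is available is not what the paper establishes.

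On the identification argument: to conclude that the limit martingale $M_\bphi$ is $\int_0^\cdot\int_D\overline{\trho\FF(\trho,\tu)}\cdot\bphi\,dx\,d\tW$, you must match not only the cross-variation with each $\tW_k$ (which is linear in the coefficient and does pass under weak convergence) but also the quadratic variation. The quadratic variations of $M^\varepsilon_\bphi$ are $\int_0^t\sum_k\big(\int_D G_{k,\varepsilon}(\trho_\varepsilon,\trho_\varepsilon\tu_\varepsilon)\cdot\bphi\,dx\big)^2\,ds$, which is a nonlinear functional of the coefficient; under mere weak convergence of $\trho_\varepsilon F_k(\trho_\varepsilon,\tu_\varepsilon)$ its limit is $\int_0^t\langle\tnu,\sum_k(\int\rho F_k(\rho,v)\cdot\bphi)^2\rangle\,ds$, which by Jensen is in general \emph{strictly larger} than $\int_0^t\sum_k\big(\int_D\overline{\trho F_k(\trho,\tu)}\cdot\bphi\,dx\big)^2\,ds$. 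So the quadratic variations would not match unless the Young-measure defect vanishes, which is exactly what you cannot yet assume. The "parallel control" you invoke is the missing content, not a routine step.

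What the paper actually does is bypass this by establishing a \emph{strong} convergence of the noise coefficient, but in the negative Sobolev space $L^2(0,T;L_2(\fU;(W^{l,2}(D))^\ast))$, $l>\tfrac32$ — which is what Lemma~\ref{lem:conv_of_stoch_int} needs — even though $\trho_\varepsilon$ does not converge strongly in any Lebesgue space. The mechanism: after reducing to $\trho_\varepsilon F_k(\trho_\varepsilon,\tu)$ (freezing $u$ at the limit, using $\|\trho_\varepsilon|\tu_\varepsilon-\tu|^2\|_{L^1_x}\to0$ in $L^1(0,T)$ from the energy convergence), one uses the renormalized continuity equation \eqref{eq:6.27} and an Arzelà–Ascoli argument in $C_w([0,T];L^2)$ to get $b(\trho_\varepsilon)\to\overline{b(\trho)}$ in $C_w([0,T];L^2(D))$ for globally Lipschitz $b$; by $L^2\hookrightarrow\hookrightarrow(W^{1,2})^\ast$ this yields strong convergence of $b(\trho_\varepsilon)B(\tu)$ in $L^1(0,T;(W^{l,2})^\ast)$, and then a density argument approximating $\rho F_k(\rho,v)\approx\sum_j b_j(\rho)B_j(v)$ finishes. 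The point is that the time-compactness coming from the continuity equation substitutes for the missing spatial strong convergence. Replacing your identification step with this argument is essential.
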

\begin{remark}
    \label{rem:ep:conv of each term of ME}
    From the same reason as Remark \ref{rem:alpha(conv of each term of ME)}, the convergence to each term in the momentum equation \eqref{eq:6.24} actually holds for any $\bphi\in C^\infty(\oD)$, but the equality holds only for $\bphi\in C_c^\infty(D)$.
\end{remark}
\begin{proof}
    As in the proof of Proposition \ref{prop:ME1}, by the equality of laws from Proposition \ref{prop:ep:repr}, $(\trho_\varepsilon,\tu_\varepsilon,\tW_\varepsilon)$ satisfies the interior momentum equation \eqref{eq:approx_ME_m}, and by \eqref{eq:6.22}, the deterministic part can pass to the limit.
    Note that at the current stage we are only able to show that a certain limit exists, but we are unable to identify it. For the identification, strong convergence of the approximate densities $\trho_\varepsilon$ is necessary, which is the main goal of Section \ref{sec:ep:Strong convergence of the densities}.
    
    Finally, the passage to the limit for the stochastic integral part using \eqref{eq:6.23} can be justified in exactly the same way as \cite[Proposition 4.4.14]{BFHbook18}. 
    To this end, we first show that, for $l>\frac32$,
    \begin{align}
        \label{eq:6.25}
        \trho_\varepsilon F_{k,\varepsilon}(\trho_\varepsilon,\tu_\varepsilon)\to \overline{\trho F_k(\trho,\tu)}\quad \text{in }L^2(0,T;(W^{l,2}(D))^\ast) 
    \end{align}
    $\tPP$-a.s., for any $k\in\NN$. We observe that 
    \begin{align*}
        &\int_D \trho_\varepsilon \left|F_{k,\varepsilon}(\trho_\varepsilon,\tu_\varepsilon) - F_{k}(\trho_\varepsilon,\tu_\varepsilon) \right|dx \\ 
        &\quad \leq \int_{\trho_\varepsilon<\varepsilon}\trho_\varepsilon \left|F_{k}(\trho_\varepsilon,\tu_\varepsilon) \right|dx + \int_{\left|\tu_\varepsilon \right|>\frac1\varepsilon}\trho_\varepsilon \left|F_{k}(\trho_\varepsilon,\tu_\varepsilon) \right|dx \\ 
        &\quad \leq f_k \left[\int_{\trho_\varepsilon<\varepsilon}\left(\trho_\varepsilon +\trho_\varepsilon |\tu_\varepsilon|\right)dx + \int_{\left|\tu_\varepsilon\right|>\frac1\varepsilon }\left(\trho_\varepsilon +\trho_\varepsilon |\tu_\varepsilon|\right)dx\right] \\ 
        &\quad \lesssim f_k \left[\varepsilon \int_D \left(1 + |\tu_\varepsilon|\right)dx + \int_{\left|\tu_\varepsilon \right|>\frac1\varepsilon}\trho_\varepsilon |\tu_\varepsilon|dx\right] \\ 
        &\quad \lesssim \varepsilon f_k \int_D \left(1 + |\tu_\varepsilon| + \trho_\varepsilon |\tu_\varepsilon|^2 \right)dx.
    \end{align*}
    Hence, due to the uniform estimates \eqref{eq:6.8}--\eqref{eq:6.9} (which continue to hold the new probability space by Proposition \ref{prop:ep:repr}), we obtain 
    \begin{align*}
        &\trho_\varepsilon F_{k,\varepsilon}(\trho_\varepsilon,\tu_\varepsilon)- \trho_\varepsilon F_{k}(\trho_\varepsilon,\tu_\varepsilon)  \to 0 \quad \text{in }L^2(0,T;(W^{l,2}(D))^\ast),
    \end{align*}
    $\tPP$-a.s., for any $k\in\NN$. To see \eqref{eq:6.25}, we first write 
    \begin{align*}
        &\int_D \trho_\varepsilon F_{k}(\trho_\varepsilon,\tu_\varepsilon)\cdot \bphi dx = \int_D \trho_\varepsilon \left(F_{k}(\trho_\varepsilon,\tu_\varepsilon) - F_{k}(\trho_\varepsilon,\tu)\right)\cdot \bphi dx + \int_D \trho_\varepsilon F_{k}(\trho_\varepsilon,\tu)\cdot \bphi dx,
    \end{align*}
    for any $\bphi\in C^\infty(\oD)$. By H\"older's inequality, we have 
    \begin{align*}
          \int_D \trho_\varepsilon \left(F_{k}(\trho_\varepsilon,\tu_\varepsilon) - F_{k}(\trho_\varepsilon,\tu)\right)\cdot \bphi dx &\leq f_k \norm{\bphi}_{L_x^\infty}\int_D \trho_\varepsilon |\tu_\varepsilon -\tu|dx \\ 
          &\leq f_k \norm{\bphi}_{L_x^\infty}\norm{\sqrt{\trho_\varepsilon}}_{L_x^2}\norm{\trho_\varepsilon |\tu_\varepsilon -\tu|^2}_{L_x^1}^{1/2}.
    \end{align*}
    It follows from Proposition \ref{prop:ep:repr} and Lemma \ref{lem:ep:conv rho u otimes u} that $\tPP$-a.s.,
    \begin{align*}
        &\norm{\trho_\varepsilon |\tu_\varepsilon -\tu|^2}_{L_x^1} = \int_D \trho_\varepsilon |\tu_\varepsilon|^2dx - 2\int_D \trho_\varepsilon \tu_\varepsilon \cdot \tu dx + \int_D \trho_\varepsilon |\tu|^2 dx \to 0 \quad \text{in }L^1(0,T). 
    \end{align*}
    This implies 
    \begin{align*}
        &\trho_\varepsilon \left(F_k(\trho_\varepsilon,\tu_\varepsilon)- F_k(\trho_\varepsilon, \tu)\right) \to 0 \quad \text{in }L^2(0,T;(W^{l,2}(D))^\ast)\quad \tPP\text{-a.s.}
    \end{align*}
    Thus, \eqref{eq:6.25} follows if we show that 
    \begin{align}
        \label{eq:6.26}
        &\trho_\varepsilon F_k(\trho_\varepsilon,\tu)  \to \overline{\trho F_k(\trho,\tu)}\quad \text{in }L^2(0,T;(W^{l,2}(D))^\ast)\quad \tPP\text{-a.s.}
    \end{align}
    To see \eqref{eq:6.26}, we need the following renormalized form of the equation of continuity:
    \begin{align}
        \label{eq:6.27}
        \pp_t b(\trho_\varepsilon) + \rdiv (b(\trho_\varepsilon)\tu_\varepsilon) + (b'(\trho_\varepsilon)\trho_\varepsilon - b(\trho_\varepsilon))\rdiv \tu_\varepsilon = \varepsilon \rdiv (b'(\trho_\varepsilon)\nabla \trho_\varepsilon) - \varepsilon b''(\trho_\varepsilon)|\nabla \trho_\varepsilon|^2
    \end{align}
    for any $b$ having at most quadratic growth, 
    $$
        \left|b''(\rho) \right| \lesssim 1\quad \text{for all } \rho\geq 0.
    $$
    \eqref{eq:6.27} follows easily by multiplying \eqref{eq:approx_CE_m} by $b'(\trho_\varepsilon)$. Moreover, if $b$ is globally Lipschitz, by an Arzel\`a--Ascoli type result in $C_w([0,T];L^2(D))$ (see \cite[Lemma 6.2]{NS04}), \eqref{eq:6.20}, and Remark \ref{rem:ep:conv_of_Delta rho}, we deduce from \eqref{eq:6.27} 
    \begin{align}
        \label{eq:6.28}
        &b(\trho_\varepsilon) \to \overline{b(\trho)}\quad \text{in }C_w([0,T];L^2(D))\quad \tPP\text{-a.s.}\quad \text{as }\varepsilon \to 0.
    \end{align} 
    Indeed, first, note that we do not have to choose a subsequence in \eqref{eq:6.28} as the limit is uniquely determined by \eqref{eq:6.20}. 
    Next, in view of \cite[Lemma 6.2]{NS04}, it suffices to show that 
    \begin{align*}
        &b(\trho_\varepsilon) \quad \text{is bounded in }L^\infty(0,T;L^2(D))\cap C([0,T];W^{-1,r}(D))  \quad \tPP\text{-a.s.,}
    \end{align*}
    for some $1<r<\infty$. The boundedness of $b(\trho_\varepsilon)$ in $L^\infty_tL^2_x$ follows immediately from the Lipschitz continuity of $b$ and the boundedness of $\trho_\varepsilon$ in $L_t^\infty L_x^\Gamma$ $\tPP$-a.s.
    Thus, by \eqref{eq:6.27}, it suffices to show that 
    \begin{align*}
        &\pp_t b(\trho_\varepsilon) \quad \text{is bounded in }L^1(0,T;W^{-1,r}(D))  \quad \tPP\text{-a.s.,}
    \end{align*}
    for some $1<r<\infty$. Clearly, 
    \begin{align*}
        &\norm{\rdiv (b(\trho_\varepsilon)\tu_\varepsilon)}_{L_t^1W_x^{-1,r}} \lesssim \norm{b(\trho_\varepsilon)\tu_\varepsilon}_{L_t^1L_x^r} \lesssim \norm{\trho_\varepsilon \tu_\varepsilon}_{L_t^1L_x^r},\quad 1<r<\infty.  
    \end{align*}
    Since $b$ is globally Lipschitz and $b'$ is bounded, we have 
    \begin{align*}
        \norm{ (b'(\trho_\varepsilon)\trho_\varepsilon - b(\trho_\varepsilon))\rdiv \tu_\varepsilon}_{L_t^1L_x^1}  &\lesssim \norm{\trho_\varepsilon}_{L_t^2L_x^2}\norm{\rdiv \tu_\varepsilon}_{L_t^2L_x^2} \\ 
        &\lesssim \norm{\trho_\varepsilon}_{L_t^2L_x^2}\norm{\tu_\varepsilon}_{L_t^2W_x^{1,2}}.
    \end{align*}
    If $1<r\leq 2$, we have 
    \begin{align*}
        \norm{ \varepsilon \rdiv (b'(\trho_\varepsilon)\nabla \trho_\varepsilon)}_{L_t^1W_x^{-1,r}} &\lesssim \norm{b'(\trho_\varepsilon)\varepsilon \nabla \trho_\varepsilon}_{L_t^1L_x^r} \\ 
        &\lesssim \norm{\varepsilon\nabla \trho_\varepsilon}_{L_t^1L_x^2},
    \end{align*}    
    and by $|b''|\lesssim 1$, 
    \begin{align*}
        &\norm{\varepsilon b''(\trho_\varepsilon)|\nabla \trho_\varepsilon|^2}_{L_t^1L_x^1}  \lesssim \norm{\sqrt{\varepsilon}\nabla \trho_\varepsilon}_{L_t^2L_x^2}^2.
    \end{align*}
    Therefore, by choosing $1<r<3/2$ so that $L^1(D) \hookrightarrow W^{-1,r}(D)$, the boundedness of $\pp_t b(\trho_\varepsilon)$ in $L^1_tW^{-1,r}_x$ follows from the convergence \eqref{eq:6.19} and Remark \ref{rem:ep:conv_of_Delta rho}. 
    Thus, we obtain \eqref{eq:6.28}.

    To obtain a similar statement for the Carath\'eodory composition 
    \begin{align*}
        &\trho_\varepsilon F_k(\trho_\varepsilon,\tu), 
    \end{align*}
    we use an approximation argument. First, we recall the compact embedding $L^2(D)\cptarrow (W^{1,2}(D))^\ast$ and 
    \begin{align*}
        &C_w([0,T];L^2(D)) \hookrightarrow L^2(0,T;(W^{1,2}(D))^\ast).   
    \end{align*}
    Consequently, we deduce from \eqref{eq:6.28} and the fact that $\tu\in L^2(0,T;W^{1,2}(D))$ $\tPP$-a.s. that 
    \begin{align}
        \label{eq:6.29}
        &b(\trho_\varepsilon) B(\tu) \to \overline{b(\trho)B(\tu)}\quad \text{in }L^1(0,T;(W^{l,2}(D))^\ast),  \quad l>\tfrac32,
    \end{align}
    for any $b$ having at most quadratic growth and $|b''(\rho)|\lesssim 1,\rho\geq 0$, and for any $B\in C^1(\RR^3)$ with $\nabla B$ bounded. Indeed, we have 
    \begin{align*}
        &\left|\intT\int_D \left(b(\trho_\varepsilon)-b(\trho_{\varepsilon'})\right) B(\tu)\cdot \bphi dx dt \right| \\ 
        &\quad \leq \norm{b(\trho_\varepsilon)-b(\trho_{\varepsilon'})}_{L^2_t(W_x^{1,2})^\ast} \norm{B(\tu)\cdot\bphi}_{L^2_t W^{1,2}_x} \\ 
        &\quad \lesssim \norm{b(\trho_\varepsilon)-b(\trho_{\varepsilon'})}_{L^2_t(W_x^{1,2})^\ast} \norm{B(\tu)}_{L^2_t W^{1,2}_x}\norm{\bphi}_{L^\infty_t W^{l,2}_x},\quad l>\tfrac32,
    \end{align*}
    for all $\bphi\in L^\infty(0,T;W^{l,2}(D))$, $\varepsilon,\varepsilon'>0$ $\tPP$-a.s., and taking the supremum over $\bphi\in L^\infty(0,T;W^{l,2}(D))$ with $\norm{\bphi}_{L_t^\infty W_x^{l,2}}\leq 1$ yields the desired convergence \eqref{eq:6.29}.

    By interpolation between $L^1(0,T;(W^{l,2}(D))^\ast)$ and $L^p(0,T;(W^{l,2}(D))^\ast),\ p>2$, and a density argument, we also have $\tPP$-a.s.,
    \begin{align*}
        &b(\trho_\varepsilon) B(\tu) \to \overline{b(\trho)B(\tu)}\quad \text{in }L^2(0,T;(W^{l,2}(D))^\ast),  
    \end{align*}
    for any globally Lipschitz $b$ and $B$. Thus, \eqref{eq:6.26} follows by a density argument via locally uniform approximation of $\rho F_k(\rho,u)$ by finite sums $\sum_j b_j(\rho)B_j(u)$. This completes the proof of \eqref{eq:6.25}.

    For $l>\frac32$, we have, by Sobolev's embedding $L^1(D)\cptarrow (W^{l,2}(D))^\ast$, \eqref{eq:6.7} and \eqref{eq:6.8}, 
    \begin{align*}
        &\tEE\left[\intT \norm{\trho_\varepsilon \FF_\varepsilon(\trho_\varepsilon,\tu_\varepsilon)}_{L_2(\fU;(W^{l,2}_x)^\ast)}^2dt \right]   \\ 
        &\quad \lesssim \tEE\left[\intT \sum_{k=1}^{\infty}\norm{\sqrt{\trho_\varepsilon}\sqrt{\trho_\varepsilon}^{-1}G_{k,\varepsilon}(\trho_\varepsilon,\trho_\varepsilon \tu_\varepsilon)}_{L^1_x}^2dt \right] \\ 
        &\quad \lesssim \tEE\left[\intT (\trho_\varepsilon)_D\sum_{k=1}^{\infty}\int_D \trho_\varepsilon^{-1}|G_{k,\varepsilon}(\trho_\varepsilon,\trho_\varepsilon \tu_\varepsilon)|^2dx dt \right] \\ 
        &\quad \lesssim \tEE\left[\intT (\trho_\varepsilon)_D\int_D \left(\trho_\varepsilon + \trho_\varepsilon |\tu_\varepsilon|^2 \right)dx dt \right] \\ 
        &\quad \lesssim c(r).
    \end{align*} 
    Consequently, \eqref{eq:6.25} implies 
    \begin{align*}
        &\trho_\varepsilon \FF_\varepsilon(\trho_\varepsilon,\tu_\varepsilon)\to \overline{\trho \FF(\trho,\tu)}\quad \text{in }L^2(0,T;L_2(\fU;(W^{l,2}(D)))^\ast) 
    \end{align*}
    $\tPP$-a.s. Together with the convergence of $\tW_\varepsilon$ from Proposition \ref{prop:ep:repr}, this allows us to apply Lemma \ref{lem:conv_of_stoch_int} to pass to the limit in the stochastic integral and hence complete the proof.
\end{proof}
\subsection{Strong convergence of the densities} \label{sec:ep:Strong convergence of the densities}
We adapt the method used in the deterministic case based on weak continuity of the \textit{effective viscous flux} (c.f. \cite[Secion 3.6.5]{FN17})
$$
    (\eta + \mu) \rdiv u - p(\rho),
$$
where $\eta = \frac\mu3 +\lambda$.
First, we introduce \textit{pseudo-differential operators} $\RRR= \Delta^{-1}\nabla \otimes \nabla = (\RRR_{ij})_{i,j=1}^3$, and $\cA = \nabla \Delta^{-1} = (\pp_j \Delta^{-1})_{j=1}^3$ on $\RR^3$, which are identified by the Fourier symbols:
\begin{itemize}
    \item the ``double'' Riesz transform:
    \begin{align*}
        &\RRR_{ij} \approx \frac{\xi_i \xi_j}{|\xi|^2},\quad i,j= 1,2,3,
    \end{align*}
    meaning that 
    \begin{align*}
        &\RRR_{ij}(v) := \cF_{\xi\to x}^{-1}\left[\frac{\xi_i \xi_j}{|\xi|^2} \cF_{x\to \xi}(v)\right],\quad v\in \cS(\RR^3);
    \end{align*}
    \item the inverse divergence:
    \begin{align*}
        &\pp_j \Delta^{-1}\approx -\frac{i \xi_j}{|\xi|^2},\quad j=1,2,3,
    \end{align*}
    meaning that 
    \begin{align*}
        &\pp_j \Delta^{-1}(v) := - \cF_{\xi\to x}^{-1}\left[\frac{i \xi_j}{|\xi|^2} \cF_{x\to \xi}(v)\right],\quad v\in \cS(\RR^3),
    \end{align*}
\end{itemize}
where $\cS(\RR^3)$ denotes the space of \textit{smooth rapidly decreasing functions}, $\cF$ is the \textit{Fourier transform} on the space of \textit{tempered distributions} $\cS'(\RR^3)$, and the values of the above mappings make sense as elements of $\cS'(\RR^3)$.
Moreover, by virtue of the H\"olmander--Mikhlin theorem, $\RRR_{ij}$ is a continuous linear operator mapping $L^p(\RR^3)$ into $L^p(\RR^3)$ for any $i,j=1,2,3$ and any $1<p<\infty$, and $\nabla \Delta^{-1}$ also possesses appropriate $L^p$-regularity.
For details, see Appendix \ref{sec:Properties of pseudo-differential operators}.

Extending $\trho_\varepsilon$ and $\tu_\varepsilon$ to be zero outside $D$, we use the quantity 
$$
    \phi \cA(1_D \trho_\varepsilon )= \phi \nabla \Delta^{-1} (1_D \trho_\varepsilon ),\quad \phi\in C_c^\infty(D)
$$
as a test function in \eqref{eq:approx_ME_m}. Since $(\trho_\varepsilon,\tu_\varepsilon)$ satisfies the equation of continuity \eqref{eq:approx_CE_m} a.e. in $(0,T)\times D$ $\tPP$-a.s., and $\trho_\varepsilon \tu_\varepsilon$ and $\nabla \trho_\varepsilon$ have zero normal trace, the equation \eqref{eq:approx_CE_m} can be extended to the whole space $\RR^3$, that is,
\begin{align*}
    \pp_t (1_D \trho_\varepsilon ) + \rdiv \left(1_D \trho_\varepsilon \tu_\varepsilon\right) = \varepsilon \rdiv \left(1_D \nabla \trho_\varepsilon\right)
\end{align*}
a.e. in $(0,T)\times \RR^3$ $\tPP$-a.s. Indeed, applying the boundary conditions on $\nabla \trho_\varepsilon$ and $\tu_\varepsilon$, we have 
\begin{align*}
    &-\intT \pp_t \phi \int_D \trho_\varepsilon \psi dx dt  - \intT \phi \int_D \trho_\varepsilon \tu_\varepsilon \cdot \nabla \psi dxdt = -\varepsilon \intT\phi \int_D \nabla \trho_\varepsilon \cdot \nabla \psi dx dt,
\end{align*}
for all $\phi\in C_c^\infty((0,T))$, and all $\psi\in C_c^\infty(\RR^3)$ $\tPP$-a.s., which is equivalent to 
\begin{align*}
    &-\intT \pp_t \phi \int_{\RR^3} 1_D \trho_\varepsilon \psi dx dt  - \intT \phi \int_{\RR^3} 1_D \trho_\varepsilon \tu_\varepsilon \cdot \nabla \psi dxdt = -\varepsilon \intT\phi \int_{\RR^3} 1_D \nabla \trho_\varepsilon \cdot \nabla \psi dx dt,
\end{align*}
for all $\phi\in C_c^\infty((0,T))$, and all $\psi\in C_c^\infty(\RR^3)$ $\tPP$-a.s. Similarly, weak differentiability of appearing terms can be verified.

Therefore, we have 
\begin{align*}
    \pp_t \Psi_\varepsilon = -\phi\nabla \Delta^{-1} \rdiv (1_D\trho_\varepsilon \tu_\varepsilon -\varepsilon 1_D \nabla \trho_\varepsilon),
\end{align*}
where $\Psi_\varepsilon = \phi \nabla \Delta^{-1}(1_D \trho_\varepsilon)$. 
Using It\^{o}'s product rule with the same argument as in Section \ref{sec:ep:Uniform estimates} yields $\tPP$-a.s., 
\begin{align}
    \left[\int_D \trho_\varepsilon \tu_\varepsilon \cdot \Psi_\varepsilon dx \right]_{t=0}^{t=\tau} &= \inttau \int_D \trho_\varepsilon \tu_\varepsilon\cdot \pp_t \Psi_\varepsilon dx dt \notag \\ 
    &\quad + \inttau \int_D \left[\trho_\varepsilon \tu_\varepsilon \otimes \tu_\varepsilon + p_\delta(\trho_\varepsilon)\II -\SSS(\nabla \tu_\varepsilon)-\varepsilon \nabla(\trho_\varepsilon \tu_\varepsilon)\right]:\nabla \Psi_\varepsilon dx dt \notag \\ 
    &\quad + \inttau \int_D \GG_\varepsilon (\trho_\varepsilon,\tu_\varepsilon)\cdot \Psi_\varepsilon d\tW_\varepsilon.\label{eq:6.30}
\end{align}
Noting that 
$$
    \nabla \Psi_\varepsilon = \nabla \phi \otimes \nabla \Delta^{-1}(1_D\trho_\varepsilon) + \phi \RRR[1_D \trho_\varepsilon],
$$
rewrite \eqref{eq:6.30} as follows:
\begin{align}
    \label{eq:6.31}
    &\inttau \int_D \phi\left[ p_\delta(\trho_\varepsilon) \trho_\varepsilon - \SSS(\nabla \tu_\varepsilon): \RRR (1_D \trho_\varepsilon)\right] dx dt = \sum_{j=1}^{5} I_{j,\varepsilon},
\end{align}
where 
\begin{align*}
    &I_{1,\varepsilon} = \left[\int_D \phi \trho_\varepsilon \tu_\varepsilon\cdot \nabla \Delta^{-1} \left[1_D \trho_\varepsilon\right]dx\right]_{t=0}^{t=\tau}, \\ 
    &I_{2,\varepsilon} = \inttau \int_D \phi \trho_\varepsilon \tu_\varepsilon \cdot \RRR\left[1_D\trho_\varepsilon \tu_\varepsilon -\varepsilon 1_D \nabla \trho_\varepsilon\right] dxdt, \\ 
    &I_{3,\varepsilon} = - \inttau \int_D \phi \left[\trho_\varepsilon \tu_\varepsilon \otimes \tu_\varepsilon - \varepsilon \nabla(\trho_\varepsilon \tu_\varepsilon) \right]:\RRR(1_D \trho_\varepsilon) dx dt, \\ 
    &I_{4,\varepsilon} = - \inttau \int_D \left[\trho_\varepsilon \tu_\varepsilon \otimes \tu_\varepsilon + p_\delta(\trho_\varepsilon)\II -\SSS(\nabla \tu_\varepsilon)-\varepsilon \nabla(\trho_\varepsilon \tu_\varepsilon)\right]: \nabla \phi \otimes \nabla\Delta^{-1}(1_D \trho_\varepsilon) dx dt, \\ 
    &I_{5,\varepsilon} = - \inttau \int_D \phi \GG_\varepsilon(\trho_\varepsilon,\trho_\varepsilon \tu_\varepsilon)\cdot \invdiv{1_D \trho_\varepsilon} dx d\tW_\varepsilon.
\end{align*}
Similarly, since $(\trho,\tu)$ satisfies the equation of continuity \eqref{eq:approx_CE_ep} in the distribution sense, by the weak continuity of $\trho$ and approximating $1_{[0,\tau]}$ by smooth functions via mollification, we have 
\begin{align*}
    \int_{\RR^3} 1_D \trho(\tau) \psi  dx =\int_{\RR^3} 1_D \trho_0 \psi dx  + \inttau \int_{\RR^3} 1_D \trho \tu \cdot \nabla \psi dx dt,
\end{align*}
for all $\tau\in [0,T]$ and all $\psi\in C_c^\infty(\RR^3)$ $\tPP$-a.s., and 
\begin{align*}
    \left[1_D \trho\right]_c(\tau,x) = \left[1_D \trho\right]_c(0,x) - \inttau \rdiv \left[1_D \trho \tu\right]_c (t,x)dt \quad \text{for all }  \tau\in [0,T], x\in  \RR^3,
\end{align*}
where the symbol $[\cdot]_c$ is the spatial regularization introduced in Section \ref{sec:ep:Uniform estimates}. Thus, as in Section \ref{sec:ep:Uniform estimates}, testing the momentum equation \eqref{eq:6.24} by $\Psi = \phi \nabla \Delta^{-1}([1_D \trho]_c)$ and letting $c\to 0$ yield $\tPP$-a.s.,
\begin{align}
    \label{eq:6.32}
    &\inttau \int_D \phi\left[ \overline{p_\delta(\trho)} \trho - \SSS(\nabla \tu): \RRR (1_D \trho)\right] dx dt = \sum_{j=1}^{5} I_{j},
\end{align}
where 
\begin{align*}
    &I_{1} = \left[\int_D \phi \trho \tu\cdot \nabla \Delta^{-1} \left[1_D \trho\right]dx\right]_{t=0}^{t=\tau}, \\ 
    &I_{2} = \inttau \int_D \phi \trho \tu \cdot\RRR\left[1_D\trho \tu \right] dxdt, \\ 
    &I_{3} = - \inttau \int_D \phi \left[\trho \tu \otimes \tu \right]:\RRR(1_D \trho) dx dt, \\ 
    &I_{4} = - \inttau \int_D \left[\trho \tu \otimes \tu + \overline{p_\delta(\trho)}\II -\SSS(\nabla \tu)\right]: \nabla \phi \otimes \nabla\Delta^{-1}(1_D \trho) dx dt, \\ 
    &I_{5} = - \inttau \int_D \phi \overline{\GG(\trho,\trho \tu)}\cdot \invdiv{1_D \trho}dx d\tW.
\end{align*}
As $\varepsilon \to 0$, by using a variant of the Div-Curl lemma (see \cite[Lemma 3.5]{FN17}), we can verify that the right-hand side of \eqref{eq:6.31} converges to the right-hand side of \eqref{eq:6.32}.
Indeed, applying the Aubin--Lions--Simon lemma (see \cite[Section 8]{Sim87}) together with the embedding $W^{1,\Gamma}(D)\cptarrow C(\oD)$, the convergence in \eqref{eq:6.19}, and the equation for $\nabla\Delta^{-1}(1_D \trho_\varepsilon)$ yields
\begin{align*}
    &\nabla \Delta^{-1} \left(1_D \trho_\varepsilon\right)\to \nabla \Delta^{-1}\left(1_D \trho\right)\quad \text{in }C([0,T]\times \RR^3)\ \tPP\text{-a.s.}
\end{align*}
Combining this with \eqref{eq:6.19}, \eqref{eq:6.22}, and \eqref{eq:6.23} yields 
\begin{align*}
    &I_{1,\varepsilon} \to I_1,\quad I_{j,\varepsilon}\to I_j,\quad j=4,5, \quad \text{for all }\tau\in [0,T] \ \tPP\text{-a.s.}
\end{align*}
Since 
\begin{align*}
    &\varepsilon \nabla \trho_\varepsilon \to 0 \quad \text{in } L^2((0,T)\times D)\ \tPP\text{-a.s.,} \\ 
    &\norm{\Riesz{f}}_{L^p_x} \lesssim \norm{f}_{L^p_x},  
\end{align*}
we have 
\begin{align*}
    &\inttau \int_D \phi \trho_\varepsilon \tu_\varepsilon \cdot \Riesz{\varepsilon 1_D \nabla \trho_\varepsilon}dx dt \to 0, \\ 
    &\inttau \int_D \phi \varepsilon \nabla\left(\trho_\varepsilon \tu_\varepsilon\right):\Riesz{1_D \trho_\varepsilon}dx dt \to 0 
\end{align*}
for all $\tau\in [0,T]$ $\tPP$-a.s. Using a variant of Div-Curl lemma (c.f. \cite[Lemma 3.5]{FN17}) yields 
\begin{align*}
    &\trho_\varepsilon \Riesz{1_D \trho_\varepsilon \tu_\varepsilon}- \trho_\varepsilon \tu_\varepsilon \Riesz{1_D \trho_\varepsilon} \weakarrow \trho \Riesz{1_D \trho \tu}- \trho \tu \Riesz{1_D \trho}\quad \text{in }L^r(\RR^3)  \quad \text{for all } \tau\in [0,T]\ \tPP\text{-a.s.,}
\end{align*}
where 
\begin{align*}
    &\frac1r = \frac1\Gamma + \frac{\Gamma + 1}{2\Gamma}.  
\end{align*}
As 
\begin{align*}
    &\Gamma \geq 5\quad \text{such that }r = \frac{2\Gamma }{\Gamma + 3}>\frac65,  
\end{align*}
we get $L^r\cptarrow W^{-1,2}(D)$ and, consequently 
\begin{align*}
    &\trho_\varepsilon \Riesz{1_D \trho_\varepsilon \tu_\varepsilon}- \trho_\varepsilon \tu_\varepsilon \Riesz{1_D \trho_\varepsilon} \to \trho \Riesz{1_D \trho \tu}- \trho \tu \Riesz{1_D \trho}\quad \text{in }L^2(0,T;(W^{1,2}(\RR^3))^\ast)\  \tPP\text{-a.s.}
\end{align*}
Hence, we have
\begin{align*}
    &I_{2,\varepsilon} + I_{3,\varepsilon} \to I_2 + I_3, \quad \text{for all }\tau\in [0,T] \ \tPP\text{-a.s.}
\end{align*}

Therefore, we obtain the relation 
\begin{align}
    \label{eq:6.33}
    \lim_{\varepsilon\to 0} \inttau \int_D \phi\left[ p_\delta(\trho_\varepsilon) \trho_\varepsilon - \SSS(\nabla \tu_\varepsilon): \RRR (1_D \trho_\varepsilon)\right] dx dt = \inttau \int_D \phi\left[ \overline{p_\delta(\trho)} \trho - \SSS(\nabla \tu): \RRR (1_D \trho)\right] dx dt 
\end{align}
for all $\tau\in [0,T]$ and all $\phi\in C_c^\infty(D)$ $\tPP$-a.s. Next, it can be verified that $\SSS(\nabla \tu_\varepsilon): \RRR (1_D \trho_\varepsilon)$ is replaced by $(\eta + \mu)\rdiv (\tu_\varepsilon) \trho_\varepsilon$, and $\SSS(\nabla \tu): \RRR (1_D \trho)$ is replaced by $(\eta + \mu)\rdiv (\tu) \trho$ exactly as in \cite[Section 3.6.5]{FN17}, where $\eta = \mu/3 + \lambda$. 
To this end, we focus on the symmetric part $\mu(\nabla\tu_\varepsilon + (\nabla \tu_\varepsilon)^T)$ of $\SSS(\nabla \tu_\varepsilon)$. By symmetry of $\RRR$ (see Section \ref{sec:Properties of pseudo-differential operators}), we have 
\begin{align*}
    \inttau \int_D \phi \mu(\nabla\tu_\varepsilon + (\nabla \tu_\varepsilon)^T) : \RRR(1_D \trho_\varepsilon) dx dt &= \inttau \int_D \RRR: \left[\phi \mu(\nabla\tu_\varepsilon + (\nabla \tu_\varepsilon)^T)\right]\trho_\varepsilon dx dt \\ 
    &= \inttau \int_D \left[2\phi \mu \rdiv (\tu_\varepsilon) \trho_\varepsilon + \theta(\tu_\varepsilon)\trho_\varepsilon\right] dx dt,
\end{align*}
and, similarly, 
\begin{align*}
    \inttau \int_D \phi \mu(\nabla\tu + (\nabla \tu)^T) : \RRR(1_D \trho) dx dt &= \inttau \int_D \RRR: \left[\phi \mu(\nabla\tu + (\nabla \tu)^T)\right]\trho dx dt \\ 
    &= \inttau \int_D \left[2\phi \mu \rdiv (\tu) \trho + \theta(\tu)\trho \right]dx dt,
\end{align*}
for all $\tau\in [0,T]$ and all $\phi\in C_c^\infty(D)$ $\tPP$-a.s., where 
\begin{align*}
    &\theta (\tu_\varepsilon) =   \RRR: \left[\phi \mu(\nabla\tu_\varepsilon + (\nabla \tu_\varepsilon)^T)\right] - \phi\mu \RRR:\left[\nabla\tu_\varepsilon + (\nabla \tu_\varepsilon)^T\right], \\ 
    &\theta (\tu) =   \RRR: \left[\phi \mu(\nabla\tu + (\nabla \tu)^T)\right] - \phi\mu \RRR:\left[\nabla\tu + (\nabla \tu)^T\right].
\end{align*}
Next, we claim that 
\begin{align*}
    &\theta(\tu_\varepsilon)\trho_\varepsilon \weakarrow \theta(\tu)\trho\quad \text{in } L^1((0,T)\times D)\quad \tPP\text{-a.s.}  
\end{align*}
To see this, we apply the Div-Curl Lemma \cite[Proposition 3.3]{FN17} to $U_\varepsilon = (\trho_\varepsilon, \trho_\varepsilon \tu_\varepsilon)$ and $V_\varepsilon= (\theta(\tu_\varepsilon),0,0,0)$.
In view of the convergence \eqref{eq:6.19}, the approximate equation of continuity \eqref{eq:approx_CE_m}, Remark \ref{rem:ep:conv_of_Delta rho}, and the continuity of $\RRR$, we have $\tPP$-a.s.,
\begin{align*}
    &\trho_\varepsilon\weakarrow \trho\quad \text{in }  L^{\Gamma}((0,T)\times D), \\ 
    &\theta(\tu_\varepsilon) \weakarrow \theta(\tu)\quad \text{in }  L^{2}((0,T)\times D),
\end{align*}
with $1/\Gamma+ 1/2<1$, and 
\begin{align*}
    &\left.
        \begin{aligned}
            &\rdiv_{t,x} U_\varepsilon = \pp_t \trho_\varepsilon +\rdiv_x (\trho_\varepsilon \tu_\varepsilon),\\ 
            &\operatorname{curl}_{t,x} V_\varepsilon = (\nabla_{t,x} V_\varepsilon - (\nabla_{t,x} V_\varepsilon)^T)
        \end{aligned}  
    \right\}  
    \quad \text{are compact in }
    \left\{
        \begin{aligned}
            &W^{-1,s}((0,T)\times D), \\ 
            &W^{-1,s}((0,T)\times D; \RR^{4\times 4}),
        \end{aligned}
    \right. \quad \text{respectively,}
\end{align*}
for a certain $s>1$ $\tPP$-a.s. Note that the time derivative of $\theta(\tu_\varepsilon)$ does not appear. Therefore, applying the Div-Curl Lemma yields the claimed weak convergence. 
For the remaining part of $\SSS(\nabla \tu_\varepsilon)$, it suffices to use the property of the diagonal components of $\RRR$ (see Section \ref{sec:Properties of pseudo-differential operators}).

Consequently, \eqref{eq:6.33} gives rise to 
\begin{align}
    \label{eq:6.34}
    \lim_{\varepsilon\to 0} \inttau \int_D \phi\left[ p_\delta(\trho_\varepsilon) \trho_\varepsilon - (\eta + \mu)\rdiv (\tu_\varepsilon )\trho_\varepsilon\right] dx dt = \inttau \int_D \phi\left[ \overline{p_\delta(\trho)} \trho - (\eta + \mu)\rdiv (\tu) \trho\right] dx dt 
\end{align}
for all $\tau\in [0,T]$ and all $\phi\in C_c^\infty(D)$ $\tPP$-a.s.

Next, we let $\varepsilon\to 0$ in the renormalized equation \eqref{eq:6.27} with $b(\trho_\varepsilon) = \trho_\varepsilon \log (\trho_\varepsilon)$, to obtain 
\begin{align}
    \label{eq:6.35}
    &\intT \int_D \ov{\trho \rdiv \tu}\psi dx dt \leq \int_D \trho_0 \log (\trho_0)\psi(0)dx + \intT \int_D \left[\ov{\trho \log (\trho)} \pp_t \psi + \ov{\trho \log(\trho)\tu}\cdot \nabla \psi\right]  dx dt 
\end{align}
for any $\psi\in C_c^\infty(\ico{0}{\tau}\times \RR^3)$, $\psi\geq 0$ $\tPP$-a.s. Note that while $b$ does not satisfy $|b''|\lesssim 1$, the inequality \eqref{eq:6.35} holds by using an approximation argument.

Finally, we apply the DiPerna--Lions theory of renormalized solutions (see \cite[Lemma 3.7, Theorem 11.36, and Lemma 11.13]{FN17}) to the limit equation from Proposition \ref{prop:ep:CE}.
Note that $\tu$ must be extended outside $D$ as $\tu\in L^2(0,T;W^{1,2}(\RR^3))$ using an extension theorem for Sobolev spaces, and the regularity $\trho\in L^\infty(0,T;L^\Gamma(D))$ $\tPP$-a.s. where $\Gamma \geq 2$. Extending $\trho$ to be zero outside $D$, $(1_D\trho,\tu)$ satisfies the renormalized equation of continuity: 
\begin{align*}
    &\pp_t b(1_D \trho) + \rdiv \left(b(1_D \trho) \tu\right) + \left(b'(1_D \trho)1_D \trho - b(1_D \trho)\rdiv  \tu\right) =0 \quad \text{in }\DD'((0,T)\times \RR^3)\quad \tPP\text{-a.s.}
\end{align*}
for any $b\in C(\ico{0}{\infty})\cap C^1((0,\infty)) $ such that 
\begin{align*}
    &\lim_{s\to 0+}(sb'(s)-b(s)) \in \RR, \\ 
    &\left|b'(x) \right|  \leq c s^\lambda\quad \text{if }s\in (1,\infty)\quad \text{for a certain }\lambda\leq \tfrac\Gamma2 - 1.
\end{align*}
By directly calculating this equality, we obtain the renormalized equation of continuity in Remark \ref{rem:ep:renormalized CE}, and by choosing $b(\rho)= \rho \log(\rho)$, we have 
\begin{align}
    \label{eq:6.36}
    &\intT\int_D \trho \rdiv \tu \psi dx dt = \int_D \trho_0 \log(\trho_0)\psi(0)dx + \intT\int_D \left[\trho \log(\trho)\pp_t \psi + \trho \log(\trho) \tu \cdot \nabla \psi \right]dxdt 
\end{align} 
for any $\psi\in C_c^\infty(\ico{0}{\tau}\times \RR^3)$, $\psi\geq 0$ $\tPP$-a.s. Subtracting \eqref{eq:6.36} from \eqref{eq:6.35} and taking $\psi$ as a smooth approximation of the indicator function of $[0,\tau]$, we obtain 
\begin{align}
    \label{eq:6.37}
    &\int_D \left[\ov{\trho \log(\trho)} - \rho \log(\trho)\right](\tau) dx + \inttau \int_D \left[\ov{\trho \rdiv \tu} - \trho \rdiv \tu\right]dx dt\leq 0,
\end{align}
for any $\tau\in [0,T]$ $\tPP$-a.s. As the pressure is a non-decreasing function of the density, with the definition of $\ov{p_\delta(\trho)}$ and Chebyshev's sum inequality at hand, we have 
\begin{align*}
    &\lim_{\varepsilon\to 0} \inttau \int_D \phi p_\delta(\trho_\varepsilon)\trho_\varepsilon dx dt \geq \inttau \int_D \phi \ov{p_\delta(\trho)}\trho dx dt,
\end{align*}
for all $\tau\in [0,T]$, and all $\phi\in C_c^\infty(D)$, $\phi\geq 0$ $\tPP$-a.s.
Thus, the relation \eqref{eq:6.34} yields 
\begin{align*}
    &\inttau \int_D \phi \left[\ov{\trho \rdiv \tu} - \trho \rdiv \tu \right] dx dt\geq 0,
\end{align*}
for all $\tau\in [0,T]$, and all $\phi\in C_c^\infty(D)$, $\phi\geq 0$ $\tPP$-a.s., and by approximating $1_D$ with smooth functions, we obtain 
\begin{align*}
    &\inttau \int_D \left[\ov{\trho \rdiv \tu} - \trho \rdiv \tu \right] dx dt\geq 0,
\end{align*}
for any $\tau\in [0,T]$ $\tPP$-a.s.

Therefore, \eqref{eq:6.37} reduces to 
\begin{align}
    \label{eq:6.38}
    \int_D \left[\ov{\trho \log(\trho)} - \rho \log(\trho)\right](\tau) dx \leq 0
\end{align}
for any $\tau\in [0,T]$ $\tPP$-a.s. As the function $\trho\mapsto \trho\log(\trho)$ is strictly convex, we use Jensen's inequality to obtain 
\begin{align*}
    &\ov{\trho \log(\trho)} - \trho \log(\trho) \geq 0.
\end{align*}
Combining this with \eqref{eq:6.38} yields $\ov{\trho \log(\trho)} = \trho \log(\trho)$ and the marginal of the Young measure $\tnu$ with respect to the $\rho$-variable is the Dirac measure $\delta_{\trho}$. 
Thus, we obtain the strong convergence 
\begin{align}
    \label{eq:6.39}
    \trho_\varepsilon \to \trho\quad \text{in } L^q(0,T;L^2(D))\quad \text{for any }1\leq q<\infty \quad \tPP\text{-a.s.}
\end{align}
With the strong convergence \eqref{eq:6.39} at hand, we are able to identify the limit in the stochastic integral. In view of \eqref{eq:6.26}, we only need the $\tPP$-a.s. convergence 
\begin{align}
    \label{eq:6.40}
    \int_D \trho_\varepsilon F_k(\trho_\varepsilon,\tu)\cdot \bphi dx \to \int_D \trho F_k(\trho,\tu)\cdot \bphi dx \quad \text{a.e. in }(0,T)
\end{align}
for any $\bphi\in C^\infty(\oD)$. Obviously \eqref{eq:6.40} follows immediately from \eqref{eq:6.39} and the Lipschitz continuity of $F_k$. Hence, we infer 
\begin{align*}
    &\ov{\trho F_k(\trho,\tu) } =\trho F_k(\trho,\tu)\quad \text{a.e. in }\tOmega\times (0,T) \times D. 
\end{align*}

Next, we can pass to the limit in the stochastic integral appearing in the energy inequality in exactly the same way as in \cite[Proposition 4.4.15]{BFHbook18}. 

\begin{proposition}
    We have 
    $$
        \inttau \int_D \trho_\varepsilon \FF_\varepsilon(\trho_\varepsilon,\tu_\varepsilon)\cdot \tu_\varepsilon dx d\tW_\varepsilon \to \inttau \int_D \trho \FF (\trho,\tu)\cdot \tu dx d\tW\quad \text{in } L^2(0,T)
    $$
    in probability.
\end{proposition}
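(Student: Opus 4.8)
The plan is to apply the convergence lemma for stochastic integrals, Lemma~\ref{lem:conv_of_stoch_int}, with the separable Hilbert space $H=\RR$, the Wiener processes $\tW_\varepsilon$ (which converge to $\tW$ in $C([0,T];\fU_0)$ $\tPP$-a.s.\ by Proposition~\ref{prop:ep:repr}, hence in probability), and the $L_2(\fU,\RR)$-valued integrands
\[
	G_\varepsilon:=\Big(\textstyle\int_D G_{k,\varepsilon}(\trho_\varepsilon,\trho_\varepsilon\tu_\varepsilon)\cdot\tu_\varepsilon\,dx\Big)_{k\in\NN},\qquad
	G:=\Big(\textstyle\int_D G_{k}(\trho,\trho\tu)\cdot\tu\,dx\Big)_{k\in\NN}.
\]
Since $G_{k,\varepsilon}(\trho_\varepsilon,\trho_\varepsilon\tu_\varepsilon)=\trho_\varepsilon F_{k,\varepsilon}(\trho_\varepsilon,\tu_\varepsilon)$, each component can be written as $\int_D\sqrt{\trho_\varepsilon}\,F_{k,\varepsilon}(\trho_\varepsilon,\tu_\varepsilon)\cdot\big(\sqrt{\trho_\varepsilon}\tu_\varepsilon\big)\,dx$, so the whole statement reduces to proving $G_\varepsilon\to G$ in $L^2(0,T;L_2(\fU,\RR))$ in probability. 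Progressive measurability of $G_\varepsilon$ (via Lemma~\ref{lem:prog_m'ble_r.d.} applied to $\tu_\varepsilon$), of $G$, and the fact that $\tW_\varepsilon$ is a cylindrical Wiener process with respect to the filtration $\sigma(\sigma_t[\trho_\varepsilon]\cup\sigma_t[\tu_\varepsilon]\cup\sigma_t[\tW_\varepsilon])$ are obtained exactly as in Remark~\ref{rem:r.d.} and Section~\ref{sec:ep:Asymptotic limit}.

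The one genuinely new ingredient, not available at the level of Lemma~\ref{lem:ep:conv rho u otimes u}, is the strong convergence $\sqrt{\trho_\varepsilon}\tu_\varepsilon\to\sqrt{\trho}\tu$ in $L^2(0,T;L^2(D))$ $\tPP$-a.s. I would obtain it exactly as in Lemma~\ref{lem:alpha(conv rho u otimes u)}, now that the strong convergence $\trho_\varepsilon\to\trho$ in $L^q(0,T;L^2(D))$ of \eqref{eq:6.39} is at our disposal: strong convergence of $\sqrt{\trho_\varepsilon}$ together with $\tu_\varepsilon\weakarrow\tu$ in $L^2(0,T;W^{1,2}(D))$ gives $\sqrt{\trho_\varepsilon}\tu_\varepsilon\weakarrow\sqrt{\trho}\tu$ in $L^2(0,T;L^2(D))$, while pairing the convergence $\trho_\varepsilon\tu_\varepsilon\to\trho\tu$ in $C([0,T];W^{-1,2}(D))$ (deduced from \eqref{eq:6.19} via the compact embedding $L^{\frac{2\Gamma}{\Gamma+1}}(D)\cptarrow W^{-1,2}(D)$, as in Lemma~\ref{lem:m:conv rho u otimes u}) against $\tu_\varepsilon\weakarrow\tu$ in $L^2(0,T;W^{1,2}(D))$ yields convergence of the $L^2$-norms, $\int_0^T\!\!\int_D\trho_\varepsilon\tu_\varepsilon\cdot\tu_\varepsilon\to\int_0^T\!\!\int_D\trho\tu\cdot\tu$; norm convergence plus weak convergence gives the claim. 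In particular, along a subsequence, $\trho_\varepsilon\to\trho$ and $\sqrt{\trho_\varepsilon}\tu_\varepsilon\to\sqrt{\trho}\tu$ a.e.\ in $\tOmega\times(0,T)\times D$, so $\tu_\varepsilon\to\tu$ a.e.\ on $\{\trho>0\}$.

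With this in hand, $G_\varepsilon\to G$ follows by a Vitali-type argument in the spirit of the proofs of Proposition~\ref{prop:alpha(conv of stoch int in MEI)}, Proposition~\ref{prop:ep:IME} and \cite[Proposition~4.4.15]{BFHbook18}. From the $\varepsilon$-uniform bound $|F_{k,\varepsilon}(\trho_\varepsilon,\tu_\varepsilon)|\le g_k(1+|\tu_\varepsilon|)$, which follows directly from \eqref{eq:1.6} and the definition of $F_{k,\varepsilon}$, one gets $|\sqrt{\trho_\varepsilon}F_{k,\varepsilon}(\trho_\varepsilon,\tu_\varepsilon)|^2\le 2g_k^2(\trho_\varepsilon+\trho_\varepsilon|\tu_\varepsilon|^2)$, whose right-hand side is equi-integrable on $(0,T)\times D$ $\tPP$-a.s.\ (the first term because $\trho_\varepsilon$ is bounded in $L^\infty(0,T;L^\Gamma(D))$ with $\Gamma>2$, the second because $\trho_\varepsilon|\tu_\varepsilon|^2=|\sqrt{\trho_\varepsilon}\tu_\varepsilon|^2$ converges in $L^1$). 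Together with the a.e.\ convergence $\sqrt{\trho_\varepsilon}F_{k,\varepsilon}(\trho_\varepsilon,\tu_\varepsilon)\to\sqrt{\trho}F_k(\trho,\tu)$ (using $\tu_\varepsilon\to\tu$ a.e.\ on $\{\trho>0\}$ and $\sqrt{\trho_\varepsilon}\to0$ on $\{\trho=0\}$), Vitali's theorem gives $\sqrt{\trho_\varepsilon}F_{k,\varepsilon}(\trho_\varepsilon,\tu_\varepsilon)\to\sqrt{\trho}F_k(\trho,\tu)$ in $L^2(0,T;L^2(D))$ $\tPP$-a.s.\ for every $k$; combining this with the strong convergence of $\sqrt{\trho_\varepsilon}\tu_\varepsilon$ and the $\tPP$-a.s.\ boundedness of $\norm{\sqrt{\trho_\varepsilon}\tu_\varepsilon}_{L^\infty(0,T;L^2(D))}$ (from \eqref{eq:6.8}) yields $\int_D G_{k,\varepsilon}(\trho_\varepsilon,\trho_\varepsilon\tu_\varepsilon)\cdot\tu_\varepsilon\,dx\to\int_D G_k(\trho,\trho\tu)\cdot\tu\,dx$ in $L^2(0,T)$ $\tPP$-a.s. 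The summation over $k$ and the passage from $\tPP$-a.s.\ convergence to convergence in probability are then handled by the $\varepsilon$-uniform estimate
\[
	\sum_{k=1}^{\infty}\int_0^T\Big|\int_D G_{k,\varepsilon}(\trho_\varepsilon,\trho_\varepsilon\tu_\varepsilon)\cdot\tu_\varepsilon\,dx\Big|^2 dt\lesssim\Big(\sum_{k=1}^{\infty}g_k^2\Big)\Big(\norm{\trho_\varepsilon}_{L_t^\infty L_x^1}\norm{\trho_\varepsilon|\tu_\varepsilon|^2}_{L_t^\infty L_x^1}+\norm{\trho_\varepsilon|\tu_\varepsilon|^2}_{L_t^\infty L_x^1}^2\Big),
\]
whose $(r/2)$-th moment is bounded uniformly in $\varepsilon$ by \eqref{eq:6.8} (recall $r\ge4$): this gives a uniform tail bound in $k$, so the limit passes through the infinite sum, together with uniform integrability over $\tOmega$, which upgrades the $\tPP$-a.s.\ convergence $G_\varepsilon\to G$ to convergence in $L^2(0,T;L_2(\fU,\RR))$ in probability. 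Lemma~\ref{lem:conv_of_stoch_int} then concludes. The main obstacle is the first step — promoting Lemma~\ref{lem:ep:conv rho u otimes u} to the strong convergence of $\sqrt{\trho_\varepsilon}\tu_\varepsilon$ — which is precisely the point where the strong convergence of the densities established in Section~\ref{sec:ep:Strong convergence of the densities} is indispensable; everything afterwards is routine.
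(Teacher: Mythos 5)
The proposal is correct and rests on the same pillars as the paper's proof: applying Lemma~\ref{lem:conv_of_stoch_int}, reducing to strong convergence of the integrands, and using the strong density convergence from Section~\ref{sec:ep:Strong convergence of the densities} (together with the resulting strong $L^2$-convergence of $\sqrt{\trho_\varepsilon}\tu_\varepsilon$, which the paper obtains implicitly via the reference to Lemma~\ref{lem:alpha(conv rho u otimes u)}). The tactical execution, however, differs. The paper reduces to a convergence statement in $L^1(\tOmega\times(0,T)\times D)$, first peels off the $F_{k,\varepsilon}-F_k$ error term, and then runs an Egorov-type argument with a truncation parameter $\kappa$ on the density, splitting the integral over $\OO_\kappa^c$, $\{\trho<\kappa\}$, and $\{\trho\geq\kappa\}$ and estimating each piece by H\"older and Chebyshev. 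You instead work $\tPP$-a.s.\ in $L^2((0,T)\times D)$ and invoke Vitali directly, using the dominating function $2f_k^2(\trho_\varepsilon+\trho_\varepsilon|\tu_\varepsilon|^2)$ whose equi-integrability is \emph{self-improving}: the $\trho_\varepsilon|\tu_\varepsilon|^2$ part is equi-integrable precisely because $|\sqrt{\trho_\varepsilon}\tu_\varepsilon|^2$ converges in $L^1$. This bypasses the explicit Egorov/truncation step and folds the $F_{k,\varepsilon}-F_k$ error into the single a.e.\ limit; it is arguably the cleaner route, and it captures what the paper's argument is really exploiting. Two small inaccuracies near the end: (i) the $\tPP$-a.s.\ uniform-in-$\varepsilon$ bounds you need (e.g.\ on $\norm{\trho_\varepsilon}_{L^\infty_tL^\Gamma_x}$) should be justified directly from the $\tPP$-a.s.\ convergences in Proposition~\ref{prop:ep:repr} rather than from the expectation bounds \eqref{eq:6.7}--\eqref{eq:6.8}, which only give bounds in mean; and (ii) the final sentence about upgrading $\tPP$-a.s.\ convergence to convergence in probability via uniform integrability over $\tOmega$ is superfluous, since $\tPP$-a.s.\ convergence already implies convergence in probability, which is all Lemma~\ref{lem:conv_of_stoch_int} asks for. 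Neither affects the validity of the argument.
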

\begin{proof}
    We proceed similarly as in Proposition \ref{prop:alpha(conv of stoch int in MEI)} and employ Lemma \ref{lem:conv_of_stoch_int}. It suffices to show that 
    \begin{align}
        \label{eq:6.41}
        \int_D \trho_\varepsilon \FF_\varepsilon (\trho_\varepsilon,\tu_\varepsilon)\cdot \tu_\varepsilon dx \to \int_D \trho \FF(\trho,\tu)\cdot \tu dx \quad \text{in }L^2(0,T;L_2(\fU,\RR))\quad \tPP\text{-a.s.}
    \end{align}
    First, let us denote the approximate stochastic integral by $\fM_\varepsilon$. We observe that 
    \begin{align*}
        \tEE\left[\norm{\fM_\varepsilon}_{L^2(0,T;L_2(\fU,\RR))}^2\right]  &= \tEE\left[\sum_{k=1}^{\infty}\intT \left(\int_D \trho_\varepsilon F_{k,\varepsilon}(\trho_\varepsilon,\tu_\varepsilon)\cdot \tu_\varepsilon dx \right)^2 dt \right]\\ 
        &\lesssim \sum_{k=1}^{\infty} f_k^2 \tEE\left[\intT \left(\int_D (\trho_\varepsilon + \trho_\varepsilon |\tu_\varepsilon|^2) dx \right)^2 dt \right] \lesssim \sum_{k=1}^\infty f_k^2,
    \end{align*}
    using \eqref{eq:6.7}--\eqref{eq:6.8}. Due to the summability of $f_k$, the convergence \eqref{eq:6.41} follows, if we can show
    \begin{align}
        \label{eq:6.42}
        \int_D \trho_\varepsilon F_{k,\varepsilon}(\trho_\varepsilon,\tu_\varepsilon)\cdot \tu_\varepsilon dx \to \int_D \trho F_{k}(\trho,\tu)\cdot \tu dx \quad \text{in }L^2(0,T)\quad \tPP\text{-a.s, for all }k\in\NN.
    \end{align}
    In view of \eqref{eq:6.7}--\eqref{eq:6.8}, this can be relaxed to 
    \begin{align}
        \label{eq:6.43}
        \int_D \trho_\varepsilon F_{k,\varepsilon}(\trho_\varepsilon,\tu_\varepsilon)\cdot \tu_\varepsilon dx \to \int_D \trho F_{k}(\trho,\tu)\cdot \tu dx \quad \text{for a.e. }t\in (0,T)\quad \tPP\text{-a.s, for all }k\in\NN.        
    \end{align}
    In order to show \eqref{eq:6.43}, we observe 
    \begin{align*}
        &\int_D \trho_\varepsilon |\tu_\varepsilon| |F_{k,\varepsilon}(\trho_\varepsilon,\tu_\varepsilon) - F_k(\trho_\varepsilon,\tu_\varepsilon)| dx \\ 
        &\quad \lesssim \int_{\trho_\varepsilon<\varepsilon} \trho_\varepsilon |\tu_\varepsilon| |F_k(\trho_\varepsilon,\tu_\varepsilon)| dx + \int_{|\tu_\varepsilon|>\frac1\varepsilon}\trho_\varepsilon |\tu_\varepsilon| |F_k(\trho_\varepsilon,\tu_\varepsilon)| dx\\ 
        &\quad \lesssim f_k \left[\int_{\trho_\varepsilon<\varepsilon} \left(\trho_\varepsilon |\tu_\varepsilon| + \trho_\varepsilon |\tu_\varepsilon|^2\right) dx + \int_{|\tu_\varepsilon|>\frac1\varepsilon} \left(\trho_\varepsilon |\tu_\varepsilon| + \trho_\varepsilon |\tu_\varepsilon|^2 \right)dx\right] \\ 
        &\quad \lesssim f_k \left[\varepsilon \int_D \left(1+ |\tu_\varepsilon|^2\right)dx + \left(\fL\left(|\tu_\varepsilon|>\tfrac1\varepsilon\right)\right)^{\frac{\gamma-1}{2\gamma}} \left(\int_D |\trho_\varepsilon \tu_\varepsilon|^{\frac{2\gamma}{\gamma +1}}\right)^{\frac{\gamma +1}{2\gamma}} \right. \\ 
        &\quad \quad \quad + \left. \left(\fL\left(|\tu_\varepsilon|>\tfrac1\varepsilon\right)\right)^{\frac{2\gamma - 3}{6\gamma}} \left(\int_D (\trho_\varepsilon |\tu_\varepsilon|^2)^{\frac{6\gamma}{4\gamma +3}}\right)^{\frac{4\gamma +3}{6\gamma}}\right],
    \end{align*}
    where $\fL$ denotes the Lebesgue measure. From \eqref{eq:6.7}, \eqref{eq:6.8} and Chebyshev's inequality, the right-hand side vanishes as $\varepsilon\to 0$ $\tPP$-a.s. for a.e. $t\in (0,T)$. Consequently, it suffices to prove 
    \begin{align}
        \label{eq:6.44}
        \int_D \trho_\varepsilon F_{k}(\trho_\varepsilon,\tu_\varepsilon)\cdot \tu_\varepsilon dx \to \int_D \trho F_{k}(\trho,\tu)\cdot \tu dx \quad \text{for a.e. }t\in (0,T)\quad \tPP\text{-a.s, for all }k\in\NN.        
    \end{align}
    Note that, as a consequence of the strong convergence \eqref{eq:6.39}, by the same argument as in the proof of Lemma \ref{lem:alpha(conv rho u otimes u)}, we have (up to a subsequence)
    \begin{align*}
        &\sqrt{\trho_\varepsilon}\tu_\varepsilon \to \sqrt{\trho }\tu \quad \text{a.e. in }(0,T)\times D\quad \tPP\text{-a.s.}  
    \end{align*}
    Let us fix $\kappa>0$. By Egorov's theorem and \eqref{eq:6.39}, there exists a measurable set $\OO_\kappa\subset \tOmega\times (0,T)\times D$ such that $\tPP\otimes \fL([\tOmega\times (0,T)\times D]\backslash \OO_\kappa)<\kappa$ and 
    \begin{align}
        \label{eq:6.45}
        \sqrt{\trho_\varepsilon}\tu_\varepsilon \to \sqrt{\trho }\tu, \quad  \trho_\varepsilon \to \trho \quad \text{uniformly in }\OO_\kappa, 
    \end{align}
    where $\fL$ denotes the Lebesgue measure on $(0,T)\times D$.
    Finally, we consider the sets 
    \begin{align*}
        &\OO_\kappa^1 = \left\{ (\omega,t,x)\in \OO_\kappa: \trho<\kappa \right\}, \\ 
        &\OO_\kappa^2 = \left\{ (\omega,t,x)\in \OO_\kappa : \trho\geq \kappa \right\}.
    \end{align*}
    As a consequence of \eqref{eq:6.45}, we can choose $\varepsilon$ small enough such that 
    \begin{align*}
        &\trho_\varepsilon \leq 2\kappa,\quad \text{in }\OO_\kappa^1, \quad \trho_\varepsilon \geq \frac\kappa2\quad \text{in }\OO_\kappa^2.  
    \end{align*}
    With these preparations at hand, we obtain 
    \begin{align*}
        &\tEE\left[\intT \int_D \left|\trho_\varepsilon F_k(\trho_\varepsilon ,\tu_\varepsilon)\cdot \tu_\varepsilon - \trho F_k(\trho,\tu)\cdot \tu \right|dx dt \right] \\ 
        &\quad = \int_{\OO_\kappa^c} \left|\trho_\varepsilon F_k(\trho_\varepsilon ,\tu_\varepsilon)\cdot \tu_\varepsilon - \trho F_k(\trho,\tu)\cdot \tu \right| dx dt d\tPP + \int_{\OO_\kappa^1} \left|\trho_\varepsilon F_k(\trho_\varepsilon ,\tu_\varepsilon)\cdot \tu_\varepsilon - \trho F_k(\trho,\tu)\cdot \tu \right| dx dt d\tPP \\ 
        &\quad \quad + \int_{\OO_\kappa^2} \left|\trho_\varepsilon F_k\left(\trho_\varepsilon, \tfrac{\sqrt{\trho_\varepsilon} \tu_\varepsilon}{\sqrt{\trho_\varepsilon}}\right)\cdot \tfrac{\sqrt{\trho_\varepsilon} \tu_\varepsilon}{\sqrt{\trho_\varepsilon}} - \trho F_k\left(\trho,\tfrac{\sqrt{\trho} \tu}{\sqrt{\trho}}\right)\cdot \tfrac{\sqrt{\trho} \tu}{\sqrt{\trho}} \right| dx dt d\tPP \\ 
        &\quad =: (I)_\kappa^1 + (I)_\kappa^2 + (I)_\kappa^3.
    \end{align*}
    By H\"older's inequality, we have 
    \begin{align*}
        (I)_\kappa^1 &\lesssim \int_{\OO_\kappa^c} \left(\trho_\varepsilon |\tu_\varepsilon| + \trho_\varepsilon |\tu_\varepsilon|^2 + \trho|\tu| + \trho|\tu|^2\right)dx dt d\tPP \\ 
        &\lesssim \int_{\OO_\kappa^c} \left(\trho_\varepsilon + \trho + \trho_\varepsilon |\tu_\varepsilon|^2 + \trho|\tu|^2\right)dx dt d\tPP \\ 
        &\lesssim \left(\tPP\times \fL(\OO_\kappa^c)\right)^{\frac{2\gamma -3}{6\gamma}} \left(\tEE\left[\intT \int_D \left((\trho_\varepsilon)^{\frac{6\gamma}{4\gamma + 3}} + (\trho)^{\frac{6\gamma}{4\gamma + 3}} + (\trho_\varepsilon |\tu_\varepsilon|^2)^{\frac{6\gamma}{4\gamma + 3}} + (\trho|\tu|^2)^{\frac{6\gamma}{4\gamma + 3}}\right)dx dt\right]\right)^{\frac{4\gamma +3}{6\gamma}} \\ 
        &\lesssim \kappa^{\frac{2\gamma - 3}{6\gamma}},
    \end{align*}
    due to the assumption on the $F_k$, and \eqref{eq:6.7}--\eqref{eq:6.8}. The second integral can be estimated by 
    \begin{align*}
        &(I)_\kappa^2 \lesssim \kappa \tEE\left[\intT\int_D (1 + |\tu_\varepsilon|^2)dx dt\right] + \kappa \tEE\left[\intT\int_D (1 + |\tu|^2)dx dt\right]\lesssim \kappa,
    \end{align*}
    due to \eqref{eq:6.9}. In view of \eqref{eq:6.45}, the continuity of $F_k$, and the lower bounds for $\trho$ and $\trho_\varepsilon$ in $\OO_\kappa^2$, the last integral vanishes as $\varepsilon\to 0$. Since $\kappa$ was arbitrary, we obtain \eqref{eq:6.44}, and, consequently, \eqref{eq:6.41}. Finally, we obtain the claim by Lemma \ref{lem:conv_of_stoch_int}.
\end{proof}
Finally, we pass to the limit in the momentum and energy inequality \eqref{eq:approx_MEI_m}. This proof can be carried out by recalling Remark \ref{rem:ep:conv of each term of ME} and proceeding similarly to Proposition \ref{prop:alpha:MEI}. 
Therefore, we obtain the following result.
\begin{proposition}
    $(\trho,\tu,\tW)$ satisfies the approximate momentum and energy inequality \eqref{eq:approx_MEI_ep} for all $\tau\in [0,T]$, all $\phi\in C_c^\infty(\ico{0}{\tau})$, and all $\bphi\in C^\infty(\oD)$ with $\bphi\cdot \rn |_{\pD} = 0$ $\tPP$-a.s.
\end{proposition}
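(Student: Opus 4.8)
The plan is to pass to the limit $\varepsilon\to 0$ in the $m$-layer momentum and energy inequality \eqref{eq:approx_MEI_m}, which, by the equality of laws in Proposition~\ref{prop:ep:repr}, holds $\tPP$-a.s. for $(\trho_\varepsilon,\tu_\varepsilon,\tW_\varepsilon)$ with the initial data expressed through $\trho_{0,\varepsilon}$, $\tq_{0,\varepsilon}=\trho_{0,\varepsilon}\tu_\varepsilon(0)$ and $\tk_{0,\varepsilon}=\sqrt{\trho_{0,\varepsilon}}\,\tu_\varepsilon(0)$, for all $\tau\in[0,T]$, all $\phi\in C_c^\infty(\ico{0}{\tau})$ and all $\bphi\in C^\infty(\oD)$. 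The argument follows the template of Proposition~\ref{prop:alpha:MEI}: the terms split into those that converge outright and those for which only one-sided (lower semicontinuity) estimates are available, and one must keep track of signs so that the limiting inequality points the right way.

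First I would dispose of the artificial viscosity contributions: $\inttau\int_D\varepsilon\trho_\varepsilon\tu_\varepsilon\cdot\Delta(\phi\bphi)\,dxdt\to 0$ by the bound on $\trho_\varepsilon\tu_\varepsilon$ in \eqref{eq:6.8} since $\Delta(\phi\bphi)\in L^\infty$, whereas $-\inttau\int_D[\varepsilon\trho_\varepsilon|\nabla\tu_\varepsilon|^2+\varepsilon P_\delta''(\trho_\varepsilon)|\nabla\trho_\varepsilon|^2]\,dxdt\le 0$ and may simply be dropped after taking $\limsup$. The remaining deterministic, non-energy terms converge to their natural limits using the convergences \eqref{eq:6.19}, the strong density convergence \eqref{eq:6.39}, the weak $L^2$ convergence of $\nabla\tu_\varepsilon$, the convergence $\trho_\varepsilon\tu_\varepsilon\otimes\tu_\varepsilon\weakarrow\trho\tu\otimes\tu$ of Lemma~\ref{lem:ep:conv rho u otimes u}, the identification $\overline{p_\delta(\trho)}=p_\delta(\trho)$ coming from \eqref{eq:6.39} and continuity of $p_\delta$, and the corresponding convergence of the initial integral; the term $\inttau\int_D\SSS(\nabla\tu_\varepsilon):\nabla(\phi\bphi)\,dxdt$ converges by linearity and weak $L^2$ convergence of $\nabla\tu_\varepsilon$.

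The energy and dissipation terms, all entering \eqref{eq:approx_MEI_m} with a minus sign, are handled by semicontinuity: as in Proposition~\ref{prop:alpha:MEI}, the map $(\rho,u)\mapsto\int_D[\tfrac12\rho|u|^2+P_\delta(\rho)](\tau)dx+\inttau\int_D\SSS(\nabla u):\nabla u\,dxdt$ is lower semicontinuous with respect to the topologies of \eqref{eq:6.19} for almost every $\tau$ (convexity of $P_\delta$ with $\trho_\varepsilon(\tau)\weakarrow\trho(\tau)$ in $L^\Gamma$; convexity and coercivity of the viscous dissipation; and, for the kinetic part, the weak $L^2$ convergence of $\sqrt{\trho_\varepsilon}\tu_\varepsilon$ obtained as in Lemma~\ref{lem:alpha(conv rho u otimes u)} from \eqref{eq:6.39}), while $v\mapsto\inttau\int_{\pD}g\,|\gamma_0(v)|\,d\Gamma dt$ is convex and continuous on $L^2(0,T;W^{1,2}(D))$, hence weakly lower semicontinuous. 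For the It\^o-correction term I would prove $\tfrac12\sum_k\inttau\int_D\trho_\varepsilon|F_{k,\varepsilon}(\trho_\varepsilon,\tu_\varepsilon)|^2\,dxdt\to\tfrac12\sum_k\inttau\int_D\trho^{-1}|G_k(\trho,\trho\tu)|^2\,dxdt$ for all $\tau$, using the identity $\trho|F_{k,\varepsilon}(\trho,\tu)|^2=[\chi(\varepsilon/\trho-1)\chi(|\tu|-1/\varepsilon)]^2\,\trho^{-1}|G_k(\trho,\trho\tu)|^2$, the $\tPP$-a.e. convergences $\trho_\varepsilon\to\trho$ and $\sqrt{\trho_\varepsilon}\tu_\varepsilon\to\sqrt{\trho}\tu$, the fact that the cutoffs tend to $1$ on $\{\trho>0\}$ while $G_k(\cdot,0,0)=0$ by \eqref{eq:1.6}, and Vitali's theorem with the uniform bounds \eqref{eq:6.7}--\eqref{eq:6.8} — the mechanism of the second assertion of Proposition~\ref{prop:alpha(conv of stoch int in MEI)}. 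The stochastic integral $\inttau\int_D\GG_\varepsilon(\trho_\varepsilon,\trho_\varepsilon\tu_\varepsilon)\cdot(\phi\bphi-\tu_\varepsilon)\,dx\,d\tW_\varepsilon$ converges in probability in $L^2(0,T)$ by splitting off the $\tu_\varepsilon$-part (the Proposition immediately preceding) and the $\phi\bphi$-part, which follows from $\trho_\varepsilon\FF_\varepsilon(\trho_\varepsilon,\tu_\varepsilon)\to\trho\FF(\trho,\tu)$ in $L^2(0,T;L_2(\fU;(W^{l,2}(D))^\ast))$ established inside Proposition~\ref{prop:ep:IME} (now genuinely identified thanks to \eqref{eq:6.39}) together with Lemma~\ref{lem:conv_of_stoch_int}. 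Collecting everything yields \eqref{eq:approx_MEI_ep} for almost every $\tau$, and the weak-in-time continuity of $\trho$ and $\trho\tu$ with the lower semicontinuity of the energy upgrades it to every $\tau\in[0,T]$; as noted in Remark~\ref{rem:ep:conv of each term of ME}, each term converges for arbitrary $\bphi\in C^\infty(\oD)$, but the combined identity is available only for $\bphi$ with $\bphi\cdot\rn|_{\pD}=0$.

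The step I expect to be the main obstacle is the joint bookkeeping of the one-sided estimates — deciding which group of terms is passed to $\limsup$ and with which sign — together with the convergence of the quadratic-variation term, where the singular weight $\trho^{-1}$ near $\{\trho=0\}$ must be controlled precisely by the cutoffs built into $F_{k,\varepsilon}$, by the strong convergences of $\trho_\varepsilon$ and $\sqrt{\trho_\varepsilon}\tu_\varepsilon$, and by uniform integrability.
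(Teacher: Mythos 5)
Your proposal is correct and follows essentially the same route the paper intends: the paper compresses the argument into one sentence referring to Proposition~\ref{prop:alpha:MEI} and Remark~\ref{rem:ep:conv of each term of ME}, and you faithfully spell out what that entails — dropping the non-negative $\varepsilon$-dissipation after $\limsup$, sending $\varepsilon\trho_\varepsilon\tu_\varepsilon\cdot\Delta(\phi\bphi)$ to zero, using weak lower semicontinuity of the energy, the viscous dissipation, and the boundary integral $\int g|u|$, the exact convergence of the It\^o-correction via the cut-off structure of $F_{k,\varepsilon}$ together with the strong convergences of $\trho_\varepsilon$ and $\sqrt{\trho_\varepsilon}\tu_\varepsilon$ and uniform integrability, the convergence of the stochastic integral from Proposition~\ref{prop:ep:IME} and the preceding proposition, and finally the a.e.-to-everywhere upgrade via weak time-continuity. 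The sign bookkeeping is right (energy and boundary terms enter with a minus, so l.s.c.~suffices; the It\^o term enters with a plus, so exact convergence is the cleanest choice), and no step is missing.
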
 
The proof of Theorem \ref{thm:sol_ep} is hereby complete.

\section{Vanishing artificial pressure limit}\label{sec:Vanishing artificial pressure limit}
Our ultimate goal is to show that as $\delta\to 0$ in \eqref{eq:approx_CE_ep}--\eqref{eq:approx_MEI_ep}, the limit is a solution in the sense of Definition \ref{Def:1.1}.
We recall the definition of a dissipative martingale solution given in Definition \ref{Def:1.1}.
\begin{definition}
	\label{def:delta:sol}
	Let $\Lambda$ be a Borel probability measure on $L^1(D)\times L^1(D)$ such that 
	\begin{equation}
		\Lambda \left\{ \rho\geq 0 \right\}=1,\quad \int_{L^1_x\times L^1_x}\left|\int_D \left[\frac12\frac{|q|^2}{\rho}+P(\rho)\right]dx \right|^rd\Lambda(\rho,q)<\infty,
	\end{equation}
	where the pressure potential is given by 
	$$
		P(\rho) = \rho \int_1^\rho \frac{p(z)}{z^2}dz
	$$
	and $r\geq 1$, and we assume that $g\in L^2((0,T)\times \pD)$ satisfies $g\geq 0$.
	The quantity $((\Omega,\FFF,(\FFF_t)_{t\geq 0},\PP),\rho,u,W)$ is called a \textit{dissipative martingale solution} to the Navier--Stokes system \eqref{eq:1.1}--\eqref{eq:1.5} with the initial law $\Lambda$ if:
	\begin{itemize}
		\item[(1)] $(\Omega,\FFF,(\FFF_t)_{t\geq 0},\PP)$ is a stochastic basis with a complete right-continuous filtration;
		\item[(2)] $W$ is a cylindrical $(\FFF_t)$-Wiener process on $\fU$;
		\item[(3)] the density $\rho$ is an $(\FFF_t)$-progressively measurable stochastic process such that 
		$$
			\rho \geq 0,\quad \rho\in C_w([0,T];L^\gamma(D))\quad \PP\text{-a.s.;}
		$$
		\item[(4)] the velocity $u$ is an $(\FFF_t)$-adapted random distribution such that  
		$$
			u\in L^2(0,T;W_{\rn }^{1,2}(D))\quad \PP\text{-a.s.;}
		$$ 
		\item[(5)] the momentum $\rho u$ is an $(\FFF_t)$-progressively measurable stochastic process such that 
		$$
			\rho u\in C_w([0,T];L^{\frac{2\gamma}{\gamma + 1}}(D))	\quad \PP\text{-a.s.;}
		$$ 
		\item[(6)] there exists an $L^1(D)\times L^1(D)$-valued $\FFF_0$-measurable random variable $(\rho_0,u_0)$ such that $\rho_0 u_0\in L^1(D)$ $\PP$-a.s., $\Lambda = \LL[\rho_0,\rho_0 u_0]$ and $(\rho_0,\rho_0 u_0) = (\rho(0),\rho u(0))$ $\PP$-a.s.;
		\item[(7)] the equation of continuity 
		\begin{equation}
			\label{eq:7.2}
			-\intT \pp_t\phi \int_D \rho \psi dxdt = \phi(0)\int_D \rho_0\psi dx + \intT \phi \int_D \rho u\cdot \nabla\psi dxdt
		\end{equation}
		holds for all $\phi\in C_c^\infty(\ico{0}{T})$ and all $\psi\in C_c^\infty\left(\RR^3\right)$ $\PP$-a.s.;
		\item[(8)] the interior momentum equation 
		\begin{align}
			&-\intT \pp_t \phi\int_D \rho u\cdot \bphi dxdt-\phi(0)\int_D \rho_0 u_0 \cdot \bphi dx  \notag\\
			&\quad = \intT \phi\int_D[\rho u\otimes u:\nabla\bphi + p(\rho)\rdiv\bphi]dxdt - \intT \phi\int_D \SSS(\nabla u):\nabla\bphi dxdt + \intT \phi\int_D \GG(\rho,\rho u)\cdot \bphi dx dW \label{eq:7.3}
		\end{align}
		holds for all $\phi\in C_c^\infty \left(\ico{0}{T}\right)$ and all $\bphi\in C_c^\infty(D)$ $\PP$-a.s.;
		\item[(9)] the momentum and energy inequality 
		\begin{align}
		   &\int_D \left[\frac12\rho_0 |u_0|^2 + P(\rho_0)\right]dx - \int_D\left[\frac12 \rho \left|u \right|^2 + P(\rho)\right](\tau)dx -\inttau \pp_t\phi \int_D\rho u\cdot \bphi dxdt - \phi(0)\int_D \rho_0 u_0 \cdot \bphi dx  \notag\\
		   &\quad -\inttau \phi \int_D(\rho u\otimes u):\nabla \bphi dx dt- \inttau \phi\int_D p(\rho)\rdiv \bphi dx dt + \inttau \int_D \SSS(\nabla u):\nabla(\phi \bphi-u)dx dt   \notag\\ 
		   &\quad +\frac12 \sum_{k=1}^{\infty}\inttau \int_D \rho^{-1}\left|G_k(\rho,\rho u) \right|^2dx dt - \inttau \int_D \GG(\rho,\rho u)\cdot (\phi\bphi - u)dxdW \notag\\ 
		   &\quad + \inttau \int_{\pD} g\left|\phi\bphi \right|-g\left|u \right|d\Gamma dt\geq 0 \label{eq:7.4}
		\end{align}
		holds for all $\tau\in [0,T],$ for all $ \phi\in C_c^\infty(\ico{0}{\tau})$ and for all $\bphi\in C^\infty(\overline{D})$ with $\bphi\cdot \rn|_{\pD}=0$ $\PP$-a.s.; 
		\item[(10)] if $b\in C^1(\RR)$ such that $b'(z)=0$ for all $z\geq M_b$, then, for all $\phi\in C_c^\infty(\ico{0}{T})$ and all $\psi\in C_c^\infty(\RR^3)$, we have $\PP$-a.s. 
		\begin{align*}
			-\intT \pp_t \phi \int_D b(\rho)\psi dxdt &= \phi(0) \int_D b(\rho_0)\psi dx + \intT \phi\int_D b(\rho)u\cdot \nabla \psi dxdt \\ 
			&\quad - \intT \phi \int_D \left(b'(\rho) \rho - b(\rho) \right) \rdiv u \psi dx dt.
		\end{align*}
	\end{itemize}
\end{definition}
We obtain the following result.
\begin{theorem}
    \label{thm:delta:sol}
	Let $T>0$ and let $D\subset \RR^3$ be a bounded domain of class $C^{2+\nu}$ for some $\nu>0$. Let $\gamma>\frac32$ and let $\Lambda$ be a Borel probability measure defined on $L^1(D)\times L^1(D)$ such that 
	\begin{equation}
		\Lambda\left\{ \rho\geq 0 \right\}=1,\quad \Lambda\left\{ \underline{\rho}\leq \int_D \rho dx \leq \overline{\rho} \right\}=1, \label{eq:7.5}
	\end{equation}
	for some deterministic constants $\urho,\orho>0$ and 
	\begin{equation}
		\int_{L^1_x\times L^1_x}\left|\int_D \left[\frac12\frac{|q|^2}{\rho}+P(\rho)\right]dx \right|^rd\Lambda(\rho,q) <\infty, \label{eq:7.6}
	\end{equation}
	for some $r\geq 4$. Assume that the diffusion coefficients $\GG=\GG(x,\rho,q)$ are continuously differentiable satisfying \eqref{eq:1.6}--\eqref{eq:1.7} and the modulus of friction $g\in L^2((0,T)\times \pD)$ satisfies the conditions 
	\begin{align*}
		&g\geq 0,\quad \intT \int_{\pD}  g d\Gamma dt >0.
	\end{align*} 
    Then there exists a dissipative martingale solution to \eqref{eq:1.1}--\eqref{eq:1.5} in the sense of Definition \ref{def:delta:sol}.
	Furthermore, this solution satisfies the following:
	\begin{align*}
		&\EE\left[\sup_{t\in [0,T]}\norm{\rho(t)}_{L^\gamma(D)}^{\gamma r}\right] + \EE\left[\norm{u}_{L^2(0,T;W^{1,2}(D))}^r \right]< \infty,\\ 
		&\EE\left[\sup_{t\in [0,T]}\norm{\rho |u|^2(t)}_{L^1(D)}^r\right] + \EE\left[\sup_{t\in [0,T]}\norm{\rho u (t)}_{L^{\frac{2\gamma}{\gamma +1}}(D)}^{\frac{2\gamma}{\gamma +1}r}\right]<\infty.
	\end{align*}
\end{theorem}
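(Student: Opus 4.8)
The plan is to carry out the vanishing artificial pressure limit $\delta\to 0$ in the $\delta$-layer system \eqref{eq:approx_CE_ep}--\eqref{eq:approx_MEI_ep}, following the scheme of Sections \ref{sec:ep:Asymptotic limit}--\ref{sec:ep:Strong convergence of the densities} but now with the physical pressure $p(\rho)=a\rho^\gamma$, $\gamma>\tfrac32$, and only the exponent $\gamma$ (rather than $\Gamma\ge 6$) at our disposal. First I would regularize the initial law: given $(\rho_0,q_0)$ with law $\Lambda$ satisfying \eqref{eq:7.5}--\eqref{eq:7.6}, construct, exactly as in Section \ref{sec:ep:Asymptotic limit}, random variables $(\rho_{0,\delta},q_{0,\delta})$ with $\rho_{0,\delta}>0$, $\int_D\rho_{0,\delta}\,dx\in[\urho/2,2\orho]$, uniformly bounded $r$-th moments of $\int_D[\tfrac12|q_{0,\delta}|^2/\rho_{0,\delta}+P_\delta(\rho_{0,\delta})]\,dx$, and laws $\Lambda_\delta$ converging weakly to $\Lambda$ on $L^1(D)\times L^1(D)$. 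Theorem \ref{thm:sol_ep} then yields dissipative martingale solutions $((\Omega^\delta,\FFF^\delta,(\FFF_t^\delta),\PP^\delta),\rho_\delta,u_\delta,W_\delta)$ in the $\delta$-layer with these laws. From Remark \ref{rem:ep:integrability_of_sol}, the generalized Korn--Poincar\'e inequality (Lemma \ref{lem:korn_poincare}, Corollary \ref{cor:est_u_R}), and the energy inequality, the basic estimates hold uniformly in $\delta$: the $r$-th moments of $\sup_t\int_D[\tfrac12\rho_\delta|u_\delta|^2+P_\delta(\rho_\delta)]\,dx$, of $\rho_\delta$ in $L^\infty_tL^\gamma_x$ (and $\delta^{1/\Gamma}\rho_\delta$ in $L^\infty_tL^\Gamma_x$), of $u_\delta$ in $L^2_tW^{1,2}_x$, and of $\rho_\delta u_\delta$ in $C_w([0,T];L^{2\gamma/(\gamma+1)}_x)$. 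The crucial additional ingredient is the improved pressure bound: testing \eqref{eq:approx_ME_ep} with $\psi\,\Bog\big(1_D\rho_\delta^{\Theta}-(\rho_\delta^{\Theta})_D\big)$ for a suitable $\Theta\in(0,\tfrac23\gamma-1]$ (positive precisely because $\gamma>\tfrac32$) and repeating the stochastic-Fubini/It\^o-product computation of Section \ref{sec:ep:Uniform estimates} together with the $L^p$-properties of $\Bog$ (Theorem \ref{thm:Bog}), I expect to obtain
$$\EE\Big[\Big|\intT\int_D\big(p(\rho_\delta)+\delta\rho_\delta^\Gamma\big)\rho_\delta^{\Theta}\,dx\,dt\Big|^{s}\Big]\le c$$
uniformly in $\delta$, for some $s>1$; this gives equi-integrability of $\{p(\rho_\delta)\}$ in $L^1((0,T)\times D)$ and $\delta\rho_\delta^\Gamma\to 0$ in $L^1(\Omega\times(0,T)\times D)$.

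The stochastic compactness step then proceeds as in Section \ref{sec:ep:Asymptotic limit}. On the path space
$$\XX=\XX_{\rho_0}\times\XX_{q_0}\times\XX_{q_0/\sqrt{\rho_0}}\times\XX_\rho\times\XX_{\rho u}\times\XX_u\times\XX_W\times\XX_E\times\XX_\nu,$$
with $\XX_\rho=\big[L^{\gamma+\Theta}((0,T)\times D),w\big]\cap C_w([0,T];L^\gamma(D))$, $\XX_E=[L^\infty(0,T;\MMM_b(D)),w^\ast]$ carrying the energy $E_\delta=\tfrac12\rho_\delta|u_\delta|^2+P(\rho_\delta)$, and $\XX_\nu$ the Young-measure space carrying $\nu_\delta=\delta_{(\rho_\delta,u_\delta,\nabla u_\delta)}$ (the remaining components as in Proposition \ref{prop:ep:tightness}), tightness follows by the same arguments, using Theorem \ref{thm:weak_conti_embedding} for the $C_w$-embeddings. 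Jakubowski's theorem (Theorem \ref{thm:Jakubowski--Skorokhod}) produces, on a new probability space $\tprobsp$, $\tPP$-a.s.\ convergent copies $(\trho_\delta,\tu_\delta,\tW_\delta,\ldots)\to(\trho,\tu,\tW,\ldots)$, and by Lemmas \ref{lem:sufficient_cond_of_Wiener_by_law} and \ref{lem:stability_of_nonanti} the filtration $\tFFF_t=\sigma(\sigma_t[\trho]\cup\sigma_t[\tu]\cup\sigma_t[\tW])$ is non-anticipative with respect to $\tW$. One passes to the limit in the continuity equation to obtain \eqref{eq:7.2}, and, using Proposition \ref{prop:ep:repr}(4) applied to the compositions $p(\rho_\delta)$ and $\rho_\delta F_k(\rho_\delta,u_\delta)$ and Lemma \ref{lem:conv_of_stoch_int}, in the interior momentum equation, obtaining \eqref{eq:7.3} with $p(\trho)$ replaced by its weak limit $\ov{p(\trho)}$ and $\GG(\trho,\trho\tu)$ by $\ov{\GG(\trho,\trho\tu)}$; the convergence $\trho_\delta\tu_\delta\otimes\tu_\delta\weakarrow\trho\tu\otimes\tu$ in $L^1_{t,x}$ is obtained as in Lemma \ref{lem:ep:conv rho u otimes u} (only weak convergence, since strong convergence of the densities is not yet available).

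The main obstacle is the strong convergence $\trho_\delta\to\trho$, which replaces the comparatively soft $\Gamma\ge 6$ argument of Section \ref{sec:ep:Strong convergence of the densities} by the full Feireisl machinery of truncations and the oscillation defect measure, adapted to the present probabilistic and slip setting. Introducing $T_k(z)=kT(z/k)$ with $T$ concave, $T(z)=z$ near $0$ and $T(z)=2$ near $+\infty$, I would use $\phi\,\nabla\Delta^{-1}(1_DT_k(\trho_\delta))$, $\phi\in C_c^\infty(D)$, as test function in the momentum equation (the stochastic It\^o-product manipulation being identical to \eqref{eq:6.30}--\eqref{eq:6.31}), let $\delta\to 0$ using the Div--Curl lemma and Proposition \ref{prop:ep:repr}(4), and derive the effective viscous flux identity
$$\lim_{\delta\to 0}\intT\int_D\phi\big[p(\trho_\delta)T_k(\trho_\delta)-(\eta+\mu)\,\rdiv(\tu_\delta)\,T_k(\trho_\delta)\big]\,dx\,dt=\intT\int_D\phi\big[\ov{p(\trho)}\,\ov{T_k(\trho)}-(\eta+\mu)\,\rdiv(\tu)\,\ov{T_k(\trho)}\big]\,dx\,dt$$
$\tPP$-a.s., with $\eta=\tfrac\mu3+\lambda$ as in Section \ref{sec:ep:Strong convergence of the densities}. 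Combined with the monotonicity of $p$ this bounds the oscillation defect measure, $\sup_{k\ge1}\limsup_{\delta\to0}\|T_k(\trho_\delta)-T_k(\trho)\|_{L^{\gamma+1}((0,T)\times D)}^{\gamma+1}<\infty$ $\tPP$-a.s.; since $\gamma+1>2$ this is exactly what is needed for the limit $(\trho,\tu)$ to satisfy the renormalized continuity equation in $\DD'$. Combining the renormalized equations for $b=T_k(\cdot)\log T_k(\cdot)$ at the $\delta$-level and at the limit, sending $k\to\infty$, and using the effective viscous flux identity once more yields, as in \eqref{eq:6.37}--\eqref{eq:6.38}, that $\int_D(\ov{\trho\log\trho}-\trho\log\trho)(\tau)\,dx\le 0$ for all $\tau$ $\tPP$-a.s.; strict convexity of $z\mapsto z\log z$ then forces $\ov{\trho\log\trho}=\trho\log\trho$ and hence $\trho_\delta\to\trho$ in $L^q(0,T;L^1(D))$ for all $q<\infty$ (and, by the uniform $L^{\gamma+\Theta}$ bound, in $L^q(0,T;L^p(D))$ for $p<\gamma$), $\tPP$-a.s. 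I expect the delicate part of this step to be the rigorous justification of the stochastic It\^o-product/Fubini computation with the truncated Bogovskii-type test functions, and of the renormalization of the stochastic momentum and continuity equations, carried out carefully enough that the low-regularity stochastic integral terms are controlled (the boundary contributions being harmless, as all test functions here are compactly supported in $D$).

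With the strong density convergence in hand one has $\ov{p(\trho)}=p(\trho)$ and $\ov{\GG(\trho,\trho\tu)}=\GG(\trho,\trho\tu)$, so \eqref{eq:7.3} holds; the stochastic integral in the energy inequality passes to the limit by the cut-off/Egorov argument at the end of Section \ref{sec:ep:Strong convergence of the densities} (with $\Gamma$ replaced by $\gamma$, which remains admissible because $\gamma>\tfrac32$ keeps the exponents $\tfrac{2\gamma}{\gamma+1}$, $\tfrac{6\gamma}{4\gamma+3}$ and the associated powers of the Lebesgue measure in the right range), and the momentum and energy inequality \eqref{eq:7.4} follows from weak lower semicontinuity of the energy and convexity of the barrier term, exactly as in Proposition \ref{prop:alpha:MEI}. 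The renormalized continuity equation (item~(10) of Definition \ref{def:delta:sol}) is the DiPerna--Lions renormalization now available for $(\trho,\tu)$ (c.f.\ \cite{FN17}). Finally, the asserted moment bounds on $\rho$, $u$, $\rho|u|^2$, and $\rho u$ are inherited from the uniform-in-$\delta$ estimates via Fatou's lemma and weak lower semicontinuity. This establishes Theorem \ref{thm:delta:sol}, and hence Theorem \ref{thm:1.3}.
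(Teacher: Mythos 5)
Your proposal reproduces the paper's Section~\ref{sec:Vanishing artificial pressure limit} argument essentially verbatim: regularized initial data with weakly converging laws, the improved Bogovskii pressure estimate (via $\rho^\beta$ with a small positive exponent, where the paper fixes $0<\beta<1/3$ rather than your $\Theta\le\tfrac23\gamma-1$, but both exploit $\gamma>\tfrac32$), Jakubowski--Skorokhod compactness on the same path space with energies and Young measures, passage to the limit with $\ov{p(\trho)}$ and $\ov{\GG(\trho,\trho\tu)}$, the effective viscous flux identity tested against $\phi\nabla\Delta^{-1}(1_D b(\trho_\delta))$, the oscillation defect bound $\osc_{\gamma+1}[\trho_\delta\to\trho]<\infty$, renormalization, and the $\rho\log\rho$ convexity argument. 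One technical slip worth noting: the renormalizer you name, $b=T_k(\cdot)\log T_k(\cdot)$, does not satisfy $b'(\rho)\rho-b(\rho)=T_k(\rho)$, so it would not produce the clean identities \eqref{eq:7.45}--\eqref{eq:7.46}; the paper's $L_k$, defined by $L_k(\rho)=\rho\log\rho$ for $\rho<k$ and $L_k(\rho)=\rho\log k+\rho\int_k^\rho T_k(z)z^{-2}\,dz$ for $\rho\ge k$, is the correct choice.
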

\subsection{Uniform estimates} \label{sec:delta:Uniform estimates}
Let $(\rho, u)$ be a dissipative martingale solution constructed in Theorem \ref{thm:sol_ep}. As in Section \ref{sec:m:Uniform estimates}, choosing $\phi=0$ in the momentum and energy inequality \eqref{eq:approx_MEI_ep} yields the standard energy inequality:
\begin{align*}
    &\int_D \left[\frac 1 2 \rho |u|^2 + P_\delta(\rho)\right](\tau)dx + \inttau \int_D \SSS(\nabla u):\nabla u dxdt +\inttau \int_{\pD} g |u| d\Gamma dt \notag\\ 
    &\quad \leq \int_D \left[\frac 1 2 \rho_0 |u_0|^2 + P_\delta(\rho_0)\right]dx + \frac 1 2 \sum_{k=1}^{\infty} \inttau \int_D \rho \left|F_{k}(\rho,u) \right|^2 dx dt + \inttau \int_D \GG(\rho,\rho u) \cdot u dx dW   
\end{align*}
for all $\tau\in [0,T]$ $\PP$-a.s., and using Gronwall's lemma together with Remark \ref{rem:ep:integrability_of_sol} yields the uniform estimates:
\begin{align}
    &\EE\left[\left|\sup_{\tau\in [0,T]} \int_D \left[\frac 1 2 \rho |u|^2 + P_\delta(\rho)\right](\tau)dx \right|^r\right] \notag \\ 
    &\quad +\EE\left[\left|\intT \int_D \left[\SSS(\nabla u):\nabla u \right]dxdt \right|^r\right] + \EE\left[\left|\intT \int_{\pD} g |u| d\Gamma dt \right|^{r}\right] \lesssim  c(r), \label{eq:7.7} \\ 
    &\EE\left[\norm{\rho}_{L_t^\infty L_x^\gamma}^{\gamma r}\right] + \EE\left[\delta^r \norm{\rho}_{L_t^\infty L_x^\Gamma}^{\Gamma r}\right] \lesssim c(r), \label{eq:7.8}\\ 
    & \EE\left[\norm{\rho |u|^2}_{L_t^\infty L_x^1}^{r}\right] + \EE\left[\norm{\rho u}_{L_t^\infty L_x^{\frac{2\gamma}{\gamma +1}}}^{\frac{2\gamma}{\gamma +1}r}\right] \lesssim c(r), \label{eq:7.9} \\
    &\EE\left[\left| \intT \norm{u}_{W_x^{1,2}}^2 dt \right|^{r/2} \right] \lesssim  c(r) \label{eq:7.10}
\end{align}
uniformly in $\delta$, where 
$$
    c(r) = \EE\left[\left|\int_D \left[\frac 1 2 \rho_0 |u_0|^2 + P_\delta(\rho_0)\right]dx \right|^r\right] + 1,\quad r\geq 4.
$$
Finally, it follows from \eqref{eq:approx_CE_ep} that 
\begin{align}
    \label{eq:7.11}
    \norm{\rho(\tau)}_{L_x^1} = \norm{\rho_0}_{L_x^1} \leq \orho,\quad \tau\in [0,T].
\end{align}
Next, in order to derive refined estimates of the pressure, we apply a method analogous to Section \ref{sec:ep:Uniform estimates}. We consider 
$$
    \Phi = \Bog \left[b(\rho) - (b(\rho))_D\right]
$$
as test function in the momentum equation \eqref{eq:approx_ME_ep}, where $b(\rho) = \rho^\beta$ and $0<\beta<1/3$ will be chosen below. Since $\rho$ satisfies the renormalized equation of continuity (see Remark \ref{rem:ep:renormalized CE}), we have 
\begin{align}
    \label{eq:7.12}
    d \Phi = \Bog \left[ - \rdiv \left(b(\rho) u\right) + (b(\rho)-b'(\rho)\rho)\rdiv u - \left( (b(\rho)-b'(\rho)\rho)\rdiv u\right)_D\right] dt.
\end{align}
Exactly as in Section \ref{sec:ep:Uniform estimates}, we deduce 
\begin{align}
    \inttau \int_D p_\delta(\rho)(b(\rho) - (b(\rho))_D) dx dt &= \left[\int_D \rho u \cdot \Phi dx \right]_{t=0}^{t=\tau} - \inttau \int_D \left[\rho u \otimes u - \SSS(\nabla u) \right]:\nabla \Phi dx dt \notag \\ 
    &\quad + \inttau \int_D \rho u\cdot \pp_t \Phi dx dt - \inttau \int_D \GG(\rho,\rho u)\cdot \Phi dx dW\notag \\ 
    &=: \sum_{j=1}^{4}I_j. \label{eq:7.13}
\end{align}
By estimating each term on the right-hand side in \eqref{eq:7.13}, we will find the desired uniform bound:
\begin{align}
    \label{eq:7.14}
    \EE\left[\left|\intT \int_D \left(p(\rho) + \delta (\rho + \rho^\Gamma)\right)\rho^\beta dx dt \right|^{r/2}\right]\lesssim c(\orho)c(r),
\end{align}
for a certain $\beta>0$. To this end, we first note that 
$$
    \norm{\Phi}_{L_x^\infty} \lesssim \norm{\rho^\beta}_{L_x^q},\quad q>3,
$$
and \eqref{eq:7.11}. Consequently, we have 
\begin{align*}
    \left|I_1 \right|   &\lesssim \sup_{\tau\in [0,T]}\norm{\sqrt{\rho}}_{L_x^2} \norm{\sqrt{\rho} u}_{L_x^2}\norm{\Phi}_{L_x^\infty} \\ 
    &\lesssim c(\orho) \sup_{\tau\in [0,T]} \norm{\rho |u|^2}_{L_x^1}^{1/2} \norm{\rho^\beta}_{L_x^\Gamma},
\end{align*}
and 
\begin{align*}
    &\left|I_2 \right|   \lesssim \sup_{\tau\in [0,T]} \norm{\rho^\beta}_{L_x^q}^4 + \left(\intT \norm{u}_{W_x^{1,2}}dt \right)^2.
\end{align*}
We also have
\begin{align*}
    \norm{\rho u\cdot \Bog \left(\rdiv\left(\rho^\beta u\right)\right)}_{L^1_{t,x}} &\lesssim \sup_{\tau\in [0,T]}\norm{\rho}_{L_x^\gamma} \intT \norm{u}_{L_x^q}\norm{\Bog\left(\rdiv\left(\rho^\beta u\right)\right)}_{L_x^q}dt \\ 
    &\lesssim \sup_{\tau\in [0,T]} \norm{\rho}_{L_x^\gamma} \sup_{\tau\in [0,T]}\norm{\rho^\beta}_{L_x^p} \intT \norm{u}_{L_x^q}\norm{u}_{L_x^r}dt,
\end{align*}
where 
$$
    \frac1\gamma +\frac2q = 1,\quad \frac1q =\frac1p + \frac1r.
$$
As $\gamma >3/2$, we can choose $q<6$ and $r\leq 6$. Similarly, 
\begin{align*}
    &\norm{\rho u\cdot \Bog\left((b(\rho)-b'(\rho)\rho)\rdiv u -\left((b(\rho)-b'(\rho)\rho)\rdiv u\right)_D\right)}_{L^1_{t,x}} \\ 
    &\quad \lesssim \sup_{\tau\in [0,T]}  \norm{\rho}_{L_x^\gamma}\intT \norm{u}_{L_x^p} \norm{\Bog \left((b(\rho)-b'(\rho)\rho)\rdiv u -\left((b(\rho)-b'(\rho)\rho)\rdiv u\right)_D\right)}_{L_x^q}dt,\quad \frac1\gamma + \frac1p +\frac1q =1.
\end{align*}
We use the embedding $W^{1,r}(D)\hookrightarrow L^q(D)$, $q\leq \frac{3r}{3-r}$ if $r<3$, to obtain 
\begin{align*}
    &\norm{\Bog \left((b(\rho)-b'(\rho)\rho)\rdiv u -\left((b(\rho)-b'(\rho)\rho)\rdiv u\right)_D\right)}_{L_x^q} \\ 
    &\quad \lesssim \norm{(b(\rho)-b'(\rho)\rho)\rdiv u -\left((b(\rho)-b'(\rho)\rho)\rdiv u\right)_D}_{L_x^r} \\ 
    &\quad \lesssim \norm{\rho^\beta}_{L_x^s}\norm{\nabla u}_{L_x^2},\quad \frac1s + \frac12 = \frac1r.
\end{align*}
Note that we can choose $q<6,p\leq 6$, and $r<2$. Therefore, by using \eqref{eq:7.8}--\eqref{eq:7.11}, we obtain 
\begin{align*}
    &\EE\left[\left|I_j \right|^{r/2}\right]  \lesssim c(r)c(\orho),\quad j=1,2,3.
\end{align*}
Finally, by using the Burkholder--Davis--Gundy inequality, we have 
\begin{align*}
    &\EE\left[\left|I_4 \right|^r\right]  \lesssim \EE\left[\left(\inttau \sum_{k=1}^{\infty}\left|\int_D \rho F_k(\rho,u)\cdot \Phi dx \right|^2 dt\right)^{r/2}\right],
\end{align*}
where
\begin{align*}
    \left|\int_D \rho F_k(\rho,u)\cdot \Phi dx \right| &\lesssim f_k\norm{\Phi}_{L_x^\infty}\int_D \left(\rho + \rho |u|\right)dx \\ 
    &\lesssim c(\orho)f_k\int_D \left(\rho + \rho |u|\right)dx.
\end{align*}
We conclude 
\begin{align*}
    &\EE\left[\left|I_4 \right|^{r/2}\right]  \lesssim c(\orho)c(r).
\end{align*}

\subsection{Asymptotic limit}\label{sec:delta:Asymptotic limit}
Let $\Lambda$ be the initial law given by Theorem \ref{thm:delta:sol}, and let $(\rho_0,q_0)$ be a random variable having the law $\Lambda$. As in Section \ref{sec:ep:Asymptotic limit}, there exists a sequence $([\rho_{0,\delta},q_{0,\delta}])_{\delta\in (0,1)}$ such that 
\begin{align}
    &\rho_{0,\delta} \in L^\Gamma (D),\quad \rho_{0,\delta}>0,\quad 0<\frac{\urho}{2} \leq \int_D \rho_{0,\delta} dx \leq 2\orho\ \PP\text{-a.s.,} \notag \\ 
    &\rho_{0,\delta} \to \rho_0 \quad \text{in } L^\gamma (D),\quad q_{0,\delta} \to q_0 \quad \text{in } L^1(D) \  \PP\text{-a.s.,} \label{eq:7.15}\\
    &\EE\left[\left|\int_D \left[\frac12 \frac{|q_{0,\delta}|^2}{\rho_{0,\delta}} + P_\delta (\rho_{0,\delta})\right]dx \right|^r\right] \lesssim 1 \label{eq:7.16}
\end{align}
for some $r\geq 4$ uniformly in $\delta\to 0$. Also, 
\begin{align}
    \label{eq:7.17}
    \int_D \left[\frac12 \frac{|q_{0,\delta}|^2}{\rho_{0,\delta}} + P_\delta (\rho_{0,\delta})\right]dx \to \int_D \left[\frac12 \frac{|q_{0}|^2}{\rho_{0}} + P(\rho_{0})\right]dx,
\end{align}
as $\delta\to 0$ $\PP$-a.s. This implies 
\begin{align}
    \frac{q_{0,\delta}}{\sqrt{\rho_{0,\delta}}} \to \frac{q_0}{\sqrt{\rho_0}}\quad \text{in } L^2(D)\ \PP\text{-a.s.}\label{eq:7.18}
\end{align}
Therefore, applying Theorem \ref{thm:sol_ep} to the laws $\PP\circ (\rho_{0,\delta},q_{0,\delta})^{-1}$ on $L^1(D)\times L^1(D)$ yields for every $\delta\in (0,1)$ a multiplet
$$
    ((\Omega^\delta,\FFF^\delta,(\FFF_t)^\delta_{t\geq 0},\PP^\delta),\overline{\rho}_{0,\delta},\overline{u}_{0,\delta},\rho_\delta,u_\delta,W_\delta),
$$
which is a weak martingale solution in the sense of Definition \ref{def:sol_ep}. Moreover, in view of \eqref{eq:7.15}--\eqref{eq:7.18}, the laws $\Lambda_\delta := \PP^\delta\circ (\overline{\rho}_{0,\delta}, \overline{\rho}_{0,\delta} \overline{u}_{0,\delta})^{-1}$ on $L^1(D)\times L^1(D)$ satisfy \eqref{eq:7.5} and \eqref{eq:7.6} uniformly in $\delta$, and 
\begin{align}
    \label{eq:7.19}
    \Lambda_\delta \to \Lambda\quad \text{weakly on } L^1(D)\times L^1(D).
\end{align}
As in Section \ref{sec:m:Asymptotic limit}, we may assume without loss of generality that 
$$
    (\Omega^\delta,\FFF^\delta,\PP^\delta) = ([0,1],\overline{\fB([0,1])},\fL),\quad \delta\in (0,1)
$$
and that 
$$
    \FFF^\delta_t = \sigma \left(\sigma_t[\rho_\delta]\cup \sigma_t [u_\delta]\cup \sigma_t[W_\delta]\right),\quad t\in [0,T].
$$
We consider a path space analogous to Section \ref{sec:ep:Asymptotic limit}, that is 
$$
    \XX = \XX_{\rho_0}\times \XX_{q_0}\times \XX_{\frac{q_0}{\sqrt{\rho_0}}}\times \XX_\rho \times \XX_{\rho u}\times \XX_{u}\times \XX_W \times \XX_E \times \XX_\nu, 
$$
where 
\begin{align*}
    &\XX_{\rho_0} = L^\gamma(D),\quad \XX_{q_0} = L^1(D),\quad \XX_{\frac{q_0}{\sqrt{\rho_0}}} = L^2(D), \\ 
    &\XX_\rho = \left[L^{\gamma + \beta}((0,T)\times D),w\right]\cap C_w([0,T];L^\gamma(D)), \\ 
    &\XX_{\rho u} = C_w([0,T];L^{\frac{2\gamma }{\gamma +1}}(D)) \cap C([0,T];W^{-k,2}(D)), \\ 
    &\XX_u = \left[L^2(0,T;W^{1,2}_{\rn }(D)),w\right], \\ 
    &\XX_W = C([0,T];\fU_0), \\ 
    &\XX_E = \left[L^\infty(0,T;\MMM_b(D)),w^\ast\right], \\ 
    &\XX_\nu = (L_{w^\ast}^\infty((0,T)\times D;\PPP(\RR^{13})),w^\ast),
\end{align*}
for certain $k\in \NN$, where $\beta$ is chosen so that \eqref{eq:7.14} holds. The energy $E_\delta(\rho_\delta,u_\delta)$ is defined as 
$$
    E_\delta(\rho_\delta,u_\delta) (\tau) = \left[\frac12 \rho_\delta\left|u_\delta \right|^2 + P_\delta(\rho_\delta)\right] (\tau) + \inttau \left[\rho_\delta^\gamma + \delta \rho_\delta^\Gamma \right] \rho_\delta^\beta dt.
$$
\begin{proposition}
    \label{prop:delta:tightness}
    The family of probability measures
    $$
        \left\{ \LL\left[\overline{\rho}_{0,\delta}, \overline{\rho}_{0,\delta} \overline{u}_{0,\delta},\frac{\overline{\rho}_{0,\delta} \overline{u}_{0,\delta}}{\sqrt{\overline{\rho}_{0,\delta}}},\rho_\delta,,\rho_\delta u_\delta,u_\delta,W_\delta,E_\delta (\rho_\delta,u_\delta),\delta_{(\rho_\delta,u_\delta,\nabla u_\delta)} \right]:\delta\in (0,1) \right\}
    $$
    is tight on $\XX$. 
\end{proposition}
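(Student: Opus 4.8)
The plan is to follow the same line of argument as in the proof of Proposition~\ref{prop:ep:tightness}, substituting the uniform bounds \eqref{eq:6.6}--\eqref{eq:6.9} by their $\delta$-analogues \eqref{eq:7.7}--\eqref{eq:7.11} together with the refined pressure estimate \eqref{eq:7.14}. Since $\XX$ is a finite product of sub-Polish spaces and tightness is preserved under such products (Tychonoff), it suffices to verify tightness of the marginal laws on each factor. First I would dispose of the initial data: tightness of $\left\{ \LL[\overline{\rho}_{0,\delta},\overline{\rho}_{0,\delta}\overline{u}_{0,\delta},\overline{\rho}_{0,\delta}\overline{u}_{0,\delta}/\sqrt{\overline{\rho}_{0,\delta}}]:\delta\in(0,1)\right\}$ on $\XX_{\rho_0}\times\XX_{q_0}\times\XX_{q_0/\sqrt{\rho_0}}$ is immediate from the strong $L^p(\Omega;\cdot)$ convergences \eqref{eq:7.15}--\eqref{eq:7.18} and Prokhorov's theorem, exactly as in Section~\ref{sec:ep:Asymptotic limit}. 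Tightness of $\left\{\LL[\rho_\delta,u_\delta,W_\delta]:\delta\in(0,1)\right\}$ is obtained verbatim as in Section~\ref{sec:alpha(Asymptotic limit)}: \eqref{eq:7.10} gives relative compactness of $u_\delta$ in $[L^2(0,T;W^{1,2}_{\rn}(D)),w]$; for $\rho_\delta$ one combines \eqref{eq:7.8} and \eqref{eq:7.14} (which yield, in expectation, a uniform bound in $L^{\gamma+\beta}((0,T)\times D)$) with the Lipschitz-in-time estimate in a negative Sobolev space coming from the equation of continuity \eqref{eq:approx_CE_ep} and the compact embedding of Theorem~\ref{thm:weak_conti_embedding} into $C_w([0,T];L^\gamma(D))$; and $\LL[W_\delta]$ is a single Radon measure on a Polish space.

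The momentum factor requires an estimate of the time modulus of continuity of $\rho_\delta u_\delta$ in some $W^{-k,2}(D)$. Using the momentum equation \eqref{eq:approx_ME_ep}, I would bound the increments of $\rho_\delta u_\delta$ tested against $\bphi\in W^{k,2}(D)$ by the relevant $L^{p}_t$-norms of $\rho_\delta u_\delta\otimes u_\delta$, $p(\rho_\delta)$ and $\SSS(\nabla u_\delta)$, together with a Burkholder--Davis--Gundy estimate of the stochastic term, all of which are controlled uniformly in $\delta$ by \eqref{eq:7.7}--\eqref{eq:7.9}, \eqref{eq:7.14} and the growth condition \eqref{eq:1.6} on $\GG$, in analogy with Lemma~\ref{lem:m:unif_est_others} and the corresponding estimates in Section~\ref{sec:ep:Asymptotic limit}. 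Kolmogorov's continuity theorem then gives $\EE\!\left[\norm{\rho_\delta u_\delta}_{C^\alpha([0,T];W^{-k,2}(D))}^{r'}\right]\lesssim c(r)$ for suitable $\alpha>0$, $k\in\NN$, $r'\geq 1$, so the compact embedding
\begin{align*}
    L^\infty(0,T;L^{\frac{2\gamma}{\gamma+1}}(D))\cap C^\alpha([0,T];W^{-k,2}(D))\cptarrow C_w([0,T];L^{\frac{2\gamma}{\gamma+1}}(D))
\end{align*}
of Theorem~\ref{thm:weak_conti_embedding} together with Chebyshev's inequality yields tightness of $\left\{\LL[\rho_\delta u_\delta]:\delta\in(0,1)\right\}$ on $\XX_{\rho u}$.

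For the remaining two factors I would argue exactly as in Proposition~\ref{prop:ep:tightness}: by \eqref{eq:7.7}--\eqref{eq:7.9} and \eqref{eq:7.14} the energy $E_\delta(\rho_\delta,u_\delta)$ (including the extra dissipation integral $\inttau[\rho_\delta^\gamma+\delta\rho_\delta^\Gamma]\rho_\delta^\beta\,dt$) is uniformly bounded in expectation in $L^\infty(0,T;\MMM_b(D))$, and since bounded sets there are relatively weak-$\ast$ compact, $\left\{\LL[E_\delta]:\delta\in(0,1)\right\}$ is tight on $\XX_E$; finally, the uniform moment bounds \eqref{eq:7.8}--\eqref{eq:7.10} confine the mass of the Young measures $\delta_{(\rho_\delta,u_\delta,\nabla u_\delta)}$ to the relatively compact sets $B_R$ of \cite[Corollary~2.8.6]{BFHbook18}, giving tightness on $\XX_\nu$. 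The main obstacle is the momentum factor: unlike the vanishing-viscosity step, here $\gamma>3/2$ is only borderline for the integrability of the convective term $\rho_\delta u_\delta\otimes u_\delta$ (one has $\rho_\delta u_\delta\otimes u_\delta\in L^1_tL^q_x$ with $\tfrac1q=\tfrac1\gamma+\tfrac13$), so $k$ must be chosen large enough and the refined pressure bound \eqref{eq:7.14} exploited with care to close the Hölder-in-time estimate for $\rho_\delta u_\delta$; everything else is a routine transcription of the arguments in Sections~\ref{sec:alpha(Asymptotic limit)} and \ref{sec:ep:Asymptotic limit}.
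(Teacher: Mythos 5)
Your overall plan—tightness factor by factor, with the momentum factor handled by a Hölder‑in‑time estimate in $W^{-k,2}(D)$—is sound, but the diagnosis of where the difficulty lies is wrong and leads you to omit the one term that actually changes in this layer. The momentum equation \eqref{eq:approx_ME_ep} contains the \emph{artificial} pressure $p_\delta(\rho_\delta)=p(\rho_\delta)+\delta(\rho_\delta+\rho_\delta^\Gamma)$, not $p(\rho_\delta)$ as you wrote. The convective term you flag as borderline is in fact harmless: $|\rho_\delta u_\delta\otimes u_\delta|\leq\rho_\delta|u_\delta|^2$ is bounded in $L^\infty_tL^1_x$ by the energy estimate \eqref{eq:7.9} uniformly in $\delta$, exactly as in Sections~\ref{sec:alpha:Uniform estimates} and \ref{sec:ep:Asymptotic limit}, giving a Lipschitz‑in‑time bound after pairing with $W^{k,2}(D)\hookrightarrow C^1(\oD)$. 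Likewise \eqref{eq:7.14} is not what closes the momentum estimate; it is used for the $L^{\gamma+\beta}_{t,x}$ bound needed for $\XX_\rho$. The genuinely new issue is $\delta\nabla(\rho_\delta+\rho_\delta^\Gamma)$: unlike in the $\varepsilon$‑layer, $\rho_\delta^\Gamma$ is not controlled in $L^\infty_tL^1_x$ uniformly, only the $\delta$‑weighted quantity $\delta\rho_\delta^\Gamma$ is, via \eqref{eq:7.8}. Your proposal as written would either silently drop this term or invoke it without a bound, so as stated there is a gap.

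The gap is fixable along your line of attack—$\EE\bigl[\norm{\delta\rho_\delta^\Gamma}_{L^\infty_tL^1_x}^r\bigr]=\EE\bigl[\delta^r\norm{\rho_\delta}_{L^\infty_tL^\Gamma_x}^{\Gamma r}\bigr]\lesssim c(r)$ from \eqref{eq:7.8}, so the $\delta$‑part of the pressure contributes a Lipschitz‑in‑time increment in $W^{-k,2}(D)$, $k>5/2$, with $r$‑th moments, and the direct Hölder estimate for the full $\rho_\delta u_\delta$ closes. This is a legitimate and slightly more elementary alternative to what the paper does. The paper instead decomposes $\rho_\delta u_\delta=Y^\delta+Z^\delta$, with $Y^\delta$ carrying only the physical pressure $p(\rho_\delta)$ (so that the argument of Proposition~\ref{prop:ep:tightness} transfers verbatim and gives $\EE[\norm{Y^\delta}_{C^\kappa_tW^{-l,2}_x}]\leq C$), while $Z^\delta(\tau)=-\delta\inttau\nabla(\rho_\delta+\rho_\delta^\Gamma)\,dt$ is shown, using \eqref{eq:7.14}, to converge to $0$ $\PP$‑a.s.\ in $C([0,T];W^{-1,(\Gamma+\beta)/\Gamma}(D))$, whence $\LL[Z^\delta]\to\delta_0$ weakly and $\{\LL[Z^\delta]\}$ is tight. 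The two routes prove the same thing; the paper's makes transparent that the $\delta$‑pressure contribution to the momentum degenerates, whereas your (repaired) route is a bit shorter because it avoids the split and the a.s.\ limit argument. Either way, you should explicitly carry $p_\delta$ rather than $p$ and identify $\delta\nabla(\rho_\delta+\rho_\delta^\Gamma)$—not the convective term—as the one term demanding a new estimate.
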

\begin{proof}
    The only change from Section \ref{sec:ep:Asymptotic limit} is the proof of tightness for $\left\{ \LL[\rho_\delta u_\delta]:\delta\in (0,1) \right\}$. We proceed as in Proposition \ref{prop:ep:tightness} and decompose $\rho_\delta u_\delta$ into two parts $\rho_\delta u_\delta(t) = Y^\delta(t) + Z^\delta(t)$, where 
    \begin{align*}
        &Y^\delta(\tau) = \ov{\rho}_{0,\delta}\ov{u}_{0,\delta} - \inttau \rdiv (\rho_\delta u_\delta\otimes u_\delta)dt + \inttau \rdiv \SSS(\nabla u_\delta )dt - \inttau \nabla p(\rho_\delta)dt + \inttau \GG(\rho_\delta,\rho_\delta u_\delta)dW, \\ 
        &Z^\delta(\tau) = - \delta \inttau \nabla(\rho_\delta + \rho_\delta^\Gamma)dt.   
    \end{align*}
    By the same way as in the proof of Proposition \ref{prop:ep:tightness}, we obtain the H\"older continuity of $Y^\delta$, that is, there exist $\kappa>0$ and $l>0$ such that 
    \begin{align*}
        &\EE\left[\norm{Y^\delta}_{C^\kappa([0,T];W^{-l,2}(D))}\right]  \leq C.
    \end{align*}
    Next, we show that the family of the laws $\left\{ \LL[Z^\delta]:\delta\in (0,1) \right\}$ is tight on 
    \begin{align*}
        &C([0,T];W^{-1,\frac{\Gamma + \beta}{\Gamma}}(D)),
    \end{align*}
    and the conclusion follows similarly to Proposition \ref{prop:ep:tightness}. Due to \eqref{eq:7.14}, we have (up to a subsequence)
    \begin{align*}
        &\delta(\rho_\delta + \rho_\delta^\Gamma)\to 0 \quad \text{in }L^{\frac{\Gamma + \beta}{\Gamma}}((0,T)\times D)\quad \PP\text{-a.s.}  
    \end{align*}
    Hence, we obtain  
    \begin{align*}
        &\delta\nabla(\rho_\delta + \rho_\delta^\Gamma)\to 0 \quad \text{in }L^{\frac{\Gamma + \beta}{\Gamma }}(0,T;W^{-1, \frac{\Gamma + \beta}{\Gamma}}(D))\quad \PP\text{-a.s.,}    
    \end{align*}
    and 
    \begin{align*}
        &Z^\delta \to 0 \quad \text{in }C([0,T];W^{-1, \frac{\Gamma + \beta}{\Gamma}}(D))\quad \PP\text{-a.s.}     
    \end{align*}
    This leads to the convergence in law 
    \begin{align*}
        &Z^\delta \overset{d}{\longrightarrow} 0 \quad \text{on }  C([0,T];W^{-1, \frac{\Gamma + \beta}{\Gamma}}(D)),
    \end{align*}
    and the claim follows. 
\end{proof}
Consequently, we may apply Jakubowski's theorem, Theorem \ref{thm:Jakubowski--Skorokhod} as well as \cite[Corollary 2.8.3]{BFHbook18} to obtain the following. 
\begin{proposition}
    \label{prop:delta:repr}
        There exists a complete probability space $\tprobsp$ with $\XX$-valued Borel measurable random variables $(\trho_{0,\delta},\tq_{0,\delta},\tk_{0,\delta},\trho_\delta,\tq_\delta,\tu_\delta,\tW_\delta,\tE_\delta,\tnu_\delta),\delta\in (0,1)$, as well as 
        $(\trho_{0},\tq_{0},\tk_{0},\trho,\tq,\tu,\tW,\tE,\tnu)$ such that (up to a subsequence):
        \begin{itemize}
            \item[(1)] the laws $\LL[(\trho_{0,\delta},\tq_{0,\delta},\tk_{0,\delta},\trho_\delta,\tq_\delta,\tu_\delta,\tW_\delta,\tE_\delta,\tnu_\delta)]$ and $\LL[\overline{\rho}_{0,\delta}, \overline{\rho}_{0,\delta} \overline{u}_{0,\delta},(\overline{\rho}_{0,\delta} \overline{u}_{0,\delta}) / \sqrt{\overline{\rho}_{0,\delta}}, \rho_\delta, \rho_\delta u_\delta, u_\delta, W_\delta, \linebreak E_\delta(\rho_\delta, u_\delta), \delta_{(\rho_\delta,u_\delta,\nabla u_\delta)} ]$ 
            coincide on $\XX$. In particular, 
            \begin{align*}
                &\trho_{0,\delta} = \trho_\delta(0),\quad \tq_{0,\delta} = \trho_\delta \tu_\delta(0),\quad \tk_{0,\delta} = \frac{\tq_{0,\delta}}{\sqrt{\trho_{0,\delta}}} = \frac{\trho_\delta \tu_\delta(0)}{\sqrt{\trho_\delta(0)}}, \\ 
                &\tq_\delta = \trho_\delta \tu_\delta,\quad \tE_\delta = E_\delta(\trho_\delta,\tu_\delta),\quad \tnu_\delta = \delta_{(\trho_\delta,\tu_\delta,\nabla \tu_\delta)},   
            \end{align*}
            $\tPP$-a.s., as well as 
            \begin{align}
                \EE\left[\left|\intT \int_D \left[\trho_\delta^\gamma + \delta \trho_\delta^\Gamma \right]\trho_\delta^\beta dx dt \right|^{r/2}\right] + \EE\left[\left|\sup_{t\in [0,T]}\int_D \left[\frac12 \trho_\delta |\tu_\delta|^2 + P_\delta(\trho_\delta)\right]dx \right|^{r} \right] \lesssim c(r);
            \end{align}
            \item[(2)] the law of $(\trho_{0},\tq_{0},\tk_{0},\trho,\tq,\tu,\tW,\tE,\tnu)$ on $\XX$ is a Radon measure;
            \item[(3)] $(\trho_{0,\delta},\tq_{0,\delta},\tk_{0,\delta},\trho_\delta,\tq_\delta,\tu_\delta,\tW_\delta,\tE_\delta,\tnu_\delta)$ converges in the topology of $\XX$ $\tPP$-a.s. to $(\trho_{0},\tq_{0},\tk_{0},\trho,\tq,\tu,\tW,\tE,\tnu)$, i.e., 
            \begin{align}
                \begin{aligned}
                    \trho_{0,\delta} &\to \trho_0 \quad \text{in } L^\gamma (D), \\ 
                    \tq_{0,\delta}&\to \tq_0 \quad \text{in } L^1(D), \\ 
                    \tk_{0,\delta} &\to \tk_0 \quad \text{in } L^2(D), \\ 
                    \trho_\delta&\to \trho \quad \text{in } C_w([0,T];L^\gamma(D)), \\ 
                    \trho_\delta&\weakarrow \trho \quad \text{in } L^{\gamma+\beta}((0,T)\times D), \\ 
                    \trho_\delta \tu_\delta &\to \tq \quad \text{in } C_w([0,T];L^{\frac{2\gamma }{\gamma +1}}(D)), \\ 
                    \tu_\delta &\weakarrow \tu\quad \text{in } L^2(0,T;W^{1,2}_{\rn }(D)), \\ 
                    \tW_\delta &\to \tW\quad \text{in } C([0,T];\fU_0), \\
                    E(\trho_\delta,\tu_\delta) &\weakstararrow \tE\quad \text{in } L^\infty(0,T;\MMM_b(D)), \\ 
                    \delta_{(\trho_\delta,\tu_\delta,\nabla \tu_\delta)} &\to \tnu \quad \text{in } (L_{w^\ast}^\infty((0,T)\times D;\PPP(\RR^{13})),w^\ast),
                \end{aligned}\label{eq:7.21}
            \end{align}
            as $\delta\to 0$ $\tPP$-a.s.;
            \item[(4)] for any Carath\'{e}odory function $H=H(t,x,\rho,v,V)$ where $(t,x)\in (0,T)\times D$, $(\rho,v,V)\in \RR^{13}$, satisfying for some $q_1,q_2>0$ the growth condition 
            $$
                \left|H(t,x,\rho,v,V) \right| \lesssim 1 + |\rho|^{q_1} + |v|^{q_2} +|V|^{q_2}
            $$
            uniformly in $(t,x)$, denote $\overline{H(\trho,\tu,\nabla \tu)}(t,x) = \ev{\tnu_{t,x},H}$. Then we have 
            \begin{align}
                H(\trho_\delta,\tu_\delta,\nabla \tu_\delta)\weakarrow \overline{H(\trho,\tu,\nabla \tu)}\quad \text{in } L^r((0,T)\times D)\quad \text{for all } 1<r\leq \frac{\gamma + \beta}{q_1}\wedge \frac{2}{q_2}, \label{eq:7.22}
            \end{align}
            as $\delta\to 0$ $\tPP$-a.s.
        \end{itemize}
\end{proposition}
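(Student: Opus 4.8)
The plan is to mimic the proof of Proposition~\ref{prop:ep:repr} (which in turn follows \cite[Corollary~2.8.3]{BFHbook18}): combine the tightness from Proposition~\ref{prop:delta:tightness} with Jakubowski's extension of the Skorokhod theorem, Theorem~\ref{thm:Jakubowski--Skorokhod}, and then identify the limit variables. First I would note that the path space $\XX$, although not Polish (it carries weak, weak-$\ast$, $C_w$ and Young-measure factors), is a sub-Polish space: each factor admits a countable family of continuous real functions separating its points — for $[L^\infty(0,T;\MMM_b(D)),w^\ast]$ and $(L^\infty_{w^\ast}((0,T)\times D;\PPP(\RR^{13})),w^\ast)$ this was recorded in the construction preceding the proposition, the remaining factors being Polish — and a countable product of sub-Polish spaces is sub-Polish. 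Since $\{\LL[\overline{\rho}_{0,\delta},\dots,\delta_{(\rho_\delta,u_\delta,\nabla u_\delta)}]:\delta\in(0,1)\}$ is tight on $\XX$ by Proposition~\ref{prop:delta:tightness}, Theorem~\ref{thm:Jakubowski--Skorokhod} yields a subsequence $\delta\to 0$, the standard (already complete) probability space $([0,1],\overline{\fB([0,1])},\fL)$, which we relabel $\tprobsp$, and $\XX$-valued Borel random variables $(\trho_{0,\delta},\tq_{0,\delta},\tk_{0,\delta},\trho_\delta,\tq_\delta,\tu_\delta,\tW_\delta,\tE_\delta,\tnu_\delta)$ and $(\trho_0,\tq_0,\tk_0,\trho,\tq,\tu,\tW,\tE,\tnu)$ such that the laws coincide on $\XX$ (part~(1), first sentence), the limit has a Radon law on $\XX$ (part~(2)), and convergence holds $\tPP$-a.s. in the topology of $\XX$. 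As the topology of $\XX$ is the product topology, this last statement is precisely the componentwise list~\eqref{eq:7.21}, proving~(3).

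\textbf{Identification of the components.} Next I would upgrade the equality of laws to the pointwise relations in~(1). The original variables satisfy, $\PP^\delta$-a.s., $\overline{\rho}_{0,\delta}=\rho_\delta(0)$, $\rho_\delta u_\delta$-component $=\rho_\delta u_\delta$, $(\overline{\rho}_{0,\delta}\overline{u}_{0,\delta})/\sqrt{\overline{\rho}_{0,\delta}}$ in its slot, $E_\delta$-component $=E_\delta(\rho_\delta,u_\delta)$, and $\nu$-component $=\delta_{(\rho_\delta,u_\delta,\nabla u_\delta)}$. Each of these defines a Borel subset of $\XX$: evaluation at $t=0$ is continuous from $C_w([0,T];L^\gamma(D))$ and $C_w([0,T];L^{\frac{2\gamma}{\gamma+1}}(D))$ into the corresponding weak spaces; the relation $\tq=\trho\tu$ is $\bigcap_{n}\{\langle \tq-\trho\tu,\phi_n\rangle=0\}$ for a countable dense family $(\phi_n)\subset C_c^\infty((0,T)\times D)$, each pairing being a Borel function on $\XX$; $E_\delta(\cdot,\cdot)$ is a Borel (indeed lower semicontinuous) functional of $(\rho,u)$ on $\XX_\rho\times\XX_u$; and $(\rho,u,\nabla u)\mapsto\delta_{(\rho,u,\nabla u)}$ is Borel into $\XX_\nu$ by the discussion of canonical Young measures in \cite[Section~2.8]{BFHbook18}; the quotient relation for $\tk_{0,\delta}$ is likewise Borel on the set $\{\trho_{0,\delta}>0\}$. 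Since the law of the full tuple is preserved and each such set has $\PP^\delta$-probability $1$ for the original variables, it has $\tPP$-probability $1$ for the new ones, which gives all the identities in~(1). Finally, the displayed moment bound in~(1) follows by transporting, via equality of laws, the uniform estimates \eqref{eq:7.7}--\eqref{eq:7.14} of Section~\ref{sec:delta:Uniform estimates} from $(\rho_\delta,u_\delta)$ to $(\trho_\delta,\tu_\delta)$; the functionals involved are Borel on $\XX$, so the transfer is legitimate, and in particular $\trho_\delta$ is bounded in $L^{\gamma+\beta}((0,T)\times D)$ and $\tu_\delta$ in $L^2(0,T;W^{1,2}(D))$, $\tPP$-a.s. in expectation.

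\textbf{The Young-measure limit, part (4).} For the last assertion I would argue as in \cite[Corollary~2.8.3]{BFHbook18}. Given a Carath\'eodory function $H$ with $|H(t,x,\rho,v,V)|\lesssim 1+|\rho|^{q_1}+|v|^{q_2}+|V|^{q_2}$, the compositions $H(\trho_\delta,\tu_\delta,\nabla\tu_\delta)$ are bounded in $L^r((0,T)\times D)$ for every $1<r\le\frac{\gamma+\beta}{q_1}\wedge\frac{2}{q_2}$ by the bounds just transferred, hence uniformly integrable at exponent $r$; combining this with the $\tPP$-a.s. convergence $\delta_{(\trho_\delta,\tu_\delta,\nabla\tu_\delta)}\to\tnu$ in $(L^\infty_{w^\ast}((0,T)\times D;\PPP(\RR^{13})),w^\ast)$ and testing against functions $\psi(t,x)\chi_N(\rho,v,V)H$ with smooth truncations $\chi_N$ (then letting $N\to\infty$, using the uniform integrability to control the tails) yields $H(\trho_\delta,\tu_\delta,\nabla\tu_\delta)\weakarrow\overline{H(\trho,\tu,\nabla\tu)}$ in $L^r((0,T)\times D)$ $\tPP$-a.s., with $\overline{H(\trho,\tu,\nabla\tu)}(t,x)=\ev{\tnu_{t,x},H}$. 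This is \eqref{eq:7.22} and completes the proof of the proposition.

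\textbf{Main obstacle.} The two genuinely delicate points are, first, verifying that the ``graph'' relations used in~(1) — in particular $\tnu_\delta=\delta_{(\trho_\delta,\tu_\delta,\nabla\tu_\delta)}$ and $\tE_\delta=E_\delta(\trho_\delta,\tu_\delta)$ — are carried by Borel subsets of the non-metrizable space $\XX$, so that equality of laws actually transfers them (note that the analogous relation $\tq=\trho\tu$ for the \emph{limit} variables is \emph{not} part of this proposition and must be re-derived later, as in Lemma~\ref{lem:ep:conv rho u otimes u}, because it is not preserved by the weak limits alone); and second, upgrading the abstract weak-$\ast$ convergence of Young measures to genuine weak-$L^r$ convergence of the nonlinear compositions in~(4), which is exactly where the uniform $L^{\gamma+\beta}$-bound on the density — itself a consequence of the Bogovski\u\i{} pressure estimate \eqref{eq:7.14} — is indispensable.
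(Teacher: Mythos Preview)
Your approach is correct and is exactly what the paper does: it simply records that the proposition follows by applying Jakubowski's theorem, Theorem~\ref{thm:Jakubowski--Skorokhod}, together with \cite[Corollary~2.8.3]{BFHbook18}, to the tight family from Proposition~\ref{prop:delta:tightness}, mirroring the analogous Proposition~\ref{prop:ep:repr}. Your write-up supplies considerably more detail on the Borel-measurability of the graph relations and the truncation argument for part~(4) than the paper, but the underlying strategy is identical.
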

\begin{remark}
    As stated in Remark \ref{rem:r.d.}, we may deduce that the filtration 
    $$
        \tFFF_t:= \sigma \left(\sigma_t[\trho] \cup \sigma_t[\tu]\cup \sigma_t[\tW]\right),\quad t\in [0,T],
    $$
    is non-anticipating with respect to $\tW= \sum_{k=1}^{\infty} e_k \tW_k$, which is a cylindrical $(\tFFF_t)$-Wiener process on $\fU$.    
\end{remark}
\begin{remark}
    In view of the convergence of the initial conditions \eqref{eq:7.19} and \eqref{eq:7.21}, we obtain that 
    $$
        \Lambda = \LL[\trho_0,\trho_0 \tu_0] \quad \text{on } L^1(D)\times L^1(D),
    $$
    where $\tu_0$ is defined as $\tu_0 = \tq_0/ \trho_0 1_{\left\{ \trho_0>0 \right\}}$.
\end{remark}
As a direct consequence of Proposition \ref{prop:delta:repr}, similarly to the proofs of Lemmas \ref{lem:alpha(q = rho u)} and \ref{lem:alpha(conv rho u otimes u)}, we obtain the following result. 
\begin{lemma}
    \label{lem:delta:conv rho u otimes u}
    We have $\tPP$-a.s., $\trho \tu = \tq$, and 
    \begin{align}
        \label{eq:7.23}
        \trho_\delta \tu_\delta\otimes \tu_\delta\weakarrow \trho \tu\otimes \tu\quad \text{in }L^1(0,T;L^1(D)).
    \end{align}
\end{lemma}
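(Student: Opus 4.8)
The plan is to follow the template already established for the analogous statements in the earlier approximation layers, namely Lemma~\ref{lem:alpha(q = rho u)} and Lemma~\ref{lem:alpha(conv rho u otimes u)} (and their $m$- and $\varepsilon$-level counterparts, Lemma~\ref{lem:m:q = rho u} and Lemma~\ref{lem:ep:conv rho u otimes u}). First I would establish the identity $\tq = \trho\tu$ $\tPP$-a.s. From Proposition~\ref{prop:delta:repr}(1) the joint laws of $(\rho_\delta, u_\delta, \rho_\delta u_\delta)$ and $(\trho_\delta, \tu_\delta, \tq_\delta)$ coincide, so $\tq_\delta = \trho_\delta \tu_\delta$ $\tPP$-a.s. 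Then, using the strong convergence $\trho_\delta \to \trho$ in $C_w([0,T];L^\gamma(D))$ together with the weak convergence $\tu_\delta \weakarrow \tu$ in $L^2(0,T;W^{1,2}_{\rn}(D))$ from \eqref{eq:7.21}, I pass to the limit: $\trho_\delta \tu_\delta \weakarrow \trho \tu$ in, say, $L^1(0,T;L^1(D))$, while on the other hand $\tq_\delta \to \tq$ in $C_w([0,T];L^{2\gamma/(\gamma+1)}(D))$; uniqueness of the distributional limit then gives $\tq = \trho\tu$.

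Next I would prove the convergence \eqref{eq:7.23}. As in the proof of Lemma~\ref{lem:alpha(conv rho u otimes u)}, the key reduction is to show the strong convergence
\begin{align*}
    \sqrt{\trho_\delta}\tu_\delta \to \sqrt{\trho}\tu \quad \text{in } L^2(0,T;L^2(D))\quad \tPP\text{-a.s.,}
\end{align*}
after which the product $\trho_\delta \tu_\delta \otimes \tu_\delta = (\sqrt{\trho_\delta}\tu_\delta)\otimes(\sqrt{\trho_\delta}\tu_\delta)$ converges in $L^1(0,T;L^1(D))$ by the same triangle-inequality and Cauchy--Schwarz argument used there, using the uniform bound $\sup_\delta \|\sqrt{\trho_\delta}\tu_\delta\|_{L^2_tL^2_x} < \infty$ that follows from \eqref{eq:7.9}. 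To get the strong convergence of $\sqrt{\trho_\delta}\tu_\delta$, I would verify the two ingredients: (i) norm convergence $\|\sqrt{\trho_\delta}\tu_\delta\|_{L^2_tL^2_x} \to \|\sqrt{\trho}\tu\|_{L^2_tL^2_x}$, and (ii) weak convergence $\sqrt{\trho_\delta}\tu_\delta \weakarrow \sqrt{\trho}\tu$ in $L^2(0,T;L^2(D))$. For (i), I would use $\|\sqrt{\trho_\delta}\tu_\delta\|_{L^2_tL^2_x}^2 = \intT\int_D \trho_\delta \tu_\delta \cdot \tu_\delta \, dx\, dt$ and combine $\tq = \trho\tu$ with a compact-embedding argument as in Lemma~\ref{lem:m:conv rho u otimes u}: since $\trho_\delta \tu_\delta$ is bounded in $C_w([0,T];L^{2\gamma/(\gamma+1)}(D))$ and $L^{2\gamma/(\gamma+1)}(D) \cptarrow W^{-1,2}(D)$ (valid because $\gamma>3/2$, so $2\gamma/(\gamma+1) > 6/5$ and hence the Sobolev embedding $W^{1,2}(D)\hookrightarrow L^{(2\gamma/(\gamma+1))'}(D)$ gives the required compactness), we get $\trho_\delta \tu_\delta \to \trho\tu$ in $L^2(0,T;W^{-1,2}(D))$ $\tPP$-a.s.; pairing this with $\tu_\delta \weakarrow \tu$ in $L^2(0,T;W^{1,2}(D))$ yields the convergence of the integral. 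For (ii), I would argue exactly as in Lemma~\ref{lem:alpha(conv rho u otimes u)}: test against $\phi\in C_c^\infty((0,T)\times D)$, split $\sqrt{\trho_\delta}\tu_\delta - \sqrt{\trho}\tu = (\sqrt{\trho_\delta}-\sqrt{\trho})\tu_\delta + \sqrt{\trho}(\tu_\delta - \tu)$, control the first term by $\|\phi\|_{L^\infty}\|\trho_\delta - \trho\|_{L^1}^{1/2}\sup_\delta\|\tu_\delta\|_{L^2}$ (using strong $L^\gamma$, hence $L^1$, convergence of $\trho_\delta$, which follows from $\trho_\delta \to \trho$ in $C_w([0,T];L^\gamma)$ — wait, here I need to be careful: at the $\delta$-level we only have $C_w$ convergence of the density, not strong convergence, so this step requires the strong $L^1_{t,x}$ convergence of $\trho_\delta$ which does hold at the $\varepsilon$-level but has yet to be re-established at the $\delta$-level).

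This last point is the main obstacle, and it is worth isolating. In Lemma~\ref{lem:alpha(conv rho u otimes u)} one had strong convergence of $\trho_\alpha$ in $L^p(0,T;W^{1,p}(D))$, and in Lemma~\ref{lem:m:conv rho u otimes u} one had it in $L^p(0,T;W^{1,p}(D))$ as well, so the term $(\sqrt{\trho_\delta}-\sqrt{\trho})\tu_\delta$ was easy. At the $\delta$-level, before the effective-viscous-flux analysis of the next subsection is carried out, only $C_w([0,T];L^\gamma(D))$ convergence of $\trho_\delta$ is available. However, $C_w$ convergence in $L^\gamma(D)$ together with the compact embedding $L^\gamma(D) \cptarrow W^{-1,2}(D)$ (again using $\gamma>3/2$) gives $\trho_\delta \to \trho$ strongly in $C([0,T];W^{-1,2}(D))$, and interpolating this with the uniform $L^\infty_t L^\gamma_x$ bound yields $\trho_\delta \to \trho$ strongly in $L^q(0,T;L^p(D))$ for suitable $q<\infty$ and $1\le p<\gamma$ — in particular in $L^1(0,T;L^1(D))$, which suffices to control $(\sqrt{\trho_\delta}-\sqrt{\trho})\tu_\delta$ after another Cauchy--Schwarz step. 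So the obstacle is resolved by the same soft-compactness mechanism already used in Lemma~\ref{lem:m:conv rho u otimes u}, and the proof closes; I would write it up in two short paragraphs mirroring the proof of Lemma~\ref{lem:alpha(conv rho u otimes u)}.
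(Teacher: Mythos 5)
Your identification of $\tq=\trho\tu$ is sound, and your instinct that the argument for the convergence \eqref{eq:7.23} cannot simply be copied from Lemma~\ref{lem:alpha(conv rho u otimes u)} is exactly right. But the fix you propose does not work, and the overall strategy overshoots the claim.

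The flaw is the interpolation step. You claim that $\trho_\delta\to\trho$ strongly in $C([0,T];W^{-1,2}(D))$ together with boundedness in $L^\infty_t L^\gamma_x$ can be interpolated to give $\trho_\delta\to\trho$ strongly in some $L^q_t L^p_x$ with $p\geq 1$. This is false: take $\rho_n(x)=1+\sin(nx)$, which converges to $1$ strongly in $W^{-1,2}$ (the $W^{-1,2}$ norm of $\sin(nx)$ scales like $1/n$) and is bounded in every $L^\gamma$, yet does not converge strongly in $L^1$ — or in any Lebesgue space. Interpolating a negative-order Sobolev space with a Lebesgue space produces Besov/Sobolev spaces of intermediate (still negative, or at best zero) smoothness, not a Lebesgue space, and there is no Gagliardo--Nirenberg inequality of the form $\|\cdot\|_{L^p}\lesssim\|\cdot\|_{W^{-1,2}}^\theta\|\cdot\|_{L^\gamma}^{1-\theta}$. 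This is not a technicality that can be repaired: at the $\delta$-level, before the effective viscous flux analysis of Section~\ref{sec:delta:Strong convergence of the densities}, strong convergence of $\trho_\delta$ in any $L^p_{t,x}$ is genuinely unavailable. Indeed, that is why the lemma's conclusion is a weak arrow, whereas its $\alpha$- and $m$-level analogues (Lemmas~\ref{lem:alpha(conv rho u otimes u)} and~\ref{lem:m:conv rho u otimes u}) claim strong convergence: at those levels the density does converge strongly in $L^p_t W^{1,p}_x$ via parabolic maximal regularity, but this ingredient is lost once $\varepsilon\to0$.

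This also shows why the overall architecture of your argument is off target. You aim to prove strong $L^2_{t,x}$ convergence of $\sqrt{\trho_\delta}\tu_\delta$, which would give strong $L^1_{t,x}$ convergence of $\trho_\delta\tu_\delta\otimes\tu_\delta$ — a stronger conclusion than the lemma claims, and one that is not true at this stage (a strong limit would entail $\sqrt{\trho_\delta}\weakarrow\sqrt{\trho}$, which generally fails for only weakly convergent densities). The correct argument is a compensated-compactness pairing, and it delivers exactly the weak convergence stated. Concretely: from $\trho_\delta\tu_\delta\to\trho\tu$ in $C_w([0,T];L^{2\gamma/(\gamma+1)}(D))$ (established as in Lemma~\ref{lem:m:q = rho u}) and the compact embedding $L^{2\gamma/(\gamma+1)}(D)\cptarrow W^{-1,2}(D)$, we get $\trho_\delta\tu_\delta\to\trho\tu$ strongly in $L^2(0,T;W^{-1,2}(D))$; paired against $\tu_\delta\weakarrow\tu$ weakly in $L^2(0,T;W^{1,2}(D))$, and using that multiplication by a fixed $\phi\in C_c^\infty((0,T)\times D)$ is weakly continuous on $W^{1,2}$, each $\int_0^T\int_D\trho_\delta\tu_\delta\otimes\tu_\delta:\phi\,dx\,dt$ converges to $\int_0^T\int_D\trho\tu\otimes\tu:\phi\,dx\,dt$. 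Combined with equi-integrability of the tensor $\trho_\delta\tu_\delta\otimes\tu_\delta$ — which follows, as in Lemma~\ref{lem:m:unif_est_others}, from the uniform bounds \eqref{eq:7.8}--\eqref{eq:7.10} and $\gamma>3/2$ placing the tensor in $L^s((0,T)\times D)$ for some $s>1$ — Dunford--Pettis gives the desired weak $L^1$ convergence, with the limit identified by the distributional step above.
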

Similarly to Proposition \ref{prop:eq_of_conti1}, $(\trho,\tu)$ satisfies the equation of continuity \eqref{eq:7.2} on the new probability space.
\begin{proposition}
    The random distribution $(\trho,\tu)$ satisfies \eqref{eq:7.2} for all $\phi\in C_c^\infty(\ico{0}{T})$ and $\psi\in C_c^\infty(\RR^3)$ $\tPP$-a.s.
\end{proposition}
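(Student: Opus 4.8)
The plan is to pass to the limit $\delta\to 0$ in the approximate equation of continuity \eqref{eq:approx_CE_ep} satisfied by $(\trho_\delta,\tu_\delta)$ on the new probability space, which holds by the equality of laws in Proposition \ref{prop:delta:repr} together with Lemma \ref{lem:prog_m'ble_r.d.} (to select a progressively measurable representative of $\tu$). This is by now a routine argument, essentially identical to the proof of Proposition \ref{prop:eq_of_conti1} in the vanishing time step layer and to Proposition \ref{prop:ep:CE} in the previous section; the point is that the equation of continuity is linear and unaffected by the slip boundary condition, so only elementary convergence facts are needed.

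First I would invoke the equality of laws from Proposition \ref{prop:delta:repr}, part (1), to conclude that $(\trho_\delta,\tu_\delta,\tW_\delta)$ satisfies the weak form
\begin{align*}
    -\intT \pp_t\phi \int_D \trho_\delta \psi dxdt = \phi(0)\int_D \trho_{0,\delta}\psi dx + \intT \phi \int_D \trho_\delta \tu_\delta\cdot \nabla\psi dxdt
\end{align*}
for all $\phi\in C_c^\infty(\ico{0}{T})$ and all $\psi\in C_c^\infty(\RR^3)$ $\tPP$-a.s.; indeed, the map sending a point of $\XX$ to the difference of the two sides (for fixed $\phi,\psi$) is Borel measurable, and $C_c^\infty(\ico{0}{T}\times\RR^3)$ is separable, so the identity transfers. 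Then I would pass to the limit term by term. The initial term converges because $\trho_{0,\delta}\to \trho_0$ in $L^\gamma(D)$ (hence in $L^1(D)$) $\tPP$-a.s. by \eqref{eq:7.21}. The time-derivative term converges because $\trho_\delta\to \trho$ in $C_w([0,T];L^\gamma(D))$ $\tPP$-a.s., so $\int_D\trho_\delta\psi dx\to\int_D\trho\psi dx$ uniformly in $t$ and the $t$-integral against $\pp_t\phi$ passes. For the flux term I would use $\trho_\delta\to\trho$ in $C_w([0,T];L^\gamma(D))$ together with the weak convergence $\tu_\delta\weakarrow\tu$ in $L^2(0,T;W^{1,2}_\rn(D))$ (via $W^{1,2}\hookrightarrow L^6$ and $\gamma>3/2$ so that $L^\gamma\cdot L^6\subset L^1$ by Hölder) to obtain $\trho_\delta\tu_\delta\weakarrow\trho\tu$ in $L^1((0,T)\times D)$, which suffices to pass the limit against the fixed smooth test function $\phi\nabla\psi$; alternatively one may simply note $\trho_\delta\tu_\delta\to\tq=\trho\tu$ in $C_w([0,T];L^{2\gamma/(\gamma+1)}(D))$ from \eqref{eq:7.21} and Lemma \ref{lem:delta:conv rho u otimes u}.

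There is no serious obstacle here — the content of this section lies in the later propositions (strong convergence of densities via the effective viscous flux, and passage to the limit in the momentum and energy inequality), not in the continuity equation. The only mild point of care is that, unlike in the artificial viscosity layer, $(\trho,\tu)$ need not satisfy a pointwise (a.e.) equation: the limit is genuinely only a distributional solution, since no parabolic smoothing remains after $\varepsilon\to 0$ was already performed and $\trho$ has merely $L^\infty_tL^\gamma_x$ regularity. So the statement is formulated, and should be proved, purely in the weak form \eqref{eq:7.2} for test functions $\phi\in C_c^\infty(\ico{0}{T})$, $\psi\in C_c^\infty(\RR^3)$, with the identification $\trho(0)=\trho_0$ coming from the weak time-continuity of $\trho$ and the convergence of initial data. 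I would close by remarking that the renormalized form (Definition \ref{Def:1.1}(10)) will follow separately from the DiPerna--Lions theory once strong convergence of the densities is established in the next subsection, exactly as indicated in Remark \ref{rem:ep:renormalized CE}.
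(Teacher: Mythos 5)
Your proposal is correct and follows the same route the paper indicates (the paper merely writes ``Similarly to Proposition \ref{prop:eq_of_conti1}''): transfer the weak form of the continuity equation to the Skorokhod space via equality of laws and separability of the test-function space, then pass to the limit termwise using the convergences in Proposition \ref{prop:delta:repr} and the identification $\tq=\trho\tu$ from Lemma \ref{lem:delta:conv rho u otimes u}. One small caveat: in your first option for the flux term, the claimed weak $L^1$ convergence of the product $\trho_\delta\tu_\delta$ does not follow by H\"older alone from $\trho_\delta\to\trho$ in $C_w([0,T];L^\gamma(D))$ together with $\tu_\delta\weakarrow\tu$ in $L^2_tW^{1,2}_x$ (this is weak-times-weak); one must first upgrade the density convergence to strong convergence in, say, $L^2(0,T;W^{-1,2}(D))$ via the compact embedding $L^\gamma(D)\cptarrow W^{-1,2}(D)$ with $\gamma>3/2$, which is precisely the mechanism behind Lemma \ref{lem:delta:conv rho u otimes u} — your second option invoking that lemma directly is the clean route and matches what the paper does.
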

Next, we perform the limit $\delta\to 0$ in the interior momentum equation \eqref{eq:approx_ME_ep}. Similarly to Section \ref{sec:ep:Asymptotic limit}, at this stage of the proof, we are not able to identify the limit in the pressure or in the stochastic integral.
We apply Proposition \ref{prop:delta:repr} (4), to the compositions $p(\trho_\delta)$ and $\trho_\delta F_k(\trho_\delta, \tu_\delta), k\in\NN$. Specifically, we have 
\begin{align}
    &p(\trho_\delta) \weakarrow \overline{p(\trho)}\quad \text{in } L^q((0,T)\times D), \label{eq:7.24}\\ 
    &\trho_\delta F_k(\trho_\delta,\tu_\delta) \weakarrow \overline{\trho F_k(\trho,\tu)} \quad \text{in } L^q((0,T)\times D), \label{eq:7.25}
\end{align}
for some $q>1$ $\tPP$-a.s. Let us now define a Hilbert--Schmidt operator $\overline{\GG(\trho,\trho \tu)}= \overline{\trho \FF(\trho,\tu)}$ by 
$$
    \overline{\trho \FF(\trho,\tu)}e_k := \overline{\trho F_k(\trho,\tu)},\quad k\in \NN.
$$
Exactly as in the proof of \cite[Proposition 4.5.8]{BFHbook18}, we obtain the following result.
\begin{proposition}
    \label{prop:delta:IME}
    The random distribution $(\trho,\tu,\tW)$ satisfies 
    \begin{align}
        &-\intT \pp_t \phi \int_D \trho \tu \cdot \bphi dxdt - \phi(0) \int_D \trho_0 \tu_0 \cdot \bphi dx \notag\\ 
        &\quad = \intT \phi \int_D [\trho \tu\otimes \tu:\nabla \bphi + \overline{p(\trho)}\rdiv \bphi]dxdt \notag \\ 
        &\quad \quad - \intT \phi\int_D \SSS(\nabla \tu):\nabla \bphi dx dt + \intT \phi \int_D \overline{\trho \FF(\trho,\tu)} \cdot \bphi dx d\tW,\label{eq:7.26}
    \end{align}
    for all $\phi\in C_c^\infty(\ico{0}{T})$ and all $\bphi\in C_c^\infty(D)$ $\tPP$-a.s.
\end{proposition}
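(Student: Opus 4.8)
The plan is to mirror the limit passage carried out for $\varepsilon\to 0$ in Proposition~\ref{prop:ep:IME}. First I would transfer the equation to the new probability space: by the equality of laws in Proposition~\ref{prop:delta:repr}(1), the triple $(\trho_\delta,\tu_\delta,\tW_\delta)$ satisfies the weak momentum equation \eqref{eq:approx_ME_ep} (with $p_\delta$ and with the coefficients $\GG$, $G_k$) for all $\phi\in C_c^\infty(\ico{0}{T})$ and all $\bphi\in C_c^\infty(D)$ $\tPP$-a.s.; as in the proof of Proposition~\ref{prop:ME1}, this requires the usual finite-dimensional and time-discrete approximation of the stochastic integral to see that the weak formulation is preserved under the change of probability space. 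Then I would pass to the limit termwise. The time-derivative and initial-datum terms converge using $\trho_\delta\tu_\delta\to\trho\tu$ in $C_w([0,T];L^{2\gamma/(\gamma+1)}(D))$ from \eqref{eq:7.21} together with $\trho_{0,\delta}\tu_{0,\delta}\to\trho_0\tu_0$; the convective term uses Lemma~\ref{lem:delta:conv rho u otimes u}; the viscous term uses $\tu_\delta\weakarrow\tu$ in $L^2(0,T;W^{1,2}_\rn(D))$.

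For the pressure I would split $p_\delta(\trho_\delta)=p(\trho_\delta)+\delta(\trho_\delta+\trho_\delta^\Gamma)$, use \eqref{eq:7.24} for the principal part, and observe that the uniform bound \eqref{eq:7.14} forces $\norm{\trho_\delta}_{L^{\Gamma+\beta}((0,T)\times D)}\lesssim \delta^{-1/(\Gamma+\beta)}$ $\tPP$-a.s., hence $\delta(\trho_\delta+\trho_\delta^\Gamma)\to 0$ in $L^{(\Gamma+\beta)/\Gamma}((0,T)\times D)$ along a subsequence — this is exactly the estimate already used for the process $Z^\delta$ in Proposition~\ref{prop:delta:tightness}. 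At this stage the pressure is only known to converge to the unidentified object $\overline{p(\trho)}$, whose identification with $p(\trho)$ is deferred to the strong-convergence argument for the densities in the next subsection. Recalling Remark~\ref{rem:ep:conv of each term of ME}, the convergence of each individual term in fact holds for any $\bphi\in C^\infty(\oD)$, while equality holds only for $\bphi\in C_c^\infty(D)$.

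For the stochastic integral the plan is again to reduce to Lemma~\ref{lem:conv_of_stoch_int}, i.e.\ to prove that $\trho_\delta\FF(\trho_\delta,\tu_\delta)\to\overline{\trho\FF(\trho,\tu)}$ strongly in $L^2(0,T;L_2(\fU,(W^{l,2}(D))^\ast))$ for $l>\tfrac32$, $\tPP$-a.s. For each fixed $k$ I would first replace $\tu_\delta$ by $\tu$ inside $F_k(\trho_\delta,\cdot)$ at the cost of $\norm{\sqrt{\trho_\delta}\,|\tu_\delta-\tu|}_{L^2_x}$, which tends to zero in $L^1(0,T)$ by Lemma~\ref{lem:delta:conv rho u otimes u} as in \eqref{eq:6.25}, and then use that $\trho_\delta$ is a renormalized solution of the continuity equation (Remark~\ref{rem:ep:renormalized CE}, valid for each $\delta>0$): for globally Lipschitz $b$ the compositions $b(\trho_\delta)$ are bounded uniformly in $\delta$ in $L^\infty(0,T;L^\gamma(D))\cap C([0,T];W^{-1,s}(D))$, whence $b(\trho_\delta)\to\overline{b(\trho)}$ in $C_w([0,T];L^\gamma(D))\hookrightarrow L^2(0,T;(W^{1,2}(D))^\ast)$ by \cite[Lemma 6.2]{NS04} and \eqref{eq:7.22}; combining with $\tu\in L^2(0,T;W^{1,2}(D))$ and a density argument writing $\rho F_k(\rho,u)$ as a locally uniform limit of finite sums $\sum_j b_j(\rho)B_j(u)$ yields the $\tPP$-a.s.\ convergence for each $k$. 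The uniform bound $\tEE\big[\intT\norm{\trho_\delta\FF(\trho_\delta,\tu_\delta)}^2_{L_2(\fU,(W^{l,2}_x)^\ast)}\,dt\big]\lesssim c(r)$ — obtained from \eqref{eq:7.8}--\eqref{eq:7.9}, $\sum f_k^2<\infty$, and the estimate $\norm{\trho_\delta^{-1/2}G_k(\trho_\delta,\trho_\delta\tu_\delta)}_{L^1_x}^2\lesssim f_k^2\int_D(\trho_\delta+\trho_\delta|\tu_\delta|^2)\,dx$ — upgrades the per-$k$ convergence to convergence of the full Hilbert--Schmidt-valued process, so that Lemma~\ref{lem:conv_of_stoch_int} together with $\tW_\delta\to\tW$ gives \eqref{eq:7.26} with the limit $\overline{\trho\FF(\trho,\tu)}$.

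I expect the main obstacle to be precisely this stochastic-integral step: establishing \emph{strong} convergence of $\trho_\delta\FF(\trho_\delta,\tu_\delta)$ in the negative-order Sobolev space uniformly in $\delta$, since here the density enjoys only the weak integrability $\gamma>\tfrac32$, so one must carry the $\sqrt{\trho_\delta}$ weights throughout and rely on the renormalized continuity equation and the $C_w([0,T];L^\gamma(D))$-compactness rather than on naive Hölder estimates. Keeping track of the exponents is the delicate point — in particular ensuring $L^\gamma(D)\hookrightarrow W^{-1,2}(D)$ (which needs $\gamma\geq\tfrac65$, satisfied since $\gamma>\tfrac32$) and that the approximation $\rho F_k(\rho,u)\approx\sum_j b_j(\rho)B_j(u)$ converges locally uniformly on the relevant range of $(\rho,u)$.
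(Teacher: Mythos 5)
Your strategy is the same as the paper's: transfer the approximate momentum equation to the new probability space by equality of laws, pass to the limit in the deterministic terms using (\ref{eq:7.21}), (\ref{eq:7.24}), and Lemma~\ref{lem:delta:conv rho u otimes u}, and reduce the stochastic term to Lemma~\ref{lem:conv_of_stoch_int} via the uniform Hilbert--Schmidt bound and a per-$k$ strong convergence in $L^2(0,T;(W^{l,2}(D))^\ast)$ obtained through the renormalized continuity equation and a density argument. Your explicit treatment of the $\delta(\rho+\rho^\Gamma)$ pressure tail, which the paper leaves to a citation, is also correct.

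However, there is a gap in the compactness step for $b(\trho_\delta)$. You claim that for \emph{globally Lipschitz} $b$ one has $b(\trho_\delta)$ bounded in $L^\infty(0,T;L^\gamma(D))\cap C([0,T];W^{-1,s}(D))$, and you seem to think the only exponent constraint is $L^\gamma(D)\hookrightarrow W^{-1,2}(D)$. The real constraint is elsewhere: to get the $C([0,T];W^{-1,s})$ equicontinuity you need $\pp_t b(\trho_\delta)\in L^1(0,T;W^{-1,s}(D))$ uniformly in $\delta$, and the term $(b'(\trho_\delta)\trho_\delta-b(\trho_\delta))\rdiv\tu_\delta$ in the renormalized equation controls this. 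For a globally Lipschitz (but not eventually constant) $b$, the factor $b'(\rho)\rho - b(\rho)$ generically grows at least linearly in $\rho$, so one needs $\trho_\delta\rdiv\tu_\delta\in L^1_{t,x}$ uniformly in $\delta$. With $\rdiv\tu_\delta$ only in $L^2_{t,x}$ this requires $\trho_\delta\in L^2_{t,x}$ uniformly in $\delta$, which is \emph{not} available in this layer: the estimates give $\trho_\delta\in L^\infty_t L^\gamma_x\cap L^{\gamma+\beta}_{t,x}$ with $\gamma>\tfrac32$ and $\beta<\tfrac13$, so $\gamma+\beta$ can be strictly below $2$. This is exactly why the paper's analogue of (\ref{eq:6.28}), namely (\ref{eq:7.28}), is stated only for $b\in C^1([0,\infty))$ with $b'(\rho)=0$ for $\rho\gg 1$: for such $b$ both $b$ and $b'(\rho)\rho-b(\rho)$ are bounded, and the equicontinuity term is controlled by $\norm{\rdiv\tu_\delta}_{L^1_{t,x}}$ alone. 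You should first establish the $C_w([0,T];L^2(D))$ convergence for this restricted class of $b$ (which is easy since $b(\trho_\delta)$ is then bounded in $L^\infty_{t,x}$), and only afterward extend to the growing compositions $\rho F_k(\rho,u)$ via the interpolation-plus-density step, as the paper does following the pattern of \eqref{eq:6.26} and \eqref{eq:6.29}; starting directly with globally Lipschitz $b$ does not go through for $\gamma$ near $\tfrac32$.
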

\begin{proof}
    Similarly to Proposition \ref{prop:ep:IME}, $(\trho_\delta,\tu_\delta,\tW_\delta)$ solves the approximate momentum equation \ref{eq:approx_ME_ep}. Thus, it remains to show 
    \begin{align}
        \label{eq:7.27}
        \trho_\delta \FF(\trho_\delta,\tu_\delta) &\to \ov{\trho \FF(\trho,\tu)}\quad \text{in } L^2(0,T;L_2(\fU,(W^{l,2}(D))^\ast)) \quad \tPP\text{-a.s.,}
    \end{align}
    where $l>\tfrac32$. Since 
    \begin{align*}
        &\tEE\left[\intT \norm{\trho_\delta \FF(\trho_\delta,\tu_\delta)}_{L_2(\fU,(W_x^{l,2})^\ast)}^2\right]  \lesssim \tEE\left[\intT (\trho_\delta)_D \int_D \left(\trho_\delta + \trho_\delta |\tu_\delta|^2\right)dx dt\right] \lesssim c(r),
    \end{align*}    
    due to Proposition \ref{prop:delta:repr} and Lemma \ref{lem:delta:conv rho u otimes u}, the convergence \eqref{eq:7.27} follows from 
    \begin{align*}
        &\trho_\delta F_k(\trho_\delta,\tu) \to \ov{\trho F_k(\trho,\tu)}\quad \text{in }L^2(0,T;(W^{l,2}(D))^\ast)\quad \tPP\text{-a.s.,}
    \end{align*}
    for any $k\in\NN$, where $l>\tfrac32$. For details, we refer to the proof of \eqref{eq:6.25}. Similarly to the proof of \eqref{eq:6.28}, we also have 
    \begin{align}
        \label{eq:7.28}
        &b(\trho_\delta) \to \ov{b(\trho)}  \quad \text{in }C_w([0,T];L^2(D))\quad \text{as }\delta\to 0,
    \end{align}
    for any $b\in C^1(\ico{0}{\infty})$, $b'(\rho) = 0$ for $\rho \gg 1$. As in the proof of \eqref{eq:6.26}, by combining \eqref{eq:7.28} with the renormalized equation of continuity (see Remark \ref{rem:ep:renormalized CE}), one can see the desired convergence
    \begin{align*}
        &\trho_\delta F_k(\trho_\delta,\tu) \to \ov{\trho F_k(\trho,\tu)}\quad \text{in }L^2(0,T;(W^{l,2}(D))^\ast)\quad \tPP\text{-a.s.,}
    \end{align*}
    for any $k\in\NN$, where $l>\tfrac32$. Combining this with the convergence of $\tW_\delta$ from Proposition \ref{prop:delta:repr}, we may apply Lemma \ref{lem:conv_of_stoch_int} to pass to the limit in the stochastic integral and hence complete the proof.
\end{proof}
\begin{remark}
    \label{rem:delta:conv of each term of ME}
    From the same reason as Remark \ref{rem:alpha(conv of each term of ME)}, the convergence to each term in the momentum equation \eqref{eq:7.26} actually holds for any $\bphi\in C^\infty(\oD)$, but the equality holds only for $\bphi\in C_c^\infty(D)$.
\end{remark}
\subsection{Strong convergence of the densities} \label{sec:delta:Strong convergence of the densities}
As in Section \ref{sec:ep:Strong convergence of the densities}, our ultimate task is to show strong convergence of the densities $\trho_\delta$. We introduce the \textit{oscillation defect measure}:
$$
    \osc_\alpha\left[\trho_\delta \to \trho\right]\left((0,T)\times D\right) = \sup_{k\geq 1}\left(\limsup_{\delta\to 0}\tEE\left[\intT \int_D \left|T_k(\trho_\delta)-T_k(\trho) \right|^\alpha dxdt\right]\right),
$$
where $\alpha\geq 1$, and $T_k$ is a family of cut-off functions defined for $k\in \NN$ as 
\begin{align*}
    &T_k(r) =kT\left(\frac{r}{k}\right),\quad T\in C^\infty(\ico{0}{\infty}),\quad 
    \begin{cases}
    T(r) =r &\text{for }0\leq r\leq 1, \\
    T'(r)\leq 0 &\text{for }r\in (1,3), \\ 
    T(r) = 2  &\text{for }r\geq 3.
    \end{cases}
\end{align*}
In the following we will show 
$$
    \osc_{\gamma + 1}\left[\trho_\delta \to \trho\right]\left((0,T)\times D\right)<\infty.
$$
We proceed as follows.
\begin{itemize}
    \item[1.] We establish a variant of the effective viscous flux identity as in Section \ref{sec:ep:Strong convergence of the densities}.
    \item[2.] We show that $\osc_{\alpha}\left[\trho_\delta \to \trho\right]\left((0,T)\times D\right)<\infty$ for $\alpha = \gamma + 1$. In particular, this implies the limit $(\trho,\tu)$ satisfies the renormalized equation of continuity.
    \item[3.] As in Section \ref{sec:ep:Strong convergence of the densities}, we prove the strong convergence of $\trho_\delta$. 
\end{itemize}
First, extend $\trho_\delta$ to be zero outside $D$ and apply an extension theorem for Sobolev spaces to $\tu_\delta$ such that $\tu_\delta\in L^2(0,T;W^{1,2}(\RR^3))$. Then, we use the quantity 
$$
    \phi \cA(1_D b(\trho_\delta)) = \phi\invdiv{1_D b(\trho_\delta)}
$$
as a test function in \eqref{eq:approx_ME_ep}. More precisely, approximating $1_{[0,\tau]}$ by smooth test functions in renormalized equation of continuity (see Remark \ref{rem:ep:renormalized CE}), we have 
$$
    1_D b(\trho_\delta)(\tau) = 1_D b(\trho_0) -  \inttau \rdiv \left(1_D b(\trho_\delta)\tu_\delta\right)dt - \inttau 1_D (b'(\trho_\delta)\trho_\delta-b(\trho_\delta))\rdiv \tu_\delta dt \quad \text{in } W^{-1,p}(\RR^3),
$$
for all $\tau\in [0,T]$ $\tPP$-a.s., all $b\in C^1(\ico{0}{\infty})$, $b'(\rho) = 0 $ for $\rho \gg 1$, and for some $p>1$. Thus, we have 
\begin{align*}
    &\phi\invdiv{1_D b(\trho_\delta)}(\tau) \\ 
    &\quad = \phi \invdiv{1_D b(\trho_0)} -  \inttau \phi\RRR\left(1_D b(\trho_\delta)\tu_\delta\right)dt - \inttau \phi \invdiv{1_D (b'(\trho_\delta)\trho_\delta-b(\trho_\delta))\rdiv \tu_\delta} dt,
\end{align*}
a.e. in $\RR^3$, for all $\tau\in [0,T]$ $\tPP$-a.s. By the same argument as in Section \ref{sec:ep:Uniform estimates} and Section \ref{sec:ep:Strong convergence of the densities}, applying It\^{o} product rule yields 
\begin{align}
    \label{eq:7.29}
    \inttau \int_D \phi\left[p_\delta(\trho_\delta) b(\trho_\delta) - \SSS(\nabla \tu_\delta):\RRR(1_D b(\trho_\delta)) \right]dxdt = \sum_{j=1}^{5} I_{j,\delta},
\end{align}
where 
\begin{align*}
    &I_{1,\delta} = \left[\int_D \phi \trho_\delta \tu_\delta \cdot \invdiv{1_D b(\trho_\delta)}dx\right]_{t=0}^{t=\tau}, \\ 
    &I_{2,\delta} = \inttau \int_D \phi \trho_\delta \tu_\delta \cdot\left[\RRR (1_D b(\trho_\delta)\tu_\delta) + \invdiv{1_D (b'(\trho_\delta)\trho_\delta-b(\trho_\delta))\rdiv \tu_\delta}\right]dx dt, \\ 
    &I_{3,\delta} = - \inttau \int_D \phi \trho_\delta \tu_\delta \otimes \tu_\delta :\RRR(1_D b(\trho_\delta))dxdt, \\ 
    &I_{4,\delta} = - \inttau \int_D \left[\trho_\delta \tu_\delta \otimes \tu_\delta + p_\delta(\trho_\delta)\II - \SSS(\nabla \tu_\delta)\right]:\nabla \phi\otimes \invdiv{1_D b(\trho_\delta)}dxdt, \\ 
    &I_{5,\delta} = - \inttau \int_D \phi \GG(\trho_\delta,\trho_\delta \tu_\delta)\cdot\invdiv{1_D b(\trho_\delta)} dx d\tW_\delta.
\end{align*}
On the other hand, as $\delta\to 0$ in the renormalized equation of continuity (see Remark \ref{rem:ep:renormalized CE}), we have 
\begin{align}
    \label{eq:7.30}
    &\intT \pp_t \phi  \int_D \ov{b(\trho)}\psi dxdt = \phi(0) \int_D b(\trho_0)\psi dx + \intT \phi \int_D \ov{b(\trho)}\tu \cdot \nabla \psi dxdt -\intT \phi \int_D \ov{(b'(\trho)\trho -b(\trho))\rdiv \tu}\psi dxdt
\end{align}
for all $\phi\in C_c^\infty(\ico{0}{T})$, and all $\psi\in C_c^\infty(\RR^3)$. Here, note that we may use \eqref{eq:7.28} to get 
$$
    \ov{b(\trho)\tu} = \ov{b(\trho)}\tu.
$$
Therefore, as above, we have 
\begin{align*}
    &\phi\invdiv{1_D \ov{b(\trho)}}(\tau) \\ 
    &\quad = \phi \invdiv{1_D b(\trho_0)} - \inttau \phi \RRR(1_D \ov{b(\trho)}\tu) dt - \inttau \phi \invdiv{1_D \ov{b'(\trho)\trho-b(\trho)\rdiv \tu}}dt   \quad \text{a.e. in }\RR^3
\end{align*}
for all $\tau\in [0,T]$ $\tPP$-a.s., and testing the equation \eqref{eq:7.26} by this quantity yields 
\begin{align}
    \label{eq:7.31}
    \inttau \int_D \phi\left[\ov{p(\trho)} \ \ov{b(\trho)} - \SSS(\nabla \tu):\Riesz{1_D \ov{b(\trho)}} \right]dxdt = \sum_{j=1}^{5} I_{j},
\end{align}
where 
\begin{align*}
    &I_{1} = \left[\int_D \phi \trho \tu \cdot \invdiv{1_D \ov{b(\trho)}}dx\right]_{t=0}^{t=\tau}, \\ 
    &I_{2} = \inttau \int_D \phi \trho \tu \cdot\left[\Riesz{1_D \ov{b(\trho)}\tu} + \invdiv{1_D \ov{(b'(\trho)\trho-b(\trho))\rdiv \tu}}\right]dx dt, \\ 
    &I_{3} = - \inttau \int_D \phi \trho \tu \otimes \tu :\Riesz{1_D \ov{b(\trho)}}dxdt, \\ 
    &I_{4} = - \inttau \int_D \left[\trho \tu \otimes \tu + \ov{p(\trho)}\II - \SSS(\nabla \tu)\right]:\nabla \phi\otimes \invdiv{1_D \ov{b(\trho)}}dxdt, \\ 
    &I_{5} = - \inttau \int_D \phi \ov{\GG(\trho,\trho \tu)}\cdot\invdiv{1_D \ov{b(\trho)}}dx d\tW.
\end{align*}
As $\delta\to 0$, by using the convergences \eqref{eq:7.21}, \eqref{eq:7.24}, \eqref{eq:7.27}, and \eqref{eq:7.28}, together with a variant of the Div-Curl lemma (see \cite[Lemma 3.5]{FN17}), 
we can verify that the right-hand side of \eqref{eq:7.29} converges to the right-hand side of \eqref{eq:7.31}. 
Indeed, as in the proof of \eqref{eq:6.33}, it follows from \eqref{eq:7.28}, the assumptions on $b$, and the equation for $\phi \nabla \Delta^{-1}(1_D b(\trho_\delta))$ that 
\begin{align*}
    &\phi \nabla \Delta^{-1}\left(1_D b(\trho_\delta)\right) \to \phi \nabla \Delta^{-1}\left(1_D \ov{b(\trho)}\right)\quad \text{in }C([0,T]\times \RR^3).
\end{align*}
Therefore, from Proposition \ref{prop:delta:repr}, and \eqref{eq:7.23}--\eqref{eq:7.28}, we have 
\begin{align*}
    &I_{1,\delta}\to I_1  
\end{align*}
and 
\begin{align*}
    &I_{j,\delta}\to I_j,\quad j=4, 5,  
\end{align*}
for all $\tau\in [0,T]$, and all $\phi\in C_c^\infty$ $\tPP$-a.s.
Moreover, by the embedding $W^{1,2}(D)\cptarrow L^p(D),p<6$, we have 
\begin{align*}
    &\inttau \int_D \phi \trho_\delta \tu_\delta \cdot  \invdiv{1_D (b'(\trho_\delta)\trho_\delta-b(\trho_\delta))\rdiv \tu_\delta} dx dt \\ 
    &\quad \to \inttau \int_D \phi \trho \tu \cdot \invdiv{1_D \ov{(b'(\trho)\trho-b(\trho))\rdiv \tu}}dx dt.
\end{align*}
Finally, we will show that 
\begin{align*}
    &\inttau \int_D \phi \tu_\delta \cdot \left[\trho_\delta \RRR (1_D b(\trho_\delta)\tu_\delta) - \trho_\delta \tu_\delta \RRR(1_D b(\trho_\delta))\right]dx dt \notag  \\ 
    &\quad \to \inttau \int_D \phi \tu \cdot \left[\trho \RRR (1_D \ov{b(\trho)}\tu) - \trho \tu \RRR(1_D \ov{b(\trho)})\right]dx dt,
\end{align*}
for all $\tau\in [0,T]$, and all $\phi\in C_c^\infty$ $\tPP$-a.s. Note that due to the symmetry of $\RRR$, 
\begin{align*}
    &\inttau \int_D \phi \tu_\delta \cdot \trho_\delta \RRR (1_D b(\trho_\delta)\tu_\delta) dxdt = \inttau \int_D \tu_\delta \cdot b(\trho_\delta)\Riesz{1_D \phi \trho_\delta \tu_\delta}dxdt.
\end{align*}
Thus, as in the proof of \eqref{eq:6.33}, using a variant of Div-Curl lemma with $1_D b(\trho_\delta)$ and $1_D\phi \trho_\delta \tu_\delta$ yields 
\begin{align*}
    &b(\trho_\delta)\Riesz{1_D \phi \trho_\delta \tu_\delta} - \phi \trho \tu \Riesz{1_D b(\trho_\delta)} \\
    &\quad  \to \ov{b(\trho)}\Riesz{1_D \phi \trho \tu} - \phi \trho \tu \Riesz{1_D \ov{b(\trho)}}\quad \text{strongly in } L^2(0,T;(W^{1,2}(D))^\ast). 
\end{align*}
This implies 
\begin{align*}
    &I_{2,\delta} + I_{3,\delta} \to I_2 + I_3,
\end{align*}
for all $\tau\in [0,T]$, and all $\phi\in C_c^\infty$ $\tPP$-a.s.
Therefore, we obtain the identity 
$$
    \inttau\int_D \phi\left[\ov{p(\trho)b(\trho)} - \ov{\SSS(\nabla \tu):\Riesz{1_D b(\trho)}}\right]dx dt = \inttau \int_D \phi \left[\ov{p(\trho)}\ \ov{b(\trho)} - \SSS(\nabla \tu):\Riesz{1_D \ov{b(\trho)}}\right]dx dt 
$$
for all $\tau\in [0,T]$, and all $\phi\in C_c^\infty(D)$ $\tPP$-a.s. Similarly to the proof of \eqref{eq:6.34}, this identity can be rewritten as 
$$
    \inttau\int_D \phi\left[\ov{p(\trho)b(\trho)} - \ov{b(\trho)\rdiv \tu}\right]dx dt = \inttau \int_D \phi \left[\ov{p(\trho)}\ \ov{b(\trho)} - \ov{b(\trho)}\rdiv \tu\right]dx dt, 
$$
and approximating $1_D$ by smooth functions $\phi\in C_c^\infty(D)$ yields 
\begin{align}
    \label{eq:7.32}
    \inttau\int_D \left[\ov{p(\trho)b(\trho)} - \ov{b(\trho)\rdiv \tu}\right]dx dt = \inttau \int_D \left[\ov{p(\trho)}\ \ov{b(\trho)} - \ov{b(\trho)}\rdiv \tu\right]dx dt,
\end{align}
for any $\tau\in [0,T]$ $\tPP$-a.s., which is the desired form of the effective viscous flux. Choosing $b=T_k$ in \eqref{eq:7.32} yields 
\begin{align}
    \label{eq:7.33}
    \inttau\int_D \left[\ov{p(\trho)T_k(\trho)} - \ov{p(\trho)}\ \ov{T_k(\trho)}\right]dx dt = \inttau \int_D \left[ \ov{T_k(\trho)\rdiv \tu} - \ov{T_k(\trho)}\rdiv \tu\right]dx dt.
\end{align}
Using \eqref{eq:7.21}, the integral on the right-hand side can be estimated as 
\begin{align}
    &\tEE\left[\inttau \int_D \left[\ov{T_k(\trho)\rdiv \tu} - \ov{T_k(\trho)}\rdiv \tu\right]dx dt\right] \notag\\ 
    &\quad = \lim_{\delta\to 0} \tEE\left[\inttau \int_D \left[T_k(\trho_\delta) - T_k(\trho)\right]\rdiv \tu_\delta dx dt \right] + \lim_{\delta\to 0} \tEE\left[\inttau \int_D \left[T_k(\trho) - T_k(\trho_\delta)\right]\rdiv \tu dx dt \right]\notag \\ 
    &\quad \lesssim \limsup_{\delta\to 0} \norm{T_k(\trho_\delta)-T_k(\trho)}_{L^2(\tOmega\times (0,\tau)\times D)} \label{eq:7.34}
\end{align}
uniformly in $k$. Since $p$ is convex, nonnegative and non-decreasing, we have 
$$
    \left(p(a)-p(b)\right)\left(T_k(a)- T_k(b)\right) \gtrsim \left|T_k(a) - T_k(b) \right|^{\gamma + 1},\quad \text{for any }a,b\geq 0.
$$
Therefore, we have 
\begin{align}
    &\inttau\int_D \left[\ov{p(\trho)T_k(\trho)} - \ov{p(\trho)}\ \ov{T_k(\trho)}\right]dx dt \notag \\ 
    &\quad = \lim_{\delta \to 0} \inttau \int_D \left[p(\trho_\delta)T_k(\trho_\delta) - \ov{p(\trho)}\ \ov{T_k(\trho)}\right]dx dt \notag \\ 
    &\quad = \lim_{\delta\to 0}\left[\inttau \int_D \left(p(\trho_\delta)-p(\trho)\right)\left(T_k(\trho_\delta)- T_k(\trho)\right)dxdt + \inttau \int_D \left(\ov{p(\trho)}-p(\trho)\right)\left(T_k(\trho_\delta)- T_k(\trho)\right)dxdt\right] \notag \\ 
    &\quad \geq  \lim_{\delta\to 0}\left[\inttau \int_D \left(p(\trho_\delta)-p(\trho)\right)\left(T_k(\trho_\delta)- T_k(\trho)\right)dxdt \right] \notag \\ 
    &\quad \gtrsim \limsup_{\delta\to 0}\inttau \int_D \left|T_k(\trho_\delta)- T_k(\trho) \right|^{\gamma + 1}dx dt. \label{eq:7.35}
\end{align}
Thus, combining the relations \eqref{eq:7.33}--\eqref{eq:7.35}, we obtain the desired conclusion:
\begin{align*}
    \limsup_{\delta\to 0}\tEE\left[\inttau \int_D \left|T_k(\trho_\delta)- T_k(\trho) \right|^{\gamma + 1}dx dt\right]\leq c
\end{align*}
uniformly in $k$ and $\tau$. This implies 
\begin{align}
    \osc_{\gamma + 1}\left[\trho_\delta \to \trho\right]\left((0,T)\times D\right)<\infty. \label{eq:7.36}
\end{align}
Hereafter, we will proceed exactly as in \cite[Section 4.5.4]{BFHbook18} to obtain the strong convergence 
\begin{align}
    \label{eq:7.37}
    \trho_\delta \to \trho\quad \text{in } L^1((0,T)\times D) \ \tPP\text{-a.s.}
\end{align}
Based on \eqref{eq:7.36}, we are going to pass to the limit $k\to \infty$ in the following equation:
\begin{align}
    \label{eq:7.38}
    \pp_t b(\ov{T_k(\trho)}) + \rdiv (b(\ov{T_k(\trho)})\tu) + (b'(\ov{T_k(\trho)})\ov{T_k(\trho)} - b(\ov{T_k(\trho)}))\rdiv \tu = - b'(\ov{T_k(\trho)})\ov{(T_k'(\trho)\trho - T_k(\trho))\rdiv \tu},
\end{align}
which holds in the sense of distributions due to \eqref{eq:7.30}. Since, for all $p\in (1,\gamma)$,
\begin{align*}
    \tEE\left[\norm{\ov{T_k(\trho)} - \trho}_{L^p_{t,x}}^p\right]  &\leq \liminf_{\delta\to 0}\tEE\left[\norm{T_k(\trho_\delta) - \trho_\delta}_{L^p_{t,x}}^p\right] \\ 
    &\leq 2^{p}\liminf_{\delta\to 0} \tEE\left[\int_{\left\{ |\trho_\delta|\geq k \right\}} |\trho_\delta|^p dx dt\right]\\ 
    &\leq 2^p k^{p-\gamma} \liminf_{\delta\to 0}\tEE \left[\intT \int_D |\trho_\delta|^\gamma dx dt\right] \to 0,\quad k\to \infty,
\end{align*}
we have 
\begin{align}
    \label{eq:7.39}
    \ov{T_k(\trho)} \to \trho\quad \text{in }L^p(\tOmega\times (0,T)\times D).
\end{align}
In order to pass to the limit in \eqref{eq:7.38}, we have to show 
\begin{align}
    \label{eq:7.40}
    b'(\ov{T_k(\trho)})\ov{(T_k'(\trho)\trho - T_k(\trho))\rdiv \tu}\to 0\quad \text{in } L^1(\tOmega\times (0,T)\times D).
\end{align}
Recall that $b$ satisfies $b'(z)=0$ for all $z\geq M$ for some $M=M(b)$. We define 
\begin{align*}
    &Q_{k,M} := \left\{ (\omega,t,x)\in \tOmega\times (0,T)\times D: \ov{T_k(\trho)} \leq M \right\}
\end{align*}
and obtain 
\begin{align*}
    &\tEE\left[\intT\int_D \left|b'(\ov{T_k(\trho)})\ov{(T_k'(\trho)\trho - T_k(\trho))\rdiv \tu} \right|dx dt\right]   \\ 
    &\quad \leq \sup_{z\leq M}|b'(z)| \tEE\left[\intT\int_D \chi_{Q_{k,M}}\left|\ov{(T_k'(\trho)\trho - T_k(\trho))\rdiv \tu} \right|dx dt\right]  \\ 
    &\quad \leq C \liminf_{\delta \to 0}  \tEE\left[\intT\int_D \chi_{Q_{k,M}}\left|(T_k'(\trho_\delta)\trho_\delta - T_k(\trho_\delta))\rdiv \tu_\delta \right|dx dt\right]  \\
    &\quad \leq \sup_{\delta}\norm{\rdiv \tu_\delta}_{L^2_{\omega,t,x}} \liminf_{\delta\to 0}\norm{T_k'(\trho_\delta)\trho_\delta - T_k(\trho_\delta)}_{L^2(Q_{k,M})}.
\end{align*}
It follows from interpolation that 
\begin{align}
    \label{eq:7.41}
    \norm{T_k'(\trho_\delta)\trho_\delta - T_k(\trho_\delta)}_{L^2(Q_{k,M})}^2 \leq \norm{T_k'(\trho_\delta)\trho_\delta - T_k(\trho_\delta)}_{L^1_{\omega,t,x}}^\alpha \norm{T_k'(\trho_\delta)\trho_\delta - T_k(\trho_\delta)}_{L^{\gamma + 1}(Q_{k,M})}^{(1-\alpha)(\gamma + 1)},
\end{align}
where $\alpha = \tfrac{\gamma - 1 }{\gamma}$. Moreover, similarly to the proof of \eqref{eq:7.39}, we have 
\begin{align}
    \label{eq:7.42}
    &\norm{T_k'(\trho_\delta)\trho_\delta - T_k(\trho_\delta)}_{L^1_{\omega,t,x}}\leq Ck^{1-\gamma} \sup_{\delta} \tEE\left[\intT \int_D |\trho_\delta|^\gamma dx dt\right] \to 0,\quad k\to \infty,
\end{align}
so it suffices to prove 
\begin{align}
    \label{eq:7.43}
    \sup_{\delta}\norm{T_k'(\trho_\delta)\trho_\delta - T_k(\trho_\delta)}_{L^{\gamma + 1}(Q_{k,M})}\leq C,
\end{align}
independently of $k$. As $T_k'(z)z \leq T_k(z)$, we have, by the definition of $Q_{k,M}$, 
\begin{align*}
    &\norm{T_k'(\trho_\delta)\trho_\delta - T_k(\trho_\delta)}_{L^{\gamma + 1}(Q_{k,M})} \\ 
    &\quad \leq 2(\norm{T_k(\trho_\delta)- T_k(\trho)}_{L^{\gamma +1}_{\omega,t,x}} + \norm{T_k(\trho)}_{L^{\gamma + 1}(Q_{k,M})}) \\ 
    &\quad \leq 2(\norm{T_k(\trho_\delta)- T_k(\trho)}_{L^{\gamma +1}_{\omega,t,x}} + \norm{T_k(\trho)- \ov{T_k(\trho)}}_{L^{\gamma + 1}_{\omega,t,x}} + \norm{\ov{T_k(\trho)}}_{L^{\gamma + 1}(Q_{k,M})}) \\ 
    &\quad \leq 2(\norm{T_k(\trho_\delta)- T_k(\trho)}_{L^{\gamma +1}_{\omega,t,x}} + \norm{T_k(\trho)- \ov{T_k(\trho)}}_{L^{\gamma + 1}_{\omega,t,x}}) + CM.
\end{align*}
Now \eqref{eq:7.36}, \eqref{eq:7.28} with $b=T_k$ and the weak lower semicontinuity of the $L^{\gamma + 1}$-norm imply \eqref{eq:7.43}.
On the other hand, \eqref{eq:7.41}--\eqref{eq:7.43} imply \eqref{eq:7.40}. So we pass to the limit in \eqref{eq:7.38} and obtain 
\begin{align}
    \label{eq:7.44}
    \pp_t b(\trho) + \rdiv (b(\trho)\tu) + (b'(\trho)\trho - b(\trho))\rdiv \tu = 0,
\end{align}
in the sense of distributions. Combining this with \eqref{eq:7.2},  we have 
\begin{align}
    \label{eq:7.45}
    &\int_D L_k(\trho) (\tau) dx + \inttau \int_D T_k(\trho)\rdiv \tu dx dt = \int_D  L_k(\trho_0)dx,
\end{align}
for any $\tau\in [0,T]$ $\tPP$-a.s., where 
\begin{align*}
    &L_k(\rho) = 
    \begin{cases}
    \rho \log(\rho) &\text{if }0\leq \rho<k \\
    \rho\log(k) + \rho \int_k^\rho \frac{T_k(z)}{z^2}dz &\text{if } \rho\geq k, 
    \end{cases}  
\end{align*}
and note that $L_k$ can be written as 
\begin{align*}
    &L_k(\rho) = \beta_k \rho + b_k(\rho) 
\end{align*}
where $b_k$ satisfies $|b_k'(\rho)|= 0$ for $\rho \gg 1$.
Similarly, it follows from \eqref{eq:7.30} that 
\begin{align}
    \label{eq:7.46}
    &\int_D \ov{L_k(\trho)} (\tau) dx + \inttau \int_D \ov{T_k(\trho)\rdiv \tu} dx dt = \int_D  L_k(\trho_0)dx,
\end{align}
for any $\tau\in [0,T]$ $\tPP$-a.s.
Moreover, as $p$ is non-decreasing, we deduce from \eqref{eq:7.32} 
\begin{align}
    \label{eq:7.47}
    \inttau \int_D \ov{T_k(\trho)\rdiv \tu}dx dt \geq \inttau \int_D \ov{T_k(\trho)}\rdiv \tu dx dt,
\end{align}
for any $\tau\in [0,T]$ $\tPP$-a.s.
Summing up \eqref{eq:7.45}--\eqref{eq:7.47} we obtain 
\begin{align}
    \label{eq:7.48}
    \int_D \left[\ov{ L_k(\trho)} - L_k(\trho)\right](\tau)dx \leq \inttau \int_D (\ov{T_k(\trho)} - T_k(\trho))\rdiv \tu dx dt,
\end{align}
for any $\tau\in [0,T]$ $\tPP$-a.s. 

Finally, by the convergence from Proposition \ref{prop:ep:repr}, \eqref{eq:7.39}, and an interpolation argument using \eqref{eq:7.36}, we observe that 
\begin{align*}
    & \inttau \int_D (\ov{T_k(\trho)} - T_k(\trho))\rdiv \tu dx dt\to 0\quad \text{as }k\to \infty,
\end{align*}
for any $\tau\in [0,T]$ $\tPP$-a.s. Since $L_k$ is convex, by Jensen's inequality, we have 
\begin{align*}
    &\ov{L_k(\trho)} \geq L_k(\trho).  
\end{align*}
Thus, letting $k\to \infty$ in \eqref{eq:7.48} and applying Fatou's lemma, we obtain 
\begin{align*}
    &\int_D \left[\ov{\trho \log(\trho)} - \trho \log(\trho)\right](\tau)dx \leq 0  
\end{align*}
for any $\tau\in [0,T]$ $\tPP$-a.s. As the function $\trho\mapsto \trho\log(\trho)$ is strictly convex, we again use Jensen's inequality to obtain 
\begin{align*}
    &\ov{\trho \log(\trho)} - \trho \log(\trho) \geq 0,
\end{align*}
which implies $\ov{\trho \log(\trho)} = \trho \log(\trho)$ and the marginal of the Young measure $\tnu$ with respect to the $\rho$-variable is the Dirac measure $\delta_{\trho}$. 
Consequently, we obtain the desired conclusion 
\begin{align}
    \label{eq:7.49}
    \trho_\delta \to \trho\quad \text{in } L^1((0,T)\times D) \ \tPP\text{-a.s.}
\end{align}

As in Section \ref{sec:ep:Strong convergence of the densities}, this implies 
\begin{align*}
    &\ov{p(\trho)} = p(\trho),\quad \ov{\GG(\trho,\trho \tu)} = \GG(\trho,\trho \tu),
\end{align*}
and we can pass to the limit in the momentum and energy inequality \eqref{eq:approx_MEI_ep}, to obtain \eqref{eq:7.4}. 
Therefore, we obtain a solution to \eqref{eq:1.1}--\eqref{eq:1.5} in the sense of Definition \ref{def:delta:sol}. 
The proof of Theorem \ref{thm:delta:sol} is hereby complete.

\appendix
\section{Appendix}
\subsection{Bogovskii operator}\label{sec:Bogovskii}
We refer to the properties of the Bogovskii operator $\Bog$, which is a sort of ``inverse divergence'' as proven in \cite[Theorem 11.17]{FN17}.
\begin{theorem}
    \label{thm:Bog}
    Let $D\subset \RR^N$ be a bounded Lipschitz domain. Then there exists a linear mapping $\Bog$
    $$
        \Bog:\left\{ f\in C_c^\infty(D): (f)_D :=  \frac{1}{|D|} \int_D f dx = 0\right\} \to C_c^\infty(D;\RR^N),
    $$
    such that
    \begin{itemize}
        \item[(i)] $\rdiv(\Bog[f]) = f$;
        \item[(ii)] we have 
        $$
            \norm{\Bog[f]}_{W^{k+1,p}(D;\RR^N)}\leq c \norm{f}_{W^{k,p}(D)}\quad \text{for any }1<p<\infty,\ k=0,1,\ldots,
        $$
        in particular, $\Bog$ can be extended uniquely as a bounded linear operator 
        $$
            \Bog:\left\{ f\in L^p(D): (f)_D = 0\right\} \to W_0^{1,p}(D;\RR^N);
        $$
        \item[(iii)] if $f\in L^p(D)$, $(f)_D = 0$, and, in addition, $f= \rdiv g$, where 
        $$
            g\in \left\{ u\in L^q(D;\RR^N):\rdiv u\in L^p(D) \right\},\quad 1<q<\infty,
        $$
        then 
        $$
            \norm{\Bog[f]}_{L^q(D;\RR^N)} \leq c\norm{g}_{L^q}(D;\RR^N);
        $$
        \item[(iv)] $\Bog$ can be uniquely extended as a bounded linear operator 
        $$
            \Bog:\left\{ f\in (W^{1,p'})^\ast: \ev{f,1} = 0 \right\} \to L^p(D;\RR^N)
        $$
        in such a way that 
        \begin{align*}
            &-\int_D \Bog[f]\cdot \nabla v dx = \ev{f,v}\quad \text{for all } v\in W^{1,p'}(D),
        \end{align*}
        where $p'$ is the H\"older conjugate exponent of $p$.
    \end{itemize}
\end{theorem}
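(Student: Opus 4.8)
The plan is to reproduce Bogovski\u{\i}'s construction (following \cite[Theorem 11.17]{FN17}, \cite{Bog80}): first solve $\rdiv u = f$ with $u|_{\pD}=0$ explicitly on domains star-shaped with respect to a ball, then globalize to an arbitrary bounded Lipschitz domain by a partition of unity, and finally extend by duality to negative Sobolev spaces.

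First I would treat the case that $D$ is star-shaped with respect to a ball $\overline{B}\subset D$. Fixing $\omega \in C_c^\infty(B)$ with $\int_D \omega\,dx = 1$, I set, for $f \in C_c^\infty(D)$ with $(f)_D = 0$,
$$
\Bog[f](x) = \int_D f(y)\,\frac{x-y}{|x-y|^N}\left(\int_{|x-y|}^{\infty} \omega\!\left(y + s\,\frac{x-y}{|x-y|}\right) s^{N-1}\,ds\right)dy .
$$
A direct differentiation under the integral sign gives $\rdiv_x(\Bog[f])(x) = f(x) - \omega(x)\int_D f\,dy = f(x)$ by the zero-mean hypothesis, and one checks $\Bog[f] \in C_c^\infty(D;\RR^N)$; this is (i). For (ii), differentiating the formula once writes $\pp_\ell \Bog[f]$ as a singular integral whose kernel is, modulo a weakly singular remainder, homogeneous of degree $-N$ in $x-y$ and of vanishing mean on spheres, i.e.\ a Calder\'on--Zygmund kernel whose constants depend on $x$ only through the geometry of $D$; the Calder\'on--Zygmund theorem then yields $\norm{\pp_\ell \Bog[f]}_{L^p} \lesssim \norm{f}_{L^p}$, so with the zero boundary values $\norm{\Bog[f]}_{W^{1,p}} \lesssim \norm{f}_{L^p}$, and iterating the differentiation gives $\norm{\Bog[f]}_{W^{k+1,p}} \lesssim \norm{f}_{W^{k,p}}$, whence the unique extension to $\{f\in L^p(D):(f)_D=0\}$ with values in $W_0^{1,p}(D;\RR^N)$. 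Property (iii) comes from integrating by parts in the formula when $f = \rdiv g$: the extra derivative falls on the kernel, which becomes homogeneous of degree $-N$, so $\Bog$ acts on $g$ as a genuine Calder\'on--Zygmund operator and $\norm{\Bog[f]}_{L^q} \lesssim \norm{g}_{L^q}$.

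For the globalization I would cover $\oD$ by finitely many open sets $U_1,\dots,U_M$ with each $D \cap U_j$ star-shaped with respect to a ball, pick a subordinate partition of unity $\{\varphi_j\}$, and decompose $f = \sum_{j} (\varphi_j f - c_j h_j)$, where $h_j \in C_c^\infty(U_j \cap U_{j+1})$ are fixed with prescribed integral and the constants $c_j$ are chosen recursively so that every summand has vanishing mean on the corresponding $D \cap U_j$; applying the star-shaped solver to each piece and summing produces $\Bog$ on $D$ with the same mapping properties, the constants depending only on the Lipschitz character of $D$. Finally (iv) follows by duality and density: for smooth $f,v$, integration by parts together with (i) gives the identity $-\int_D \Bog[f]\cdot \nabla v\,dx = \ev{f,v}$, and since $\Bog$ is bounded $\{f \in L^{p}(D) : (f)_D = 0\} \to W_0^{1,p}(D;\RR^N)$, this identity shows $\Bog$ is also bounded from $\{f \in (W^{1,p'}(D))^\ast : \ev{f,1} = 0\}$ into $L^p(D;\RR^N)$; one then extends by density of smooth functions, the pairing formula persisting in the limit.

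The main obstacle is the globalization step: one must arrange the partition of unity and the correction functions $h_j$ so that each localized piece has \emph{exactly} zero mean — otherwise the star-shaped solver cannot be applied — and one must track that the Calder\'on--Zygmund constants produced on the individual patches depend only on the Lipschitz character of $D$, so that the resulting estimates on all of $D$ are the claimed uniform ones. The singular-integral estimates on each patch, though computational, are standard once the kernel has been put into Calder\'on--Zygmund form.
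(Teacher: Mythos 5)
The paper gives no proof of Theorem~\ref{thm:Bog}; it is stated as a black-box reference to~\cite[Theorem 11.17]{FN17}. Your proposal reconstructs the standard textbook argument (Bogovski\u{\i}'s explicit kernel on star-shaped domains, Calder\'on--Zygmund estimates, partition-of-unity globalization), and for parts (i)--(iii) and the globalization it is essentially the right skeleton, modulo the well-known care needed to verify the CZ conditions and to arrange the localized pieces to have exactly vanishing mean.

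There is, however, a genuine gap in your argument for part (iv). The duality identity
$-\int_D \Bog[f]\cdot \nabla v\,dx = \ev{f,v}$ together with boundedness of $\Bog$ on $L^p_0$ does \emph{not} yield $\norm{\Bog[f]}_{L^p}\lesssim \norm{f}_{(W^{1,p'})^\ast}$: that identity only controls the pairing of $\Bog[f]$ against the subspace $\{\nabla v : v\in W^{1,p'}(D)\}\subsetneq L^{p'}(D;\RR^N)$, which is a proper closed subspace (its orthogonal complement contains all divergence-free $L^p$ fields with vanishing normal trace), so you cannot recover the full $L^p$ norm of $\Bog[f]$ from it. To close the gap one either (a) uses the explicit kernel again and shows $\Bog$ extends as a CZ operator of the appropriate negative order, or, more cleanly, (b) invokes the closed-range / Hahn--Banach surjectivity of $\nabla : W^{1,p'}(D)/\RR\to L^{p'}(D;\RR^N)$ to write any $f\in (W^{1,p'})^\ast$ with $\ev{f,1}=0$ as $f=\rdiv F$ for some $F\in L^p(D;\RR^N)$ with $\norm{F}_{L^p}\lesssim \norm{f}_{(W^{1,p'})^\ast}$, and then applies part~(iii) (with $q=p$) to conclude $\norm{\Bog[f]}_{L^p}\lesssim \norm{F}_{L^p}$. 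Your sketch never uses part (iii) in the proof of (iv), which is a sign the deduction as written cannot be correct.
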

\subsection{Properties of pseudo-differential operators}\label{sec:Properties of pseudo-differential operators}
In this section, we refer to the properties of \textit{pseudo-differential operators} used in this paper, which are identified by the Fourier symbols (see \cite[Section 11.17]{FN17}). Specifically, we consider the following:
\begin{itemize}
    \item the ``double'' Riesz transform:
    \begin{align*}
        &\RRR= \Delta^{-1}\nabla \otimes \nabla = (\RRR_{ij})_{i,j=1}^3,\quad \RRR_{ij} \approx \frac{\xi_i \xi_j}{|\xi|^2},\quad i,j= 1,2,3,
    \end{align*}
    meaning that 
    \begin{align*}
        &\RRR_{ij}(v) := \cF_{\xi\to x}^{-1}\left[\frac{\xi_i \xi_j}{|\xi|^2} \cF_{x\to \xi}(v)\right],\quad v\in \cS(\RR^3),\quad i,j= 1,2,3;
    \end{align*}
    \item the inverse divergence:
    \begin{align*}
        &\cA = \nabla \Delta^{-1} = (\cA_j)_{j=1}^3 = (\pp_j \Delta^{-1})_{j=1}^3,\quad \pp_j \Delta^{-1}\approx -\frac{i \xi_j}{|\xi|^2},\quad j=1,2,3,
    \end{align*}
    meaning that 
    \begin{align*}
        &\pp_j \Delta^{-1}(v) := - \cF_{\xi\to x}^{-1}\left[\frac{i \xi_j}{|\xi|^2} \cF_{x\to \xi}(v)\right],\quad v\in \cS(\RR^3),\quad j=1,2,3,
    \end{align*}
\end{itemize}
where $\cS(\RR^3)$ denotes the space of smooth rapidly decreasing functions, $\cF$ is the Fourier transform on the space of tempered distributions $\cS'(\RR^3)$, and the values of the above mappings make sense as elements of $\cS'(\RR^3)$.

The following is a classical result for operators associated with Fourier symbols (see \cite[Chapter IV, Theorem 3]{Ste70}).
\begin{theorem}[H\"olmander--Mikhlin]
    Consider an operator $\cL$ defined by means of a Fourier symbol $m=m(\xi)$, 
    \begin{align*}
        &\cL[v](x) = \cF_{\xi\to x}^{-1}\left[m(\xi)\cF_{x\to \xi}(v)\right],\quad v\in \cS(\RR^N),
    \end{align*}
    where $m\in L^\infty(\RR^N)$ has classical derivatives up to order $\left[\frac{N}{2} \right] + 1$ in $\RR^N\backslash \left\{ 0 \right\}$ and satisfies 
    \begin{align*}
        &\left|\pp_\xi^\alpha m(\xi) \right| \leq c_\alpha \left|\xi \right|^{-\left|\alpha \right|},\quad \xi\neq 0,
    \end{align*}
    for any multi-index $\alpha$ such that $\left|\alpha \right|\leq \left[\frac{N}{2} \right] + 1$, where $[\cdot]$ denotes the integer part. Then $\cL$ is a bounded linear operator on $L^p(\RR^N)$ for any $1<p<\infty$.
\end{theorem}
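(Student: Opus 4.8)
The final statement to prove is the Hörmander–Mikhlin multiplier theorem (labelled simply \textbf{Theorem} in the appendix subsection on pseudo-differential operators). Since this is a classical result whose proof is standard and the paper explicitly cites \cite[Chapter IV, Theorem 3]{Ste70}, the natural plan is to present the Calderón–Zygmund approach in the order it is usually carried out, while flagging which of the ingredients is the genuine technical heart.

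The plan is as follows. First I would reduce to the estimate that matters: the operator $\cL$ is bounded on $L^2(\RR^N)$ trivially by Plancherel, since $m\in L^\infty$ gives $\norm{\cL v}_{L^2}\le \norm{m}_{L^\infty}\norm{v}_{L^2}$. By the Marcinkiewicz interpolation theorem, boundedness on all $L^p$ for $1<p<\infty$ will follow once I establish the weak-type $(1,1)$ bound, and then a duality argument (the adjoint symbol $\overline{m}$ satisfies the same hypotheses) upgrades $1<p\le 2$ to $2\le p<\infty$. So the whole problem is: show $\cL$ is of weak type $(1,1)$, i.e. $|\{|\cL v|>\lambda\}|\lesssim \lambda^{-1}\norm{v}_{L^1}$.

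Second, I would invoke the Calderón–Zygmund singular integral machinery. The multiplier $\cL$ is given by convolution with a kernel $K=\cF^{-1}(m)$, which is a tempered distribution agreeing with a locally integrable function away from the origin. The key step — and the one I expect to be the main obstacle — is to verify the \emph{Hörmander kernel condition}
\begin{equation*}
    \int_{|x|>2|y|} |K(x-y)-K(x)|\,dx \le C \quad \text{uniformly in } y\ne 0.
\end{equation*}
This is where the hypothesis $|\partial_\xi^\alpha m(\xi)|\le c_\alpha |\xi|^{-|\alpha|}$ for $|\alpha|\le [N/2]+1$ is used: one performs a Littlewood–Paley dyadic decomposition $m=\sum_j m(2^{-j}\xi)\psi(\xi)$ of the symbol, estimates each piece $K_j$ and its gradient via integration by parts (the derivative bounds on $m$ translate, through Plancherel on the annulus combined with the Cauchy–Schwarz-type trade-off that forces exactly $[N/2]+1$ derivatives, into $\norm{K_j}_{L^1}\lesssim 1$ and $\norm{\nabla K_j}_{L^1}\lesssim 2^{j}$), and then sums the geometric series over $j$ splitting at $2^j\sim |y|^{-1}$. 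Once the Hörmander condition holds, the standard Calderón–Zygmund argument — decompose $v$ via the Calderón–Zygmund decomposition at height $\lambda$ into a good part (handled by the $L^2$ bound) and a bad part supported on disjoint cubes with mean zero (handled by the kernel cancellation condition, with the exceptional set being the dilated cubes) — delivers the weak-type $(1,1)$ estimate.

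Finally I would assemble the pieces: weak-type $(1,1)$ plus strong-type $(2,2)$ gives strong-type $(p,p)$ for $1<p\le 2$ by Marcinkiewicz interpolation, and duality with the adjoint multiplier $\overline{m(\xi)}$ (which satisfies the identical symbol hypotheses, hence is also $L^{p'}$-bounded for $1<p'\le 2$) yields $2\le p<\infty$. The only delicate accounting is making sure the number of derivatives $[N/2]+1$ is exactly what the dyadic $L^1$-kernel estimates consume — that arithmetic (an $L^2$ bound on an annulus of measure $\sim 2^{jN}$ against $|\xi|^{-2|\alpha|}$, with $|x|^{|\alpha|}K_j$ having an $L^2$ norm one controls, then Cauchy–Schwarz to pass to $L^1$) is the concrete computation I would not grind through here but which is the substantive content; everything else is bookkeeping.
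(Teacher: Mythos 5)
The paper does not prove this statement: it is cited verbatim as a classical result from Stein \cite[Chapter~IV, Theorem~3]{Ste70}, and the text immediately proceeds to apply it without argument. Your sketch is a correct outline of the standard Calder\'on--Zygmund proof that appears in that reference: $L^2$-boundedness from Plancherel and $m\in L^\infty$, reduction to the weak-type $(1,1)$ estimate, verification of the H\"ormander integral condition on the kernel via a Littlewood--Paley dyadic decomposition of the symbol (with the $L^2$-on-annuli plus Cauchy--Schwarz bookkeeping being precisely where the $\left[\frac{N}{2}\right]+1$ derivative count is spent), and then Marcinkiewicz interpolation for $1<p\le 2$ followed by duality with the conjugate symbol for $2\le p<\infty$. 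You are candid that the dyadic kernel estimates $\norm{K_j}_{L^1}\lesssim 1$ and $\norm{\nabla K_j}_{L^1}\lesssim 2^{j}$ are the substantive content left unwritten; that is exactly the technical core one would have to supply to make this a complete proof, but as a blind reconstruction of the cited argument the plan is sound and matches the source.
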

The following theorem is an immediate consequence of the H\"olmander--Mikhlin theorem.
\begin{theorem}
    \label{thm:conti_of_Riesz}
    The operators $(\RRR_{ij})_{i,j=1}^3$ are continuous linear operators mapping $L^p(\RR^3)$ into $L^p(\RR^3)$ for any $1<p<\infty$. In particular, the following estimate holds true:
    \begin{align*}
        &\norm{\RRR_{ij}(v)}_{L^p(\RR^3)} \leq c(p) \norm{v}_{L^p(\RR^3)}\quad \text{for all }v\in L^p(\RR^3),\quad i,j=1,2,3.  
    \end{align*}
\end{theorem}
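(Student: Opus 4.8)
The plan is to deduce the statement directly from the Hölmander--Mikhlin theorem recorded just above, by checking that each of the Fourier multiplier symbols $m_{ij}(\xi) = \xi_i\xi_j/|\xi|^2$, $i,j=1,2,3$, satisfies its hypotheses in dimension $N=3$; since $[N/2]+1 = 2$, only derivatives of order at most $2$ need to be controlled, and $m_{ij}$ is $C^\infty$ away from the origin, so this regularity threshold is comfortably met.

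First I would observe that $m_{ij}$ is smooth on $\RR^3\setminus\{0\}$ and positively homogeneous of degree $0$, i.e. $m_{ij}(t\xi)=m_{ij}(\xi)$ for all $t>0$ and $\xi\neq 0$. Boundedness is elementary: $|m_{ij}(\xi)| = |\xi_i|\,|\xi_j|/|\xi|^2 \le \tfrac12(\xi_i^2+\xi_j^2)/|\xi|^2 \le 1$, so $m_{ij}\in L^\infty(\RR^3)$ with $\norm{m_{ij}}_{L^\infty(\RR^3)}\le 1$. For the derivative bounds, I would invoke the general fact that if $m$ is smooth on $\RR^N\setminus\{0\}$ and homogeneous of degree $0$, then every partial derivative $\pp_\xi^\alpha m$ is homogeneous of degree $-|\alpha|$; being continuous on the compact unit sphere, $\pp_\xi^\alpha m$ is bounded there by $c_\alpha := \sup_{|\omega|=1}|\pp_\xi^\alpha m(\omega)|$, and homogeneity then gives $|\pp_\xi^\alpha m(\xi)| = |\xi|^{-|\alpha|}\,|\pp_\xi^\alpha m(\xi/|\xi|)| \le c_\alpha |\xi|^{-|\alpha|}$ for all $\xi\neq 0$. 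Applying this to $m_{ij}$ for every multi-index $\alpha$ with $|\alpha|\le 2$ verifies exactly the Hölmander--Mikhlin condition.

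The Hölmander--Mikhlin theorem then yields that $\RRR_{ij}$, initially defined on $\cS(\RR^3)$ via the symbol $m_{ij}$, extends uniquely to a bounded linear operator on $L^p(\RR^3)$ for every $1<p<\infty$, with $\norm{\RRR_{ij}(v)}_{L^p(\RR^3)}\le c(p)\norm{v}_{L^p(\RR^3)}$, the constant $c(p)$ depending only on $p$ (and, through the theorem, on the absolute constants $c_\alpha$). There is essentially no obstacle here: the entire analytic content of the statement is carried by the Hölmander--Mikhlin theorem, and the remaining work is the routine homogeneity bookkeeping above. If one preferred a fully explicit argument one could instead compute $\pp_{\xi_k}m_{ij}$ and $\pp_{\xi_k}\pp_{\xi_l}m_{ij}$ directly and read off the $|\xi|^{-1}$ and $|\xi|^{-2}$ decay, but the homogeneity reasoning makes this unnecessary.
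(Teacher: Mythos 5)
Your proof is correct and takes the same route the paper implicitly does: the paper simply declares Theorem~\ref{thm:conti_of_Riesz} to be "an immediate consequence of the H\"olmander--Mikhlin theorem," and you have supplied the routine verification of the multiplier hypotheses (boundedness of $m_{ij}$, the homogeneity-of-degree-$0$ argument giving $|\pp_\xi^\alpha m_{ij}(\xi)|\leq c_\alpha|\xi|^{-|\alpha|}$, and the check that only orders $|\alpha|\leq[N/2]+1=2$ are needed in dimension $N=3$). Nothing is missing; this is precisely the bookkeeping the paper chose to leave to the reader.
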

Next, we refer to the result for the continuity properties of $\cA$, which was proven in \cite[Theorem 11.33]{FN17}.
\begin{theorem}
    \label{thm:conti_of_invdiv}
    \begin{itemize}
        \item[(i)] The operators $(\cA_j)_{j=1}^3$ are continuous linear operators mapping $L^1(\RR^3)\cap L^2(\RR^3)$ into $L^2(\RR^3)+L^\infty(\RR^3)$, and $L^p(\RR^3)$ into $L^{\frac{3p}{3-p}}(\RR^3)$ for any $1<p<3$.
        \item[(ii)] In particular, 
        \begin{align*}
            &\norm{\cA_j(v)}_{L^\infty(\RR^3)+L^2(\RR^3)} \leq c \norm{v}_{L^1(\RR^3)\cap L^2(\RR^3)},\quad \text{for all }v\in L^1(\RR^3)\cap L^2(\RR^3),  
        \end{align*}
        and 
        \begin{align*}
            &\norm{\cA_j(v)}_{L^{\frac{3p}{3-p}}(\RR^3)}\leq c(p)\norm{v}_{L^p(\RR^3)}\quad \text{for all }v\in L^p(\RR^3),\quad 1<p<3.  
        \end{align*}
        \item[(iii)] If $v,\pdv{v}{t}\in L^p(I\times \RR^3)$, where $I$ is an open interval, then 
        \begin{align*}
            &\pdv{\cA_j(v)}{t}(t,x) = \cA_j\left(\pdv{v}{t}\right)(t,x)\quad \text{for a.a. }(t,x)\in I\times \RR^3,\quad 1<p<3.  
        \end{align*}
    \end{itemize}
\end{theorem}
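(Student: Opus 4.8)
The plan is to reduce the statement to two classical facts about singular integrals and then to a general commutation property of bounded operators. First I would record, by reading off the Fourier symbols, that $\cA_j=\pp_j\Delta^{-1}$ factors as
\begin{equation*}
    \cA_j = c\,R_j\,(-\Delta)^{-1/2},\qquad j=1,2,3,
\end{equation*}
where $R_j$ is the $j$-th Riesz transform (symbol $-i\xi_j/|\xi|$) and $(-\Delta)^{-1/2}$ has symbol $|\xi|^{-1}$; equivalently, on $\cS(\RR^3)$ the operator $\cA_j$ is convolution with the kernel $K_j(x)=c\,x_j|x|^{-3}$, homogeneous of degree $-2$. This convolution representation is what makes $\cA_j v$ a well-defined element of $\cS'(\RR^3)$ for every $v\in L^1(\RR^3)\cap L^p(\RR^3)$, $1\le p<\infty$ (note that, because the symbol is singular at the origin, $\cA_j$ is \emph{not} an $L^2$-Fourier multiplier), and it is the definition tacitly used in the statement.

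For (i) and (ii), the $L^p\to L^{3p/(3-p)}$ bound for $1<p<3$ would follow by composing the Hardy--Littlewood--Sobolev inequality for the Riesz potential $(-\Delta)^{-1/2}=I_1:L^p(\RR^3)\to L^{3p/(3-p)}(\RR^3)$ (the exponent relation $\tfrac1q=\tfrac1p-\tfrac13$ being exactly $q=\tfrac{3p}{3-p}$, and $1<p<3$ the admissible range) with the $L^{3p/(3-p)}$-boundedness of $R_j$, which is an instance of the H\"olmander--Mikhlin theorem stated above. For the borderline mapping $L^1\cap L^2\to L^2+L^\infty$ I would argue directly with the kernel: split $K_j=K_j\mathbf{1}_{B_1}+K_j\mathbf{1}_{B_1^c}$, observe by a radial integration that $K_j\mathbf{1}_{B_1}\in L^1(\RR^3)$ and $K_j\mathbf{1}_{B_1^c}\in L^2(\RR^3)$, and apply Young's inequality: $K_j\mathbf{1}_{B_1}*v\in L^2$ with $\|\cdot\|_{L^2}\lesssim\|v\|_{L^2}$ (from $L^1*L^2\subset L^2$) and $K_j\mathbf{1}_{B_1^c}*v\in L^\infty$ with $\|\cdot\|_{L^\infty}\lesssim\|v\|_{L^2}$ (from $L^2*L^2\subset L^\infty$). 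Summing the two pieces gives the claimed decomposition and the bound $\|\cA_j v\|_{L^2+L^\infty}\lesssim\|v\|_{L^1\cap L^2}$, which together with the Sobolev-type estimate is exactly (ii).

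For (iii), the observation is that a bounded linear operator commutes with the weak time derivative. Having shown in (i) that $\cA_j:L^p(\RR^3)\to L^{3p/(3-p)}(\RR^3)$ is bounded for $1<p<3$, and given $v,\pp_t v\in L^p(I\times\RR^3)$, one tests against $\varphi(t)\psi(x)$ with $\varphi\in C_c^\infty(I)$, $\psi\in C_c^\infty(\RR^3)$, moves $\cA_j$ onto $\psi$ via its bounded adjoint, integrates by parts in $t$ using the definition of the weak derivative $\pp_t v$, and moves $\cA_j$ back; this yields $\pp_t(\cA_j v)=\cA_j(\pp_t v)$ in $\DD'(I\times\RR^3)$, and since the right-hand side belongs to $L^p(I;L^{3p/(3-p)}(\RR^3))$ the identity holds for a.e.\ $(t,x)$.

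The main obstacle is really confined to (i)--(ii): one cannot treat $\cA_j$ as a Fourier multiplier on $L^2$, so the target spaces must be identified through the explicit kernel (or the $R_j\,(-\Delta)^{-1/2}$ factorization together with Hardy--Littlewood--Sobolev), and it is precisely the borderline statement $L^1\cap L^2\to L^2+L^\infty$ that forces the near-field/far-field splitting of $K_j$ rather than a one-line symbol argument. Once the boundedness in (i) is in hand, part (iii) is a routine commutation argument.
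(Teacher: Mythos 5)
The paper itself provides no proof of this theorem; it simply cites \cite[Theorem 11.33]{FN17}, so there is no in-paper argument to compare against. Your blind proof is correct and self-contained, and it is the standard route: the factorization $\cA_j=c\,R_j\,(-\Delta)^{-1/2}$ reduces the $L^p\to L^{3p/(3-p)}$ bound for $1<p<3$ to Hardy--Littlewood--Sobolev for the Riesz potential $I_1$ composed with the Calder\'on--Zygmund boundedness of $R_j$ on $L^{3p/(3-p)}$ (admissible since $3p/(3-p)\in(3/2,\infty)$ for $p\in(1,3)$); the near/far split $K_j=K_j\mathbf 1_{B_1}+K_j\mathbf 1_{B_1^c}$ with $K_j\mathbf 1_{B_1}\in L^1$ and $K_j\mathbf 1_{B_1^c}\in L^2$, followed by Young's inequality ($L^1*L^2\subset L^2$ and $L^2*L^2\subset L^\infty$), gives the borderline $L^1\cap L^2\to L^2+L^\infty$ estimate (your version in fact only needs $\|v\|_{L^2}$ on the right, which is slightly stronger than stated); and (iii) is the routine commutation argument. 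One step in (iii) that deserves to be spelled out a little more: after moving $\cA_j$ onto the spatial test function $\psi$, the resulting $w=\cA_j\psi$ lies in $L^{p'}(\RR^3)$ for $p\in(1,3)$ but is no longer compactly supported, so the identity $\int_I\pp_t\varphi\int v\,w\,dx\,dt=-\int_I\varphi\int(\pp_t v)w\,dx\,dt$ defining the weak time derivative must first be established for $w\in C_c^\infty$ and then extended to $w\in L^{p'}$ by density, using that both sides are continuous linear functionals of $w\in L^{p'}$ because $v,\pp_t v\in L^p(I\times\RR^3)$ and $\varphi$ is supported in a compact subinterval.
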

Finally, the following formulas
\begin{align*}
    &\RRR_{ij}(f) = \pp_i \cA_j (f),\quad \sum_{j=1}^{3}\RRR_{jj}(f) =f,\quad  \int_{\RR^3}\RRR_{ij}(f) g dx = \int_{\RR^3} f \RRR_{ij}(g)dx 
\end{align*}
hold for all (real-valued) $f,g\in \cS(\RR^3)$, and can be extended by density in accordance with Theorems \ref{thm:conti_of_Riesz}, \ref{thm:conti_of_invdiv} to $f,g\in L^p(\RR^3)$, $1<p<\infty$, whenever the both sides make sense.
Combining this with Theorems \ref{thm:conti_of_Riesz}, \ref{thm:conti_of_invdiv} yields the following assertion.
\begin{corollary}
        \label{cor:conti_of_invdiv}
    Let $D$ be a bounded domain. Then 
    \begin{align*}
        &\norm{\cA_j (1_D v)}_{W^{1,p}(\RR^3)} \leq c(p,D) \norm{v}_{L^p(D)},\quad \text{for all }v\in L^p(\RR^3),\quad   1<p<\infty, \quad j=1,2,3.
    \end{align*}
\end{corollary}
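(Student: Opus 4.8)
The plan is to bound the $W^{1,p}(\RR^3)$ norm of $\cA_j(1_D v)$ by treating the function itself and its first-order derivatives separately, and then to invoke Theorems~\ref{thm:conti_of_Riesz} and \ref{thm:conti_of_invdiv}. First I would record the elementary observations that, since $v\in L^p(\RR^3)$, the truncation $1_D v$ lies in $L^p(\RR^3)$ with $\norm{1_D v}_{L^p(\RR^3)}=\norm{v}_{L^p(D)}$, and that, $D$ being bounded, $1_D v$ moreover lies in $L^q(\RR^3)$ for every $1\le q\le p$ with $\norm{1_D v}_{L^q(\RR^3)}\le |D|^{1/q-1/p}\norm{v}_{L^p(D)}$ by Hölder's inequality. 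These two facts supply the right-hand side in terms of $\norm{v}_{L^p(D)}$ throughout.

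For the derivatives I would use the identity $\pp_i\cA_j(f)=\RRR_{ij}(f)$ recorded at the end of Appendix~\ref{sec:Properties of pseudo-differential operators}, which holds on $L^p(\RR^3)$ by density. Applying it with $f=1_D v$ and then the Hölmander--Mikhlin bound of Theorem~\ref{thm:conti_of_Riesz} gives, for each $i=1,2,3$,
$\norm{\pp_i\cA_j(1_D v)}_{L^p(\RR^3)}=\norm{\RRR_{ij}(1_D v)}_{L^p(\RR^3)}\le c(p)\norm{1_D v}_{L^p(\RR^3)}=c(p)\norm{v}_{L^p(D)}$.
This disposes of the gradient part.

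For the zeroth-order term the point is that $\cA_j$, whose Fourier symbol decays only like $|\xi|^{-1}$, raises integrability rather than preserving it, so one cannot apply it directly on $L^p$ and land in $L^p(\RR^3)$; here boundedness of $D$ is exploited. I would choose $q:=3p/(p+3)$, which lies in $(1,3)$ for the relevant range of $p$ and satisfies $3q/(3-q)=p$, so that Theorem~\ref{thm:conti_of_invdiv}(i)--(ii) yields $\cA_j(1_D v)\in L^{3q/(3-q)}(\RR^3)=L^p(\RR^3)$ with $\norm{\cA_j(1_D v)}_{L^p(\RR^3)}\le c(p)\norm{1_D v}_{L^q(\RR^3)}\le c(p,D)\norm{v}_{L^p(D)}$, using the Hölder bound from the first paragraph. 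Summing this with the gradient estimate gives the assertion.

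Since the corollary is merely the assembly of the two cited mapping theorems, there is no genuine obstacle; the only care required is the exponent bookkeeping in the last step — one must route the compactly supported function $1_D v$ through a slightly smaller Lebesgue exponent before using the Sobolev-type gain of $\cA_j$, which is harmless precisely because $|D|<\infty$.
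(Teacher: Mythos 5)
Your decomposition into the gradient part (handled via $\pp_i\cA_j=\RRR_{ij}$ and Theorem~\ref{thm:conti_of_Riesz}) and the zeroth-order part (handled via the Sobolev-gain of Theorem~\ref{thm:conti_of_invdiv}(i)--(ii), routing $1_D v$ through a smaller exponent $q$ using boundedness of $D$) is precisely the intended assembly of the two cited theorems; the paper itself offers no more than the phrase ``combining this with Theorems~\ref{thm:conti_of_Riesz}, \ref{thm:conti_of_invdiv}''. The gradient estimate is correct as written.

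However, the phrase ``which lies in $(1,3)$ for the relevant range of $p$'' hides a genuine restriction: with $q=3p/(p+3)$ one has $q<3$ always, but $q>1$ only when $p>3/2$. So the zeroth-order bound you derive holds for $3/2<p<\infty$, not for $1<p<\infty$ as the corollary asserts. Moreover, no repair of the exponent bookkeeping can close this, because the statement itself fails for $1<p\leq 3/2$: take $v=1_D$ (or any $v$ with $\int_D v\neq 0$). Then $\Delta^{-1}(1_D v)(x)\sim -\tfrac{1}{4\pi|x|}\int_D v$ as $|x|\to\infty$, hence $\cA_j(1_D v)(x)\sim \tfrac{x_j}{4\pi|x|^3}\int_D v$ decays like $|x|^{-2}$, and $\int_{|x|>1}|x|^{-2p}\,dx<\infty$ iff $p>3/2$. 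So $\cA_j(1_D v)\notin L^p(\RR^3)$ for $p\leq 3/2$. Your argument is therefore sharp where the corollary is true; the overly general range $1<p<\infty$ is an imprecision in the paper's statement (harmless for the paper's purposes, since the only places $\cA(1_D\cdot)$ is used are Sections~\ref{sec:ep:Strong convergence of the densities}--\ref{sec:delta:Strong convergence of the densities}, where the relevant integrability exponent is at least $\Gamma\geq 6$). You should make the constraint $p>3/2$ explicit rather than burying it in ``the relevant range of $p$''.
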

\subsection{Proof of Theorem 1.8}\label{sec:proof of Theorem 1.8}
The differences from the proof of Theorem \ref{thm:1.3} are as follows. We first note that, in the Galerkin approximation constructed in Section \ref{sec:Approximation system}, it is not necessary to include the technical subsequence $(V_{0,m})$ in $(V_m)$. 
Moreover, there is no need to introduce the convex approximation layer. Next, we replace the basic approximation problem \eqref{eq:3.11}--\eqref{eq:3.12} by the following:
\begin{align*}
    &d\rho + \rdiv (\rho[u]_R)dt = \varepsilon \Delta \rho dt,\quad \nabla \rho \cdot \rn |_{\pD} =0,\quad \text{in}\ (0,T)\times D,  \\
    \int_D \rho u\cdot \bphi(\tau) dx &= \int_D \rho_0 u_0 \cdot \bphi dx  + \inttau \int_D \left[(\rho[u]_R \otimes u):\nabla\bphi + \chi\left(\norm{u}_{V_m}-R\right)p_\delta (\rho)\rdiv \bphi\right]dxdt  \notag \\ 
    &\quad - \inttau \int_D \left[\SSS(\nabla u):\nabla\bphi -\varepsilon \rho u\cdot \Delta \bphi \right]dxdt - \inttau \int_{\pD}g u\cdot \bphi d\Gamma dt \notag \\ 
    &\quad + \inttau\int_D \GG_\varepsilon(\rho,\rho u)\cdot \bphi dxdW, \quad \ \tau\in [0,T],\  \bphi\in V_m.
\end{align*}
Specifically, in the iteration scheme in Section \ref{sec:Iteration scheme}, we redefine the operator $\NNN$ in \eqref{eq:3.27} as follows:
\begin{align*}
    \NNN[\rho](v) &= \left[-\rdiv (\rho[v]_R\otimes v) - \chi(\norm{v}_{V_m}-R)\nabla p_\delta(\rho)+\varepsilon \Delta (\rho v) +\rdiv \SSS(\nabla v)\right]_m^\ast  \notag \\
    &\quad - [\varepsilon \rho(\nabla v)\rn]_m^{\ast \pp} + [\varepsilon \rho v \cdot \nabla_\rn]_m^{\ast \pp} - [\SSS(\nabla v)\rn]_m^{\ast \pp} - [g u]_m^{\ast \pp}.
\end{align*}
By exactly the same argument as in Section \ref{sec:Approximation system}, we obtain a pathwise unique solution $(\rho,u)$, and the energy balance is given in the following form (see Proposition \ref{prop:energy_balance}):
\begin{align*}
    &\int_D \left[\frac 1 2 \rho |u|^2 + P_\delta(\rho)\right](\tau)dx + \inttau \int_D \left[\SSS(\nabla u):\nabla u + \varepsilon \rho |\nabla u|^2 + \varepsilon P_\delta''(\rho)|\nabla \rho|^2\right]dxdt +\inttau \int_{\pD} g \left|u \right|^2 d\Gamma dt \notag\\ 
    &\quad = \int_D \left[\frac 1 2 \rho_0 |u_0|^2 + P(\rho_0)\right]dx +\frac 1 2 \sum_{k=1}^{\infty} \inttau \int_D G_{k,\varepsilon}(\rho,\rho u) \cdot \MMM^{-1}[\rho] ([G_{k,\varepsilon}(\rho,\rho u)]_m^\ast) dxdt \notag \\ 
    &\quad \quad + \inttau \int_D \GG(\rho,\rho u) \cdot u dx dW.
\end{align*}
As already mentioned above, the discussion in Section \ref{sec:convex_approx} is no longer needed.
In Section \ref{sec:The limit in the Galerkin approximation scheme}, we extend the class of test functions for the momentum equation \eqref{eq:approx_ME_m} to $\bphi\in C^\infty(\oD)$ with $\bphi\cdot\rn|_{\pD} = 0$, and replace the momentum and energy inequality \eqref{eq:approx_MEI_m} by the standard energy inequality of the form \eqref{eq:1.23}. 
For the a priori estimates in Section \ref{sec:m:Uniform estimates}, it suffices to choose a dense sequence in $L^{\frac{2\Gamma}{\Gamma - 1}}(D)$ from $(V_m)_{m\in\NN}$. 
For passing to the limit, note that 
\begin{align*}
    &\intT\phi\int_{\pD}g u_n\cdot \bphi d\Gamma dt \to \intT\phi\int_{\pD}g u\cdot \bphi d\Gamma dt, \\ 
    &\intT \phi \int_{\pD} g|u|^2 d\Gamma dt \leq \liminf_{n\to\infty}\intT \phi \int_{\pD} g|u_n|^2 d\Gamma dt,
\end{align*}
whenever $u_n\weakarrow u$ in $L^2(0,T;W^{1,2}(D))$, for all $\phi\in C^\infty_c(\ico{0}{T})$, and all $\bphi\in C^\infty(\oD)$.

The discussion from Section \ref{sec:Vanishing viscosity limit} onward can be carried out in exactly the same manner, taking the above observations into consideration.
This completes the proof of Theorem \ref{thm:1.8}.

\section*{Acknowledgements}
The author would like to thank Associate Professor Yushi Hamaguchi, Professor Seiichiro Kusuoka and Professor Senjo Shimizu for carefully reading the manuscript and providing insightful comments.

\bibliographystyle{alpha} 
\bibliography{subfile/reference.bib}
\end{document}